\newtheoremstyle{component}{}{}{}{}{\itshape}{.}{.5em}{\thmnote{#3}{#1}}
\theoremstyle{plain}
\newtheorem{thm}{Theorem}[section]
\newtheorem{pro}[thm]{Proposition}
\newtheorem{lem}[thm]{Lemma}
\newtheorem{cor}[thm]{Corollary}
\newtheorem{rem}[thm]{Remark}
\theoremstyle{definition}
\newtheorem{thmA}{Theorem}
\newtheorem{proA}[thmA]{Proposition}
\newtheorem{thmN}{Theorem}
\newtheorem{proN}{Proposition}
\DeclareMathOperator{\ord}{ord}
\DeclareFontFamily{OT2}{cmr}{\hyphenchar\font45}
\DeclareFontShape{OT2}{cmr}{m}{n}{%
<5><6><7><8><9>gen*wncyr%
<10><10.95><12><14.4><17.28><20.74><24.88>wncyr10}{}
\DeclareFontShape{OT2}{cmr}{b}{n}{%
<5><6><7><8><9>gen*wncyb%
<10><10.95><12><14.4><17.28><20.74><24.88>wncyb10}{}
\newcommand{\HH}{\mathcal{H}}
\newcommand{\K}{\mathcal{K}} 
\DeclareMathAlphabet{\mathcyr}{OT2}{cmr}{m}{n}
\DeclareMathAlphabet{\mathcyb}{OT2}{cmr}{b}{n}
\SetMathAlphabet{\mathcyr}{bold}{OT2}{cmr}{b}{n}
\title{Multi-variable Admissible distributions}
\author{Kengo Fukunaga}
\address[K.~Fukunaga]{Department of Mathematics\\
Institute of Science Tokyo\\
2-12-1 Ookayama, Meguro-ku, Tokyo 152-8551, JAPAN}
\email[K.~Fukunaga]{fukunaga.k.492a@m.isct.ac.jp }
\author{Tadashi Ochiai}
\address[T.~Ochiai]{Department of Mathematics\\
Institute of Science Tokyo\\
2-12-1 Ookayama, Meguro-ku, Tokyo 152-8551, JAPAN}
\email[T.~Ochiai]{ochiai.t.1998@m.isct.ac.jp }
\subjclass[2010]{11R23 (Primary), 11F41, 11F67 (Secondaries)}
\keywords{Iwasawa theory, $p$-adic $L$-function}
\thanks{The second-named author is partially supported by KAKENHI (
Grant-in-Aid for Grant-in-Aid for Challenging Exploratory Research : Grant Number 23K17561 and Grant-in-Aid for Scientic Research (B): Grant Number 23K25763) 
of JSPS. }
\begin{document}
\maketitle
\begin{abstract}
The theory of admissible distributions over a weight-space of one-variable was studied by Amice--V\'{e}lu and played important roles in the cyclotomic Iwasawa theory of 
non-ordinary $p$-adic Galois representations. 
In this article, we establish the multi-variable generalization of the theory of admissible distributions over a weight-space of several variables. {As an application of our theory, we construct a two-variable $p$-adic Rankin Selberg $L$-series attached to a cuspidal eigenform $f$ with slope $\alpha\geq 0$ and a Hida family $G$ as a two-variable admissible distribution with growth $(2\alpha,\alpha)$.}
\end{abstract}
 \setcounter{tocdepth}{1}
\tableofcontents 
\section{Introduction} \label{sc:Introduction}
Let us fix a prime number $p$, an embedding of $\overline{\mathbb{Q}}$ into $\mathbb{C}$, and an embedding of $\overline{\mathbb{Q}}$ into the completion $\mathbb{C}_p$ of the algebraic closure of the $p$-adic field $\mathbb{Q}_p$. 
The Iwasawa theory of the cyclotomic deformation of a motive was originally studied when the given motive is ordinary at $p$.  
Later, the theory was generalized to the situation where the given motive is non-ordinary at $p$. 
Let $\mathcal{M}$ be a motive defined over a number field $F$ and let $k$ be the fraction field of the 
ring of coefficients of the motive $\mathcal{M}$, which is a number field.  
We denote by $\widehat{k}$ the completion of $k$ in 
$\mathbb{C}_p$. We denote by $\Gamma_{F ,\mathrm{cyc}}$ the Galois group of the cyclotomic $\mathbb{Z}_p$-extension of $F$, which is isomorphic to 
the $p$-Sylow subgroup of $\mathbb{Z}^\times_p$. 
\par 
When $\mathcal{M}$ is ordinary at $p$, the conjectural $p$-adic $L$-function of $\mathcal{M}$ is an element of 
the Iwasawa algebra $\mathcal{O}_{\widehat{k}}[[\Gamma_{F ,\mathrm{cyc}}]] \otimes_{\mathcal{O}_{\widehat{k}}} \widehat{k}$, 
which is isomorphic to the algebra of bounded measures on $\Gamma_{F ,\mathrm{cyc}}$ with values in $ \widehat{k}$. 
When $\mathcal{M}$ is non-ordinary at $p$, the conjectural $p$-adic $L$-function of $\mathcal{M}$ is not necessarily contained in 
$\mathcal{O}_{\widehat{k}}[[\Gamma_{F ,\mathrm{cyc}}]] \otimes_{\mathcal{O}_{\widehat{k}}} \widehat{k}$ and it is conjectured to be 
an element of the module of admissible distributions of growth $h$ with values in $\widehat{k}$, which is much larger than 
$\mathcal{O}_{\widehat{k}}[[\Gamma_{F ,\mathrm{cyc}}]] \otimes_{\mathcal{O}_{\widehat{k}}} \widehat{k}$. 
Here, $h$ is a certain non-negative rational number. 
Note that the theory of admissible distributions on $\Gamma_{F ,\mathrm{cyc}}$ was classically established by Amice--V\'{e}lu as we will recall later. 
\par 
Thanks to the theory of Hida deformations (for ordinary cusp forms) and the theory of Coleman families (for non-ordinary cusp forms), 
the setting of Iwasawa theory was generalized and enlarged in order to cover the situation associated to these Galois deformations. 
\par 
Let $\mathcal{F}$ be a Hida deformation for $\mathrm{GL}_2 (\mathbb{Q})$ 
defined over a local algebra $R$, which is finite over a certain one-variable Iwasawa algebra over $\mathbb{Z}_p$. 
Then the two-variable $p$-adic $L$-function associated $\mathcal{F}$ constructed by Kitagawa and Greenberg--Stevens
is an element of $R [[\Gamma_{\mathbb{Q} ,\mathrm{cyc}}]]  \otimes_{\mathbb{Z}_p} \mathbb{Q}_p$. 
The ring $R [[\Gamma_{\mathbb{Q} ,\mathrm{cyc}}]]  \otimes_{\mathbb{Z}_p} \mathbb{Q}_p$ is identified with the algebra of 
bounded measures on $\Gamma_{\mathbb{Q} ,\mathrm{cyc}}$ with values in $R  \otimes_{\mathbb{Z}_p} \mathbb{Q}_p$. 
A recipe to construct an element in the algebra of bounded measures on $\Gamma_{\mathbb{Q} ,\mathrm{cyc}}$ with values in 
$R  \otimes_{\mathbb{Z}_p} \mathbb{Q}_p$ and the way to characterize an element of $R [[\Gamma_{\mathbb{Q} ,\mathrm{cyc}}]] 
\otimes_{\mathbb{Z}_p} \mathbb{Q}_p$ is more or less parallel to the above case of bounded measures on $\Gamma_{\mathbb{Q} ,\mathrm{cyc}}$ with values in 
${\mathcal{O}_{\widehat{k}}}$ which was classically well-known. The case of $R$ which is associated to a general nearly ordinary deformation is also similar.   
\par 
In the non-ordinary situation, the situation is quite different. In this case, 
the $p$-adic $L$-function is {a one-variable admissible distribution} when the ring of coefficients $R$ is 
a discrete valuation ring ${\mathcal{O}_{\widehat{k}}}$, and a recipe to construct {a one-variable admissible distribution} and the way to characterize {a one-variable admissible distribution} is more complicated than the case of $\mathcal{O}_{\widehat{k}}[[\Gamma_{F ,\mathrm{cyc}}]] \otimes_{\mathcal{O}_{\widehat{k}}} \widehat{k}$, but this was already studied by the classical theory of Amice--V\'{e}lu.  
However, if we consider the situation of a more general non-ordinary Galois deformation over the deformation ring $R$ which is not 
a discrete valuation ring, the theory of the space where the $p$-adic $L$-function is contained, 
as well as a recipe to construct an element of this space and the way to characterize the element are not found in any references and it seems that the theory which we can use to construct a $p$-adic $L$-function for a non-ordinary Galois deformation was still missing. 
Also, we would like to establish the theory which will be a multi-variable generalization of the theory of 
Amice--V\'{e}lu and which we can use to construct a $p$-adic $L$-function  
of a non-ordinary Galois deformation. In \S\ref{Construction of a two-variable padic rankin selber l series}, we will apply our theory to construct a $p$-adic $L$-function 
of a non-ordinary Galois deformation space.  
\\   
\  
\par 
In order to state our main results, we first recall some notation and the classical theory of Amice--V\'{e}lu. 
Let $\mathcal{K}$ be a complete subfield of $\mathbb{C}_p$ and $\mathcal{O}_\mathcal{K}$ the ring of integers in $\mathcal{K}$. Typical examples of such fields $\mathcal{K}$ 
are $\mathbb{C}_p$, a finite extension of $\mathbb{Q}_p$ or the completion $\widehat{\mathbb{Q}^{\mathrm{ur}}_p}$ of the maximal unramified extension 
$\mathbb{Q}^{\mathrm{ur}}_p$ of $\mathbb{Q}_p$. 
Let $\ord_{p}$ be the $p$-adic order on $\mathbb{C}_{p}$ such that $\ord_{p}(p)=1$. For $h \in \ord_{p}(\mathcal{O}_{\K}\backslash \{0\})$, we define 
\begin{equation}\label{equation:thedefinition_of_H_h}
\HH_{h/\K} =\Big\{\sum^{+ \infty}_{n = 0} a_n X^n \in \K[ [X] ] 
\ \Big\vert \ 
\inf \big\{\ord_p (a_n ) +h \tfrac{\log n }{\log p} 
\big \}_{n \in\mathbb{Z}_{> 0}}
  >-\infty\Big\}
\end{equation}
and call an element of $\HH_{h/\K}$ a power series of logarithmic order $h$. We note that we have $fg \in \HH_{h\slash \K}$ for $f\in \mathcal{O}_{\K}[[X]]\otimes_{\mathcal{O}_{\K}}\K$ and $g\in \HH_{h\slash \K}$. Hence, $\HH_{h\slash \K}$ is an $\mathcal{O}_{\K}[[X]]\otimes_{\mathcal{O}_{\K}}\K$-module.
\par 
Let $\Gamma$ be a $p$-adic Lie group which is isomorphic to $1+2p\mathbb{Z}_p\subset \mathbb{Q}_{p}^{\times}$ via a continuous character $\chi : \Gamma \longrightarrow \mathbb{Q}_{p}^{\times}$. When $p$ is odd, we have $1+2p\mathbb{Z}_p = 1+p\mathbb{Z}_p$ and $1+p\mathbb{Z}_p$ is a pro-cyclic group. 
We note that we can regard $\Gamma$ as a subgroup of $\mathcal{K}^\times$ 
through the character $\chi$. 
We take a topological generator $\gamma\in \Gamma$ and put $u=\chi(\gamma)$. We denote by $\mu_{p^{m}}$ the subgroup of $\overline{\mathbb{Q}}^{\times}$ consisting of $p^{m}$-power roots of unity with $m\in\mathbb{Z}_{\geq 0}$ and put $\mu_{p^{\infty}}=\cup_{m\geq 0}\mu_{p^{m}}$.
\par 
Let $d,e$ be integers satisfying $e\geq d$. We put $[d,e]=\{d,d+1,\ldots, e\}$. Denote by $\lfloor h\rfloor$ the largest integer which is equal to or smaller than $h$. 
\par 
The following results on the admissible distribution and the $p$-adic power series of logarithmic order 
in the one-variable situation which we will state in Theorem \ref{one variable classical uniqueness}, Theorem \ref{onevariable projectresult}, Proposition \ref{prop1}, Proposition \ref{prop2} 
below might look like copies from well-known references such as 
those of Amice--V\'{e}lu \cite{amicevelu}, Vishik \cite{vishik1976}, Perrin--Riou \cite{perrinriou1994}, Colmez \cite{colmez2010} and Bella{\"\i}che \cite{Bell2021}. 
However, the results and their proofs which we present in this article are not fully recovered by these references. 
Later in Remark \ref{remark:H_h}, we will explain more precisely about 
the relation between Theorem \ref{one variable classical uniqueness}, Theorem \ref{onevariable projectresult}, Proposition \ref{prop1}, Proposition \ref{prop2} 
in this paper and the results in the above references. Another important reason why we give detailed proofs of these results which might be 
supposed to be classical, is that, we must allow the coefficients field $\K$ of 
Theorem \ref{one variable classical uniqueness}, Theorem \ref{onevariable projectresult}, Proposition \ref{prop1}, Proposition \ref{prop2}
to be a more general Banach space. In fact, we need such extended versions as a partial step of the proof of Theorem \ref{intro main theroem 1}, Theorem \ref{intro main theorem 2}, Proposition \ref{multivariable Iii necesarry sufficient condition}, 
Theorem \ref{multi variabl admissible intro}.
\par 
The following classical theorem gives a characterization of an element of $\HH_{h/\K}$. 
\begin{thmN}\label{one variable classical uniqueness}
If $f\in\HH_{h/\K}$ satisfies $f(u^{i}\epsilon-1)=0$ for every $i\in [d,d+\lfloor h\rfloor]$ and for every $\epsilon\in \mu_{p^{\infty}}$, then $f$ is zero.
\end{thmN}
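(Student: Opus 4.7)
The plan is to argue by contradiction: assuming $f \neq 0$, I will derive a contradiction by counting zeros of $f$ and comparing their total valuation to the growth constraint defining $\HH_{h/\K}$.

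First I would factor out the maximal power of $Y$ dividing $f$; the quotient still lies in $\HH_{h/\K}$ (up to enlarging the constant), and the hypothesis passes to it for pairs $(i,\zeta)$ with $u^i\zeta \neq 1$. So I may assume $f(0) \neq 0$ and put $v_0 = \ord_p f(0)$. For each $m \geq 1$, the hypothesis then supplies the $M_m := (\lfloor h\rfloor + 1) p^m$ distinct zeros $\{u^i \zeta - 1 : i \in [d, d+\lfloor h\rfloor],\ \zeta \in \mu_{p^m}\}$ of $f$ in the open unit disk. Pairwise distinctness follows from the fact that $\mathbb{Q}_p$ contains no nontrivial $p$-power root of unity, so $u^k \notin \mu_{p^\infty}$ for $k \neq 0$. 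Using $\ord_p(u^i \zeta - 1) = \ord_p(\zeta - 1) = 1/(p^{j-1}(p-1))$ for primitive $p^j$-th roots $\zeta$ (with $j \geq 1$) and the count $p^{j-1}(p-1)$ of such roots, a direct computation shows that the valuations of these $M_m$ zeros sum to $W + (\lfloor h\rfloor + 1)\, m$ for a constant $W$ depending on $u$, $d$, $\lfloor h\rfloor$ but independent of $m$.

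The key growth estimate I will need is a Newton polygon lower bound: writing $\mathrm{NP}$ for the Newton polygon of $f$ and $C$ for the constant from the definition of $\HH_{h/\K}$, one has $\mathrm{NP}(n) \geq -h\log_p n - C$ for $n$ sufficiently large. This holds at vertices by the defining inequality $\ord_p(a_n) + h\log_p n \geq -C$, and between vertices it follows by Jensen's inequality applied to the convex function $n \mapsto -h\log_p n - C$ (convex because $-\log$ is convex), which ensures that a chord between two points lying above the graph stays above the graph. Letting $v_1 \geq v_2 \geq \cdots$ denote the valuations of the zeros of $f$ in the open unit disk in decreasing order with multiplicity, the identity $\mathrm{NP}(n) = v_0 - \sum_{j=1}^n v_j$ then yields
$$\sum_{j=1}^{M_m} v_j \;\leq\; v_0 + C + h\log_p M_m \;=\; v_0 + C + hm + h\log_p(\lfloor h\rfloor + 1)$$
for $m$ large. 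On the other hand, since the first $M_m$ zero valuations dominate those of any $M_m$ known zeros, the left side is at least $W + (\lfloor h\rfloor + 1)\, m$, so
$$(\lfloor h\rfloor + 1 - h)\, m \;\leq\; v_0 + C + h\log_p(\lfloor h\rfloor + 1) - W.$$
Since $\lfloor h\rfloor + 1 > h$ always, the left side grows to infinity while the right side is constant, giving the desired contradiction.

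The hardest step will be establishing the Newton polygon lower bound near the origin, where a chord from $(0, v_0)$ to a far-away first vertex could dip below the graph of $-h\log_p n - C$ for small $n$. The resolution is that $f$ has zeros of many distinct valuations (one for each $j$ in our indexing), which forces $\mathrm{NP}$ to develop several vertices in the range $[1, M_m]$ once $m$ is sufficiently large, so that $n = M_m$ always lies on a chord whose two endpoints both have $x$-coordinate $\geq 1$, precisely the regime where Jensen's bound applies.
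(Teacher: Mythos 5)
Your proof is correct, and it is in essence the same Newton--polygon argument as the paper's proof of Proposition \ref{generalization of uniqueness on M} (whose $M=\K$ case is this theorem), but carried out in the concrete language of the classical coefficient-level Newton polygon rather than the paper's Gauss-norm formalism $v_t$, $d_t$, $m_f$, $n_f$: your identity $\mathrm{NP}(n) = v_0 - \sum_{j\le n}v_j$ and the bound $\mathrm{NP}(n)\geq -h\log_p n - C$ are the Legendre-dual incarnations of the paper's slope estimate $d_t(f)\geq(\lfloor h\rfloor+1)\,d_t(\log(1+X))$ and of the equivalence $v_{\HH_h}^{\prime}\sim v_{\HH_h}$ in Proposition \ref{another valuation on H_{h}}, and both routes reach the same contradiction $(\lfloor h\rfloor+1-h)m\leq\mathrm{const}$. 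Your version is more elementary; what the paper's abstract $v_t$-on-$B_r(M)$ formulation buys is that the same proof runs verbatim for coefficients in an arbitrary $\K$-Banach space $M$, where a coefficient-by-coefficient Newton polygon is less readily available. On the near-origin subtlety you flag: your resolution is correct but the stated mechanism is slightly off. What saves you is not that there are ``several vertices in $[1,M_m]$'' but simply that the first Newton-polygon vertex sits at a \emph{fixed} finite abscissa $N_1$ (the number of zeros of maximal valuation is finite because $f$ converges on the open unit disk, the content of Corollary \ref{leading degree and roots of poly}); since $M_m\to\infty$ while $N_1$ is fixed, eventually $M_m>N_1$ and the Jensen argument applies with both chord endpoints at abscissa $\geq 1$. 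An alternative that avoids any discussion of vertex positions is to bound the chord from $(0,v_0)$ directly, using that $x\mapsto(\log_p x)/x$ is decreasing for $x>e$, at the cost of replacing $C$ by a slightly larger constant. Two notational slips: the variable is $X$, not $Y$, and your use of $v_j$ for the zero valuations collides with $v_0=\ord_p f(0)$ at $j=0$.
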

Theorem \ref{one variable classical uniqueness} is essentially due to Amice--V\'{e}lu \cite[Lemme I\hspace{-1.2pt}I. 2.5]{amicevelu}, 
but, it is a variant of \cite[Lemme I\hspace{-1.2pt}I. 2.5]{amicevelu} (see also Remark \ref{remark:H_h} (1) for a more precise situation). {Theorem \ref{one variable classical uniqueness} is a special case of Proposition \ref{generalization of uniqueness on M} which will be proved later in this paper}. 
\par 
For each $m\in \mathbb{Z}_{\geq 0}$, we put $\Omega_{m}^{[d,e]}(X)=\prod_{i=d}^{e}((1+X)^{p^{m}}-u^{ip^{m}})\in \mathcal{O}_{\K}[X]$. We define an $\mathcal{O}_{\K}[[X]]\otimes_{\mathcal{O}_{\K}}\K$-module $J_{h}^{[d,e]}$ to be
\begin{multline}\label{intro powe Ihd1d2}
J_{h}^{[d,e]}=\Bigg\{(s_{m}^{[d,e]})_{m}\in \varprojlim_{m\in\mathbb{Z}_{\geq 0}}\left(\frac{\mathcal{O}_{\K}[[X]]}{(\Omega_{m}^{[d,e]}(X))}\otimes_{\mathcal{O}_{\K}}\K\right)
\Bigg\vert\\
 (p^{hm}s_{m}^{[d,e]})_{m}\in \left(\prod_{m=0}^{+\infty}\frac{\mathcal{O}_{\K}[[X]]}{(\Omega_{m}^{[d,e]}(X))}\right)\otimes_{\mathcal{O}_{\K}}\K 
\Bigg\}, 
\end{multline}
{where we regard $\varprojlim_{m\in\mathbb{Z}_{\geq 0}}\left(\frac{\mathcal{O}_{\K}[[X]]}{(\Omega_{m}^{[d,e]}(X))}\otimes_{\mathcal{O}_{\K}}\K\right)$ and $\left(\prod_{m=0}^{+\infty}\frac{\mathcal{O}_{\K}[[X]]}{(\Omega_{m}^{[d,e]}(X))}\right)\otimes_{\mathcal{O}_{\K}}\K$ as submodules of $\prod_{m=0}^{+\infty}\left(\frac{\mathcal{O}_{\K}[[X]]}{(\Omega_{m}^{[d,e]}(X))}\otimes_{\mathcal{O}_{\K}}\K\right)$}. The following classical theorem gives a recipe to construct an element of $\HH_{h/\K}$. 
\begin{thmN}\label{onevariable projectresult}
Assume that $e-d\geq \lfloor h\rfloor$. For $s^{[d,e]}=(s_{m}^{[d,e]})_{m\in\mathbb{Z}_{\geq 0}}\in J_{h}^{[d,e]}$, there exists a unique element $f_{s^{[d,e]}}\in \HH_{h\slash \K}$ such that 
$$f_{s^{[d,e]}}-\tilde{s}^{[d,e]}_{m}\in \Omega_{m}^{[d,e]}\HH_{h\slash \K}$$
for each $m\in\mathbb{Z}_{\geq 0}$, where $\tilde{s}^{[d,e]}_{m}\in \mathcal{O}_{\K}[[X]]\otimes_{\mathcal{O}_{\K}}\K$ is a lift of $s_{m}^{[d,e]}$. Further, the correspondence $s^{[d,e]}\mapsto f_{s^{[d,e]}}$ from $J_{h}^{[d,e]}$ to $\HH_{h\slash \K}$ induces an $\mathcal{O}_\mathcal{K}[ [X] ]\otimes_{\mathcal{O}_{\K}}\K$-module isomorphism 
$$
J_{h}^{[d,e]} \overset{\sim}{\longrightarrow} \HH_{h/\K}.
$$ 
\end{thmN}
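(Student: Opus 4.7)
The plan is to dispose of uniqueness by a direct application of Theorem~\ref{one variable classical uniqueness}, then to construct $f_{s^{[d,e]}}$ explicitly as a telescoping sum of polynomial lifts whose convergence in $\HH_{h/\K}$ will be the technical heart of the argument. For uniqueness, suppose $f_1,f_2\in\HH_{h/\K}$ both satisfy $f_i-\tilde{s}_m^{[d,e]}\in \Omega_m^{[d,e]}\HH_{h/\K}$ for every $m$. Then $f_1-f_2\in \Omega_m^{[d,e]}\HH_{h/\K}$ for every $m$, so it vanishes at every root $u^i\epsilon-1$ of $\Omega_m^{[d,e]}$ (for $i\in[d,e]$, $\epsilon\in\mu_{p^\infty}$). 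Since $e-d\geq\lfloor h\rfloor$, the interval $[d,d+\lfloor h\rfloor]$ sits inside $[d,e]$, and Theorem~\ref{one variable classical uniqueness} forces $f_1=f_2$.

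For existence, given $s^{[d,e]}=(s_m^{[d,e]})_m\in J_h^{[d,e]}$, polynomial division by the distinguished polynomial $\Omega_m^{[d,e]}$ (whose leading term is $X^{(e-d+1)p^m}$) yields, for each $m$, a unique polynomial lift $P_m\in \mathcal{O}_\K[X]\otimes_{\mathcal{O}_\K}\K$ of $s_m^{[d,e]}$ with $\deg P_m<(e-d+1)p^m$. Compatibility of $(s_m^{[d,e]})_m$ produces polynomials $Q_m\in\mathcal{O}_\K[X]\otimes_{\mathcal{O}_\K}\K$ of degree strictly less than $(e-d+1)(p^{m+1}-p^m)$ satisfying $P_{m+1}=P_m+\Omega_m^{[d,e]}Q_m$, and I set
\begin{equation*}
f_{s^{[d,e]}}:=P_0+\sum_{m=0}^{\infty}\Omega_m^{[d,e]}Q_m\in \K[[X]].
\end{equation*}
By telescoping, this formal series reduces to $P_{m+1}$, and hence to $s_m^{[d,e]}$, modulo $\Omega_m^{[d,e]}$ for every $m$, so the required congruences hold automatically once the series is shown to converge in $\HH_{h/\K}$.

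Verifying this convergence is the main obstacle. The growth condition defining $J_h^{[d,e]}$ furnishes a uniform constant $N$ such that each coefficient of $P_m$ has $p$-adic valuation at least $-hm-N$ for every $m$, and division by the distinguished polynomial $\Omega_m^{[d,e]}$ then gives an analogous bound (with a slightly larger constant) for the coefficients of $Q_m$. On the other hand, the roots $u^i\epsilon-1$ of $\Omega_m^{[d,e]}$ all lie strictly inside the open unit disc, with the smallest among them of valuation $\sim 1/((p-1)p^{m-1})$, and a Newton polygon argument shows that for each fixed $n\in\mathbb{Z}_{>0}$ the coefficient of $X^n$ in $\Omega_m^{[d,e]}$ has $p$-adic valuation at least $(e-d+1)m$ minus a constant depending only on $n$. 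Since $e-d+1>h$, combining these two estimates in the spirit of the classical Amice--V\'{e}lu convergence argument yields a uniform lower bound
\begin{equation*}
\ord_p(a_n(f_{s^{[d,e]}}))+h\tfrac{\log n}{\log p}\geq -C,\qquad n\in\mathbb{Z}_{>0},
\end{equation*}
which is exactly membership in $\HH_{h/\K}$. Finally, the map $s^{[d,e]}\mapsto f_{s^{[d,e]}}$ is $\mathcal{O}_\K[[X]]\otimes_{\mathcal{O}_\K}\K$-linear directly from its telescoping construction, so the bijection is automatically a module isomorphism.
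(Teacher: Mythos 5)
Your uniqueness argument is correct and essentially the same as the paper's. For existence, the telescoping series $f_{s^{[d,e]}} = P_0 + \sum_{m\geq 0}\Omega_m^{[d,e]}Q_m$ also agrees in spirit with the paper's construction, but your convergence argument is a genuinely different route: you propose to bound $\ord_p\bigl(a_n(f_{s^{[d,e]}})\bigr)$ coefficient by coefficient, whereas the paper works throughout with the family of valuations $v_{t_n}$, $t_n=\tfrac{1}{p^n(p-1)}$, estimates $v_{t_n}(\Omega_m^{[d,e]}q_m)$ via Lemma~\ref{valuation of Omega}, and then invokes the characterization of $\HH_{h/\K}$ inside $B_+(\K)$ by the condition $v_h'(f)>-\infty$ from Proposition~\ref{another valuation on H_{h}}. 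Your route can be made to work: one splits into the regime $n\geq p^m$, where $\Omega_m^{[d,e]}\in\mathcal{O}_\K[X]$ and $m\leq\log_p n$ give $\ord_p\geq -h(m+1)-N\geq -h\log_p n-h-N$, and the regime $n<p^m$, where the bound $\ord_p\bigl([X^j]\Omega_m^{[d,e]}\bigr)\geq (e-d+1)(m-\log_p n)$ for $j\leq n$ combined with $e-d+1>h$ gives the same threshold. But this combination is exactly the step you hand-wave by appealing to ``the spirit of the classical Amice--V\'{e}lu argument,'' so the quantitative heart of the proof is absent from your writeup; and the claim that the congruences $f_{s^{[d,e]}}-\tilde{s}^{[d,e]}_m\in\Omega_m^{[d,e]}\HH_{h/\K}$ ``hold automatically'' also needs justification, because one must show that the quotient $\bigl(f_{s^{[d,e]}}-\tilde{s}^{[d,e]}_m\bigr)/\Omega_m^{[d,e]}$, which lives a priori only in $B_+(\K)$, actually lies in $\HH_{h/\K}$ (the paper does this by a further $v_{t_n}$ estimate).

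The genuine gap is surjectivity. You assert that $s^{[d,e]}\mapsto f_{s^{[d,e]}}$ is a bijection and then upgrade it to a module isomorphism by linearity, but you never prove the map is onto $\HH_{h/\K}$, and this is not automatic. Given an arbitrary $f\in\HH_{h/\K}$, one must show that the Weierstrass remainders $r_m:=f\bmod\Omega_m^{[d,e]}$ (of degree $<(e-d+1)p^m$) satisfy a uniform bound $r_m\in p^{-hm-N}\mathcal{O}_\K[X]$ so that $([r_m])_m\in J_h^{[d,e]}$. The paper obtains this from $v_0(r_m)+t_m\bigl((e-d+1)p^m-1\bigr)\geq v_{t_m}(r_m)\geq v_{t_m}(f)$, using the boundedness of $t_m(e-d+1)p^m$ and the relation $v_{t_m}(f)+hm\geq v_{\HH_h}(f)+\alpha_h$ from Proposition~\ref{another valuation on H_{h}}. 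Without such an argument you have only an injection, not an isomorphism.
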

Theorem \ref{onevariable projectresult} is also essentially due to Amice--V\'{e}lu \cite[Proposition I\hspace{-1.2pt}V. 1]{amicevelu}, but, it is a variant of \cite[Proposition I\hspace{-1.2pt}V. 1]{amicevelu} 
 (see also Remark \ref{remark:H_h} (1) for a more precise situation). {Theorem \ref{onevariable projectresult} is a special case of Proposition \ref{isomorphism HH and project lim Banach} which will be proved later in this paper.}
 \par  
 For each $m\in \mathbb{Z}_{\geq 0}$, we put $\Omega_{m}^{[d,e]}(\gamma)=\prod_{i=d}^{e}([\gamma]^{p^{m}}-u^{ip^{m}})\in \mathcal{O}_{\K}[[\Gamma]]$, where $[\ ]:\Gamma\rightarrow \mathcal{O}_{\K}[[\Gamma]]^{\times}$ is the natural inclusion. We remark that the ideal $(\Omega_{m}^{[d,e]}(\gamma))$ of $\mathcal{O}_{\K}[[\Gamma]]$ is independent of the choice of the topological generator $\gamma$ of $\Gamma$. In a similar way to $J_{h}^{[d,e]}$, we define an $\mathcal{O}_{\K}[[\Gamma ]]\otimes_{\mathcal{O}_{\K}}\K$-module $I_{h}^{[d,e]}$ to be
\begin{multline}\label{intro powe Ihd1d2+}
I_{h}^{[d,e]}=\Bigg\{(s^{[d,e]}_{m})_{m}\in \varprojlim_{m\in\mathbb{Z}_{\geq 0}}\left(\frac{\mathcal{O}_{K}[[\Gamma ]]}{(\Omega_{m}^{[d,e]}(\gamma ))}\otimes_{\mathcal{O}_{\K}}\K\right)\\
\Bigg\vert (p^{hm}s_{m}^{[d,e]})_{m}\in \left(\prod_{m=0}^{+\infty}\frac{\mathcal{O}_{K}[[\Gamma ]]}{(\Omega_{m}^{[d,e]}(\gamma ))}\right)\otimes_{\mathcal{O}_{\K}}\K 
\Bigg\}.
\end{multline}
By definition, we have a non-canonical $\K$-linear isomorphism $I_{h}^{[d,e]}\stackrel{\sim}{\rightarrow}J_{h}^{[d,e]}$ 
which extends the non-canonical continuous $\mathcal{O}_{\K}$-algebra isomorphism $\mathcal{O}_{\K}[[\Gamma]]\stackrel{\sim}{\rightarrow}\mathcal{O}_{\K}[[X]]$ characterized by $[\gamma]\mapsto 1+X$.  
\par 
{To simplify the notation, we denote $[i,i]$ by $[i]$ for each $i\in \mathbb{Z}$}. If we have a system $s^{[d,e]}=(s_{m}^{[d,e]})_{m\in\mathbb{Z}_{\geq 0}}\in I_{h}^{[d,e]}$, we obtain a system 
$(s_{m}^{[i]})_{m\in\mathbb{Z}_{\geq 0}}\in I_{h}^{[i]}$ for each integer $i \in [d,e]$ by setting $s_{m}^{[i]} \in \frac{\mathcal{O}_{K}[[\Gamma ]]}{(\Omega_{m}^{[i]}(\gamma ))}\otimes_{\mathcal{O}_{\K}}\K$ to be the image of $s_m^{[d,e]}$ by the natural projection 
$\frac{\mathcal{O}_{K}[[\Gamma ]]}{(\Omega_{m}^{[d,e]}(\gamma ))}\otimes_{\mathcal{O}_{\K}}\K\rightarrow \frac{\mathcal{O}_{K}[[\Gamma ]]}{(\Omega_{m}^{[i]}(\gamma ))}\otimes_{\mathcal{O}_{\K}}\K$. On the other hand, when we want to construct a $p$-adic $L$-function of a given motive, 
we are often given $(s_{m}^{[i]})_{m\in\mathbb{Z}_{\geq 0}}\in I_{h}^{[i]}$ for each integer $i$ contained in a fixed range $[d,e]$ related to the given motive, 
and we need to construct a projective system $s^{[d,e]}=(s_{m}^{[d,e]})_{m\in\mathbb{Z}_{\geq 0}}\in I_{h}^{[d,e]}$ 
whose projection gives the given projective system $(s_{m}^{[i]})_{m\in\mathbb{Z}_{\geq 0}}\in I_{h}^{[i]}$ for each $i \in [d,e]$. 
The following proposition gives a necessary and sufficient condition for the existence of such a system $s^{[d,e]}=(s_{m}^{[d,e]})_{m\in\mathbb{Z}_{\geq 0}}\in I_{h}^{[d,e]}$.
\begin{proN}\label{prop1}
Let $s^{[i]}=(s^{[i]}_{m})_{m\in \mathbb{Z}_{\geq 0}}\in I_{h}^{[i]}$, and let $\tilde{s}^{[i]}_{m}\in \mathcal{O}_{\K}[[\Gamma]]\otimes_{\mathcal{O}_{\K}}\K$ be a lift of $s^{[i]}_{m}$ for each $m\in \mathbb{Z}_{\geq 0}$ and for each $i\in [d,e]$.  If there exists a non-negative integer $n$ which satisfies 
\begin{equation}\label{intro onevariabble admissible I}
p^{m(h-(j-d))}\displaystyle{\sum_{i=d}^{j}}\begin{pmatrix}j-d\\i-d\end{pmatrix}(-1)^{j-i}\tilde{s}^{[i]}_{m}\in \mathcal{O}_{\K}[[\Gamma]]\otimes_{\mathcal{O}_{\K}}p^{-n}\mathcal{O}_{\K}
\end{equation}
for each $m\in \mathbb{Z}_{\geq 0}$ and for each $j\in [d,e]$, we have a unique element $s^{[d,e]}\in I_{h}^{[d,e]}$ such that the image of $s^{[d,e]}$ by the natural projection $I_{h}^{[d,e]}\rightarrow I_{h}^{[i]}$ is $s^{[i]}$ for each $i\in [d,e]$.
\end{proN}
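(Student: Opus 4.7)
My plan has three parts: first, construct $s^{[d,e]}_m$ for each $m$ via the Chinese Remainder Theorem; second, verify the projective-system property; third, establish the growth condition $(p^{hm}s^{[d,e]}_m)_m\in\bigl(\prod_m\mathcal{O}_{\K}[[\Gamma]]/(\Omega_m^{[d,e]}(\gamma))\bigr)\otimes_{\mathcal{O}_{\K}}\K$ using hypothesis~(\ref{intro onevariabble admissible I}). For each fixed $m$ and distinct $i,j\in[d,e]$ one has $u^{ip^m}\neq u^{jp^m}$, so the ideals $([\gamma]^{p^m}-u^{ip^m})$ are pairwise comaximal in $\mathcal{O}_{\K}[[\Gamma]]\otimes_{\mathcal{O}_{\K}}\K$; CRT thus gives
$$\frac{\mathcal{O}_{\K}[[\Gamma]]}{(\Omega_m^{[d,e]}(\gamma))}\otimes_{\mathcal{O}_{\K}}\K \;\xrightarrow{\sim}\; \prod_{i=d}^{e}\frac{\mathcal{O}_{\K}[[\Gamma]]}{([\gamma]^{p^m}-u^{ip^m})}\otimes_{\mathcal{O}_{\K}}\K,$$
and I take $s^{[d,e]}_m$ to be the unique preimage of $(s^{[i]}_m)_{i=d}^{e}$. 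The projective-system property follows because the further reduction of $s^{[d,e]}_{m+1}$ to each factor agrees with $s^{[i]}_m$ by the projective property of each $s^{[i]}$, so CRT uniqueness at level $m$ forces this reduction to equal $s^{[d,e]}_m$.

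For the growth condition I write down an explicit lift via Newton interpolation in the variable $[\gamma]^{p^m}$:
$$\tilde{s}^{[d,e]}_m \;:=\; \sum_{r=0}^{e-d}c_r^{(m)}\prod_{l=0}^{r-1}\bigl([\gamma]^{p^m}-u^{(d+l)p^m}\bigr),$$
where $c_r^{(m)}$ is the $r$-th divided difference of $\tilde{s}^{[d]}_m,\ldots,\tilde{s}^{[d+r]}_m$ at the nodes $u^{(d+j)p^m}$ for $j=0,\ldots,r$. By the standard property of Newton interpolation, $\tilde{s}^{[d,e]}_m\equiv\tilde{s}^{[d+j]}_m\pmod{[\gamma]^{p^m}-u^{(d+j)p^m}}$, so $\tilde{s}^{[d,e]}_m$ represents $s^{[d,e]}_m$. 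Writing $\sigma^{(m)}_{d+k}:=\sum_{l=d}^{d+k}\binom{k}{l-d}(-1)^{d+k-l}\tilde{s}^{[l]}_m$ for the quantity appearing in~(\ref{intro onevariabble admissible I}), the Newton expansion $\tilde{s}^{[d+i]}_m=\sum_{k=0}^{i}\binom{i}{k}\sigma^{(m)}_{d+k}$ and linearity of divided differences give
$$c_r^{(m)} \;=\; \sum_{k=0}^{r}\sigma^{(m)}_{d+k}\cdot D_{k,r}^{(m)},$$
where $D_{k,r}^{(m)}\in\K$ is the $r$-th divided difference of $j\mapsto\binom{j}{k}$ at the same nodes. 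Since $\binom{j}{k}=0$ for $0\leq j<k$, one has $D_{k,r}^{(m)}=0$ for $r<k$.

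The heart of the argument, and the main obstacle, is the estimate
$$\bigl|D_{k,r}^{(m)}\bigr|_{p} \;\leq\; p^{k(v+m)+C_{k,r}}, \qquad v:=\ord_{p}(u-1),$$
for a constant $C_{k,r}$ depending only on $e-d$ and $p$, valid for $0\leq k\leq r\leq e-d$. The intuition is that $\binom{j}{k}$ is a polynomial of degree $k$ in the integer variable $j$, so the divided differences for $r>k$ produce cancellations that exactly absorb the $(r-k)$ extra denominator factors, each of $p$-adic valuation $v+m$. One can verify this estimate by direct manipulation of the Lagrange form of the divided difference, or by regarding $D_{k,r}^{(m)}$ as the $r$-th divided difference of the $p$-adic analytic function $y\mapsto\binom{\log(y/u^{dp^m})/(p^m\log u)}{k}$ at the nodes $u^{(d+j)p^m}$ and bounding its Taylor coefficients around $y_0=u^{dp^m}$. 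Granting this estimate together with $|\sigma^{(m)}_{d+k}|_p\leq p^{n+m(h-k)}$ from~(\ref{intro onevariabble admissible I}), the exponents $m(h-k)+k(v+m)=mh+kv$ telescope, yielding $|c_r^{(m)}|_p\leq p^{n+mh+O(1)}$ and hence $|\tilde{s}^{[d,e]}_m|_p\leq p^{n+mh+O(1)}$. Therefore $p^{hm}\tilde{s}^{[d,e]}_m\in p^{-N}\mathcal{O}_{\K}[[\Gamma]]$ for a uniform $N$, proving $s^{[d,e]}\in I_h^{[d,e]}$; uniqueness is immediate from the CRT isomorphism.
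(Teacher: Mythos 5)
Your construction is conceptually sound and is in fact the "Newton interpolation" dual of the paper's argument, but the heart of your proof — the divided-difference estimate $\lvert D_{k,r}^{(m)}\rvert_{p}\leq p^{k(v+m)+C_{k,r}}$ — is stated as a claim with a heuristic and not actually proven. This is precisely where the difficulty of the proposition lives, and it cannot be waved through. The naive Lagrange bound only gives $r$ (not $k$) factors of $p^{v+m}$ in the denominator, so you are asserting a cancellation that is genuine but nontrivial. Moreover, the suggested route — bounding the Taylor coefficients of $y\mapsto\binom{\log(y/u^{dp^m})/(p^m\log u)}{k}$ around $y_0$ — runs into the fact that the $n$-th coefficient of $\log(1+(y-y_0)/y_0)$ carries a factor $1/n$, so those coefficients are \emph{not} uniformly $p$-integral and a careless sup over $n$ will not close. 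You would need to quantify how the Newton polygon of the $p$-adic logarithm (specifically the slopes $t_n=\frac{1}{p^{n}(p-1)}$ and the value $m_{\log(1+X)}(t_m)=-m+\frac{1}{p-1}$) feeds into the divided differences; this is the content that is missing.

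For comparison, the paper's proof of Lemma~\ref{onevariable litfing prop} sidesteps the divided-difference combinatorics entirely. It introduces a polynomial in an auxiliary variable, $s(X,Y)=\sum_{l=0}^{e-d}\binom{Y-d}{l}\theta_{l+d}(X)$, which interpolates $\tilde{s}^{[i]}_m$ at \emph{integer} values $Y=i$ (Mahler/finite-difference expansion), and only afterwards substitutes $Y=\log(1+X)/w$. The estimate then becomes a two-variable Gauss-norm bound on $s(X,Y/p^{m+2})$: since $\theta_{l+d}\in p^{ml}M^{0}[[X]]$ and $\binom{Y/p^{m+2}}{l}\in\tfrac{1}{(e-d)!\,p^{(m+2)l}}\mathcal{O}_{\K}[Y]$, each term is bounded by $p^{2l}(e-d)!$, uniformly in $m$. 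The Newton polygon of $\log(1+X)$ supplies exactly the input $\mathrm{ord}_p(\log(1+b))>-m$ for $\mathrm{ord}_p(b)>t_m$, which legitimizes replacing $\log(1+b)/w$ by a free variable $c$ with $\mathrm{ord}_p(c)>-(m+2)$. This makes the "cancellation" visible as the plain $p$-integrality of $\binom{Y/p^{m+2}}{l}$ rather than as a cancellation among Lagrange terms. If you want to complete your version, the cleanest path is to reorganize your divided-difference computation in exactly this way: translate the Newton coefficients $c_r^{(m)}$ back into the paper's coordinates and reduce your estimate to the integrality of $\binom{Y/p^{m+2}}{l}$, or prove the divided-difference bound directly by the power-series formula $f[y_0,\dots,y_r]=\sum_{n\geq r}b_n\,h_{n-r}(0,y_1-y_0,\dots,y_r-y_0)$, carefully controlling $\mathrm{ord}_p(b_n)$ uniformly in $n$.

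One secondary remark: once the estimate is in place, you should also account explicitly for the constant that appears ($c^{[d,e]}$ in the paper), which governs the denominator $p^{-n-c^{[d,e]}}$ in the statement of Proposition~\ref{one variable I(i) banach sufficient cond}. Your phrase ``$\lvert c_r^{(m)}\rvert_p\leq p^{n+mh+O(1)}$'' hides the constant, and the precise form of $C_{k,r}$ (in particular whether it involves $\mathrm{ord}_p((e-d)!)$ and $\lfloor\tfrac{e-d+1}{p-1}\rfloor$, which do appear in the paper's $c^{[d,e]}$) matters for the integral refinement highlighted in Remark~\ref{remark:H_h2}.
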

Let $s^{[d,e]}=(s^{[d,e]}_{m})_{m\in \mathbb{Z}_{\geq 0}}$ be an element of $I_{h}^{[d,e]}$. 
For every integer $i\in [d,e]$, we denote by $s^{[i]}=(s_{m}^{[i]})_{m\in\mathbb{Z}_{\geq 0}}\in I_{h}^{[i]}$ the projection of the element $s^{[d,e]}$ to the $(i)$-component. 
Then, there exists a non-negative integer $n$ and a lift $\tilde{s}^{[i]}_{m}$ of $s^{[i]}_{m}$ for each $m\in \mathbb{Z}_{\geq 0}$ and for each $i\in [d,e]$ which satisfy \eqref{intro onevariabble admissible I}. Indeed, by the definition of $s^{[d,e]}$, there exists a non-negative integer $n$ such that $s^{[d,e]}\in \left(\prod_{m\in \mathbb{Z}_{\geq 0}}\frac{\mathcal{O}_{\K}[[\Gamma]]}{(\Omega_{m}^{[d,e]}(\gamma))}\otimes_{\mathcal{O}_{\K}}p^{-mh}\mathcal{O}_{\K}\right)\otimes_{\mathcal{O}_{\K}}p^{-n}\mathcal{O}_{\K}$. Then, 
for every $m\in \mathbb{Z}_{\geq 0}$, we have a lift
$\tilde{s}_{m}^{[d,e]}\in \mathcal{O}_{\K}[[\Gamma]]\otimes_{\mathcal{O}_{\K}}p^{-hm-n}\mathcal{O}_{\K}$ of $s_{m}^{[d,e]}$. If we take a lift $\tilde{s}_{m}^{[i]}$ of $s_{m}^{[i]}$ to be $\tilde{s}_{m}^{[d,e]}$ for each $m\in \mathbb{Z}_{\geq 0}$ and $i\in [d,e]$, we see that $\tilde{s}_{m}^{[i]}$ satisfies \eqref{intro onevariabble admissible I}.
\par 
In \cite{amicevelu}, Amice--V\'{e}lu developed a similar argument as Proposition \ref{prop1} in a more special setting. 
In fact, Amice--V\'{e}lu constructed a one-variable $p$-adic $L$-function for an elliptic eigen cusp form with positive slope in \cite[Theorem I\hspace{-1.2pt}I\hspace{-1.2pt}I]{amicevelu}. 
We can find a similar argument as Proposition \ref{prop1} in the proof of \cite[Theorem I\hspace{-1.2pt}I\hspace{-1.2pt}I]{amicevelu}. 
%
Proposition \ref{prop1} is a special case of Proposition \ref{one variable I(i) banach sufficient cond} which will be proved later in this paper.
\par 
Let us explain an interpretation of $I_{h}^{[d,e]}$ as a space of distributions. 
We denote by $C^{[d,e]} (\Gamma , \mathcal{O}_\mathcal{K})$ the $\mathcal{O}_\mathcal{K}$-module of functions $f:\Gamma\rightarrow \mathcal{O}_{\K}$ such that $\chi(x)^{-d}f(x)$ is a locally polynomial function of degree at most $e-d$ (see \S \ref{preparation} for the precise definition of 
locally polynomial functions). Let $\mathcal{D}^{[d,e]}_h (\Gamma , \mathcal{K})$ be the $\K$-vector space 
of elements of $\mathrm{Hom}_{\mathcal{O}_{\mathcal{K}}}( C^{[d,e]} (\Gamma,\linebreak \mathcal{O}_\mathcal{K}), \mathcal{K})$
which are $[d,e]$-admissible distributions of growth $h$ (see \eqref{defof admissible distri space} of this paper for the precise definition of $[d,e]$-admissible distributions of growth $h$). Put $LC(\Gamma,\mathcal{O}_{\K})=C^{[0,0]}(\Gamma,\mathcal{O}_{\K})$ and $\mathrm{Meas}(\Gamma,\mathcal{O}_{\K})=\mathrm{Hom}_{\mathcal{O}_{\K}}(LC(\Gamma,\mathcal{O}_{\K}),\mathcal{O}_{\K})$. 
The $\mathcal{O}_{\K}$-module $\mathrm{Meas}(\Gamma,\mathcal{O}_{\K})$ is an $\mathcal{O}_{\K}$-algebra by the convolution product of measures 
and we regard $\mathcal{D}^{[d,e]}_h (\Gamma , \mathcal{K})$ as a $\mathrm{Meas}(\Gamma,\mathcal{O}_{\K})\otimes_{\mathcal{O}_{\K}}\K$-module naturally. 
It is well-known that there exists a natural $\mathcal{O}_{\K}$-algebra isomorphism $\mathrm{Meas}(\Gamma,\mathcal{O}_{\K})\stackrel{\sim}{\rightarrow}\mathcal{O}_{\K}[[\Gamma]]$. Thus, we can regard $\mathcal{D}_{h}^{[d,e]}(\Gamma,\mathcal{\K})$ as an $\mathcal{O}_{\K}[[\Gamma]]\otimes_{\mathcal{O}_{\K}}\K$-module. Let $\mathfrak{X}_{\mathcal{O}_{\K}[[\Gamma]]}^{[d,e]}$ be the set of arithmetic specializations $\kappa$ on $\mathcal{O}_{\K}[[\Gamma]]$ with the weight $w_{\kappa}\in [d,e]$. For each $\kappa\in \mathfrak{X}_{\mathcal{O}_{\K}[[\Gamma]]}^{[d,e]}$, we denote by $\phi_{\kappa}$ and $m_{\kappa}$ the finite part of $\kappa$ and the smallest integer $m$ such that $\phi_{\kappa}$ factors through $\Gamma\slash \Gamma^{p^{m}}$.
\begin{proN}\label{prop2}
We have an $\mathcal{O}_{\K}[[\Gamma]]\otimes_{\mathcal{O}_{\K}}\K$-module isomorphism 
\begin{equation}\label{equation:mapfromDtoH}
I_{h}^{[d,e]}\stackrel{\sim}{\rightarrow} \mathcal{D}^{[d,e]}_h (\Gamma , \K)
\end{equation}
such that the image $\mu_{s^{[d,e]}}\in \mathcal{D}^{[d,e]}_h (\Gamma , \K)$ of each element $s^{[d,e]}=(s_{m}^{[d,e]})_{m\in \mathbb{Z}_{\geq 0}}\in I_{h}^{[d,e]}$ is characterized 
by  
$$
\kappa(\tilde{s}_{m_{\kappa}}^{[d,e]})= \int_\Gamma \chi^{w_{\kappa}} \phi_{\kappa} d\mu_{s^{[d,e]}} 
$$
for every $\kappa\in\mathfrak{X}_{\mathcal{O}_{\K}[[\Gamma]]}^{[d,e]}$, where $\tilde{s}_{m_{\kappa}}^{[d,e]}$ is a lift of $s_{m_{\kappa}}^{[d,e]}$.
\end{proN}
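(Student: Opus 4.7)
The plan is to factor the asserted isomorphism through two identifications that are essentially already in hand: the transport of structure from $\mathcal{O}_{\K}[[\Gamma]]$ to $\mathcal{O}_{\K}[[X]]$ via $[\gamma]\mapsto 1+X$, and the classical Amice--V\'{e}lu identification of $\HH_{h/\K}$ with $\mathcal{D}_h^{[d,e]}(\Gamma,\K)$. The continuous $\mathcal{O}_{\K}$-algebra isomorphism $\mathcal{O}_{\K}[[\Gamma]]\overset{\sim}{\longrightarrow}\mathcal{O}_{\K}[[X]]$ sends $\Omega_m^{[d,e]}(\gamma)$ to $\Omega_m^{[d,e]}(X)$, so it induces an $\mathcal{O}_{\K}[[\Gamma]]\otimes_{\mathcal{O}_{\K}}\K$-module isomorphism $I_h^{[d,e]}\overset{\sim}{\longrightarrow}J_h^{[d,e]}$; combined with Theorem \ref{onevariable projectresult} this yields $s^{[d,e]}\mapsto f_{s^{[d,e]}}\in\HH_{h/\K}$. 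The classical theory then supplies an $\mathcal{O}_{\K}[[\Gamma]]\otimes_{\mathcal{O}_{\K}}\K$-module isomorphism $\HH_{h/\K}\overset{\sim}{\longrightarrow}\mathcal{D}_h^{[d,e]}(\Gamma,\K)$, $f\mapsto\mu_f$, characterized by the Amice-transform identity $\int_\Gamma \chi^{w}\phi\, d\mu_f = f(u^{w}\phi(\gamma)-1)$ for every arithmetic character $\chi^{w}\phi$ with $w\in[d,e]$ and $\phi$ of finite order. Composing these two steps produces the asserted map \eqref{equation:mapfromDtoH}; although the middle identification depends on the choice of $\gamma$, the same $\gamma$ is used on both ends, so the composite is canonical.

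To verify the characterization, fix $\kappa\in\mathfrak{X}_{\mathcal{O}_{\K}[[\Gamma]]}^{[d,e]}$ with weight $w_\kappa$, finite part $\phi_\kappa$ and level $m=m_\kappa$. Since $\phi_\kappa(\gamma)^{p^m}=1$, we have $\kappa(\Omega_m^{[d,e]}(\gamma))=\prod_{i=d}^{e}(u^{w_\kappa p^m}-u^{ip^m})=0$ (because $w_\kappa\in[d,e]$), so $\kappa$ factors through $\mathcal{O}_{\K}[[\Gamma]]/(\Omega_m^{[d,e]}(\gamma))$ and $\kappa(\tilde{s}_m^{[d,e]})$ is independent of the chosen lift. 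Transporting the congruence from Theorem \ref{onevariable projectresult} to the group-ring side via $1+X\leftrightarrow[\gamma]$ and using that $\Omega_m^{[d,e]}$ vanishes at the point corresponding to $\kappa$ gives
\begin{equation*}
\kappa(\tilde{s}_m^{[d,e]}) \;=\; f_{s^{[d,e]}}\bigl(u^{w_\kappa}\phi_\kappa(\gamma)-1\bigr) \;=\; \int_\Gamma \chi^{w_\kappa}\phi_\kappa\, d\mu_{s^{[d,e]}},
\end{equation*}
where the last equality is the defining property of $\mu_{s^{[d,e]}}=\mu_{f_{s^{[d,e]}}}$.

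The principal obstacle is the classical Amice--V\'{e}lu identification $\HH_{h/\K}\overset{\sim}{\longrightarrow}\mathcal{D}_h^{[d,e]}(\Gamma,\K)$ itself: one must show that every power series of logarithmic order $h$ extends uniquely to a $[d,e]$-admissible distribution of growth $h$ on $C^{[d,e]}(\Gamma,\mathcal{O}_{\K})$, and that every such distribution arises in this way. The proof rests on a careful estimate matching the logarithmic growth of the coefficients $a_n$ in \eqref{equation:thedefinition_of_H_h} with the admissibility bound on locally polynomial test functions of degree at most $e-d$; the natural constraint $e-d\geq\lfloor h\rfloor$ from Theorem \ref{onevariable projectresult} is expected to be implicit in the setup. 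Once this identification is in hand, compatibility of the module structures is routine, since multiplication on the group-ring side corresponds, under the standard isomorphism $\mathrm{Meas}(\Gamma,\mathcal{O}_{\K})\cong\mathcal{O}_{\K}[[\Gamma]]$, to convolution of measures on the distribution side.
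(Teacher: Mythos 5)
Your proposed factorization $I_h^{[d,e]}\simeq J_h^{[d,e]}\simeq\HH_{h/\K}\simeq\mathcal{D}_h^{[d,e]}(\Gamma,\K)$ has two real problems, one of scope and one of circularity.

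First, scope: the middle isomorphism $J_h^{[d,e]}\simeq\HH_{h/\K}$ (Theorem \ref{onevariable projectresult}, proved in the paper as Proposition \ref{isomorphism HH and project lim Banach}) is valid only under the hypothesis $e-d\geq\lfloor h\rfloor$. Proposition \ref{prop2}, however, carries no such hypothesis — nor does its generalization Proposition \ref{onevariable isom Ih from Dh for banach}, which is where the paper actually proves it. When $e-d<\lfloor h\rfloor$ the modules $I_h^{[d,e]}$ and $\mathcal{D}_h^{[d,e]}(\Gamma,\K)$ are \emph{not} isomorphic to $\HH_{h/\K}$, yet they are still isomorphic to each other, so the statement is genuinely stronger than what your route can reach. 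Your remark that the constraint $e-d\geq\lfloor h\rfloor$ is ``expected to be implicit in the setup'' is exactly where this goes wrong: it is not.

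Second, circularity: you defer the heart of the argument to a ``classical Amice--V\'{e}lu identification $\HH_{h/\K}\overset{\sim}{\longrightarrow}\mathcal{D}_h^{[d,e]}(\Gamma,\K)$,'' but as Remark \ref{remark:H_h} explains, the literature (Vishik, Perrin-Riou) supplies this only in the small $o$-version and only for $d=0$, $e=h$; in the big $O$-version and for general $d,e$ this equivalence is precisely one of the paper's new contributions, and composing Theorem~B with Proposition~\ref{prop2} is essentially how the paper packages it. Invoking it as known is therefore citing the conclusion. The paper's proof of Proposition \ref{onevariable isom Ih from Dh for banach} instead constructs the map $I_h^{[d,e]}(M)\to\mathcal{D}_h^{[d,e]}(\Gamma,M)$ directly, by identifying $\mathcal{O}_{\K}[[\Gamma]]/(\Omega_m^{[i]}(\gamma))$ with $\mathrm{Hom}_{\mathcal{O}_{\K}}(C_m^{[i]}(\Gamma,\mathcal{O}_{\K}),\mathcal{O}_{\K})$ at each finite level $m$ and degree $i$, passing to limits in $m$ and to the sum over $i\in[d,e]$, then verifying the admissibility estimate via \eqref{continuous measure and spectral norm} and proving injectivity and surjectivity without ever introducing $\HH_{h/\K}$. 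That direct construction is what both removes the hypothesis $e-d\geq\lfloor h\rfloor$ and avoids the circular appeal. Your verification of the interpolation formula via $\kappa(\Omega_{m_\kappa}^{[d,e]}(\gamma))=0$ is correct as far as it goes, but it sits on top of the unproved bridge.
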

Proposition \ref{prop2} is essentially due to Vishik \cite[2.3. Theorem]{vishik1976}. Vishik essentially proved that there exists an injective map from $\mathcal{D}_{h}^{[0,h]}(\Gamma,\K)$ into $\HH_{h\slash \K}$ for each $h\in \mathbb{Z}_{\geq 0}$ in \cite[2.3. Theorem]{vishik1976}, 
and Perrin-Riou showed that this map is surjective in \cite[1.2.7. Proposition]{perrinriou1994} 
(see Remark \ref{remark:H_h} for the precise situation). {Proposition \ref{prop2} is a special case of Proposition \ref{onevariable isom Ih from Dh for banach} which will be proved later in this paper.}
\par 
Below we give several historical remarks on the relation of the above results to the classical references. 
As mentioned earlier, our notations and definitions in the above statements are different from those given in Amice--V\'{e}lu \cite{amicevelu}, Vishik \cite{vishik1976}, 
Perrin--Riou \cite{perrinriou1994}. Hence, these results in the one-variable situation are missing in classical references such as \cite{amicevelu}, 
\cite{perrinriou1994} and \cite{vishik1976} and we also need to prove it in \S \ref{section: one-variable}. 
In later references such as \cite{Bell2021}, \cite{colmez2010}, some of notations and definitions are common with ours compared to  
\cite{amicevelu}, \cite{perrinriou1994} and \cite{vishik1976}, but the statements presented in Theorem \ref{one variable classical uniqueness}, Theorem \ref{onevariable projectresult}, Proposition \ref{prop1}, Proposition \ref{prop2} in this paper are found in none of these articles, or even if we find it in one of these articles, the proof is missing. In the remark below, we will also explain these points. 
\begin{rem}\label{remark:H_h}
\begin{enumerate}
\item There might be another option of the definition of $\HH_{h/\K}$ which is obtained by replacing the condition 
$$ 
\text{``$
\inf \big\{\ord_p (a_n ) +h \tfrac{\log n }{\log p} 
\big \}_{n \in\mathbb{Z}_{> 0}}
  >-\infty$''}
  $$ 
in \eqref{equation:thedefinition_of_H_h} 
by the condition $$ 
\text{``$\ord_p (a_n ) +h \tfrac{\log n }{\log p} \rightarrow +\infty 
$ when $n\rightarrow +\infty$''.} 
$$  
 We call the latter version of $\HH_{h/\K}$ the small $o$ version and we call our version of 
$\HH_{h/\K}$ the big O version. We do not know references which prove Theorem \ref{one variable classical uniqueness} and Theorem 
\ref{onevariable projectresult} in the big O version, hence we prove these theorems in our paper. 
However, the classical reference \cite[Lemme I\hspace{-1.2pt}I. 2.5, Proposition I\hspace{-1.2pt}V. 1]{amicevelu}
already proves the small o versions of Theorem \ref{one variable classical uniqueness} and Theorem 
\ref{onevariable projectresult}. 
%
%
\par 
Similarly to the case of $\HH_{h\slash \K}$, Vishik proved that there exists an injective map from the small o version of $\mathcal{D}_{h}^{[0,h]}(\Gamma,\K)$ into the small o version $\HH_{h\slash \K}$ for each $h\in\mathbb{Z}_{\geq 0}$ in \cite[2.3. Theorem]{vishik1976}.
In Proposition \ref{prop2}, we give a slightly more general result with $\mathcal{D}_{h}^{[d,e]}(\Gamma,\K)$ for more general $d$, $e$ and $h$, 
but we work with the big O version of $\mathcal{D}_{h}^{[d,e]}(\Gamma,\K)$. 
%
%
\item 
We believe that the big O version as it is presented here will be more suitable to the future study of multi-variable Iwasawa theory 
because the module $\HH_{h/\K}$ with $h=0$ recovers the Iwasawa algebra $\mathcal{O}_\mathcal{K} [ [X] ]\otimes_{\mathcal{O}_{\K}}\K$ 
which is standard algebra in the study of (nearly) ordinary setting (If we work with the small o version, we recovers the Tate algebra $\mathcal{O}_\mathcal{K} \langle X \rangle\otimes_{\mathcal{O}_{\K}}\K$ which is not compatible with a lot of research in Iwasawa theory). Then, we prove the above theorems and propositions as special cases of our results.\label{remark:H_h1} 
%
\item In the classical references \cite{amicevelu} and \cite{vishik1976}, they discuss only the case where $h\in \mathbb{Z}_{\geq 0}$, $d=0$ and $e=h$.\label{remark:H_h3}
\item 
Even after the publication of classical references such as Amice--V\'{e}lu \cite{amicevelu}, Vishik \cite{vishik1976}, 
Perrin--Riou \cite{perrinriou1994}, there also appeared some other references related to this subject such as an article by Colmez \cite{colmez2010} and a book by Bella{\"\i}che \cite{Bell2021}. 
Compared to  \cite{amicevelu}, \cite{vishik1976}, and \cite{perrinriou1994}, 
the notations of \cite{Bell2021} and \cite{colmez2010} are closer to ours. However, in \cite{colmez2010}, 
there are no statements corresponding to 
Theorem \ref{one variable classical uniqueness}, Theorem \ref{onevariable projectresult}, Proposition \ref{prop1}, Proposition \ref{prop2}. 
In \cite[Theorem V. 6.10]{Bell2021}, we find a statement corresponding to Theorem \ref{onevariable projectresult}, but its proof is reduced to an online article which seems to be lost and missing. 
\item 
In the classical references \cite{amicevelu} and \cite{vishik1976}, they discuss only the case where $\mathcal{K}$ is equal to $\mathbb{C}_p$. 
In \cite{Bell2021}, the field of coefficients $\mathcal{K}$ is assumed to be a complete discrete valuation field. Only, the reference \cite{colmez2010} allows 
$\mathcal{K}$ to be any closed subfield of $\mathbb{C}_p$ as we do in this paper.\label{remark:H_h2} 
\end{enumerate}
\end{rem}

%
%
%
%
%
%
%
%
%
%
%
%
%

As mentioned earlier, Amice--V\'{e}lu and Vishik applied the above mentioned theory to construct the one-variable cyclotomic $p$-adic $L$-function 
associated to an elliptic cusp form which is not necessarily ordinary at $p$. 
On the other hand, we sometimes consider more general $p$-adic families of motives which is not necessarily ordinary at $p$ and which is not the cyclotomic deformation 
of a fixed motive. The most typical example of such $p$-adic families is the Coleman family mentioned earlier. 
Hence we will need the multi-variable version of the above theories in order to develop a theory of multi-variable $p$-adic $L$-functions attached to 
such general $p$-adic families of non-ordinary motives. 
In order to state our result on such multi-variable generalizations, we will prepare some notation. 
\par 
For each $i\in\mathbb{Z}_{\geq 0}$, we denote by $\ell(i)$ the smallest non-negative integer $n$ which satisfies $p^{n}>i$. By definition, we have $\ell(0)=0$ and $\ell (i)=\lfloor\frac{\log i}{\log p}\rfloor+1$ if $i\geq 1$. Let $k\in \mathbb{Z}_{\geq 1}$. Throughout this paper, for each $k$-tupule $\boldsymbol{a}$ of a set $X$, we denote by $a_{j}\in X$ the $j$-th component of $\boldsymbol{a}$. Let $\langle\ ,\ \rangle_{k}$ be the Euclidean inner product on $\mathbb{R}^{k}$ defined by $\langle \boldsymbol{a},\boldsymbol{b}\rangle_{k}=a_{1}b_{1}+\cdots+a_{k}b_{k}$ for each $\boldsymbol{a},\boldsymbol{b}\in \mathbb{R}^{k}$. Let $\boldsymbol{h}\in \ord_{p}(\mathcal{O}_{\mathcal{K}}\backslash \{0\})^{k}$. We define 
a multi-variable variant of \eqref{equation:thedefinition_of_H_h} as follows: 
 \begin{equation}\label{equation:themultivariableversion_of_H_h}
\HH_{\boldsymbol{h}/\K} =\Big\{\sum_{\boldsymbol{n}\in\mathbb{Z}_{\geq 0}^{k}} a_{\boldsymbol{n}} X^{\boldsymbol{n}} \in \K[ [X_{1},\ldots, X_{k}] ] 
\ \Big\vert \ 
\inf \big\{\ord_p (a_{\boldsymbol{n}} ) +\langle \boldsymbol{h},\ell(\boldsymbol{n})\rangle_{k} 
\big \}_{\boldsymbol{n}\in \mathbb{Z}_{\geq 0}^{k}}
  >-\infty\Big\},
\end{equation}
where $X^{\boldsymbol{n}}=X_{1}^{n_{1}}\cdots X_{k}^{n_{k}}$ and $\ell(\boldsymbol{n})=(\ell(n_{1}),\ldots, \ell(n_{k}))$. We call an element $f$ of $\HH_{\boldsymbol{h}/\K}$ a $k$-variable power series of logarithmic order $\boldsymbol{h}$. We remark that $fg\in \HH_{\boldsymbol{h}\slash \K}$ for each $f\in \mathcal{O}_{\K}[[X_{1},\ldots, X_{k}]]\otimes_{\mathcal{O}_{\K}}\K$ and $g\in \HH_{\boldsymbol{h}\slash \K}$. Then, $\HH_{\boldsymbol{h}\slash \K}$ is an $\mathcal{O}_{\K}[[X_{1},\ldots, X_{k}]]\otimes_{\mathcal{O}_{\K}}\K$-module. Further, if $k=1$ and $\boldsymbol{h}=h$, the module defined in \eqref{equation:themultivariableversion_of_H_h} is equal to the module defined in $\eqref{equation:thedefinition_of_H_h}$. This is checked by using the inequality $\frac{\log n}{\log p}\leq \ell(n)\leq \frac{\log n}{\log p}+1$ for each $n\in\mathbb{Z}_{\geq 1}$.

Let $\Gamma_{i}$ be a $p$-adic Lie group which is isomorphic to $1+2p\mathbb{Z}_p\subset \mathbb{Q}_{p}^{\times}$ via a continuous character $\chi_{i} : \Gamma_{i} \longrightarrow \mathbb{Q}_{p}^{\times}$ for each $1\leq i\leq k$. We define $\Gamma=\Gamma_{1}\times \cdots \times\Gamma_{k}$. Let $\boldsymbol{d},\boldsymbol{e}\in \mathbb{Z}^{k}$ such that $\boldsymbol{e}\geq\boldsymbol{d}$. Here the order $\geq$ on $\mathbb{Z}^{k}$ is the componentwise order. Put $[\boldsymbol{d},\boldsymbol{e}]=\prod_{i=1}^{k}[d_{i},e_{i}]$. Let $\gamma_{i}\in \Gamma_{i}$ be a topological generator and put $u_{i}=\chi_{i}(\gamma_{i})$ with $1\leq i\leq k$. For each $\boldsymbol{m}\in \mathbb{Z}_{\geq 0}^{k}$, we put $(\Omega_{\boldsymbol{m}}^{[\boldsymbol{d},\boldsymbol{e}]}(X_{1},\ldots, X_{k}))=(\Omega_{m_{1}}^{[d_{1},e_{1}]}(X_{1}),\ldots, \Omega_{m_{k}}^{[d_{k},e_{k}]}(X_{k}))\subset \mathcal{O}_{\K}[[X_{1},\ldots, X_{k}]]$. If there is no risk of confution, we write $(\Omega_{\boldsymbol{m}}^{[\boldsymbol{d},\boldsymbol{e}]})$ for $(\Omega_{\boldsymbol{m}}^{[\boldsymbol{d},\boldsymbol{e}]}(X_{1},\ldots, X_{k}))$. We define 
a multi-variable version $J_{\boldsymbol{h}}^{[\boldsymbol{d},\boldsymbol{e}]}$ of \eqref{intro powe Ihd1d2} to be
\begin{align}\label{intro mult powe Ihde}
\begin{split}
J_{\boldsymbol{h}}^{[\boldsymbol{d},\boldsymbol{e}]}=\Bigg\{(s_{\boldsymbol{m}}^{[\boldsymbol{d},\boldsymbol{e}]})_{\boldsymbol{m}}\in \varprojlim_{\boldsymbol{m}\in\mathbb{Z}_{\geq 0}^{k}}&\left(\frac{\mathcal{O}_{\mathcal{K}}[[X_{1},\ldots, X_{k}]]}{(\Omega_{\boldsymbol{m}}^{[\boldsymbol{d},\boldsymbol{e}]}(X_{1},\ldots, X_{k}))}\otimes_{\mathcal{O}_{\K}}\K\right)\\
&\Bigg\vert (p^{\langle \boldsymbol{h},\boldsymbol{m}\rangle_{k}}s_{\boldsymbol{m}}^{[\boldsymbol{d},\boldsymbol{e}]})_{\boldsymbol{m}}\in \left(\prod_{\boldsymbol{m}\in \mathbb{Z}_{\geq 0}^{k}}\frac{\mathcal{O}_{\mathcal{K}}[[X_{1},\ldots, X_{k}]]}{(\Omega_{\boldsymbol{m}}^{[\boldsymbol{d},\boldsymbol{e}]}(X_{1},\ldots, X_{k}))}\right)\otimes_{\mathcal{O}_{\K}}\K\Bigg\}. 
\end{split}
\end{align}
Here, we remark that $\varprojlim_{\boldsymbol{m}\in\mathbb{Z}_{\geq 0}^{k}}\left(\frac{\mathcal{O}_{\mathcal{K}}[[X_{1},\ldots, X_{k}]]}{(\Omega_{\boldsymbol{m}}^{[\boldsymbol{d},\boldsymbol{e}]}(X_{1},\ldots, X_{k}))}\otimes_{\mathcal{O}_{\K}}\K\right)$ and $\left(\prod_{\boldsymbol{m}\in \mathbb{Z}_{\geq 0}^{k}}\frac{\mathcal{O}_{\mathcal{K}}[[X_{1},\ldots, X_{k}]]}{(\Omega_{\boldsymbol{m}}^{[\boldsymbol{d},\boldsymbol{e}]}(X_{1},\ldots, X_{k}))}\right)\linebreak\otimes_{\mathcal{O}_{\K}}\K$ are regarded as submodules of $\prod_{\boldsymbol{m}\in\mathbb{Z}_{\geq 0}^{k}}\left(\frac{\mathcal{O}_{\mathcal{K}}[[X_{1},\ldots, X_{k}]]}{(\Omega_{\boldsymbol{m}}^{[\boldsymbol{d},\boldsymbol{e}]}(X_{1},\ldots, X_{k}))}\otimes_{\mathcal{O}_{\K}}\K\right)$.
\\ \\ 
\ 
\par 
As multi-variable generalizations of Theorem \ref{one variable classical uniqueness} and Theorem \ref{onevariable projectresult}, 
we will state Theorem \ref{intro main theroem 1} and Theorem \ref{intro main theorem 2} below. 
Their proofs are given in \S \ref{sc:ordinary}, but these proofs are not obtained immediately simply replacing the field of coefficients $\K$ in 
Theorem \ref{one variable classical uniqueness} and Theorem \ref{onevariable projectresult} 
in the one-variable case by a general Banach space and by applying the same argument as the case with coefficients in $\K$. 
We really need a step-by-step induction argument with respect to the number of variables. 
In fact, to prove these results by induction argument,  
we will prove the following isomorphisms 
\begin{align*}
\HH_{\boldsymbol{h}\slash \K}&\simeq \HH_{h_{k}}(\HH_{\boldsymbol{h}^{\prime}\slash \K}),\\
J_{\boldsymbol{h}}^{[\boldsymbol{d},\boldsymbol{e}]}&\simeq J_{h_{k}}^{[d_{k},e_{k}]}(J_{\boldsymbol{h}^{\prime}}^{[\boldsymbol{d}^{\prime},\boldsymbol{e}^{\prime}]}),
\end{align*}
for every $\boldsymbol{h}\in \ord_{p}(\mathcal{O}_{\K}\backslash \{0\})^{k}$, $\boldsymbol{d},\boldsymbol{e}\in \mathbb{Z}^{k}$, $\boldsymbol{e}\geq \boldsymbol{d}$ 
in Proposition\ \ref{isometry ofHh for induction} and Proposition \ref{multi J induction pro}. 
Here we set $\boldsymbol{h}^{\prime}=(h_{1},\ldots, h_{k-1})$, $\boldsymbol{d}^{\prime}=(d_{1},\ldots, d_{k-1})$, $\boldsymbol{e}^{\prime}=(e_{1},\ldots, e_{k-1})$ 
and we denote by 
$\HH_{h_{k}}(\HH_{\boldsymbol{h}^{\prime}\slash \K})$ and $J_{h_{k}}^{[d_{k},e_{k}]}(J_{\boldsymbol{h}^{\prime}}^{[\boldsymbol{d}^{\prime},\boldsymbol{e}^{\prime}]})$ 
the spaces obtained by replacing the field of coefficients $\K$ of $\HH_{h_{k}\slash \K}$ and $J_{h_{k}}^{[d_{k},e_{k}]}$ by 
$\HH_{\boldsymbol{h}^{\prime}\slash \K}$ and $J_{\boldsymbol{h}^{\prime}}^{[\boldsymbol{d}^{\prime},\boldsymbol{e}^{\prime}]}$ respectively (see \eqref{themultivariable Banach version_of_H_h} 
and \eqref{generalization of the project lim for deformation ring Jboldsymbol} for the precise definitions of $\HH_{h_{k}}(\HH_{\boldsymbol{h}^{\prime}\slash \K})$ and $J_{h_{k}}^{[d_{k},e_{k}]}(J_{\boldsymbol{h}^{\prime}}^{[\boldsymbol{d}^{\prime},\boldsymbol{e}^{\prime}]})$). 
\par 
Put $\lfloor\boldsymbol{h}\rfloor=(\lfloor h_{1}\rfloor,\ldots,\lfloor h_{k}\rfloor)$. Here is one of our main results 
which is a multi-variable variant of Theorem \ref{one variable classical uniqueness}. 
\begin{thmA}[Theorem \ref{main theorem 1 and proof}]\label{intro main theroem 1}
 If $f\in\HH_{\boldsymbol{h}/\K}$ satisfies $f(u_{1}^{i_{1}}\epsilon_{1}-1,\ldots, u_{k}^{i_{k}}\epsilon_{k}-1)=0$ for each $k$-tuple $\boldsymbol{i}\in
 [\boldsymbol{d},\boldsymbol{d}+\lfloor\boldsymbol{h}\rfloor]$ and $(\epsilon_{1},\ldots, \epsilon_{k})\in \mu_{p^{\infty}}^{k}$, then $f$ is zero.
\end{thmA}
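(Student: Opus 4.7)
The plan is to argue by induction on the number of variables $k$. The base case $k=1$ is Theorem~\ref{one variable classical uniqueness}. For the inductive step, assuming the statement for $k-1$ variables, expand
\[
f(X_1,\ldots,X_k) = \sum_{n \geq 0} f_n(X_1,\ldots,X_{k-1})\, X_k^n,
\]
and write $\boldsymbol{h}' = (h_1,\ldots,h_{k-1})$, $\boldsymbol{d}' = (d_1,\ldots,d_{k-1})$. A direct inspection of \eqref{equation:themultivariableversion_of_H_h} shows that each $f_n$ lies in $\HH_{\boldsymbol{h}'/\K}$: if the coefficients $a_{\boldsymbol{n}}$ of $f$ satisfy $\ord_p(a_{\boldsymbol{n}}) + \langle \boldsymbol{h}, \ell(\boldsymbol{n})\rangle_k \geq -C$ uniformly in $\boldsymbol{n}$, then for fixed $n$ the coefficients $a_{(\boldsymbol{n}',n)}$ of $f_n$ satisfy $\ord_p(a_{(\boldsymbol{n}',n)}) + \langle \boldsymbol{h}', \ell(\boldsymbol{n}')\rangle_{k-1} \geq -C - h_k\, \ell(n)$.

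Fix $\boldsymbol{i}' \in [\boldsymbol{d}', \boldsymbol{d}' + \lfloor \boldsymbol{h}' \rfloor]$ and $\boldsymbol{\epsilon}' = (\epsilon_1,\ldots,\epsilon_{k-1}) \in \mu_{p^{\infty}}^{k-1}$, and set $y_j = u_j^{i_j}\epsilon_j - 1$, so $s_j := \ord_p(y_j) > 0$. The key step is to show that
\[
G(X_k) := \sum_{n \geq 0} f_n(y_1,\ldots,y_{k-1})\, X_k^n
\]
is a well-defined element of $\HH_{h_k/\K}$. For each $\boldsymbol{n}'\in\mathbb{Z}_{\geq 0}^{k-1}$,
\[
\ord_p\!\bigl(a_{(\boldsymbol{n}',n)}\, y_1^{n_1}\cdots y_{k-1}^{n_{k-1}}\bigr) \geq -C - h_k\,\ell(n) + \sum_{j<k}\bigl(s_j n_j - h_j\,\ell(n_j)\bigr),
\]
and the elementary inequality $n_j \geq p^{\ell(n_j)-1}$ for $n_j\geq 1$ shows that $s_j n_j - h_j\,\ell(n_j) \to +\infty$ as $n_j\to \infty$, hence is bounded below uniformly in $n_j$. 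This simultaneously guarantees that the series defining each coefficient $f_n(y_1,\ldots,y_{k-1})$ converges in $\K$ and yields a uniform lower bound $\ord_p(f_n(y_1,\ldots,y_{k-1})) + h_k\,\ell(n) \geq -C'$, i.e. $G \in \HH_{h_k/\K}$.

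By hypothesis, $G(u_k^{i_k}\epsilon_k - 1) = 0$ for every $i_k \in [d_k, d_k + \lfloor h_k\rfloor]$ and every $\epsilon_k \in \mu_{p^\infty}$, so the $k=1$ case forces $G=0$; equivalently $f_n(y_1,\ldots,y_{k-1}) = 0$ for every $n\geq 0$. Since $\boldsymbol{i}'$ and $\boldsymbol{\epsilon}'$ were arbitrary, the inductive hypothesis applied to each $f_n \in \HH_{\boldsymbol{h}'/\K}$ gives $f_n = 0$ for all $n$, hence $f=0$. The main obstacle is the growth estimate establishing $G \in \HH_{h_k/\K}$: one must verify that after substituting values of strictly positive $p$-adic order into the first $k-1$ variables, the linear-in-$\ell(n_j)$ defect $h_j\,\ell(n_j)$ is dominated by the exponential gain $s_j n_j$, which is precisely what the bound $n_j\geq p^{\ell(n_j)-1}$ delivers. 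All remaining steps reduce cleanly to the one-variable result and the induction.
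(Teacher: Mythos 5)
Your proof is correct and takes essentially the same route as the paper's: decompose $f$ as a power series in $X_k$ with coefficients in the $(k-1)$-variable ring, substitute $u_j^{i_j'}\epsilon_j - 1$ for the first $k-1$ variables, apply the one-variable case to the resulting series $G$, and then invoke the inductive hypothesis on each $f_n$. The only cosmetic difference is that you verify the growth estimate showing $G\in\HH_{h_k}$ by hand, whereas the paper packages this into the isometric isomorphism $\HH_{\boldsymbol{h}}(M)\simeq \HH_{h_{k}}(\HH_{\boldsymbol{h}^{\prime}}(M))$ of Proposition~\ref{isometry ofHh for induction} together with the boundedness of the evaluation map on $B_{\boldsymbol{r}}(M)$.
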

We can define a valuation $v_{\HH_{\boldsymbol{h}}}$ on $\HH_{\boldsymbol{h}\slash \K}$
by $v_{\HH_{\boldsymbol{h}}}(f)=\inf\{\ord_p (a_{\boldsymbol{n}} ) +\langle \boldsymbol{h},\ell(\boldsymbol{n})\rangle_{k}\}_{\boldsymbol{n}\in \mathbb{Z}_{\geq 0}^{k}}$ for each $f=\sum_{\boldsymbol{n}\in\mathbb{Z}_{\geq 0}^{k}}a_{\boldsymbol{n}}X^{\boldsymbol{n}}\in\HH_{\boldsymbol{h}\slash \K}$. We define an integral structure $\left(J_{\boldsymbol{h}}^{[\boldsymbol{d},\boldsymbol{e}]}\right)^{0}$ of $J_{\boldsymbol{h}}^{[\boldsymbol{d},\boldsymbol{e}]}$ to be
$$
\left(J_{\boldsymbol{h}}^{[\boldsymbol{d},\boldsymbol{e}]}\right)^{0}=\left\{(s_{\boldsymbol{m}}^{[\boldsymbol{d},\boldsymbol{e}]})_{\boldsymbol{m}}\in J_{\boldsymbol{h}}^{[\boldsymbol{d},\boldsymbol{e}]} \ 
\left\vert \  (p^{\langle \boldsymbol{h},\boldsymbol{m}\rangle_{k}}s_{\boldsymbol{m}}^{[\boldsymbol{d},\boldsymbol{e}]})_{\boldsymbol{m}}\in \prod_{\boldsymbol{m}\in \mathbb{Z}_{\geq 0}^{k}}\frac{\mathcal{O}_{\mathcal{K}}[[X_{1},\ldots, X_{k}]]}{(\Omega_{\boldsymbol{m}}^{[\boldsymbol{d},\boldsymbol{e}]}(X_{1},\ldots, X_{k}))}\right. \right\} .
$$
%
%
%
We also prove the following theorem which is a  multi-variable variant of Theorem \ref{onevariable projectresult}. 
\begin{thmA}[Theorem \ref{main thm 2 and proof}]\label{intro main theorem 2}
Assume that $\boldsymbol{e}-\boldsymbol{d}\geq \lfloor\boldsymbol{h}\rfloor$.
For $s^{[\boldsymbol{d},\boldsymbol{e}]}=(s_{\boldsymbol{m}}^{[\boldsymbol{d},\boldsymbol{e}]})_{\boldsymbol{m}\in \mathbb{Z}_{\geq 0}^{k}}\in J_{\boldsymbol{h}}^{[\boldsymbol{d},\boldsymbol{e}]}$, there exists a unique element $f_{s^{[\boldsymbol{d},\boldsymbol{e}]}}\in \HH_{\boldsymbol{h}\slash \K}$ such that 
$$f_{s^{[\boldsymbol{d},\boldsymbol{e}]}}-\tilde{s}_{\boldsymbol{m}}^{[\boldsymbol{d},\boldsymbol{e}]}\in (\Omega_{\boldsymbol{m}}^{[\boldsymbol{d},\boldsymbol{e}]})\HH_{\boldsymbol{h}\slash\K}$$
for each $\boldsymbol{m}\in \mathbb{Z}_{\geq 0}^{k}$, where $\tilde{s}_{\boldsymbol{m}}^{[\boldsymbol{d},\boldsymbol{e}]}\in \mathcal{O}_{\K}[[X_{1},\ldots, X_{k}]]\otimes_{\mathcal{O}_{\K}}\K$ is a lift of $s_{\boldsymbol{m}}^{[\boldsymbol{d},\boldsymbol{e}]}$. Further, the correspondence $s^{[\boldsymbol{d},\boldsymbol{e}]}\mapsto f_{s^{[\boldsymbol{d},\boldsymbol{e}]}}$ from $J_{\boldsymbol{h}}^{[\boldsymbol{d},\boldsymbol{e}]}$ to $\HH_{\boldsymbol{h}\slash \K}$ induces an $\mathcal{O}_\mathcal{K}[ [X_{1},\ldots, X_{k}] ]\otimes_{\mathcal{O}_{\K}}\K$-module isomorphism 
$
J_{\boldsymbol{h}}^{[\boldsymbol{d},\boldsymbol{e}]} \overset{\sim}{\longrightarrow} \HH_{\boldsymbol{h}/\K}.
$ 
Via the above isomorphism, we have
\begin{align}\label{error of Ih and integral Hh}
\{f\in \HH_{\boldsymbol{h}\slash\K}\vert v_{\HH_{\boldsymbol{h}}}(f)\geq \alpha_{\boldsymbol{h}}^{[\boldsymbol{d},\boldsymbol{e}]}\}\subset \left(J_{\boldsymbol{h}}^{[\boldsymbol{d},\boldsymbol{e}]}\right)^{0}\subset \{f\in \HH_{\boldsymbol{h}\slash\K}\vert v_{\HH_{\boldsymbol{h}}}(f)\geq \beta_{\boldsymbol{h}}\},\end{align}
where $\alpha_{\boldsymbol{h}}^{[\boldsymbol{d},\boldsymbol{e}]}=\sum_{i=1}^{k}\alpha_{h_{i}}^{[d_{i},e_{i}]}$ and $\beta_{\boldsymbol{h}}=\sum_{i=1}^{k}\beta_{h_{i}}$ with
\begin{align*}
\alpha_{h_{i}}^{[d_{i},e_{i}]}&=\begin{cases}\lfloor\frac{(e_{i}-d_{i}+1)}{p-1}+\max\{0, h_{i}-\frac{h_{i}}{\log p}(1+\log \frac{\log p}{(p-1)h_{i}})\}\rfloor+1\ &\mathrm{if}\ h_{i}>0,\\ 0\ &\mathrm{if}\ h_{i}=0,\end{cases}\\
\beta_{h_{i}}&=\begin{cases}-\lfloor\max\{h_{i},\frac{p}{p-1}\}\rfloor-1\ \ \ \  \ \ \ \ \ \ \ \ \ \ \ \ \ \ \ \ \ \ \ \ \ \ \ \ \ \ \ \ \ \ \ \ \ \ \ \, &\mathrm{if}\ h_{i}>0,\\ 0\ &\mathrm{if}\ h_{i}=0.\end{cases}
\end{align*}
\end{thmA}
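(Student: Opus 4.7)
My strategy is to deduce Theorem~B from the one-variable Theorem~\ref{onevariable projectresult} by induction on $k$, using the already-available multi-variable uniqueness result Theorem~\ref{intro main theroem 1} for the uniqueness half. Uniqueness of $f_{s^{[\boldsymbol{d},\boldsymbol{e}]}}$ is essentially immediate: the difference $g$ of any two candidates lies in $(\Omega_{\boldsymbol{m}}^{[\boldsymbol{d},\boldsymbol{e}]})\HH_{\boldsymbol{h}/\K}$ for every $\boldsymbol{m}$, hence vanishes on the full grid $\{(u_i^{j_i}\epsilon_i-1)_i\mid \boldsymbol{j}\in[\boldsymbol{d},\boldsymbol{e}],\ \epsilon_i\in\mu_{p^{\infty}}\}$; since $\boldsymbol{e}-\boldsymbol{d}\geq\lfloor\boldsymbol{h}\rfloor$ this grid contains the one required by Theorem~\ref{intro main theroem 1}, forcing $g=0$. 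The same argument yields injectivity of the map $s^{[\boldsymbol{d},\boldsymbol{e}]}\mapsto f_{s^{[\boldsymbol{d},\boldsymbol{e}]}}$.

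For existence I would induct on $k$, the base case $k=1$ being Theorem~\ref{onevariable projectresult}. The key observation is that the logarithmic-order condition decomposes additively in the variables: writing $\boldsymbol{h}=(\boldsymbol{h}',h_k)$ with $\boldsymbol{h}'=(h_1,\ldots,h_{k-1})$ and expanding any $f\in\K[[X_1,\ldots,X_k]]$ as $\sum_{n_k\geq 0}b_{n_k}(X_1,\ldots,X_{k-1})X_k^{n_k}$, one has
\[
\inf_{\boldsymbol{n}}\bigl(\ord_p(a_{\boldsymbol{n}})+\langle\boldsymbol{h},\ell(\boldsymbol{n})\rangle_k\bigr)=\inf_{n_k}\bigl(v_{\HH_{\boldsymbol{h}'}}(b_{n_k})+h_k\,\ell(n_k)\bigr),
\]
so membership in $\HH_{\boldsymbol{h}/\K}$ is precisely the one-variable logarithmic-order condition for the sequence $(b_{n_k})$ valued in the normed module $\HH_{\boldsymbol{h}'/\K}$. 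The module $J_{\boldsymbol{h}}^{[\boldsymbol{d},\boldsymbol{e}]}$ admits an exactly parallel decomposition, because $(\Omega_{\boldsymbol{m}}^{[\boldsymbol{d},\boldsymbol{e}]})$ is the sum of the one-variable ideal $(\Omega_{m_k}^{[d_k,e_k]}(X_k))$ and the $(k-1)$-variable ideal $(\Omega_{\boldsymbol{m}'}^{[\boldsymbol{d}',\boldsymbol{e}']})$. Applying the inductive hypothesis to the inner layer, and then a coefficient-in-a-Banach-module analogue of Theorem~\ref{onevariable projectresult} in the outer variable $X_k$, produces $f_{s^{[\boldsymbol{d},\boldsymbol{e}]}}$ with the required congruences; $\mathcal{O}_\mathcal{K}[[X_1,\ldots,X_k]]\otimes_{\mathcal{O}_\mathcal{K}}\K$-linearity is built into the construction, and the inverse of the resulting map is simply $f\mapsto(f\bmod \Omega_{\boldsymbol{m}}^{[\boldsymbol{d},\boldsymbol{e}]})_{\boldsymbol{m}}$.

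For the two-sided bounds~\eqref{error of Ih and integral Hh}, I would lift the one-variable constants $\alpha_{h_i}^{[d_i,e_i]}$ and $\beta_{h_i}$ componentwise: since $\Omega_{\boldsymbol{m}}^{[\boldsymbol{d},\boldsymbol{e}]}$ is a product of one-variable factors in separate variables, both the Euclidean division by its generators and the resulting valuation shifts act independently in each variable, and summing over $i$ yields the stated $\alpha_{\boldsymbol{h}}^{[\boldsymbol{d},\boldsymbol{e}]}=\sum_i\alpha_{h_i}^{[d_i,e_i]}$ and $\beta_{\boldsymbol{h}}=\sum_i\beta_{h_i}$. The main technical obstacle is making the inductive step fully rigorous: it requires Theorem~\ref{onevariable projectresult} to be usable with coefficients in the $p$-adic Banach module $\HH_{\boldsymbol{h}'/\K}$, which is neither a discrete valuation field nor noetherian. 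A useful alternative that side-steps this is a direct construction: take the canonical polynomial lift $\tilde{s}_{\boldsymbol{m}}^{[\boldsymbol{d},\boldsymbol{e}]}$ of multi-degree strictly less than $(e_i-d_i+1)p^{m_i}$ in $X_i$, observe that the coefficient of each fixed monomial $X^{\boldsymbol{n}}$ stabilises as $\boldsymbol{m}\to\infty$ componentwise (once $p^{m_i}(e_i-d_i+1)>n_i$ for every $i$), and bound the resulting coefficients $a_{\boldsymbol{n}}$ using the growth condition defining $J_{\boldsymbol{h}}^{[\boldsymbol{d},\boldsymbol{e}]}$. Either way, the hardest step is controlling the valuation shift coming from repeatedly dividing by the one-variable factors $\Omega_{m_i}^{[d_i,e_i]}(X_i)$ while keeping track of the interaction between different indices $i$, in order to obtain the precise explicit constants $\alpha_{h_i}^{[d_i,e_i]}$ and $\beta_{h_i}$ rather than weaker (non-additive) bounds.
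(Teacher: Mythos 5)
Your approach — uniqueness from Theorem~A, existence by induction on $k$ via the decomposition $\HH_{\boldsymbol{h}/\K}\simeq\HH_{h_k}(\HH_{\boldsymbol{h}'/\K})$ together with a parallel decomposition of $J_{\boldsymbol{h}}^{[\boldsymbol{d},\boldsymbol{e}]}$, and additivity of the constants — is exactly the paper's strategy (Propositions~\ref{isometry ofHh for induction}, \ref{multi J induction pro} and the proof of Theorem~\ref{main thm 2 and proof}). The ``main technical obstacle'' you flag, making the one-variable theorem work with Banach-module coefficients, is precisely what the paper resolves by proving the whole of Section~\ref{section: one-variable} (culminating in Proposition~\ref{isomorphism HH and project lim Banach}) over a general $\K$-Banach space $M$ from the outset, so your primary plan is the correct one and your ``direct construction'' alternative is unnecessary.
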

Next, we will give the multi-variable generalizations of Proposition \ref{prop1} and Proposition \ref{prop2} (Proposition C and Theorem D respectively).

\par 
To state the results, we introduce some notation before we state these results. We put $(\Omega_{\boldsymbol{m}}^{[\boldsymbol{d},\boldsymbol{e}]}(\gamma_{1},\ldots, \gamma_{k}))=(\Omega_{m_{1}}^{[d_{1},e_{1}]}(\gamma_{1}),\ldots, \Omega_{m_{k}}^{[d_{k},e_{k}]}(\gamma_{k}))\subset \mathcal{O}_{\K}[[\Gamma]]$ for each $\boldsymbol{m}\in \mathbb{Z}_{\geq 0}^{k}$. We remark that the ideal $(\Omega_{\boldsymbol{m}}^{[\boldsymbol{d},\boldsymbol{e}]}(\gamma_{1},\ldots, \gamma_{k}))$ of $\mathcal{O}_{\K}[[\Gamma]]$ is independent of the choice of the topological generators $\gamma_{i}$ of $\Gamma_{i}$ for each $1\leq i\leq k$. If there is no risk of confusion, we write $(\Omega_{\boldsymbol{m}}^{[\boldsymbol{d},\boldsymbol{e}]})$ for $(\Omega_{\boldsymbol{m}}^{[\boldsymbol{d},\boldsymbol{e}]}(\gamma_{1},\ldots, \gamma_{k}))$. In a similar way to $J_{\boldsymbol{h}}^{[\boldsymbol{d},\boldsymbol{e}]}$, we define an $\mathcal{O}_{\K}[[\Gamma ]]\otimes_{\mathcal{O}_{\K}}\K$-module $I_{\boldsymbol{h}}^{[\boldsymbol{d},\boldsymbol{e}]}$ to be
\begin{align}
\begin{split}
I_{\boldsymbol{h}}^{[\boldsymbol{d},\boldsymbol{e}]}=\Bigg\{(s_{\boldsymbol{m}}^{[\boldsymbol{d},\boldsymbol{e}]})_{\boldsymbol{m}}\in \varprojlim_{\boldsymbol{m}\in\mathbb{Z}_{\geq 0}^{k}}&\left(\frac{\mathcal{O}_{\K}[[\Gamma]]}{(\Omega_{\boldsymbol{m}}^{[\boldsymbol{d},\boldsymbol{e}]}(\gamma_{1},\ldots, \gamma_{k}))}\otimes_{\mathcal{O}_{\K}}\K\right)\\
&\Bigg\vert (p^{\langle \boldsymbol{h},\boldsymbol{m}\rangle_{k}}s_{\boldsymbol{m}}^{[\boldsymbol{d},\boldsymbol{e}]})_{\boldsymbol{m}}\in \left(\prod_{\boldsymbol{m}\in \mathbb{Z}_{\geq 0}^{k}}\frac{\mathcal{O}_{\K}[[\Gamma]]}{(\Omega_{\boldsymbol{m}}^{[\boldsymbol{d},\boldsymbol{e}]}(\gamma_{1},\ldots, \gamma_{k}))}\right)\otimes_{\mathcal{O}_{\K}}\K\Bigg\}.
\end{split}
\end{align}
Further, we put $\left(I_{\boldsymbol{h}}^{[\boldsymbol{d},\boldsymbol{e}]}\right)^{0}=\left\{(s_{\boldsymbol{m}}^{[\boldsymbol{d},\boldsymbol{e}]})_{\boldsymbol{m}}\in I_{\boldsymbol{h}}^{[\boldsymbol{d},\boldsymbol{e}]}\Big\vert (p^{\langle \boldsymbol{h},\boldsymbol{m}\rangle_{k}}s_{\boldsymbol{m}}^{[\boldsymbol{d},\boldsymbol{e}]})_{\boldsymbol{m}}\in \prod_{\boldsymbol{m}\in \mathbb{Z}_{\geq 0}^{k}}\frac{\mathcal{O}_{\mathcal{K}}[[\Gamma]]}{(\Omega_{\boldsymbol{m}}^{[\boldsymbol{d},\boldsymbol{e}]}(\gamma_{1},\ldots, \gamma_{k}))}\right\}$. 
\par 
{Let us consider the non-canonical continuous $\mathcal{O}_{\K}$-algebra isomorphism $\alpha^{(k)}:\mathcal{O}_{\K}[[\Gamma]]\stackrel{\sim}{\rightarrow}\mathcal{O}_{\K}[[X_{1},\ldots, X_{k}]]$ characterized by $\alpha^{(k)} ([(\gamma_{1}^{n_{1}},\ldots, \gamma_{k}^{n_{k}})]) = \prod_{i=1}^{k}(1+X_{i})^{n_{i}}$ for each $\boldsymbol{n}\in \mathbb{Z}^{k}$. Then, we have a non-canonical $\K$-linear isomorphism 
\begin{equation}\label{intrononcanonical between Ioldysmbolhde and J}
I_{\boldsymbol{h}}^{[\boldsymbol{d},\boldsymbol{e}]}\simeq J_{\boldsymbol{h}}^{[\boldsymbol{d},\boldsymbol{e}]}
\end{equation}
which extends the isomorphism $\alpha^{(k)}$. 
\par 
We denote by $C^{[\boldsymbol{d},\boldsymbol{e}]} (\Gamma , \mathcal{O}_\mathcal{K})$ the $\mathcal{O}_\mathcal{K}$-module of $k$-variable functions $f:\Gamma\rightarrow \mathcal{O}_{\K}$ such that $\left(\prod_{i=1}^{k}\chi_{i}(x_{i})^{-d_{i}}\right)f(x_{1},\ldots, x_{k})$ is a locally polynomial function of degree at most $\boldsymbol{e}-\boldsymbol{d}$ (see \S \ref{preparation} for the precise definition of 
locally polynomial functions). Let $\mathcal{D}^{[\boldsymbol{d},\boldsymbol{e}]}_{\boldsymbol{h} }(\Gamma , \mathcal{K})$ be the $\K$-vector space 
of elements of $\mathrm{Hom}_{\mathcal{O}_{\mathcal{K}}}( C^{[\boldsymbol{d},\boldsymbol{e}]} (\Gamma , \mathcal{O}_\mathcal{K}) , \mathcal{K})$
which are $[\boldsymbol{d},\boldsymbol{e}]$-admissible distributions of growth $\boldsymbol{h}$ (see \eqref{defof admissible distri space} of this paper for the precise definition of $[\boldsymbol{d},\boldsymbol{e}]$-admissible distributions of growth $\boldsymbol{h}$). In the same way as the space of one-variable admissible distributions, we can regard $\mathcal{D}_{\boldsymbol{h}}^{[\boldsymbol{d},\boldsymbol{e}]}(\Gamma,\K)$ as an $\mathcal{O}_{\K}[[\Gamma]]\otimes_{\mathcal{O}_{\K}}\K$-module naturally. 
\par 
{
Similarly as what we remarked earlier before Theorem \ref{intro main theroem 1} and Theorem \ref{intro main theorem 2},  
Proposition \ref{multivariable Iii necesarry sufficient condition} and Theorem \ref{multi variabl admissible intro} 
are not obtained immediately simply replacing the field of coefficients $\K$ in Proposition \ref{prop1} and Proposition \ref{prop2} 
in the one-variable case by a general Banach space and by applying the same argument as the case with coefficients in $\K$. 
We really need a step-by-step induction argument with respect to the number of variables. 
\par 
In fact, to prove Theorem \ref{multi variabl admissible intro} by induction argument, we will prove the following isomorphisms 
\begin{align*}
I_{\boldsymbol{h}}^{[\boldsymbol{d},\boldsymbol{e}]}&\simeq I_{h_{k}}^{[d_{k},e_{k}]}(I_{\boldsymbol{h}^{\prime}}^{[\boldsymbol{d}^{\prime},\boldsymbol{e}^{\prime}]}),\\
\mathcal{D}_{\boldsymbol{h}}^{[\boldsymbol{d},\boldsymbol{e}]}(\Gamma,\K)&\simeq\mathcal{D}_{h_{k}}^{[d_{k},e_{k}]}(\Gamma_{k}, \mathcal{D}_{\boldsymbol{h}^{\prime}}^{[\boldsymbol{d}^{\prime},\boldsymbol{e}^{\prime}]}(\Gamma^{\prime},\K)), 
\end{align*}
for every $\boldsymbol{h}\in \ord_{p}(\mathcal{O}_{\K}\backslash \{0\})^{k}$, $\boldsymbol{d},\boldsymbol{e}\in \mathbb{Z}^{k}$, $\boldsymbol{e}\geq \boldsymbol{d}$ 
in Proposition \ref{for induction admisible admissible} and Proposition \ref{multi J induction pro}. 
Here we set $\boldsymbol{h}^{\prime}=(h_{1},\ldots, h_{k-1})$, $\boldsymbol{d}^{\prime}=(d_{1},\ldots, d_{k-1})$, $\boldsymbol{e}^{\prime}=(e_{1},\ldots, e_{k-1})$, $\Gamma^{\prime}=\Gamma_{1}\times\cdots\times \Gamma_{k-1}$ we denote by $I_{h_{k}}^{[d_{k},e_{k}]}(I_{\boldsymbol{h}^{\prime}}^{[\boldsymbol{d}^{\prime},\boldsymbol{e}^{\prime}]})$ and 
$\mathcal{D}_{h_{k}}^{[d_{k},e_{k}]}(\Gamma_{k}, \mathcal{D}_{\boldsymbol{h}^{\prime}}^{[\boldsymbol{d}^{\prime},\boldsymbol{e}^{\prime}]}(\Gamma^{\prime},\K))$ 
the spaces obtained by replacing the field of coefficients $\K$ of $I_{h_{k}}^{[d_{k},e_{k}]}$ and $\mathcal{D}_{h_{k}}^{[d_{k},e_{k}]}(\Gamma_{k},\K)$ by 
$I_{\boldsymbol{h}^{\prime}}^{[\boldsymbol{d}^{\prime},\boldsymbol{e}^{\prime}]}$ and $\mathcal{D}_{\boldsymbol{h}^{\prime}}^{[\boldsymbol{d}^{\prime},\boldsymbol{e}^{\prime}]}(\Gamma^{\prime},\K)$ 
respectively (see \eqref{generalization of the project lim for deformation ring} and \eqref{defof admissible distri space} 
for the precise definitions of $I_{h_{k}}^{[d_{k},e_{k}]}(I_{\boldsymbol{h}^{\prime}}^{[\boldsymbol{d}^{\prime},\boldsymbol{e}^{\prime}]})$ and 
$\mathcal{D}_{h_{k}}^{[d_{k},e_{k}]}(\Gamma_{k}, \mathcal{D}_{\boldsymbol{h}^{\prime}}^{[\boldsymbol{d}^{\prime},\boldsymbol{e}^{\prime}]}(\Gamma^{\prime},\K))$).}  
\par 
{The induction argument in the proof of Proposition \ref{multivariable Iii necesarry sufficient condition} requires further more complicated process than that of 
Theorem \ref{multi variabl admissible intro}. Hence we stress again that the multi-variable variant of the one-variable theory of Amice--V\'{e}lu and Vishik obtained in this article 
is not done simply replacing the field of coefficients in the one-variable case by a general Banach space.}
\par 
{Let us denote $[\boldsymbol{i},\boldsymbol{i}]$ by $[\boldsymbol{i}]$ for each $\boldsymbol{i}\in \mathbb{Z}^{k}$}. The following proposition is a multi-variable variant of Proposition \ref{prop1}. 
\begin{proA}[Proposition \ref{multivariable iwasawa I(ii) sufficient}]\label{multivariable Iii necesarry sufficient condition}
Let $s^{[\boldsymbol{i}]}=(s^{[\boldsymbol{i}]}_{\boldsymbol{m}})_{\boldsymbol{m}\in \mathbb{Z}_{\geq 0}^{k}}\in I_{\boldsymbol{h}}^{[\boldsymbol{i}]}$ and $\tilde{s}_{\boldsymbol{m}}^{[\boldsymbol{i}]}$ a lift of $s_{\boldsymbol{m}}^{[\boldsymbol{i}]}$ for each $\boldsymbol{m}\in \mathbb{Z}_{\geq 0}^{k}$ and $\boldsymbol{i}\in [\boldsymbol{d},\boldsymbol{e}]$. If there exists a non-negative integer $n$ which satisfies
$$p^{\langle \boldsymbol{m},\boldsymbol{h}-(\boldsymbol{j}-\boldsymbol{d})\rangle_{k}}\displaystyle{\sum_{\boldsymbol{i}\in [\boldsymbol{d},\boldsymbol{j}]}}\left(\prod_{t=1}^{k}\begin{pmatrix}j_{t}-d_{t}\\i_{t}-d_{t}\end{pmatrix}\right)(-1)^{\sum_{t=1}^{k}(j_{t}-i_{t})}\tilde{s}_{\boldsymbol{m}}^{[\boldsymbol{i}]}\in \mathcal{O}_{\K}[[\Gamma]]\otimes_{\mathcal{O}_{\K}}p^{-n}\mathcal{O}_{\K}$$
for each $\boldsymbol{m}\in \mathbb{Z}_{\geq 0}^{k}$ and $\boldsymbol{j}\in [\boldsymbol{d},\boldsymbol{e}]$, we have a unique element $s^{[\boldsymbol{d},\boldsymbol{e}]}\in \left(I_{\boldsymbol{h}}^{[\boldsymbol{d},\boldsymbol{e}]}\right)^{0}\otimes_{\mathcal{O}_{\K}}p^{-c^{[\boldsymbol{d},\boldsymbol{e}]}-n}\mathcal{O}_{\K}$ such that the image of $s^{[\boldsymbol{d},\boldsymbol{e}]}$ by the natural projection $I_{\boldsymbol{h}}^{[\boldsymbol{d},\boldsymbol{e}]}\rightarrow I_{\boldsymbol{h}}^{[\boldsymbol{i}]}$ is $s^{[\boldsymbol{i}]}$ for each $\boldsymbol{i}\in [\boldsymbol{d},\boldsymbol{e}]$, where $c^{[\boldsymbol{d},\boldsymbol{e}]}=\sum_{i=1}^{k}c^{[d_{i},e_{i}]}$ is the constant defined by
\begin{equation}\label{intro multivariable cde constant}
c^{[d_{i},e_{i}]}=\begin{cases}\ord_{p}((e_{i}-d_{i})!)+2(e_{i}-d_{i})+\lfloor\frac{e_{i}-d_{i}+1}{p-1}\rfloor+1\ &\mathrm{if}\ d_{i}<e_{i},\\
0\ &\mathrm{if}\ d_{i}=e_{i}.\end{cases}
\end{equation}
\end{proA}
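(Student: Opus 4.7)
The plan is to construct $s^{[\boldsymbol{d},\boldsymbol{e}]}$ explicitly at each finite level $\boldsymbol{m}$ via a multi-variable Newton/Lagrange interpolation formula, and then verify three conditions: (i) correct projection $s^{[\boldsymbol{d},\boldsymbol{e}]}\mapsto s^{[\boldsymbol{i}]}$ for every $\boldsymbol{i}\in[\boldsymbol{d},\boldsymbol{e}]$; (ii) compatibility of $(s_{\boldsymbol{m}}^{[\boldsymbol{d},\boldsymbol{e}]})_{\boldsymbol{m}}$ under the transition maps $\boldsymbol{m}'\geq\boldsymbol{m}$; and (iii) the integrality bound $p^{\langle\boldsymbol{h},\boldsymbol{m}\rangle_{k}+c^{[\boldsymbol{d},\boldsymbol{e}]}+n}s_{\boldsymbol{m}}^{[\boldsymbol{d},\boldsymbol{e}]}\in \OK[[\Gamma]]/(\Omega_{\boldsymbol{m}}^{[\boldsymbol{d},\boldsymbol{e}]})$. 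Once (i)--(iii) are established, the resulting element lies in $(I_{\boldsymbol{h}}^{[\boldsymbol{d},\boldsymbol{e}]})^{0}\otimes_{\OK}p^{-c^{[\boldsymbol{d},\boldsymbol{e}]}-n}\OK$, as required.

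Concretely, I would set
\[
\tilde{s}_{\boldsymbol{m}}^{[\boldsymbol{d},\boldsymbol{e}]}\ :=\ \sum_{\boldsymbol{j}\in[\boldsymbol{d},\boldsymbol{e}]}\left(\prod_{t=1}^{k}\prod_{l=d_{t}}^{j_{t}-1}\frac{[\gamma_{t}]^{p^{m_{t}}}-u_{t}^{lp^{m_{t}}}}{u_{t}^{j_{t}p^{m_{t}}}-u_{t}^{lp^{m_{t}}}}\right)\Delta_{\boldsymbol{j},\boldsymbol{m}},
\]
where $\Delta_{\boldsymbol{j},\boldsymbol{m}}=\sum_{\boldsymbol{i}\in[\boldsymbol{d},\boldsymbol{j}]}\bigl(\prod_{t=1}^{k}\binom{j_{t}-d_{t}}{i_{t}-d_{t}}\bigr)(-1)^{\sum_{t}(j_{t}-i_{t})}\tilde{s}_{\boldsymbol{m}}^{[\boldsymbol{i}]}$ is exactly the combinatorial quantity controlled in the hypothesis. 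The Lagrange factor in variable $t$ vanishes modulo $([\gamma_{t}]^{p^{m_{t}}}-u_{t}^{i_{t}p^{m_{t}}})$ whenever $i_{t}<j_{t}$ and equals $1$ at $[\gamma_{t}]^{p^{m_{t}}}=u_{t}^{j_{t}p^{m_{t}}}$; a Fubini-type telescoping identity, applied one variable at a time, then gives $\tilde{s}_{\boldsymbol{m}}^{[\boldsymbol{d},\boldsymbol{e}]}\equiv \tilde{s}_{\boldsymbol{m}}^{[\boldsymbol{i}]}\pmod{(\Omega_{\boldsymbol{m}}^{[\boldsymbol{i}]})}$ for every $\boldsymbol{i}\in[\boldsymbol{d},\boldsymbol{e}]$, which is statement (i).

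The heart of the argument, and the main obstacle, is the $p$-adic estimate in (iii). The Lagrange denominators factor as
\[
\prod_{l=d_{t}}^{j_{t}-1}(u_{t}^{j_{t}p^{m_{t}}}-u_{t}^{lp^{m_{t}}})^{-1}=u_{t}^{-\binom{j_{t}-d_{t}}{2}p^{m_{t}}}\prod_{l=d_{t}}^{j_{t}-1}(u_{t}^{(j_{t}-l)p^{m_{t}}}-1)^{-1},
\]
and the standard estimate $\ord_{p}(u_{t}^{np^{m}}-1)\le m+\ord_{p}(n)+\tfrac{1}{p-1}$, valid since $u_{t}\in 1+2p\mathbb{Z}_{p}$ is a topological generator, bounds the product in absolute value by $p^{-(j_{t}-d_{t})m_{t}-\ord_{p}((j_{t}-d_{t})!)-\lfloor(j_{t}-d_{t}+1)/(p-1)\rfloor-O(j_{t}-d_{t})}$. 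Summed over $t$, the factor $p^{\langle\boldsymbol{m},\boldsymbol{j}-\boldsymbol{d}\rangle_{k}}$ appearing in the denominator cancels precisely against the $p^{\langle\boldsymbol{m},\boldsymbol{h}-(\boldsymbol{j}-\boldsymbol{d})\rangle_{k}}$ power in the hypothesis, leaving an $\boldsymbol{m}$-independent denominator that is bounded by exactly the constant $c^{[\boldsymbol{d},\boldsymbol{e}]}=\sum_{t}c^{[d_{t},e_{t}]}$ from \eqref{intro multivariable cde constant}: the $\ord_{p}((e_{t}-d_{t})!)$ term comes from the binomial factorial, the $\lfloor (e_{t}-d_{t}+1)/(p-1)\rfloor$ term from the cyclotomic slack, and the $2(e_{t}-d_{t})$ term absorbs the unit factor $u_{t}^{-\binom{j_{t}-d_{t}}{2}p^{m_{t}}}$ together with the $O(j_{t}-d_{t})$ losses from coarser estimates.

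For (ii), the key is that once uniqueness of a representative modulo $(\Omega_{\boldsymbol{m}}^{[\boldsymbol{d},\boldsymbol{e}]})$ (up to a controlled denominator) is established from its reductions modulo the $(\Omega_{\boldsymbol{m}}^{[\boldsymbol{i}]})$'s --- which is a multi-variable Chinese Remainder statement reducing to the one-variable case via the tensor decomposition $\OK[[\Gamma]]/(\Omega_{\boldsymbol{m}}^{[\boldsymbol{d},\boldsymbol{e}]})\simeq\bigotimes_{t}\OK[[\Gamma_{t}]]/(\Omega_{m_{t}}^{[d_{t},e_{t}]}(\gamma_{t}))$ --- the compatibility of $(s_{\boldsymbol{m}}^{[\boldsymbol{i}]})_{\boldsymbol{m}}$ inside each $I_{\boldsymbol{h}}^{[\boldsymbol{i}]}$ forces the same compatibility of the interpolated sequence. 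Uniqueness in the conclusion follows from the injectivity of the combined projection map $(I_{\boldsymbol{h}}^{[\boldsymbol{d},\boldsymbol{e}]})^{0}\otimes_{\OK}p^{-c^{[\boldsymbol{d},\boldsymbol{e}]}-n}\OK\to\prod_{\boldsymbol{i}\in[\boldsymbol{d},\boldsymbol{e}]}I_{\boldsymbol{h}}^{[\boldsymbol{i}]}$, itself a consequence of the same CRT with denominator. The main technical hurdle throughout is the bookkeeping of $p$-adic denominators in this CRT isomorphism, and verifying that the slack absorbed into $c^{[\boldsymbol{d},\boldsymbol{e}]}$ is uniform in $\boldsymbol{m}$.
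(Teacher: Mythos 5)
Your overall strategy---construct $s_{\boldsymbol{m}}^{[\boldsymbol{d},\boldsymbol{e}]}$ at each finite level, then verify projections, compatibility, and integrality---matches the paper's strategy (Lemma \ref{multivariable litfing prop} followed by a compatibility check). However, your explicit interpolation formula has a genuine error in the very first step: the Newton-type basis you have written does not interpolate the finite-difference data correctly, so condition (i) fails.

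The problem is the mismatch between \emph{geometric} interpolation nodes and \emph{ordinary} finite differences. The quantity
\[
\prod_{l=d_{t}}^{j_{t}-1}\frac{[\gamma_{t}]^{p^{m_{t}}}-u_{t}^{lp^{m_{t}}}}{u_{t}^{j_{t}p^{m_{t}}}-u_{t}^{lp^{m_{t}}}},
\]
evaluated at $[\gamma_{t}]^{p^{m_{t}}}=u_{t}^{i_{t}p^{m_{t}}}$ with $i_t\ge j_t$, equals the \emph{Gaussian} binomial coefficient
$\binom{i_{t}-d_{t}}{j_{t}-d_{t}}_{q}$ with $q=u_{t}^{p^{m_{t}}}$, not the ordinary binomial $\binom{i_{t}-d_{t}}{j_{t}-d_{t}}$. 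Consequently, the sum $\sum_{\boldsymbol{j}}\kappa(N_{\boldsymbol{j}})\kappa(\Delta_{\boldsymbol{j},\boldsymbol{m}})$ does \emph{not} telescope back to $\kappa(\tilde{s}_{\boldsymbol{m}}^{[\boldsymbol{i}]})$. Concretely, take $k=1$, $\boldsymbol{d}=0$, $\boldsymbol{e}=2$, $q=u^{p^{m}}$, and reduce your formula modulo $\Omega_{m}^{[2]}(\gamma)$ (i.e., evaluate at $[\gamma]^{p^{m}}\mapsto q^{2}$). Since $\tfrac{q^{2}-1}{q-1}=q+1$, you get
\[
\tilde{s}_{m}^{[0]}+(q+1)\bigl(\tilde{s}_{m}^{[1]}-\tilde{s}_{m}^{[0]}\bigr)+\bigl(\tilde{s}_{m}^{[2]}-2\tilde{s}_{m}^{[1]}+\tilde{s}_{m}^{[0]}\bigr)
=\tilde{s}_{m}^{[2]}+(q-1)\bigl(\tilde{s}_{m}^{[1]}-\tilde{s}_{m}^{[0]}\bigr),
\]
which differs from $\tilde{s}_{m}^{[2]}$ by a term that is not generically in $\bigl(\Omega_{m}^{[2]}\bigr)$. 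Since $\ord_{p}(q-1)\approx m$ while $\ord_{p}(\tilde{s}_{m}^{[1]}-\tilde{s}_{m}^{[0]})\gtrsim -m(h-1)$, the defect is of order $p^{-m(h-2)}$, which does not vanish and does not become negligible for $h>2$; it simply ruins the projection property.

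The paper's Lemma \ref{onevariable litfing prop} resolves exactly this tension with a change of variables that you have omitted: it forms the auxiliary polynomial $s(X,Y)=\sum_{i=0}^{e-d}\binom{Y-d}{i}\theta_{i+d}(X)$ (with $\theta$ your $\Delta$, and the telescoping identity \eqref{generalization of prop 1 on Banach1new eq1} then giving $s(X,i)=\tilde{s}_{m}^{[i]}$) and substitutes
\[
Y=\frac{\log(1+X)}{\log u}.
\]
Because the $p$-adic logarithm kills $p$-power roots of unity, at $X=u^{i}\epsilon-1$ this substitution sends $Y$ exactly to the integer $i$, turning the geometric nodes into arithmetic ones for which the ordinary binomial/finite-difference interpolation is exact. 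The $p$-adic estimate then comes from Proposition \ref{supectral norm gauss norm} applied to $s(X,Y/p^{m+2})$ rather than from bounding Lagrange denominators directly, which is also why the term $2(e_i-d_i)$ in $c^{[d_i,e_i]}$ arises: it is the $\mathbf{A}$-valued loss from the rescaling $Y\mapsto Y/p^{m+2}$, not from a $u^{-\binom{j-d}{2}p^{m}}$ unit as you suggest. To repair your argument you would either need to replace your ordinary finite differences by $q$-analogue finite differences matching the geometric nodes (which then fails to match the hypothesis in the statement), or adopt the $\log$-substitution that the paper uses.
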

Let $\mu\in \mathcal{D}_{\boldsymbol{h}}^{[\boldsymbol{d},\boldsymbol{e}]}(\Gamma,\K)$. We define
\begin{multline*}
\begin{split}
v_{\boldsymbol{h}}^{[\boldsymbol{d},\boldsymbol{e}]}(\mu)=\inf_{\substack{\boldsymbol{a}\in \Gamma,\boldsymbol{m}\in \mathbb{Z}_{\geq 0}^{k}\\ \boldsymbol{i}\in [\boldsymbol{d},\boldsymbol{e}]}}\Biggl\{\ord_{p}\left(\displaystyle{\int_{\boldsymbol{a}\Gamma^{p^{\boldsymbol{m}}}}}\prod_{j=1}^{k}\left((\chi_{j}(x_{j})-\chi_{j}(a_{j}))^{i_{j}-d_{j}}\chi_{j}(x_{j})^{d_{j}}\right)d\mu\right)\\
+\langle \boldsymbol{h}-(\boldsymbol{i}-\boldsymbol{d}),\boldsymbol{m}\rangle_{k}\Bigg\}>-\infty,
\end{split}
\end{multline*}
where $\boldsymbol{a}\Gamma^{p^{\boldsymbol{m}}}=\prod_{j=1}^{k}a_{j}\Gamma_{j}^{p^{m_{j}}}$. Let $\mathfrak{X}_{\mathcal{O}_{\K}[[\Gamma]]}^{[\boldsymbol{d},\boldsymbol{e}]}$ be the set of $k$-variable arithmetic specializations of weight $\boldsymbol{w}_{\kappa}\in [\boldsymbol{d},\boldsymbol{e}]$ over $\mathcal{O}_{\K}[[\Gamma]]$. For each $\kappa\in \mathfrak{X}_{\mathcal{O}_{\K}[[\Gamma]]}^{[\boldsymbol{d},\boldsymbol{e}]}$, we denote by $\boldsymbol{\phi}_{\kappa}=(\phi_{\kappa,1},\ldots, \phi_{\kappa,k})$ the finite character of $\kappa$ and put $\boldsymbol{m}_{\kappa}=(m_{\kappa,1},\ldots, m_{\kappa,k})$, where $m_{\kappa,i}$ is the smallest integer $m$ such that $\phi_{\kappa,i}$ factors through $\Gamma_{i}/(\Gamma_{i})^{p^m}$ with $1\leq i\leq k$.
The following theorem is a multi-variable variant of Proposition \ref{prop2}.
\begin{thmA}[Theorem \ref{multi-variable results on admissible distributions}]\label{multi variabl admissible intro}
We have a unique $\mathcal{O}_{\K}[[\Gamma]]\otimes_{\mathcal{O}_{\K}}\K$-module isomorphism 
\begin{equation}\label{equation:multi-variable results on admissible distributions intro}
I_{\boldsymbol{h}}^{[\boldsymbol{d},\boldsymbol{e}]} \stackrel{\sim}{\rightarrow} \mathcal{D}^{[\boldsymbol{d},\boldsymbol{e}]}_{\boldsymbol{h}} (\Gamma, \K)
\end{equation}
such that the image $\mu_{s^{[\boldsymbol{d},\boldsymbol{e}]}}\in \mathcal{D}^{[\boldsymbol{d},\boldsymbol{e}]}_{\boldsymbol{h}} (\Gamma,\K)$ of each element $s^{[\boldsymbol{d},\boldsymbol{e}]}=(s_{\boldsymbol{m}}^{[\boldsymbol{d},\boldsymbol{e}]})_{\boldsymbol{m}\in \mathbb{Z}_{\geq 0}^{k}} \in I_{\boldsymbol{h}}^{[\boldsymbol{d},\boldsymbol{e}]}$ is characterized by the interpolation property  
\begin{align}
\kappa(\tilde{s}_{\boldsymbol{m}_{\kappa}}^{[\boldsymbol{d},\boldsymbol{e}]})= \int_{\Gamma} \prod_{j=1}^{k}(\chi_{j}^{w_{\kappa,j}}\phi_{\kappa,j})(x_{j})d\mu_{s^{[\boldsymbol{d},\boldsymbol{e}]}} \end{align}
 for every $\kappa\in \mathfrak{X}_{\mathcal{O}_{\K}[[\Gamma]]}^{[\boldsymbol{d},\boldsymbol{e}]}$, where $\tilde{s}_{\boldsymbol{m}_{\kappa}}^{[\boldsymbol{d},\boldsymbol{e}]}$ is a lift of $s_{\boldsymbol{m}_{\kappa}}^{[\boldsymbol{d},\boldsymbol{e}]}$. In addition, via the above isomorphism, we have 
\begin{equation}\label{multi variabl admissible intro constant c}
\{\mu\in \mathcal{D}^{[\boldsymbol{d},\boldsymbol{e}]}_{\boldsymbol{h}} (\Gamma ,\K)\vert v_{\boldsymbol{h}}^{[\boldsymbol{d},\boldsymbol{e}]}(\mu)\geq c^{[\boldsymbol{d},\boldsymbol{e}]}\}\subset \left(I_{\boldsymbol{h}}^{[\boldsymbol{d},\boldsymbol{e}]}\right)^{0}
\subset \{\mu\in \mathcal{D}^{[\boldsymbol{d},\boldsymbol{e}]}_{\boldsymbol{h}} (\Gamma ,\K)\vert v_{\boldsymbol{h}}^{[\boldsymbol{d},\boldsymbol{e}]}(\mu)\geq 0\},
\end{equation}
where $c^{[\boldsymbol{d},\boldsymbol{e}]}=\sum_{i=1}^{k}c^{[d_{i},e_{i}]}$ is the constant defined in \eqref{intro multivariable cde constant}.
\end{thmA}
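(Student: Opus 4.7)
The plan is to adapt the one-variable construction underlying Proposition \ref{prop2} (the technical form of which is Proposition \ref{onevariable isom Ih from Dh for banach}) to $k$ variables by building both directions of \eqref{equation:multi-variable results on admissible distributions intro} explicitly on a natural $\mathcal{O}_{\K}$-basis of $C^{[\boldsymbol{d},\boldsymbol{e}]}(\Gamma,\mathcal{O}_{\K})$ and then tracking the growth estimates coordinate-by-coordinate. The key structural observation is that, for each $\boldsymbol{m}\in\mathbb{Z}_{\geq 0}^{k}$, the $\mathcal{O}_{\K}$-module $C^{[\boldsymbol{d},\boldsymbol{e}]}(\Gamma,\mathcal{O}_{\K})$ has a basis consisting of the tensor-product functions
\[
\mathbf{1}_{\boldsymbol{a}\Gamma^{p^{\boldsymbol{m}}}}(\boldsymbol{x})\prod_{j=1}^{k}(\chi_{j}(x_{j})-\chi_{j}(a_{j}))^{i_{j}-d_{j}}\chi_{j}(x_{j})^{d_{j}},\qquad \boldsymbol{a}\in\Gamma/\Gamma^{p^{\boldsymbol{m}}},\ \boldsymbol{i}\in[\boldsymbol{d},\boldsymbol{e}],
\]
so that pairings with these basis elements decouple into products of the one-variable pairings already handled in the proof of Proposition \ref{onevariable isom Ih from Dh for banach}.

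From $s^{[\boldsymbol{d},\boldsymbol{e}]}=(s_{\boldsymbol{m}}^{[\boldsymbol{d},\boldsymbol{e}]})\in I_{\boldsymbol{h}}^{[\boldsymbol{d},\boldsymbol{e}]}$, I would define $\mu_{s^{[\boldsymbol{d},\boldsymbol{e}]}}$ by prescribing its value on each basis element as a weighted evaluation of any lift $\tilde{s}_{\boldsymbol{m}}^{[\boldsymbol{d},\boldsymbol{e}]}$ at the character of $\boldsymbol{a}\Gamma^{p^{\boldsymbol{m}}}$ twisted by $\prod_{j}(\chi_{j}-\chi_{j}(a_{j}))^{i_{j}-d_{j}}\chi_{j}^{d_{j}}$. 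Well-definedness (independence of lift, compatibility between levels $\boldsymbol{m}\to\boldsymbol{m}+\boldsymbol{1}$) comes from the projective-system condition modulo $(\Omega_{\boldsymbol{m}}^{[\boldsymbol{d},\boldsymbol{e}]})$, while the admissibility bound $v_{\boldsymbol{h}}^{[\boldsymbol{d},\boldsymbol{e}]}(\mu_{s^{[\boldsymbol{d},\boldsymbol{e}]}})>-\infty$ translates directly from the growth condition defining $I_{\boldsymbol{h}}^{[\boldsymbol{d},\boldsymbol{e}]}$. For the inverse map, given an admissible $\mu$, I would define $s_{\boldsymbol{m}}^{[\boldsymbol{d},\boldsymbol{e}]}$ by prescribing $\kappa(\tilde{s}_{\boldsymbol{m}}^{[\boldsymbol{d},\boldsymbol{e}]})=\int_{\Gamma}\prod_{j}\chi_{j}^{w_{\kappa,j}}\phi_{\kappa,j}\,d\mu$ for every arithmetic specialization $\kappa$ of level $\leq\boldsymbol{m}$ and weight $\boldsymbol{w}_{\kappa}\in[\boldsymbol{d},\boldsymbol{e}]$. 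A Chinese Remainder argument applied to the tensor-factored quotient $\mathcal{O}_{\K}[[\Gamma]]/(\Omega_{\boldsymbol{m}}^{[\boldsymbol{d},\boldsymbol{e}]})\otimes\K$ produces a unique element with these prescribed specializations, and uniqueness of the whole isomorphism then follows from Theorem \ref{intro main theroem 1} via the non-canonical identification of $I_{\boldsymbol{h}}^{[\boldsymbol{d},\boldsymbol{e}]}$ with $J_{\boldsymbol{h}}^{[\boldsymbol{d},\boldsymbol{e}]}$.

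The interpolation formula then holds essentially by construction. The principal technical obstacle is the integral-structure assertion \eqref{multi variabl admissible intro constant c}: producing the sharp constant $c^{[\boldsymbol{d},\boldsymbol{e}]}=\sum_{i=1}^{k}c^{[d_{i},e_{i}]}$. The upper inclusion $(I_{\boldsymbol{h}}^{[\boldsymbol{d},\boldsymbol{e}]})^{0}\subset\{v_{\boldsymbol{h}}^{[\boldsymbol{d},\boldsymbol{e}]}\geq 0\}$ is immediate from the integral evaluation of the pairings on the tensor-product basis, but the lower inclusion requires controlling the denominators of the inverse change-of-basis matrix between the monomial basis $\{\prod_{j}\chi_{j}^{i_{j}}\}$ and the divided-difference basis $\{\prod_{j}(\chi_{j}-\chi_{j}(a_{j}))^{i_{j}-d_{j}}\chi_{j}^{d_{j}}\}$. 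That matrix factors as a tensor product of one-variable Vandermonde-type matrices, whose $p$-adic valuations of their inverses account for the $\operatorname{ord}_{p}((e_{i}-d_{i})!)+2(e_{i}-d_{i})$ contributions, while the remaining $\lfloor(e_{i}-d_{i}+1)/(p-1)\rfloor+1$ term arises from the $p$-adic valuation of $\Omega_{m_{i}}^{[d_{i},e_{i}]}(\gamma_{i})$ inside the relevant local ring. Because the bases decouple, these contributions simply sum over $i$, and the verification reduces to the one-variable case which will be established as Proposition \ref{onevariable isom Ih from Dh for banach}.
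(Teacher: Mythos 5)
Your plan is a genuinely different route from the paper's.  The paper proves Theorem~\ref{multi-variable results on admissible distributions} by induction on $k$: for $k\geq 2$ it combines the isometric adjunction isomorphism $\mathcal{D}_{\boldsymbol{h}}^{[\boldsymbol{d},\boldsymbol{e}]}(\Gamma,M)\simeq\mathcal{D}_{h_k}^{[d_k,e_k]}\bigl(\Gamma_k,\mathcal{D}_{\boldsymbol{h}'}^{[\boldsymbol{d}',\boldsymbol{e}']}(\Gamma',M)\bigr)$ of Proposition~\ref{for induction admisible admissible} with the parallel structure $J_{\boldsymbol{h}}^{[\boldsymbol{d},\boldsymbol{e}]}(M)^0\simeq J_{h_k}^{[d_k,e_k]}\bigl(J_{\boldsymbol{h}'}^{[\boldsymbol{d}',\boldsymbol{e}']}(M)\bigr)^0$ of Proposition~\ref{multi J induction pro}, then passes to the $I$-side by the non-canonical isomorphism~\eqref{noncanonical between Ioldysmbolhde and J}; the constant $c^{[\boldsymbol{d},\boldsymbol{e}]}=\sum_i c^{[d_i,e_i]}$ then telescopes automatically from the chain of inclusions, with only the $k=1$ estimate (Proposition~\ref{onevariable isom Ih from Dh for banach}, applied over a Banach coefficient ring) ever being computed.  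You instead propose a direct construction on a tensor-product basis of $C^{[\boldsymbol{d},\boldsymbol{e}]}(\Gamma,\mathcal{O}_{\K})$, an explicit CRT-style inverse, and a denominator analysis of the change of basis between $\{\prod_j\chi_j^{i_j}\}$ and $\{\prod_j(\chi_j-\chi_j(a_j))^{i_j-d_j}\chi_j^{d_j}\}$.  That route can be made to work — it is essentially the content of the paper's Lemma~\ref{multivariable litfing prop}, which the authors do prove (and use for Proposition~\ref{multivariable iwasawa I(ii) sufficient}, i.e.\ the statement corresponding to Proposition~C, not for this theorem) — but the claim that the contributions ``simply sum over $i$'' deserves scrutiny: the weight-interpolation step uses the $p$-adic logarithm and a Weierstrass division, and in $k$ variables the estimate for the $j$-th coordinate is made over a Banach coefficient ring built from the remaining coordinates, so the additivity of the constant is not a formal tensor-product computation but itself requires an induction (precisely the content of Lemma~\ref{multivariable litfing prop}).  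The paper's choice to prove the theorem via $J(J)$ makes the additivity of $c^{[\boldsymbol{d},\boldsymbol{e}]}$ a corollary of the one-variable result over Banach coefficients, which is the cleaner bookkeeping; your approach buys a more explicit description of the isomorphism but you would still need to carry out the same inductive estimate to nail the lower inclusion of~\eqref{multi variabl admissible intro constant c}, so the ``reduces to the one-variable case'' sentence is currently a gap rather than a finished argument.
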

{In \S\ref{sc:ordinary deformation}, we generalize the results of Theorem \ref{intro main theroem 1}, Theorem \ref{intro main theorem 2}, Proposition \ref{multivariable Iii necesarry sufficient condition} and Theorem \ref{multi variabl admissible intro} to results on deformation spaces. We prove the generalizations of Theorem \ref{intro main theroem 1}, Theorem \ref{intro main theorem 2}, Proposition \ref{multivariable Iii necesarry sufficient condition} and Theorem \ref{multi variabl admissible intro} on deformation spaces in Theorem \ref{main theorem 1 for deformation space}, Theorem \ref{deformtion Jhtheorem multivariable}, Proposition \ref{deformation Ih[d,e] lift from Ihi,i} and Theorem \ref{multi-variable results on admissible distributions deformation ver} respectively.}

As mentioned above, our results are multi-variable generalizations of the results of Amice--V\'{e}lu \cite{amicevelu} and Vishik \cite{vishik1976}. However, even if we restrict our results to the one-variable case, our results still have several advantages compared to the 
results obtained in \cite{amicevelu} and \cite{vishik1976}. 
In addition to Remark \ref{remark:H_h}, we explain below a few more advantages of our results 
which are not proved in the classical results obtained in \cite{amicevelu} and \cite{vishik1976}. 
\begin{rem}\label{remark:H_h2}
\begin{enumerate}
\item  
From the Iwasawa theoretical viewpoint, it is important to study the integral structures of given modules. 
Let $\HH_{\boldsymbol{h}\slash \K}^{0}=\{f\in \HH_{\boldsymbol{h}\slash \K}\vert v_{\HH_{\boldsymbol{h}}}(f)\geq 0\}$. 
We estimated the difference of the integral lattice $\left(J_{\boldsymbol{h}}^{[\boldsymbol{d},\boldsymbol{e}]}\right)^{0}$ of 
$J_{\boldsymbol{h}}^{[\boldsymbol{d},\boldsymbol{e}]}$ and the integral lattice $\HH_{\boldsymbol{h}\slash \K}^{0}$ of $\HH_{\boldsymbol{h}\slash \K}$ 
in the isomorphism $J_{\boldsymbol{h}}^{[\boldsymbol{d},\boldsymbol{e}]} \overset{\sim}{\longrightarrow} \HH_{\boldsymbol{h}/\K}$ of Theorem \ref{intro main theorem 2}. 
In the classical one-variable setting, Amice--V\'{e}lu \cite[Proposition I\hspace{-1.2pt}V. 1]{amicevelu} did not really study 
such an error between the integral structures of the both sides of the isomorphism. 
Hence our estimate \eqref{error of Ih and integral Hh} on the difference of the integral structures in the isomorphism $J_{\boldsymbol{h}}^{[\boldsymbol{d},\boldsymbol{e}]} \overset{\sim}{\longrightarrow} \HH_{\boldsymbol{h}/\K}$
 gives a new and finer result even if we restrict ourselves to the one-variable situation.  
%
\item 
Let $s^{[i]}=(s^{[i]}_{m})_{m\in \mathbb{Z}_{\geq 0}}\in I_{h}^{[i]}$ and let $\tilde{s}^{[i]}_{m}\in \mathcal{O}_{\K}[[\Gamma]]\otimes_{\mathcal{O}_{\K}}\K$ be a lift of $s^{[i]}_{m}$ for each $m\in \mathbb{Z}_{\geq 0}$ and $i\in [d,e]$, where $I_{h}^{[i]}$ is the 
module defined in \eqref{intro powe Ihd1d2+}. We assume that there exists a non-negative integer $n$ which satisfies \eqref{intro onevariabble admissible I} in Proposition \ref{prop1}. Then, by the classical result of Proposition \ref{prop1}, we see that there exists a unique element $s^{[d,e]}\in I_{h}^{[d,e]}$ such that the image of $s^{[d,e]}$ by the natural projection $I_{h}^{[d,e]}\rightarrow I_{h}^{[i]}$ is $s^{[i]}$ for each $d\leq i\leq e$. In this case also, our result gives an 
integral refinement of this classical result. 
In fact, when we restrict our result of Proposition \ref{multivariable Iii necesarry sufficient condition} to the classical 
one-vaiable setting, we can prove that $s^{[d,e]}$ is in $(I_{h}^{[d,e]})^{0}\otimes_{\mathcal{O}_{\K}}p^{-c^{[d,e]}-n}\mathcal{O}_{\K}$ 
provided that  $s^{[i]}=(s^{[i]}_{m})_{m\in \mathbb{Z}_{\geq 0}}$ is contained in the integral part $(I_{h}^{[i]})^0$ for every $i\in [d,e]$, 
where $c^{[d,e]}$ is the constant in \eqref{intro multivariable cde constant}. 

\item 
 We also estimate the error between the integral structure $\left(I_{\boldsymbol{h}}^{[\boldsymbol{d},\boldsymbol{e}]}\right)^{0}$ and 
the integral structure $\mathcal{D}_{\boldsymbol{h}\slash \K}^{[\boldsymbol{d},\boldsymbol{e}]}(\Gamma,\K)^{0}$ in the isomorphism $I_{\boldsymbol{h}}^{[\boldsymbol{d},\boldsymbol{e}]}\simeq \mathcal{D}_{\boldsymbol{h}
\slash \K}^{[\boldsymbol{d},\boldsymbol{e}]}(\Gamma,\K)$ in Theorem \ref{multi variabl admissible intro}, where $\mathcal{D}_{\boldsymbol{h}\slash \K}^{[\boldsymbol{d},\boldsymbol{e}]}(\Gamma,\K)^{0}=\{\mu\in \mathcal{D}_{\boldsymbol{h}\slash \K}^{[\boldsymbol{d},\boldsymbol{e}]}(\Gamma,\K)\vert v_{\boldsymbol{h}}^{[\boldsymbol{d},\boldsymbol{e}]}\geq 0\}$. 
In this case also, our result restricted to the the classical one-variable setting gives 
a new and finer result compared to the classical result of Vishik \cite[2.3. Theorem]{vishik1976}. 
%
\end{enumerate}
\end{rem}

As an application of our theory developed in this paper, 
we construct a two-variable $p$-adic Rankin Selberg $L$-series in \S\ref{sc:application}.
To state the application, we recall some notation of Rankin Selberg $L$-series and Hida families. We denote by $S_{l}(N,\psi)$ the space of cusp forms of weight $l\in \mathbb{Z}_{\geq 1}$, level $N$ and character $\psi$, where $N\in \mathbb{Z}_{\geq 1}$ and $\psi$ is a Dirichlet character modulo $N$. For each $f\in S_{l_{1}}(N,\psi)$ and $g\in S_{l_{2}}(N,\xi)$, we define the Rankin Selberg $L$-sereis $\mathscr{D}_{N}(s,f,g)$ to be
$$\mathscr{D}_{N}(s,f,g)=L_{N}(2s+2-l_{1}-l_{2},\psi\xi)\sum_{n=1}^{+\infty}a_{n}(f)a_{n}(g)n^{-s},\ \mathrm{Re}(s)>\frac{l_{1}+l_{2}}{2},$$
where $a_{n}(f)$ and $a_{n}(g)$ are the $n$-th Fourier coefficients of $f$ of $g$ respectively and $L_{N}(s,\psi\xi)=\sum_{n=1}^{+\infty}\psi\xi(n)n^{-s}$. Assume that $l_{1}>l_{2}$. It is known that $\mathscr{D}_{N}(s,f,g)$ has a holomorphic continuation to the whole complex plane. Further, when $f$ is a primitive form whose conductor divides $N$ and the Fourier coefficients of $g$ are algebraic, Shimura \cite{Shimura1976} and \cite{Shimura1977} proved that $\frac{\mathscr{D}_{N}(m,f,g)}{\pi^{2m-l_{2}+1}\langle f,f\rangle_{l_{1},N}}$ is\ algebraic
for each integer $m$ satisfying $l_{2}\leq m<l_{1}$. Here $\langle f,f\rangle_{l_{1},N}$ is defined by 
$$\langle f,f\rangle_{l_{1},N}=\int_{\Gamma_{0}(N)\backslash \mathfrak{H}}\vert f(z)\vert^{2}y^{l_{1}}\frac{dxdy}{y^{2}},$$
where $\mathfrak{H}$ is the upper half plane and
 $\Gamma_{0}(N)=\left\{ \left. \begin{pmatrix}a&b\\ c&d\end{pmatrix}\in SL_{2}(\mathbb{Z}) \  \right\vert \ c\equiv 0\ \mathrm{mod}\ N\right  \}$. The values $\mathscr{D}_{N}(m,f,g)$ with $l_{2}\leq m<l_{1}$ are called the critical values of $\mathscr{D}_{N}(s,f,g)$. For each normalized Hecke eigenforms $f\in S_{l_{1}}(N,\psi)$ and $g\in S_{l_{2}}(N,\xi)$, we put
$$\Lambda(s,f,g)=\Gamma_{\mathbb{C}}\left(s-l_{2}+1\right)\Gamma_{\mathbb{C}}\left(s\right)\mathscr{D}_{M}(s,f^{0},g^{0})$$
where $f^{0}$ and $g^{0}$ are primtive forms attached to $f$ and $g$ respectively, $M$ is the least common multiple of the conductor of $f$ and the conductor of $g$ and $\Gamma_{\mathbb{C}}(s)=2(2\pi)^{-s}\Gamma(s)$.
 
We assume that $p\geq 5$. Let $\K$ be a finite extension of $\mathbb{Q}_{p}$ and $\omega$ the Teichm\"{u}ller character modulo $p$. Let $N$ be a positive integer which is prime to $p$ and $\xi$ a Dirichlet character modulo $Np$. We say that a power series $G=\sum_{n=1}^{+\infty}a_{n}(G)q^{n}\in \mathcal{O}_{\K}[[\Gamma_{2}]][[q]]$ is an $\mathcal{O}_{\K}[[\Gamma_{2}]]$-adic Hida family of tame level $N$ and character $\xi$ if the specialization $\kappa(G)=\sum_{n=1}^{+\infty}\kappa(a_{n}(G))q^{n}$ is a $q$-expansion of a normalized cuspidal Hecke eigenform of weight $w_{\kappa}$, level $Np^{m_{\kappa}+1}$ and character $\xi\phi_{\kappa}\omega^{-w_{\kappa}}$ which is ordinary at $p$ for each $\kappa\in \mathfrak{X}_{\mathcal{O}_{\K}[[\Gamma_{2}]]}$ such that $w_{\kappa}\geq 2$. 

As an application of our theorems, we have the following two-variable $p$-adic Rankin Selberg $L$-series. 
\begin{thmA}[Theorem \ref{two variable rankin selberg l series of hida family}]\label{intro two variable p-aic}
Let $f\in S_{k}(p^{m(f)},\psi;\K)$ with $k,m(f)\in \mathbb{Z}_{\geq 1}$ be a normalized Hecke eigenform, and let $G$ be an $\mathcal{O}_{\K}[[\Gamma_{2}]]$-adic Hida family of tame level $1$ and character $\xi$. Here, $\psi$ and $\xi$ are Dirichlet characters modulo $p^{m(f)}$ and $p$ respectively. Put $\boldsymbol{h}=(2\alpha,\alpha)$ with $\alpha=\ord_{p}(a_{p}(f))$, $\boldsymbol{d}=(0,2)$ and $\boldsymbol{e}=(k-3,k-1)$. We assume the following conditions:
\begin{enumerate}
\item The root number of $f^{0}$ and Fourier coefficients of $f$ and $f^{0}$ are contained in $\K$, where $f^{0}$ is the primitive form associated with $f$.
\item We have $k>\lfloor 2\alpha\rfloor+\lfloor \alpha\rfloor +2$.
\end{enumerate} 
Then, there exists a unique element $\mu_{(f,G)}\in \mathcal{D}_{\boldsymbol{h}}^{[\boldsymbol{d},\boldsymbol{e}]}(\Gamma_{1}\times \Gamma_{2},\K)$ which satisfies the 
following interpolation: 
\small 
\begin{multline*}
\int_{\Gamma_{1}\times \Gamma_{2}}\kappa\vert_{\Gamma_{1}\times \Gamma_{2}}\mu_{(f,G)}= \sqrt{-1}^{2w_{\kappa,1}+w_{\kappa,2} }
G(\phi_{\kappa,1})G(\omega^{-w_{\kappa,2}}\xi\phi_{\kappa,1}\phi_{\kappa,2})\\
\times E_{p,\phi_{\kappa,1}}(w_{\kappa,1}+w_{\kappa,2},f,\kappa\vert_{\mathcal{O}_{\K}[[\Gamma_{2}]]}(G))\frac{\Lambda\left(w_{\kappa,1}+w_{\kappa,2},f,
 \left(\kappa\vert_{\mathcal{O}_{\K}[[\Gamma_{2}]]}(G)\otimes\phi_{\kappa,1}\right)^{\rho}\right)}{\langle f^{0},f^{0}\rangle_{k,c_{f}}}
\end{multline*}
for every $\kappa\in \mathfrak{X}_{\mathcal{O}_{\K}[[\Gamma]]}^{[\boldsymbol{d},\boldsymbol{e}]}$ such that $w_{\kappa,1}+w_{\kappa,2}<k$ where $G(\phi_{\kappa,1})$ and $G(\omega^{-w_{\kappa,2}}\xi\phi_{\kappa,1}\phi_{\kappa,2})$ are Gauss sums of $\phi_{\kappa,1}$ and $\omega^{-w_{\kappa,2}}\xi\phi_{\kappa,1}\phi_{\kappa,2}$ respectively, $c_{f}$ is the conductor of $f$, $\rho$ is the complex conjugate and $\left(\kappa\vert_{\mathcal{O}_{\K}[[\Gamma_{2}]]}(G)\otimes\phi_{\kappa,1}\right)^{\rho}=\sum_{n=1}^{+\infty}\rho\left(\kappa\vert_{\mathcal{O}_{\K}[[\Gamma_{2}]]}(G)\phi_{\kappa,1}(n)\right)q^{n}$ and $E_{p,\phi_{\kappa,1}}(s,f,\kappa\vert_{\mathcal{O}_{\K}[[\Gamma_{2}]]}(G))$ is the $p$-th Euler factor which will be defined in \eqref{for comaptibilityof PC Euler factor}.
\end{thmA}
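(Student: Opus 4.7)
The plan is to apply our multi-variable machinery in the case $k=2$ with $\Gamma=\Gamma_{1}\times\Gamma_{2}$ to reduce the construction of $\mu_{(f,G)}$ to producing a compatible projective system. By Theorem \ref{multi variabl admissible intro} the target space $\mathcal{D}_{\boldsymbol{h}}^{[\boldsymbol{d},\boldsymbol{e}]}(\Gamma_{1}\times\Gamma_{2},\K)$ is canonically isomorphic to $I_{\boldsymbol{h}}^{[\boldsymbol{d},\boldsymbol{e}]}$, so it suffices to construct an element $s^{[\boldsymbol{d},\boldsymbol{e}]}\in I_{\boldsymbol{h}}^{[\boldsymbol{d},\boldsymbol{e}]}$ whose specialization at each $\kappa\in\mathfrak{X}_{\mathcal{O}_{\K}[[\Gamma]]}^{[\boldsymbol{d},\boldsymbol{e}]}$ with $w_{\kappa,1}+w_{\kappa,2}<k$ yields the prescribed product of Gauss sums, Panchishkin Euler factor $E_{p,\phi_{\kappa,1}}$, and Rankin--Selberg critical value. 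By Proposition \ref{multivariable Iii necesarry sufficient condition}, this is further reduced to: (i) constructing, for every $\boldsymbol{i}\in[\boldsymbol{d},\boldsymbol{e}]$, a projective system $s^{[\boldsymbol{i}]}\in I_{\boldsymbol{h}}^{[\boldsymbol{i}]}$ interpolating the values at $w_{\kappa}=\boldsymbol{i}$; and (ii) verifying the Kummer-type congruence \eqref{intro onevariabble admissible I} among these systems. Uniqueness of $\mu_{(f,G)}$ then follows from Theorem \ref{intro main theroem 1}, since assumption (2) guarantees $\boldsymbol{e}-\boldsymbol{d}\geq\lfloor\boldsymbol{h}\rfloor$ and the interpolation points $w_{\kappa}\in[\boldsymbol{d},\boldsymbol{d}+\lfloor\boldsymbol{h}\rfloor]$ suffice to force coincidence.

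To carry out step (i), I would combine two classical inputs. Since $f$ has slope $\alpha>0$ in the cyclotomic direction, the Amice--V\'{e}lu--Vishik construction produces a cyclotomic admissible modular symbol for $f$ of growth $2\alpha$. Since $G$ is ordinary, Hida's control theorem together with Kitagawa's construction gives a $\Gamma_{2}$-adic measure on the space of modular symbols, recovering the classical symbol of $\kappa_{2}(G)$ at each arithmetic $\kappa_{2}$. Shimura's Petersson-type pairing assembles these into a two-variable object, and fixing $w_{\kappa}=\boldsymbol{i}$ at an integer point produces $s_{\boldsymbol{m}}^{[\boldsymbol{i}]}\in \mathcal{O}_{\K}[[\Gamma]]/(\Omega_{\boldsymbol{m}}^{[\boldsymbol{i}]})\otimes_{\mathcal{O}_{\K}}\K$. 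The growth estimate $p^{\langle\boldsymbol{h},\boldsymbol{m}\rangle_{2}}s_{\boldsymbol{m}}^{[\boldsymbol{i}]}\in \mathcal{O}_{\K}[[\Gamma]]/(\Omega_{\boldsymbol{m}}^{[\boldsymbol{i}]})\otimes_{\mathcal{O}_{\K}}p^{-n}\mathcal{O}_{\K}$ needed to place $s^{[\boldsymbol{i}]}$ inside $I_{\boldsymbol{h}}^{[\boldsymbol{i}]}$ is a slope bookkeeping: the $2\alpha m_{1}$ in the first coordinate comes from the denominator of the admissible modular symbol of $f$ under cyclotomic twist, while the $\alpha m_{2}$ in the second coordinate comes from the inverse of $E_{p,\phi_{\kappa,1}}$, which involves $\alpha_{p}(f)^{-m_{\kappa,1}}$ of slope $\alpha\, m_{\kappa,1}$.

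The main obstacle is step (ii): verifying the congruences \eqref{intro onevariabble admissible I} uniformly across $\boldsymbol{i}\in[\boldsymbol{d},\boldsymbol{e}]$. These cannot be derived corner-by-corner; they must come from a single $\Lambda$-adic Rankin--Selberg object interpolating all weights simultaneously, so that the binomial alternating sums in \eqref{intro onevariabble admissible I} become finite differences of this universal object in the two weight variables. Concretely, I expect to package the construction in step (i) as a single modular symbol valued in a $p$-adic nearly-holomorphic coefficient sheaf over the two-variable weight space, and then deduce the congruences from its regularity in both variables, combined with the Hida-integrality in the $\Gamma_{2}$-direction and the Amice--V\'{e}lu growth bound in the $\Gamma_{1}$-direction. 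The numerical hypothesis $k>\lfloor 2\alpha\rfloor+\lfloor\alpha\rfloor+2$ is precisely what is needed for the range $[\boldsymbol{d},\boldsymbol{e}]$ to contain enough integer points for the universal object to be uniquely determined by its values, and matches the admissibility dimensions required by Proposition \ref{multivariable Iii necesarry sufficient condition} together with the integrality estimate \eqref{multi variabl admissible intro constant c} controlling the denominator $p^{-c^{[\boldsymbol{d},\boldsymbol{e}]}-n}$ that governs the final application of Theorem \ref{multi variabl admissible intro}.
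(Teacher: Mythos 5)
Your architectural skeleton is the right one and matches the paper: identify $\mathcal{D}_{\boldsymbol{h}}^{[\boldsymbol{d},\boldsymbol{e}]}$ with $I_{\boldsymbol{h}}^{[\boldsymbol{d},\boldsymbol{e}]}$ via Theorem~\ref{multi variabl admissible intro}, reduce the construction to Proposition~\ref{multivariable Iii necesarry sufficient condition}, and obtain uniqueness from Theorem~\ref{intro main theroem 1} by observing that $\boldsymbol{d}+\lfloor\boldsymbol{h}\rfloor=(\lfloor2\alpha\rfloor,\lfloor\alpha\rfloor+2)$ lies inside the interpolation range thanks to hypothesis~(2). However, there is a concrete gap in step~(i): you propose to build $s^{[\boldsymbol{i}]}\in I_{\boldsymbol{h}}^{[\boldsymbol{i}]}$ for \emph{every} $\boldsymbol{i}\in[\boldsymbol{d},\boldsymbol{e}]=[(0,2),(k-3,k-1)]$ by ``interpolating the values at $w_{\kappa}=\boldsymbol{i}$'', but the region where critical Rankin--Selberg values exist is the triangle $i_{1}+i_{2}<k$, and most of $[\boldsymbol{d},\boldsymbol{e}]$ (e.g.\ the corner $(k-3,k-1)$, which has $i_{1}+i_{2}=2k-4$) lies outside it. The paper resolves this with a structural step your sketch omits: it constructs the projective systems only on the sub-rectangle $[\boldsymbol{d},\boldsymbol{e}_{\alpha}]$ with $\boldsymbol{e}_{\alpha}=(\lfloor2\alpha\rfloor,\lfloor\alpha\rfloor+2)$, which by hypothesis~(2) is entirely critical and has $\boldsymbol{e}_{\alpha}-\boldsymbol{d}=\lfloor\boldsymbol{h}\rfloor$, and then invokes the isomorphism $I_{\boldsymbol{h}}^{[\boldsymbol{d},\boldsymbol{e}]}\to I_{\boldsymbol{h}}^{[\boldsymbol{d},\boldsymbol{e}_{\alpha}]}$ of \eqref{projection I is isom if e-dgeq h} to extend; verifying the interpolation on the full triangle then requires the further two-step stretching argument in Lemma~\ref{proposition of interpolation of L(f,G)}. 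Without this reduction-and-extension, your plan has nothing coherent to construct at the non-critical weights and Proposition~\ref{multivariable Iii necesarry sufficient condition} cannot be applied.

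Secondarily, where you do have a concrete proposal it diverges from the paper's route. You would realize $s^{[\boldsymbol{i}]}$ by pairing a $\Gamma_{1}$-admissible modular symbol for $f$ against a Kitagawa-style $\Gamma_{2}$-adic modular symbol for the Hida family; the paper instead builds, for each $\boldsymbol{m}$ and $\boldsymbol{i}$, explicit nearly-holomorphic Eisenstein products $\Phi^{(\boldsymbol{i})}(a;\psi,G)$ (twisted by $\delta_{m}^{(r)}$-operators and multiplied by congruence-subsets of $G$), applies $T_{p}$, and pairs with $f$ through the bounded functional $l_{f,M}$ of \eqref{classical lf map}. Both strategies are in principle viable, but your handling of step~(ii) -- ``deduce the congruences from regularity in both variables'' -- is only a statement of intent, whereas the paper's admissibility estimate (Proposition~\ref{admissible condition of p-adic l}) is a genuine computation tracking $\ord_{p}$ of the alternating sums $\theta_{\boldsymbol{m}}^{(\boldsymbol{j})}$ through the Fourier coefficients of the Eisenstein series and the level $m_{f}(\boldsymbol{m})=\max\{2m_{1}+1,m_{2},m(f)\}$ appearing in $l_{f,M}^{(m)}$; this is precisely where the $\boldsymbol{h}=(2\alpha,\alpha)$ growth is established. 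Your heuristic attributing the $2\alpha m_{1}$ to the modular symbol of $f$ and the $\alpha m_{2}$ to $E_{p,\phi_{\kappa,1}}^{-1}$ is not how the exponents arise in the paper: the factor $2m_{1}+1$ comes from the level of the quadratic twist $\phi_{\kappa,1}^{2}$ inside the Eisenstein data, and the entire $a_{p}(f)^{-\max\{2m_{1}+1,m_{2},\cdot\}}$ power contributes both coordinates simultaneously.
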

Theorem \ref{intro two variable p-aic} is a special case of Threorem \ref{two variable rankin selberg l series of hida family}. 
\par 
\begin{rem}
In Theorem \ref{intro two variable p-aic}, we constructed a two-variable $p$-adic $L$-function which is 
a $\K$-valued admissible distribution of growth $(2\alpha,\alpha)$ with $\alpha\in \mathbb{Q}_{>0}$ as an application of 
our Proposition \ref{multivariable Iii necesarry sufficient condition} and Theorem \ref{multi variabl admissible intro}. 
The use of these main results is quite essential in this result and Theorem \ref{intro two variable p-aic} is essentially the first example 
of a multi-variable non-ordinary $p$-adic $L$-function. 
\par 
It is true that there exists a few examples of non-ordinary multi-variable $p$-adic $L$-functions. For example, a two-variable $p$-adic $L$-function associated 
to the cyclotomic deformation of a Coleman family of slope $\alpha >0$ was constructed by 
Panchishkin \cite{Panchishkin03} without using our main results. 
In fact, the $p$-adic $L$-function of  \cite{Panchishkin03} is constructed as an element of the space 
$\mathcal{D}_{\alpha}(\Gamma_{1},\mathcal{O}_\mathcal{K} \langle X \rangle\otimes_{\mathcal{O}_{\K}}\K)$ of admissible distributions of growth $\alpha$ 
over a Tate algebra $\mathcal{O}_\mathcal{K} \langle X \rangle\otimes_{\mathcal{O}_{\K}}\K$.   
The Coleman family is often defined over a Tate algebra $\mathcal{O}_\mathcal{K} \langle X \rangle\otimes_{\mathcal{O}_{\K}}\K$ according to the original construction of Coleman \cite{coleman 1997}, 
But it can be defined as a family over an Iwasawa algebra $\mathcal{O}_{\K}[[X]] \otimes_{\mathcal{O}_{\K}}\K$ $($see \cite{NOR2023}$)$. 
Thus the above-mentioned work of Panchishkin give an example of non-ordinary multi-variable $p$-adic $L$-function in 
$\mathcal{D}_{\alpha}(\Gamma_{1},\mathcal{O}_{\K}[[X]] \otimes_{\mathcal{O}_{\K}}\K) $. Note that we have a non-canonical isomorphism 
$$
{\mathcal{D}_{\alpha}(\Gamma_{1},\mathcal{O}_\mathcal{K} [[X]] \otimes_{\mathcal{O}_{\K}}\K)\simeq 
\mathcal{D}_{\alpha}(\Gamma_{1},\mathcal{O}_\mathcal{K} [[\Gamma_{1}]] \otimes_{\mathcal{O}_{\K}}\K)
\simeq 
\mathcal{D}_{(\alpha,0)}(\Gamma_{1}\times\Gamma_{2},\K)}
$$
with certain $p$-adic group $\Gamma_{2}$ isomorphic to $1+p\mathbb{Z}_p$. {In addition to Panchishkin \cite{Panchishkin03}, Bella{\"\i}che \cite[Theorem V\hspace{-1.2pt}I\hspace{-1.2pt}I.3.1]{Bell2021} 
constructs a non-ordinary two-variable $p$-adic $L$-function. The growth of this $p$-adic $L$-function is also locally of type $(\alpha ,0)$ 
as in \cite{Panchishkin03}. 
}
\par 
As in these results, only in a special case where the growth of the corresponding multi-variable admissible distribution of the 
expected multi-variable $p$-adic $L$ function is of type $(\alpha , 0, \ldots ,0)$, the construction of the multi-variable $p$-adic $L$-function 
can be done without using our higher dimensional theories, but in an ad-hoc manner using the classical one-variable theory. 
However, in general multi-variable Galois deformation where the number of non-zero digit in the growth is greater than one (the growth is $(2\alpha ,\alpha )$ 
in Theorem \ref{intro two variable p-aic} for example), we can not construct a $p$-adic $L$-function and prove some fundamental properties 
in such an ad-hoc manner and the use of our theory given in this paper will be indispensable to construct multi-variable $p$-adic $L$-functions. 
\end{rem}
%
%


\section{Preparation on the precise notation}\label{preparation} 
In this section, we introduce some notation in order to state our results precisely. Let $R$ be a ring and $M$ an $R$-module. 
For any positive integer $k$, we put $M[[X_{1},\ldots,X_{k}]]=\prod_{\boldsymbol{n}\in \mathbb{Z}_{\geq 0}^{k}}M$. When $M=R$, each element $(a_{\boldsymbol{n}})_{\boldsymbol{n}}\in \prod_{\boldsymbol{n}\in \mathbb{Z}_{\geq 0}^{k}}R$ is identified with the power series $\sum_{\boldsymbol{n}\in \mathbb{Z}_{\geq 0}^{k}}^{+\infty}a_{\boldsymbol{n}}X^{\boldsymbol{n}}$ over $R$, where $X^{\boldsymbol{n}}=X_{1}^{n_{1}}\cdots X_{k}^{n_{k}}$ for each $\boldsymbol{n}\in \mathbb{Z}_{\geq 0}^{k}$. Thus, the notation $M[[X_{1},\ldots, X_{k}]]$ is justified for each $R$-module $M$. 
We regard the $R$-module $M[[X_{1},\ldots, X_{k}]]$ as an $R[[X_{1},\ldots, X_{k}]]$-module by the scalar multiplication defined by $f\cdot g=(\sum_{
\boldsymbol{l}_{1}+\boldsymbol{l}_{2}=\boldsymbol{n}, \ 
\boldsymbol{l}_{1},\boldsymbol{l}_{2}\in \mathbb{Z}_{\geq 0}^{k}
}a_{\boldsymbol{l}_{1}}m_{\boldsymbol{l}_{2}})_{\boldsymbol{n}\in \mathbb{Z}_{\geq 0}^{k}}$ for each $f=\sum_{\boldsymbol{n}\in \mathbb{Z}_{\geq 0}^{k}}a_{\boldsymbol{n}}X^{\boldsymbol{n}}\in R[[X_{1},\ldots, X_{k}]]$ and $g=(m_{\boldsymbol{n}})_{\boldsymbol{n}\in \mathbb{Z}_{\geq 0}^{k}}\in M[[X_{1},\ldots, X_{k}]]$. Further $M[X_{1},\ldots, X_{k}]=\oplus_{\boldsymbol{n}\in \mathbb{Z}_{\geq 0}^{k}}M\subset M[[X_{1},\ldots, X_{k}]]$ becomes an $R[X_{1},\ldots, X_{k}]$-submodule. We regard $M$ as an $R$-submodule of $M[X_{1},\ldots, X_{k}]$ naturally. Let $1\leq i\leq k$. We define the degree $\deg_{X_{i}} g$ of $g=(m_{\boldsymbol{n}})_{\boldsymbol{n}\in \mathbb{Z}_{\geq 0}^{k}}\in M[X_{1},\ldots, X_{k}]$ with respect to the variable $X_{i}$ to be
\begin{equation}
\deg_{X_{i}} g=\begin{cases}-\infty,\ &\mathrm{if}\ g=0,\\
\max\{n\in\mathbb{Z}_{\geq 0}\vert {}^{\exists}\boldsymbol{n}\in \mathbb{Z}_{\geq 0}^{k}\ \mathrm{s.t}\ n_{i}=n\ \mathrm{and}\ m_{\boldsymbol{n}}\neq 0\},\ &\mathrm{otherwise}.
\end{cases}
\end{equation}
{Let $\K$ be a complete subfield of $\mathbb{C}_{p}$.} Let us recall the definition of $\K$-Banch spaces. Let $M$ be a $\K$-vector space. A function $v_{M}:M\rightarrow \mathbb{R}\cup\{+\infty\}$ is called a valuation on $M$ if the following conditions are satisfied:
\begin{enumerate}
\item For $x\in M$, $v_{M}(x)=+\infty$ if and only if $x=0$.
\item For $x,y\in M$, $v_{M}(x+y)\geq \min\{v_{M}(x),v_{M}(y)\}$.
\item For $\lambda\in \K$ and $x\in M$, $v_{M}(\lambda x)=\mathrm{ord}_{p}(\lambda)+v_{M}(x)$.
\end{enumerate}
Let $v_{M}$ be a valuation on $M$. Then we say that the pair $(M,v_{M})$ is a $\K$-Banach space if $M$ is complete with respect to the topology defined by $v_{M}$. If there is no risk of confusion, we omit $v_{M}$ and call $M$ a Banach space. From now on, we fix a $\K$-Banach space $(M,v_{M})$. Let $\boldsymbol{h}\in \ord_{p}(\mathcal{O}_{\K}\backslash \{0\})^{k}$. We define 
 \begin{equation}\label{themultivariable Banach version_of_H_h}
\HH_{\boldsymbol{h}}(M) =\Big\{(m_{\boldsymbol{n}})_{\boldsymbol{n}\in \mathbb{Z}_{\geq 0}^{k}}\in M[[X_{1},\ldots, X_{k}]] 
\ \Big\vert \ 
\inf \big\{v_{M}(m_{\boldsymbol{n}} ) +\langle \boldsymbol{h},\ell(\boldsymbol{n})\rangle_{k} 
\big \}_{\boldsymbol{n}\in \mathbb{Z}_{\geq 0}^{k}}
  >-\infty\Big\}
\end{equation}
and 
\begin{equation}\label{multivariable banach version of Br}
B_{\boldsymbol{r}}(M) =\Big\{(m_{\boldsymbol{n}})_{\boldsymbol{n}\in \mathbb{Z}_{\geq 0}^{k}}\in M[[X_{1},\ldots, X_{k}]]  
\ \Big\vert \ 
\inf \big\{v_{M}(m_{\boldsymbol{n}} ) +\langle \boldsymbol{r},\boldsymbol{n}\rangle_{k} 
\big \}_{\boldsymbol{n}\in \mathbb{Z}_{\geq 0}^{k}}
  >-\infty\Big\}
\end{equation}
for each $\boldsymbol{r}\in \mathbb{Q}^{k}$. Note that $\HH_{\boldsymbol{h}}(M)$ and $B_{\boldsymbol{r}}(M)$ are $\mathcal{O}_{\K}[[X_{1},\ldots, X_{k}]]\otimes_{\mathcal{O}_{\K}}\K$-submodules of $M[[X_{1},\ldots,X_{k}]]$. We have $\HH_{\boldsymbol{h}}(M)\subset B_{\boldsymbol{r}}(M)$ for any $\boldsymbol{h}\in \ord_{p}(\mathcal{O}_{\K}\backslash \{0\})^{k}$ and $\boldsymbol{r}\in \mathbb{Q}_{>0}^{k}$ since $\displaystyle{\lim_{\boldsymbol{n}\rightarrow +\infty}}(\langle\boldsymbol{r},\boldsymbol{n}\rangle_{k}-\langle \boldsymbol{h},\ell(\boldsymbol{n})\rangle_{k})=+\infty$.

If $M=\K$, $\HH_{\boldsymbol{h}}(\K)$ is equal to the module 
$\HH_{\boldsymbol{h}/\K}$ defined in \eqref{equation:themultivariableversion_of_H_h}. For each $f=(m_{\boldsymbol{n}})_{\boldsymbol{n}\in \mathbb{Z}_{\geq 0}^{k}}\in \HH_{\boldsymbol{h}}(M)$, we put 
\begin{equation}
v_{\HH_{\boldsymbol{h}}}(f)=\inf \big\{v_{M}(m_{\boldsymbol{n}} ) +\langle \boldsymbol{h},\ell(\boldsymbol{n})\rangle_{k} 
\big \}_{\boldsymbol{n}\in \mathbb{Z}_{\geq 0}^{k}}.
\end{equation}
For each $f=(m_{\boldsymbol{n}})_{\boldsymbol{n}\in \mathbb{Z}_{\geq 0}^{k}}\in B_{\boldsymbol{r}}(M)$, we put 
\begin{equation}\label{equation:definition_of_v_r}
v_{\boldsymbol{r}}(f)=\inf \big\{v_{M}(m_{\boldsymbol{n}} ) +\langle \boldsymbol{r},\boldsymbol{n}\rangle_{k} 
\big \}_{\boldsymbol{n}\in \mathbb{Z}_{\geq 0}^{k}}. 
\end{equation}
Then, we have the following:
\begin{pro}\label{HHh is a k-banach space}
Let $\K$ be a complete subfield of $\mathbb{C}_{p}$ and let $M$ be a $\K$-Banach space. Then the pairs $(\HH_{\boldsymbol{h}}(M),v_{\HH_{\boldsymbol{h}}})$ and $(B_{\boldsymbol{r}}(M),v_{\boldsymbol{r}})$ are $\K$-Banach spaces.
\end{pro}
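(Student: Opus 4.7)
The plan is to verify the two defining properties of a $\K$-Banach space for each pair: first that $v_{\HH_{\boldsymbol{h}}}$ (respectively $v_{\boldsymbol{r}}$) is a valuation, and second that the underlying space is complete for the induced topology. Since $B_{\boldsymbol{r}}(M)$ is obtained from $\HH_{\boldsymbol{h}}(M)$ by replacing $\langle \boldsymbol{h}, \ell(\boldsymbol{n})\rangle_k$ by $\langle \boldsymbol{r}, \boldsymbol{n}\rangle_k$ throughout, it suffices to give the argument for $\HH_{\boldsymbol{h}}(M)$; the other case is formally identical.

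To check the valuation axioms, I would inspect each condition in turn. Nondegeneracy is immediate from that of $v_M$: if $v_{\HH_{\boldsymbol{h}}}(f)=+\infty$, then since each weight $\langle\boldsymbol{h},\ell(\boldsymbol{n})\rangle_k$ is finite, we must have $v_M(m_{\boldsymbol{n}})=+\infty$, hence $m_{\boldsymbol{n}}=0$ for every $\boldsymbol{n}$. The ultrametric inequality and the scaling identity each follow by applying the corresponding property of $v_M$ coefficient by coefficient and then taking the infimum over $\boldsymbol{n}\in\mathbb{Z}_{\geq 0}^{k}$.

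The substantive step is completeness. Given a Cauchy sequence $(f^{(j)})_{j}$ with $f^{(j)}=(m_{\boldsymbol{n}}^{(j)})_{\boldsymbol{n}}$, the coefficient-wise bound
\[
v_M\bigl(m_{\boldsymbol{n}}^{(j)}-m_{\boldsymbol{n}}^{(j')}\bigr) \;\geq\; v_{\HH_{\boldsymbol{h}}}\bigl(f^{(j)}-f^{(j')}\bigr) - \langle \boldsymbol{h},\ell(\boldsymbol{n})\rangle_k
\]
shows that $(m_{\boldsymbol{n}}^{(j)})_{j}$ is Cauchy in $M$ for each fixed $\boldsymbol{n}$, so by Banach completeness of $M$ it converges to some $m_{\boldsymbol{n}}\in M$. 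Set $f=(m_{\boldsymbol{n}})_{\boldsymbol{n}}\in M[[X_{1},\ldots,X_{k}]]$. To show simultaneously that $f\in\HH_{\boldsymbol{h}}(M)$ and that $f^{(j)}\to f$, fix any $C\in\mathbb{R}$ and choose $J$ from the Cauchy property so that $v_M(m_{\boldsymbol{n}}^{(j)}-m_{\boldsymbol{n}}^{(j')})+\langle \boldsymbol{h},\ell(\boldsymbol{n})\rangle_k\geq C$ for all $j,j'\geq J$ and all $\boldsymbol{n}$. Letting $j'\to\infty$ and using the continuity of $v_M$ in the non-archimedean metric yields $v_M(m_{\boldsymbol{n}}^{(j)}-m_{\boldsymbol{n}})+\langle \boldsymbol{h},\ell(\boldsymbol{n})\rangle_k\geq C$ uniformly in $\boldsymbol{n}$ for every $j\geq J$. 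This says $f^{(j)}-f\in\HH_{\boldsymbol{h}}(M)$ with $v_{\HH_{\boldsymbol{h}}}(f^{(j)}-f)\geq C$; writing $f=f^{(J)}-(f^{(J)}-f)$ then places $f$ in $\HH_{\boldsymbol{h}}(M)$, and since $C$ was arbitrary we obtain $f^{(j)}\to f$ in the topology of $v_{\HH_{\boldsymbol{h}}}$.

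No serious obstacle is expected; the only subtle point is the interchange of limits justifying the passage $j'\to\infty$ inside $v_M$, which rests on the standard non-archimedean fact that once $v_M(m_{\boldsymbol{n}}^{(j')}-m_{\boldsymbol{n}})$ exceeds $v_M(m_{\boldsymbol{n}}^{(j)}-m_{\boldsymbol{n}}^{(j')})$, the ultrametric identity forces $v_M(m_{\boldsymbol{n}}^{(j)}-m_{\boldsymbol{n}})=v_M(m_{\boldsymbol{n}}^{(j)}-m_{\boldsymbol{n}}^{(j')})$. Overall, the argument is the standard completeness proof for weighted $\ell^\infty$-type spaces with Banach-space coefficients, and no new ideas beyond the definitions are required.
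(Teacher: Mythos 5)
Your proof is correct and follows essentially the same approach as the paper: verify the valuation axioms coordinatewise, then for completeness extract a coordinatewise limit using the bound $v_M(m_{\boldsymbol{n}}^{(j)}-m_{\boldsymbol{n}}^{(j')})\geq v_{\HH_{\boldsymbol{h}}}(f^{(j)}-f^{(j')})-\langle\boldsymbol{h},\ell(\boldsymbol{n})\rangle_k$ and show the resulting $f$ lies in the space and is the limit. The only cosmetic difference is that you deduce $f\in\HH_{\boldsymbol{h}}(M)$ from $v_{\HH_{\boldsymbol{h}}}(f^{(J)}-f)\geq C$ via $f=f^{(J)}-(f^{(J)}-f)$, whereas the paper first bounds $v_M(m_{\boldsymbol{n}})+\langle\boldsymbol{h},\ell(\boldsymbol{n})\rangle_k$ directly by $\inf_l v_{\HH_{\boldsymbol{h}}}(f^{(l)})$; both are fine.
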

\begin{proof}
We prove that $(\HH_{\boldsymbol{h}}(M),v_{\HH_{\boldsymbol{h}}})$ is a $\K$-Banach space.
It is easy to see that $v_{\HH_{\boldsymbol{h}}}(f)=+\infty$ if and only if $f=0$ and we have $v_{\HH_{\boldsymbol{h}}}(\lambda f)=\ord_{p}(\lambda)+v_{\HH_{\boldsymbol{h}}}(f)$ for each $\lambda\in \K$ and $f\in \HH_{\boldsymbol{h}}(M)$. Since $v_{M}(m^{(1)}+m^{(2)})\geq \min\{v_{M}(m^{(1)}),v_{M}(m^{(2)})\}$ for each $m^{(1)},m^{(2)}\in M$, we can prove that $v_{\HH_{\boldsymbol{h}}}(f+g)\geq \min\{v_{\HH_{\boldsymbol{h}}}(f),v_{\HH_{\boldsymbol{h}}}(g)\}$ for each $f,g\in \HH_{\boldsymbol{h}}(M)$ easily. Then, $v_{\HH_{\boldsymbol{h}}}$ is a valuation on $\HH_{\boldsymbol{h}}(M)$. 

Next, we prove that $\HH_{\boldsymbol{h}}(M)$ is complete with respecet to the topology induced by $v_{\HH_{\boldsymbol{h}}}$. Let $(f_{l})_{l\in\mathbb{Z}_{\geq 0}}$ be a Cauchy sequence of $\HH_{\boldsymbol{h}}(M)$. We put $f_{l}=(m_{\boldsymbol{n}}^{(l)})_{\boldsymbol{n}\in \mathbb{Z}_{\geq 0}^{k}}$. Let $\boldsymbol{n}\in \mathbb{Z}_{\geq 0}^{k}$. Since $v_{M}(m_{\boldsymbol{n}}^{(l)}-m_{\boldsymbol{n}}^{(n)})\geq v_{\HH_{\boldsymbol{h}}}(f_{n}-f_{l})-\langle \boldsymbol{h},\ell(\boldsymbol{n})\rangle_{k}$ for each $l,n\in \mathbb{Z}_{\geq 0}$, $(m_{\boldsymbol{n}}^{(l)})_{l\in \mathbb{Z}_{\geq 0}}$ is a Cauchy sequence in $M$ and there exists a limit $m_{\boldsymbol{n}}=\displaystyle{\lim_{l\rightarrow +\infty}}m_{\boldsymbol{n}}^{(l)}\in M$. Further, we have $v_{M}(m_{\boldsymbol{n}})+\langle \boldsymbol{h},\ell(\boldsymbol{n})\rangle_{k}\geq \inf\{v_{\HH_{\boldsymbol{h}}}(f_{l})\}_{l\in\mathbb{Z}_{\geq 0}}$. Define $f=(m_{\boldsymbol{n}})_{\boldsymbol{n}}\in M[[X_{1},\ldots, X_{k}]]$. Since $v_{M}(m_{\boldsymbol{n}})+\langle \boldsymbol{h},\ell(\boldsymbol{n})\rangle_{k}\geq \inf\{v_{\HH_{\boldsymbol{h}}}(f_{l})\}_{l\in\mathbb{Z}_{\geq 0}}$ for each $\boldsymbol{n}\in \mathbb{Z}_{\geq 0}^{k}$, we see that $f\in \HH_{\boldsymbol{h}}(M)$.

We prove that $f=\displaystyle{{\lim}_{l\rightarrow +\infty}}f_{l}$. Let $A>0$. Since $(f_{l})_{l\in\mathbb{Z}_{\geq 0}}$ is a Cauchy sequence, there exists an $N\in \mathbb{Z}_{\geq 0}$ such that for each $l,n\geq N$, we have $v_{\HH_{\boldsymbol{h}}}(f_{l}-f_{n})\geq A$. Therefore, we have 
\small 
\begin{align*}
v_{M}(m_{\boldsymbol{n}}-m_{\boldsymbol{n}}^{(n)})+\langle \boldsymbol{h},\ell(\boldsymbol{n})\rangle_{k}&=\displaystyle{\lim_{l\rightarrow+\infty}}v_{M}(m_{\boldsymbol{n}}^{(l)}-m_{\boldsymbol{n}}^{(n)})+\langle \boldsymbol{h},\ell(\boldsymbol{n})\rangle_{k} \geq \inf\{v_{\HH_{\boldsymbol{h}}}(f_{l}-f_{n})\}_{l,n\geq N}\geq A
\end{align*}
\normalsize 
for each $n\geq N$ and $\boldsymbol{n}\in \mathbb{Z}_{\geq 0}^{k}$. Thus, $v_{\HH_{\boldsymbol{h}}}(f-f_{n})\geq A$ for each $n\geq N$ and we conclude that $f=\displaystyle{{\lim}_{l\rightarrow +\infty}}f_{l}$. In the same way, we can prove that $(B_{\boldsymbol{r}}(M),v_{\boldsymbol{r}})$ is a $\K$-Banach space.
\end{proof}
\begin{pro}\label{mult Br prodct equlity}
Let $f\in B_{\boldsymbol{r}}(\K)$ and $g\in B_{\boldsymbol{r}}(M)$ with $\boldsymbol{r}\in \mathbb{Q}^{k}$. Then, we have $fg\in B_{\boldsymbol{r}}(M)$ and $v_{\boldsymbol{r}}(fg)=v_{\boldsymbol{r}}(f)+v_{\boldsymbol{r}}(g)$.
\end{pro}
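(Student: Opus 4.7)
The inequality $v_{\boldsymbol{r}}(fg)\ge v_{\boldsymbol{r}}(f)+v_{\boldsymbol{r}}(g)$, which also shows $fg\in B_{\boldsymbol{r}}(M)$, is the easy direction. Writing $f=\sum_{\boldsymbol{n}}a_{\boldsymbol{n}}X^{\boldsymbol{n}}$, $g=\sum_{\boldsymbol{n}}m_{\boldsymbol{n}}X^{\boldsymbol{n}}$, and $c_{\boldsymbol{n}}=\sum_{\boldsymbol{l}_{1}+\boldsymbol{l}_{2}=\boldsymbol{n}}a_{\boldsymbol{l}_{1}}m_{\boldsymbol{l}_{2}}$ for the coefficient of $X^{\boldsymbol{n}}$ in $fg$, the scalar-compatibility axiom $v_{M}(\lambda m)=\ord_{p}(\lambda)+v_{M}(m)$ together with the non-Archimedean triangle inequality on $M$ gives
\[
v_{M}(c_{\boldsymbol{n}})+\langle\boldsymbol{r},\boldsymbol{n}\rangle_{k}\ge \min_{\boldsymbol{l}_{1}+\boldsymbol{l}_{2}=\boldsymbol{n}}\bigl(\ord_{p}(a_{\boldsymbol{l}_{1}})+\langle\boldsymbol{r},\boldsymbol{l}_{1}\rangle_{k}+v_{M}(m_{\boldsymbol{l}_{2}})+\langle\boldsymbol{r},\boldsymbol{l}_{2}\rangle_{k}\bigr)\ge v_{\boldsymbol{r}}(f)+v_{\boldsymbol{r}}(g),
\]
and taking the infimum over $\boldsymbol{n}$ settles this half.

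The reverse inequality is a multi-variable Gauss-lemma argument, for which the crucial combinatorial input is the lexicographic order $\preceq$ on $\mathbb{Z}_{\ge 0}^{k}$: it is a well-ordering and is translation-invariant, i.e.\ $\boldsymbol{a}\prec\boldsymbol{b}$ implies $\boldsymbol{a}+\boldsymbol{c}\prec\boldsymbol{b}+\boldsymbol{c}$. Put $A=v_{\boldsymbol{r}}(f)$ and $B=v_{\boldsymbol{r}}(g)$. For each $\epsilon>0$, let $\boldsymbol{n}_{1}$ be the lex-minimum of the non-empty set $S_{f}(\epsilon)=\{\boldsymbol{n}:\ord_{p}(a_{\boldsymbol{n}})+\langle\boldsymbol{r},\boldsymbol{n}\rangle_{k}<A+\epsilon\}$, and define $\boldsymbol{n}_{2}$ analogously from $g$ and $B$. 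In the expansion $c_{\boldsymbol{n}_{1}+\boldsymbol{n}_{2}}=\sum_{\boldsymbol{l}_{1}+\boldsymbol{l}_{2}=\boldsymbol{n}_{1}+\boldsymbol{n}_{2}}a_{\boldsymbol{l}_{1}}m_{\boldsymbol{l}_{2}}$, any summand with $(\boldsymbol{l}_{1},\boldsymbol{l}_{2})\neq(\boldsymbol{n}_{1},\boldsymbol{n}_{2})$ must satisfy $\boldsymbol{l}_{1}\prec\boldsymbol{n}_{1}$ (forcing $\boldsymbol{l}_{2}\succ\boldsymbol{n}_{2}$) or $\boldsymbol{l}_{1}\succ\boldsymbol{n}_{1}$ (forcing $\boldsymbol{l}_{2}\prec\boldsymbol{n}_{2}$), by translation invariance; lex-minimality of $\boldsymbol{n}_{1}$ in $S_{f}(\epsilon)$ or of $\boldsymbol{n}_{2}$ in $S_{g}(\epsilon)$ then pushes the relevant $v$-contribution above the threshold $A+\epsilon$ or $B+\epsilon$, giving
\[
\ord_{p}(a_{\boldsymbol{l}_{1}})+\langle\boldsymbol{r},\boldsymbol{l}_{1}\rangle_{k}+v_{M}(m_{\boldsymbol{l}_{2}})+\langle\boldsymbol{r},\boldsymbol{l}_{2}\rangle_{k}\ge A+B+\epsilon,
\]
whereas the diagonal summand $a_{\boldsymbol{n}_{1}}m_{\boldsymbol{n}_{2}}$ has this quantity less than $A+B+2\epsilon$.

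The main obstacle is to upgrade this analysis to strict dominance of the diagonal term in $c_{\boldsymbol{n}_{1}+\boldsymbol{n}_{2}}$, since the naive bound leaves a possible $\epsilon$-sized overlap between the diagonal's upper bound and the off-diagonal lower bound. When $\K$ is discretely valued, $A$ and $B$ are attained by some coefficient and one may take $\epsilon=0$; the diagonal summand then has strictly smaller valuation than every other summand, hence $v_{M}(c_{\boldsymbol{n}_{1}+\boldsymbol{n}_{2}})+\langle\boldsymbol{r},\boldsymbol{n}_{1}+\boldsymbol{n}_{2}\rangle_{k}=A+B$, which closes the argument. For a general complete subfield $\K\subset\mathbb{C}_{p}$ one refines the choice by taking $\boldsymbol{n}_{1}$ (resp.\ $\boldsymbol{n}_{2}$) to be lex-minimum of a nested threshold set $S_{f}(\epsilon')\subset S_{f}(\epsilon)$ with $\epsilon'\ll\epsilon$, so that the diagonal's excess over $A+B$ is much smaller than the guaranteed gap $\epsilon$ on the non-diagonal side; this secures strict dominance and gives $v_{M}(c_{\boldsymbol{n}_{1}+\boldsymbol{n}_{2}})+\langle\boldsymbol{r},\boldsymbol{n}_{1}+\boldsymbol{n}_{2}\rangle_{k}<A+B+\epsilon$, whence $v_{\boldsymbol{r}}(fg)\le A+B$ upon letting $\epsilon\to 0^{+}$.
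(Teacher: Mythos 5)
Your easy direction and your attainment case (when the infima defining $v_{\boldsymbol{r}}(f)$ and $v_{\boldsymbol{r}}(g)$ are achieved, in particular when $\K$ is discretely valued) are correct and agree with the paper's argument. But the $\epsilon,\epsilon'$-nested thresholding you propose for a general complete subfield $\K\subset\mathbb{C}_{p}$ does not close the gap you yourself flag. If $\boldsymbol{n}_{1}$ is the lex-minimum of the narrower set $S_{f}(\epsilon')$, then lex-minimality only tells you that every $\boldsymbol{l}_{1}\prec\boldsymbol{n}_{1}$ lies outside $S_{f}(\epsilon')$, i.e.\ $\ord_{p}(a_{\boldsymbol{l}_{1}})+\langle\boldsymbol{r},\boldsymbol{l}_{1}\rangle_{k}\geq A+\epsilon'$; it gives you no information about membership in the larger set $S_{f}(\epsilon)$, and indeed indices of $S_{f}(\epsilon)\setminus S_{f}(\epsilon')$ may perfectly well precede $\boldsymbol{n}_{1}$ lexicographically, since the lex-minimum of $S_{f}(\epsilon)$ can only move \emph{down} as the set grows. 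So the guaranteed off-diagonal margin is $\epsilon'$, not $\epsilon$, while the diagonal's excess over $A+B$ is only bounded by $2\epsilon'$. Since $2\epsilon'>\epsilon'$, strict dominance is never secured, cancellation among comparable summands is not excluded, and you cannot read off $v_{M}(c_{\boldsymbol{n}_{1}+\boldsymbol{n}_{2}})$.

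The paper avoids this by perturbing the exponent $\boldsymbol{r}$ rather than thresholding the value. For any $\boldsymbol{s}\in\mathbb{Q}^{k}$ with $s_{i}>r_{i}$ for every $i$, the set $S_{f,\boldsymbol{s}}=\{\boldsymbol{n}\ \vert\ v_{\boldsymbol{s}}(f)=\ord_{p}(a_{\boldsymbol{n}})+\langle\boldsymbol{s},\boldsymbol{n}\rangle_{k}\}$ is automatically non-empty: writing $\ord_{p}(a_{\boldsymbol{n}})+\langle\boldsymbol{s},\boldsymbol{n}\rangle_{k}=(\ord_{p}(a_{\boldsymbol{n}})+\langle\boldsymbol{r},\boldsymbol{n}\rangle_{k})+\langle\boldsymbol{s}-\boldsymbol{r},\boldsymbol{n}\rangle_{k}$, the first bracket is bounded below by $v_{\boldsymbol{r}}(f)$ and the second tends to $+\infty$, so the infimum is attained at finitely many candidates and the lex-minimum argument applies with $\epsilon=0$, giving the equality $v_{\boldsymbol{s}}(fg)=v_{\boldsymbol{s}}(f)+v_{\boldsymbol{s}}(g)$ exactly. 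One then lets $\boldsymbol{s}\to\boldsymbol{r}^{+}$ and uses the elementary fact that $\lim_{\boldsymbol{s}\to\boldsymbol{r}^{+}}v_{\boldsymbol{s}}(h)=v_{\boldsymbol{r}}(h)$ (immediate from the definition of the infimum together with $v_{\boldsymbol{s}}(h)\geq v_{\boldsymbol{r}}(h)$ for $\boldsymbol{s}\geq\boldsymbol{r}$). Your thresholding idea would need a single lex-minimal index that is simultaneously stable under tightening the threshold, which is precisely what the shift to $\boldsymbol{s}>\boldsymbol{r}$ buys you for free; without that stability the argument does not go through.
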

\begin{proof}
Put $f=\sum_{\boldsymbol{n}\in \mathbb{Z}_{\geq 0}^{k}}a_{\boldsymbol{n}}X^{\boldsymbol{n}}$ and $g=(m_{\boldsymbol{n}})_{\boldsymbol{n}\in \mathbb{Z}_{\geq 0}^{k}}$. We can assume that $f\neq 0$ and $g\neq 0$.  For each $\boldsymbol{l}_{1},\boldsymbol{l}_{2}\in \mathbb{Z}_{\geq 0}^{k}$, 
the equality $v_{M}(a_{\boldsymbol{l}_{1}}m_{\boldsymbol{l}_{2}})+\langle \boldsymbol{r},(\boldsymbol{l}_{1}+\boldsymbol{l}_{2})\rangle_{k}=(\ord_{p}(a_{\boldsymbol{l}_{1}})+\langle \boldsymbol{r},\boldsymbol{l}_{1}\rangle_{k})+(v_{M}(m_{\boldsymbol{l}_{2}})+\langle \boldsymbol{r},\boldsymbol{l}_{2}\rangle_{k})$ 
implies that $fg\in B_{\boldsymbol{r}}(M)$ and $v_{\boldsymbol{r}}(fg)\geq v_{\boldsymbol{r}}(f)+v_{\boldsymbol{r}}(g)$.
\par 
We assume that the set $S_{f,\boldsymbol{r}} =\{\boldsymbol{n}\in \mathbb{Z}_{\geq 0}^{k}\ \vert \ v_{\boldsymbol{r}}(f)=\ord_{p}(a_{\boldsymbol{n}})+\langle \boldsymbol{r},\boldsymbol{n}\rangle_{k}\}$ and the $S_{g ,\boldsymbol{r}} = \{\boldsymbol{n}\in \mathbb{Z}_{\geq 0}^{k}\ \vert \ v_{\boldsymbol{r}}(g)=v_{M}(m_{\boldsymbol{n}})+\langle \boldsymbol{r},\boldsymbol{n}\rangle_{k}\}$ are both non-empty. 
We take the minimum elements $\boldsymbol{n}_{f}$ and $\boldsymbol{n}_{g}$ of $S_{f ,\boldsymbol{r}}$ and $S_{g ,\boldsymbol{r}}$ respectively 
with respect to the lexicographic order. Then, we see that $v_{M}(a_{\boldsymbol{l}_{1}}m_{\boldsymbol{l}_{2}})+\langle \boldsymbol{r},\boldsymbol{n}_{f}+\boldsymbol{n}_{g}\rangle_{k}>v_{M}(a_{\boldsymbol{n}_{f}}m_{\boldsymbol{n}_{g}})+\langle \boldsymbol{r},\boldsymbol{n}_{f}+\boldsymbol{n}_{g}\rangle_{k}$ for each $\boldsymbol{l}_{1},\boldsymbol{l}_{2}\in \mathbb{Z}_{\geq 0}^{k}$ satisfying $\boldsymbol{l}_{1}+\boldsymbol{l}_{2}=\boldsymbol{n}_{f}+\boldsymbol{n}_{g}$ and $(\boldsymbol{l}_{1},\boldsymbol{l}_{2})\neq (\boldsymbol{n}_{f},\boldsymbol{n}_{g})$. Thus, we have
\begin{multline*}
v_{\boldsymbol{r}}(fg)\leq v_{M}\left(\sum_{\substack{\boldsymbol{l}_{1}+\boldsymbol{l}_{2}=\boldsymbol{n}_{f}+\boldsymbol{n}_{g}\\ \boldsymbol{l}_{1},\boldsymbol{l}_{2}\geq 0}}a_{\boldsymbol{l}_{1}}m_{\boldsymbol{l}_{2}}\right)+\langle \boldsymbol{r},\boldsymbol{n}_{f}+\boldsymbol{n}_{g}\rangle_{k}\\
=v_{M}(a_{\boldsymbol{n}_{f}}m_{\boldsymbol{n}_{g}})+\langle \boldsymbol{r},\boldsymbol{n}_{f}+\boldsymbol{n}_{g}\rangle_{k}=v_{\boldsymbol{r}}(f)+v_{\boldsymbol{r}}(g).
\end{multline*}
Therefore, we have $v_{\boldsymbol{r}}(fg)=v_{\boldsymbol{r}}(f)+v_{\boldsymbol{r}}(g)$. 

Next, we prove that $v_{\boldsymbol{r}}(fg)=v_{\boldsymbol{r}}(f)+v_{\boldsymbol{r}}(g)$ for general $f\in B_{\boldsymbol{r}}(\K)\backslash \{0\}$ and $g\in B_{\boldsymbol{r}}(M)\backslash\{0\}$. We have a natural inclusion $B_{\boldsymbol{r}}(M)\rightarrow B_{\boldsymbol{s}}(M)$ for each $\boldsymbol{s}\in \mathbb{Q}^{k}$ such that $\boldsymbol{s}\geq \boldsymbol{r}$. Further, we see that $S_{f,\boldsymbol{s}}\neq \emptyset$ and $S_{g,\boldsymbol{s}}\neq \emptyset$ for every $\boldsymbol{s}\in \mathbb{Q}^{k}$ such that $s_{i}>r_{i}$ with $1\leq i\leq k$. Then, we have
$$
v_{\boldsymbol{r}}(fg)=\lim_{\substack{\|\boldsymbol{s}-\boldsymbol{r}\|\rightarrow 0\\ \boldsymbol{s}\in \prod_{i=1}^{k}\mathbb{Q}_{>r_{i}}}}v_{\boldsymbol{s}}(fg)
=\lim_{\substack{\|\boldsymbol{s}-\boldsymbol{r}\|\rightarrow 0\\ \boldsymbol{s}\in \prod_{i=1}^{k}\mathbb{Q}_{>r_{i}}}}(v_{\boldsymbol{s}}(f)+v_{\boldsymbol{s}}(g)) 
=v_{\boldsymbol{r}}(f)+v_{\boldsymbol{r}}(g),
$$
where $\|\boldsymbol{s}-\boldsymbol{r}\|=\sqrt{\langle \boldsymbol{s}-\boldsymbol{r}, \boldsymbol{s}-\boldsymbol{r}\rangle_{k}}$. This completes the proof.
\end{proof}
Next, we recall the definition of complete tensor products on Banach spaces. 
Let $(M,v_{M})$ and $(N,v_{N})$ be $\K$-Banach spaces. For each $c\in M\otimes_{\K}N$, we define $v_{M,N}(c)$ to be the least upper bound of $\min\{v_{M}(m_{i})+v_{N}(n_{i})\}_{i}$ among all representations $c=\sum_{i} m_{i}\otimes n_{i}$. 
It is easy to see that $v_{M,N}(0)=+\infty$, $v_{M,N}(x+y)\geq\min\{v_{M,N}(x),v_{M,N}(y)\}$ and $v_{M,N}(\lambda x)=\ord_{p}(\lambda)+v_{M,N}(x)$ for each $x,y\in M\otimes_{\K}N$ and $\lambda\in \K$. 
Let $x\in M\otimes_{\K}N\backslash\{0\}$. We take finite dimensional $\K$-vector subspaces $M_{0}\subset M$ and $N_{0}\subset N$ such that $x\in M_{0}\otimes_{\K}N_{0}$ and  
{we put $v_{M_{0}}=v_{M}\vert_{M_{0}}$ and $v_{N_{0}}=v_{N}\vert_{N_{0}}$. In the same way as $v_{M,N}$, we define $v_{M_{0},N_{0}}: M_{0}\otimes_{\K}N_{0}\rightarrow \mathbb{R}\cup\{+\infty\}$.} We have $v_{M,N}(x)=v_{M_{0},N_{0}}(x)$ by \cite[Lemme 3.1]{Jerome2013}. 
Since $(M_{0}\otimes_{\K}N_{0},v_{M_{0},N_{0}})$ is a $\K$-Banach space, we see that $v_{M,N} (x) =v_{M_{0},N_{0}}(x)\neq +\infty$. Thus $v_{M,N}$ is a valuation on $M\otimes_{\K}N$. We denote by $M\widehat{\otimes}_{\K}N$ the completion of $(M\otimes_{\K}N,v_{M,N})$. We call $M\widehat{\otimes}_{\K}N$ the complete tensor product of $(M,v_{M})$ and $(N,v_{N})$. Let $i_{M,N}: M\otimes_{\K}N\rightarrow M\widehat{\otimes}_{\K}N$ be the natural map. We write $x\widehat{\otimes}_{\K}y$ for $i_{M,N}(x\otimes_{\K}y)$ where $x\in M$ and $y\in N$. For each closed intermediate field $\mathcal{L}$ of $\mathbb{C}_{p}\slash \K$, we put 
\begin{equation}\label{definition of ML}
M_{\mathcal{L}}=M\widehat{\otimes}_{\K}\mathcal{L}\end{equation}
and {we denote by $v_{M_{\mathcal{L}}}$ the valuation $v_{M,\mathcal{L}}$ on $M_{\mathcal{L}}$.} By \cite[Lemme 3.1]{Jerome2013}, we have $v_{M_{\mathcal{L}}}(x\widehat{\otimes}_{\K}1)=v_{M}(x)$ for every $x\in M$. Further, it is known that we have $M_{\mathcal{L}}=M\otimes_{\K}\mathcal{L}$ if $\mathcal{L}$ is a finite extension of $\K$.
Let $\boldsymbol{r}\in \mathbb{Q}^{k}$ and $\boldsymbol{b}=(b_{1},\ldots, b_{k})\in \overline{\K}^{k}$ such that $\ord_{p}(b_{i})>r_{i}$ for each $1\leq i\leq k$. For each $f=(m_{\boldsymbol{n}})_{\boldsymbol{n}\in \mathbb{Z}_{\geq 0}^{k}}\in B_{\boldsymbol{r}}(M)$, we define a substitution $f(\boldsymbol{b})\in M_{\K(b_{1},\ldots, b_{k})}$ to be
\begin{equation}\label{substitiution of banach Br(M)}
f(\boldsymbol{b})=\sum_{\boldsymbol{n}\in \mathbb{Z}_{\geq 0}^{k}}m_{\boldsymbol{n}}\otimes_{\K}\boldsymbol{b}^{\boldsymbol{n}},
\end{equation}
where $\boldsymbol{b}^{\boldsymbol{n}}=b_{1}^{n_{1}}\cdots b_{k}^{n_{k}}$ with $\boldsymbol{n}\in \mathbb{Z}_{\geq 0}^{k}$. 
\par 
{Let $\epsilon>0$ and $\mathcal{L}$ be a finite extension of $\K$. By \cite[Proposition 3 in \S2.6.2]{BGR1984}, there exists a basis $b_{1}\ldots, b_{d}$ of $ \mathcal{L}$ over $\K$ such that we have 
\begin{equation}\label{boldh H isometry completetensor eq prop3 bgr}
\min\{\ord_{p}(a_{i}b_{i})\}_{i=1}^{d}\geq \ord_{p}(b)-\epsilon
\end{equation}
 for every element $(a_{1},\ldots, a_{d})\in \K^{d}$, where $b=\sum_{i=1}^{d}a_{i}b_{i}\in \mathcal{L}$. We prove that
\begin{equation}\label{eq:boldh H isometry completetensor}
\min\{v_{M}(m_{i})+\ord_{p}(b_{i})\}_{i=1}^{d}\geq v_{M_{\mathcal{L}}}(m)-\epsilon
\end{equation}
for every $(m_{1},\ldots, m_{d})\in M^{d}$, where $m=\sum_{i=1}^{d}m_{i}\otimes_{\K}b_{i}\in M_{\mathcal{L}}$. Let $(m_{1},\ldots, m_{d})\in M^{d}$ and put $m=\sum_{i=1}^{d}m_{i}\otimes_{\K}b_{i}$. Assume that $m$ has a presentation $m=\sum_{j=1}^{n}m_{j}^{\prime}\otimes_{\K}b_{j}^{\prime}$ with $m_{j}^{\prime}\in M$ and $b_{j}^{\prime}\in \mathcal{L}$ where $n\in \mathbb{Z}_{\geq 1}$. Since $b_{1},\ldots, b_{d}$ is a basis of $\mathcal{L}$ over $\K$,  for each $1\leq j\leq n$, there exists a unique $d$-tuple $(a_{1,j},\ldots, a_{d,j})\in \K^{d}$ such that $b_{j}^{\prime}=\sum_{i=1}^{d}a_{i,j}b_{i}$. Thus, we have $m=\sum_{j=1}^{n}m_{j}^{\prime}\otimes_{\K}b_{j}^{\prime}=\sum_{i=1}^{d}(\sum_{j=1}^{n}a_{i,j}m_{j}^{\prime})\otimes_{\K}b_{i}$. Since $\sum_{i=1}^{d}m_{i}\otimes_{\K}b_{i}=\sum_{i=1}^{d}(\sum_{j=1}^{n}a_{i,j}m_{j}^{\prime})\otimes_{\K}b_{i}$, we have $m_{i}=\sum_{j=1}^{n}a_{i,j}m_{j}^{\prime}$ for each $1\leq i\leq d$. By \eqref{boldh H isometry completetensor eq prop3 bgr}, we see that
\begin{align*}
\min\{v_{M}(m_{j}^{\prime})+\ord_{p}(b_{j}^{\prime})\}_{j=1}^{n}-\epsilon&\leq \min\{v_{M}(m_{j}^{\prime})+\ord_{p}(a_{i,j}b_{i})\}_{\substack{1\leq i\leq d\\ 1\leq j\leq n}}\\
&=\min\{\min\{v_{M}(a_{i,j}m_{j}^{\prime})\}_{j=1}^{n}+\ord_{p}(b_{i})\}_{i=1}^{d}\\
&\leq \min\{v_{M}(m_{i})+\ord_{p}(b_{i})\}_{i=1}^{d}.
\end{align*}
Since the definition of $v_{M_{\mathcal{L}}}(m)$ is the least upper bound of $\min\{v_{M}(m_{j}^{\prime})+\ord_{p}(b_{j}^{\prime})\}_{j=1}^{n}$ among all representations $m=\sum_{j=1}^{n}m_{j}^{\prime}\otimes_{\K}b_{j}^{\prime}$, we see that $v_{M_{\mathcal{L}}}(m)-\epsilon \leq \min\{v_{M}(m_{i})+\ord_{p}(b_{i})\}_{i=1}^{d}$ and we have \eqref{eq:boldh H isometry completetensor}.}

We prepare some notation and recall some results on Banach spaces. For a reference, we mention \cite{BGR1984}. Let $(M,v_{M})$ and $(N,v_{N})$ be $\K$-Banach spaces. We define a valuation $v_{M\oplus N}$ on $M\oplus N$ to be $v_{M\oplus N}((m,n))=\min\{v_{M}(m),v_{N}(n)\}$ for each $m\in M$ and $n\in N$. Then it is easy to see that $(M\oplus N,v_{M\oplus N})$ is a $\K$-Banach space. We say that a $\K$-linear map $f:M\rightarrow N$ is bounded if the set $\{v_{N}(f(x))-v_{M}(x)\}_{x\in M\backslash \{0\}}$ is bounded below. In particular, $f$ is called an isometry, if $v_{N}(f(x))=v_{M}(x)$ for all $x\in M$.  As mentioned below \eqref{definition of ML}, the natural map $M\rightarrow M_{\mathcal{L}}$ is an isometry for each closed intermediate field $\mathcal{L}$ of $\mathbb{C}_{p}\slash \K$. We denote by $\mathfrak{L}(M,N)$ the $\K$-vector space of bounded $\K$-linear maps from $M$ to $N$. For each $f\in\mathfrak{L}(M,N)$, we put
\begin{equation}\label{valuation of bounded operator}
v_{\mathfrak{L}}(f)=\begin{cases} +\infty,\ &\mathrm{if}\ M=\{0\},\\
\inf\{v_{N}(f(x))-v_{M}(x)\}_{x\in M\backslash \{0\}},\ &\mathrm{if}\ M\neq \{0\}.\end{cases}
\end{equation}
It is known that $(\mathfrak{L}(M,N),v_{\mathfrak{L}})$ is a $\K$-Banach space ($cf$. \cite[Proposition 4 in \S2.1.6]{BGR1984}). If $f\in \mathfrak{L}(M,N)$ is bijective, we call $f$ a $\K$-Banach isomorphism from $M$ to $N$. By the open mapping theorem, if $f$ is a $\K$-Banach isomorphism, $f^{-1}$ is also a $\K$-Banach isomorphism. We say that $f\in \mathfrak{L}(M,N)$ is an isometric isomorphism if $f$ is a bijective isometry. To prove that a $\K$-Banach isomorphism $f:M\stackrel{\sim}{\rightarrow}N$ is an isometry, the following lemma is useful.
\begin{lem}\label{easy lemma on isometry}
Let $f:M\stackrel{\sim}{\rightarrow}N$ be a $\K$-Banach isomorphism. We assume that $v_{\mathfrak{L}}(f)\geq 0$ and $v_{\mathfrak{L}}(f^{-1})\geq 0$. Then $f$ is an isometric isomorphism. 
\end{lem}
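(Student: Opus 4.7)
The plan is to unpack the definition of $v_{\mathfrak{L}}$ from \eqref{valuation of bounded operator} for both $f$ and $f^{-1}$, and then combine the two inequalities to collapse them into equality. Since $f$ is already assumed to be a $\K$-Banach isomorphism (in particular bijective), the only content to prove is that $v_{N}(f(x))=v_{M}(x)$ for every $x\in M$.

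First, I would handle the trivial cases: if $M=\{0\}$, then $N=\{0\}$ as well (since $f$ is a bijection), and the claim is vacuous. Otherwise, from $v_{\mathfrak{L}}(f)\geq 0$ and the definition $v_{\mathfrak{L}}(f)=\inf\{v_{N}(f(x))-v_{M}(x)\}_{x\in M\backslash\{0\}}$, I would read off the inequality $v_{N}(f(x))\geq v_{M}(x)$ for every nonzero $x\in M$. Symmetrically, from $v_{\mathfrak{L}}(f^{-1})\geq 0$ I would obtain $v_{M}(f^{-1}(y))\geq v_{N}(y)$ for every nonzero $y\in N$.

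Next, I would set $y=f(x)$ for $x\in M\backslash\{0\}$ in the second inequality; since $f$ is injective, $y$ is nonzero, and the inequality yields $v_{M}(x)=v_{M}(f^{-1}(f(x)))\geq v_{N}(f(x))$. Combining with the first inequality gives $v_{N}(f(x))=v_{M}(x)$, which is what we wanted. The case $x=0$ is immediate, since both sides equal $+\infty$ by the first axiom of a valuation. This completes the argument, so $f$ is a bijective isometry, i.e., an isometric isomorphism.

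I do not expect any genuine obstacle here: the lemma is a purely formal consequence of the definition of $v_{\mathfrak{L}}$, and the only care needed is to treat the degenerate cases ($M=\{0\}$ and $x=0$) separately so that the use of $v_{\mathfrak{L}}(f)=\inf\{v_{N}(f(x))-v_{M}(x)\}_{x\in M\backslash\{0\}}$ is legitimate.
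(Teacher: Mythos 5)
Your proof is correct and follows essentially the same route as the paper: unpack the inequality $v_N(f(x))\geq v_{\mathfrak{L}}(f)+v_M(x)$ from $v_{\mathfrak{L}}(f)\geq 0$, then apply the symmetric inequality for $f^{-1}$ at $y=f(x)$ and combine. Your extra attention to the degenerate cases $M=\{0\}$ and $x=0$ is a small refinement the paper leaves implicit, but it does not change the argument.
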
 
\begin{proof}
For each $x\in M$, we have $v_{N}(f(x))\geq v_{\mathfrak{L}}(f)+v_{M}(x)\geq v_{M}(x)$ and $v_{M}(x)=v_{M}(f^{-1}f(x))\geq v_{\mathfrak{L}}(f^{-1})+v_{N}(f(x))\geq v_{N}(f(x))$ 
by \eqref{valuation of bounded operator}. Hence $f$ is an isometry.
\end{proof}
Let $(\HH_{\boldsymbol{h}}(M),v_{\HH_{\boldsymbol{h}}})$ and $(B_{\boldsymbol{r}}(M),v_{\boldsymbol{r}})$ be the  $\K$-Banach spaces defined in \eqref{themultivariable Banach version_of_H_h} and \eqref{multivariable banach version of Br} for each $\boldsymbol{h}\in \ord_{p}(\mathcal{O}_{\K}\backslash \{0\})^{k}$ and $\boldsymbol{r}\in \mathbb{Q}^{k}$ with $k\in \mathbb{Z}_{\geq 1}$. We have the following:
\begin{pro}\label{isometry ofHh for induction}
\begin{enumerate}
\item
Let $\boldsymbol{h}\in \ord_{p}(\mathcal{O}_{\K}\backslash\{0\})^{k}$ and let $h\in \ord_{p}(\mathcal{O}_{\K}\backslash\{0\})$. We can define an isometric isomorphism
$$\varphi:\HH_{h}(\HH_{\boldsymbol{h}}(M))\stackrel{\sim}{\rightarrow}\HH_{(\boldsymbol{h},h)}(M)$$
by setting $\varphi ((f^{(n)})_{n=0}^{+\infty} )= (m_{\boldsymbol{n}}^{(n)})_{(\boldsymbol{n},n)\in \mathbb{Z}_{\geq 0}^{k+1}}$ where $f^{(n)}=(m_{\boldsymbol{n}}^{(n)})_{\boldsymbol{n}\in \mathbb{Z}_{\geq 0}^{k}}\in \HH_{\boldsymbol{h}}(M)$ for each $n\in \mathbb{Z}_{\geq 0}$.\label{isometry ofHh for induction1}
\item Let $\boldsymbol{r}\in \mathbb{Q}^{k}$ and $r\in \mathbb{Q}$. We can define an isometric isomorphism
$$\varphi:B_{r}(B_{\boldsymbol{r}}(M))\stackrel{\sim}{\rightarrow}B_{(\boldsymbol{r},r)}(M)$$
by setting $\varphi ((f^{(n)})_{n=0}^{+\infty} ) = (m_{\boldsymbol{n}}^{(n)})_{(\boldsymbol{n},n)\in \mathbb{Z}_{\geq 0}^{k+1}}$ where $f^{(n)}=(m_{\boldsymbol{n}}^{(n)})_{\boldsymbol{n}\in \mathbb{Z}_{\geq 0}^{k}}\in B_{\boldsymbol{r}}(M)$ for each $n\in \mathbb{Z}_{\geq 0}$. \label{isometry ofHh for induction2}
\end{enumerate}
\end{pro}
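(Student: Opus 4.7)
The plan is to verify (\ref{isometry ofHh for induction1}) by a direct unwinding of the nested valuations, and to note that (\ref{isometry ofHh for induction2}) is formally identical with $\ell(\cdot)$ replaced by the identity. First, by Proposition \ref{HHh is a k-banach space} both $\HH_{\boldsymbol{h}}(M)$ and $B_{\boldsymbol{r}}(M)$ are $\K$-Banach spaces, so the outer spaces $\HH_h(\HH_{\boldsymbol{h}}(M))$ and $B_r(B_{\boldsymbol{r}}(M))$ are defined and themselves $\K$-Banach by the same proposition. Hence the only content to verify is (a) that the ``reshuffling'' map $\varphi$ is well-defined as a map into the stated target, (b) that it is a $\K$-linear bijection, and (c) that it is an isometry.

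For (a) and (c) together, take $f=(f^{(n)})_{n\in\mathbb{Z}_{\geq 0}}$ with $f^{(n)}=(m_{\boldsymbol{n}}^{(n)})_{\boldsymbol{n}\in\mathbb{Z}_{\geq 0}^{k}}\in\HH_{\boldsymbol{h}}(M)$. Unwinding definitions,
\[
v_{\HH_{h}}(f)=\inf_{n\in\mathbb{Z}_{\geq 0}}\bigl\{v_{\HH_{\boldsymbol{h}}}(f^{(n)})+h\ell(n)\bigr\}=\inf_{n\in\mathbb{Z}_{\geq 0}}\inf_{\boldsymbol{n}\in\mathbb{Z}_{\geq 0}^{k}}\bigl\{v_{M}(m_{\boldsymbol{n}}^{(n)})+\langle\boldsymbol{h},\ell(\boldsymbol{n})\rangle_{k}+h\ell(n)\bigr\},
\]
and exchanging the two infima (which is always legitimate) gives
\[
v_{\HH_{h}}(f)=\inf_{(\boldsymbol{n},n)\in\mathbb{Z}_{\geq 0}^{k+1}}\bigl\{v_{M}(m_{\boldsymbol{n}}^{(n)})+\langle(\boldsymbol{h},h),\ell((\boldsymbol{n},n))\rangle_{k+1}\bigr\}=v_{\HH_{(\boldsymbol{h},h)}}(\varphi(f)),
\]
since $\ell((\boldsymbol{n},n))=(\ell(\boldsymbol{n}),\ell(n))$ by definition. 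In particular the right-hand side is $>-\infty$ if and only if the left-hand side is, so $\varphi(f)\in\HH_{(\boldsymbol{h},h)}(M)$, and the identical chain shows isometry. Linearity of $\varphi$ is immediate from the coefficient-wise description.

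For (b), the inverse $\psi:\HH_{(\boldsymbol{h},h)}(M)\to\HH_{h}(\HH_{\boldsymbol{h}}(M))$ is likewise defined by reshuffling indices: given $(m_{\boldsymbol{n}}^{(n)})_{(\boldsymbol{n},n)\in\mathbb{Z}_{\geq 0}^{k+1}}$ in the target, set $f^{(n)}=(m_{\boldsymbol{n}}^{(n)})_{\boldsymbol{n}\in\mathbb{Z}_{\geq 0}^{k}}$ for each $n$. The same interchange-of-infima computation shows simultaneously that each $f^{(n)}$ lies in $\HH_{\boldsymbol{h}}(M)$ (because $v_{\HH_{\boldsymbol{h}}}(f^{(n)})\geq v_{\HH_{(\boldsymbol{h},h)}}((m_{\boldsymbol{n}}^{(n)}))-h\ell(n)>-\infty$) and that $(f^{(n)})_{n}$ lies in $\HH_{h}(\HH_{\boldsymbol{h}}(M))$. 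Since $\psi$ is manifestly a two-sided inverse of $\varphi$, the claim (\ref{isometry ofHh for induction1}) follows. For (\ref{isometry ofHh for induction2}) the exact same argument applies after replacing $\ell(\cdot)$ by the identity function and $\langle\boldsymbol{h},\ell(\boldsymbol{n})\rangle_{k}+h\ell(n)$ by $\langle\boldsymbol{r},\boldsymbol{n}\rangle_{k}+rn=\langle(\boldsymbol{r},r),(\boldsymbol{n},n)\rangle_{k+1}$. There is no real obstacle here; the only thing to be careful about is the elementary but essential interchange of the two infima, which is what makes the nested valuation collapse to the $(k+1)$-variable valuation on the nose.
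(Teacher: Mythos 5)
Your proposal is correct and follows essentially the same route as the paper: the content in both cases is that the nested valuations collapse to the $(k+1)$-variable valuation, and you simply phrase it as a direct interchange of infima (giving $v_{\HH_h(\HH_{\boldsymbol{h}}(M))}(f)=v_{\HH_{(\boldsymbol{h},h)}}(\varphi(f))$ in one stroke), whereas the paper extracts the two one-sided estimates $v_{\mathfrak{L}}(\varphi)\geq 0$ and $v_{\mathfrak{L}}(\varphi^{-1})\geq 0$ from the same computation and concludes via Lemma \ref{easy lemma on isometry}. Both handle well-definedness of $\varphi$ and $\psi$ by the same finiteness observation, and both treat (2) by the identical argument with $\ell$ replaced by the identity.
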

\begin{proof}
We prove \eqref{isometry ofHh for induction1}. Let $(f^{(n)})_{n\in \mathbb{Z}_{\geq 0}}\in \HH_{h}(\HH_{\boldsymbol{h}}(M))$ with $f^{(n)}=(m_{\boldsymbol{n}}^{(n)})_{\boldsymbol{n}\in \mathbb{Z}_{\geq 0}^{k}}\in \HH_{\boldsymbol{h}}(M)$ for each $n\in \mathbb{Z}_{\geq 0}$. By the inequality
\begin{align*}
v_{M}(m_{\boldsymbol{n}}^{(n)})+\langle (\boldsymbol{h},h),\ell((\boldsymbol{n},n))\rangle_{k+1}&=(v_{M}(m_{\boldsymbol{n}}^{(n)})+\langle \boldsymbol{h},\ell(\boldsymbol{n})\rangle_{k})+h\ell(n)\\
&\geq v_{\HH_{\boldsymbol{h}}}(f^{(n)})+h\ell(n)\\
&\geq v_{\HH_{h}(\HH_{\boldsymbol{h}}(M))}((f^{(n)})_{n=0}^{+\infty}),
\end{align*}
{we have 
\begin{equation}\label{isometry ofHh for induction varphi well bounded eq}
v_{\HH_{(\boldsymbol{h},h)}}((m_{(\boldsymbol{n},n)})_{(\boldsymbol{n},n)\in \mathbb{Z}_{\geq 0}})\geq v_{\HH_{h}(\HH_{\boldsymbol{h}}(M))}((f^{(n)})_{n=0}^{+\infty})>-\infty.\end{equation}
By regarding that $(m_{(\boldsymbol{n},n)})_{(\boldsymbol{n},n)\in \mathbb{Z}_{\geq 0}^{k}\times \mathbb{Z}_{\geq 0}}\in \HH_{(\boldsymbol{h},h)}(M)$, we define the $\K$-linear map $\varphi :\HH_{h}(\HH_{\boldsymbol{h}}(M))\rightarrow \HH_{(\boldsymbol{h},h)}(M)$ and we have $v_{\mathfrak{L}}(\varphi)\geq 0$ by \eqref{isometry ofHh for induction varphi well bounded eq}.}
\par 
Next, we prove that $\varphi$ has an inverse map $\varphi^{-1}$ with $v_{\mathfrak{L}}(\varphi^{-1})\geq 0$. Let $f=(m_{\boldsymbol{n}})_{\boldsymbol{n}\in \mathbb{Z}_{\geq 0}^{k+1}}\in \HH_{(\boldsymbol{h},h)}(M)$. Fix a non-negative integer $n$. We have
\begin{align*}
v_{M}(m_{(\boldsymbol{n},n)})+\langle \boldsymbol{h},\ell(\boldsymbol{n})\rangle_{k}&=(v_{M}(m_{(\boldsymbol{n},n)})+\langle (\boldsymbol{h},h),\ell((\boldsymbol{n},n))\rangle_{k+1})-h\ell(n)\\
&\geq v_{\HH_{(\boldsymbol{h},h)}}(f)-h\ell(n).
\end{align*}
for each $\boldsymbol{n}\in \mathbb{Z}_{\geq 0}^{k}$. Then, $(m_{(\boldsymbol{n},n)})_{\boldsymbol{n}\in \mathbb{Z}_{\geq 0}^{k}}$ is an element of $\HH_{\boldsymbol{h}}(M)$ which satisfies
$$v_{\HH_{\boldsymbol{h}}}((m_{(\boldsymbol{n},n)})_{\boldsymbol{n}\in \mathbb{Z}_{\geq 0}^{k}})\geq v_{\HH_{(\boldsymbol{h},h)}}(f)-h\ell(n).$$ 
Therefore, we can define a map $\psi: \HH_{(\boldsymbol{h},h)}(M)\rightarrow \HH_{h}(\HH_{\boldsymbol{h}}(M))$ by setting $\psi ((m_{\boldsymbol{n}})_{\boldsymbol{n}\in \mathbb{Z}_{\geq 0}^{k+1}})
= (f^{(n)})_{n=0}^{+\infty}$ with $f^{(n)}=(m_{(\boldsymbol{n},n)})_{\boldsymbol{n}\in \mathbb{Z}_{\geq 0}^{k}}$ for each $n\in \mathbb{Z}_{\geq 0}$. Further, we have $v_{\mathfrak{L}}(\psi)\geq 0$. It is easy to see that $\psi=\varphi^{-1}$. Then $\varphi$ is an isometric isomorphism by $\mathrm{Lemma\ \ref{easy lemma on isometry}}$. 
We can prove \eqref{isometry ofHh for induction2} in the same way as \eqref{isometry ofHh for induction1}.
\end{proof}
We have the following:
\begin{pro}\label{boldh H isometry completetensor}
Let $\mathcal{L}$ be a finite extension of $\mathcal{K}$ and let $k\in \mathbb{Z}_{\geq 1}$.
\begin{enumerate}
\item Let $\boldsymbol{h}\in \ord_{p}(\mathcal{O}_{\K}\backslash\{0\})^{k}$. Then, the natural map
$$\varphi:(\HH_{\boldsymbol{h}}(M))_{\mathcal{L}}\rightarrow\HH_{\boldsymbol{h}}(M_{\mathcal{L}}),$$
which is defined by setting $\varphi (f\otimes_{\K}a )= af$ for each $f\in \HH_{\boldsymbol{h}}(M)$ and for each $a\in \mathcal{L}$, is an isometric isomorphism.\label{boldh H isometry completetensor1}
\item Let $\boldsymbol{r}\in \mathbb{Q}^{k}$. Then, the natural map
$$\varphi:(B_{\boldsymbol{r}}(M))_{\mathcal{L}}\rightarrow B_{\boldsymbol{r}}(M_{\mathcal{L}}),$$
which is defined by setting $\varphi (f\otimes_{\K}a ) = af$ for each $f\in B_{\boldsymbol{r}}(M)$ and for each $a\in \mathcal{L}$, is an isometric isomorphism.\label{boldh H isometry completetensor2}
\end{enumerate}
\end{pro}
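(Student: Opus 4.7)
The plan is to apply Lemma~\ref{easy lemma on isometry}. Since $\mathcal{L}/\mathcal{K}$ is a finite extension, the remark after \eqref{definition of ML} gives $(\HH_{\boldsymbol{h}}(M))_{\mathcal{L}}=\HH_{\boldsymbol{h}}(M)\otimes_{\mathcal{K}}\mathcal{L}$ and $M_{\mathcal{L}}=M\otimes_{\mathcal{K}}\mathcal{L}$ without any completion, so I can work purely algebraically. It will therefore be enough to exhibit a set-theoretic inverse $\psi$ of $\varphi$ and verify $v_{\mathfrak{L}}(\varphi)\geq 0$ and $v_{\mathfrak{L}}(\psi)\geq 0$.

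First I will bound $v_{\mathfrak{L}}(\varphi)$. For a representation $c=\sum_{i}f_{i}\otimes a_{i}\in (\HH_{\boldsymbol{h}}(M))_{\mathcal{L}}$ with $f_{i}=(m_{\boldsymbol{n},i})_{\boldsymbol{n}}$, each coefficient of $\varphi(c)$ is $\sum_{i}m_{\boldsymbol{n},i}\widehat{\otimes}_{\mathcal{K}}a_{i}\in M_{\mathcal{L}}$, whose $v_{M_{\mathcal{L}}}$-valuation is at least $\min_{i}\{v_{M}(m_{\boldsymbol{n},i})+\ord_{p}(a_{i})\}$ by the very definition of $v_{M,\mathcal{L}}$ as a supremum over representations. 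Adding $\langle\boldsymbol{h},\ell(\boldsymbol{n})\rangle_{k}$, taking $\inf_{\boldsymbol{n}}$, and then taking the supremum over all representations of $c$ yields $v_{\HH_{\boldsymbol{h}}(M_{\mathcal{L}})}(\varphi(c))\geq v_{(\HH_{\boldsymbol{h}}(M))_{\mathcal{L}}}(c)$, i.e.\ $v_{\mathfrak{L}}(\varphi)\geq 0$.

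Next I will construct $\psi$. Given $g=(n_{\boldsymbol{n}})_{\boldsymbol{n}}\in\HH_{\boldsymbol{h}}(M_{\mathcal{L}})$ and any $\mathcal{K}$-basis $b_{1},\ldots,b_{d}$ of $\mathcal{L}$, uniquely decompose $n_{\boldsymbol{n}}=\sum_{i=1}^{d}m_{\boldsymbol{n},i}\otimes b_{i}$ and set $f_{i}=(m_{\boldsymbol{n},i})_{\boldsymbol{n}}\in M[[X_{1},\ldots,X_{k}]]$. To check $f_{i}\in\HH_{\boldsymbol{h}}(M)$ I will fix some $\epsilon>0$ (say $\epsilon=1$) and use the almost-orthogonal basis supplied by \eqref{eq:boldh H isometry completetensor}; it immediately gives $v_{M}(m_{\boldsymbol{n},i})\geq v_{M_{\mathcal{L}}}(n_{\boldsymbol{n}})-\epsilon-\ord_{p}(b_{i})$, whence $v_{\HH_{\boldsymbol{h}}}(f_{i})\geq v_{\HH_{\boldsymbol{h}}(M_{\mathcal{L}})}(g)-\epsilon-\ord_{p}(b_{i})>-\infty$. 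Setting $\psi(g):=\sum_{i}f_{i}\otimes b_{i}\in(\HH_{\boldsymbol{h}}(M))_{\mathcal{L}}$, uniqueness of the decomposition makes $\psi$ a genuine two-sided inverse of $\varphi$.

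The step I expect to be the main obstacle is proving $v_{\mathfrak{L}}(\psi)\geq 0$, because the almost-orthogonal basis produced by \eqref{eq:boldh H isometry completetensor} depends on the chosen $\epsilon$, so no single basis realises the inequality for arbitrarily small $\epsilon$. The trick will be to fix $g$ and, for each $\epsilon>0$, pick an $\epsilon$-almost-orthogonal basis together with the corresponding representation of $\psi(g)$ in $\HH_{\boldsymbol{h}}(M)\otimes_{\mathcal{K}}\mathcal{L}$; the calculation of the previous paragraph then gives $v_{(\HH_{\boldsymbol{h}}(M))_{\mathcal{L}}}(\psi(g))\geq v_{\HH_{\boldsymbol{h}}(M_{\mathcal{L}})}(g)-\epsilon$. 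Because the supremum-over-representations definition of the tensor valuation of $\psi(g)$ does not privilege any single basis, letting $\epsilon\to 0$ yields $v_{\mathfrak{L}}(\psi)\geq 0$, and Lemma~\ref{easy lemma on isometry} completes the proof of \eqref{boldh H isometry completetensor1}. The $B_{\boldsymbol{r}}$ assertion \eqref{boldh H isometry completetensor2} will follow by the verbatim same argument with $\langle\boldsymbol{h},\ell(\boldsymbol{n})\rangle_{k}$ replaced everywhere by $\langle\boldsymbol{r},\boldsymbol{n}\rangle_{k}$.
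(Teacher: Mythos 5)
Your proposal is correct and follows essentially the same route as the paper: both bound $v_{\mathfrak{L}}(\varphi)$ directly from the tensor-norm definition, both construct the inverse by decomposing coefficients along an almost-orthogonal basis produced by \cite[Proposition 3 in \S2.6.2]{BGR1984}, and both resolve the $\epsilon$-dependence of that basis by the observation that the tensor norm of $\psi(g)$ is a supremum over representations, so letting $\epsilon\to 0$ gives $v_{\mathfrak{L}}(\psi)\geq 0$. The only stylistic difference is that the paper splits the bijectivity argument into separate injectivity and surjectivity steps whereas you package it as the construction of a two-sided inverse $\psi$; this is the same content.
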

\begin{proof}
{We prove \eqref{boldh H isometry completetensor1}. First, we prove that $\varphi$ is well-defined. Let $f\in \HH_{\boldsymbol{h}}(M)_{\mathcal{L}}$. 
Let us express $f$ as a sum $f=\sum_{i=1}^{l}f^{(i)}\otimes_{\K}a_{i}$ where $f^{(i)}\in \HH_{\boldsymbol{h}}(M)$ and $a_{i}\in \mathcal{L}$ with $l\in \mathbb{Z}_{\geq 1}$. Put $f^{(i)}=(m_{\boldsymbol{n}}^{(i)})_{\boldsymbol{n}\in \mathbb{Z}_{\geq 0}^{k}}$. Then, we have $\sum_{i=1}^{l}a_{i}f^{(i)}=(\sum_{i=1}^{l}m^{(i)}_{\boldsymbol{n}}\otimes_{\K}a_{i})_{\boldsymbol{n}\in \mathbb{Z}_{\geq 0}^{k}}\in M_{\mathcal{L}}[[X]]$. We denote by $v_{M_{\mathcal{L}}}$ the valuation on $M_{\mathcal{L}}$ defined just after \eqref{definition of ML}. By the definition of $v_{M_{\mathcal{L}}}$, we have
\begin{equation}\label{boldh H isometry completetensor welldef 1}
v_{M_{\mathcal{L}}}\left(\sum_{i=1}^{l}m^{(i)}_{\boldsymbol{n}}\otimes_{\K}a_{i}\right)\geq \min\{v_{M}(m^{(i)}_{\boldsymbol{n}})+\ord_{p}(a_{i})\}_{i=1}^{l}
\end{equation}
for each $\boldsymbol{n}\in \mathbb{Z}_{\geq 0}^{k}$. Since $v_{\HH_{\boldsymbol{h}}}(f^{(i)})=\inf\{v_{M}(m^{(i)}_{\boldsymbol{n}})+\langle \boldsymbol{h},\ell(\boldsymbol{n})\rangle_{k}\}_{\boldsymbol{n}\in \mathbb{Z}_{\geq 0}^{k}}$ for each $1\leq i\leq l$, by \eqref{boldh H isometry completetensor welldef 1}, we have
\begin{align*}
v_{M_{\mathcal{L}}}\left(\sum_{i=1}^{l}m^{(i)}_{\boldsymbol{n}}\otimes_{\K}a_{i}\right)+\langle \boldsymbol{h},\ell(\boldsymbol{n})\rangle_{k}&\geq \min\{v_{M}(m^{(i)}_{\boldsymbol{n}})+\ord_{p}(a_{i})\}_{i=1}^{l}+\langle \boldsymbol{h},\ell(\boldsymbol{n})\rangle_{k}\\
&=\min\{(v_{M}(m^{(i)}_{\boldsymbol{n}})+\langle \boldsymbol{h},\ell(\boldsymbol{n})\rangle_{k})+\ord_{p}(a_{i})\}_{i=1}^{l}\\
&\geq \min\{v_{\HH_{\boldsymbol{h}}}(f^{(i)})+\ord_{p}(a_{i})\}_{i=1}^{l}
\end{align*}
for every $\boldsymbol{n}\in \mathbb{Z}_{\geq 0}^{k}$. Then, we have $\varphi(f)=\sum_{i=1}^{l}a_{i}f^{(i)}\in \HH_{\boldsymbol{h}}(M_{\mathcal{L}})$ and
\begin{align}\label{boldh H isometry completetensor vL(varphi)geq0}
\begin{split}
v_{\HH_{\boldsymbol{h}}}\left(\sum_{i=1}^{l}a_{i}f^{(i)}\right)&=\inf\left\{v_{M_{\mathcal{L}}}\left(\sum_{i=1}^{l}m^{(i)}_{\boldsymbol{n}}\otimes_{\K}a_{i}\right)+\langle \boldsymbol{h},\ell(\boldsymbol{n})\rangle_{k}\right\}_{\boldsymbol{n}\in \mathbb{Z}_{\geq 0}^{k}}\\
&\geq \min\{v_{\HH_{\boldsymbol{h}}}(f^{(i)})+\ord_{p}(a_{i})\}_{i=1}^{l}.
\end{split}
\end{align}
In particular, $\varphi$ is well-defined.}
\par
{Next, we prove that $v_{\mathfrak{L}}(\varphi)\geq 0$. We denote by $v_{\HH_{\boldsymbol{h}}(M)_{\mathcal{L}}}$ the valuation on $\HH_{\boldsymbol{h}}(M)_{\mathcal{L}}$ defined 
just after \eqref{definition of ML}. Let $f\in \HH_{\boldsymbol{h}}(M)_{\mathcal{L}}$. By \eqref{boldh H isometry completetensor vL(varphi)geq0}, we have 
\begin{align}\label{boldh H isometry completetensor vL(varphi)geq02}
\begin{split}
v_{\HH_{\boldsymbol{h}}}(\varphi(f))&=v_{\HH_{\boldsymbol{h}}}\left(\sum_{i=1}^{l}a_{i}f^{(i)}\right)\\
&\geq \min\{v_{\HH_{\boldsymbol{h}}}(f^{(i)})+\ord_{p}(a_{i})\}_{i=1}^{l}
\end{split}
\end{align}
for all representations $f=\sum_{i=1}^{l}f^{(i)}\otimes_{\K}a_{i}$. By the definition of $v_{\HH_{\boldsymbol{h}}(M)_{\mathcal{L}}}$, $v_{\HH_{\boldsymbol{h}}(M)_{\mathcal{L}}}(f)$ is the least upper bound of $\min\{v_{\HH_{\boldsymbol{h}}}(f^{(i)})+\ord_{p}(a_{i})\}_{i=1}^{l}$ among all representations $f=\sum_{i=1}^{l}f^{(i)}\otimes_{\K}a_{i}$. By \eqref{boldh H isometry completetensor vL(varphi)geq02}, we have $v_{\HH_{\boldsymbol{h}}}(\varphi(f))\geq v_{\HH_{\boldsymbol{h}}(M)_{\mathcal{L}}}(f)$. Thus, we have $v_{\mathfrak{L}}(\varphi)\geq 0$. }
\par 
{Next, we prove that $\varphi$ is injective. Let $b_{1},\ldots, b_{d}$ be a basis of $\mathcal{L}$ over $\K$. Let $f\in \HH_{\boldsymbol{h}}(M)_{\mathcal{L}}$ such that $\varphi(f)=0$. Since $b_{1},\ldots, b_{d}$ is a basis of $\mathcal{L}$ over $\K$, $f$ can be expressed as a sum $f=\sum_{i=1}^{d}f^{(i)}\otimes_{\K}b_{i}$ with $f^{(i)}\in \HH_{\boldsymbol{h}}(M)$ uniquely. Put $f^{(i)}=(m^{(i)}_{\boldsymbol{n}})_{\boldsymbol{n}\in \mathbb{Z}_{\geq 0}^{k}}$ with $1\leq i\leq d$. We have $\varphi(f)=\left(\sum_{i=1}^{d}m^{(i)}_{\boldsymbol{n}}\otimes_{\K}b_{i}\right)_{\boldsymbol{n}\in \mathbb{Z}_{\geq 0}^{k}}$. Since $\varphi(f)=\left(\sum_{i=1}^{d}m^{(i)}_{\boldsymbol{n}}\otimes_{\K}b_{i}\right)_{\boldsymbol{n}\in \mathbb{Z}_{\geq 0}^{k}}=0$, we see that $\sum_{i=1}^{d}m^{(i)}_{\boldsymbol{n}}\otimes_{\K}b_{i}=0$ for all $\boldsymbol{n}\in \mathbb{Z}_{\geq 0}^{k}$. Since $b_{1},\ldots, b_{d}$ is a basis of $\mathcal{L}$ over $\K$, for each $\boldsymbol{n}\in \mathbb{Z}_{\geq 0}^{k}$, the condition $\sum_{i=1}^{d}m^{(i)}_{\boldsymbol{n}}\otimes_{\K}b_{i}=0$ implies that $m_{\boldsymbol{n}}^{(i)}=0$ for every $1\leq i\leq d$. Therefore, we have $f^{(i)}=(m_{\boldsymbol{n}}^{(i)})_{\boldsymbol{n}\in \mathbb{Z}_{\geq 0}^{k}}=0$ for every $1\leq i\leq d$. Thus, we see that $f=\sum_{i=1}^{d}f^{(i)}\otimes_{\K}b_{i}=0$ and we conclude that $\varphi$ is injective.}
\par
{Next, we prove that $\varphi$ is surjective. Let $\epsilon>0$. By \cite[Proposition 3 in \S2.6.2]{BGR1984}, there exists a basis $b_{1}\ldots, b_{d}\in \mathcal{L}$ over $\K$ such that 
\begin{equation}\label{boldh H isometry completetensor surject bgr2.6.2basis}
\min\{\ord_{p}(a_{i}b_{i})\}_{i=1}^{d}\geq \ord_{p}(b)-\epsilon
\end{equation}
 for every element $(a_{1},\ldots, a_{d})\in \K^{d}$, where $b=\sum_{i=1}^{d}a_{i}b_{i}\in \mathcal{L}$.}
\par 
{Let $f=(m_{\boldsymbol{n}})_{\boldsymbol{n}\in \mathbb{Z}_{\geq 0}^{k}}\in \HH_{\boldsymbol{h}}(M_{\mathcal{L}})$ with $m_{\boldsymbol{n}}\in M_{\mathcal{L}}$. For each $\boldsymbol{n}\in \mathbb{Z}_{\geq 0}^{k}$, there exists a unique element $(m^{(1)}_{\boldsymbol{n}},\ldots, m^{(d)}_{\boldsymbol{n}})\in M^{d}$ such that 
\begin{equation}\label{boldh H isometry completetensor mboldn=sumboldsymboln(i)}
m_{\boldsymbol{n}}=\sum_{i=1}^{d}m^{(i)}_{\boldsymbol{n}}\otimes_{\K}b_{i}.
\end{equation}
 Put $f^{(i)}=(m^{(i)}_{\boldsymbol{n}})_{\boldsymbol{n}\in \mathbb{Z}_{\geq 0}^{k}}$ for each $1\leq i\leq d$. By \eqref{eq:boldh H isometry completetensor}, we see that
$$v_{M}(m^{(i)}_{\boldsymbol{n}})+\langle \boldsymbol{h},\ell(\boldsymbol{n})\rangle_{k}+\ord_{p}(b_{i})\geq v_{M_{\mathcal{L}}}(m_{\boldsymbol{n}})+\langle \boldsymbol{h},\ell(\boldsymbol{n})\rangle_{k}-\epsilon\geq v_{\HH_{\boldsymbol{h}}}(f)-\epsilon$$
for every $1\leq i\leq d$ and for every $\boldsymbol{n}\in \mathbb{Z}_{\geq 0}^{k}$. Therefore, we have $f^{(i)}\in \HH_{\boldsymbol{h}}(M)$ and 
\begin{align}\label{boldh H isometry completetensor vhhffigeevhhff-ep}
\begin{split}
v_{\HH_{\boldsymbol{h}}}(f^{(i)})+\ord_{p}(b_{i})&=\inf\{v_{M}(m^{(i)}_{\boldsymbol{n}})+\langle \boldsymbol{h},\ell(\boldsymbol{n})\rangle_{k}\}_{\boldsymbol{n}\in \mathbb{Z}_{\geq 0}^{k}}+\ord_{p}(b_{i})\\
&\geq v_{\HH_{\boldsymbol{h}}}(f)-\epsilon
\end{split}
\end{align}
for each $1\leq i\leq d$. By \eqref{boldh H isometry completetensor mboldn=sumboldsymboln(i)}, we see that 
\begin{equation}\label{boldh H isometry completetensor explict inverseimage surjective}
f=\varphi(\sum_{i=1}^{d}f^{(i)}\otimes_{\K}b_{i}).
\end{equation}
Therefore, we see that $\varphi$ is surjective.}

{Next, we prove that $v_{\mathfrak{L}}(\varphi^{-1})\geq 0$. Let $\epsilon >0$ and let $b_{1},\ldots, b_{d}$ be a basis of $\mathcal{L}$ over $\mathcal{K}$ 
which satisfies \eqref{boldh H isometry completetensor surject bgr2.6.2basis}. Let $f=(m_{\boldsymbol{n}})_{\boldsymbol{n}\in \mathbb{Z}_{\geq 0}^{k}}\in \HH_{\boldsymbol{h}}(M_{\mathcal{L}})$ with $m_{\boldsymbol{n}}\in M_{\mathcal{L}}$. For each $\boldsymbol{n}\in \mathbb{Z}_{\geq 0}^{k}$, let $(m_{\boldsymbol{n}}^{(1)},\ldots, m_{\boldsymbol{n}}^{(d)})\in M^{d}$ be the unique $d$-tuple which satisfies $m_{\boldsymbol{n}}=\sum_{i=1}^{d}m_{\boldsymbol{n}}^{(i)}\otimes_{\K}b_{i}$. By \eqref{boldh H isometry completetensor explict inverseimage surjective}, we have 
$$\varphi^{-1}(f)=\sum_{i=1}^{d}f^{(i)}\otimes_{\K}b_{i}$$
where $f^{(i)}=(m_{\boldsymbol{n}}^{(i)})_{\boldsymbol{n}\in \mathbb{Z}_{\geq 0}^{k}}\in \HH_{\boldsymbol{h}}(M)$ with $1\leq i\leq d$. By the definition of $v_{\HH_{\boldsymbol{h}}(M)_{\mathcal{L}}}$, we have  $v_{\HH_{\boldsymbol{h}}(M)_{\mathcal{L}}}(\varphi^{-1}(f))\geq \min\{v_{\HH_{\boldsymbol{h}}}(f^{(i)})+\ord_{p}(b_{i})\}_{i=1}^{d}$. By \eqref{boldh H isometry completetensor vhhffigeevhhff-ep}, we see that 
\begin{align*}
v_{\HH_{\boldsymbol{h}}(M)_{\mathcal{L}}}(\varphi^{-1}(f))&\geq \min\{v_{\HH_{\boldsymbol{h}}}(f^{(i)})+\ord_{p}(b_{i})\}_{i=1}^{d}\\
&\geq v_{\HH_{\boldsymbol{h}}}(f)-\epsilon.
\end{align*}
Thus, we have $v_{\mathfrak{L}}(\varphi^{-1})\geq -\epsilon$. Since $\epsilon$ is an arbitary positive real number, we have $v_{\mathfrak{L}}(\varphi^{-1})\geq 0$. }

By Lemma \ref{easy lemma on isometry}, we see that $\varphi$ is isometric. We can prove \eqref{boldh H isometry completetensor2} in the same way as \eqref{boldh H isometry completetensor1}.
\end{proof}
\begin{lem}\label{analytic funcuniqueof Brby Zp}
 Let $M$ be a $\K$-Banach space and $f\in B_{\boldsymbol{r}}(M)$ with $\boldsymbol{r}\in \mathbb{Q}^{k}$. If there exists an element $\boldsymbol{t}\in\mathbb{Q}^{k}$ such that $\boldsymbol{t}\geq \boldsymbol{r}$ and we have $f(\boldsymbol{x})=0$ for every $\boldsymbol{x}\in \mathbb{Z}_{p}^{k}$ with $\ord_{p}(x_{i})>t_{i}$ for each $1\leq i\leq k$, then we have $f=0$. 
 \end{lem}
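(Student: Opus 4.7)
The plan is to prove this by induction on the number of variables $k$, reducing to a one-variable uniqueness statement for $B_{r}(M)$-valued power series.

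For the base case $k=1$, write $f=\sum_{n\geq 0} m_n X^n \in B_r(M)$ and suppose $f \neq 0$. Let $n_0$ be the smallest index with $m_{n_0} \neq 0$. For any $x \in \mathbb{Z}_p$ with $\ord_p(x) > r$, the series $f(x)$ converges in $M$, and for the tail one can estimate
\begin{equation*}
v_M\!\left(f(x) - m_{n_0} x^{n_0}\right) \geq \min_{n > n_0}\{v_M(m_n) + rn + n(\ord_p(x)-r)\} \geq v_r(f) + (n_0+1)(\ord_p(x)-r).
\end{equation*}
Picking $x \neq 0$ with $\ord_p(x) > t$ and letting $\ord_p(x) \to \infty$, the assumption $f(x)=0$ forces $v_M(m_{n_0} x^{n_0})$ to exceed any prescribed bound, which gives $v_M(m_{n_0}) \geq v_r(f) + \ord_p(x) - (n_0+1)r \to +\infty$, contradicting $m_{n_0}\neq 0$.

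For the inductive step, write $\boldsymbol{r}' = (r_1,\dots,r_{k-1})$ and $\boldsymbol{t}' = (t_1,\dots,t_{k-1})$. By Proposition \ref{isometry ofHh for induction} (2), I may identify $f$ with an element of $B_{r_k}(B_{\boldsymbol{r}'}(M))$, say $f = (F_{n_k})_{n_k \geq 0}$ with $F_{n_k} \in B_{\boldsymbol{r}'}(M)$. Fix any $(x_1,\dots,x_{k-1}) \in \mathbb{Z}_p^{k-1}$ with $\ord_p(x_i) > t_i$ for $1 \leq i \leq k-1$; then $F_{n_k}(x_1,\dots,x_{k-1}) \in M$ (no extension of scalars is needed since $\mathbb{Z}_p \subset \mathcal{K}$), and the one-variable series $g(x_k) := \sum_{n_k} F_{n_k}(x_1,\dots,x_{k-1}) x_k^{n_k}$ lies in $B_{r_k}(M)$ and vanishes on all $x_k \in \mathbb{Z}_p$ with $\ord_p(x_k) > t_k$. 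The base case applies and gives $F_{n_k}(x_1,\dots,x_{k-1}) = 0$ for every $n_k$. Since this holds for every such $(x_1,\dots,x_{k-1})$, the induction hypothesis applied to each $F_{n_k} \in B_{\boldsymbol{r}'}(M)$ yields $F_{n_k} = 0$, whence $f=0$.

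The only delicate point is the base case, where the argument has to work with coefficients in an arbitrary $\mathcal{K}$-Banach space $M$ rather than in $\mathcal{K}$; what makes it go through is precisely the estimate above, which relies only on the properties of the valuation $v_M$ and on the defining bound $v_r(f) > -\infty$. The passage from one variable to several variables is then formal once Proposition \ref{isometry ofHh for induction} (2) allows us to reinterpret a $k$-variable series as a one-variable series over a Banach space.
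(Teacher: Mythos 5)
Your proof is correct and follows the same inductive strategy as the paper: reduce to $B_{r_k}(B_{\boldsymbol{r}'}(M))$ via Proposition \ref{isometry ofHh for induction}, fix $\boldsymbol{x}'$ and apply the one-variable base case to the resulting element of $B_{r_k}(M)$, then invoke the induction hypothesis on each coefficient $F_{n_k}$. The only difference is cosmetic, in the base case: the paper factors out $X^{n_0}$ and takes a limit $g(x_n)\to g(0)$, whereas you estimate the tail directly and let $\ord_p(x)\to\infty$ -- both amount to the same leading-term argument.
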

 \begin{proof}
We prove this lemma by induction on $k$. Assume that $k=1$ and put $f=(m_{n})_{n\geq 0}$ with $m_{n}\in M$. If $f\neq 0$, there exists an $n_{0}\in \mathbb{Z}_{\geq 0}$ such that $m_{n_{0}}\neq 0$ and $m_{n}=0$ for every $n\in \mathbb{Z}_{\geq 0}$ such that $n<n_{0}$. Put $m^{\prime}_{n}=m_{n+n_{0}}$ for every $n\in \mathbb{Z}_{\geq 0}$ and $g=(m_{n}^{\prime})_{n\geq 0}$. Then, we see that $g\in B_{r}(M)$ and $f=X^{n_{0}}g$. Let $x\in \mathbb{Z}_{p}\backslash \{0\}$ such that $\ord_{p}(x)>t$. Since $f(x)=x^{n_{0}}g(x)=0$, we see that $g(x)=0$. Then, we see that $g(x)=0$ for every $x\in \mathbb{Z}_{p}\backslash \{0\}$ such that $\ord_{p}(x)>t$. Let $x_{n}\in \mathbb{Z}_{p}\backslash \{0\}$ be a sequence such that $\lim_{n\rightarrow +\infty}x_{n}=0$. Then, we see that $m_{n_{0}}=g(0)=\lim_{n\rightarrow +\infty}g(x_{n})=0$. This is a contradiction. Then, $f=0$. 

Next, we assume that $k\geq 2$. By Proposition \ref{isometry ofHh for induction}, we identify $B_{\boldsymbol{r}}(M)$ with $B_{r_{k}}(B_{\boldsymbol{r}^{\prime}}(M))$ where $\boldsymbol{r}^{\prime}=(r_{1},\ldots, r_{k-1})$ and put $f=(f_{n})_{n\in \mathbb{Z}_{\geq 0}}$ with $f_{n}\in B_{\boldsymbol{r}^{\prime}}(M)$. Let $\boldsymbol{x}^{\prime}\in \mathbb{Z}_{p}^{k-1}$ such that $\ord_{p}(x_{i}^{\prime})>t_{i}$ for each $1\leq i\leq k-1$. Put $f_{\boldsymbol{x}^{\prime}}=(f_{n}(x_{1}^{\prime},\ldots, x_{k-1}^{\prime}))\in B_{r_{k}}(M)$. Then, for each $x\in \mathbb{Z}_{p}$ such that $\ord_{p}(x)>t_{k}$, we have $f_{\boldsymbol{x}^{\prime}}(x)=f(x_{1}^{\prime},\ldots, x_{k-1}^{\prime},x)=0$. By applying the result in the case $k=1$ to $f_{\boldsymbol{x}^{\prime}}\in B_{r_{k}}(M)$, we see that $f_{\boldsymbol{x}^{\prime}}=0$. Thus, for each $n\in \mathbb{Z}_{\geq 0}$, we have $f_{n}(\boldsymbol{x}^{\prime})=0$. By induction on $k$, we have $f_{n}=0$ for every $n\in \mathbb{Z}_{\geq 0}$. Thus, we see that $f=(f_{n})_{n\in \mathbb{Z}_{\geq 0}}=0$.
 \end{proof}
\begin{pro}\label{Br slide prop}
Let $M$ be a $\K$-Banach space and let $\boldsymbol{r}\in \mathbb{Q}^{k}$. Let $f=(m_{\boldsymbol{n}})_{\boldsymbol{n}\in \mathbb{Z}_{\geq 0}^{k}}\in B_{\boldsymbol{r}}(M)$ and 
let $\boldsymbol{a}\in \K^{k}$ be an element satisfying $\ord_{p}(a_{i})>r_{i}$ for each $1\leq i\leq k$. 
\begin{enumerate}
\item For each $\boldsymbol{n}\in \mathbb{Z}_{\geq 0}^{k}$, we see that the series
$$\sum_{\substack{\boldsymbol{l}\in \mathbb{Z}_{\geq 0}^{k}\\  \boldsymbol{n}\leq \boldsymbol{l}}}\left(\prod_{i=1}^{k}\begin{pmatrix}l_{i}\\ n_{i}\end{pmatrix}a_{i}^{l_{i}-n_{i}}\right)m_{\boldsymbol{l}}$$
is convergent in $M$. Further, if we define an element $f_{+\boldsymbol{a}} \in M[[X_{1},\ldots, X_{k}]]$ 
to be $f_{+\boldsymbol{a}}=\left(\sum_{\boldsymbol{l}\in \mathbb{Z}_{\geq 0}^{k}, \boldsymbol{n}\leq \boldsymbol{l}}\left(\prod_{i=1}^{k}\begin{pmatrix}l_{i}\\ n_{i}\end{pmatrix}a_{i}^{l_{i}-n_{i}}\right)m_{\boldsymbol{l}}\right)_{\boldsymbol{n}\in \mathbb{Z}_{\geq 0}^{k}}$, we have $f_{+\boldsymbol{a}}\in B_{\boldsymbol{r}}(M)$ and $v_{\boldsymbol{r}}(f)=v_{\boldsymbol{r}}(f_{+\boldsymbol{a}})$. \label{Br slide prop1}
\item Let $f_{+\boldsymbol{a}}\in B_{\boldsymbol{r}}(M)$ be the element in \eqref{Br slide prop1}. Then, $f_{+\boldsymbol{a}}$ is the unique element which satisfies
$$f_{+\boldsymbol{a}}(\boldsymbol{b})=f(\boldsymbol{b}+\boldsymbol{a})$$
for every $\boldsymbol{b}\in \overline{\K}^{k}$ such that $\ord_{p}(b_{i})>r_{i}$ with $1\leq i\leq k$.\label{Br slide prop2}
\end{enumerate}
\end{pro}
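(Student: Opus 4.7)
The plan is to prove part (1) first for convergence and the lower bound on $v_{\boldsymbol{r}}(f_{+\boldsymbol{a}})$, then establish part (2), and finally use part (2) to derive the reverse inequality in part (1). For the convergence statement in part (1), the key estimate is the following. Set $\epsilon_i=\ord_p(a_i)-r_i>0$. For each $\boldsymbol{l}\geq \boldsymbol{n}$, the general term satisfies
\begin{align*}
v_M\!\left(\prod_{i=1}^{k}\binom{l_i}{n_i}a_i^{l_i-n_i}m_{\boldsymbol{l}}\right)
&\geq \sum_{i=1}^{k}(l_i-n_i)\ord_p(a_i) + v_M(m_{\boldsymbol{l}})\\
&\geq v_{\boldsymbol{r}}(f) - \langle\boldsymbol{r},\boldsymbol{n}\rangle_{k} + \sum_{i=1}^{k}(l_i-n_i)\epsilon_i,
\end{align*}
where I have used $v_M(m_{\boldsymbol{l}})+\langle\boldsymbol{r},\boldsymbol{l}\rangle_{k}\geq v_{\boldsymbol{r}}(f)$. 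Since $\epsilon_i>0$, the term valuation tends to $+\infty$ as $\boldsymbol{l}\to\infty$, so the series converges in the $\mathcal{K}$-Banach space $M$. The same inequality, taken over all $\boldsymbol{n}$, yields $v_M(f_{+\boldsymbol{a},\boldsymbol{n}})+\langle\boldsymbol{r},\boldsymbol{n}\rangle_{k}\geq v_{\boldsymbol{r}}(f)$, hence $f_{+\boldsymbol{a}}\in B_{\boldsymbol{r}}(M)$ with $v_{\boldsymbol{r}}(f_{+\boldsymbol{a}})\geq v_{\boldsymbol{r}}(f)$.

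For part (2), I compute $f(\boldsymbol{b}+\boldsymbol{a})$ using \eqref{substitiution of banach Br(M)} and expand each factor $(b_i+a_i)^{l_i}$ by the binomial theorem, obtaining a double sum
$$
f(\boldsymbol{b}+\boldsymbol{a})=\sum_{\boldsymbol{l}\in\mathbb{Z}_{\geq 0}^{k}}m_{\boldsymbol{l}}\sum_{\boldsymbol{n}\leq\boldsymbol{l}}\prod_{i=1}^{k}\binom{l_i}{n_i}b_i^{n_i}a_i^{l_i-n_i}.
$$
Since $\ord_p(b_i)>r_i$ and $\ord_p(a_i)>r_i$, one checks (via the same estimate as above, with the extra factor $\prod_i b_i^{n_i}$) that the terms of this double series have valuations tending jointly to $+\infty$, so by the non-Archimedean Fubini principle the sum can be reordered summing over $\boldsymbol{n}$ first and then $\boldsymbol{l}\geq \boldsymbol{n}$, which reproduces precisely $f_{+\boldsymbol{a}}(\boldsymbol{b})$. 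Uniqueness of an element of $B_{\boldsymbol{r}}(M)$ with this property follows from Lemma \ref{analytic funcuniqueof Brby Zp}: if $g_{1},g_{2}\in B_{\boldsymbol{r}}(M)$ both satisfied $g_{j}(\boldsymbol{b})=f(\boldsymbol{b}+\boldsymbol{a})$ on $\{\boldsymbol{b}\in\overline{\mathcal{K}}^{k}\mid \ord_p(b_i)>r_i\}$, then $g_1-g_2$ vanishes on $\{\boldsymbol{b}\in\mathbb{Z}_p^{k}\mid \ord_p(b_i)>r_i\}$, forcing $g_1=g_2$ (take $\boldsymbol{t}=(\ord_p(a_i))_{i}\geq\boldsymbol{r}$ in the lemma, after possibly enlarging each $t_i$).

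Finally, to get the reverse inequality $v_{\boldsymbol{r}}(f_{+\boldsymbol{a}})\leq v_{\boldsymbol{r}}(f)$ in (1), I apply the construction of (1) again to $f_{+\boldsymbol{a}}$ with the shift $-\boldsymbol{a}$, which is permitted since $\ord_p(-a_i)=\ord_p(a_i)>r_i$. By part (2) applied twice, both $(f_{+\boldsymbol{a}})_{+(-\boldsymbol{a})}$ and $f$ satisfy the same substitution identity $h(\boldsymbol{b})=f_{+\boldsymbol{a}}(\boldsymbol{b}-\boldsymbol{a})$ on the allowed domain, so the uniqueness part of (2) yields $(f_{+\boldsymbol{a}})_{+(-\boldsymbol{a})}=f$. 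The bound from part (1) then gives $v_{\boldsymbol{r}}(f)=v_{\boldsymbol{r}}((f_{+\boldsymbol{a}})_{+(-\boldsymbol{a})})\geq v_{\boldsymbol{r}}(f_{+\boldsymbol{a}})$, completing the equality. The main obstacle is carrying out the rearrangement of the double series in part (2) with care: one must verify that the joint valuations of the terms $m_{\boldsymbol{l}}\prod_{i}\binom{l_i}{n_i}b_i^{n_i}a_i^{l_i-n_i}$ tend to $+\infty$ as $(\boldsymbol{n},\boldsymbol{l})$ leaves every finite subset of $\{(\boldsymbol{n},\boldsymbol{l}):\boldsymbol{n}\leq\boldsymbol{l}\}$, which is where the strict inequalities $\ord_p(a_i)>r_i$ and $\ord_p(b_i)>r_i$ are both crucial.
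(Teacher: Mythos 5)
Your proposal is correct and follows the same route as the paper: prove convergence of each coefficient and the one-sided estimate $v_{\boldsymbol{r}}(f_{+\boldsymbol{a}})\geq v_{\boldsymbol{r}}(f)$, establish the substitution identity in (2) by binomial expansion together with uniqueness via Lemma \ref{analytic funcuniqueof Brby Zp}, and obtain the reverse inequality from $(f_{+\boldsymbol{a}})_{+(-\boldsymbol{a})}=f$. The only stylistic difference is that you package the rearrangement in (2) as an appeal to non-Archimedean unconditional summability, whereas the paper carries it out by hand, estimating the tail of the truncated sums $\sum_{\boldsymbol{n}\in[\boldsymbol{0}_{k},\boldsymbol{t}]}m_{\boldsymbol{n}}(\boldsymbol{b}+\boldsymbol{a})^{\boldsymbol{n}}$; these are equivalent arguments.
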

\begin{proof}
First, we prove that $\sum_{\boldsymbol{l}\in \mathbb{Z}_{\geq 0}^{k}, \boldsymbol{n}\leq \boldsymbol{l}}\left(\prod_{i=1}^{k}\begin{pmatrix}l_{i}\\ n_{i}\end{pmatrix}a_{i}^{l_{i}-n_{i}}\right)m_{\boldsymbol{l}}$ is convergent in $M$ for each $\boldsymbol{n}\in \mathbb{Z}_{\geq 0}^{k}$. We have
\begin{align}\label{Br slide propproof1}
v_{M}\left(\left(\prod_{i=1}^{k}\begin{pmatrix}l_{i}\\ n_{i}\end{pmatrix}a_{i}^{l_{i}-n_{i}}\right)m_{\boldsymbol{l}}\right)\geq \left(\sum_{i=1}^{k}l_{i}\ord_{p}(a_{i})\right)+v_{M}(m_{\boldsymbol{l}})-\sum_{i=1}^{k}n_{i}\ord_{p}(a_{i})
\end{align}
for each $\boldsymbol{l}\in \mathbb{Z}_{\geq 0}^{k}$ such that $\boldsymbol{l}\geq \boldsymbol{n}$. Since $f=(m_{\boldsymbol{n}})_{\boldsymbol{n}\in \mathbb{Z}_{\geq 0}^{k}}\in B_{\boldsymbol{r}}(M)$, we see that $\lim_{\boldsymbol{l}\rightarrow +\infty}\left(\left(\sum_{i=1}^{k}l_{i}\ord_{p}(a_{i})\right)+v_{M}(m_{\boldsymbol{l}})\right)=+\infty$, which implies that 
$$\lim_{\boldsymbol{l}\rightarrow +\infty}v_{M}\left(\left(\prod_{i=1}^{k}\begin{pmatrix}l_{i}\\ n_{i}\end{pmatrix}a_{i}^{l_{i}-n_{i}}\right)m_{\boldsymbol{l}}\right)=+\infty.$$
Thus, $\sum_{\boldsymbol{l}\in \mathbb{Z}_{\geq 0}^{k}, \boldsymbol{n}\leq \boldsymbol{l}}\left(\prod_{i=1}^{k}\begin{pmatrix}l_{i}\\ n_{i}\end{pmatrix}a_{i}^{l_{i}-n_{i}}\right)m_{\boldsymbol{l}}$ is convergent in $M$.

Next, we prove that $f_{+\boldsymbol{a}}\in B_{\boldsymbol{r}}(M)$ and $v_{\boldsymbol{r}}(f_{+\boldsymbol{a}})\geq v_{\boldsymbol{r}}(f)$. By \eqref{Br slide propproof1}, we have
\begin{multline*}
v_{M}\left(\left(\prod_{i=1}^{k}\begin{pmatrix}l_{i}\\ n_{i}\end{pmatrix}a_{i}^{l_{i}-n_{i}}\right)m_{\boldsymbol{l}}\right)+\langle \boldsymbol{r},\boldsymbol{n}\rangle_{k}\\ \geq\left(\sum_{i=1}^{k}(l_{i}-n_{i})(\ord_{p}(a_{i})-r_{i})\right)+(v_{M}(m_{\boldsymbol{l}})+\langle \boldsymbol{r},\boldsymbol{l}\rangle_{k})\geq v_{\boldsymbol{r}}(f)
\end{multline*}
for each $\boldsymbol{n},\boldsymbol{l}\in \mathbb{Z}_{\geq 0}^{k}$ such that $\boldsymbol{n}\leq \boldsymbol{l}$. 
Hence, we have 
$$v_{M}\left(\sum_{\boldsymbol{l}\in \mathbb{Z}_{\geq 0}^{k}, \boldsymbol{n}\leq \boldsymbol{l}}\left(\prod_{i=1}^{k}\begin{pmatrix}l_{i}\\ n_{i}\end{pmatrix}a_{i}^{l_{i}-n_{i}}\right)m_{\boldsymbol{l}}\right)+\langle \boldsymbol{r},\boldsymbol{n}\rangle_{k}\geq v_{\boldsymbol{r}}(f)
$$ for every $\boldsymbol{n}\in \mathbb{Z}_{\geq 0}^{k}$, and we have $f_{+\boldsymbol{a}}\in B_{\boldsymbol{r}}(M)$ and
\begin{equation}\label{Br slide propproof2}
v_{\boldsymbol{r}}(f_{+\boldsymbol{a}})\geq v_{\boldsymbol{r}}(f).
\end{equation}
Next, we prove \eqref{Br slide prop2}. Let $\boldsymbol{b}\in \overline{\K}^{k}$ such that $\ord_{p}(b_{i})>r_{i}$ with $1\leq i\leq k$. For each $\boldsymbol{t}\in \mathbb{Z}_{\geq 0}^{k}$, we have
\begin{align*}
\sum_{\boldsymbol{n}\in  [\boldsymbol{0}_{k},\boldsymbol{t}]}m_{\boldsymbol{n}}(\boldsymbol{b}+\boldsymbol{a})^{\boldsymbol{n}}=\sum_{\boldsymbol{n}\in  [\boldsymbol{0}_{k},\boldsymbol{t}]}\sum_{\boldsymbol{l}\in  [\boldsymbol{n},\boldsymbol{t}]}\left(\left(\prod_{i=1}^{k}\begin{pmatrix}l_{i}\\ n_{i}\end{pmatrix}a_{i}^{l_{i}-n_{i}}\right)m_{\boldsymbol{l}}\right)\boldsymbol{b}^{\boldsymbol{n}}
\end{align*}
where $\boldsymbol{0}_{k}=(0,\ldots, 0)\in \mathbb{Z}^{k}$. Then, we see that
\begin{align}\label{Br slide propproof3}
\begin{split}
f_{+\boldsymbol{a}}(\boldsymbol{b})-\sum_{\boldsymbol{n}\in  [\boldsymbol{0}_{k},\boldsymbol{t}]}m_{\boldsymbol{n}}(\boldsymbol{b}+\boldsymbol{a})^{\boldsymbol{n}}&=\sum_{\boldsymbol{n}\in  [\boldsymbol{0}_{k},\boldsymbol{t}]}\left(\sum_{\substack{\boldsymbol{l}\in \mathbb{Z}_{\geq 0}^{k}\\  \boldsymbol{n}\leq \boldsymbol{l},\ \boldsymbol{l}\notin [\boldsymbol{n},\boldsymbol{t}]}}\left(\prod_{i=1}^{k}\begin{pmatrix}l_{i}\\ n_{i}\end{pmatrix}a_{i}^{l_{i}-n_{i}}\right)m_{\boldsymbol{l}}\right)\boldsymbol{b}^{\boldsymbol{n}}\\
&\ \ +\sum_{\substack{\boldsymbol{n}\in \mathbb{Z}_{\geq 0}^{k}\\ \boldsymbol{n}\notin [\boldsymbol{0}_{k},\boldsymbol{t}]}}\left(\sum_{\substack{\boldsymbol{l}\in \mathbb{Z}_{\geq 0}^{k}\\  \boldsymbol{n}\leq \boldsymbol{l}}}\left(\prod_{i=1}^{k}\begin{pmatrix}l_{i}\\ n_{i}\end{pmatrix}a_{i}^{l_{i}-n_{i}}\right)m_{\boldsymbol{l}}\right)\boldsymbol{b}^{\boldsymbol{n}}
\end{split}
\end{align}
for each $\boldsymbol{t}\in \mathbb{Z}_{\geq 0}^{k}$. By \eqref{Br slide propproof1},  we have
\begin{align*}
&v_{M}\left(\left(\prod_{i=1}^{k}\begin{pmatrix}l_{i}\\ n_{i}\end{pmatrix}a_{i}^{l_{i}-n_{i}}\right)m_{\boldsymbol{l}}\boldsymbol{b}^{\boldsymbol{n}}\right) \\
& \geq\sum_{i=1}^{k}(l_{i}-n_{i})\ord_{p}(a_{i})+v_{M}(m_{\boldsymbol{l}})+\sum_{i=1}^{k}n_{i}\ord_{p}(b_{i}) \notag \\
& =\sum_{i=1}^{k}(l_{i}-n_{i})(\ord_{p}(a_{i})-r_{i})+(v_{M}(m_{\boldsymbol{l}})+\langle \boldsymbol{r},\boldsymbol{l}\rangle_{k})+\sum_{i=1}^{k}n_{i}(\ord_{p}(b_{i})-r_{i}) \notag \\
& \geq v_{\boldsymbol{r}}(f)+\sum_{i=1}^{k}n_{i}(\ord_{p}(b_{i})-r_{i}) \notag 
\end{align*}
for every $\boldsymbol{n},\boldsymbol{l}\in \mathbb{Z}_{\geq 0}^{k}$ such that $\boldsymbol{n}\leq \boldsymbol{l}$. Thus, we see that
\begin{equation}\label{Br slide propproof4}
\lim_{\boldsymbol{t}\rightarrow +\infty}\sum_{\substack{\boldsymbol{n}\in \mathbb{Z}_{\geq 0}^{k}\\ \boldsymbol{n}\notin [\boldsymbol{0}_{k},\boldsymbol{t}]}}\left(\sum_{\substack{\boldsymbol{l}\in \mathbb{Z}_{\geq 0}^{k}\\  \boldsymbol{n}\leq \boldsymbol{l}}}\left(\prod_{i=1}^{k}\begin{pmatrix}l_{i}\\ n_{i}\end{pmatrix}a_{i}^{l_{i}-n_{i}}\right)m_{\boldsymbol{l}}\right)\boldsymbol{b}^{\boldsymbol{n}}=0.
\end{equation}
Since we have
\begin{align*}
&v_{M}\left(\sum_{\boldsymbol{n}\in  [\boldsymbol{0}_{k},\boldsymbol{t}]}\left(\sum_{\substack{\boldsymbol{l}\in \mathbb{Z}_{\geq 0}^{k}\\  \boldsymbol{n}\leq \boldsymbol{l},\ \boldsymbol{l}\notin [\boldsymbol{n},\boldsymbol{t}]}}\left(\prod_{i=1}^{k}\begin{pmatrix}l_{i}\\ n_{i}\end{pmatrix}a_{i}^{l_{i}-n_{i}}\right)m_{\boldsymbol{l}}\right)\boldsymbol{b}^{\boldsymbol{n}}\right)\\
&\geq\inf_{\substack{\boldsymbol{l},\boldsymbol{n}\in \mathbb{Z}_{\geq 0}^{k}\\ \boldsymbol{l}\notin [\boldsymbol{0}_{k},\boldsymbol{t}],\ \boldsymbol{n}\leq \boldsymbol{l}}}\left\{\sum_{i=1}^{k}\left((l_{i}-n_{i})\ord_{p}(a_{i})+n_{i}\ord_{p}(b_{i})\right)+v_{M}(m_{\boldsymbol{l}})\right\}\\
&\geq \inf_{\substack{\boldsymbol{l}\in \mathbb{Z}_{\geq 0}^{k}\\ \boldsymbol{l}\notin [\boldsymbol{0}_{k},\boldsymbol{t}]}}\left\{\sum_{i=1}^{k}l_{i}\min\{\ord_{p}(a_{i}),\ord_{p}(b_{i})\}+v_{M}(m_{\boldsymbol{l}})\right\}\\
&\geq \inf_{\substack{\boldsymbol{l}\in \mathbb{Z}_{\geq 0}^{k}\\ \boldsymbol{l}\notin [\boldsymbol{0}_{k},\boldsymbol{t}]}}\left\{\sum_{i=1}^{k}l_{i}(\min\{\ord_{p}(a_{i}),\ord_{p}(b_{i})\}-r_{i})\right\}+v_{\boldsymbol{r}}(f),
\end{align*}
we see that
\begin{equation}\label{Br slide propproof5}
\lim_{\boldsymbol{t}\rightarrow +\infty}\sum_{\boldsymbol{n}\in  [\boldsymbol{0}_{k},\boldsymbol{t}]}\left(\sum_{\substack{\boldsymbol{l}\in \mathbb{Z}_{\geq 0}^{k}\\  \boldsymbol{n}\leq \boldsymbol{l},\ \boldsymbol{l}\notin [\boldsymbol{n},\boldsymbol{t}]}}\left(\prod_{i=1}^{k}\begin{pmatrix}l_{i}\\ n_{i}\end{pmatrix}a_{i}^{l_{i}-n_{i}}\right)m_{\boldsymbol{l}}\right)\boldsymbol{b}^{\boldsymbol{n}}=0.
\end{equation}
By \eqref{Br slide propproof3}, \eqref{Br slide propproof4} and \eqref{Br slide propproof5}, we see that 
$$f_{+\boldsymbol{a}}(\boldsymbol{b})-f(\boldsymbol{b}+\boldsymbol{a})=f_{+\boldsymbol{a}}(\boldsymbol{b})-\lim_{\boldsymbol{t}\rightarrow +\infty}\sum_{\boldsymbol{n}\in  [\boldsymbol{0}_{k},\boldsymbol{t}]}m_{\boldsymbol{n}}(\boldsymbol{b}+\boldsymbol{a})^{\boldsymbol{n}}=0.$$
Thus, we have $f_{+\boldsymbol{a}}(\boldsymbol{b})=f(\boldsymbol{b}+\boldsymbol{a})$ for every $\boldsymbol{b}\in \overline{\K}^{k}$ such that $\ord_{p}(b_{i})>r_{i}$ with $1\leq i\leq k$. The uniqueness of $f_{+\boldsymbol{a}}$ follows from Lemma \ref{analytic funcuniqueof Brby Zp}. We complete the proof of \eqref{Br slide prop2}.

Finally, we prove that $v_{\boldsymbol{r}}(f)=v_{\boldsymbol{r}}(f_{+\boldsymbol{a}})$. By \eqref{Br slide propproof2}, we have $v_{\boldsymbol{r}}(f_{+\boldsymbol{a}})\geq v_{\boldsymbol{r}}(f)$. Further, by the uniqueness of \eqref{Br slide prop2}, we see that $(f_{+\boldsymbol{a}})_{+(-\boldsymbol{a})}=f$. Thus, by \eqref{Br slide propproof2}, we have $v_{\boldsymbol{r}}(f)=v_{\boldsymbol{r}}((f_{+\boldsymbol{a}})_{+(-\boldsymbol{a})})\geq v_{\boldsymbol{r}}(f_{+\boldsymbol{a}})$. Thus, we have $v_{\boldsymbol{r}}(f)=v_{\boldsymbol{r}}(f_{+\boldsymbol{a}})$.
\end{proof}

Let us fix $\boldsymbol{d},\boldsymbol{e}\in \mathbb{Z}^{k}$ satisfying $\boldsymbol{e}\geq \boldsymbol{d}$. For each $1\leq i\leq k$, 
we take a $p$-adic Lie group $\Gamma_{i}$ which is isomorphic to $1+2p\mathbb{Z}_p\subset \mathbb{Q}_{p}^{\times}$ via a continuous character $\chi_{i} : \Gamma_{i} \longrightarrow \mathbb{Q}_{p}^{\times}$. Fix a topological generator $\gamma_{i}\in \Gamma_{i}$ and put $u_{i}=\chi_{i}(\gamma_{i})$ for each $1\leq i\leq k$. We define $\Gamma=\Gamma_{1}\times \cdots \times\Gamma_{k}$. Let $\mathcal{O}_{\K}[[\Gamma]]$ be the $k$-variable Iwasawa algebra. We denote by $[\ ]: \Gamma\rightarrow \mathbb{Z}_{p}[[\Gamma]]^{\times}$ the tautological inclusion map. Let $M^{0}=\{m\in M\vert v_{M}(m)\geq 0\}$. We put
\begin{equation}\label{definition of M0[[Gamma]]}
\begin{split}
M^{0}[[\Gamma]]=\mathcal{O}_{\K}[[\Gamma]]\widehat{\otimes}_{\mathcal{O}_{\K}}M^{0} 
=\varprojlim_{U}\left(\mathcal{O}_{\K}[\Gamma\slash U]\otimes_{\mathcal{O}_{\K}}M^{0}\right),
\end{split}
\end{equation}
where $U$ runs over all open subgroups of $\Gamma$. By definition, $M^{0}[[\Gamma]]$ is an $\mathcal{O}_{\K}[[\Gamma]]$-module. 
For each $\boldsymbol{m}\in \mathbb{Z}_{\geq 0}^{k}$, we denote by $(\Omega_{\boldsymbol{m}}^{[\boldsymbol{d},\boldsymbol{e}]}(\gamma_{1},\ldots, \gamma_{k}))$ 
the ideal of $\mathcal{O}_{\K}[[\Gamma]]$ generated by $\Omega_{m_{1}}^{[d_{1},e_{1}]}(\gamma_{1}),\ldots, \Omega_{m_{k}}^{[d_{k},e_{k}]}(\gamma_{k})$, 
where $\Omega_{m_{i}}^{[d_{i},e_{i}]}(\gamma_{i})=\prod_{j=d_{i}}^{e_{i}}([\gamma_{i}]^{p^{m_{i}}}-u_{i}^{jp^{m_{i}}})\in \mathcal{O}_{\K}[[\Gamma_{i}]]$ for every 
$i$ satisfying $1\leq i\leq k$. We remark that the ideal $(\Omega_{\boldsymbol{m}}^{[\boldsymbol{d},\boldsymbol{e}]}(\gamma_{1},\ldots, \gamma_{k}))$ is independent of the choice of topological generators $\gamma_{i}\in \Gamma_{i}$ for each $1\leq i\leq k$. If there is no risk of confusion, we write $(\Omega_{\boldsymbol{m}}^{[\boldsymbol{d},\boldsymbol{e}]})$ for $(\Omega_{\boldsymbol{m}}^{[\boldsymbol{d},\boldsymbol{e}]}(\gamma_{1},\ldots, \gamma_{k}))$. We regard $\varprojlim_{\boldsymbol{m}\in\mathbb{Z}_{\geq 0}^{k}}\left(\frac{M^{0}[[\Gamma]]}{(\Omega_{\boldsymbol{m}}^{[\boldsymbol{d},\boldsymbol{e}]}(\gamma_{1},\ldots, \gamma_{k}))M^{0}[[\Gamma]]}\otimes_{\mathcal{O}_{\K}}\K\right)$ and $\left(\prod_{\boldsymbol{m}\in \mathbb{Z}_{\geq 0}^{k}}\frac{M^{0}[[\Gamma]]}{(\Omega_{\boldsymbol{m}}^{[\boldsymbol{d},\boldsymbol{e}]}(\gamma_{1},\ldots, \gamma_{k}))M^{0}[[\Gamma]]}\right)\otimes_{\mathcal{O}_{\K}}\K$ as submodules of $\prod_{\boldsymbol{m}\in \mathbb{Z}_{\geq 0}^{k}}\bigg(\frac{M^{0}[[\Gamma]]}{(\Omega_{\boldsymbol{m}}^{[\boldsymbol{d},\boldsymbol{e}]}(\gamma_{1},\ldots, \gamma_{k}))M^{0}[[\Gamma]]}\linebreak\otimes_{\mathcal{O}_{\K}}\K\bigg)$ and we define an $\mathcal{O}_{\K}[[\Gamma]]\otimes_{\mathcal{O}_{\K}}\K$-module $I_{\boldsymbol{h}}^{[\boldsymbol{d},\boldsymbol{e}]}(M)$ to be 
\begin{multline}\label{generalization of the project lim for deformation ring}
I_{\boldsymbol{h}}^{[\boldsymbol{d},\boldsymbol{e}]}(M)=
\left\{(s_{\boldsymbol{m}}^{[\boldsymbol{d},\boldsymbol{e}]})_{\boldsymbol{m}}\in \varprojlim_{\boldsymbol{m}\in\mathbb{Z}_{\geq 0}^{k}} \left(\frac{M^{0}[[\Gamma]]}{(\Omega_{\boldsymbol{m}}^{[\boldsymbol{d},\boldsymbol{e}]}(\gamma_{1},\ldots, \gamma_{k}))M^{0}[[\Gamma]]}\otimes_{\mathcal{O}_{\K}}\K\right) \right. 
\\
\Bigg\vert (p^{\langle \boldsymbol{h},\boldsymbol{m}\rangle_{k}}s_{\boldsymbol{m}}^{[\boldsymbol{d},\boldsymbol{e}]})_{\boldsymbol{m}}\in 
\left. 
\left(\prod_{\boldsymbol{m}\in \mathbb{Z}_{\geq 0}^{k}}\frac{M^{0}[[\Gamma]]}{(\Omega_{\boldsymbol{m}}^{[\boldsymbol{d},\boldsymbol{e}]}(\gamma_{1},\ldots, \gamma_{k}))M^{0}[[\Gamma]]}\right)\otimes_{\mathcal{O}_{\K}}\K\right\}.
\end{multline}
For each $\boldsymbol{m}\in \mathbb{Z}_{\geq 0}^{k}$, we denote by $(\Omega_{\boldsymbol{m}}^{[\boldsymbol{d},\boldsymbol{e}]}(X_{1},\ldots, X_{k}))$ 
the ideal of $\mathcal{O}_{\K}[[X_{1},\ldots, X_{k}]]$ generated by $\Omega_{m_{1}}^{[d_{1},e_{1}]}(X_{1}),\ldots, \Omega_{m_{k}}^{[d_{k},e_{k}]}(X_{k})$, where $\Omega_{m_{i}}^{[d_{i},e_{i}]}(X_{i})=\prod_{j=d_{i}}^{e_{i}}((1+X_{i})^{p^{m_{i}}}-u_{i}^{jp^{m_{i}}})\in \mathcal{O}_{\K}[[X_{i}]]$ for every 
$i$ satisfying $1\leq i\leq k$. We also define an $\mathcal{O}_{\K}[[X_{1},\ldots, X_{k}]]\otimes_{\mathcal{O}_{\K}}\K$-module $J_{\boldsymbol{h}}^{[\boldsymbol{d},\boldsymbol{e}]}(M)$ to be
\begin{align}\label{generalization of the project lim for deformation ring Jboldsymbol}
\begin{split}
J_{\boldsymbol{h}}^{[\boldsymbol{d},\boldsymbol{e}]}(M)=&
\left\{(s_{\boldsymbol{m}}^{[\boldsymbol{d},\boldsymbol{e}]})_{\boldsymbol{m}}\in \varprojlim_{\boldsymbol{m}\in\mathbb{Z}_{\geq 0}^{k}}
\left(\frac{M^{0}[[X_{1},\ldots, X_{k}]]}{(\Omega_{\boldsymbol{m}}^{[\boldsymbol{d},\boldsymbol{e}]}(X_{1},\ldots, X_{k}))M^{0}[[X_{1},\ldots, X_{k}]]}\otimes_{\mathcal{O}_{\K}}\K\right) \right.\\
&\Bigg\vert (p^{\langle \boldsymbol{h},\boldsymbol{m}\rangle_{k}}s_{\boldsymbol{m}}^{[\boldsymbol{d},\boldsymbol{e}]})_{\boldsymbol{m}}\in 
\left. \left(\prod_{\boldsymbol{m}\in \mathbb{Z}_{\geq 0}^{k}}\frac{M^{0}[[X_{1},\ldots, X_{k}]]}{(\Omega_{\boldsymbol{m}}^{[\boldsymbol{d},\boldsymbol{e}]}(X_{1},\ldots, X_{k}))M^{0}[[X_{1},\ldots, X_{k}]]}\right)\otimes_{\mathcal{O}_{\K}}\K\right\}.
\end{split}
\end{align}
We regard 
$$\varprojlim_{\boldsymbol{m}\in\mathbb{Z}_{\geq 0}^{k}}\left(\frac{M^{0}[[X_{1},\ldots,X_{k}]]}{(\Omega_{\boldsymbol{m}}^{[\boldsymbol{d},\boldsymbol{e}]}(X_{1},\ldots, X_{k}))M^{0}[[X_{1},\ldots, X_{k}]]}\otimes_{\mathcal{O}_{\K}}\K\right)$$ 
and $$\left(\prod_{\boldsymbol{m}\in \mathbb{Z}_{\geq 0}^{k}}\frac{M^{0}[[X_{1},\ldots, X_{k}]]}{(\Omega_{\boldsymbol{m}}^{[\boldsymbol{d},\boldsymbol{e}]}(X_{1},\ldots, X_{k}))M^{0}[[X_{1},\ldots, X_{k}]]}\right)\otimes_{\mathcal{O}_{\K}}\K$$ 
as submodules of $\prod_{\boldsymbol{m}\in \mathbb{Z}_{\geq 0}^{k}}\left(\frac{M^{0}[[X_{1},\ldots, X_{k}]]}{(\Omega_{\boldsymbol{m}}^{[\boldsymbol{d},\boldsymbol{e}]}(X_{1},\ldots, X_{k}))M^{0}[[X_{1},\ldots,X_{k}]]}\otimes_{\mathcal{O}_{\K}}\K\right)$. 
Let us consider the non-canonical continuous $\mathcal{O}_{\K}$-algebra isomorphism
\begin{equation}\label{iwasaawa noncaonnical multiisom}
\alpha^{(k)}:\mathcal{O}_{\K}[[\Gamma]]\stackrel{\sim}{\rightarrow}\mathcal{O}_{\K}[[X_{1},\ldots, X_{k}]]
\end{equation}
characterized by 
$
\alpha^{(k)} ([(\gamma_{1}^{n_{1}},\ldots, \gamma_{k}^{n_{k}})]) = \prod_{i=1}^{k}(1+X_{i})^{n_{i}}$ for each $\boldsymbol{n}\in \mathbb{Z}^{k}$. We note that $M^{0}[[X_{1},\ldots, X_{k}]]$ is isomorphic to 
$$\mathcal{O}_{\K}[[X_{1},\ldots, X_{k}]]\widehat{\otimes}_{\mathcal{O}_{\K}}M^{0}= \varprojlim_{\boldsymbol{m}\in \mathbb{Z}_{\geq 0}^{k}} (\mathcal{O}_{\K}[[X_{1},\ldots, X_{k}]]\slash (\Omega_{\boldsymbol{m}}^{[\boldsymbol{0}_{k},\boldsymbol{0}_{k}]}(X_{1},\ldots,X_{k}))\otimes_{\mathcal{O}_{\K}}M^{0}),
$$ 
where $\boldsymbol{0}_{k}=(0,\ldots, 0)\in \mathbb{Z}_{\geq 0}^{k}$. We can define a non-canonical $\mathcal{O}_{\K}$-module isomorphism 
\begin{equation}\label{non-canonical continuous isomorphihsm of iwasawa module of banach}
\alpha_{M}^{(k)}:M^{0}[[\Gamma]]\stackrel{\sim}{\rightarrow}M^{0}[[X_{1},\ldots,X_{k}]]
\end{equation}
to be $c\widehat{\otimes}_{\mathcal{O}_{\K}}m\mapsto \alpha^{(k)}(c)\widehat{\otimes}_{\mathcal{O}_{\K}}m$ for each $m\in M^{0}$ and $c\in \mathcal{O}_{\K}[[\Gamma]]$. Via $\alpha_{M}^{(k)}$, we have a non-canonical $\K$-linear isomorphism 
\begin{equation}\label{noncanonical between Ioldysmbolhde and J}
I_{\boldsymbol{h}}^{[\boldsymbol{d},\boldsymbol{e}]}(M)\simeq J_{\boldsymbol{h}}^{[\boldsymbol{d},\boldsymbol{e}]}(M).
\end{equation}
\par 
Next, we introduce $[\boldsymbol{d},\boldsymbol{e}]$-admissible distributions of growth $\boldsymbol{h}$. We denote by $\mathcal{O}_{\K}[X_{1},\ldots,\linebreak X_{k}]_{\leq \boldsymbol{n}}$ with $\boldsymbol{n}\in \mathbb{Z}_{\geq 0}^{k}$ the $\mathcal{O}_{\K}$-module of $k$-variable polynomials of $j$-th degree at most $n_{j}$ for each $1\leq j\leq k$. We say that a function $f:\Gamma\rightarrow \mathcal{O}_{\K}$ is a $k$-variable locally polynomial function on $\Gamma$ of degree at most $\boldsymbol{n}\in \mathbb{Z}_{\geq 0}^{k}$ if, for each $\boldsymbol{a}\in \Gamma$, there exists a neighborhood $U$ of $\boldsymbol{a}$ in $\Gamma$
and there exists a polynomial $p_{\boldsymbol{a}}\in \mathcal{O}_{\K}[X_{1},\ldots, X_{k}]_{\leq \boldsymbol{n}}$ such that we have $f(x_{1},\ldots, x_{k})=p_{\boldsymbol{a}}(\chi_{1}(x_{1}),\ldots, \chi_{k}(x_{k}))$ on $U$. We denote by $C^{[\boldsymbol{d},\boldsymbol{e}]}(\Gamma,\mathcal{O}_{\K})$ the $\mathcal{O}_{\K}$-module which consists of functions $f:\Gamma\rightarrow \mathcal{O}_{\K}$ such that $\left(\prod_{i=1}^{k}\chi_{i}(x_{i})^{-d_{i}}\right)f(x_{1},\ldots, x_{k})$ is a $k$-variable locally polynomial function of degree at most $\boldsymbol{e}-\boldsymbol{d}$. 
For any $\mu\in \mathrm{Hom}_{\mathcal{O}_{\mathcal{K}}}( C^{[\boldsymbol{d},\boldsymbol{e}]} (\Gamma , \mathcal{O}_\mathcal{K}) , M) $, 
we set 
\begin{multline}\label{admissible condition}
v_{\boldsymbol{h}}^{[\boldsymbol{d},\boldsymbol{e}]}(\mu)=
\inf_{\substack{\boldsymbol{a}\in \Gamma,\boldsymbol{m}\in 
\mathbb{Z}_{\geq 0}^{k}\\ \boldsymbol{i}\in [\boldsymbol{d},\boldsymbol{e}]}}
\Biggl\{v_{M}\left(\displaystyle{\int_{\boldsymbol{a}\Gamma^{p^{\boldsymbol{m}}}}}\prod_{j=1}^{k}\left((\chi_{j}(x_{j})-\chi_{j}(a_{j}))^{i_{j}-d_{j}}\chi_{j}(x_{j})^{d_{j}}\right)d\mu\right)
\\
+\langle \boldsymbol{h}-(\boldsymbol{i}-\boldsymbol{d}),\boldsymbol{m}\rangle_{k}\Bigg\},
\end{multline}
where $\boldsymbol{a}\Gamma^{p^{\boldsymbol{m}}}=\prod_{j=1}^{k}a_{j}\Gamma_{j}^{p^{m_{j}}}$. We define a $\K$-subspace $\mathcal{D}^{[\boldsymbol{d},\boldsymbol{e}]}_{\boldsymbol{h}}(\Gamma , M)$ 
of $\mathrm{Hom}_{\mathcal{O}_{\mathcal{K}}}( C^{[\boldsymbol{d},\boldsymbol{e}]} (\Gamma , \mathcal{O}_\mathcal{K}),\linebreak M)$ 
by 
\begin{equation}\label{defof admissible distri space}
\mathcal{D}^{[\boldsymbol{d},\boldsymbol{e}]}_{\boldsymbol{h}}(\Gamma , M) = \{ \mu \in  \mathrm{Hom}_{\mathcal{O}_{\mathcal{K}}}( C^{[\boldsymbol{d},\boldsymbol{e}]} (\Gamma , \mathcal{O}_\mathcal{K}) , M) \ \vert \  v_{\boldsymbol{h}}^{[\boldsymbol{d},\boldsymbol{e}]}(\mu) > -\infty   \}. 
\end{equation}
An element $\mu$ of $\mathcal{D}^{[\boldsymbol{d},\boldsymbol{e}]}_{\boldsymbol{h}}(\Gamma , M)$ is called 
a $[\boldsymbol{d},\boldsymbol{e}]$-admissible distribution of growth $\boldsymbol{h}$.

\begin{pro}\label{multi admissible banach}
The pair $(\mathcal{D}_{\boldsymbol{h}}^{[\boldsymbol{d},\boldsymbol{e}]}(\Gamma,M),v_{\boldsymbol{h}}^{[\boldsymbol{d},\boldsymbol{e}]})$ is a $\K$-Banach space.
\end{pro}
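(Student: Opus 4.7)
The plan is to verify the axioms of a $\K$-Banach space directly, mimicking the strategy used for $\HH_{\boldsymbol{h}}(M)$ in Proposition \ref{HHh is a k-banach space}. First I would check that $v_{\boldsymbol{h}}^{[\boldsymbol{d},\boldsymbol{e}]}$ is a valuation. The scalar homogeneity $v_{\boldsymbol{h}}^{[\boldsymbol{d},\boldsymbol{e}]}(\lambda\mu)=\mathrm{ord}_{p}(\lambda)+v_{\boldsymbol{h}}^{[\boldsymbol{d},\boldsymbol{e}]}(\mu)$ and the ultrametric inequality for $v_{\boldsymbol{h}}^{[\boldsymbol{d},\boldsymbol{e}]}(\mu+\nu)$ follow immediately from the corresponding properties of $v_{M}$ together with the $\mathcal{O}_{\K}$-linearity of the assignment $\mu\mapsto\int_{\boldsymbol{a}\Gamma^{p^{\boldsymbol{m}}}}\prod_{j}(\chi_{j}(x_{j})-\chi_{j}(a_{j}))^{i_{j}-d_{j}}\chi_{j}(x_{j})^{d_{j}}\,d\mu$. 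For non-degeneracy, set $\varphi_{\boldsymbol{a},\boldsymbol{m},\boldsymbol{i}}:=\mathbf{1}_{\boldsymbol{a}\Gamma^{p^{\boldsymbol{m}}}}\prod_{j=1}^{k}(\chi_{j}(x_{j})-\chi_{j}(a_{j}))^{i_{j}-d_{j}}\chi_{j}(x_{j})^{d_{j}}$. The key observation is that, by Taylor expansion on each coset of $\Gamma^{p^{\boldsymbol{m}}}$, the family $\{\varphi_{\boldsymbol{a},\boldsymbol{m},\boldsymbol{i}}\}_{(\boldsymbol{a},\boldsymbol{m},\boldsymbol{i})}$ spans $C^{[\boldsymbol{d},\boldsymbol{e}]}(\Gamma,\mathcal{O}_{\K})$ as an $\mathcal{O}_{\K}$-module; hence $v_{\boldsymbol{h}}^{[\boldsymbol{d},\boldsymbol{e}]}(\mu)=+\infty$ forces $\mu(\varphi_{\boldsymbol{a},\boldsymbol{m},\boldsymbol{i}})=0$ for every triple, which forces $\mu=0$.

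Next I would establish completeness. Let $(\mu_{n})_{n}$ be a Cauchy sequence in $\mathcal{D}_{\boldsymbol{h}}^{[\boldsymbol{d},\boldsymbol{e}]}(\Gamma,M)$ and fix a triple $(\boldsymbol{a},\boldsymbol{m},\boldsymbol{i})$. The estimate $v_{M}(\mu_{n}(\varphi_{\boldsymbol{a},\boldsymbol{m},\boldsymbol{i}})-\mu_{l}(\varphi_{\boldsymbol{a},\boldsymbol{m},\boldsymbol{i}}))\geq v_{\boldsymbol{h}}^{[\boldsymbol{d},\boldsymbol{e}]}(\mu_{n}-\mu_{l})-\langle\boldsymbol{h}-(\boldsymbol{i}-\boldsymbol{d}),\boldsymbol{m}\rangle_{k}$ shows that $(\mu_{n}(\varphi_{\boldsymbol{a},\boldsymbol{m},\boldsymbol{i}}))_{n}$ is Cauchy in $M$, so it has a limit $L(\boldsymbol{a},\boldsymbol{m},\boldsymbol{i})\in M$. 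I would then extend this assignment to an $\mathcal{O}_{\K}$-linear functional $\mu:C^{[\boldsymbol{d},\boldsymbol{e}]}(\Gamma,\mathcal{O}_{\K})\to M$. Well-definedness requires compatibility under refinement: when $\boldsymbol{m}'\geq\boldsymbol{m}$, the relations expressing $\varphi_{\boldsymbol{a},\boldsymbol{m},\boldsymbol{i}}$ as a finite $\mathcal{O}_{\K}$-linear combination of $\{\varphi_{\boldsymbol{a}',\boldsymbol{m}',\boldsymbol{i}'}\}$ (arising from Taylor expansion around finer centers) are preserved at every finite level $n$ and hence pass to the limit in $M$.

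Finally I would verify $\mu\in\mathcal{D}_{\boldsymbol{h}}^{[\boldsymbol{d},\boldsymbol{e}]}(\Gamma,M)$ and $\mu_{n}\to\mu$. For $A\in\mathbb{R}$, take $N$ with $v_{\boldsymbol{h}}^{[\boldsymbol{d},\boldsymbol{e}]}(\mu_{n}-\mu_{l})\geq A$ for $n,l\geq N$; passing $l\to\infty$ in the inequality $v_{M}(\mu_{n}(\varphi_{\boldsymbol{a},\boldsymbol{m},\boldsymbol{i}})-\mu_{l}(\varphi_{\boldsymbol{a},\boldsymbol{m},\boldsymbol{i}}))+\langle\boldsymbol{h}-(\boldsymbol{i}-\boldsymbol{d}),\boldsymbol{m}\rangle_{k}\geq A$ yields the same bound with $\mu_{l}$ replaced by $\mu$, uniformly in $(\boldsymbol{a},\boldsymbol{m},\boldsymbol{i})$. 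Combined with $v_{\boldsymbol{h}}^{[\boldsymbol{d},\boldsymbol{e}]}(\mu_{N})>-\infty$, this simultaneously gives $v_{\boldsymbol{h}}^{[\boldsymbol{d},\boldsymbol{e}]}(\mu)>-\infty$ and $v_{\boldsymbol{h}}^{[\boldsymbol{d},\boldsymbol{e}]}(\mu_{n}-\mu)\geq A$ for $n\geq N$. The main obstacle I anticipate is the middle step: verifying that the collection of limits $L(\boldsymbol{a},\boldsymbol{m},\boldsymbol{i})$ assembles into a genuine $\mathcal{O}_{\K}$-linear functional on $C^{[\boldsymbol{d},\boldsymbol{e}]}(\Gamma,\mathcal{O}_{\K})$, which amounts to a distributional analogue of the projective-system compatibility built into the definition of $I_{\boldsymbol{h}}^{[\boldsymbol{d},\boldsymbol{e}]}(M)$.
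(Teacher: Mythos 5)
Your proof is correct, and it diverges from the paper's argument in one genuine way: you work throughout with the shifted test functions $\varphi_{\boldsymbol{a},\boldsymbol{m},\boldsymbol{i}}=\mathbf{1}_{\boldsymbol{a}\Gamma^{p^{\boldsymbol{m}}}}\prod_{j}(\chi_{j}(x_{j})-\chi_{j}(a_{j}))^{i_{j}-d_{j}}\chi_{j}(x_{j})^{d_{j}}$, which are exactly the integrands occurring in the definition \eqref{admissible condition} of $v_{\boldsymbol{h}}^{[\boldsymbol{d},\boldsymbol{e}]}$, so the Cauchy estimate $v_{M}(\mu_{n}(\varphi)-\mu_{l}(\varphi))\geq v_{\boldsymbol{h}}^{[\boldsymbol{d},\boldsymbol{e}]}(\mu_{n}-\mu_{l})-\langle\boldsymbol{h}-(\boldsymbol{i}-\boldsymbol{d}),\boldsymbol{m}\rangle_{k}$ is immediate. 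The paper instead works with the unshifted spanning set $\mathbf{1}_{\boldsymbol{a}\Gamma^{p^{\boldsymbol{m}}}}\prod_{j}\chi_{j}^{i_{j}}$, and to bound integrals against these it must first prove the auxiliary inequality \eqref{multi admissible banach1}, which is established by an induction on $\boldsymbol{i}$ in the lexicographic order. Your route is therefore a bit more economical for the purposes of this proposition, while the paper's inequality \eqref{multi admissible banach1} is the more useful form when one later compares $\mathcal{D}_{\boldsymbol{h}}^{[\boldsymbol{d},\boldsymbol{e}]}(\Gamma,M)$ with the Iwasawa-module description $I_{\boldsymbol{h}}^{[\boldsymbol{d},\boldsymbol{e}]}(M)$, since the interpolation there is naturally phrased against the monomials $\prod\chi_{j}^{i_{j}}$.

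One small remark: the ``main obstacle'' you flag (coherence of the limits $L(\boldsymbol{a},\boldsymbol{m},\boldsymbol{i})$ under refinement) dissolves if you instead define $\mu(f):=\lim_{n}\mu_{n}(f)$ for \emph{every} $f\in C^{[\boldsymbol{d},\boldsymbol{e}]}(\Gamma,\mathcal{O}_{\K})$ at once: any such $f$ is a finite $\mathcal{O}_{\K}$-linear combination of the $\varphi_{\boldsymbol{a},\boldsymbol{m},\boldsymbol{i}}$ (the transition matrices between the shifted and unshifted monomials on each coset have $\mathcal{O}_{\K}$ entries and unit determinant, since the $\chi_{j}(a_{j})$ are units), so $(\mu_{n}(f))_{n}$ is Cauchy in $M$, and the pointwise limit is automatically an $\mathcal{O}_{\K}$-linear functional — no separate compatibility check is required. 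This is also how the paper handles the extension step.
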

\begin{proof}
First, we will show that
\begin{equation}\label{multi admissible banach1}
v_{M}\left(\int_{\boldsymbol{a}\boldsymbol{\Gamma}^{p^{\boldsymbol{m}}}}\prod_{j=1}^{k}\chi_{j}(x_{j})^{i_{j}}d\mu\right)\geq -\langle\boldsymbol{h},\boldsymbol{m}\rangle_{k}+v_{\boldsymbol{h}}^{[\boldsymbol{d},\boldsymbol{e}]}(\mu)
\end{equation}
for every $\mu\in \mathcal{D}_{\boldsymbol{h}}^{[\boldsymbol{d},\boldsymbol{e}]}(\Gamma,\K)$, $\boldsymbol{a}\in \Gamma$, $\boldsymbol{m}\in \mathbb{Z}_{\geq 0}^{k}$ and 
$\boldsymbol{i} = 
(i_j)\in [\boldsymbol{d},\boldsymbol{e}]$. {To prove \eqref{multi admissible banach1}, it suffices to prove the followings:
\begin{enumerate}
\item The inequality \eqref{multi admissible banach1} holds when $\boldsymbol{i}=\boldsymbol{d}$.
\item Let $\boldsymbol{i}\in [\boldsymbol{d},\boldsymbol{e}]$ be an element satisfying $\boldsymbol{i}\neq \boldsymbol{d}$. 
Assume that the inequality \eqref{multi admissible banach1} holds for each $\boldsymbol{j}\in [\boldsymbol{d},\boldsymbol{i}]$ satisfying $\boldsymbol{j}\neq \boldsymbol{i}$. 
Then the inequality \eqref{multi admissible banach1} holds for $\boldsymbol{i}$.
\end{enumerate}}
{By the definition \eqref{admissible condition} of 
$v_{\boldsymbol{h}}^{[\boldsymbol{d},\boldsymbol{e}]}(\mu)$, we have 
\begin{align*}
v_{M}\left(\displaystyle{\int_{\boldsymbol{a}\Gamma^{p^{\boldsymbol{m}}}}}\prod_{j=1}^{k}\chi_{j}(x_{j})^{d_{j}}d\mu\right)
+\langle \boldsymbol{h},\boldsymbol{m}\rangle_{k}
\geq 
v_{\boldsymbol{h}}^{[\boldsymbol{d},\boldsymbol{e}]}(\mu)
\end{align*}
for every $\mu\in \mathcal{D}_{\boldsymbol{h}}^{[\boldsymbol{d},\boldsymbol{e}]}(\Gamma,\K)$, $\boldsymbol{a}\in \Gamma$, $\boldsymbol{m}\in \mathbb{Z}_{\geq 0}^{k}$.  
By moving $\langle \boldsymbol{h},\boldsymbol{m}\rangle_{k}$ to the right hand-side, we have the desired inequality \eqref{multi admissible banach1} when $\boldsymbol{i}=\boldsymbol{d}$.} 
{
Let us assume that $\boldsymbol{i}>\boldsymbol{d}$ and we have
\begin{equation}\label{multi admissible banach1 tver}
v_{M}\left(\int_{\boldsymbol{a}\boldsymbol{\Gamma}^{p^{\boldsymbol{m}}}}\prod_{j=1}^{k}\chi_{j}(x_{j})^{t_{j}}d\mu\right)\geq -\langle\boldsymbol{h},\boldsymbol{m}\rangle_{k}+v_{\boldsymbol{h}}^{[\boldsymbol{d},\boldsymbol{e}]}(\mu)
\end{equation}
for every $\mu\in \mathcal{D}_{\boldsymbol{h}}^{[\boldsymbol{d},\boldsymbol{e}]}(\Gamma,\K)$, $\boldsymbol{a}\in \Gamma$, $\boldsymbol{m}\in \mathbb{Z}_{\geq 0}^{k}$ and $\boldsymbol{t}\in [\boldsymbol{d},\boldsymbol{i}]$ such that $\boldsymbol{t}\neq \boldsymbol{i}$. We have 
\small 
\begin{align*}
\prod_{j=1}^{k}\chi_{j}(x_{j})^{i_{j}}
=\prod_{j=1}^{k}(\chi_{j}(x_{j})-\chi_{j}(a_{j}))^{i_{j}-d_{j}}\chi_{j}(x_{j})^{d_{j}}
-\sum_{\substack{\boldsymbol{t}\in [\boldsymbol{d},\boldsymbol{i}]
\\ 
\boldsymbol{t}\neq \boldsymbol{i}}}\left(\prod_{j=1}^{k}\begin{pmatrix}i_{j}-d_{j}\\ t_{j}-d_{j}\end{pmatrix}(-\chi_{j}(a_{j}))^{i_{j}-t_{j}}\chi_{j}(x_{j})^{t_{j}}\right).
\end{align*}
Hence we have 
\begin{multline}
\begin{split}
\label{multi admissible banach eq 1.5}
v_{M}\left(\int_{\boldsymbol{a}\boldsymbol{\Gamma}^{p^{\boldsymbol{m}}}}\prod_{j=1}^{k}\chi_{j}(x_{j})^{i_{j}}d\mu\right)\geq \min\Bigg\{v_{M}\left(\int_{\boldsymbol{a}\boldsymbol{\Gamma}^{p^{\boldsymbol{m}}}}\prod_{j=1}^{k}(\chi_{j}(x_{j})-\chi_{j}(a_{j}))^{i_{j}-d_{j}}\chi_{j}(x_{j})^{d_{j}}d\mu\right),\\
v_{M}\bigg(\sum_{\substack{\boldsymbol{t}\in [\boldsymbol{d},\boldsymbol{i}]
\\ 
\boldsymbol{t}\neq \boldsymbol{i}}}\prod_{j=1}^{k}\begin{pmatrix}i_{j}-d_{j}\\ t_{j}-d_{j}\end{pmatrix}(-\chi_{j}(a_{j}))^{i_{j}-t_{j}}\int_{\boldsymbol{a}\boldsymbol{\Gamma}^{p^{\boldsymbol{m}}}}\chi_{j}(x_{j})^{t_{j}}d\mu\bigg)\Bigg\}.
\end{split}
\end{multline}
\normalsize 
By the definition \eqref{admissible condition} of 
$v_{\boldsymbol{h}}^{[\boldsymbol{d},\boldsymbol{e}]}(\mu)$, we have 
\begin{align}\label{multi admissible banach eq 1.501}
v_{M}\left(\int_{\boldsymbol{a}\boldsymbol{\Gamma}^{p^{\boldsymbol{m}}}}\prod_{j=1}^{k}(\chi_{j}(x_{j})-\chi_{j}(a_{j}))^{i_{j}-d_{j}}\chi_{j}(x_{j})^{d_{j}}d\mu\right)
+\langle \boldsymbol{h}-(\boldsymbol{i}-\boldsymbol{d}),\boldsymbol{m}\rangle_{k}.
\geq 
v_{\boldsymbol{h}}^{[\boldsymbol{d},\boldsymbol{e}]}(\mu)
\end{align}
for every $\boldsymbol{a}\in \Gamma$, $\boldsymbol{m}\in \mathbb{Z}_{\geq 0}^{k}$ and $\boldsymbol{i}\in [\boldsymbol{d},\boldsymbol{e}]$. By 
moving $\langle \boldsymbol{h}-(\boldsymbol{i}-\boldsymbol{d}),\boldsymbol{m}\rangle_{k}$ in the inequality \eqref{multi admissible banach eq 1.501} to the right-hand side, 
we obtain 
\begin{align*}
v_{M}\left(\int_{\boldsymbol{a}\boldsymbol{\Gamma}^{p^{\boldsymbol{m}}}}\prod_{j=1}^{k}(\chi_{j}(x_{j})-\chi_{j}(a_{j}))^{i_{j}-d_{j}}\chi_{j}(x_{j})^{d_{j}}d\mu\right)&\geq-\langle \boldsymbol{h}-(\boldsymbol{i}-\boldsymbol{d}),\boldsymbol{m}\rangle_{k}+v_{\boldsymbol{h}}^{[\boldsymbol{d},\boldsymbol{e}]}(\mu)
\end{align*}
for every $\mu\in \mathcal{D}_{\boldsymbol{h}}^{[\boldsymbol{d},\boldsymbol{e}]}(\Gamma,\K)$, $\boldsymbol{a}\in \Gamma$, $\boldsymbol{m}\in \mathbb{Z}_{\geq 0}^{k}$ and 
$\boldsymbol{i} \in [\boldsymbol{d},\boldsymbol{e}]$. 
Since we have 
$$
\langle \boldsymbol{h}-(\boldsymbol{i}-\boldsymbol{d}),\boldsymbol{m}\rangle_{k}=\langle \boldsymbol{h},\boldsymbol{m}\rangle_{k}-\langle (\boldsymbol{i}-\boldsymbol{d}),
\boldsymbol{m}\rangle_{k}\leq \langle \boldsymbol{h},\boldsymbol{m}\rangle_{k}
$$ 
for every $\boldsymbol{i}\in [\boldsymbol{d},\boldsymbol{e}]$ and $\boldsymbol{m}\in \mathbb{Z}_{\geq 0}^{k}$, 
we have
\begin{equation}\label{multi admissible banach eq 1.51}
v_{M}\left(\int_{\boldsymbol{a}\boldsymbol{\Gamma}^{p^{\boldsymbol{m}}}}\prod_{j=1}^{k}(\chi_{j}(x_{j})-\chi_{j}(a_{j}))^{i_{j}-d_{j}}\chi_{j}(x_{j})^{d_{j}}d\mu\right)\geq -\langle \boldsymbol{h},\boldsymbol{m}\rangle_{k}+v_{\boldsymbol{h}}^{[\boldsymbol{d},\boldsymbol{e}]}(\mu)
\end{equation}
for every $\boldsymbol{a}\in \Gamma$, $\boldsymbol{m}\in \mathbb{Z}_{\geq 0}^{k}$ and $\boldsymbol{i} \in [\boldsymbol{d},\boldsymbol{e}]$. 
On the other hand, by the properties of valuations, we have
\begin{align*}
& v_{M}\left(\sum_{\substack{\boldsymbol{t}\in [\boldsymbol{d},\boldsymbol{i}]
\\ 
\boldsymbol{t}\neq \boldsymbol{i}}}\prod_{j=1}^{k}\begin{pmatrix}i_{j}-d_{j}\\ t_{j}-d_{j}\end{pmatrix}(-\chi_{j}(a_{j}))^{i_{j}-t_{j}}\int_{\boldsymbol{a}\boldsymbol{\Gamma}^{p^{\boldsymbol{m}}}}\chi_{j}(x_{j})^{t_{j}}d\mu\right)
\\
& \geq \min_{\substack{\boldsymbol{t}\in [\boldsymbol{d},\boldsymbol{i}]
\\ 
 \boldsymbol{t}\neq \boldsymbol{i}}}\left\{v_{M}\left(\prod_{j=1}^{k}\begin{pmatrix}i_{j}-d_{j}\\ t_{j}-d_{j}\end{pmatrix}(-\chi_{j}(a_{j}))^{i_{j}-t_{j}}\int_{\boldsymbol{a}\boldsymbol{\Gamma}^{p^{\boldsymbol{m}}}}\prod_{j=1}^{k}\chi_{j}(x_{j})^{t_{j}}d\mu\right)\right\}\\
& =\min_{\substack{\boldsymbol{t}\in [\boldsymbol{d},\boldsymbol{i}]
\\ 
\boldsymbol{t}\neq \boldsymbol{i}}}\left\{\ord_{p}\left(\prod_{j=1}^{k}\begin{pmatrix}i_{j}-d_{j}\\ t_{j}-d_{j}\end{pmatrix}(-\chi_{j}(a_{j}))^{i_{j}-t_{j}}\right)+v_{M}\left(\int_{\boldsymbol{a}\boldsymbol{\Gamma}^{p^{\boldsymbol{m}}}}\prod_{j=1}^{k}\chi_{j}(x_{j})^{t_{j}}d\mu\right)\right\} .
\end{align*}
Since $\ord_{p}\left(\prod_{j=1}^{k}\begin{pmatrix}i_{j}-d_{j}\\ t_{j}-d_{j}\end{pmatrix}(-\chi_{j}(a_{j}))^{i_{j}-t_{j}}\right)\geq 0$, we have 
\small  
\begin{multline*}
v_{M}\left(\sum_{\substack{\boldsymbol{t}\in [\boldsymbol{d},\boldsymbol{i}]
\\ 
\boldsymbol{t}\neq \boldsymbol{i}}}\prod_{j=1}^{k}\begin{pmatrix}i_{j}-d_{j}\\ t_{j}-d_{j}\end{pmatrix}(-\chi_{j}(a_{j}))^{i_{j}-t_{j}}\int_{\boldsymbol{a}\boldsymbol{\Gamma}^{p^{\boldsymbol{m}}}}\chi_{j}(x_{j})^{t_{j}}d\mu\right)\\
\geq \min_{\substack{\boldsymbol{t}\in [\boldsymbol{d},\boldsymbol{i}]
\\ 
\boldsymbol{t}\neq \boldsymbol{i}}}\left\{v_{M}\left(\int_{\boldsymbol{a}\boldsymbol{\Gamma}^{p^{\boldsymbol{m}}}}\prod_{j=1}^{k}\chi_{j}(x_{j})^{t_{j}}d\mu\right)\right\}.
\end{multline*}
\normalsize 
Since \eqref{multi admissible banach1 tver} holds for every $\boldsymbol{a}\in \Gamma$, $\boldsymbol{m}\in \mathbb{Z}_{\geq 0}^{k}$ and $\boldsymbol{t}\in [\boldsymbol{d},\boldsymbol{i}]$ with $\boldsymbol{t}\neq \boldsymbol{i}$, the above inequality implies 
\begin{equation}\label{multi admissible banach eq 1.52}
v_{M}\left(\sum_{\substack{\boldsymbol{t}\in [\boldsymbol{d},\boldsymbol{i}]
\\ 
\boldsymbol{t}\neq \boldsymbol{i}}}\prod_{j=1}^{k}\begin{pmatrix}i_{j}-d_{j}\\ t_{j}-d_{j}\end{pmatrix}(-\chi_{j}(a_{j}))^{i_{j}-t_{j}}\int_{\boldsymbol{a}\boldsymbol{\Gamma}^{p^{\boldsymbol{m}}}}\chi_{j}(x_{j})^{t_{j}}d\mu\right)\geq -\langle\boldsymbol{h},\boldsymbol{m}\rangle_{k}+v_{\boldsymbol{h}}^{[\boldsymbol{d},\boldsymbol{e}]}(\mu).
\end{equation}  
By \eqref{multi admissible banach eq 1.5},\eqref{multi admissible banach eq 1.51} and \eqref{multi admissible banach eq 1.52}, we deduce the desired inequality \eqref{multi admissible banach1}.}

By \eqref{multi admissible banach1}, we see that $v_{\boldsymbol{h}}^{[\boldsymbol{d},\boldsymbol{e}]}(\mu)=+\infty$ if and only if $\mu=0$. It is easy to check that 
\begin{align*}
& v_{\boldsymbol{h}}^{[\boldsymbol{d},\boldsymbol{e}]}(\mu+\nu)\geq \min\{v_{\boldsymbol{h}}^{[\boldsymbol{d},\boldsymbol{e}]}(\mu),v_{\boldsymbol{h}}^{[\boldsymbol{d},\boldsymbol{e}]}(\nu)\}, \\ 
& v_{\boldsymbol{h}}^{[\boldsymbol{d},\boldsymbol{e}]}(a\mu)=\ord_{p}(a)+v_{\boldsymbol{h}}^{[\boldsymbol{d},\boldsymbol{e}]}(\mu).
\end{align*}
Hence, $v_{\boldsymbol{h}}^{[\boldsymbol{d},\boldsymbol{e}]}$ is a valuation on $\mathcal{D}_{\boldsymbol{h}}^{[\boldsymbol{d},\boldsymbol{e}]}(\Gamma,M)$. 
\par 
Next, we prove that $\mathcal{D}_{\boldsymbol{h}}^{[\boldsymbol{d},\boldsymbol{e}]}(\Gamma,M)$ is complete with respect to $v_{\boldsymbol{h}}^{[\boldsymbol{d},\boldsymbol{e}]}$. Let $(\mu_{n})_{n\geq 0}$ be a Cauchy sequence of $\mathcal{D}_{\boldsymbol{h}}^{[\boldsymbol{d},\boldsymbol{e}]}(\Gamma,M)$. {By \eqref{multi admissible banach1}, we have 
\begin{multline*}
v_{M}\left(\int_{\boldsymbol{a}\boldsymbol{\Gamma}^{p^{\boldsymbol{m}}}}\prod_{j=1}^{k}\chi_{j}(x_{j})^{i_{j}}d\mu_{n_{1}}-\int_{\boldsymbol{a}\boldsymbol{\Gamma}^{p^{\boldsymbol{m}}}}\prod_{j=1}^{k}\chi_{j}(x_{j})^{i_{j}}d\mu_{n_{2}}\right)\\
=v_{M}\left(\int_{\boldsymbol{a}\boldsymbol{\Gamma}^{p^{\boldsymbol{m}}}}\prod_{j=1}^{k}\chi_{j}(x_{j})^{i_{j}}(d\mu_{n_{1}}-d\mu_{n_{2}})\right)\geq  -\langle\boldsymbol{h},\boldsymbol{m}\rangle_{k}+v_{\boldsymbol{h}}^{[\boldsymbol{d},\boldsymbol{e}]}(\mu_{n_{1}}-\mu_{n_{2}})
\end{multline*}
for every $n_{1},n_{2}\in \mathbb{Z}_{\geq 0}$, $\boldsymbol{a}\in \Gamma$, $\boldsymbol{m}\in \mathbb{Z}_{\geq 0}^{k}$ and $\boldsymbol{i}\in [\boldsymbol{d},\boldsymbol{e}]$, and $\left\{\int_{\boldsymbol{a}\Gamma^{p^{\boldsymbol{m}}}}\prod_{j=1}^{k}\chi_{j}(x_{j})^{i_{j}}d\mu_{n}\right\}_{n\geq 0}$ is a Cauchy sequence of $M$. For each $f\in C^{[\boldsymbol{d},\boldsymbol{e}]}(\Gamma, \mathcal{O}_{\K})$, there exists an 
element $\boldsymbol{m}\in \mathbb{Z}_{\geq 0}^{k}$ such that 
we have 
$$
f(x_{1},\ldots, x_{k})=\sum_{\boldsymbol{a}\in \Gamma\slash \Gamma^{p^{\boldsymbol{m}}}}1_{\boldsymbol{a}\Gamma^{p^{\boldsymbol{m}}}}(x_{1},\ldots, x_{k})\sum_{\boldsymbol{i}\in [\boldsymbol{d},\boldsymbol{e}]}c^{(\boldsymbol{a})}_{\boldsymbol{i}}\prod_{j=1}^{k}\chi_{j}(x_{j})^{i_{j}}
$$ 
with $c^{(\boldsymbol{a})}_{\boldsymbol{i}}\in \mathcal{O}_{\K}$ where $1_{\boldsymbol{a}\Gamma^{p^{\boldsymbol{m}}}}(x_{1},\ldots,x_{k})$ is the characteristic function of $\boldsymbol{a}\Gamma^{p^{\boldsymbol{m}}}$. Then, we see that $\left\{\int_{\Gamma}fd\mu_{n}\right\}_{n\geq 0}$ is also a Cauchy sequence of $M$. Since $M$ is complete, we have a limit $\displaystyle{\lim_{n\rightarrow +\infty}}\int_{\Gamma}fd\mu_{n}$ in $M$. By setting 
\begin{equation}\label{multi admissible banach eq in the proof2}
\int_{\Gamma}fd\mu^{\prime}=\displaystyle{\lim_{n\rightarrow +\infty}}\int_{\Gamma}fd\mu_{n}
\end{equation}
for each $f\in C^{[\boldsymbol{d},\boldsymbol{e}]}(\Gamma,\mathcal{O}_{\K})$, we have $\mu^{\prime} \in \mathrm{Hom}_{\mathcal{O}_{\K}}(C^{[\boldsymbol{d},\boldsymbol{e}]}(\Gamma,\mathcal{O}_{\K}),M)$. By \eqref{multi admissible banach eq in the proof2}, we have
\begin{align*}
& v_{M}\left(\displaystyle{\int_{\boldsymbol{a}\Gamma^{p^{\boldsymbol{m}}}}}\prod_{j=1}^{k}\left((\chi_{j}(x_{j})-\chi_{j}(a_{j}))^{i_{j}-d_{j}}\chi_{j}(x_{j})^{d_{j}}\right)d\mu^{\prime}\right)+\langle \boldsymbol{h}-(\boldsymbol{i}-\boldsymbol{d}),\boldsymbol{m}\rangle_{k}
\\ 
& =\lim_{n\rightarrow +\infty}v_{M}\left(\displaystyle{\int_{\boldsymbol{a}\Gamma^{p^{\boldsymbol{m}}}}}\prod_{j=1}^{k}\left((\chi_{j}(x_{j})-\chi_{j}(a_{j}))^{i_{j}-d_{j}}\chi_{j}(x_{j})^{d_{j}}\right)d\mu_{n}\right)+\langle \boldsymbol{h}-(\boldsymbol{i}-\boldsymbol{d}),\boldsymbol{m}\rangle_{k}\\
& \geq \inf\{v_{\boldsymbol{h}}^{[\boldsymbol{d},\boldsymbol{e}]}(\mu_{n})\}_{n\in \mathbb{Z}_{\geq 0}}>-\infty
\end{align*}
for every $\boldsymbol{m}\in \mathbb{Z}_{\geq 0}^{k}$, $\boldsymbol{a}\in \Gamma$ and $\boldsymbol{i}\in[\boldsymbol{d},\boldsymbol{e}]$. Thus, $\mu^{\prime}\in \mathcal{D}_{\boldsymbol{h}}^{[\boldsymbol{d},\boldsymbol{e}]}(\Gamma,M)$. We prove that $\mu^{\prime}={{\lim}_{n\rightarrow +\infty}}\mu_{n}$. Let $A>0$. There exists an integer $N\in \mathbb{Z}_{\geq 0}$ such that we have $v_{\boldsymbol{h}}^{[\boldsymbol{d},\boldsymbol{e}]}(\mu_{n_{1}}-\mu_{n_{2}})\geq A$ for every $n_{1},n_{2}\geq N$. Therefore, if $n_{1},n_{2}\geq N$, we have 
\begin{align*}
v_{M}\left(\displaystyle{\int_{\boldsymbol{a}\Gamma^{p^{\boldsymbol{m}}}}}\prod_{j=1}^{k}\left((\chi_{j}(x_{j})-\chi_{j}(a_{j}))^{i_{j}-d_{j}}\chi_{j}(x_{j})^{d_{j}}\right)d(\mu_{n_{1}}-\mu_{n_{2}})\right)+\langle \boldsymbol{h}-(\boldsymbol{i}-\boldsymbol{d}),\boldsymbol{m}\rangle_{k}\\
\geq v_{\boldsymbol{h}}^{[\boldsymbol{d},\boldsymbol{e}]}(\mu_{n_{1}}-\mu_{n_{2}})\geq A
\end{align*}
for every $\boldsymbol{a}\in \Gamma$, $\boldsymbol{m}\in \mathbb{Z}_{\geq 0}^{k}$ and $\boldsymbol{i}\in [\boldsymbol{d},\boldsymbol{e}]$. By \eqref{multi admissible banach eq in the proof2}, if $n_{2}\geq N$, we see that 
\begin{align}\label{multi admissible banach eq in the proof3}
& v_{M}\left(\displaystyle{\int_{\boldsymbol{a}\Gamma^{p^{\boldsymbol{m}}}}}\prod_{j=1}^{k}\left((\chi_{j}(x_{j})-\chi_{j}(a_{j}))^{i_{j}-d_{j}}\chi_{j}(x_{j})^{d_{j}}\right)d(\mu^{\prime}-\mu_{n_{2}})\right)+\langle \boldsymbol{h}-(\boldsymbol{i}-\boldsymbol{d}),\boldsymbol{m}\rangle_{k}\\
& =\lim_{n_{1}\rightarrow +\infty}v_{M}\left(\displaystyle{\int_{\boldsymbol{a}\Gamma^{p^{\boldsymbol{m}}}}}\prod_{j=1}^{k}\left((\chi_{j}(x_{j})-\chi_{j}(a_{j}))^{i_{j}-d_{j}}\chi_{j}(x_{j})^{d_{j}}\right)d(\mu_{n_{1}}-\mu_{n_{2}})\right)\notag \\ 
& +\langle \boldsymbol{h}-(\boldsymbol{i}-\boldsymbol{d}),\boldsymbol{m}\rangle_{k} \geq A \notag
\end{align}
for every $\boldsymbol{a}\in \Gamma$, $\boldsymbol{m}\in \mathbb{Z}_{\geq 0}^{k}$ and $\boldsymbol{i}\in [\boldsymbol{d},\boldsymbol{e}]$. By \eqref{multi admissible banach eq in the proof3}, we have $v_{\boldsymbol{h}}^{[\boldsymbol{d},\boldsymbol{e}]}(\mu^{\prime}-\mu_{n})\geq A$ for every $n\geq N$. Thus, we see that $\mu^{\prime}=\lim_{n\rightarrow +\infty}\mu_{n}$.}
\end{proof}
Let $\mu\in \mathrm{Hom}_{\mathcal{O}_{\K}}(C^{[\boldsymbol{d},\boldsymbol{e}]}(\Gamma,\mathcal{O}_{\K}),\K)$ and $\nu\in \mathrm{Hom}_{\mathcal{O}_{\K}}(C^{[\boldsymbol{d},\boldsymbol{e}]}(\Gamma,\mathcal{O}_{\K}),M)$. We can define a convolution $\mu*\nu\in \mathrm{Hom}_{\mathcal{O}_{\K}}(C^{[\boldsymbol{d},\boldsymbol{e}]}(\Gamma,\mathcal{O}_{\K}),M)$ to be 
\begin{equation}
\int_{\Gamma} f(\boldsymbol{x})d(\mu*\nu)=\int_{\Gamma}\left(\int_{\Gamma}f(\boldsymbol{x}\boldsymbol{y})d\mu(\boldsymbol{x})\right)d\nu(\boldsymbol{y})\end{equation}
for each $f\in C^{[\boldsymbol{d},\boldsymbol{e}]}(\Gamma,\mathcal{O}_{\K})$. 
{To verify that this product is well-defined, we will show that, for each $f\in C^{[\boldsymbol{d},\boldsymbol{e}]}(\Gamma,\mathcal{O}_{\K})$, 
the function $\boldsymbol{y}\mapsto \int_{\Gamma}f(\boldsymbol{x}\boldsymbol{y})d\mu(\boldsymbol{x})$ is in $C^{[\boldsymbol{d},\boldsymbol{e}]}(\Gamma,\mathcal{O}_{\K})\otimes_{\mathcal{O}_{\K}}\K$. 
If $f(\boldsymbol{x})=1_{\boldsymbol{a}\Gamma^{p^{\boldsymbol{m}}}}(\boldsymbol{x})\prod_{j=1}^{k}\chi_{j}(x_{j})^{i_{j}}$ for $\boldsymbol{i}\in [\boldsymbol{d},\boldsymbol{e}]$, 
where $1_{\boldsymbol{a}\Gamma^{p^{\boldsymbol{m}}}}(\boldsymbol{x})$ is the characteristic function on $\boldsymbol{a}\Gamma^{p^{\boldsymbol{m}}}$ with $\boldsymbol{a}\in \Gamma$, $\boldsymbol{m}\in \mathbb{Z}_{\geq 0}^{k}$, we have 
\begin{equation}\label{for convolution equation}
\small 
\int_{\Gamma}1_{\boldsymbol{a}\Gamma^{p^{\boldsymbol{m}}}}(\boldsymbol{x}\boldsymbol{y})\prod_{j=1}^{k}\chi_{j}(x_{j}y_{j})^{i_{j}}d\mu(\boldsymbol{x})
=\sum_{\boldsymbol{b}\in \Gamma\slash \Gamma^{p^{\boldsymbol{m}}}}1_{\boldsymbol{b}\Gamma^{p^{\boldsymbol{m}}}}(\boldsymbol{y})\prod_{j=1}^{k}\chi_{j}(y_{j})^{i_{j}}\int_{\boldsymbol{a}\boldsymbol{b}^{-1}\Gamma^{p^{\boldsymbol{m}}}}\prod_{j=1}^{k}\chi_{j}(x_{j})^{i_{j}}d\mu(\boldsymbol{x}).
\normalsize 
\end{equation}
In this situation, 
 the function $\boldsymbol{y}\mapsto \int_{\Gamma}f(\boldsymbol{x}\boldsymbol{y})d\mu(\boldsymbol{x})$ is in $C^{[\boldsymbol{d},\boldsymbol{e}]}(\Gamma,\mathcal{O}_{\K})\otimes_{\mathcal{O}_{\K}}\K$ 
by \eqref{for convolution equation}. Since every function $f\in C^{[\boldsymbol{d},\boldsymbol{e}]}(\Gamma,\mathcal{O}_{\K})$ is a linear combination of $1_{\boldsymbol{a}\Gamma^{p^{\boldsymbol{m}}}}(\boldsymbol{x})\prod_{j=1}^{k}\chi_{j}(x_{j})^{i_{j}}$ with $\boldsymbol{a}\in \Gamma$, $\boldsymbol{m}\in \mathbb{Z}_{\geq 0}^{k}$ and $\boldsymbol{i}\in [\boldsymbol{d},\boldsymbol{e}]$ over $\mathcal{O}_{\K}$, the function $\boldsymbol{y}\mapsto \int_{\Gamma}f(\boldsymbol{x}\boldsymbol{y})d\mu(\boldsymbol{x})$ is in $C^{[\boldsymbol{d},\boldsymbol{e}]}(\Gamma,\mathcal{O}_{\K})\otimes_{\mathcal{O}_{\K}}\K$ for any $f\in C^{[\boldsymbol{d},\boldsymbol{e}]}(\Gamma,\mathcal{O}_{\K})$.} Therefore, $\mathrm{Hom}_{\mathcal{O}_{\K}}(C^{[\boldsymbol{d},\boldsymbol{e}]}(\Gamma,\mathcal{O}_{\K}),\K)$ becomes a commutative $\K$-algebra and $\mathrm{Hom}_{\mathcal{O}_{\K}}( C^{[\boldsymbol{d},\boldsymbol{e}]}(\Gamma,\mathcal{O}_{\K}),M)$ becomes a $\mathrm{Hom}_{\mathcal{O}_{\K}}(C^{[\boldsymbol{d},\boldsymbol{e}]}(\Gamma,\mathcal{O}_{\K}),\K)$-module by the convolutions. 
\begin{lem}\label{admissibule convolution multivariable}
Let $\mu_{1}\in \mathcal{D}_{\boldsymbol{g}}^{[\boldsymbol{d},\boldsymbol{e}]}(\Gamma,\K)$ and $\mu_{2}\in \mathcal{D}_{\boldsymbol{h}}^{[\boldsymbol{d},\boldsymbol{e}]}(\Gamma,M)$, where $\boldsymbol{g},\boldsymbol{h}\in \ord_{p}(\mathcal{O}_{\K}\backslash\{0\})^{k}$. Then, we have $\mu_{1}*\mu_{2}\in \mathcal{D}_{\boldsymbol{g}+\boldsymbol{h}}^{[\boldsymbol{d},\boldsymbol{e}]}(\Gamma,M)$ and 
$v_{\boldsymbol{g}+\boldsymbol{h}}^{[\boldsymbol{d},\boldsymbol{e}]}
(\mu_{1}*\mu_{2})\geq v
_{\boldsymbol{g}}^{[\boldsymbol{d},\boldsymbol{e}]}
(\mu_{1})+v_{\boldsymbol{h}}^{[\boldsymbol{d},\boldsymbol{e}]}(\mu_{2})$.
\end{lem}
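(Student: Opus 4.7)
The plan is to prove, for every $\boldsymbol{a}\in \Gamma$, $\boldsymbol{m}\in \mathbb{Z}_{\geq 0}^{k}$ and $\boldsymbol{i}\in [\boldsymbol{d},\boldsymbol{e}]$, the estimate
\[v_{M}\left(\int_{\boldsymbol{a}\Gamma^{p^{\boldsymbol{m}}}}\prod_{j=1}^{k}(\chi_{j}(x_{j})-\chi_{j}(a_{j}))^{i_{j}-d_{j}}\chi_{j}(x_{j})^{d_{j}}\,d(\mu_{1}*\mu_{2})\right)\geq v_{\boldsymbol{g}}^{[\boldsymbol{d},\boldsymbol{e}]}(\mu_{1})+v_{\boldsymbol{h}}^{[\boldsymbol{d},\boldsymbol{e}]}(\mu_{2})-\langle \boldsymbol{g}+\boldsymbol{h}-(\boldsymbol{i}-\boldsymbol{d}),\boldsymbol{m}\rangle_{k},\]
from which the conclusion follows by taking the infimum. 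Unwinding the definition of convolution as a double integral and decomposing $\Gamma\times\Gamma$ into the finite disjoint union of products $\boldsymbol{c}\Gamma^{p^{\boldsymbol{m}}}\times \boldsymbol{c}^{-1}\boldsymbol{a}\Gamma^{p^{\boldsymbol{m}}}$ indexed by $\boldsymbol{c}\in\Gamma/\Gamma^{p^{\boldsymbol{m}}}$, the inequality reduces to a finite sum of analogous bounds, one for each $\boldsymbol{c}$, which I will estimate term by term.

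The central algebraic step is to rewrite the integrand in a form to which the admissibility hypotheses apply on each side. Fixing representatives $\boldsymbol{c}$ and $\boldsymbol{b}=\boldsymbol{c}^{-1}\boldsymbol{a}$, for $\boldsymbol{x}\in\boldsymbol{c}\Gamma^{p^{\boldsymbol{m}}}$ and $\boldsymbol{y}\in\boldsymbol{b}\Gamma^{p^{\boldsymbol{m}}}$ one has the identity
\[\chi_{j}(x_{j}y_{j})-\chi_{j}(a_{j})=(\chi_{j}(x_{j})-\chi_{j}(c_{j}))\chi_{j}(y_{j})+(\chi_{j}(y_{j})-\chi_{j}(b_{j}))\chi_{j}(c_{j}),\]
using $\chi_{j}(b_{j})\chi_{j}(c_{j})=\chi_{j}(a_{j})$. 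Expanding $(\chi_{j}(x_{j}y_{j})-\chi_{j}(a_{j}))^{i_{j}-d_{j}}$ by the binomial theorem and subsequently expanding each resulting factor $\chi_{j}(y_{j})^{t}$ as $\sum_{\ell}\binom{t}{\ell}\chi_{j}(b_{j})^{t-\ell}(\chi_{j}(y_{j})-\chi_{j}(b_{j}))^{\ell}$ decomposes $\prod_{j}(\chi_{j}(x_{j}y_{j})-\chi_{j}(a_{j}))^{i_{j}-d_{j}}\chi_{j}(x_{j}y_{j})^{d_{j}}$ as a finite linear combination, with coefficients of nonnegative $p$-adic valuation (being products of integer binomial coefficients and of powers of elements of $\mathbb{Z}_{p}^{\times}$), of terms
\[\prod_{j=1}^{k}\bigl[(\chi_{j}(x_{j})-\chi_{j}(c_{j}))^{p_{j}}\chi_{j}(x_{j})^{d_{j}}\bigr]\cdot\bigl[(\chi_{j}(y_{j})-\chi_{j}(b_{j}))^{q_{j}}\chi_{j}(y_{j})^{d_{j}}\bigr]\]
indexed by tuples $\boldsymbol{p},\boldsymbol{q}\in[\boldsymbol{0},\boldsymbol{e}-\boldsymbol{d}]$ satisfying the crucial inequality $\boldsymbol{p}+\boldsymbol{q}\geq \boldsymbol{i}-\boldsymbol{d}$.

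Once this decomposition is in hand, the admissibility estimate \eqref{admissible condition} applied to $\mu_{1}$ on $\boldsymbol{c}\Gamma^{p^{\boldsymbol{m}}}$ with exponent $\boldsymbol{p}+\boldsymbol{d}\in[\boldsymbol{d},\boldsymbol{e}]$ and to $\mu_{2}$ on $\boldsymbol{b}\Gamma^{p^{\boldsymbol{m}}}$ with exponent $\boldsymbol{q}+\boldsymbol{d}\in[\boldsymbol{d},\boldsymbol{e}]$ bounds each term in valuation by at least $v_{\boldsymbol{g}}^{[\boldsymbol{d},\boldsymbol{e}]}(\mu_{1})+v_{\boldsymbol{h}}^{[\boldsymbol{d},\boldsymbol{e}]}(\mu_{2})-\langle \boldsymbol{g}+\boldsymbol{h}-(\boldsymbol{p}+\boldsymbol{q}),\boldsymbol{m}\rangle_{k}$; since $\boldsymbol{p}+\boldsymbol{q}\geq \boldsymbol{i}-\boldsymbol{d}$ and $\boldsymbol{m}\geq\boldsymbol{0}$, this is at least the target bound. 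The ultrametric inequality then yields the same estimate for the full finite sum.

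The main obstacle lies in the algebraic bookkeeping of the decomposition step: one must simultaneously produce factors of the admissibility form on both sides, keep both exponents $\boldsymbol{p}$ and $\boldsymbol{q}$ inside $[\boldsymbol{0},\boldsymbol{e}-\boldsymbol{d}]$, and preserve $\boldsymbol{p}+\boldsymbol{q}\geq \boldsymbol{i}-\boldsymbol{d}$, as it is the last inequality that converts the growth $\boldsymbol{g}$ of $\mu_{1}$ and $\boldsymbol{h}$ of $\mu_{2}$ into the growth $\boldsymbol{g}+\boldsymbol{h}$ of the convolution. A naive substitution $\chi_{j}(x_{j}y_{j})=\chi_{j}(x_{j})\chi_{j}(y_{j})$ without recentering at the shifted cosets produces differences $(\chi_{j}(x_{j})-\chi_{j}(a_{j}))$ which are incompatible with the coset $\boldsymbol{c}\Gamma^{p^{\boldsymbol{m}}}$ over which $\mu_{1}$ is integrated; the identity above is precisely what decouples the $\boldsymbol{x}$- and $\boldsymbol{y}$-dependences cleanly.
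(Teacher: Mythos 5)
Your proposal is correct and follows essentially the same route as the paper's proof. The paper decomposes $1_{\boldsymbol{a}\Gamma^{p^{\boldsymbol{m}}}}(\boldsymbol{x}\boldsymbol{y})=\sum_{\boldsymbol{b}\in\Gamma/\Gamma^{p^{\boldsymbol{m}}}}1_{\boldsymbol{b}\Gamma^{p^{\boldsymbol{m}}}}(\boldsymbol{x})1_{\boldsymbol{a}\boldsymbol{b}^{-1}\Gamma^{p^{\boldsymbol{m}}}}(\boldsymbol{y})$, uses the same splitting $\chi_t(x_ty_t)-\chi_t(a_t)=(\chi_t(x_t)-\chi_t(b_t))\chi_t(y_t)+\chi_t(b_t)(\chi_t(y_t)-\chi_t(a_tb_t^{-1}))$, applies the binomial theorem to produce an index $\boldsymbol{j}\in[\boldsymbol{d},\boldsymbol{i}]$ with $\mu_1$-exponent $\boldsymbol{j}-\boldsymbol{d}$ and $\mu_2$-exponent $\boldsymbol{i}-\boldsymbol{j}$ (after a secondary recentering of the residual $\chi_s(y_s)^{j_s}$ factor), and concludes from the sum of exponents being exactly $\boldsymbol{i}-\boldsymbol{d}$; you allow the sum to be $\geq\boldsymbol{i}-\boldsymbol{d}$ and use $\boldsymbol{m}\geq\boldsymbol{0}$, which is a harmless relaxation. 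One minor wording slip: the cosets $\boldsymbol{c}\Gamma^{p^{\boldsymbol{m}}}\times\boldsymbol{c}^{-1}\boldsymbol{a}\Gamma^{p^{\boldsymbol{m}}}$ do not partition $\Gamma\times\Gamma$ but rather the support of $1_{\boldsymbol{a}\Gamma^{p^{\boldsymbol{m}}}}(\boldsymbol{x}\boldsymbol{y})$; your subsequent computation makes clear this is what you intended, so it does not affect the argument.
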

\begin{proof}
Let $\boldsymbol{a}\in \Gamma$ and $\boldsymbol{m}\in \mathbb{Z}_{\geq 0}^{k}$. 
Since $1_{\boldsymbol{a}\Gamma^{p^{\boldsymbol{m}}}}(\boldsymbol{x}\boldsymbol{y})=\sum_{\boldsymbol{b}\in \Gamma\slash \Gamma^{p^{\boldsymbol{m}}}}1_{\boldsymbol{b}\Gamma^{p^{\boldsymbol{m}}}}(\boldsymbol{x})1_{\boldsymbol{a}\boldsymbol{b}^{-1}\Gamma^{p^{\boldsymbol{m}}}}(\boldsymbol{y})$, we have
\begin{align}\label{admissibule convolution multivariable lemma eq1}
\begin{split}
&\int_{\boldsymbol{a}\Gamma^{p^{\boldsymbol{m}}}}\prod_{j=1}^{k}(\chi_{j}(x_{j})-\chi_{j}(a_{j}))^{i_{j}-d_{j}}\chi_{j}(x_{j})^{d_{j}}d(\mu_{1}*\mu_{2})\\
&=\sum_{\boldsymbol{b}\in \Gamma\slash \Gamma^{p^{\boldsymbol{m}}}}\int_{\boldsymbol{a}\boldsymbol{b}^{-1}\Gamma^{p^{\boldsymbol{m}}}}\int_{\boldsymbol{b}\Gamma^{p^{\boldsymbol{m}}}}\prod_{j=1}^{k}(\chi_{j}(x_{j}y_{j})-\chi_{j}(a_{j}))^{i_{j}-d_{j}}\chi_{j}(x_{j}y_{j})^{d_{j}}d\mu_{1}(\boldsymbol{x})d\mu_{2}(\boldsymbol{y})\\
&=\sum_{\boldsymbol{b}\in \Gamma\slash \Gamma^{p^{\boldsymbol{m}}}}\int_{\boldsymbol{a}\boldsymbol{b}^{-1}\Gamma^{p^{\boldsymbol{m}}}}\int_{\boldsymbol{b}\Gamma^{p^{\boldsymbol{m}}}}\Bigl(\sum_{\boldsymbol{j}\in [\boldsymbol{d},\boldsymbol{i}]}\prod_{r=1}^{k}\begin{pmatrix}i_{r}-d_{r}\\ j_{r}-d_{r}\end{pmatrix}\prod_{t=1}^{k}(\chi_{t}(x_{t}y_{t})-\chi_{t}(b_{t}y_{t}))^{j_{t}-d_{t}}\\
&\ \ \times \prod_{s=1}^{k}(\chi_{s}(b_{s}y_{s})-\chi_{s}(a_{s}))^{i_{s}-j_{s}}\Bigr)\chi_{j}(x_{j}y_{j})^{d_{j}}d\mu_{1}(\boldsymbol{x})d\mu_{2}(\boldsymbol{y})\\
&=\sum_{\boldsymbol{b}\in \Gamma\slash \Gamma^{p^{\boldsymbol{m}}}}\sum_{\boldsymbol{j}\in [\boldsymbol{d},\boldsymbol{i}]}\prod_{r=1}^{k}\begin{pmatrix}i_{r}-d_{r}\\ j_{r}-d_{r}\end{pmatrix}\chi_{r}(b_{r})^{i_{r}-j_{r}}\int_{\boldsymbol{a}\boldsymbol{b}^{-1}\Gamma^{p^{\boldsymbol{m}}}}\prod_{s=1}^{k}(\chi_{s}(y_{s})-\chi_{s}(a_{s}b_{s}^{-1}))^{i_{s}-j_{s}}\\
&\ \  \times \chi_{s}(y_{s})^{j_{s}}d\mu_{2}(\boldsymbol{y})\int_{\boldsymbol{b}\Gamma^{p^{\boldsymbol{m}}}}\prod_{t=1}^{k}(\chi_{t}(x_{t})-\chi_{t}(b_{t}))^{j_{t}-d_{t}}\chi_{t}(x_{t})^{d_{t}}d\mu_{1}(\boldsymbol{x})
\end{split}
\end{align}
for each $\boldsymbol{i}\in [\boldsymbol{d},\boldsymbol{e}]$. We have 
\begin{equation}\label{admissibule convolution multivariable lemma eq2}
\ord_{p}\left(\int_{\boldsymbol{b}\Gamma^{p^{\boldsymbol{m}}}}\prod_{t=1}^{k}(\chi_{t}(x_{t})-\chi_{t}(b_{t}))^{j_{t}-d_{t}}\chi_{t}(x_{t})^{d_{t}}d\mu_{1}(\boldsymbol{x})\right)\geq -\langle \boldsymbol{g}-(\boldsymbol{j}-\boldsymbol{d}),\boldsymbol{m}\rangle_{k}+v_{\boldsymbol{g}}^{[\boldsymbol{d},\boldsymbol{e}]}(\mu_{1})
\end{equation}
for each $\boldsymbol{j}\in [\boldsymbol{d},\boldsymbol{i}]$. We see that
\begin{align*}
&\int_{\boldsymbol{a}\boldsymbol{b}^{-1}\Gamma^{p^{\boldsymbol{m}}}}\prod_{s=1}^{k}(\chi_{s}(y_{s})-\chi_{s}(a_{s}b_{s}^{-1}))^{i_{s}-j_{s}}\chi_{s}(y_{s})^{j_{s}}d\mu_{2}(\boldsymbol{y})\\
&=\sum_{\boldsymbol{q}\in [\boldsymbol{d},\boldsymbol{j}]}\prod_{t=1}^{k}\begin{pmatrix}j_{t}-d_{t}\\ q_{t}-d_{t}\end{pmatrix}\chi_{t}(a_{t}b_{t}^{-1})^{j_{t}-q_{t}}\\
&\ \ \times \int_{\boldsymbol{a}\boldsymbol{b}^{-1}\Gamma^{p^{\boldsymbol{m}}}}\prod_{r=1}^{k}(\chi_{r}(y_{r})-\chi_{r}(a_{r}b_{r}^{-1}))^{i_{r}-j_{r}+q_{r}-d_{r}}\chi_{r}(y_{r})^{d_{r}}d\mu_{2}(\boldsymbol{y}).
\end{align*}
Then, we have
\begin{equation}\label{admissibule convolution multivariable lemma eq3}
v_{M}\left(\int_{\boldsymbol{a}\boldsymbol{b}^{-1}\Gamma^{p^{\boldsymbol{m}}}}\prod_{s=1}^{k}(\chi_{s}(y_{s})-\chi_{s}(a_{s}b_{s}^{-1}))^{i_{s}-j_{s}}\chi_{s}(y_{s})^{j_{s}}d\mu_{2}(\boldsymbol{y})\right)\geq -\langle \boldsymbol{h}-(\boldsymbol{i}-\boldsymbol{j}),\boldsymbol{m}\rangle_{k}+v_{\boldsymbol{h}}^{[\boldsymbol{d},\boldsymbol{e}]}(\mu_{2})
\end{equation}
for each $\boldsymbol{j}\in [\boldsymbol{d},\boldsymbol{i}]$.
By \eqref{admissibule convolution multivariable lemma eq1}, \eqref{admissibule convolution multivariable lemma eq2} and \eqref{admissibule convolution multivariable lemma eq3}, we see that
\begin{align*}
&v_{M}\left(\int_{\boldsymbol{a}\Gamma^{p^{\boldsymbol{m}}}}\prod_{j=1}^{k}(\chi_{j}(x_{j})-\chi_{j}(a_{j}))^{i_{j}-d_{j}}\chi_{j}(x_{j})^{d_{j}}d(\mu_{1}*\mu_{2})\right)\\
&\geq -\langle \boldsymbol{g}+\boldsymbol{h}-(\boldsymbol{i}-\boldsymbol{d}),\boldsymbol{m}\rangle_{k}+v_{\boldsymbol{g}}^{[\boldsymbol{d},\boldsymbol{e}]}
(\mu_{1})+v_{\boldsymbol{h}}^{[\boldsymbol{d},\boldsymbol{e}]}(\mu_{2}).
\end{align*}
Thus we obtain the desired inequality $v_{\boldsymbol{g}+\boldsymbol{h}}^{[\boldsymbol{d},\boldsymbol{e}]}
(\mu_{1}*\mu_{2})\geq 
v_{\boldsymbol{g}}^{[\boldsymbol{d},\boldsymbol{e}]}
(\mu_{1})+v_{\boldsymbol{h}}^{[\boldsymbol{d},\boldsymbol{e}]}(\mu_{2})$. 
\end{proof}
By Lemma\ \ref{admissibule convolution multivariable}, $\mathcal{D}_{\boldsymbol{0}_{k}}^{[\boldsymbol{d},\boldsymbol{e}]}(\Gamma,\K)$ becomes a $\K$-algebra and $\mathcal{D}_{\boldsymbol{h}}^{[\boldsymbol{d},\boldsymbol{e}]}(\Gamma,M)$ becomes a $\mathcal{D}_{\boldsymbol{0}_{k}}^{[\boldsymbol{d},\boldsymbol{e}]}(\Gamma,\K)$-module, where $\boldsymbol{0}_{k}=(0,\ldots, 0)\in \mathbb{Z}_{\geq 0}^{k}$. Let $C(\Gamma,\mathcal{O}_{\K})$ be the $\mathcal{O}_{\K}$-algebra of continuous functions $f: \Gamma\rightarrow \mathcal{O}_{\K}$. We note that $\mathrm{Hom}_{\mathcal{O}_{\K}}(C(\Gamma,\mathcal{O}_{\K}),\mathcal{O}_{\K})$ becomes an $\mathcal{O}_{\K}$-algebra by the natural convolution and we see that
\begin{equation}\label{continuous measure and spectral norm}
v_{M}\left(\int_{\Gamma}f(\boldsymbol{x})d\mu\right)\geq \inf\{\ord_{p}(f(\boldsymbol{x}))\}_{\boldsymbol{x}\in \Gamma}
\end{equation}
for every $\mu\in \mathrm{Hom}_{\mathcal{O}_{\K}}(C(\Gamma,\mathcal{O}_{\K}),M^{0})$ and for every $f\in C(\Gamma,\mathcal{O}_{\K})$ easily. 
\par 
Let $\mu\in \mathrm{Hom}_{\mathcal{O}_{\K}}(C(\Gamma,\mathcal{O}_{\K}),M^{0})$. 
Recall that we have $\mu\vert_{C^{[\boldsymbol{d},\boldsymbol{e}]}(\Gamma,\mathcal{O}_{\K})}\in \mathcal{D}_{\boldsymbol{0}_{k}}^{[\boldsymbol{d},\boldsymbol{e}]}(\Gamma,M)^{0}$ 
by \eqref{continuous measure and spectral norm}. 
Let $\varphi:\mathrm{Hom}_{\mathcal{O}_{\K}}(C(\Gamma,\mathcal{O}_{\K}),M^{0}){\rightarrow}\mathcal{D}_{\boldsymbol{0}_{k}}^{[\boldsymbol{d},\boldsymbol{e}]}(\Gamma,M)^{0}$ 
be an $\mathcal{O}_{\K}$-linear homomorphism defined by setting $\varphi (\mu ) =\mu\vert_{C^{[\boldsymbol{d},\boldsymbol{e}]}(\Gamma,\mathcal{O}_{\K})}$ for each $\mu\in \mathrm{Hom}_{\mathcal{O}_{\K}}(C(\Gamma,\mathcal{O}_{\K}),M^{0})$. 
\begin{pro}\label{isomorphism multi 0admissible continuous}
The $\mathcal{O}_{\K}$-linear homomorphism $\varphi:\mathrm{Hom}_{\mathcal{O}_{\K}}(C(\Gamma,\mathcal{O}_{\K}),M^{0}){\rightarrow}\mathcal{D}_{\boldsymbol{0}_{k}}^{[\boldsymbol{d},\boldsymbol{e}]}(\Gamma,\linebreak M)^{0}$ is an isomorphism. 
Further, if $M=\K$, $\varphi$ becomes an $\mathcal{O}_{\K}$-algebra isomorphism.
\end{pro}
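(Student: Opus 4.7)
The plan is to construct $\varphi^{-1}$ by restricting $\mu$ to locally constant functions and extending via the classical Amice--Mahler identification, and then to verify that this reconstruction really agrees with $\mu$ on $C^{[\boldsymbol{d},\boldsymbol{e}]}(\Gamma,\mathcal{O}_{\K})$. As a preliminary reduction, multiplication by $\prod_{j=1}^{k}\chi_j^{d_j}$ takes values in $\mathbb{Z}_p^{\times}$, so it defines an isometric $\mathcal{O}_{\K}$-module automorphism of $C(\Gamma,\mathcal{O}_{\K})$ that sends $C^{[\boldsymbol{0}_k,\boldsymbol{e}-\boldsymbol{d}]}$ onto $C^{[\boldsymbol{d},\boldsymbol{e}]}$ and visibly intertwines the two corresponding instances of $\varphi$; this reduces the problem to the case $\boldsymbol{d}=\boldsymbol{0}_k$. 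In this setting $LC(\Gamma,\mathcal{O}_{\K}) = C^{[\boldsymbol{0}_k,\boldsymbol{0}_k]}(\Gamma,\mathcal{O}_{\K})$ is a subspace of $C^{[\boldsymbol{0}_k,\boldsymbol{e}]}(\Gamma,\mathcal{O}_{\K})$ and is dense in $C(\Gamma,\mathcal{O}_{\K})$ for the sup norm, giving injectivity of $\varphi$ at once.

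For surjectivity, given $\mu \in \mathcal{D}_{\boldsymbol{0}_k}^{[\boldsymbol{0}_k,\boldsymbol{e}]}(\Gamma,M)^0$, taking $\boldsymbol{i}=\boldsymbol{0}_k$ in the defining admissibility inequality yields $v_M(\mu(1_{\boldsymbol{a}\Gamma^{p^{\boldsymbol{m}}}})) \geq 0$, so $\mu|_{LC}$ lies in $\mathrm{Hom}_{\mathcal{O}_{\K}}(LC(\Gamma,\mathcal{O}_{\K}), M^0)$. This set is naturally identified with $\varprojlim_{\boldsymbol{m}}\mathcal{O}_{\K}[\Gamma/\Gamma^{p^{\boldsymbol{m}}}] \otimes_{\mathcal{O}_{\K}} M^0 = M^0[[\Gamma]]$, which via the classical Amice--Mahler isomorphism is also identified with $\mathrm{Hom}_{\mathcal{O}_{\K}}(C(\Gamma,\mathcal{O}_{\K}), M^0)$; the resulting $\nu \in \mathrm{Hom}_{\mathcal{O}_{\K}}(C(\Gamma,\mathcal{O}_{\K}),M^0)$ is the unique such extension of $\mu|_{LC}$.

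The crucial step is to prove $\nu|_{C^{[\boldsymbol{0}_k,\boldsymbol{e}]}} = \mu$. The elements $g_{\boldsymbol{a},\boldsymbol{m},\boldsymbol{i}}=1_{\boldsymbol{a}\Gamma^{p^{\boldsymbol{m}}}}\prod_{j=1}^{k}\chi_j^{i_j}$ with $\boldsymbol{i} \in [\boldsymbol{0}_k,\boldsymbol{e}]$ span $C^{[\boldsymbol{0}_k,\boldsymbol{e}]}(\Gamma,\mathcal{O}_{\K})\otimes_{\mathcal{O}_{\K}}\mathcal{K}$ over $\mathcal{K}$, so the $\mathcal{O}_{\K}$-linearity of both sides reduces the verification to these elements. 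Fix such $g$, take the refinements $\boldsymbol{m}_n = (n,\ldots,n)$ for $n > \max_j m_j$, and form the Riemann sums $g_n = \sum_{\boldsymbol{c} \in \boldsymbol{a}\Gamma^{p^{\boldsymbol{m}}}/\Gamma^{p^{\boldsymbol{m}_n}}}\prod_j\chi_j(c_j)^{i_j}1_{\boldsymbol{c}\Gamma^{p^{\boldsymbol{m}_n}}} \in LC$. Since $v_{\sup}(g-g_n) \to +\infty$, the boundedness of $\nu$ gives $\nu(g) = \lim_n \sum_{\boldsymbol{c}}\prod_j\chi_j(c_j)^{i_j}\mu(1_{\boldsymbol{c}\Gamma^{p^{\boldsymbol{m}_n}}})$, while the $\mathcal{O}_{\K}$-linearity of $\mu$ applied to the partition $g=\sum_{\boldsymbol{c}}1_{\boldsymbol{c}\Gamma^{p^{\boldsymbol{m}_n}}}\prod_j\chi_j^{i_j}$ gives $\mu(g) = \sum_{\boldsymbol{c}}\mu(1_{\boldsymbol{c}\Gamma^{p^{\boldsymbol{m}_n}}}\prod_j\chi_j^{i_j})$ for every $n$. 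Thus one is reduced to showing $\sum_{\boldsymbol{c}}\mu(1_{\boldsymbol{c}\Gamma^{p^{\boldsymbol{m}_n}}}(\prod_j\chi_j^{i_j}-\prod_j\chi_j(c_j)^{i_j})) \to 0$, and the multinomial expansion
\[
\prod_j\chi_j^{i_j}-\prod_j\chi_j(c_j)^{i_j} = \sum_{\boldsymbol{k} \in [\boldsymbol{0}_k,\boldsymbol{i}]\setminus\{\boldsymbol{0}_k\}}\prod_j\binom{i_j}{k_j}(\chi_j-\chi_j(c_j))^{k_j}\chi_j(c_j)^{i_j-k_j},
\]
combined with the admissibility bound $v_M\bigl(\int_{\boldsymbol{c}\Gamma^{p^{\boldsymbol{m}_n}}}\prod_j(\chi_j-\chi_j(c_j))^{k_j}d\mu\bigr) \geq \langle\boldsymbol{k},\boldsymbol{m}_n\rangle_k \geq n$ for $\boldsymbol{k} \neq \boldsymbol{0}_k$, yields $v_M\bigl(\mu(1_{\boldsymbol{c}\Gamma^{p^{\boldsymbol{m}_n}}}(\prod_j\chi_j^{i_j}-\prod_j\chi_j(c_j)^{i_j}))\bigr) \geq n$ uniformly in $\boldsymbol{c}$; the non-Archimedean triangle inequality then propagates this to the full sum, whose valuation tends to $+\infty$.

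Finally, for the algebra statement when $M=\mathcal{K}$, both $\varphi(\nu_1*\nu_2)$ and $\varphi(\nu_1)*\varphi(\nu_2)$ restrict to $LC(\Gamma,\mathcal{O}_{\K})$ to the standard convolution of $\mathcal{O}_{\K}$-valued measures, so they coincide as elements of $\mathcal{D}_{\boldsymbol{0}_k}^{[\boldsymbol{d},\boldsymbol{e}]}(\Gamma,\mathcal{K})^0$ by the injectivity already established. The main obstacle in the whole argument is the Riemann-sum vanishing in the previous paragraph: its validity depends essentially on the admissibility estimate at nonzero $\boldsymbol{k}$, since without the admissibility data beyond the trivial $\boldsymbol{k}=\boldsymbol{0}_k$ case the difference between $\mu(g)$ and the limit of $\nu(g_n)$ could not be controlled.
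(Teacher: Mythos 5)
Your proof is correct and follows essentially the same strategy as the paper: establish injectivity from density of the locally polynomial functions in $C(\Gamma,\mathcal{O}_{\K})$, extend $\mu$ from locally constant functions to a genuine $M^{0}$-valued measure using the admissibility bound at $\boldsymbol{i}=\boldsymbol{d}$, and then verify agreement on higher-degree test functions by a refinement argument driven by the admissibility bound at $\boldsymbol{i}>\boldsymbol{d}$.

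The one place where you genuinely reorganize the argument is in the verification that the reconstructed measure agrees with $\mu$ on $C^{[\boldsymbol{0}_k,\boldsymbol{e}]}(\Gamma,\mathcal{O}_{\K})$. The paper forms the difference $\nu'$, shows it kills $C^{[\boldsymbol{d},\boldsymbol{d}]}$ by construction, and then inducts on $\boldsymbol{i}$ along the componentwise partial order, using at each step only the top term $(\chi-\chi(a))^{\boldsymbol{i}-\boldsymbol{d}}\chi^{\boldsymbol{d}}$ together with the inductive hypothesis to absorb lower-degree terms, before refining $\boldsymbol{m}$. You instead carry out a single multinomial expansion of $\chi^{\boldsymbol{i}}-\chi(\boldsymbol{c})^{\boldsymbol{i}}$ over the partition at level $\boldsymbol{m}_n$ and kill all nonzero $\boldsymbol{k}$-terms simultaneously by the admissibility estimate with $\langle\boldsymbol{k},\boldsymbol{m}_n\rangle_k\geq n$. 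These are equivalent in substance, both powered by the same estimate, but your version avoids the bookkeeping of the partial-order induction at the cost of the full multinomial sum. Your preliminary reduction to $\boldsymbol{d}=\boldsymbol{0}_k$ by twisting with $\prod_j\chi_j^{d_j}$ is harmless and the paper dispenses with it by simply using $C^{[\boldsymbol{d},\boldsymbol{d}]}$ in place of $LC$ throughout. For the algebra statement, your appeal to ``the injectivity already established'' is slightly loose (you need that an element of $\mathcal{D}^{0}$ is determined by its restriction to $LC$, which is not the literal injectivity of $\varphi$ but follows at once from the bijectivity just proved, or equivalently from your own Riemann-sum computation), but the conclusion is sound; the paper in fact omits the verification of the algebra statement entirely.
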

\begin{proof}
Let $\mu\in \mathrm{Hom}_{\mathcal{O}_{\K}}(C(\Gamma,\mathcal{O}_{\K}),M^{0})$. 
Since $C^{[\boldsymbol{d}, \boldsymbol{e}]}(\Gamma,\mathcal{O}_{\K})$ is dense in $C(\Gamma,\mathcal{O}_{\K})$ with respect to the uniform norm, we have $\mu=0$ if $\mu\vert_{C^{[\boldsymbol{d},\boldsymbol{e}]}(\Gamma,\mathcal{O}_{\K})}=0$. Hence $\varphi$ is injective. 
\par 
In the rest of the proof, we prove that $\varphi$ is surjective. 
Let $f\in C^{[\boldsymbol{d},\boldsymbol{d}]}(\Gamma,\mathcal{O}_{\K})$ and let $\nu\in \mathcal{D}_{\boldsymbol{0}_{k}}^{[\boldsymbol{d},\boldsymbol{e}]}(\Gamma,M)^{0}$. 
Then there exist a sufficiently large $\boldsymbol{m}\in \mathbb{Z}_{\geq 0}^{k}$ and $c_{\boldsymbol{a}}\in \mathcal{O}_{\K}$ for each $\boldsymbol{a}\in \Gamma\slash \Gamma^{p^{\boldsymbol{m}}}$ such that $f(\boldsymbol{x})=\left(\sum_{\boldsymbol{a}\in \Gamma\slash \Gamma^{p^{\boldsymbol{m}}}}c_{\boldsymbol{a}}1_{\boldsymbol{a}\Gamma^{p^{\boldsymbol{m}}}}(\boldsymbol{x})\right)\prod_{t=1}^{k}\chi_{t}(x_{t})^{d_{t}}$ where $1_{\boldsymbol{a}\Gamma^{p^{\boldsymbol{m}}}}$ is the characteristic function on $\boldsymbol{a}\Gamma^{p^{\boldsymbol{m}}}$. 
Hence, we have 
\begin{align}\label{isomorphism multi 0admissible continuouseq1}
\begin{split}
v_{M}\left(\int_{\Gamma}f(\boldsymbol{x})d\nu\right)&\geq \min_{\boldsymbol{a}\in \Gamma\slash \Gamma^{p^{\boldsymbol{m}}}}\left\{\ord_{p}(c_{\boldsymbol{a}})+v_{M}\left(\int_{\boldsymbol{a}\Gamma^{p^{\boldsymbol{m}}}}\prod_{t=1}^{k}\chi_{t}(x_{t})^{d_{t}}d\nu\right)\right\}.
\end{split}
\end{align}
{By \eqref{admissible condition}, we have $v_{M}\left(\int_{\boldsymbol{a}\Gamma^{p^{\boldsymbol{m}}}}\prod_{t=1}^{k}\chi_{t}(x_{t})^{d_{t}}d\nu\right)\geq v_{\boldsymbol{0}_{k}}^{[\boldsymbol{d},\boldsymbol{e}]}(\nu)$ for each $\boldsymbol{a}\in \Gamma^{p^{\boldsymbol{m}}}$. 
Since $\nu\in \mathcal{D}_{\boldsymbol{0}_{k}}^{[\boldsymbol{d},\boldsymbol{e}]}(\Gamma,M)^{0}$, 
we have $v_{\boldsymbol{0}_{k}}^{[\boldsymbol{d},\boldsymbol{e}]}(\nu)\geq 0$. 
By $f(\boldsymbol{x})=\left(\sum_{\boldsymbol{a}\in \Gamma\slash \Gamma^{p^{\boldsymbol{m}}}}c_{\boldsymbol{a}}1_{\boldsymbol{a}\Gamma^{p^{\boldsymbol{m}}}}(\boldsymbol{x})\right)\prod_{t=1}^{k}\chi_{t}(x_{t})^{d_{t}}$, we have $\ord_{p}(c_{\boldsymbol{a}})=\inf_{\boldsymbol{x}\in \boldsymbol{a}\Gamma^{p^{\boldsymbol{m}}}}\{f(\boldsymbol{x})\}$. 
Combining these facts with \eqref{isomorphism multi 0admissible continuouseq1}, we have
\begin{align}\label{isomorphism multi 0admissible continuouseq1.1}
\begin{split}
v_{M}\left(\int_{\Gamma}f(\boldsymbol{x})d\nu\right)&\geq \min_{\boldsymbol{a}\in \Gamma\slash \Gamma^{p^{\boldsymbol{m}}}}\left\{\ord_{p}(c_{\boldsymbol{a}})+v_{M}\left(\int_{\boldsymbol{a}\Gamma^{p^{\boldsymbol{m}}}}\prod_{t=1}^{k}\chi_{t}(x_{t})^{d_{t}}d\nu\right)\right\}\\
&\geq \min_{\boldsymbol{a}\in \Gamma\slash \Gamma^{p^{\boldsymbol{m}}}}\{\ord_{p}(c_{\boldsymbol{a}})\}\\
&\geq \mathrm{inf}_{\boldsymbol{x}\in \Gamma}\{f(\boldsymbol{x})\}.
\end{split}
\end{align}}
By \eqref{isomorphism multi 0admissible continuouseq1.1}, if a sequence $\{f_{n}\}_{n\geq 1}$ of $C^{[\boldsymbol{d},\boldsymbol{d}]}(\Gamma, \mathcal{O}_{\K})$ converges to a
function $f\in C(\Gamma,\mathcal{O}_{\K})$ with respect to the uniform norm, there exists a limit $\lim_{n\rightarrow +\infty}\int_{\Gamma}f_{n}(\boldsymbol{x})d\nu\in M^{0}$. Since $C^{[\boldsymbol{d},\boldsymbol{d}]}(\Gamma,\mathcal{O}_{\K})$ is dense in $C(\Gamma,\mathcal{O}_{\K})$ with respect to the uniform norm, we define an element $\mu\in \mathrm{Hom}_{\mathcal{O}_{\K}}(C(\Gamma,\mathcal{O}_{\K}),M^{0})$ to be
$$\int_{\Gamma}f(\boldsymbol{x})d\mu=\lim_{n\rightarrow+\infty}\int_{\Gamma}f_{n}(\boldsymbol{x})d\nu$$
where $\{f_{n}\}_{n\geq 1}$ is a sequence of $C^{[\boldsymbol{d},\boldsymbol{d}]}(\Gamma,\mathcal{O}_{\K})$ which converges to $f$ with respect to the uniform norm. 
We want to prove that $\varphi(\mu)=\nu$. So we will prove that $\nu^{\prime}=0$ by putting $\nu^{\prime}=\varphi(\mu)-\nu$. 
{To prove $\nu^{\prime}=0$, it suffices to prove the followings:
\begin{enumerate}
\item We have $\nu^{\prime}\vert_{C^{[\boldsymbol{d},\boldsymbol{d}]}(\Gamma,\mathcal{O}_{\K})}=0$.
\item Let $\boldsymbol{i}\in [\boldsymbol{d},\boldsymbol{e}]$ be an element satisfying $\boldsymbol{i}\neq \boldsymbol{d}$. Assume that $\nu^{\prime}\vert_{C^{[\boldsymbol{d},\boldsymbol{j}]}(\Gamma,\mathcal{O}_{\K})}=0$ for each $\boldsymbol{j}\in [\boldsymbol{d},\boldsymbol{i}]$ such that $\boldsymbol{j}\neq \boldsymbol{i}$. Then we have  $\nu^{\prime}\vert_{C^{[\boldsymbol{d},\boldsymbol{i}]}(\Gamma,\mathcal{O}_{\K})}=0$.
\end{enumerate}}
By the definition of $\mu$, we see that $\int_{\Gamma}f(\boldsymbol{x})d\nu^{\prime}=0$ for each $f\in C^{[\boldsymbol{d},\boldsymbol{d}]}(\Gamma,\mathcal{O}_{\K})$. Let $\boldsymbol{i}\in [\boldsymbol{d},\boldsymbol{e}]$ such that $\boldsymbol{i}\neq \boldsymbol{d}$ and assume that $\nu^{\prime}\vert_{C^{[\boldsymbol{d},\boldsymbol{j}]}(\Gamma,\mathcal{O}_{\K})}=0$ for each $\boldsymbol{d}\leq \boldsymbol{j}<\boldsymbol{i}$. Let  $P_{\boldsymbol{i}}$ be the subset of $\{1,\ldots, k\}$ consisting of $t$ such that $d_{t}<i_{t}$. Put $\boldsymbol{i}_{t}^{\prime}=(i_{1},\ldots,i_{t}-1,\ldots,i_{k})$ for each  $t\in P_{\boldsymbol{i}}$. By definition, we see that $\boldsymbol{d}\leq \boldsymbol{i}_{t}^{\prime}<\boldsymbol{i}$ with $t\in P_{\boldsymbol{i}}$. Since $\left(\prod_{t=1}^{k}\chi_{t}(x_{t})^{i_{t}}\right)1_{\boldsymbol{a}\Gamma^{p^{\boldsymbol{m}}}}(\boldsymbol{x})-\left(\prod_{t=1}^{k}(\chi_{t}(x_{t})-\chi_{t}(a_{t})^{i_{t}-d_{t}})\chi_{t}(x_{t})^{d_{t}}\right)1_{\boldsymbol{a}\Gamma^{p^{\boldsymbol{m}}}}(\boldsymbol{x})\in \sum_{t\in P_{\boldsymbol{i}}}C^{[\boldsymbol{d},\boldsymbol{i}_{t}^{\prime}]}(\Gamma,\mathcal{O}_{\K})$ for each $\boldsymbol{a}\in \Gamma$ and $\boldsymbol{m}\in \mathbb{Z}_{\geq 0}^{k}$, we see that 
\begin{align}\label{isomorphism multi 0admissible continuouseq2}
\begin{split}
v_{M}\left(\int_{\boldsymbol{a}\Gamma^{p^{\boldsymbol{m}}}}\prod_{t=1}^{k}\chi_{t}(x_{t})^{i_{t}}d\nu^{\prime}\right)
& =v_{M}\left(\int_{\boldsymbol{a}\Gamma^{p^{\boldsymbol{m}}}}\prod_{t=1}^{k}(\chi_{t}(x_{t})-\chi_{t}(a_{t})^{i_{t}-d_{t}})\chi_{t}(x_{t})^{d_{t}}d\nu^{\prime}\right)\\
&\geq \sum_{t=1}^{k}(i_{t}-d_{t})m_{t}
\end{split}
\end{align}
where $1_{\boldsymbol{a}\Gamma^{p^{\boldsymbol{m}}}}$ is the characteristic function on $\boldsymbol{a}\Gamma^{p^{\boldsymbol{m}}}$. Let $\boldsymbol{a}\in \Gamma$ and $\boldsymbol{m}\in \mathbb{Z}_{\geq 0}^{k}$. Since $1_{\boldsymbol{a}\Gamma^{p^{\boldsymbol{m}}}}(\boldsymbol{x})=\sum_{\boldsymbol{b}\in \Gamma^{p^{\boldsymbol{m}}}\slash \Gamma^{p^{\boldsymbol{m}+\boldsymbol{n}}}}1_{\boldsymbol{a}\boldsymbol{b}\Gamma^{p^{\boldsymbol{m}+\boldsymbol{n}}}}(x)$ for each $\boldsymbol{n}\in \mathbb{Z}_{\geq 0}^{k}$, by \eqref{isomorphism multi 0admissible continuouseq2}, we have 
\begin{align*}
v_{M}\left(\int_{\boldsymbol{a}\Gamma^{p^{\boldsymbol{m}}}}\prod_{t=1}^{k}\chi_{t}(x_{t})^{i_{t}}d\nu^{\prime}\right)&\geq \lim_{\boldsymbol{n}\rightarrow +\infty}\min_{\boldsymbol{b}\in \Gamma^{p^{\boldsymbol{m}}}\slash \Gamma^{p^{\boldsymbol{m}+\boldsymbol{n}}}}\left\{v_{M}\left(\int_{\boldsymbol{ab}\Gamma^{p^{\boldsymbol{m}+\boldsymbol{n}}}}\prod_{t=1}^{k}\chi_{t}(x_{t})^{i_{t}}d\nu^{\prime}\right)\right\}\\
&\geq\lim_{\boldsymbol{n}\rightarrow +\infty}\sum_{t=1}^{k}(i_{t}-d_{t})(m_{t}+n_{t})=+\infty.
\end{align*}
Hence $\int_{\boldsymbol{a}\Gamma^{p^{\boldsymbol{m}}}}\prod_{t=1}^{k}\chi_{t}(x_{t})^{i_{t}}d\nu^{\prime}=0$. By assumption, we have $\int_{\boldsymbol{a}\Gamma^{p^{\boldsymbol{m}}}}\prod_{t=1}^{k}\chi_{t}(x_{t})^{j_{t}}d\nu^{\prime}=0$ for each $\boldsymbol{0}\leq \boldsymbol{j}<\boldsymbol{i}$. Since every $f\in C^{[\boldsymbol{d},\boldsymbol{i}]}(\Gamma,\mathcal{O}_{\K})$ can be written as a linear combination of $1_{\boldsymbol{a}\Gamma^{p^{\boldsymbol{m}}}}(\boldsymbol{x})\prod_{t=1}^{k}\chi_{t}(x_{t})^{j_{t}}$ with $\boldsymbol{a}\in \Gamma$, $\boldsymbol{m}\in \mathbb{Z}_{\geq 0}^{k}$ and $\boldsymbol{d}\leq \boldsymbol{j}\leq \boldsymbol{i}$, we  have $\nu^{\prime}\vert_{C^{[\boldsymbol{d},\boldsymbol{i}]}(\Gamma,\mathcal{O}_{\K})}=0$. Then we have $\nu^{\prime}=0$. Hence $\varphi(\mu)=\nu$. {Finally, it is easy to see that $\varphi$ is an $\mathcal{O}_{\K}$-algebra isomorphism if $M=\K$.}
\end{proof}
We recall the definition of arithmetic specializations. Let $\mathbf{I}$ be a finite free extension of $\mathcal{O}_{\K}[[\Gamma]]$. Assume that $\mathbf{I}$ is an integral domain. A continuous $\mathcal{O}_{\K}$-algebra homomorphism $\kappa:\mathbf{I}\rightarrow \overline{\K}$ is called  an arithmetic specialization of weight $\boldsymbol{w}_{\kappa}\in\mathbb{Z}^{k}$ and finite part $\boldsymbol{\phi}_{\kappa}=(\phi_{\kappa,1},\ldots, \phi_{\kappa,k})$ if $\kappa\vert_{\Gamma}:\Gamma\rightarrow \overline{\K}^{\times}$ is a continuous character given by $\kappa(\boldsymbol{x})=\prod_{i=1}^{k}(\chi_{i}^{w_{\kappa,i}}\phi_{\kappa,i})(x_{i})$ for each $\boldsymbol{x}\in \Gamma$, where $\phi_{\kappa,i}$ is a finite order character on $\Gamma_{i}$ with $1\leq i\leq k$. Let $\mathfrak{X}_{\mathbf{I}}$ be the set of arithmetic specializations on $\mathbf{I}$ and $\mathfrak{X}_{\mathbf{I}}^{[\boldsymbol{d},\boldsymbol{e}]}\subset \mathfrak{X}_{\mathbf{I}}$ the subset consisting of arithmetic specializations $\kappa$ with $\boldsymbol{w}_{\kappa}\in [\boldsymbol{d},\boldsymbol{e}]$. For each $\kappa\in \mathfrak{X}_{\mathbf{I}}$, we put $\boldsymbol{m}_{\kappa}=(m_{\kappa,1},\ldots, m_{\kappa,k})$, where $m_{\kappa,i}$ is the smallest integer $m$ such that $\phi_{\kappa,i}$ factors through $\Gamma_{i}\slash\Gamma_{i}^{p^m}$ with $1\leq i\leq k$. 
\par 
Let $\kappa\in \mathfrak{X}_{\mathcal{O}_{\K}[[\Gamma]]}$ be an arithmetic specialization. We define a map
\begin{equation}
\kappa: M^{0}[[\Gamma]]\rightarrow M_{\K(\phi_{\kappa,1},\ldots, \phi_{\kappa,k})}
\end{equation}
to be $\kappa(c\widehat{\otimes}_{\mathcal{O}_{\K}}m)=m\otimes_{\K}\kappa(c)$ for each $c\in \mathcal{O}_{\K}[[\Gamma]]$ and $m\in M^{0}$. We prove that we have an $\mathcal{O}_{\K}$-module isomorphism
\begin{equation}\label{hom banach iwasawa continuous}
\mathrm{Hom}_{\mathcal{O}_{\K}}(C(\Gamma,\mathcal{O}_{\K}),M^{0})\stackrel{\sim}{\rightarrow}M^{0}[[\Gamma]],\ \mu\mapsto h_{\mu},
\end{equation}
where $h_{\mu}$ is the unique element characterized by $\int_{\Gamma}\kappa\vert_{\Gamma}d\mu=\kappa(h_{\mu})$ for each $\kappa\in \mathfrak{X}_{\mathcal{O}_{\K}[[\Gamma]]}$. 

By Proposition \ref{isomorphism multi 0admissible continuous}, we have an isomorphism $\varphi:\mathrm{Hom}_{\mathcal{O}_{\K}}(C(\Gamma,\mathcal{O}_{\K}),M^{0})\stackrel{\sim}{\rightarrow}\mathrm{Meas}(\Gamma, M^{0})$, where $\mathrm{Meas}(\Gamma,M^{0})=\mathcal{D}_{\boldsymbol{0}_{k}}^{[\boldsymbol{0}_{k},\boldsymbol{0}_{k}]}(\Gamma,M)^{0}$. We denote by $LC(\Gamma\slash \Gamma^{p^{\boldsymbol{n}}},\mathcal{O}_{\K})$ the $\mathcal{O}_{\K}$-module of functions $f:\Gamma\slash \Gamma^{p^{\boldsymbol{n}}}\rightarrow \mathcal{O}_{\K}$ for each $\boldsymbol{n}\in \mathbb{Z}_{\geq 0}^{k}$. It is well-known that there exists a natural $\mathcal{O}_{\K}$-algebra isomorphism $\mathrm{Meas}(\Gamma\slash \Gamma^{p^{\boldsymbol{n}}},\mathcal{O}_{\K})=\mathrm{Hom}_{\mathcal{O}_{\K}}(LC(\Gamma\slash \Gamma^{p^{\boldsymbol{n}}},\mathcal{O}_{\K}),\mathcal{O}_{\K})\simeq \mathcal{O}_{\K}[\Gamma\slash \Gamma^{p^{\boldsymbol{n}}}]$ defined by $\mu_{\boldsymbol{a}}\mapsto [\boldsymbol{a}]$ for each $\boldsymbol{a}\in \Gamma\slash \Gamma^{p^{\boldsymbol{n}}}$, where $\mu_{\boldsymbol{a}}$ is the Dirac measure at $\boldsymbol{a}\in\Gamma \slash \Gamma^{p^{\boldsymbol{n}}}$. We remark that the natural maps $LC(\Gamma\slash \Gamma^{p^{\boldsymbol{n}}},\mathcal{O}_{\K})\rightarrow LC(\Gamma,\mathcal{O}_{\K})$ defined by $f\mapsto f\pi_{\boldsymbol{n}}$ with $\boldsymbol{n}\in \mathbb{Z}_{\geq 0}^{k}$ induce an $\mathcal{O}_{\K}$-module isomorphism $\displaystyle{{\varinjlim}_{\boldsymbol{n}\in\mathbb{Z}_{\geq 0}^{k}}}LC(\Gamma\slash \Gamma^{p^{\boldsymbol{n}}},\mathcal{O}_{\K})\stackrel{\sim}{\rightarrow}LC(\Gamma,\mathcal{O}_{\K})$, where $LC(\Gamma,\mathcal{O}_{\K})=C^{[\boldsymbol{0}_{k},\boldsymbol{0}_{k}]}(\Gamma,\mathcal{O}_{\K})$ and $\pi_{\boldsymbol{n}}:\Gamma\rightarrow \Gamma\slash \Gamma^{p^{\boldsymbol{n}}}$ is the projection. Then, we have a natural $\mathcal{O}_{\K}$-module isomorphism 
$$
\psi:\mathrm{Meas}(\Gamma,M^{0})\stackrel{\sim}{\rightarrow} \displaystyle{\varprojlim_{\boldsymbol{n}}}(\mathrm{Meas}(\Gamma\slash \Gamma^{p^{\boldsymbol{n}}},\mathcal{O}_{\K})\otimes_{\mathcal{O}_{\K}}M^{0})\stackrel{\sim}{\rightarrow} \displaystyle{\varprojlim_{\boldsymbol{n}}}(\mathcal{O}_{\K}[\Gamma\slash \Gamma^{p^{\boldsymbol{n}}}]\widehat{\otimes}_{\mathcal{O}_{\K}}M^{0})=M^{0}[[\Gamma]].
$$ Therefore, we have $\psi\circ\varphi:\mathrm{Hom}_{\mathcal{O}_{\K}}(C(\Gamma,\mathcal{O}_{\K}),M^{0})\stackrel{\sim}{\rightarrow}M^{0}[[\Gamma]]$. By definition, we see that $h_{\mu}=\psi\circ \varphi(\mu)=\displaystyle{\lim_{\boldsymbol{n}\rightarrow +\infty}}\sum_{\boldsymbol{a}\in \Gamma\slash \Gamma^{p^{\boldsymbol{n}}}}[\boldsymbol{a}]\widehat{\otimes}_{\mathcal{O}_{\K}}\int_{\boldsymbol{a}\Gamma^{p^{\boldsymbol{n}}}}d\mu$ and we have
$$
\kappa(h_{\mu})=\displaystyle{\lim_{\boldsymbol{n}\rightarrow +\infty}}\sum_{\boldsymbol{a}\in \Gamma\slash \Gamma^{p^{\boldsymbol{n}}}}\int_{\boldsymbol{a}\Gamma^{p^{\boldsymbol{n}}}}\kappa\vert_{\Gamma}(\boldsymbol{a})d\mu=\int_{\Gamma}\kappa\vert_{\Gamma}d\mu$$ 
for each $\kappa\in \mathfrak{X}_{\mathcal{O}_{\K}[[\Gamma]]}$. Thus, we have \eqref{hom banach iwasawa continuous}. If $M=\K$, the isomorphism of \eqref{hom banach iwasawa continuous} is an $\mathcal{O}_{\K}$-algebra isomorphism. By Proposition\ \ref{isomorphism multi 0admissible continuous}, we have an $\mathcal{O}_{\K}$-algebra isomorphism $\mathcal{O}_{\K}[[\Gamma]]\simeq \mathrm{Hom}_{\mathcal{O}_{\K}}(C(\Gamma,\mathcal{O}_{\K}),\mathcal{O}_{\K})\simeq \mathcal{D}_{\boldsymbol{0}_{k}}^{[\boldsymbol{d},\boldsymbol{e}]}(\Gamma,\K)^{0}$. By Lemma \ref{admissibule convolution multivariable}, $\mathcal{D}_{\boldsymbol{h}}^{[\boldsymbol{d},\boldsymbol{e}]}(\Gamma,M)$ becomes a $\mathcal{D}_{\boldsymbol{0}_{k}}^{[\boldsymbol{d},\boldsymbol{e}]}(\Gamma,\K)$-module. Thus, we can regard $\mathcal{D}_{\boldsymbol{h}}^{[\boldsymbol{d},\boldsymbol{e}]}(\Gamma,M)$ as an $\mathcal{O}_{\K}[[\Gamma]]\otimes_{\mathcal{O}_{\K}}\K$-module.

Let $\boldsymbol{d},\boldsymbol{e}\in \mathbb{Z}^{k}$ such that $\boldsymbol{e}\geq \boldsymbol{d}$ and $\boldsymbol{h}\in \ord_{p}(\mathcal{O}_{\K}\backslash \{0\})^{k}$. Assume that $k\geq 2$ and put $\boldsymbol{h}^{\prime}=(h_{1},\ldots,h_{k-1})$, $\boldsymbol{d}^{\prime}=(d_{1},\ldots, d_{k-1})$, $\boldsymbol{e}^{\prime}=(e_{1},\ldots, e_{k-1})$ and $\Gamma^{\prime}=\Gamma_{1}\times \cdots \times \Gamma_{k-1}$. Then, we have a natural $\mathcal{O}_{\K}$-module isomorphism
\begin{equation}\label{locally poly tensor naturalisom}
C^{[d_{k},e_{k}]}(\Gamma_{k},\mathcal{O}_{\K})\otimes_{\mathcal{O}_{\K}}C^{[\boldsymbol{d}^{\prime},\boldsymbol{e}^{\prime}]}(\Gamma^{\prime},\mathcal{O}_{\K})\stackrel{\sim}{\rightarrow}C^{[\boldsymbol{d},\boldsymbol{e}]}(\Gamma,\mathcal{O}_{\K}),\  f\otimes_{\mathcal{O}_{\K}}g\mapsto g\cdot f
\end{equation}
where $g\cdot f\in C^{[\boldsymbol{d},\boldsymbol{e}]}(\Gamma,\mathcal{O}_{\K})$ is the element defined by   $g\cdot f(\boldsymbol{x})=g(x_{1},\ldots,x_{k-1})f(x_{k})$ for each $\boldsymbol{x}\in \Gamma$. By the isomorphism \eqref{locally poly tensor naturalisom}, we have the following adjunction:
\begin{equation}\label{adjonction of homlocallly poly M}
\mathrm{Hom}_{\mathcal{O}_{\K}}(C^{[\boldsymbol{d},\boldsymbol{e}]}(\Gamma,\mathcal{O}_{\K}),M)\simeq \mathrm{Hom}_{\mathcal{O}_{\K}}(C^{[d_{k},e_{k}]}(\Gamma_{k},\mathcal{O}_{\K}),\mathrm{Hom}_{\mathcal{O}_{\K}}(C^{[\boldsymbol{d}^{\prime},\boldsymbol{e}^{\prime}]}(\Gamma^{\prime},\mathcal{O}_{\K}),M)).
\end{equation}
\begin{pro}\label{for induction admisible admissible}
Assume that $k\geq 2$.  Let $\boldsymbol{h}\in \ord_{p}(\mathcal{O}_{\K}\backslash \{0\})^{k}$ and $\boldsymbol{d},\boldsymbol{e}\in \mathbb{Z}^{k}$ such that $\boldsymbol{e}\geq \boldsymbol{d}$. Put $\boldsymbol{h}^{\prime}=(h_{1},\ldots,h_{k-1})$, $\boldsymbol{d}^{\prime}=(d_{1},\ldots, d_{k-1})$, $\boldsymbol{e}^{\prime}=(e_{1},\ldots, e_{k-1})$ and $\Gamma^{\prime}=\Gamma_{1}\times \cdots \times \Gamma_{k-1}$. The adjunction in \eqref{adjonction of homlocallly poly M} induces the following isometric isomorphism:
$$\varphi:\mathcal{D}_{\boldsymbol{h}}^{[\boldsymbol{d},\boldsymbol{e}]}(\Gamma,M)\stackrel{\sim}{\rightarrow}\mathcal{D}_{h_{k}}^{[d_{k},e_{k}]}(\Gamma_{k},\mathcal{D}_{\boldsymbol{h}^{\prime}}^{[\boldsymbol{d}^{\prime},\boldsymbol{e}^{\prime}]}(\Gamma^{\prime},M)).$$
\end{pro}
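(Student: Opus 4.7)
The plan is to verify that the algebraic adjunction stated in \eqref{adjonction of homlocallly poly M} restricts to the claimed map between the admissible-distribution subspaces and is in fact an isometry with respect to the two valuations $v_{\boldsymbol{h}}^{[\boldsymbol{d},\boldsymbol{e}]}$ and $v_{h_k}^{[d_k,e_k]}$ (the latter being the one on a $\K$-Banach space with values in the $\K$-Banach space $\mathcal{D}_{\boldsymbol{h}'}^{[\boldsymbol{d}',\boldsymbol{e}']}(\Gamma',M)$ from Proposition \ref{multi admissible banach}). Concretely, for $\mu\in \mathrm{Hom}_{\mathcal{O}_{\K}}(C^{[\boldsymbol{d},\boldsymbol{e}]}(\Gamma,\mathcal{O}_{\K}),M)$, its image $\varphi(\mu)$ is defined by $\varphi(\mu)(f)(g)=\mu(g\cdot f)$ for $f\in C^{[d_k,e_k]}(\Gamma_k,\mathcal{O}_{\K})$ and $g\in C^{[\boldsymbol{d}',\boldsymbol{e}']}(\Gamma',\mathcal{O}_{\K})$, where $g\cdot f$ is the function introduced in \eqref{locally poly tensor naturalisom}.

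The central computation is a factorisation of integrals. Writing $\boldsymbol{a}=(\boldsymbol{a}',a_{k})\in \Gamma'\times \Gamma_{k}$, $\boldsymbol{m}=(\boldsymbol{m}',m_{k})$ and $\boldsymbol{i}=(\boldsymbol{i}',i_{k})$, the defining integrand of $v_{\boldsymbol{h}}^{[\boldsymbol{d},\boldsymbol{e}]}$ splits as a product of the $(\boldsymbol{a}',\boldsymbol{m}',\boldsymbol{i}')$-factor on $\Gamma'$ and the $(a_{k},m_{k},i_{k})$-factor on $\Gamma_{k}$. Setting
$$
\nu_{a_{k},m_{k},i_{k}}:=\int_{a_{k}\Gamma_{k}^{p^{m_{k}}}}(\chi_{k}(x_{k})-\chi_{k}(a_{k}))^{i_{k}-d_{k}}\chi_{k}(x_{k})^{d_{k}}\,d\varphi(\mu)(x_{k}),
$$
viewed as an element of $\mathrm{Hom}_{\mathcal{O}_{\K}}(C^{[\boldsymbol{d}',\boldsymbol{e}']}(\Gamma',\mathcal{O}_{\K}),M)$, the identity
$$
\int_{\boldsymbol{a}\Gamma^{p^{\boldsymbol{m}}}}\prod_{j=1}^{k}(\chi_{j}(x_{j})-\chi_{j}(a_{j}))^{i_{j}-d_{j}}\chi_{j}(x_{j})^{d_{j}}\,d\mu=\int_{\boldsymbol{a}'\Gamma'^{p^{\boldsymbol{m}'}}}\prod_{j=1}^{k-1}(\chi_{j}(x_{j})-\chi_{j}(a_{j}))^{i_{j}-d_{j}}\chi_{j}(x_{j})^{d_{j}}\,d\nu_{a_{k},m_{k},i_{k}}
$$
holds by the very construction of $\varphi$. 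Combined with the decomposition $\langle \boldsymbol{h}-(\boldsymbol{i}-\boldsymbol{d}),\boldsymbol{m}\rangle_{k}=\langle \boldsymbol{h}'-(\boldsymbol{i}'-\boldsymbol{d}'),\boldsymbol{m}'\rangle_{k-1}+(h_{k}-(i_{k}-d_{k}))m_{k}$, taking the infimum first over $(\boldsymbol{a}',\boldsymbol{m}',\boldsymbol{i}')$ (which yields $v_{\boldsymbol{h}'}^{[\boldsymbol{d}',\boldsymbol{e}']}(\nu_{a_{k},m_{k},i_{k}})$) and then over $(a_{k},m_{k},i_{k})$ produces exactly $v_{h_{k}}^{[d_{k},e_{k}]}(\varphi(\mu))$ on one side and $v_{\boldsymbol{h}}^{[\boldsymbol{d},\boldsymbol{e}]}(\mu)$ on the other.

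From this double-indexed infimum identity one reads off simultaneously that (i) $\nu_{a_{k},m_{k},i_{k}}\in \mathcal{D}_{\boldsymbol{h}'}^{[\boldsymbol{d}',\boldsymbol{e}']}(\Gamma',M)$ whenever $\mu\in \mathcal{D}_{\boldsymbol{h}}^{[\boldsymbol{d},\boldsymbol{e}]}(\Gamma,M)$, (ii) $\varphi(\mu)$ lies in the outer admissible distribution module, and (iii) the two valuations agree. For the inverse, one uses the fact that \eqref{adjonction of homlocallly poly M} already gives an $\mathcal{O}_{\K}$-module isomorphism on Hom spaces, so any $\tilde{\mu}\in \mathcal{D}_{h_{k}}^{[d_{k},e_{k}]}(\Gamma_{k},\mathcal{D}_{\boldsymbol{h}'}^{[\boldsymbol{d}',\boldsymbol{e}']}(\Gamma',M))$ has a unique preimage $\mu\in \mathrm{Hom}_{\mathcal{O}_{\K}}(C^{[\boldsymbol{d},\boldsymbol{e}]}(\Gamma,\mathcal{O}_{\K}),M)$, and the same infimum identity, read in reverse, forces $v_{\boldsymbol{h}}^{[\boldsymbol{d},\boldsymbol{e}]}(\mu)=v_{h_{k}}^{[d_{k},e_{k}]}(\tilde\mu)>-\infty$, so $\mu\in \mathcal{D}_{\boldsymbol{h}}^{[\boldsymbol{d},\boldsymbol{e}]}(\Gamma,M)$.

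The main obstacle I anticipate is purely bookkeeping: making sure that $\nu_{a_{k},m_{k},i_{k}}$ really takes values in $M$ (rather than in some completion) and that its valuation $v_{\boldsymbol{h}'}^{[\boldsymbol{d}',\boldsymbol{e}']}$ is finite when computed intrinsically (not just a posteriori via the infimum identity). This requires justifying that the measure-valued integral defining $\nu_{a_{k},m_{k},i_{k}}$ exists term-by-term on the generators $1_{\boldsymbol{a}'\Gamma'^{p^{\boldsymbol{m}'}}}\prod_{j<k}(\chi_{j}-\chi_{j}(a_{j}))^{i_{j}-d_{j}}\chi_{j}^{d_{j}}$ of $C^{[\boldsymbol{d}',\boldsymbol{e}']}(\Gamma',\mathcal{O}_{\K})$, which is exactly what the splitting of the characteristic function of $\boldsymbol{a}\Gamma^{p^{\boldsymbol{m}}}$ achieves. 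Once this is cleanly written out the isometry follows from Lemma \ref{easy lemma on isometry}.
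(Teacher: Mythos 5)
Your proposal matches the paper's approach: both hinge on the same splitting of the integral across $\Gamma'\times\Gamma_k$, reducing to the identity $\int_{\boldsymbol{a}\Gamma^{p^{\boldsymbol{m}}}}(\cdots)\,d\mu=\int_{\boldsymbol{a}'\Gamma'^{p^{\boldsymbol{m}'}}}(\cdots)\,d\nu_{a_k,m_k,i_k}$ together with the additive decomposition of $\langle\boldsymbol{h}-(\boldsymbol{i}-\boldsymbol{d}),\boldsymbol{m}\rangle_k$, and both use the step of writing a general $f\in C^{[d_k,e_k]}(\Gamma_k,\mathcal{O}_{\K})$ as an $\mathcal{O}_{\K}$-linear combination of the special test functions to see that $\varphi(\mu)$ lands in the admissible distribution module. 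Your packaging as a single double-indexed infimum, giving $v_{h_k}^{[d_k,e_k]}(\varphi(\mu))=v_{\boldsymbol{h}}^{[\boldsymbol{d},\boldsymbol{e}]}(\mu)$ outright, is just a tidier way of stating the two one-sided inequalities the paper proves separately before invoking Lemma \ref{easy lemma on isometry}; the argument is correct.
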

\begin{proof}
Let $\mu\in \mathcal{D}_{\boldsymbol{h}}^{[\boldsymbol{d},\boldsymbol{e}]}(\Gamma,M)$. We denote by $\mu^{\prime}\in \mathrm{Hom}_{\mathcal{O}_{\K}}(C^{[d_{k},e_{k}]}(\Gamma_{k},\mathcal{O}_{\K}),\mathrm{Hom}_{\mathcal{O}_{\K}}(C^{[\boldsymbol{d}^{\prime},\boldsymbol{e}^{\prime}]}(\linebreak\Gamma^{\prime},\mathcal{O}_{\K}),M))$  the image of $\mu$ by the adjunction in \eqref{adjonction of homlocallly poly M}. Let $a_{k}\in \Gamma_{k}$, $i_{k}\in [d_{k},e_{k}]$ and $m_{k}\in \mathbb{Z}_{\geq 0}$. We put 
$$\nu_{a_{k}\Gamma_{k}^{p^{m_{k}}}}^{(i_{k})}=\int_{a_{k}\Gamma_{k}^{p^{m_{k}}}}(\chi_{k}(x_{k})-\chi_{k}(a_{k}))^{i_{k}-d_{k}}\chi_{k}(x_{k})^{d_{k}}d\mu^{\prime}\in \mathrm{Hom}_{\mathcal{O}_{\K}}(C^{[\boldsymbol{d}^{\prime},\boldsymbol{e}^{\prime}]}(\Gamma^{\prime},\mathcal{O}_{\K}),M).$$
First, we prove that 
\begin{equation}\label{mufprime is a admissible distr}
\nu_{a_{k}\Gamma_{k}^{p^{m_{k}}}}^{(i_{k})}\in \mathcal{D}_{\boldsymbol{h}^{\prime}}^{[\boldsymbol{d}^{\prime},\boldsymbol{e}^{\prime}]}(\Gamma^{\prime},M).\end{equation}
For each $\boldsymbol{m}^{\prime}\in \mathbb{Z}_{\geq 0}^{k-1}$, $\boldsymbol{a}^{\prime}\in \Gamma^{k-1}$ and $\boldsymbol{i}^{\prime}\in [\boldsymbol{d}^{\prime},\boldsymbol{e}^{\prime}]$, we see that
\begin{align*} 
& v_{M}\left(\int_{\boldsymbol{a}^{\prime}{\Gamma^{\prime}}^{p^{\boldsymbol{m}^{\prime}}}}\prod_{j=1}^{k-1}(\chi_{j}(x_{j})-\chi_{j}(a_{j}))^{i^{\prime}_{j}-d_{j}}\chi_{j}(x_{j})^{d_{j}}d\nu_{a_{k}\Gamma_{k}^{p^{m_{k}}}}^{(i_{k})}\right)+\langle \boldsymbol{h}^{\prime}-(\boldsymbol{i}^{\prime}-\boldsymbol{d}^{\prime}),\boldsymbol{m}^{\prime}\rangle_{k-1}\\
&=v_{M}\Bigg(\int_{\boldsymbol{a}^{\prime}{\Gamma^{\prime}}^{p^{\boldsymbol{m}^{\prime}}}\times a_{k}\Gamma_{k}^{p^{m_{k}}}}\left(\prod_{j=1}^{k-1}(\chi_{j}(x_{j})-\chi_{j}(a_{j}))^{i^{\prime}_{j}-d_{j}}\chi_{j}(x_{j})^{d_{j}}\right)(\chi_{k}(x_{k})-\chi_{k}(a_{k}))^{i_{k}-d_{k}}\\ 
& \chi_{k}(x_{k})^{d_{k}}d\mu\Bigg) +\langle \boldsymbol{h}^{\prime}-(\boldsymbol{i}^{\prime}-\boldsymbol{d}^{\prime}),\boldsymbol{m}^{\prime}\rangle_{k-1} \\
& \geq v_{\boldsymbol{h}}^{[\boldsymbol{d},\boldsymbol{e}]}(\mu)-(h_{k}-(i_{k}-d_{k}))m_{k}.
\end{align*}
\normalsize 
Then, we have
\begin{equation}\label{admissible dieme distri 2}
v_{\boldsymbol{h}^{\prime}}^{[\boldsymbol{d}^{\prime},\boldsymbol{e}^{\prime}]}(\nu_{a_{k}\Gamma_{k}^{p^{m_{k}}}}^{(i_{k})})\geq v_{\boldsymbol{h}}^{[\boldsymbol{d},\boldsymbol{e}]}(\mu)-(h_{k}-(i_{k}-d_{k}))m_{k}.
\end{equation}
Thus, we have \eqref{mufprime is a admissible distr}. 

Next, we prove that $\mu^{\prime}(f)\in \mathcal{D}_{\boldsymbol{h}^{\prime}}^{[\boldsymbol{d}^{\prime},\boldsymbol{e}^{\prime}]}(\Gamma^{\prime},M)$ for each $f\in C^{[d_{k},e_{k}]}(\Gamma_{k},\mathcal{O}_{\K})$. For each $f\in C^{[d_{k},e_{k}]}(\Gamma_{k},\mathcal{O}_{\K})$, there exists an $m_{k}\in \mathbb{Z}_{\geq 0}$ such that we have
$$f(x_{k})=\sum_{a_{k}\in \Gamma_{k}\slash \Gamma_{k}^{p^{m_{k}}}}1_{a_{k}\Gamma_{k}^{p^{m_{k}}}}(x_{k})\sum_{i_{k}=d_{k}}^{e_{k}}c_{i_{k}}^{(a_{k})}(\chi_{k}(x_{k})-\chi_{k}(a_{k}))^{i_{k}-d_{k}}\chi_{k}(x_{k})^{d_{k}}$$
with $c_{i_{k}}^{(a_{k})}\in \mathcal{O}_{\K}$ where $1_{a_{k}\Gamma^{p^{m_{k}}}}(x_{k})$ is the characteristic function on $a_{k}\Gamma_{k}^{p^{m_{k}}}$. Therefore, we have $\mu^{\prime}(f)=\sum_{a_{k}\in \Gamma_{k}\slash \Gamma_{k}^{p^{m_{k}}}}\sum_{i_{k}=d_{k}}^{e_{k}}c_{i_{k}}^{(a_{k})}\nu_{a_{k}\Gamma_{k}^{p^{m_{k}}}}^{(i_{k})}\in \mathcal{D}_{\boldsymbol{h}^{\prime}}^{[\boldsymbol{d}^{\prime},\boldsymbol{e}^{\prime}]}(\Gamma^{\prime},M)$. Then, we see that $\mu^{\prime}(f)\in \mathcal{D}_{\boldsymbol{h}^{\prime}}^{[\boldsymbol{d}^{\prime},\boldsymbol{e}^{\prime}]}(\Gamma^{\prime},M)$ for each $f\in C^{[d_{k},e_{k}]}(\Gamma_{k},\mathcal{O}_{\K})$.

Next, we prove that $\mu^{\prime}\in \mathcal{D}_{h_{k}}^{[d_{k},e_{k}]}(\Gamma_{k},\mathcal{D}_{\boldsymbol{h}^{\prime}}^{[\boldsymbol{d}^{\prime},\boldsymbol{e}^{\prime}]}(\Gamma^{\prime},M))$ and 
\begin{equation}\label{admissible dieme distri 3}
v_{h_{k}}^{[d_{k},e_{k}]}(\mu^{\prime})\geq v_{\boldsymbol{h}}^{[\boldsymbol{d},\boldsymbol{e}]}(\mu).
\end{equation}
By \eqref{admissible dieme distri 2}, we have
\begin{multline*}
v_{\boldsymbol{h}^{\prime}}^{[\boldsymbol{d}^{\prime},\boldsymbol{e}^{\prime}]}\left(\int_{a_{k}\Gamma_{k}^{p^{m_{k}}}}(\chi_{k}(x_{k})-\chi_{k}(a_{k}))^{i_{k}-d_{k}}\chi_{k}(x_{k})^{d_{k}}d\mu^{\prime}\right)+(h_{k}-(i_{k}-d_{k}))m_{k}\\
=v_{\boldsymbol{h}^{\prime}}^{[\boldsymbol{d}^{\prime},\boldsymbol{e}^{\prime}]}(\nu_{a_{k}\Gamma_{k}^{p^{k}}}^{(i_{k})})+(h_{k}-(i_{k}-d_{k}))m_{k}\geq  v_{\boldsymbol{h}}^{[\boldsymbol{d},\boldsymbol{e}]}(\mu).
\end{multline*}
for every $m_{k}\in \mathbb{Z}_{\geq 0}$, $a_{k}\in \Gamma_{k}$ and $i_{k}\in [d_{k},e_{k}]$. Therefore, we have \eqref{admissible dieme distri 3} and $\mu^{\prime}\in  \mathcal{D}_{h_{k}}^{[d_{k},e_{k}]}(\Gamma_{k},\mathcal{D}_{\boldsymbol{h}^{\prime}}^{[\boldsymbol{d}^{\prime},\boldsymbol{e}^{\prime}]}(\Gamma^{\prime},M))$. Thus, $\varphi:\mathcal{D}_{\boldsymbol{h}}^{[\boldsymbol{d},\boldsymbol{e}]}(\Gamma,M)\rightarrow\mathcal{D}_{h_{k}}^{[d_{k},e_{k}]}(\Gamma_{k},\mathcal{D}_{\boldsymbol{h}^{\prime}}^{[\boldsymbol{d}^{\prime},\boldsymbol{e}^{\prime}]}(\Gamma^{\prime},M))$ is well-defined. Further, by \eqref{admissible dieme distri 3}, we have
\begin{equation}\label{admissible dieme distri 4}
v_{\mathfrak{L}}(\varphi)\geq 0.
\end{equation}
Next, we prove that the inverse $\varphi^{-1}$ of $\varphi$ is well-defined and 
\begin{equation}\label{admissible dieme distri 5}
v_{\mathfrak{L}}(\varphi^{-1})\geq 0.
\end{equation}
Let $\mu\in \mathcal{D}_{h_{k}}^{[d_{k},e_{k}]}(\Gamma_{k},\mathcal{D}_{\boldsymbol{h}^{\prime}}^{[\boldsymbol{d}^{\prime},\boldsymbol{e}^{\prime}]}(\Gamma^{\prime},M))$, we denote by $\mu^{\prime\prime}\in \mathrm{Hom}_{\mathcal{O}_{\K}}(C^{[\boldsymbol{d},\boldsymbol{e}]}(\Gamma,\mathcal{O}_{\K}),M)$ the inverse image of $\mu$ by the adjunction in \eqref{adjonction of homlocallly poly M}. Let $\boldsymbol{a}\in \Gamma$, $\boldsymbol{m}\in \mathbb{Z}_{\geq 0}^{k}$ and $\boldsymbol{i}\in [\boldsymbol{d},\boldsymbol{e}]$. We have
\begin{multline*}
\int_{\boldsymbol{a}\Gamma^{p^{\boldsymbol{m}}}}\prod_{j=1}^{k}(\chi_{j}(x_{j})-\chi_{j}(a_{j}))^{i_{j}-d_{j}}\chi_{j}(x_{j})^{d_{j}}d\mu^{\prime\prime}\\
=\int_{\boldsymbol{a}^{\prime}{\Gamma^{\prime}}^{p^{\boldsymbol{m}^{\prime}}}}\prod_{j=1}^{k-1}(\chi_{j}(x_{j})-\chi_{j}(a_{j}))^{i_{j}-d_{j}}\chi_{j}(x_{j})^{d_{j}}dw_{a_{k}\Gamma_{k}^{p^{m_{k}}}}^{(i_{k})}
\end{multline*}
where $\boldsymbol{a}^{\prime}=(a_{1},\ldots, a_{k-1})$, $\boldsymbol{m}^{\prime}=(m_{1},\ldots, m_{k-1})$ and $\boldsymbol{i}^{\prime}=(i_{1},\ldots, i_{k-1})$ and 
$$w_{a_{k}\Gamma_{k}^{p^{m_{k}}}}^{(i_{k})}=\int_{a_{k}\Gamma_{k}^{p^{m_{k}}}}(\chi_{k}(x_{k})-\chi_{k}(a_{k}))^{i_{k}-d_{k}}\chi_{k}(x_{k})^{d_{k}}d\mu\in \mathcal{D}_{\boldsymbol{h}^{\prime}}^{[\boldsymbol{d}^{\prime},\boldsymbol{e}^{\prime}]}(\Gamma^{\prime},M).$$
Then, we see that
\begin{multline*}
v_{M}\left(\int_{\boldsymbol{a}\Gamma^{p^{\boldsymbol{m}}}}\prod_{j=1}^{k}(\chi_{j}(x_{j})-\chi_{j}(a_{j}))^{i_{j}-d_{j}}\chi_{j}(x_{j})^{d_{j}}d\mu^{\prime\prime}\right)+\langle \boldsymbol{h}-(\boldsymbol{i}-\boldsymbol{d}),\boldsymbol{m}\rangle_{k}\\
\geq v_{\boldsymbol{h}^{\prime}}^{[\boldsymbol{d}^{\prime},\boldsymbol{e}^{\prime}]}(w_{a_{k}\Gamma_{k}^{p^{m_{k}}}}^{(i_{k})})+(h_{k}-(i_{k}-d_{k}))m_{k}\geq v_{h_{k}}^{[d_{k},e_{k}]}(\mu)
\end{multline*}
and we conclude that
\begin{equation}\label{admissible dieme distri 6}
v_{\boldsymbol{h}}^{[\boldsymbol{d},\boldsymbol{e}]}(\mu^{\prime\prime})\geq v_{h_{k}}^{[d_{k},e_{k}]}(\mu).
\end{equation}
Then, $\mu^{\prime\prime}\in \mathcal{D}_{\boldsymbol{h}}^{[\boldsymbol{d},\boldsymbol{e}]}(\Gamma,M)$ and we see that $\varphi^{-1}$ is well-defined. Further, by \eqref{admissible dieme distri 6}, we have \eqref{admissible dieme distri 5}.
Then, by Lemma \ref{easy lemma on isometry}, \eqref{admissible dieme distri 4} and  \eqref{admissible dieme distri 5}, we see that $\varphi$ is isometric.
\end{proof}
Let $\boldsymbol{d}^{(i)},\boldsymbol{e}^{(i)}\in \mathbb{Z}^{k}$ with $i=1,2$ such that $[\boldsymbol{d}^{(1)},\boldsymbol{e}^{(1)}]\subset [\boldsymbol{d}^{(2)},\boldsymbol{e}^{(2)}]$. We note that the natural restriction map $\mathrm{Hom}_{\mathcal{O}_{\K}}(C^{[\boldsymbol{d}^{(2)},\boldsymbol{e}^{(2)}]}(\Gamma,\mathcal{O}_{\K}),M)\rightarrow \mathrm{Hom}_{\mathcal{O}_{\K}}(C^{[\boldsymbol{d}^{(1)},\boldsymbol{e}^{(1)}]}(\Gamma,\mathcal{O}_{\K}),M)$, $\mu\mapsto \mu\vert_{C^{[\boldsymbol{d}^{(1)},\boldsymbol{e}^{(1)}]}(\Gamma,\mathcal{O}_{\K}),M)}$ induces the following $\mathcal{O}_{\K}[[\Gamma]]\otimes_{\mathcal{O}_{\K}}\K$-module homomorphism
\begin{equation}\label{proj admissibledistr defnotusis}
\mathcal{D}_{\boldsymbol{h}}^{[\boldsymbol{d}^{(2)},\boldsymbol{e}^{(2)}]}(\Gamma,M)\rightarrow \mathcal{D}_{\boldsymbol{h}}^{[\boldsymbol{d}^{(1)},\boldsymbol{e}^{(1)}]}(\Gamma,M)
\end{equation}
and 
\begin{equation}\label{projadmissibdistr notisom value}
v_{\boldsymbol{h}}^{[\boldsymbol{d}^{(1)},\boldsymbol{e}^{(1)}]}(\mu\vert_{C^{[\boldsymbol{d}^{(1)},\boldsymbol{e}^{(1)}]}(\Gamma,\mathcal{O}_{\K}),M)})\geq v_{\boldsymbol{h}}^{[\boldsymbol{d}^{(2)},\boldsymbol{e}^{(2)}]}(\mu)
\end{equation}
for every $\mu\in\mathcal{D}_{\boldsymbol{h}}^{[\boldsymbol{d}^{(2)},\boldsymbol{e}^{(2)}]}(\Gamma,M)$. Indeed, for each $\boldsymbol{a}\in \Gamma$ we see that
\begin{multline*}
\prod_{j=1}^{k}\chi_{j}(x_{j})^{d_{j}^{(1)}}\\
=\sum_{\boldsymbol{i}\in [\boldsymbol{0}_{k},\boldsymbol{d}^{(1)}-\boldsymbol{d}^{(2)}]}\left(\prod_{j=1}^{k}\begin{pmatrix}d_{j}^{(1)}-d_{j}^{(2)}\\ i_{j}\end{pmatrix}(\chi_{j}(x_{j})-\chi_{j}(a_{j}))^{d_{j}^{(1)}-d_{j}^{(2)}-i_{j}}\chi_{j}(a_{j})^{i_{j}}\right)\prod_{j=1}^{k}\chi_{j}(x_{j})^{d_{j}^{(2)}}.
\end{multline*}
Therefore, if $\mu\in \mathcal{D}_{\boldsymbol{h}}^{[\boldsymbol{d}^{(2)},\boldsymbol{e}^{(2)}]}(\Gamma,M)$, for each $\boldsymbol{m}\in \mathbb{Z}_{\geq 0}^{k}$, $\boldsymbol{a}\in \Gamma$ and $\boldsymbol{i}\in [\boldsymbol{d}^{(1)},\boldsymbol{e}^{(1)}]$, we have
\begin{align*}
&v_{M}\left(\int_{\boldsymbol{a}\boldsymbol{\Gamma}^{p^{\boldsymbol{m}}}}\prod_{j=1}^{k}(\chi_{j}(x_{j})-\chi_{j}(a_{j}))^{i_{j}-d_{j}^{(1)}}\chi_{j}(x_{j})^{d_{j}^{(1)}}d\mu\right)\\
&=v_{M}\Bigg(\sum_{\boldsymbol{t}\in [\boldsymbol{0}_{k},\boldsymbol{d}^{(1)}-\boldsymbol{d}^{(2)}]}\prod_{j=1}^{k}\begin{pmatrix}d_{j}^{(1)}-d_{j}^{(2)}\\ t_{j}\end{pmatrix}\chi_{j}(a_{j})^{t_{j}}\int_{\boldsymbol{a}\boldsymbol{\Gamma}^{p^{\boldsymbol{m}}}}(\chi_{j}(x_{j})-\chi_{j}(a_{j}))^{i_{j}-d_{j}^{(2)}-t_{j}}\\
&\prod_{j=1}^{k}\chi_{j}(x_{j})^{d_{j}^{(2)}}d\mu\Bigg)\\
&\geq \min_{\boldsymbol{t}\in [\boldsymbol{0}_{k},\boldsymbol{d}^{(1)}-\boldsymbol{d}^{(2)}]}\left\{v_{M}\left(\int_{\boldsymbol{a}\boldsymbol{\Gamma}^{p^{\boldsymbol{m}}}}(\chi_{j}(x_{j})-\chi_{j}(a_{j}))^{i_{j}-d_{j}^{(2)}-t_{j}}\prod_{j=1}^{k}\chi_{j}(x_{j})^{d_{j}^{(2)}}d\mu\right)\right\}\\
&\geq \min_{\boldsymbol{t}\in [\boldsymbol{0}_{k},\boldsymbol{d}^{(1)}-\boldsymbol{d}^{(2)}]}\left\{v_{\boldsymbol{h}}^{[\boldsymbol{d}^{(2)},\boldsymbol{e}^{(2)}]}(\mu)-\langle\boldsymbol{h}-(\boldsymbol{i}-\boldsymbol{d}^{(2)}-\boldsymbol{t}),\boldsymbol{m} \rangle_{k}\right\}\\
&\geq v_{\boldsymbol{h}}^{[\boldsymbol{d}^{(2)},\boldsymbol{e}^{(2)}]}(\mu)-\langle \boldsymbol{h}-(\boldsymbol{i}-\boldsymbol{d}^{(1)}),\boldsymbol{m}\rangle_{k}.
\end{align*}
 Then, we see that $v_{\boldsymbol{h}}^{[\boldsymbol{d}^{(1)},\boldsymbol{e}^{(1)}]}(\mu\vert_{C^{[\boldsymbol{d}^{(1)},\boldsymbol{e}^{(1)}]}(\Gamma,\mathcal{O}_{\K}),M)})\geq v_{\boldsymbol{h}}^{[\boldsymbol{d}^{(2)},\boldsymbol{e}^{(2)}]}(\mu)$. Therefore, we have \eqref{proj admissibledistr defnotusis} and \eqref{projadmissibdistr notisom value}.
 \begin{lem}\label{locally polynomial is locally analytic lemma}
Let $f\in C^{[\boldsymbol{d},\boldsymbol{e}]}(\Gamma,\mathcal{O}_{\K})$ with $\boldsymbol{d},\boldsymbol{e}\in \mathbb{Z}^{k}$ such that $\boldsymbol{d}\leq \boldsymbol{e}$. There exists an $\boldsymbol{m}\in \mathbb{Z}_{\geq 0}^{k}$ such that for each $\boldsymbol{a}\in \Gamma$, there exists a unique $g_{\boldsymbol{a}}\in B_{\boldsymbol{0}_{k}}(\K)^{0}$ which satisfies
 \begin{equation}\label{lemma for admsitrib localyanaly}
 f(\boldsymbol{x})=g_{\boldsymbol{a}}(\chi_{1}(x_{1})-\chi_{1}(a_{1}),\ldots, \chi_{k}(x_{k})-\chi_{k}(a_{k}))
 \end{equation}
 for every $\boldsymbol{x}\in \boldsymbol{a}\Gamma^{p^{\boldsymbol{m}}}$ where $\boldsymbol{0}_{k}=(0,\ldots,0)\in \mathbb{Z}^{k}$.
 \end{lem}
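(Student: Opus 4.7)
The plan is to translate the local polynomial description of $f$ into a power-series expansion around each $\boldsymbol{a}\in\Gamma$, using the change of variable $X_j=\chi_j(x_j)-\chi_j(a_j)$. First, write $F(\boldsymbol{x}):=\bigl(\prod_{j=1}^{k}\chi_j(x_j)^{-d_j}\bigr)f(\boldsymbol{x})$, which by the definition of $C^{[\boldsymbol{d},\boldsymbol{e}]}(\Gamma,\mathcal{O}_{\K})$ is a locally polynomial function on $\Gamma$ of degree at most $\boldsymbol{e}-\boldsymbol{d}$. By definition, for each $\boldsymbol{a}\in\Gamma$ there is an open neighborhood of $\boldsymbol{a}$ on which $F$ agrees with a polynomial $p_{\boldsymbol{a}}(\chi_1(x_1),\ldots,\chi_k(x_k))$ for some $p_{\boldsymbol{a}}\in\mathcal{O}_{\K}[X_1,\ldots,X_k]_{\leq\boldsymbol{e}-\boldsymbol{d}}$. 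Because the subgroups $\boldsymbol{a}\Gamma^{p^{\boldsymbol{n}}}$ form a fundamental system of neighborhoods of $\boldsymbol{a}$, this neighborhood can be shrunk to $\boldsymbol{a}\Gamma^{p^{\boldsymbol{m}_{\boldsymbol{a}}}}$ for some $\boldsymbol{m}_{\boldsymbol{a}}\in\mathbb{Z}_{\geq 0}^{k}$. By compactness of $\Gamma$, finitely many such neighborhoods cover $\Gamma$; letting $\boldsymbol{m}$ be the coordinatewise maximum of the corresponding exponents, the identity $F(\boldsymbol{x})=p_{\boldsymbol{a}}(\chi_1(x_1),\ldots,\chi_k(x_k))$ holds on $\boldsymbol{a}\Gamma^{p^{\boldsymbol{m}}}$ for every $\boldsymbol{a}\in\Gamma$.

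With this $\boldsymbol{m}$ fixed, I define
$$
g_{\boldsymbol{a}}(X_1,\ldots,X_k)=\left(\prod_{j=1}^{k}(\chi_j(a_j)+X_j)^{d_j}\right) p_{\boldsymbol{a}}\bigl(\chi_1(a_1)+X_1,\ldots,\chi_k(a_k)+X_k\bigr),
$$
where, when $d_j<0$, the factor $(\chi_j(a_j)+X_j)^{d_j}$ is interpreted via the binomial series $\sum_{n\geq 0}\binom{d_j}{n}\chi_j(a_j)^{d_j-n}X_j^{n}$. Since $\chi_j(a_j)\in\mathbb{Z}_p^{\times}$ and the generalized binomial coefficients lie in $\mathbb{Z}_p$, every coefficient of $(\chi_j(a_j)+X_j)^{d_j}$ lies in $\mathbb{Z}_p\subset\mathcal{O}_{\K}$; the polynomial factor $p_{\boldsymbol{a}}(\chi_1(a_1)+X_1,\ldots,\chi_k(a_k)+X_k)$ also has coefficients in $\mathcal{O}_{\K}$. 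Viewing each factor as an element of $B_{\boldsymbol{0}_k}(\K)^{0}$ (where the single-variable factors are regarded as elements of the multi-variable Banach space depending trivially on the remaining variables) and applying Proposition \ref{mult Br prodct equlity} iteratively, one obtains $g_{\boldsymbol{a}}\in B_{\boldsymbol{0}_k}(\K)^{0}$.

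It remains to verify the claimed interpolation and uniqueness. For $\boldsymbol{x}\in\boldsymbol{a}\Gamma^{p^{\boldsymbol{m}}}$, set $\boldsymbol{b}=(\chi_j(x_j)-\chi_j(a_j))_{j=1}^{k}$. The relation $\chi_j(x_j)/\chi_j(a_j)\in(1+2p\mathbb{Z}_p)^{p^{m_j}}$ forces $\ord_p(b_j)>0$, which strictly exceeds the growth rate $r_j=0$, so the substitution $g_{\boldsymbol{a}}(\boldsymbol{b})$ in the sense of \eqref{substitiution of banach Br(M)} converges. Because substitution is compatible with products and with polynomial substitution, we obtain
$$
g_{\boldsymbol{a}}(\boldsymbol{b})=\prod_{j=1}^{k}\chi_j(x_j)^{d_j}\cdot p_{\boldsymbol{a}}(\chi_1(x_1),\ldots,\chi_k(x_k))=\prod_{j=1}^{k}\chi_j(x_j)^{d_j}\cdot F(\boldsymbol{x})=f(\boldsymbol{x}),
$$
which is the desired equality \eqref{lemma for admsitrib localyanaly}. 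Uniqueness follows from Lemma \ref{analytic funcuniqueof Brby Zp}: if another $g'_{\boldsymbol{a}}\in B_{\boldsymbol{0}_k}(\K)^{0}$ satisfies the same relation, then $g_{\boldsymbol{a}}-g'_{\boldsymbol{a}}$ vanishes on the set $\{\boldsymbol{b}\in\mathbb{Z}_p^{k}\mid\ord_p(b_j)>m_j\text{ for all }j\}$, which is an open neighborhood of $0$, so $g_{\boldsymbol{a}}=g'_{\boldsymbol{a}}$.

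The main technical point is the case $d_j<0$: one must confirm that the binomial expansion of $(\chi_j(a_j)+X_j)^{d_j}$ not only produces an element of $B_{0}(\K)^{0}$ but actually represents $\chi_j(x_j)^{d_j}$ after the substitution $X_j\mapsto\chi_j(x_j)-\chi_j(a_j)$; this is the standard $p$-adic convergence of $(1+z)^{d_j}$ for $z\in p\mathbb{Z}_p$, and is the only place where convergence (not merely formal manipulation) is essential.
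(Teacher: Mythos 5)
Your proof is correct and follows essentially the same route as the paper's: in both cases $g_{\boldsymbol{a}}$ is defined as the product of the shifted polynomial $p_{\boldsymbol{a}}(\chi_1(a_1)+X_1,\ldots,\chi_k(a_k)+X_k)$ with the $p$-adic binomial expansion of $\prod_j(\chi_j(a_j)+X_j)^{d_j}$ around $0$, and uniqueness is reduced to Lemma \ref{analytic funcuniqueof Brby Zp}. The only (minor, welcome) difference is that you spell out the compactness argument establishing a uniform $\boldsymbol{m}$, which the paper leaves implicit, and you explicitly verify that the $p$-adic binomial series really returns $\chi_j(x_j)^{d_j}$ under the substitution $X_j\mapsto\chi_j(x_j)-\chi_j(a_j)$; both points are correct and match the paper's intent.
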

 \begin{proof}
 Let $f\in C^{[\boldsymbol{d},\boldsymbol{e}]}(\Gamma,\mathcal{O}_{\K})$. Then, there exists  an $\boldsymbol{m}\in  \mathbb{Z}_{\geq 0}^{k}$ such that for each $\boldsymbol{a}\in \Gamma$, there exists a $p_{\boldsymbol{a}}\in \mathcal{O}_{\K}[X_{1},\ldots, X_{k}]_{\leq \boldsymbol{e}-\boldsymbol{d}}$ which satisfies 
 $$f(\boldsymbol{x})=\left(\prod_{i=1}^{k}\chi_{i}(x_{i})^{d_{i}}\right)p_{\boldsymbol{a}}(\chi_{1}(x_{1}),\ldots, \chi_{k}(x_{k}))$$
for every $\boldsymbol{x}\in \boldsymbol{a}\Gamma^{p^{\boldsymbol{m}}}$. Put $q_{\boldsymbol{a}}=p_{\boldsymbol{a}}(X_{1}+\chi_{1}(a_{1}),\ldots, X_{k}+\chi_{k}(a_{k}))$. Then, we have $q_{\boldsymbol{a}}\in \mathcal{O}_{\K}[X_{1},\ldots, X_{k}]_{\leq \boldsymbol{e}-\boldsymbol{d}}\subset B_{\boldsymbol{0}_{k}}(\K)^{0}$. Further, if we put 
$$
r_{\boldsymbol{a}}(X_{1},\ldots, X_{k})=\sum_{\boldsymbol{n}\in \mathbb{Z}_{\geq 0}^{k}}\left(\prod_{i=1}^{k}\begin{pmatrix}d_{i}\\ n_{i}\end{pmatrix}\chi_{i}(a_{i})^{d_{i}-n_{i}}\right)X^{\boldsymbol{n}}\in B_{\boldsymbol{0}_{k}}(\K)^{0},
$$
we have $\prod_{i=1}^{k}\chi_{i}(x_{i})^{d_{i}}=r_{\boldsymbol{a}}(\chi_{1}(x_{1})-\chi_{1}(a_{1}),\ldots, \chi_{k}(x_{k})-\chi_{k}(a_{k}))$ for every $\boldsymbol{x}\in \boldsymbol{a}\Gamma^{p^{\boldsymbol{m}}}$ where 
$$ \begin{pmatrix}X\\ n\end{pmatrix}=\begin{cases}\frac{X(X-1)\cdots (X-n+1)}{n!}\ &\mathrm{if}\ n\geq 1\\1\ &\mathrm{if}\ n=0\end{cases}$$
for each $n\in \mathbb{Z}_{\geq 0}$. Put $g_{\boldsymbol{a}}=q_{\boldsymbol{a}}r_{\boldsymbol{a}}$. Then, we see that $g_{\boldsymbol{a}}\in B_{\boldsymbol{0}_{k}}(\K)^{0}$ and $f(\boldsymbol{x})=g_{\boldsymbol{a}}(\chi_{1}(x_{1})-\chi_{1}(a_{1}),\ldots, \chi_{k}(x_{k})-\chi_{k}(a_{k}))$ for every $\boldsymbol{x}\in \boldsymbol{a}\Gamma^{p^{\boldsymbol{m}}}$. The uniqueness of $g_{\boldsymbol{a}}$ follows from Lemma \ref{analytic funcuniqueof Brby Zp} immediately.
 \end{proof}

\begin{pro}\label{admissible proj is isom not hisom}
Let $\boldsymbol{b},\boldsymbol{c},\boldsymbol{d},\boldsymbol{e}\in \mathbb{Z}^{k}$ such that $\boldsymbol{c}-\boldsymbol{b}\geq \lfloor \boldsymbol{h}\rfloor$ and $[\boldsymbol{b},\boldsymbol{c}]\subset [\boldsymbol{d},\boldsymbol{e}]$. Then, the restriction map 
\begin{equation}\label{admissible proj is isom not hisomeqst}
\mathcal{D}_{\boldsymbol{h}}^{[\boldsymbol{d},\boldsymbol{e}]}(\Gamma,M)\rightarrow \mathcal{D}_{\boldsymbol{h}}^{[\boldsymbol{b},\boldsymbol{c}]}(\Gamma,M)
\end{equation}
defined in \eqref{proj admissibledistr defnotusis} is an $\mathcal{O}_{\K}[[\Gamma]]\otimes_{\mathcal{O}_{\K}}\K$-module isomorphism. Further, the restriction map in \eqref{admissible proj is isom not hisomeqst} is isometric.
\end{pro}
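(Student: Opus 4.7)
The plan is to combine Lemma \ref{easy lemma on isometry} with the bound $v_{\mathfrak{L}}(\rho) \geq 0$ supplied by \eqref{projadmissibdistr notisom value}, where $\rho$ denotes the restriction map: only the bijectivity of $\rho$ and the reverse estimate $v_{\mathfrak{L}}(\rho^{-1}) \geq 0$ remain to be established. Iteratively applying the adjunction isomorphism of Proposition \ref{for induction admisible admissible}, coordinate by coordinate, reduces the problem to the following one-variable statement: for any $\K$-Banach space $N$ and any integers $d \leq b \leq c \leq e$ with $c - b \geq \lfloor h \rfloor$, the restriction $\mathcal{D}_{h}^{[d,e]}(\Gamma_{1},N) \to \mathcal{D}_{h}^{[b,c]}(\Gamma_{1},N)$ is an isometric isomorphism.

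In the one-variable case, set $B_{s}^{(a,m)}(\mu) = \int_{a\Gamma_{1}^{p^{m}}}(\chi_{1}(x)-\chi_{1}(a))^{s}\chi_{1}(x)^{d}\,d\mu$ for $s \in [0,e-d]$. These moments determine $\mu$ completely, admissibility is equivalent to $v_{N}(B_{s}^{(a,m)}(\mu)) \geq (s-h)m + v_{h}^{[d,e]}(\mu)$, and expanding $(\chi_{1}-\chi_{1}(a))^{s}$ around $a' \in a\Gamma_{1}^{p^{m}}/\Gamma_{1}^{p^{m+n}}$ yields the refinement identity
$$B_{s}^{(a,m)}(\mu) = \sum_{a'}\sum_{r=0}^{s}\binom{s}{r}(\chi_{1}(a') - \chi_{1}(a))^{s-r}B_{r}^{(a',m+n)}(\mu).$$
For injectivity, assume $\mu|_{C^{[b,c]}} = 0$. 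Expanding $\chi_{1}^{i}$ in powers of $\chi_{1}-\chi_{1}(a)$ converts the vanishing for $i \in [b,c]$ into $c-b+1$ linear relations among $B_{0}^{(a,m)}(\mu),\ldots,B_{c-d}^{(a,m)}(\mu)$. Inserting these relations into the refinement identity and combining with the admissibility bound, one sees that $v_{N}(B_{s}^{(a,m)}(\mu))$ acquires a drift of size $(c-b+1-h)n$ as the refinement depth $n$ grows; the hypothesis $c - b \geq \lfloor h \rfloor$ makes this drift strictly positive, hence $B_{s}^{(a,m)}(\mu) = 0$ for every $(s,a,m)$ and $\mu = 0$.

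For surjectivity, given $\nu \in \mathcal{D}_{h}^{[b,c]}(\Gamma_{1},N)$ one constructs an extension $\tilde{\nu}$ by prescribing its moments: those corresponding to test functions already in $C^{[b,c]}(\Gamma_{1},\mathcal{O}_{\K})$ are forced to equal the corresponding moments of $\nu$, while the remaining moments are defined by a limit formula in $B_{r}^{(a',m+n)}(\nu)$ dictated by the refinement identity, whose convergence is guaranteed by the same strict gap $c - b + 1 > h$. A direct verification then shows that the resulting family is refinement-compatible and satisfies $v_{N}(B_{s}^{(a,m)}(\tilde{\nu})) \geq (s-h)m + v_{h}^{[b,c]}(\nu)$, so $\tilde{\nu} \in \mathcal{D}_{h}^{[d,e]}(\Gamma_{1},N)$ with $v_{h}^{[d,e]}(\tilde{\nu}) \geq v_{h}^{[b,c]}(\nu)$, yielding $v_{\mathfrak{L}}(\rho^{-1}) \geq 0$. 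The main obstacle is precisely this surjectivity construction: one must show that the limit defining the extra moments is independent of the refinement depth and the choice of representative, that the resulting family is globally refinement-compatible, and that it inherits the correct admissibility bound. All three points hinge on the strict inequality $c - b + 1 > h$ coming from the hypothesis $c - b \geq \lfloor h \rfloor$.
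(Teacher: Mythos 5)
Your proposal matches the paper's strategy closely: reduce to the one-variable case by iterating the adjunction isomorphism of Proposition \ref{for induction admisible admissible}, establish injectivity via the admissibility estimate and a refinement argument exploiting the strict inequality $c-b+1-h>0$, and establish surjectivity (and the bound $v_{\mathfrak{L}}(\rho^{-1})\geq 0$) by a Cauchy-limit construction whose convergence comes from the same gap. The only organizational difference is that the paper carries out the one-variable injectivity via a minimal-counterexample argument on bad intervals $[r,s]$ and carries out surjectivity explicitly through partial sums $S_{m}(f)$ built from the Mahler-type expansion of Lemma \ref{locally polynomial is locally analytic lemma}, whereas you sketch these steps at a higher level; the underlying ideas are the same.
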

\begin{proof}
We prove this proposition by induction on $k$.

$\mathbf{Case}$\ $k=1$.

Assume that $k=1$. First, we prove the injectivity of \eqref{admissible proj is isom not hisomeqst}. Let $\mu\in \mathcal{D}_{h}^{[d,e]}(\Gamma,M)$ such that $\mu\vert_{C^{[b,c]}(\Gamma, \mathcal{O}_{\K})}=0$. Let $Z$ be the set of $[r,s]$ with $r,\in \mathbb{Z}$ such that $[b,c]\subset [r,s]\subset [d,e]$ and $\mu\vert_{C^{[r,s]}(\Gamma,\mathcal{O}_{\K})}\neq 0$. Assume that $Z$ is not empty. Let $[r,s]\in Z$ be a minimal element. Since $\mu\vert_{C^{[b,c]}(\Gamma, \mathcal{O}_{\K})}=0$, we have $[b,c]\neq [r,s]$. Then, $b\neq r$ or $c\neq s$. Assume that $c\neq s$. Then, we have $c<s$. Then, by the minimality of $[r,s]$, we have $[r,s-1]\notin Z$. Thus, 
\begin{equation}\label{admisible ristriction isoeq0.5}
\mu\vert_{C^{[r,s-1]}(\Gamma,\mathcal{O}_{\K})}=0.
\end{equation}
Since $\chi_{1}(x)^{s}1_{a\Gamma^{p^{m}}}(x)-(\chi_{1}(x)-\chi_{1}(a))^{s-r}\chi_{1}(x)^{r}1_{a\Gamma^{p^{m}}}(x)\in C^{[r,s-1]}(\Gamma,\mathcal{O}_{\K})$ for each $a\in \Gamma$ and $m\in \mathbb{Z}_{\geq 0}$, by \eqref{admisible ristriction isoeq0.5}, we see that 
\begin{align}\label{admisible ristriction isoeq1}
\begin{split}
v_{M}\left(\int_{a\Gamma^{p^{m}}}\chi_{1}(x)^{s}d\mu\right)
& =v_{M}\left(\int_{a\Gamma^{p^{m}}}(\chi_{1}(x)-\chi_{1}(a))^{s-r}\chi_{1}(x)^{r}d\mu\right)\\
&\geq((s-r)-h)m+v_{h}^{[r,s]}(\mu\vert_{C^{[r,s]}(\Gamma,\mathcal{O}_{\K})}).
\end{split}
\end{align}
We note that by \eqref{projadmissibdistr notisom value}, we have $v_{h}^{[r,s]}(\mu\vert_{C^{[r,s]}(\Gamma,\mathcal{O}_{\K})})>-\infty$. Further, since $c<s$, $r\leq b$ and $c-b\geq \lfloor h\rfloor$, we have
\begin{equation}\label{admisible ristriction isoeq2}
(s-r)-h>0.
\end{equation}
Let $a\in \Gamma$ and $m\in \mathbb{Z}_{\geq 0}$. Since $1_{a\Gamma^{p^{m}}}(x)=\sum_{w\in \Gamma^{p^{m}}\slash \Gamma^{p^{m+n}}}1_{aw\Gamma^{p^{m+n}}}(x)$ for each $n\in \mathbb{Z}_{\geq 0}$, by \eqref{admisible ristriction isoeq1} and \eqref{admisible ristriction isoeq2}, we have 
\begin{align*}
v_{M}\left(\int_{a\Gamma^{p^{m}}}\chi_{1}(x)^{s}d\mu\right)&\geq \lim_{n\rightarrow +\infty}\min_{w\in \Gamma^{p^{m}}\slash \Gamma^{p^{m+n}}}\left\{v_{M}\left(\int_{aw\Gamma^{p^{m+n}}}\chi_{1}(x)^{s}d\mu\right)\right\}\\
&\geq\lim_{n\rightarrow +\infty}((s-r)-h)(m+n)+v_{h}^{[r,s]}(\mu\vert_{C^{[r,s]}(\Gamma,\mathcal{O}_{\K})})=+\infty.
\end{align*}
Hence 
\begin{equation}\label{admissib proism 4.5}
\int_{a\Gamma^{p^{m}}}\chi_{1}(x)^{s}d\mu=0.
\end{equation}
 Since every $f\in C^{[r,s]}(\Gamma,\mathcal{O}_{\K})$ can be written as a linear combination of $1_{a\Gamma^{p^{m}}}(x)\chi_{1}(x)^{j}$ with $a\in \Gamma$, $m\in \mathbb{Z}_{\geq 0}$ and $r\leq j\leq s$, by \eqref{admisible ristriction isoeq0.5} and \eqref{admissib proism 4.5}, we have $\mu\vert_{C^{[r,s]}(\Gamma,\mathcal{O}_{\K})}=0$. This is a contradiction. Then, we have $c=s$. Since $[b,c]\neq [r,c]$, we have $b\neq r$. Then, $b>r$. By the minimality of $[r,c]$, we have $[r+1,c]\notin Z$ . Thus, 
\begin{equation}\label{admisible ristriction isoem3}
\mu\vert_{C^{[r+1,c]}(\Gamma,\mathcal{O}_{\K})}=0.
\end{equation}
Since 
$$
\chi_{1}(x)^{r}1_{a\Gamma^{p^{m}}}(x)-(-\chi_{1}(a))^{-(c-r)}(\chi_{1}(x)-\chi_{1}(a))^{c-r}\chi_{1}(x)^{r}1_{a\Gamma^{p^{m}}}(x)\in C^{[r+1,c]}(\Gamma,\mathcal{O}_{\K})$$
for each $a\in \Gamma$ and $m\in \mathbb{Z}_{\geq 0}$, by \eqref{admisible ristriction isoem3}, we see that 
\begin{align}\label{admisible ristriction isoem4}
\begin{split}
v_{M}\left(\int_{a\Gamma^{p^{m}}}\chi_{1}(x)^{r}d\mu\right)
& =v_{M}\left(\int_{a\Gamma^{p^{m}}}(\chi_{1}(x)-\chi_{1}(a))^{c-r}\chi_{1}(x)^{r}d\mu\right)\\
&\geq ((c-r)-h)m+v_{h}^{[r,c]}(\mu\vert_{C^{[r,c]}(\Gamma,\mathcal{O}_{\K})}).
\end{split}
\end{align}
We note that by \eqref{projadmissibdistr notisom value}, we have $v_{h}^{[r,c]}(\mu\vert_{C^{[r,c]}(\Gamma,\mathcal{O}_{\K})})>-\infty$. Further, since $r<b$ and $c-b\geq \lfloor h\rfloor$, we have
\begin{equation}\label{admisible ristriction isoem5}
(c-r)-h>0.
\end{equation}
Let $a\in \Gamma$ and $m\in \mathbb{Z}_{\geq 0}$. Since $1_{a\Gamma^{p^{m}}}(x)=\sum_{w\in \Gamma^{p^{m}}\slash \Gamma^{p^{m+n}}}1_{aw\Gamma^{p^{m+n}}}(x)$ for each $n\in \mathbb{Z}_{\geq 0}$, by \eqref{admisible ristriction isoem4} and \eqref{admisible ristriction isoem5}, we have 
\begin{align*}
v_{M}\left(\int_{a\Gamma^{p^{m}}}\chi_{1}(x)^{r}d\mu\right)&\geq \lim_{n\rightarrow +\infty}\min_{w\in \Gamma^{p^{m}}\slash \Gamma^{p^{m+n}}}\left\{v_{M}\left(\int_{aw\Gamma^{p^{m+n}}}\chi_{1}(x)^{r}d\mu\right)\right\}\\
&\geq\lim_{n\rightarrow +\infty}((c-r)-h)(m+n)+v_{h}^{[r,c]}(\mu\vert_{C^{[r,c]}(\Gamma,\mathcal{O}_{\K})})=+\infty.
\end{align*}
Hence we have 
\begin{equation}\label{admisible ristriction isoem5.5}
\int_{a\Gamma^{p^{m}}}\chi_{1}(x)^{r}d\mu=0.
\end{equation}
Since every $f\in C^{[r,c]}(\Gamma,\mathcal{O}_{\K})$ can be written as a linear combination of $1_{a\Gamma^{p^{m}}}(x)\chi_{1}(x)^{j}$ with $a\in \Gamma$, $m\in \mathbb{Z}_{\geq 0}$ and $r\leq j\leq c$, by \eqref{admisible ristriction isoem3} and \eqref{admisible ristriction isoem5.5} we have $\mu\vert_{C^{[r,c]}(\Gamma,\mathcal{O}_{\K})}=0$. This is a condtradiction. Then, the restriction map of \eqref{admissible proj is isom not hisomeqst} is injective.

Next, we prove the surjectivity of \eqref{admissible proj is isom not hisomeqst}. For each $m\in \mathbb{Z}_{\geq 0}$, let $R_{m}\subset \Gamma$ be a complete reprensetative set of $\Gamma\slash \Gamma^{p^{m}}$. Let $f\in C^{[d,e]}(\Gamma,\mathcal{O}_{\K})$. By Lemma \ref{locally polynomial is locally analytic lemma}, there exists an $m_{f}\in \mathbb{Z}_{\geq 0}$ such that for each $a\in \Gamma$ there exists a unique element $f^{\prime}_{a}\in B_{0}(\K)^{0}$ such that 
\begin{equation}\label{admisible ristriction isoem6}
f(x)\chi_{1}(x)^{-b}=f^{\prime}_{a}(\chi_{1}(x)-\chi_{1}(a))
\end{equation}
for every $x\in a\Gamma^{p^{m_{f}}}$. Let $y,w\in \Gamma$. By Proposition\ \ref{Br slide prop}, there exists a unique element $(f_{y}^{\prime})_{+(w-y)}\in B_{0}(\K)^{0}$ which satisfies 
$$(f_{y}^{\prime})_{+(w-y)}(z)=f_{y}^{\prime}(z+(\chi_{1}(w)-\chi_{1}(y)))$$
for every $z\in \overline{\K}$ such that $\ord_{p}(z)>0$. Then, we have
\begin{equation}\label{admisible ristriction isoem7}
f_{w}^{\prime}=(f_{y}^{\prime})_{+(w-y)}
\end{equation} 
if $y\Gamma^{p^{m_{f}}}=w\Gamma^{p^{m_{f}}}$. Indeed, by \eqref{admisible ristriction isoem6}, we have
\begin{align*}
&(f_{y}^{\prime})_{+(w-y)}(\chi_{1}(x)-\chi_{1}(w))\\
&=f_{y}^{\prime}(\chi_{1}(x)-\chi_{1}(y))=f(x)\chi_{1}(x)^{-b}\\
&=f_{w}^{\prime}(\chi_{1}(x)-\chi_{1}(w))
\end{align*}
for every $x\in y\Gamma^{p^{m_{f}}}$. Thus, by Lemma \ref{analytic funcuniqueof Brby Zp}, we have \eqref{admisible ristriction isoem7}.

For each $a\in \Gamma$, we put
$$f_{a}^{\prime}=\sum_{n=0}^{+\infty}a_{n,a}X^{n}$$
with $a_{n,a}\in \mathcal{O}_{\K}$. We define
$$S_{m}(f)=\sum_{a\in R_{m}}\int_{a\Gamma^{p^{m}}}\sum_{i=0}^{c-b}a_{i,a}(\chi_{1}(x)-\chi_{1}(a))^{i}\chi_{1}(x)^{b}d\mu.$$
{
If $f\in C^{[b,c]}(\Gamma, \mathcal{O}_{\K})$, $f(x)\chi_{1}(x)^{-b}$ is a locally polynomial function on $\Gamma$ of degree at most $c-b$. Thus, $f_{a}^{\prime}$ in \eqref{admisible ristriction isoem6} is a polynomial of degree at most $c-b$ for each $a\in \Gamma$, and we have 
$$\sum_{i=0}^{c-b}a_{i,a}(\chi_{1}(x)-\chi_{1}(a))^{i}\chi_{1}(x)^{b}=f_{a}^{\prime}(\chi_{1}(x)-\chi_{1}(a))\chi_{1}(a)^{b}=f(x)$$
for each $x\in a\Gamma^{p^{m_{f}}}$ where $m\in \mathbb{Z}_{\geq 0}$ with $m\geq m_{f}$. Therefore, if $f\in C^{[b,c]}(\Gamma, \mathcal{O}_{\K})$,
we have
\begin{equation}\label{admisible ristriction isoem7.5}
S_{m}(f)=\sum_{a\in R_{m}}\int_{a\Gamma^{p^{m}}}f(x)d\mu=\int_{\Gamma}f(x)d\mu
\end{equation}
for  each $m\in \mathbb{Z}_{\geq 0}$ such  that $m\geq m_{f}$ by the definition of $S_{m}(f)$.}

We prove that the sequence $\left(S_{m}(f)\right)_{m\in \mathbb{Z}_{\geq 0}}$ is convergent in $M$. Let $m,n\in \mathbb{Z}_{\geq 0}$ such that $m\geq n$. For each $a\in R_{n}$, we denote by $R_{m,n}^{(a)}$ the subset of $R_{m}$ consisting of $w\in R_{m}$ such that $w\Gamma^{p^{n}}=a\Gamma^{p^{n}}$. Thus, we have $a\Gamma^{p^{n}}=\coprod_{w\in R_{m,n}^{(a)}}w\Gamma^{p^{m}}$ for each $a\in R_{n}$. For each $m\in \mathbb{Z}_{\geq 0}$ such that $m\geq m_{f}$, we have
\begin{align}\label{admisible ristriction isoem8}
\begin{split}
& S_{m+1}(f)-S_{m}(f)  \\
& =\sum_{a\in R_{m}}\sum_{w\in R_{m+1,m}^{(a)}}
\int_{w\Gamma^{p^{m+1}}}\bigg(\sum_{i=0}^{c-b}a_{i,w}(\chi_{1}(x)-\chi_{1}(w))^{i}-\sum_{i=0}^{c-b}a_{i,a} 
(\chi_{1}(x)-\chi_{1}(a))^{i}\bigg)\chi_{1}(x)^{b}d\mu \\
& =\sum_{a\in R_{m}}
\sum_{w\in R_{m+1,m}^{(a)}}\int_{w\Gamma^{p^{m+1}}}\sum_{i=0}^{c-b}\left(a_{i,w}-\sum_{l=i}^{c-b}a_{l,a}
\begin{pmatrix}l\\ i\end{pmatrix}(\chi_{1}(w)-\chi_{1}(a))^{l-i}\right) 
(\chi_{1}(x)-\chi_{1}(w))^{i}\\
&\chi_{1}(x)^{b}d\mu. 
\end{split} 
\end{align}
By Proposition \ref{Br slide prop} and \eqref{admisible ristriction isoem7}, we have
$$a_{i,w}=\sum_{l=i}^{+\infty}a_{l,a}\begin{pmatrix}l\\i\end{pmatrix}(\chi_{1}(w)-\chi_{1}(a))^{l-i}$$
for every $i\in [0,c-b]$, $a\in R_{m}$ and $w\in R_{m+1,m}^{(a)}$. Then, by \eqref{admisible ristriction isoem8}, we see that 
\begin{multline*}
S_{m+1}(f)-S_{m}(f)\\
=\sum_{a\in R_{m}}\sum_{w\in R_{m+1,m}^{(a)}}\sum_{i=0}^{c-b}\int_{w\Gamma^{p^{m+1}}}\sum_{l=c-b+1}^{+\infty}a_{l,a}\begin{pmatrix}l\\i\end{pmatrix}(\chi_{1}(w)-\chi_{1}(a))^{l-i}\\
(\chi_{1}(x)-\chi_{1}(w))^{i}\chi_{1}(x)^{b}d\mu
\end{multline*}
if $m\geq m_{f}$. Since we have 
$$\ord_{p}\left(\sum_{l=c-b+1}^{+\infty}a_{l,a}\begin{pmatrix}l\\i\end{pmatrix}(\chi_{1}(w)-\chi_{1}(a))^{l-i}\right)\geq (c-b+1-i)(m+1)$$ and 
\begin{align*}
v_{M}\left(\int_{w\Gamma^{p^{m+1}}}(\chi_{1}(x)-\chi_{1}(w))^{i}\chi_{1}(x)^{b}d\mu\right)&\geq (i-h)(m+1)+v_{h}^{[b,c]}(\mu)
\end{align*}
 for each $i\in [0,c-b]$, we see that
 \begin{equation}\label{admisible ristriction isoem9}
 v_{M}(S_{m+1}(f)-S_{m}(f))\geq (c-b+1-h)(m+1)+v_{h}^{[b,c]}(\mu).
 \end{equation}
Since $c-b+1-h>0$, by \eqref{admisible ristriction isoem9}, we see that $\{S_{m}(f)\}_{m\in\mathbb{Z}_{\geq 0}}$ is a Cauchy sequence. Therefore, we have a limit $\lim_{m\rightarrow +\infty}S_{m}(f)\in M$. We put
$$\int_{\Gamma}f(x)d\nu=\lim_{m\rightarrow +\infty}S_{m}(f).$$
Then, $\nu$ is an element of $\mathrm{Hom}_{\mathcal{O}_{\K}}(C^{[d,e]}(\Gamma,\mathcal{O}_{\K}),M)$. 

Next, we prove that $\nu$ is in $\mathcal{D}_{h}^{[d,e]}(\Gamma,M)$. Let $a\in \Gamma$, $m\in \mathbb{Z}_{\geq 0}$ and $i\in [d,e]$. For each $w\in \Gamma$, we put 
$$r_{w}(X)=\sum_{n=0}^{+\infty}\begin{pmatrix}d-b\\ n\end{pmatrix}\chi_{1}(w)^{(d-b)-n}X^{n}\in B_{0}(\K)^{0}.
$$
and 
$$s_{w}(X)=\sum_{l=0}^{i-d}(\chi_{1}(w)-\chi_{1}(a))^{i-d-l}X^{l}\in\mathcal{O}_{\K}[X].$$
Then, we have 
\begin{equation}\label{admisible ristriction isoem10}
\chi_{1}(x)^{d-b}=r_{w}(\chi_{1}(x)-\chi_{1}(w))\ \mathrm{and}\ (\chi_{1}(x)-\chi_{1}(a))^{i-d}=s_{w}(\chi_{1}(x)-\chi_{1}(w))
\end{equation}
for every $x\in \Gamma$ where 
$$ \begin{pmatrix} d-b\\ n\end{pmatrix}=\begin{cases}\frac{(d-b)(d-b-1)\cdots (d-b-n+1)}{n!}\ &\mathrm{if}\ n\geq 1 ,\\
1\ &\mathrm{if}\ n=0 .
\end{cases}$$
Put
$$q_{w}(X)=r_{w}(X)s_{w}(X).$$
Then, we have
\begin{equation}\label{admisible ristriction isoem11}
q_{w}(X)=\sum_{n=0}^{+\infty}\sum_{l=0}^{\min\{n,i-d\}}\begin{pmatrix}d-b\\ n-l\end{pmatrix}\chi_{1}(w)^{(d-b)-(n-l)}(\chi_{1}(w)-\chi_{1}(a))^{i-d-l}X^{n}.
\end{equation}
By \eqref{admisible ristriction isoem10}, for each $w\in \Gamma$ such that $w\Gamma^{p^{m}}=a\Gamma^{p^{m}}$, we have
$$1_{a\Gamma^{p^{m}}}(x)(\chi_{1}(x)-\chi_{1}(a))^{i-d}\chi_{1}(x)^{d-b}=q_{w}(\chi_{1}(x)-\chi_{1}(w))$$
for every $x\in w\Gamma^{p^{m}}$. Then, by the definition of $S_{n}(1_{a\Gamma^{p^{m}}}(x)(\chi_{1}(x)-\chi_{1}(a))^{i-d}\chi_{1}(x)^{d})$ with $n\in \mathbb{Z}_{\geq 0}$ and \eqref{admisible ristriction isoem11}, we have
\begin{multline*}
S_{n}(1_{a\Gamma^{p^{m}}}(x)(\chi_{1}(x)-\chi_{1}(a))^{i-d}\chi_{1}(x)^{d})\\
=\sum_{w\in R_{n,m}^{(a)}}\int_{w\Gamma^{p^{n}}}\sum_{j=0}^{c-b}\sum_{l=0}^{\min\{j,i-d\}}\begin{pmatrix}d-b\\ j-l\end{pmatrix}\chi_{1}(w)^{(d-b)-(j-l)}(\chi_{1}(w)-\chi_{1}(a))^{i-d-l}\\
(\chi_{1}(x)-\chi_{1}(w))^{j}\chi_{1}(x)^{b}d\mu
\end{multline*}
for each $n\in \mathbb{Z}_{\geq 0}$ such  that $n\geq m$. Therefore, we see that
\begin{align*}
& v_{M}\left(S_{n}(1_{a\Gamma^{p^{m}}}(x)(\chi_{1}(x)-\chi_{1}(a))^{i-d}\chi_{1}(x)^{d})\right)\\
& \geq \inf_{\substack{0\leq j\leq c-b\\ 0\leq l\leq\min\{j,i-d\}}}\left\{(i-d-l)m+(j-h)m\right\}+v_{h}^{[b,c]}(\mu)\\
& \geq \inf_{0\leq j\leq c-b}\left\{(i-d-j)m+(j-h)m\right\}+v_{h}^{[b,c]}(\mu)\\
& =(i-d-h)m+v_{h}^{[b,c]}(\mu).
\end{align*}
Therefore, since we have
$$\int_{a\Gamma^{p^{m}}}(\chi_{1}(x)-\chi_{1}(a))^{i-d}\chi_{1}(x)^{d}d\nu=\lim_{n\rightarrow +\infty}S_{n}(1_{a\Gamma^{p^{m}}}(x)(\chi_{1}(x)-\chi_{1}(a))^{i-d}\chi_{1}(x)^{d}),$$
we see that
$$v_{M}\left(\int_{a\Gamma^{p^{m}}}(\chi_{1}(x)-\chi_{1}(a))^{i-d}\chi_{1}(x)^{d}d\nu\right)+(h-(i-d))m\geq v_{h}^{[b,c]}(\mu)$$
for every $m\in \mathbb{Z}_{\geq 0}$, $a\in \Gamma$ and $i\in [d,e]$. Thus, we have
\begin{equation}\label{admisible ristriction isoem12}
v_{h}^{[d,e]}(\nu)\geq v_{h}^{[b,c]}(\mu)
\end{equation}
and we see that $\nu\in D_{h}^{[d,e]}(\Gamma, M)$. Further, by \eqref{admisible ristriction isoem7.5}, we have $\nu\vert_{C^{[b,c]}(\Gamma,\mathcal{O}_{\K})}=\mu$. Then, the restriction map in \eqref{admissible proj is isom not hisomeqst} is surjective. Further, by \eqref{projadmissibdistr notisom value} and \eqref{admisible ristriction isoem12}, the restriction map in \eqref{admissible proj is isom not hisomeqst} is isometric.

$\mathbf{Case}$\ $k>1$. 

We assume that $k>1$. We denote by $\mathrm{res}^{[\boldsymbol{d},\boldsymbol{e}]}_{[\boldsymbol{b},\boldsymbol{c}]}: \mathcal{D}_{\boldsymbol{h}}^{[\boldsymbol{d},\boldsymbol{e}]}(\Gamma,M)\rightarrow \mathcal{D}_{\boldsymbol{h}}^{[\boldsymbol{b},\boldsymbol{c}]}(\Gamma,M)$ the restriction map in \eqref{proj admissibledistr defnotusis}. Put $\boldsymbol{b}^{\prime}=(b_{1},\ldots, b_{k-1})$, $\boldsymbol{c}^{\prime}=(c_{1},\ldots, c_{k-1})$, $\boldsymbol{d}^{\prime}=(d_{1},\ldots, d_{k-1})$, $\boldsymbol{e}^{\prime}=(e_{1},\ldots, e_{k-1})$, $\boldsymbol{h}^{\prime}=(h_{1},\ldots, h_{k-1})$ and $\Gamma^{\prime}=\Gamma_{1}\times\cdots\times \Gamma_{k-1}$. Then, by induction on $k$, the restriction map $\mathrm{res}^{[\boldsymbol{d}^{\prime},\boldsymbol{e}^{\prime}]}_{[\boldsymbol{b}^{\prime},\boldsymbol{c}^{\prime}]}: \mathcal{D}_{\boldsymbol{h}^{\prime}}^{[\boldsymbol{d}^{\prime},\boldsymbol{e}^{\prime}]}(\Gamma^{\prime},M)\rightarrow \mathcal{D}_{\boldsymbol{h}^{\prime}}^{[\boldsymbol{b}^{\prime},\boldsymbol{c}^{\prime}]}(\Gamma^{\prime},M)$ is an isometric isomorphism. Thus, we can define the following isometric isomorphism:
$$\psi: \mathcal{D}_{h_{k}}^{[d_{k},e_{k}]}(\Gamma_{k},\mathcal{D}_{\boldsymbol{h}^{\prime}}^{[\boldsymbol{d}^{\prime},\boldsymbol{e}^{\prime}]}(\Gamma^{\prime},M))\rightarrow \mathcal{D}_{h_{k}}^{[d_{k},e_{k}]}(\Gamma_{k},\mathcal{D}_{\boldsymbol{h}^{\prime}}^{[\boldsymbol{b}^{\prime},\boldsymbol{c}^{\prime}]}(\Gamma^{\prime},M)),\ \mu\mapsto \mathrm{res}^{[\boldsymbol{d}^{\prime},\boldsymbol{e}^{\prime}]}_{[\boldsymbol{b}^{\prime},\boldsymbol{c}^{\prime}]}\circ \mu.$$

Let 
$$\mathrm{res}^{\prime}: \mathcal{D}_{h_{k}}^{[d_{k},e_{k}]}(\Gamma_{k},\mathcal{D}_{\boldsymbol{h}^{\prime}}^{[\boldsymbol{b}^{\prime},\boldsymbol{c}^{\prime}]}(\Gamma^{\prime},M))\rightarrow \mathcal{D}_{h_{k}}^{[b_{k},c_{k}]}(\Gamma_{k},\mathcal{D}_{\boldsymbol{h}^{\prime}}^{[\boldsymbol{b}^{\prime},\boldsymbol{c}^{\prime}]}(\Gamma^{\prime},M)),\ \mu\mapsto \mu\vert_{C^{[b_{k},c_{k}]}(\Gamma_{k},\mathcal{O}_{\K})}$$
be the restriction map. By the result in the case $k=1$, $\mathrm{res}^{\prime}$ is an isometric isomorphism. We have the following commutative diagram:
$$
\xymatrix{
\mathcal{D}_{\boldsymbol{h}}^{[\boldsymbol{d},\boldsymbol{e}]}(\Gamma,M)\ar[d]^{\mathrm{res}^{[\boldsymbol{d},\boldsymbol{e}]}_{[\boldsymbol{b},\boldsymbol{c}]}}\ar[r]^{\simeq\ \ \ \ \ \ \ \ \ \ \ }&\mathcal{D}_{h_{k}}^{[d_{k},e_{k}]}(\Gamma_{k},\mathcal{D}_{\boldsymbol{h}^{\prime}}^{[\boldsymbol{d}^{\prime},\boldsymbol{e}^{\prime}]}(\Gamma^{\prime},M))\ar[d]^{\mathrm{res}^{\prime}\circ \psi}\\
\mathcal{D}_{\boldsymbol{h}}^{[\boldsymbol{b},\boldsymbol{c}]}(\Gamma,M)\ar[r]^{\simeq \ \ \ \ \ \ \ \ \ \ \ }&\mathcal{D}_{h_{k}}^{[b_{k},c_{k}]}(\Gamma_{k},\mathcal{D}_{\boldsymbol{h}^{\prime}}^{[\boldsymbol{b}^{\prime},\boldsymbol{c}^{\prime}]}(\Gamma^{\prime},M))
}
$$
where the two horizontal maps are isometric isomorphisms defined in Proposition \ref{for induction admisible admissible}. Since the two horizotal maps and $\mathrm{res}^{\prime}\circ \psi$ are isometric isomorphisms, we see that $\mathrm{res}^{[\boldsymbol{d},\boldsymbol{e}]}_{[\boldsymbol{b},\boldsymbol{c}]}$ is an isometric isomorphism.
\end{proof}

\section{One-variable power series of logarithmic order with values in Banach spaces}\label{section: one-variable} 
In this section, we generalize the classical theory of one-variable admissible distributions with values in a $p$-adic field obtained in \cite{amicevelu} and \cite{vishik1976} to the theory of one-variable admissible distributions with values in a Banach space. 
The results obtained in this section will be used to prove our main results in \S \ref{sc:ordinary}. In this subsection, we fix a $\K$-Banach space $(M,v_{M})$. Let $r\in \mathbb{Q}$. We define a subset $B_{r}^{\mathrm{md}}(M)\subset B_{r}(M)$ to be 
\begin{equation}\label{definition of Brmd(M)}
B_{r}^{\mathrm{md}}(M)=\left\{f=(m_{n})_{n=0}^{+\infty}\in B_{r}(M)\ \bigg\vert \  {}^{\exists}n_{0}\in \mathbb{Z}_{\geq 0}\ \mathrm{such\ that}\ v_{r}(f)=v_{M}(m_{n_{0}})+rn_{0}\right\}.
\end{equation}
We remark that $B_{r}^{\mathrm{md}}(M)=B_{r}(M)$ if and only if $x\notin \overline{(x,\infty)\cap v_{M}(M\backslash \{0\})}$ for every $x\in \mathbb{R}$. 
Especially, we have $B_{r}^{\mathrm{md}}(M)=B_{r}(M)$ for any  $r\in \mathbb{Q}$ if $v_{M}(M\backslash \{0\})$ is a discrete closed subset. 
As an example of $f\in B_{0}(\mathbb{C}_{p})\backslash B_{0}^{\mathrm{md}}(\mathbb{C}_{p})$, we can take $f=\sum_{n=1}^{+\infty}p^{\frac{1}{n}}X^{n}\in B_{0}(\mathbb{C}_{p})$. For each $f=(m_{n})_{n=0}^{+\infty}\in B_{r}^{\mathrm{md}}(M)$, we put 
\begin{equation}
d_{r}(f)=\begin{cases}\min\{n\in \mathbb{Z}_{\geq 0}\vert v_{r}(f)=v_{M}(m_{n})+rn\}\ &\mathrm{if}\ f\neq 0,\\
-\infty\ &\mathrm{if}\ f=0.
\end{cases}
\end{equation}
\begin{pro}\label{multiplication of B_{r}(K) and B_{r}(M)}
Let $r\in \mathbb{Q}$. If $f\in B_{r}^{\mathrm{md}}(\K)$ and $g\in B_{r}^{\mathrm{md}}(M)$, we see that $fg\in B_{r}^{\mathrm{md}}(M)$ and
$$
d_{r}(fg)=d_{r}(f)+d_{r}(g).
$$
 \end{pro}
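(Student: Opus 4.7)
The plan is to build on Proposition \ref{mult Br prodct equlity}, which already yields $fg\in B_{\boldsymbol{r}}(M)$ and the valuation identity $v_{r}(fg)=v_{r}(f)+v_{r}(g)$ in the one-variable case. With that identity in hand, the remaining task is purely combinatorial: to locate the first index $n$ for which the coefficient of $X^{n}$ in $fg$ realizes $v_{r}(fg)$, and to show that this index is exactly $d_{r}(f)+d_{r}(g)$.

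Write $f=\sum_{n\geq 0}a_{n}X^{n}$ and $g=(m_{n})_{n\geq 0}$, and set $n_{f}=d_{r}(f)$, $n_{g}=d_{r}(g)$. The key observation, which I would isolate as the one small lemma needed, is the following trichotomy coming from the minimality defining $d_{r}$: for $l<n_{f}$ one has $\ord_{p}(a_{l})+rl>v_{r}(f)$ strictly (because the minimum is first attained at $n_{f}$), while for $l\geq n_{f}$ one only has $\geq v_{r}(f)$; and analogously for $g$. First, I would compute the coefficient of $X^{n_{f}+n_{g}}$ in $fg$, namely $\sum_{l_{1}+l_{2}=n_{f}+n_{g}}a_{l_{1}}m_{l_{2}}$. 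Any pair $(l_{1},l_{2})\neq(n_{f},n_{g})$ with $l_{1}+l_{2}=n_{f}+n_{g}$ satisfies either $l_{1}<n_{f}$ or $l_{2}<n_{g}$; the trichotomy then forces $v_{M}(a_{l_{1}}m_{l_{2}})+r(l_{1}+l_{2})>v_{r}(f)+v_{r}(g)=v_{r}(fg)$, while the distinguished pair $(n_{f},n_{g})$ gives equality. Hence the non-archimedean sum has valuation exactly $v_{r}(fg)-r(n_{f}+n_{g})$, so the infimum in the definition of $v_{r}(fg)$ is realized at index $n_{f}+n_{g}$, proving $fg\in B_{r}^{\mathrm{md}}(M)$ and $d_{r}(fg)\leq n_{f}+n_{g}$.

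For the reverse inequality, I would take any $n<n_{f}+n_{g}$ and examine the coefficient $\sum_{l_{1}+l_{2}=n}a_{l_{1}}m_{l_{2}}$. For every summand, either $l_{1}<n_{f}$ or, if $l_{1}\geq n_{f}$, then $l_{2}=n-l_{1}<n_{g}$. Either way, the strict part of the trichotomy applies to one factor and the weak inequality to the other, yielding $v_{M}(a_{l_{1}}m_{l_{2}})+rn>v_{r}(fg)$. The ultrametric inequality then forces the valuation of the full coefficient at $X^{n}$ plus $rn$ to exceed $v_{r}(fg)$ strictly, so no index $n<n_{f}+n_{g}$ can attain the infimum. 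Combined with the previous step, this gives $d_{r}(fg)=n_{f}+n_{g}=d_{r}(f)+d_{r}(g)$.

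There is no real obstacle: once one recognizes that $d_{r}$ selects the \emph{minimum} index achieving the valuation, the argument is a direct non-archimedean ``leading term'' computation, structurally identical to the proof of $v_{r}(fg)=v_{r}(f)+v_{r}(g)$ given in Proposition \ref{mult Br prodct equlity} (indeed the case treated there, based on the lexicographic minimum $\boldsymbol{n}_{f},\boldsymbol{n}_{g}$ of the sets $S_{f,\boldsymbol{r}},S_{g,\boldsymbol{r}}$, already implicitly carries out exactly the same dominance computation). The only care needed is the unconditional handling of the boundary case $f=0$ or $g=0$, which should be dispatched at the outset by the convention $d_{r}(0)=-\infty$ together with the obvious fact $fg=0$.
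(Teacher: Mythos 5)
Your proposal is correct and takes essentially the same approach as the paper: both rely on Proposition \ref{mult Br prodct equlity} for the valuation identity, then use the minimality defining $d_{r}$ (strict inequality below the leading index, weak inequality at or above it) to show the coefficient at index $d_{r}(f)+d_{r}(g)$ realizes $v_{r}(fg)$, and that no smaller index can. Your explicit two-step split into the $\leq$ and $\geq$ inequalities and your named ``trichotomy'' are only a presentational refinement of the paper's more compact argument, and your remark about the boundary case $f=0$ or $g=0$ is correctly dispatched by the paper's opening reduction ``we may assume $f\neq 0$ and $g\neq 0$.''
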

 \begin{proof}
We may assume that $f\neq 0$ and $g\neq 0$. Put $f=(a_{n})_{n\in \mathbb{Z}_{\geq 0}}$, $g=(m_{n})_{n\in \mathbb{Z}_{\geq 0}}$ and $d=d_{r}(f)+d_{r}(g)$. We see that $v_{M}(a_{l_{1}}m_{l_{2}})+rd>v_{M}(a_{d_{r}(f)}m_{d_{r}(g)})+rd$ for every $(l_{1},l_{2})\in \mathbb{Z}_{\geq 0}^{2}$ such that $l_{l}+l_{2}=d$ and $(l_{1},l_{2})\neq (d_{r}(f),d_{r}(g))$. Then, we have
\begin{align*}
v_{M}(\sum_{\substack{l_{1}+l_{2}=d\\l_{1},l_{2}\geq 0}}a_{l_{1}}m_{l_{2}})+rd=v_{M}(a_{d_{r}(f)}m_{d_{r}(g)})+rd=v_{r}(f)+v_{r}(g).
\end{align*}
By Proposition \ref{mult Br prodct equlity}, we have $v_{r}(fg)=v_{r}(f)+v_{r}(g)$. Then, $fg\in B_{r}^{\mathrm{md}}(M)$. We see that $v_{M}(a_{l_{1}}m_{l_{2}})+r(l_{1}+l_{2})>v_{r}(f)+v_{r}(g)$ unless $l_{1}\geq d_{r}(f)$ and $l_{2}\geq d_{r}(g)$. Then, we have $d_{r}(fg)=d_{r}(f)+d_{r}(g)$. We complete the proof.
\end{proof}
We prove the Weiestrass division theorem on $B_{r}(M)$.
\begin{pro}\label{Weiestrass on Banach space}
Let $r\in \mathbb{Q}$ and $f\in B_{r}^{\mathrm{md}}(\K)\backslash\{0\}$ with $d_{r}(f)=s$. For each $g\in B_{r}(M)$, there exists a unique pair $(q,t)\in B_{r}(M)\times M[X]$ which satisfies $g=fq+t$ and $\deg t<s$. Further, we have
$$v_{r}(g)=\min\{v_{r}(f)+v_{r}(q),v_{r}(t)\}.$$
\end{pro}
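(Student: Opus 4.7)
The plan is to recast the statement as asserting that a certain bounded linear map between $\K$-Banach spaces is an isometric isomorphism, and then to establish this by a perturbation argument. Set $V=B_{r}(M)\oplus M[X]_{<s}$ equipped with the valuation $v((q,t))=\min\{v_{r}(f)+v_{r}(q),\,v_{r}(t)\}$, and let $\Phi\colon V\to B_{r}(M)$ be the bounded $\K$-linear map $\Phi(q,t)=fq+t$. The existence and uniqueness of the decomposition, together with the valuation identity, are captured by the single assertion that $\Phi$ is an isometric isomorphism.

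The first step will be to use $d_{r}(f)=s$ to decompose $f=R+a_{s}X^{s}W$ with $R=\sum_{n<s}a_{n}X^{n}$ and $W=1+\sum_{n\geq 1}(a_{n+s}/a_{s})X^{n}$. By the definition of $d_{r}(f)$, $v_{r}(R)>v_{r}(f)$ strictly (since $s$ is the \emph{smallest} index achieving the infimum), while $\ord_{p}(a_{n+s}/a_{s})+rn\geq 0$ forces $v_{r}(W)=0$. The crucial subfact is that $W$ is a unit in $B_{r}(\K)$ with $v_{r}(W^{-1})=0$: setting $c_{i}=a_{i+s}/a_{s}$, the formal inverse $W^{-1}=\sum_{k\geq 0}(-1)^{k}(W-1)^{k}$ has its $X^{n}$-coefficient equal to a finite sum of products $c_{i_{1}}\cdots c_{i_{k}}$ with $i_{1}+\cdots+i_{k}=n$, each satisfying $\ord_{p}\geq -rn$, so $v_{r}(W^{-1})\geq 0$; together with $v_{r}(W)+v_{r}(W^{-1})=v_{r}(WW^{-1})=0$ from Proposition~\ref{mult Br prodct equlity}, this forces $v_{r}(W^{-1})=0$. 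Consequently multiplication by $W$ is an isometric automorphism of $B_{r}(M)$.

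Next I will introduce the auxiliary map $\Phi'(q,t)=a_{s}X^{s}Wq+t$. Composing with the isometry $(q,t)\mapsto (Wq,t)$ reduces the claim that $\Phi'$ is an isometric isomorphism to the analogous statement for $(q',t)\mapsto a_{s}X^{s}q'+t$, which is an immediate verification via the splitting $g=\pi_{s}(g)+X^{s}\sigma_{s}(g)$ (where $\pi_{s}$ truncates at degree $<s$ and $\sigma_{s}$ shifts down by $s$) and the identity $v_{r}(g)=\min\{v_{r}(\pi_{s}(g)),\,rs+v_{r}(\sigma_{s}(g))\}$. Writing $\Phi=\Phi'+N$ with $N(q,t)=Rq$, Proposition~\ref{mult Br prodct equlity} and the isometry of $\Phi'$ yield, for every $g\in B_{r}(M)$,
\[
v_{r}\!\bigl(N\bigl((\Phi')^{-1}(g)\bigr)\bigr)\;\geq\;\bigl(v_{r}(R)-v_{r}(f)\bigr)+v_{r}(g)\;=\;\varepsilon+v_{r}(g),\qquad \varepsilon:=v_{r}(R)-v_{r}(f)>0.
\]

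Hence $\Phi\circ(\Phi')^{-1}=\mathrm{id}+S$ with $v_{\mathfrak{L}}(S)\geq \varepsilon>0$, so the Neumann series $\sum_{k\geq 0}(-S)^{k}$ converges in $\mathfrak{L}(B_{r}(M),B_{r}(M))$ and is a two-sided inverse; by the strict ultrametric inequality it is also isometric (since $v_{r}(Sg)>v_{r}(g)$ whenever $g\neq 0$). Composing back with $\Phi'$ will exhibit $\Phi$ as an isometric isomorphism, yielding existence, uniqueness, and the valuation identity simultaneously. The main obstacle is the invertibility of $W$ in $B_{r}(\K)$ with $v_{r}(W^{-1})=0$; once this is in hand, everything else is a standard perturbation argument built on the multiplicativity of $v_{r}$ from Proposition~\ref{mult Br prodct equlity}.
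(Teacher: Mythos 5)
Your proof is correct, and it takes a genuinely different route from the paper. The paper first reduces to the case $r\in \ord_{p}(\K^{\times})$ so that it may normalize $v_{r}(f)=0$ and work with a division operator $\tau_{s}\colon (m_{n})_{n}\mapsto p^{rs}(m_{n+s})_{n}$; it builds $q$ by an explicit iteration $q=u^{-1}\sum_{j\geq 0}(-1)^{j}b^{j}(\tau_{s}\circ hu^{-1})^{j}\circ\tau_{s}(g)$, and then recovers the general $r\in\mathbb{Q}$ by Galois descent through a finite extension $\mathcal{L}/\K$. Your argument replaces all of this with a single perturbation on the $\K$-Banach space $V=B_{r}(M)\oplus M[X]_{<s}$: you split $f=R+a_{s}X^{s}W$ using only the definition of $d_{r}$, verify $W^{-1}\in B_{r}(\K)$ with $v_{r}(W^{-1})=0$ by a direct coefficient estimate plus the multiplicativity of $v_{r}$ (Proposition~\ref{mult Br prodct equlity}), check that the split operator $(q,t)\mapsto a_{s}X^{s}q+t$ is an isometry via the tautological identity $v_{r}(g)=\min\{v_{r}(\pi_{s}(g)),rs+v_{r}(\sigma_{s}(g))\}$, and then absorb the lower-order part $R$ by a Neumann series in $\mathfrak{L}(B_{r}(M),B_{r}(M))$ with strictly positive operator valuation $\varepsilon=v_{r}(R)-v_{r}(f)>0$. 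The paper's series is also a Neumann series in disguise, so the iterative core is shared; what your formulation buys is that it works directly for every $r\in\mathbb{Q}$ (the decomposition $f=R+a_{s}X^{s}W$ never multiplies by $p^{rs}$, so no passage to $\mathcal{L}$ is needed), it produces uniqueness, existence, and the valuation identity in one stroke (the paper argues uniqueness separately), and it isolates exactly which hypothesis drives convergence, namely the strict inequality $v_{r}(R)>v_{r}(f)$ encoded in $d_{r}(f)=s$. You correctly cover the degenerate case $R=0$ (then $\varepsilon=+\infty$ and $\Phi=\Phi'$), and each invocation of Proposition~\ref{mult Br prodct equlity} is in the legitimate form with one factor in $B_{r}(\K)$.
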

\begin{proof}
First, we prove the uniqueness of $q$ and $t$. For this, it suffices to show that $q=t=0$ 
under the assumption that $fq+t=0$. By contradiction, we assume that $q\neq 0$. Then, we see that $fq=-t\in B_{r}^{\mathrm{md}}(M)\backslash \{0\}$ and $d_{r}(fq)<s$. We put $f=f_{1}+X^{s}f_{2}$ with $f_{1}\in \K[X]$ and $ f_{2}\in B_{r}(\K)$ such that $\deg f_{1}<s$. Since $v_{r}(f_{1})>v_{r}(f)$, we have $v_{r}(f_{1}q)>v_{r}(fq)$. If we put $fq=(m_{n})_{n\in \mathbb{Z}_{\geq 0}}$ and $f_{1}q=(m_{n}^{\prime})_{n\in\mathbb{Z}_{\geq 0}}$, we have $m_{n}=m_{n}^{\prime}$ for each $n\in \mathbb{Z}_{\geq 0}$ such that $n<s$. Therefore, by $v_{r}(f_{1}q)>v_{r}(fq)$, we see that 
$$v_{M}(m_{n})+rn\geq v_{r}(f_{1}q)>v_{r}(fq)$$
for each $n\in \mathbb{Z}_{\geq 0}$ such that $n<s$, which contradicts to $d_{r}(fq)<s$. Thus, $q=t=0$. 

Next we prove the existence of $q,t$ and the estimate $v_{r}(g)=\min\{v_{r}(f)+v_{r}(q),v_{r}(t)\}$. As a first step, we assume that $r\in \ord_{p}(\K^{\times})$. Then, without loss of generality, we can assume that $v_{r}(f)=0$. Let us define an operator 
$$\tau_{s}: B_{r}(M)\rightarrow B_{r}(M)$$
to be $\tau_{s}((m_{n})_{n\in\mathbb{Z}_{\geq 0}})=p^{rs}(m_{n+s})_{n\in \mathbb{Z}_{\geq 0}}$. It is easy to see that $\tau_{s}$ is well-defined and $v_{\mathfrak{L}}(\tau_{s})\geq 0$. Clearly $\tau_{s}$ satisfies
\begin{enumerate}
\item $\tau_{s}((p^{-r}X)^{s}l)=l$ for each $l\in B_{r}(M)$,
\item $\tau_{s}(l)=0\Leftrightarrow l\in M[X]$ with $\deg l<s$.
\end{enumerate}
 We can write $f=bh(p^{-r}X)+(p^{-r}X)^{s}u(p^{-r}X)$, where $b\in \mathcal{O}_{\K}$ such that $\ord_{p}(b)>0$, $h(Y)\in \mathcal{O}_{\K}[Y]$ with $\deg h(Y)<s$ and $u(Y)\in \mathcal{O}_{\K}[[Y]]^{\times}$. Let 
$$q=\frac{1}{u(p^{-r}X)}\sum_{j=0}^{+\infty}(-1)^{j}b^{j}\left(\tau_{s}\circ \frac{h(p^{-r}X)}{u(p^{-r}X)}\right)^{j}\circ \tau_{s}(g).$$
Here, for example, 
$$\left(\tau_{s}\circ \frac{h(p^{-r}X)}{u(p^{-r}X)}\right)^{2}\circ \tau_{s}(g)=\tau_{s}\left(\frac{h(p^{-r}X)}{u(p^{-r}X)}\tau_{s}\left(\frac{h(p^{-r}X)}{u(p^{-r}X)}\tau_{s}(g)\right)\right).$$
Then, the sum $q$ is well-defined in $B_{r}(M)$ and we have $v_{r}(q)\geq v_{r}(g)$. Since $fq=bh(p^{-r}X)q+(p^{-r}X)^{s}u(p^{-r}X)q$, we have
$$\tau_{s}(fq)=b\tau_{s}(h(p^{-r}X)q)+u(p^{-r}X)q.$$
But 
\begin{align*}
b\tau_{s}(h(p^{-r}X)q)&=b\left(\tau_{s}\circ \frac{h(p^{-r}X)}{u(p^{-r}X)}\circ \sum_{j=0}^{+\infty}(-1)^{j}b^{j}\left(\tau_{s}\circ \frac{h(p^{-r}X)}{u(p^{-r}X)}\right)^{j}\circ \tau_{s}(g)\right)\\
&=\tau_{s}(g)-u(p^{-r}X)q.
\end{align*} 
Therefore, $\tau_{s}(fq)=\tau_{s}(g)$. If we put $t=g-fq$, we have $t\in M[X]$ and $\deg t<s$. Since $v_{r}(q)\geq v_{r}(g)$ and $v_{r}(f)=0$, we see that $v_{r}(t)\geq v_{r}(g)$ and we have $\min\{v_{r}(q),v_{r}(t)\}\geq v_{r}(g)$. On the other hand, we have $v_{r}(g)\geq \min\{v_{r}(fq),v_{r}(t)\}=\min\{v_{r}(q),v_{r}(t)\}$. Then, we conclude that $v_{r}(g)=\min\{v_{r}(q),v_{r}(t)\}$.

As a second step, we take a general $r\in \mathbb{Q}$. Let $\mathcal{L}\slash \K$ be a finite Galois extension such that $r\in \ord_{p}(\mathcal{L}^{\times})$. By the result of the first step, there exists a unique pair $(q,t)\in B_{r}(M_{\mathcal{L}})\times M_{\mathcal{L}}[X]$ such that $g=fq+t$ and $\deg t<s$. In addition, we have $v_{r}(g)=\min\{v_{r}(f)+v_{r}(q),v_{r}(t)\}$. We denote by $G(\mathcal{L}\slash \K)$ the Galois group of $\mathcal{L}\slash \mathcal{K}$. We define an action of $G(\mathcal{L}\slash \K)$ on $M_{\mathcal{L}}$ to be $\sigma(x)=\sum_{i=1}^{d}m_{i}\otimes_{\K}\sigma(y_{i})$ for each $\sigma\in G(\mathcal{L}\slash \K)$ and $x=\sum_{i=1}^{d}m_{i}\otimes_{\K}y_{i}\in M_{\mathcal{L}}$. In addition, we put $\sigma(l)=(\sigma(m_{n}))_{n\in\mathbb{Z}_{\geq 0}}\in B_{r}(M_{\mathcal{L}})$ for each $l=(m_{n})_{n\in \mathbb{Z}_{\geq 0}}\in B_{r}(M_{\mathcal{L}})$. For each $\sigma\in G(\mathcal{L}\slash \K)$, we have
$$
g=\sigma(g)
=f\sigma(q)+\sigma(t).
$$
By the uniqueness of $q$ and $t$, we have $\sigma(q)=q$ and $\sigma(t)=t$. That is, $q\in B_{r}(M)$ and $t\in M[X]$. Since the natural map $M\rightarrow M_{\mathcal{L}}$ defined by $x\mapsto x\otimes_{\K}1$ for $x\in M$ is an isometry, we see that $v_{r}(g)=\min\{v_{r}(f)+v_{r}(q),v_{r}(t)\}$.
\end{proof}
Next, we prove the Weiestrass preparation theorem on $B_{r}(\K)$.
\begin{pro}\label{Weiestras preparation theorem}
Let $r\in \mathbb{Q}$ and $f\in B_{r}^{\mathrm{md}}(\K)\backslash\{0\}$ with $d_{r}(f)=s$. Then, $f$ can be written uniquely as $f=gu$ where $u\in B_{r}(\K)^{\times}$ with $u-1\in XB_{r}(\K)$ and $g\in \K[X]$ with $\deg g=d_{r}(g)=s$. In addition, we have $v_{r}(f)=v_{r}(g)$ and $v_{r}(u)=0$.
\end{pro}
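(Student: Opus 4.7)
The strategy is the classical reduction of Weierstrass preparation to Weierstrass division (Proposition \ref{Weiestrass on Banach space}). I would apply that result with divisor $f$ and dividend $X^s\in B_r(\K)$ to obtain a unique pair $(q,t)\in B_r(\K)\times\K[X]$ with $X^s = fq + t$, $\deg t < s$, and $rs = v_r(X^s) = \min\{v_r(f)+v_r(q),\, v_r(t)\}$. Writing $t_n = -\sum_{i+j=n} a_i b_j$ for $n<s$ and using $d_r(f) = s$ (so that $\ord_p(a_i)+ri > v_r(f)$ whenever $i<s$), one observes the strict inequality $v_r(t) > rs$, and consequently $v_r(fq) = rs$. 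Setting $g_0 := X^s - t$, a polynomial of degree $s$ with leading coefficient $1$, the intended factorization will be $g := q(0)^{-1} g_0$ and $u := q(0)\,q^{-1}$; this gives $f = gu$, $\deg g = s$ and $u(0) = 1$, provided $q$ is invertible in $B_r(\K)$.

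The main obstacle is to show $q \in B_r(\K)^{\times}$. I would first treat the case $r \in \ord_p(\K^\times)$ and (after scaling so that $v_r(f) = 0$) revisit the explicit construction of $q$ in the proof of Proposition \ref{Weiestrass on Banach space}: using the decomposition $f = b\,h(p^{-r}X) + (p^{-r}X)^s u(p^{-r}X)$ there (with $b \in \OK$ satisfying $\ord_p(b) > 0$, $h \in \OK[Y]$ of degree $<s$, $u \in \OK[[Y]]^\times$) and the identity $\tau_s(X^s) = p^{rs}$, the formula from that proof simplifies to
\[
q \;=\; \frac{p^{rs}}{u(p^{-r}X)}\bigl(1 + w\bigr),\qquad w := \sum_{j\geq 1}(-b)^j\bigl(\tau_s\circ (h/u)\bigr)^j(1).
\]
The prefactor is a unit in $B_r(\K)$ since $u(p^{-r}X) \in \OK[[Y]]^\times$, and the bounds $v_{\mathfrak{L}}(\tau_s) \geq 0$, $v_r(h/u) \geq 0$ together with $\ord_p((-b)^j) = j\,\ord_p(b)$ yield $v_r(w) \geq \ord_p(b) > 0$, so $1+w$ is invertible via the geometric series $\sum_k(-w)^k$. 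For general $r \in \mathbb{Q}$ I would pass to a finite Galois extension $\mathcal{L}/\K$ with $r \in \ord_p(\mathcal{L}^\times)$, invert $q$ there, and descend $q^{-1}$ to $B_r(\K)$ by $\mathrm{Gal}(\mathcal{L}/\K)$-invariance, which follows from the uniqueness clause of Weierstrass division over both fields.

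Granted $q \in B_r(\K)^\times$, the remaining verifications are short. Applying Proposition \ref{mult Br prodct equlity} to $u \cdot u^{-1} = 1$ together with $u(0) = u^{-1}(0) = 1$ (which force $v_r(u) \leq 0$ and $v_r(u^{-1}) \leq 0$) yields $v_r(u) = 0$ and hence $v_r(g) = v_r(f)$; since the leading coefficient of $g$ realises $v_r(g)$ while the strict bound $v_r(t) > rs$ makes every lower coefficient of $g$ contribute strictly more than $v_r(g)$, we conclude $d_r(g) = s$. For uniqueness, given two such factorizations $f = g_1 u_1 = g_2 u_2$, I would perform Weierstrass division of $X^s$ by each $g_i$ (which lies in $B_r^{\mathrm{md}}(\K)$ with $d_r(g_i) = s$): the explicit decomposition $X^s = g_i \cdot g_{i,s}^{-1} + (X^s - g_{i,s}^{-1} g_i)$ identifies the quotient as the constant $g_{i,s}^{-1}$, so from $X^s = fq + t = g_i(u_i q) + t$ and uniqueness one gets $u_i q = g_{i,s}^{-1}$; evaluating at $0$ with $u_i(0) = 1$ forces $g_{1,s} = g_{2,s} = q(0)^{-1}$, whence $u_1 = u_2$ and $g_1 = g_2$.
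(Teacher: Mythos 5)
Your proposal is correct and shares the paper's overall architecture — reduce preparation to division of $X^s$ by $f$, show the quotient $q$ is a unit, then Galois-descend from $r\in\ord_p(\K^\times)$ to general $r\in\mathbb{Q}$ — but you use a genuinely different argument for the crucial invertibility of $q$. You reopen the \emph{proof} of Proposition~\ref{Weiestrass on Banach space}, plugging in the dividend $X^s$ and rewriting the Neumann series there as $q=p^{rs}u(p^{-r}X)^{-1}(1+w)$ with $v_r(w)\geq\ord_p(b)>0$, so that $1+w$ inverts by a geometric series in the complete space $B_r(\K)$. The paper instead argues purely from the \emph{statement} of division: after normalizing $v_r(f)=0$, the estimate $0=\min\{v_r(q),v_r(l)\}$ together with a $d_r$-comparison rules out $v_r(l)=0$, giving $v_r(q)=0$ and hence $q\in B_r(\K)^0$; then comparing coefficients of $X^s$ in $fq=(p^{-r}X)^s-l$ shows the constant term $q_0$ of $q$ lies in $\OK^\times$, and since $B_r(\K)^0\cong\OK[[p^{-r}X]]$ is a ring in which any element with unit constant term is invertible, $q\in B_r(\K)^\times$. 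The paper's route is somewhat more robust, in that it does not depend on the particular construction used to prove the division theorem; yours is more explicit but equally valid, and it has the minor virtue of producing $q^{-1}$ concretely. Your uniqueness argument (divide $X^s$ by each $g_i$, so both quotients $u_iq$ equal the constant $g_{i,s}^{-1}$, and evaluate at $0$) is a mild reparametrisation of the paper's (divide $X^s$ by $f$, identify the quotients as $b_i^{-1}u_i^{-1}$, and evaluate at $0$). The verifications of $v_r(t)>rs$, $v_r(u)=0$, $v_r(g)=v_r(f)$, and $d_r(g)=s$, and the descent of $q^{-1}$ by Galois invariance, are all sound and match the paper's in substance.
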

\begin{proof}
First, we prove the uniqueness of $(g,u)$. We write $f=g_{i}u_{i}$, where $u_{i}\in B_{r}(\K)^{\times}$ with $u_{i}-1\in XB_{r}(\K)$ and $g_{i}\in \K[X]$ with $\deg g_{i}=d_{r}(g_{i})=s$ for $i=1,2$.  Put $g_{i}=b_{i}X^{s}-h_{i}$, where $h_{i}\in \K[X]$ with $\deg h_{i}<s$ and $b_{i}\in \K^{\times}$.  We have $X^{s}=b_{i}^{-1}(fu_{i}^{-1}+h_{i})$. The uniqueness of $\mathrm{Proposition\ \ref{Weiestrass on Banach space}}$ implies that $(b_{1}u_{1},b_{1}^{-1}h_{1})=(b_{2}u_{2},b_{2}^{-1}h_{2})$ .  Since $u_{i}-1\in XB_{r}(\K)$, we have $b_{1}=b_{2}$, $u_{1}=u_{2}$ and $h_{1}=h_{2}$. Thus, we see that $(g_{1},u_{1})=(g_{2},u_{2})$.

Next, we prove that $f$ can be written as $f=gu$ and we have $v_{r}(f)=v_{r}(g)$ and $v_{r}(u)=0$. As a first step, we assume that $r\in \ord_{p}(\K^{\times})$. Then, without loss of generality, we can assume that $v_{r}(f)=0$. By $\mathrm{Proposition\ \ref{Weiestrass on Banach space}}$, there exists a unique pair $(q,l)\in B_{r}(\K)\times \K[X]$ such that $(p^{-r}X)^{s}=fq+l$ and $\deg l<s$. In addition, we have $\min\{v_{r}(q),v_{r}(l)\}=0$. If $v_{r}(l)=0$, we see that $s=d_{r}((p^{-r}X)^{s})=d_{r}(fq+l)<s$. This is a contradiction. Then, we have $v_{r}(l)>0$ and $v_{r}(q)=0$. Let $q_{0}\in \mathcal{O}_{\K}$ be the constant term of $q$. Since $d_{r}(f)=s$, we have $q_{0}\in \mathcal{O}_{\K}^{\times}$. We put $B_{r}(\K)^{0}=\{t\in B_{r}(\K)\vert v_{r}(t)\geq 0\}$.  Then, $q$ is a unit in $B_{r}(\K)^{0}$. We put $u=q_{0}q^{-1}\in 1+XB_{r}(\K)$ and $g=q_{0}^{-1}((p^{-r}X)^{s}-l)\in \K[X]$. Then, we have $f=gu$ and $d_{r}(g)=\deg g=s$. Further, by construction, it is easy to see that $v_{r}(g)=v_{r}(u)=0$.

As a second step, we take a general $r\in \mathbb{Q}$.  Let $\mathcal{L}\slash \K$ be a finite Galois extension such that $r\in \ord_{p}(\mathcal{L})$. By the result of the first step, $f$ can be written in the form $f=gu$ uniquely, where $u\in B_{r}(\mathcal{L})^{\times}$ with $u-1\in XB_{r}(\mathcal{L})$ and $g\in \mathcal{L}[X]$ with $\deg g=d_{r}(g)=s$. In addition, we have $v_{r}(f)=v_{r}(g)$ and $v_{r}(u)=0$. We denote by $G(\mathcal{L}\slash \K)$ the Galois group of $\mathcal{L}\slash \mathcal{K}$. We define an action of $G(\mathcal{L}\slash \K)$ on $B_{r}(\mathcal{L})$ to be
$$\sigma(h)=\sum_{n=0}^{+\infty}\sigma(a_{n})X^{n}$$
for each $h=\sum_{n=0}^{+\infty}a_{n}X^{n}\in B_{r}(\mathcal{L})$. For each $\sigma\in G(\mathcal{L}\slash \K)$, we have $f=\sigma(g)\sigma(u)$. By the uniqueness of $(g,u)$, we have $g=\sigma(g)$ and $u=\sigma(u)$ for each $\sigma\in G(\mathcal{L}\slash \K)$. That is, $g\in \K[X]$ and $u\in B_{r}(\K)$. Since $\sigma(u^{-1})=\sigma(u)^{-1}=u^{-1}$ for each $\sigma\in G(\mathcal{L}\slash \K)$, we see that $u\in B_{r}(\K)^{\times}$. We complete the proof.
\end{proof}
\begin{cor}\label{leading degree and roots of poly}
Let $r\in \mathbb{Q}$ and $f\in B_{r}^{\mathrm{md}}(\K)\backslash \{0\}$. Then, $d_{r}(f)$ is equal to the number of roots of $f$ in the set 
$\{x\in\overline{\K}\ \vert \ \ord_{p}(x)> r\}$ with multiplicity.
\end{cor}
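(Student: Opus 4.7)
The plan is to reduce the corollary to the Weierstrass preparation theorem proved just above and then analyze root counts through factorization. By Proposition \ref{Weiestras preparation theorem}, we may write $f = gu$ uniquely with $g \in \K[X]$, $\deg g = d_r(g) = s$, and $u \in B_r(\K)^\times$. Since the substitution map $B_r(\K) \to \overline{\K}$ at a point $b$ with $\ord_p(b) > r$ (defined in \eqref{substitiution of banach Br(M)}) is multiplicative by absolute convergence in the ultrametric setting, the identity $u \cdot u^{-1} = 1$ yields $u(b) u^{-1}(b) = 1$, so $u(b) \neq 0$ for every such $b$. Consequently, the zeros of $f$ in $\{x \in \overline{\K} \mid \ord_p(x) > r\}$ (counted with multiplicity) coincide with those of the polynomial $g$, and it suffices to prove that all $s$ roots of $g$ lie in this open disk.

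For this, I will factor $g = b\prod_{i=1}^{s}(X - \alpha_i)$ with $\alpha_i \in \overline{\K}$ and $b \in \K^\times$, working over a finite extension $\mathcal{L}/\K$ containing all the $\alpha_i$. A direct computation from the definition of $v_r$ and $d_r$ gives
$$
v_r(X - \alpha_i) = \min\{\ord_p(\alpha_i), r\}, \qquad
d_r(X - \alpha_i) = \begin{cases} 1 & \text{if } \ord_p(\alpha_i) > r, \\ 0 & \text{if } \ord_p(\alpha_i) \leq r. \end{cases}
$$
Since the valuation on $\mathcal{L}$ is discrete, we have $B_r^{\mathrm{md}}(\mathcal{L}) = B_r(\mathcal{L})$, so Proposition \ref{multiplication of B_{r}(K) and B_{r}(M)} applied inductively over $\mathcal{L}$ yields
$$
d_r(g) = d_r(b) + \sum_{i=1}^{s} d_r(X - \alpha_i) = \#\{\, i \mid \ord_p(\alpha_i) > r \,\}.
$$
Combined with $d_r(g) = s = \deg g$, this forces $\ord_p(\alpha_i) > r$ for every $i$, finishing the proof.

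The only genuine subtlety is the passage to the finite extension $\mathcal{L}$ when invoking Proposition \ref{multiplication of B_{r}(K) and B_{r}(M)}: the statement as written has the first factor in $B_r^{\mathrm{md}}(\K)$, but the argument there is intrinsic to the base field and transfers verbatim to $\mathcal{L}$ in place of $\K$. Everything else — multiplicativity of substitution, the direct computation of $d_r(X - \alpha_i)$, and the final arithmetic — is routine, so I do not expect any further obstacle.
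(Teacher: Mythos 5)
Your proof is correct and follows the same route as the paper: apply the Weierstrass preparation theorem (Proposition \ref{Weiestras preparation theorem}) to reduce to the polynomial factor $g$, factor $g$ over a finite extension of $\K$, compute $d_r$ of each linear factor, and invoke multiplicativity of $d_r$ (Proposition \ref{multiplication of B_{r}(K) and B_{r}(M)}) to force all roots of $g$ into the open disk. The one place you are more explicit than the paper is in verifying that the unit $u$ is nonvanishing on $\{x\in\overline{\K}\mid \ord_p(x)>r\}$ --- so that the roots of $f$ and of $g$ there coincide, with multiplicity --- a step the paper leaves implicit but which is worth spelling out.
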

\begin{proof}
Put $s=d_{r}(f)$. By $\mathrm{Proposition\ \ref{Weiestras preparation theorem}}$, $f$ can be written in the form $f=gu$, where $u\in B_{r}(\K)^{\times}$ and $g\in \K[X]$ with $d_{r}(g)=\deg g=s$. By replacing $\K$ with a finite extension of $\K$, we can assume that we have a factorization $g=c(X-a_{1})\cdots (X-a_{s})$ with $a_{1},\ldots, a_{s}\in \K$ and $c\in \K^{\times}$. Then, we see that
$$s=d_{r}(g)=\sum_{i=1}^{s}d_{r}(X-a_{i}).$$
Since $d_{r}(X-a_{i})$ is equal to $1$ (resp. $0$) if $\ord_{p}(a_{i})> r$ (resp. otherwise), 
all $a_{i}$ ($i=1 ,\ldots ,s$) must satisfy $\ord_{p}(a_{i})>r$.
\end{proof}
\begin{cor}\label{cor of remainder theorem}
Let $r\in \mathbb{Q}$ and $f\in \K[X]\backslash \{0\}$ a non-zero separable polynomial with $d_{r}(f)=\deg f$. For each $g\in B_{r}(M)$, the following two conditions are equivalent:
\begin{enumerate}
\item There exists a unique $q\in B_{r}(M)$ such that $g=fq$.\label{cor of remainder theorem1}
\item For every root $a\in \overline{\K}$ of $f$, we have $g(a)=0$ in $M_{\K(a)}$.\label{cor of remainder theorem2}
\end{enumerate}
\end{cor}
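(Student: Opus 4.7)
The plan is to prove the two implications separately, with (1) $\Rightarrow$ (2) being essentially immediate and (2) $\Rightarrow$ (1) reducing to the Weierstrass division theorem (Proposition \ref{Weiestrass on Banach space}) combined with a Vandermonde-type argument.

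For (1) $\Rightarrow$ (2), the idea is simply that if $g = fq$ for some $q \in B_r(M)$, then for any root $a \in \overline{\K}$ of $f$, evaluation (as defined in \eqref{substitiution of banach Br(M)}) is multiplicative, giving $g(a) = f(a)\,q(a) = 0$ in $M_{\K(a)}$. One should also verify that, since $\ord_p(a) > r$ holds automatically for roots of $f$ by Corollary \ref{leading degree and roots of poly} (as $d_r(f) = \deg f$ forces every root to lie in $\{x \in \overline{\K} \mid \ord_p(x) > r\}$), the evaluation makes sense.

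For (2) $\Rightarrow$ (1), I would first apply Proposition \ref{Weiestrass on Banach space} to write $g = fq + t$ uniquely with $q \in B_r(M)$ and $t \in M[X]$ of degree strictly less than $s = \deg f$. Evaluating at each root $a_j \in \overline{\K}$ of $f$ yields $t(a_j) = g(a_j) - f(a_j)q(a_j) = 0$ in $M_{\K(a_j)}$. It then remains to deduce $t = 0$ from the vanishing at the $s$ distinct roots $a_1,\ldots,a_s$ (distinct by separability); once $t = 0$ is established, existence in (1) follows, and uniqueness of $q$ is inherited from the uniqueness in Proposition \ref{Weiestrass on Banach space}.

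The main obstacle is passing from the $s$ evaluation equalities to the vanishing of the polynomial $t = \sum_{i=0}^{s-1} m_i X^i$ with coefficients in the Banach space $M$. I would resolve this as follows: let $\mathcal{L}/\K$ be the splitting field of $f$, which is finite over $\K$. Inside $M_{\mathcal{L}}$, the equations $t(a_j) = \sum_{i=0}^{s-1} m_i \widehat{\otimes}_{\K} a_j^i = 0$ for $j = 1,\ldots,s$ form a linear system whose coefficient matrix is the Vandermonde matrix $(a_j^i)_{1\le j\le s,\,0\le i\le s-1}$. Because the $a_j$ are distinct, this matrix is invertible over $\mathcal{L}$, so inverting it over $\mathcal{L}$ expresses each $m_i \widehat{\otimes}_{\K} 1$ as an $\mathcal{L}$-linear combination of the $t(a_j)$, giving $m_i \widehat{\otimes}_{\K} 1 = 0$ in $M_{\mathcal{L}}$. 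Since the natural map $M \to M_{\mathcal{L}}$, $x \mapsto x \widehat{\otimes}_{\K} 1$, is an isometry (as recorded just after \eqref{definition of ML} via \cite[Lemme 3.1]{Jerome2013}), it is injective, forcing $m_i = 0$ for every $i$, hence $t = 0$ and $g = fq$ as required.
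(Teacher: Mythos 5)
Your proof is essentially the same as the paper's: both reduce to the Weierstrass division theorem (Proposition \ref{Weiestrass on Banach space}) to get $g = fq + t$ with $\deg t < \deg f$, and then show $t = 0$ via the invertibility of the Vandermonde matrix at the distinct roots over a finite extension containing them. You make the base-change step slightly more explicit by invoking injectivity of $M \to M_{\mathcal{L}}$, but this is exactly what the paper's ``by replacing $\K$ with a finite extension'' is silently doing.
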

\begin{proof}
We see that  \eqref{cor of remainder theorem1} implies \eqref{cor of remainder theorem2} easily. Then, we prove that \eqref{cor of remainder theorem2} implies \eqref{cor of remainder theorem1}. By $\mathrm{Proposition\ \ref{Weiestrass on Banach space}}$, there exists a unique pair $(q,t)\in B_{r}(M)\times M[X]$ such that $g=fq+t$ and $\deg t<\deg f$. Then it suffices to prove the following property:
\begin{enumerate}
\item[$(*)$] Let $t\in M[X]$ with $\deg t<\deg f$. If $t(a)=0$ in $M_{\K(a)}$ for all roots $a\in \overline{\K}$ of $f$, then $t=0$. 
\end{enumerate}
By replacing $\K$ with a finite extension of $\K$, we can assume that $\K$ contains all roots of $f$. Let $a_{1},\ldots, a_{s}\in \K$ be the roots of $f$ with $s=\deg f$. Put $t=(t_{n})_{n\in\mathbb{Z}_{\geq 0}}\in M[X]$. Since $\deg t<s$, we have $t_{n}=0$ if $n\geq s$. We define a square matrix $A=(a_{i,j})_{1\leq i,j\leq s}$ of order $s$ to be $a_{i,j}=a_{i}^{j-1}$ for each $1\leq i,j\leq s$. The matrix $A$ is invertible since $f$ is separable. By the assumption that $t(a_{i})=0$ for each $1\leq i\leq s$, we have $A{}^{t}(t_{0},\ldots, t_{s-1})={}^{t}(0,\ldots, 0)$. Then, $(t_{0},\ldots, t_{s-1})=(0,\ldots, 0)$ and we conclude that $t=0$.
\end{proof}
\begin{pro}\label{supectral norm gauss norm}
Let $r\in \mathbb{Q}$ and $f\in B_{r}(M)$. Then, we have 
\begin{align*}
v_{r}(f)&=\inf_{\substack{b\in \overline{\K}\\ \ord_{p}(b)>r}}\{v_{M_{\K(b)}}(f(b))\}.
\end{align*}
\end{pro}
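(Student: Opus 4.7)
The strategy is to prove the two inequalities separately. The easy direction, $v_r(f) \leq \inf_{b} v_{M_{\K(b)}}(f(b))$, is immediate: for $b \in \overline{\K}$ with $\ord_p(b) > r$, the series $f(b) = \sum_n m_n \widehat{\otimes}_{\K} b^n$ converges in $M_{\K(b)}$, and by the ultrametric property together with the identity $v_{M_{\K(b)}}(m \widehat{\otimes}_{\K} b^n) = v_M(m) + n\ord_p(b)$,
\[
v_{M_{\K(b)}}(f(b)) \ \geq \ \inf_n \bigl(v_M(m_n) + n\ord_p(b)\bigr) \ \geq \ \inf_n \bigl(v_M(m_n) + nr\bigr) \ = \ v_r(f),
\]
since $n\ord_p(b) \geq nr$ for $n \geq 0$.

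For the reverse inequality, the plan is to approximate $v_r(f)$ by the auxiliary Gauss valuations $v_s(f) := \inf_n(v_M(m_n) + ns)$ for $s \in \mathbb{Q}$ with $s > r$ close to $r$, and to find $b \in \overline{\K}$ with $\ord_p(b) = s$ realizing $v_{M_{\K(b)}}(f(b)) = v_s(f)$. Two preliminary observations make this feasible: (i) for $s > r$ the hypothesis $f \in B_r(M)$ gives $v_M(m_n) + ns = (v_M(m_n) + nr) + n(s-r) \to +\infty$, so $v_s(f)$ is actually a minimum attained on a finite nonempty set $S_s \subset \mathbb{Z}_{\geq 0}$; and (ii) $s \mapsto v_s(f)$ is non-decreasing on $(r, +\infty)$ with $\lim_{s \to r^+} v_s(f) = v_r(f)$ (one has $v_s(f) \geq v_r(f)$, and applying the bound $v_s(f) \leq v_M(m_n) + ns$ for each fixed $n$ with $s \to r^+$ forces $\limsup_{s \to r^+} v_s(f) \leq v_r(f)$). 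Given $\epsilon > 0$, one then picks such an $s$ with $v_s(f) < v_r(f) + \epsilon$ and a corresponding $b \in \overline{\K}$ with $\ord_p(b) = s$, available since $\ord_p(\overline{\K}^\times) = \mathbb{Q}$.

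The main obstacle will be arranging $b$ so that no cancellation occurs among the dominant terms, i.e., so that $v_{M_{\K(b)}}(f(b)) = v_s(f)$. If $|S_s| = 1$ this is automatic by the strong triangle inequality. Otherwise I plan a reduction-mod-valuation argument: fix $\pi \in \overline{\K}$ with $\ord_p(\pi) = s$, write $b = \pi u$ with $\ord_p(u) = 0$, and pass to the residue $\overline{k}$-vector space $\overline{M}$ formed by elements of $M_{\mathbb{C}_p}$ of valuation $\geq v_s(f)$ modulo those of valuation $> v_s(f)$, where $\overline{k}$ denotes the residue field of $\mathbb{C}_p$ (algebraically closed and in particular infinite). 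The dominant sum then reduces to a polynomial $P(\overline{u}) = \sum_{n \in S_s} \overline{m_n \pi^n}\,\overline{u}^{\,n}$ in $\overline{M}[\,\overline{u}\,]$ whose coefficients $\overline{m_n \pi^n}$ are nonzero exactly for $n \in S_s$ by construction of $S_s$. Since any $\overline{k}$-linear functional $\phi$ on $\overline{M}$ that is nonzero on some such coefficient converts $P$ into a nonzero element $\phi \circ P \in \overline{k}[T]$ having only finitely many roots in the infinite field $\overline{k}^\times$, there exists $\overline{u} \in \overline{k}^\times$ with $P(\overline{u}) \neq 0$; lifting this $\overline{u}$ via Hensel's lemma to some $u \in \overline{\K}^\times$ and setting $b = \pi u$ produces $v_{M_{\K(b)}}\!\bigl(\sum_{n \in S_s} m_n b^n\bigr) = v_s(f)$. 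Since the tail $\sum_{n \notin S_s} m_n b^n$ has strictly larger valuation by definition of $S_s$, we obtain $v_{M_{\K(b)}}(f(b)) = v_s(f) < v_r(f) + \epsilon$, and letting $\epsilon \to 0$ closes the reverse inequality.
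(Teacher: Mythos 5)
Your proof is correct, and the easy inequality and the continuity argument $v_s(f) \to v_r(f)$ as $s \to r^+$ match the paper. The difference lies in how the two arguments prevent cancellation among the dominant terms. You work at a fixed $s > r$, allow the minimizing index set $S_s$ to have several elements, and then use a residue-field (generic-position) argument: reduce modulo the valuation filtration, apply a $\overline{k}$-linear functional, and note a nonzero polynomial over the infinite residue field $\overline{\mathbb{F}}_p$ has a non-root. The paper instead exploits the Newton-polygon slope structure directly: it first treats $f \in B_r^{\mathrm{md}}(M)$ and shows that for any rational $t$ slightly larger than $r$, the minimum of $v_M(m_n)+tn$ is attained \emph{uniquely} at $n = d_r(f)$ — the finitely many indices below $d_r(f)$ stay strictly above by continuity, and those above get pushed strictly up because their slopes $n$ are larger. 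With a unique dominant term, $b = p^t$ works with no need to choose $u$ carefully, and then $t \to r^+$ gives the bound. The general case is reduced to this one via $f \in B_t^{\mathrm{md}}(M)$ for $t>r$. Both routes are valid. The paper's perturbation-of-valuation argument is lighter — no residue module, no auxiliary functional, no appeal to the residue field being infinite — whereas your approach is somewhat more uniform in that it does not pass through the $B_r^{\mathrm{md}}$ case separately. One minor remark: the reference to Hensel's lemma is superfluous; since the residue field of $\overline{\K}$ is all of $\overline{\mathbb{F}}_p$, any set-theoretic lift of $\overline{u}$ to a unit of $\mathcal{O}_{\overline{\K}}$ suffices.
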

\begin{proof}
Let $f=(m_{n})_{n\in\mathbb{Z}_{\geq 0}}\in B_{r}(M)$ with $m_{n}\in M$ for every non-negative integer $n$. 
By \eqref{equation:definition_of_v_r}, we have $v_{r}(f) = \inf\{v_{M}(m_{n})+rn\}_{n\in \mathbb{Z}_{\geq 0}}$. 
Hence, for every $b\in \overline{\K}$ such that $\ord_{p}(b)>r$, we have 
$$
v_{r}(f) \leq \inf_{n\in \mathbb{Z}_{\geq 0}}\{v_{M}(m_{n})+n\ord_{p}(b)\} \leq v_{M_{\K(b)}}\left(\sum_{n=0}^{+\infty}m_{n}\otimes_{\K}b^{n}\right)
= v_{M_{\K(b)}}(f(b)). 
$$ 
Thus we obtained the following inequality: 
\begin{equation}\label{onevariable eq spectraul norm gauss norm eqaf1}
v_{r}(f) \leq \inf_{\substack{b\in \overline{\K}\\ \ord_{p}(b)>r}}\{v_{M_{\K(b)}}(f(b))\}. 
\end{equation}
%
Let us prove the opposite inequality. 
We assume that $f\in B_{r}^{\mathrm{md}}(M)\backslash\{0\}$ and put $s=d_{r}(f)$.  There exists a real number $\delta>0$ such that for every $t\in (r,r+\delta)\cap \mathbb{Q}$, we have $v_{M_{\K(p^{t})}}(m_{n}p^{tn})>v_{M_{\K(p^{t})}}(m_{s}p^{ts})$ for every integer $n$ satisfying $0\leq n <s$. On the other hand, for every 
integer $n$ satisfying $s<n$ and for every $t\in (r,r+\delta)\cap \mathbb{Q}$, we have 
\begin{align*}
v_{M_{\K(p^{t})}}(m_{n}p^{tn})&=(v_{M}(m_{n})+rn)+(t-r)n\\
&\geq (v_{M}(m_{s})+rs)+(t-r)(s+1)\\
&=v_{M_{\K(p^{t})}}(m_{s}p^{ts})+(t-r).
\end{align*}
Therefore, we see that $v_{M_{\K(p^{t})}}(f(p^{t}))=v_{M_{\K(p^{t})}}(m_{s}p^{ts})=v_{M}(m_{s})+ts$ for every $t\in (r,r+\delta)\cap \mathbb{Q}$. Then, we have $\displaystyle{\inf_{\substack{b\in \overline{\K}\\ \ord_{p}(b)>r}}}\{v_{M_{\K(b)}}(f(b))\}\leq \inf_{t\in (r,r+\delta)\cap \mathbb{Q}}\{v_{M_{\K(p^{t})}}(f(p^{t}))\}=v_{M}(m_{s})+rs=v_{r}(f)$. By \eqref{onevariable eq spectraul norm gauss norm eqaf1}, we conclude that $v_{r}(f)=\displaystyle{\inf_{\substack{b\in \overline{\K}\\ \ord_{p}(b)>r}}}\{v_{M_{\K(b)}}(f(b))\}$.

Next we take a general $f\in B_{r}(M)$. We can assume that $f\neq 0$. For each $\epsilon>0$, there exists a $\delta>0$ such that $v_{r}(f)\leq v_{t}(f)<v_{r}(f)+\epsilon$ for every $t\in [r,r+\delta)\cap \mathbb{Q}$. Let $t\in (r,r+\delta)\cap \mathbb{Q}$. Since $f\in B_{t}^{\mathrm{md}}(M)$, by the result in the case of $B_{t}^{\mathrm{md}}(M)$, we see that 
$$
\inf_{\substack{b\in \overline{\K}\\ \ord_{p}(b)>r}}\{v_{M_{\K(b)}}(f(b))\} \leq \inf_{\substack{b\in \overline{\K}\\ \ord_{p}(b)>t}}\{v_{M_{\K(b)}}(f(b))\}
=v_{t}(f)<v_{r}(f)+\epsilon.
$$ 
Therefore, we have $\inf_{\substack{b\in \overline{\K}\\ \ord_{p}(b)>r}}\{v_{M_{\K(b)}}(f(b))\} \leq v_{r}(f)$. By \eqref{onevariable eq spectraul norm gauss norm eqaf1}, we conclude that $v_{r}(f)=\displaystyle{\inf_{\substack{b\in \overline{\K}\\ \ord_{p}(b)>r}}}\{v_{M_{\K(b)}}(f(b))\}$ for each general $f\in B_{r}(M)$.
\end{proof}
We have $B_{r}(M)\subset B_{r^{\prime}}(M)$ for each $r,r^{\prime}\in \mathbb{Q}$ such that $r<r^{\prime}$. We define $B_{+}(M)=\cap_{r\in \mathbb{Q}_{>0}}B_{r}(M)\subset B_{0}(M)$. Let $f=(m_{n})_{n\in\mathbb{Z}_{\geq 0}}\in B_{+}(M)\backslash \{0\}$. We define
\begin{align}
\begin{split}
m_{f}(t)&=v_{t}(f):\mathbb{R}_{>0}\rightarrow \mathbb{R},\\
n_{f}(t)&=d_{t}(f): \mathbb{R}_{>0}\rightarrow \mathbb{Z}_{\geq 0},
\end{split}
\end{align}
where $v_{t}(f)=\mathrm{inf}\{v_{M}(m_{n})+tn\}_{n\in\mathbb{Z}_{\geq0}}$ and $d_{t}(f)=\min\{n_{0}\in \mathbb{Z}_{\geq 0}\vert v_{t}(f)=v_{M}(m_{n_{0}})+tn_{0}\}$ for each $t\in \mathbb{R}_{>0}$. By definition, $m_{f}(t)$ is monotonically increasing and we have
\begin{equation}\label{trivial equation of M_{m}(t)}
m_{f}(t)=v_{M}(m_{n_{f}(t)})+tn_{f}(t).
\end{equation}
\begin{pro}\label{leading deree is left continuous}
Let $f\in B_{+}(M)\backslash \{0\}$. Then, the function $n_{f}(t)$ is monotonically decreasing and right continuous.
\end{pro}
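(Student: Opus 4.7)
The plan is to treat $n_f(t)$ via a standard Newton-polygon type analysis of the affine functions $\phi_n(t) = v_M(m_n) + tn$, whose lower envelope is exactly $m_f(t) = v_t(f)$. The preliminary check, which I would make first, is that $n_f(t)$ is actually well-defined for every $t>0$: since $f\in B_+(M)$, there exists $t'\in(0,t)$ with $f\in B_{t'}(M)$, so $v_M(m_n)+t'n\geq C$ for some constant $C$, hence $v_M(m_n)+tn\geq C+(t-t')n\to+\infty$ as $n\to+\infty$. Therefore the infimum in the definition of $v_t(f)$ is attained and $f\in B_t^{\mathrm{md}}(M)$.

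For the monotonicity, given $0<t_1<t_2$ and setting $n_i=n_f(t_i)$, the minimality of $n_i$ at $t_i$ gives $v_M(m_{n_1})+t_1 n_1\leq v_M(m_{n_2})+t_1 n_2$ and $v_M(m_{n_2})+t_2 n_2\leq v_M(m_{n_1})+t_2 n_1$. Adding these two inequalities yields $(t_2-t_1)(n_2-n_1)\leq 0$, and since $t_2-t_1>0$ we deduce $n_2\leq n_1$, proving that $n_f$ is non-increasing.

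For the right continuity, I fix $t_0>0$ and set $n_0=n_f(t_0)$. The goal is to find $\delta>0$ such that $n_f(t)=n_0$ for every $t\in(t_0,t_0+\delta)$. Since $n_f(t_0)=n_0$ is the smallest minimizer at $t_0$, for every $n<n_0$ we have the strict inequality $c_n:=v_M(m_n)-v_M(m_{n_0})-t_0(n_0-n)>0$; since there are only finitely many such $n$, the quantity $\delta:=\min_{n<n_0} c_n/(n_0-n)$ is a positive real number, and a direct computation shows that whenever $t-t_0<\delta$ the strict inequality $v_M(m_n)+tn>v_M(m_{n_0})+tn_0$ continues to hold for all $n<n_0$. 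For every $n>n_0$, starting from $v_M(m_n)+t_0 n\geq v_M(m_{n_0})+t_0 n_0$ and adding $(t-t_0)(n-n_0)>0$ to both sides gives $v_M(m_n)+tn>v_M(m_{n_0})+tn_0$ for every $t>t_0$, automatically and uniformly in $n>n_0$. Combining the two cases, $n_0$ is the unique minimizer of $\phi_n(t)$ on $(t_0,t_0+\delta)$, so $n_f(t)=n_0$ on that interval, which is right continuity at $t_0$.

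The only delicate point I anticipate is the uniform control over the infinitely many indices $n>n_0$; all other steps are finite and elementary. That point is handled by the observation above that the ``gap'' $\phi_n(t)-\phi_{n_0}(t)=(\phi_n(t_0)-\phi_{n_0}(t_0))+(t-t_0)(n-n_0)$ is a sum of two non-negative terms when $t>t_0$ and $n>n_0$, so no uniform estimate in $n$ is in fact required. Thus the argument goes through cleanly, and no $p$-adic input beyond $f\in B_+(M)$ is needed.
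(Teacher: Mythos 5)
Your proof is correct and follows essentially the same Newton-polygon approach as the paper's: both establish monotonicity by comparing the affine maps $t\mapsto v_M(m_n)+tn$ at two points, and right continuity by showing that the finitely many indices $n<n_0$ cannot overtake $n_0$ for $t$ slightly to the right of $t_0$. Your version is, if anything, slightly more precise than what is printed: you correctly require the inequality $v_M(m_n)+tn>v_M(m_{n_0})+t\,n_0$ (with $t$, not $t_0$, in the dominant term on the right, which is the version that actually forces $n_f(t)\geq n_0$), and your direct gap estimate for the indices $n>n_0$ dispenses with the appeal to monotonicity that the paper invokes at that final step.
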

\begin{proof}
Put $f=(m_{n})_{n\in\mathbb{Z}_{\geq 0}}$ and $n_{t}=d_{t}(f)$ for $t\in (0,+\infty)$. We first prove that the function $t\mapsto n_{t}$ with $t\in (0,+\infty)$ is monotonically decreasing. By contradiction, we suppose that there exist $t_{1},t_{2}\in(0,+\infty)$ such that $t_{1}<t_{2}$ and $n_{t_{1}}<n_{t_{2}}$. We put 
\begin{align*}
g(t)=v_{M}(m_{n_{t_{1}}})-v_{M}(m_{n_{t_{2}}})+t(n_{t_{1}}-n_{t_{2}})
\end{align*}
for $t\in (0,+\infty)$. Since $n_{t_{1}}<n_{t_{2}}$, $g(t)$ is monotonically decreasing. On the other hand, we have $v_{t_{1}}(f)=v_{M}(m_{n_{t_{1}}})+t_{1}n_{t_{1}}\leq v_{M}(m_{n_{t_{2}}})+t_{1}n_{t_{2}}$ and $v_{M}(m_{n_{t_{1}}})+t_{2}n_{t_{1}}>v_{t_{2}}(f)=v_{M}(m_{n_{t_{2}}})+t_{2}n_{t_{2}}$, which are equivalent to $g(t_{1})\leq 0$ and $g(t_{2})>0$. This is a contradiction.

Next we prove that $n_{f}(t)$ is right continuous at a $t_{0}\in (0,+\infty)$. There exists a small $\delta >0$ such that $v_{M}(m_{n})+tn>v_{M}(m_{n_{t_{0}}})+t_{0}n_{t_{0}}$ for every $t\in[t_{0},t_{0}+\delta)$ and $0\leq n<n_{t_{0}}$. Then, we have $n_{t_{0}}\leq n_{t}$ for every $t\in [t_{0},t_{0}+\delta)$. Since the function $t\mapsto n_{t}$ is monotonically decreasing, we have $n_{t}=n_{t_{0}}$ for every $t\in [t_{0}, t_{0}+\delta)$.
\end{proof}
Let $f\in B_{+}(\K)\backslash \{0\}$ and $g\in B_{+}(M)\backslash \{0\}$. We have
\begin{equation}\label{multiplicity of mfg and nfg real number}
m_{fg}(t)=m_{f}(t)+m_{g}(t),\ n_{fg}(t)=n_{f}(t)+n_{g}(t)
\end{equation}
for each $t\in \mathbb{R}$. Indeed, by Proposition \ref{mult Br prodct equlity} and Proposition \ref{multiplication of B_{r}(K) and B_{r}(M)}, we have \eqref{multiplicity of mfg and nfg real number} for each $t\in \mathbb{Q}$. Further, by \eqref{trivial equation of M_{m}(t)} and Proposition \ref{leading deree is left continuous}, we see that $m_{g}(t)$ and $n_{g}(t)$ is right continous for each $g\in B_{+}(M)\backslash \{0\}$. Then, we have \eqref{multiplicity of mfg and nfg real number} for each $t\in \mathbb{R}$. We call an  $r\in \mathbb{R}_{>0}$ a break-point of $g$ if the function $n_{g}(t)$ is not continuous at $r$. By $\mathrm{Proposition\ \ref{leading deree is left continuous}}$, the set of break-points of $g$ is a discrete subset. Further, by \eqref{trivial equation of M_{m}(t)}, $m_{g}(t)$ is differentiable except for break-points and satisfies 
\begin{equation}\label{mprimegt0ngt}
m_{g}^{\prime}(t)=n_{g}(t).
\end{equation}
\begin{pro}\label{break points and roots}
Let $f\in B_{+}(\K)\backslash\{0\}$. For each $r\in \mathbb{R}_{>0}$, $r$ is a break-point of $f$ if and only if there exists a root $x\in \overline{\K}$ of $f$ with $\ord_{p}(x)=r$.
\end{pro}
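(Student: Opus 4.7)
The plan is to use Corollary \ref{leading degree and roots of poly} to interpret $n_f(t) = d_t(f)$ as the number of roots of $f$ in $\{x \in \overline{\K} : \ord_p(x) > t\}$ counted with multiplicity, and then to analyze the behaviour of this counting function as $t$ passes through $r$ from below and from above.

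First, I would verify that $f \in B_r^{\mathrm{md}}(\K)$ for every $r > 0$, so that $d_r(f)$ is defined and Corollary \ref{leading degree and roots of poly} applies. Writing $f = \sum a_n X^n$, the fact that $f \in B_{r/2}(\K)$ gives a lower bound $\ord_p(a_n) + (r/2)n \geq V$ for some $V \in \mathbb{R}$, so
\[
\ord_p(a_n) + rn = \big(\ord_p(a_n) + (r/2)n\big) + (r/2) n \geq V + (r/2) n \longrightarrow +\infty.
\]
Hence only finitely many $n$ can achieve the infimum defining $v_r(f)$, so the infimum is attained and $d_r(f)$ makes sense.

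Next, I would observe that the set of valuations of roots of $f$ is locally discrete in $\mathbb{R}_{>0}$: for any $r > 0$, the integer $d_{r/2}(f)$ is finite by the first step, so Corollary \ref{leading degree and roots of poly} bounds the number of roots (with multiplicity) in $\{\ord_p > r/2\}$ by $d_{r/2}(f)$, and in particular the set of distinct valuations of such roots is finite. Consequently there is $\epsilon > 0$ with $(r - \epsilon, r) \cap \{\ord_p(x) : f(x) = 0\} = \emptyset$.

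Finally, for $t \in (r - \epsilon, r)$ the roots of $f$ with $\ord_p > t$ split into those with $\ord_p > r$ and those with $\ord_p = r$, so
\[
n_f(t) \;=\; n_f(r) + \mathrm{mult}_r(f),
\]
where $\mathrm{mult}_r(f)$ denotes the total multiplicity of roots of $f$ with valuation exactly $r$. Letting $t \to r^-$ yields $\lim_{t \to r^-} n_f(t) = n_f(r) + \mathrm{mult}_r(f)$, and combining this with the right-continuity of $n_f$ from Proposition \ref{leading deree is left continuous} shows that $n_f$ is discontinuous at $r$ if and only if $\mathrm{mult}_r(f) > 0$, which is precisely the existence of a root $x \in \overline{\K}$ of $f$ with $\ord_p(x) = r$. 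The only delicate point in this plan is the local discreteness of root valuations, which is precisely what permits the clean decomposition above; once Corollary \ref{leading degree and roots of poly} is in hand, this reduces to the finiteness of $d_{r/2}(f)$ established in the first step.
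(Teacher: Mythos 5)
Your proof is correct and follows essentially the same strategy as the paper's: both directions reduce to interpreting $n_f(t) = d_t(f)$ via Corollary \ref{leading degree and roots of poly} as a count of roots with valuation $> t$ and analyzing the jump of this count across $r$. You make explicit two points the paper leaves implicit, namely that $f \in B_r^{\mathrm{md}}(\K)$ for all $r>0$ (so $d_r(f)$ is well-defined) and that the set of root valuations is locally discrete, but the core argument is the same.
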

\begin{proof}
If there exists a root $x\in \overline{\K}$ of $f$ with $\ord_{p}(x)=r$, 
we have $d_{t}(f)>d_{r}(f)$ for each $t\in (0,r)\cap \mathbb{Q}$ by Corollary \ref{leading degree and roots of poly}. 
Thus, we conclude that $r$ is a break-point of $f$. 
On the other hand, if $r$ is a break-point of $f$, for each $t_{1},t_{2}\in \mathbb{Q}_{>0}$ with $t_{1}<r<t_{2}$, there exists a root of $f$ in the set $\{x\in \overline{\K}\ \vert \ t_{1}<\ord_{p}(x)\leq t_{2}\}$ by Corollary \ref{leading degree and roots of poly}. Thus, we see that there exists a root $x\in \overline{\K}$ of $f$ with $\ord_{p}(x)=r$.
\end{proof}
\begin{pro}\label{continuity ofv_{r}(m)}
Let $f\in B_{+}(M)\backslash\{0\}$. The function $m_{f}(t)$ is continuous.
\end{pro}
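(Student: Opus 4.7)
The plan is to exploit the identity $m_f(t) = v_M(m_{n_f(t)}) + t \cdot n_f(t)$ from \eqref{trivial equation of M_{m}(t)} together with the results on $n_f$ already established in Proposition \ref{leading deree is left continuous}. Write $f = (m_n)_{n\in \mathbb{Z}_{\geq 0}}$. Since $n_f$ takes values in $\mathbb{Z}_{\geq 0}$ and is monotonically decreasing, the set of its discontinuities (i.e. the break-points of $f$) is a discrete subset of $\mathbb{R}_{>0}$. Between consecutive break-points, $n_f$ is constant with some value $n_0$, so $m_f(t) = v_M(m_{n_0}) + t n_0$ is affine and thus continuous on each such open interval.

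It remains to verify continuity at each break-point $r \in \mathbb{R}_{>0}$. Right continuity is immediate: by Proposition \ref{leading deree is left continuous}, $n_f$ is right continuous at $r$, so there exists $\delta > 0$ with $n_f(t) = n_f(r) =: n_0$ for all $t \in [r, r+\delta)$, which gives $\lim_{t \to r^+} m_f(t) = v_M(m_{n_0}) + r n_0 = m_f(r)$.

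The only nontrivial point is left continuity at $r$. Using discreteness of the break-points, choose $\delta > 0$ so that $(r-\delta, r)$ contains no break-point. Then $n_f$ is continuous on $(r-\delta, r)$; being integer-valued and monotonically decreasing, it is constant there, equal to some integer $n_1 \geq n_0$. Hence
\[
\lim_{t \to r^-} m_f(t) = \lim_{t \to r^-}\bigl(v_M(m_{n_1}) + t n_1\bigr) = v_M(m_{n_1}) + r n_1.
\]
To identify this limit with $m_f(r)$, I would argue as follows. Since $n_1$ realizes the infimum defining $v_t(f)$ for every $t \in (r-\delta, r)$, we have $v_M(m_{n_1}) + t n_1 \leq v_M(m_{n_0}) + t n_0$ for all such $t$; letting $t \to r^-$ yields
\[
v_M(m_{n_1}) + r n_1 \leq v_M(m_{n_0}) + r n_0 = m_f(r).
\]
The reverse inequality $v_M(m_{n_1}) + r n_1 \geq m_f(r)$ holds by the very definition of $m_f(r)$ as the infimum. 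Combining the two inequalities gives $\lim_{t \to r^-} m_f(t) = m_f(r)$, which completes the proof.

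There is no genuine obstacle here: all of the heavy lifting has already been carried out in Proposition \ref{leading deree is left continuous}, and the argument reduces to a short analysis of a monotone integer-valued step function together with the minimality property of the infimum at a break-point. The only point that requires a moment of care is the observation that the set of break-points is discrete, which follows from the monotonicity and integer-valuedness of $n_f$.
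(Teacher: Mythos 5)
Your proof is correct and follows essentially the same route as the paper: reduce to left continuity at a break-point, use $m_{f}(t)=v_{M}(m_{n_{f}(t)})+t\,n_{f}(t)$ together with the step structure of $n_{f}$ from Proposition \ref{leading deree is left continuous}, and close the argument with the two inequalities coming from the infimum property. The only cosmetic difference is that the paper obtains the upper bound on the left-limit by invoking monotonicity of $m_{f}$, whereas you compare the two terms $v_{M}(m_{n_{1}})+tn_{1}$ and $v_{M}(m_{n_{0}})+tn_{0}$ directly; the two are interchangeable here.
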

\begin{proof}
Put $f=(m_{n})_{n\in\mathbb{Z}_{\geq 0}}$. Let $x_{1},x_{2}\in \mathbb{R}_{>0}$ be break-points of $f$ such that $x_{1}<x_{2}$ and there exist no break-points in $(x_{1},x_{2})$. By \eqref{trivial equation of M_{m}(t)}, we have $m_{f}(t)=v_{M}(m_{d_{x_{1}}(f)})+td_{x_{1}}(f)$ on $t\in [x_{1},x_{2})$. Therefore, it suffices to prove that $m_{f}(t)$ is left continuous at the break-point $x_{2}$. Put $s=d_{x_{2}}(f)$ and $s_{0}=d_{x_{1}}(f)$. By the definition of $m_{f}$, we have $m_{f}(x_{2})\leq v_{M}(m_{s_{0}})+s_{0}x_{2}$. Further, we have $m_{f}(t)=v_{M}(m_{s_{0}})+s_{0}t\leq m_{f}(x_{2})$ for every $t\in [x_{1},x_{2})$. Thus, we see that $v_{M}(m_{s_{0}})+s_{0}x_{2}=m_{f}(x_{2})$ and $m_{f}(t)$ is left  continuous at $x_{2}$.
\end{proof}
Let $\log(1+X) \in B_{+}(\K)$ be the $p$-adic logarithm function defined by
\begin{equation}\label{definition of p-adic logarithmfunc}
\log(1+X)=\sum_{k=1}^{+\infty}(-1)^{k-1}\frac{X^{k}}{k}.
\end{equation}
We set $t_{n}=\frac{1}{p^{n}(p-1)}$ for each $n\in \mathbb{Z}_{\geq 0}$. 
The following proposition is stated in \cite[2.6. Example]{vishik1976}. We give a detail of the proof.
\begin{pro}\label{properties of log}
Let $\log (1+X)$ be the $p$-adic logarithm function. Then, the break-points of $\log(1+X)$ are $t_{n}$ with $n\geq 0$. In addiiton, we have 
$$d_{t_{n}}(\log(1+X))=p^{n},\  m_{\log(1+X)}(t_{n})=-n+\frac{1}{p-1}$$
for each $n\geq 0$.
\end{pro}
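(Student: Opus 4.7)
The plan is to compute directly the functions $m_{\log(1+X)}$ and $n_{\log(1+X)}$ and then read off their break-points. The coefficients of $\log(1+X)$ are $a_k = (-1)^{k-1}/k$ for $k \geq 1$ (and $a_0 = 0$), so $\ord_p(a_k) = -\ord_p(k)$. Since $\ord_p(k) \leq \log_p k$, we have $-\ord_p(k) + tk \to +\infty$ as $k \to \infty$ for every $t > 0$, which both confirms that $\log(1+X)$ lies in $B_+(\K)$ and guarantees that the infimum in the definition of $v_t(\log(1+X))$ is actually attained; in particular $\log(1+X) \in B_t^{\mathrm{md}}(\K)$ for every $t > 0$ and the invariant $d_t$ is well-defined.

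The next step is to reduce the analysis to powers of $p$. If $k = p^m u$ with $u \geq 2$ coprime to $p$, then
\[
-\ord_p(k) + tk = (-m + tp^m) + tp^m(u - 1) > -m + tp^m,
\]
so the infimum of $-\ord_p(k) + tk$ over $k \geq 1$ equals the infimum over $m \geq 0$ of $g(m) := -m + tp^m$, and any index achieving the infimum must be a power of $p$. A direct computation gives $g(m+1) - g(m) = -1 + tp^m(p-1) = -1 + t/t_m$, which is negative, zero, or positive according to whether $t < t_m$, $t = t_m$, or $t > t_m$. Since $t_m = 1/(p^m(p-1))$ is strictly decreasing in $m$, the sequence $g(m)$ is strictly decreasing while $t_m > t$ and strictly increasing once $t_m < t$.

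Specializing to $t = t_n$: $g$ is strictly decreasing on $\{0, 1, \ldots, n\}$, satisfies $g(n+1) = g(n)$, and is strictly increasing on $\{n+1, n+2, \ldots\}$. Hence the infimum equals $g(n) = -n + t_n p^n = -n + \tfrac{1}{p-1}$ and is attained at exactly $k = p^n$ and $k = p^{n+1}$; the smaller of the two yields $d_{t_n}(\log(1+X)) = p^n$, proving both claimed numerical values. For $t \in (t_n, t_{n-1})$ (with the convention $t_{-1} := +\infty$) the same sign analysis produces a unique minimum at $m = n$, so $n_{\log(1+X)}(t) = p^n$ throughout this interval and also at its left endpoint $t_n$. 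Consequently $n_{\log(1+X)}$ is a right-continuous step function that jumps from $p^{n+1}$ to $p^n$ at each $t_n$, so its break-points are precisely $\{t_n\}_{n \geq 0}$.

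The calculation is essentially routine; the only delicate point is confirming that, at the isolated value $t = t_n$ where two indices ($k = p^n$ and $k = p^{n+1}$) simultaneously realize the infimum, the definition of $d_t$ selects the smaller one, giving $p^n$ rather than $p^{n+1}$, and that this choice is consistent with the right-continuity of $n_{\log(1+X)}$ established in Proposition \ref{leading deree is left continuous}.
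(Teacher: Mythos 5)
Your proof is correct, and it proceeds along a genuinely different route from the paper's.

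The paper's argument is ``root-theoretic'': it takes as known that the roots of the $p$-adic logarithm are exactly $\{\epsilon - 1 : \epsilon \in \mu_{p^\infty}\}$, invokes Proposition \ref{break points and roots} to identify the break-points with the valuations of these roots, uses Corollary \ref{leading degree and roots of poly} together with the simplicity of the roots (since $\log(1+X)' = (1+X)^{-1}$ never vanishes) to compute $d_{t_n}$, and then determines $m_{\log(1+X)}(t_0)$ via the spectral-norm identity of Proposition \ref{supectral norm gauss norm} plus the classical estimate $\inf_{\ord_p(a)>t_0}\ord_p(\log(1+a)) = \tfrac{1}{p-1}$ from Washington's book, finally propagating to all $n$ by integrating the slope $p^{n+1}$ over $[t_{n+1},t_n]$. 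You instead work entirely at the level of coefficients: $\ord_p(a_k) = -\ord_p(k)$, reduce the Newton-polygon minimization to the integer sequence $g(m) = -m + tp^m$ over prime powers, and read off the break-points, $d_{t_n}$, and $m_{\log(1+X)}(t_n)$ from the explicit sign analysis of $g(m+1) - g(m) = -1 + t/t_m$. Your approach has the advantage of being fully self-contained and elementary (it neither cites the root set of the $p$-adic logarithm nor the external Washington reference), and it computes $d_{t_n}$ and $m_{\log(1+X)}(t_n)$ simultaneously rather than via separate mechanisms. The paper's approach is shorter once the general machinery (Propositions \ref{supectral norm gauss norm}, \ref{leading degree and roots of poly}, \ref{break points and roots}) is in hand and better illustrates how that machinery is meant to be used. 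Both proofs are correct, and your careful remarks about the infimum being attained (so $\log(1+X) \in B_t^{\mathrm{md}}(\K)$), about $d_{t_n}$ selecting the \emph{smaller} of the two indices $p^n, p^{n+1}$ realizing the minimum, and about the consistency of this with the right-continuity from Proposition \ref{leading deree is left continuous}, address exactly the delicate points that could otherwise cause an off-by-one error.
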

\begin{proof}
It is well-known that the roots of $\log(1+X)$ are $\epsilon-1$ with $\epsilon\in \mu_{p^{\infty}}$. Then, by $\mathrm{Proposition\ \ref{break points and roots}}$, the break-points of $\log(1+X)$ are $t_{n}$ with $n\geq 0$. Further, since $\log(1+X)^{\prime}=\frac{1}{1+X}$, $\log(1+X)$ has no multiple roots. Thus,  we see that $d_{t_{n}}(\log(1+X))=p^{n}$ for each $n\geq 0$. 

Next we prove that $m_{\log(1+X)}(t_{n})=-n+\frac{1}{p-1}$ for each $n\geq 0$. By $\mathrm{Proposition\ \ref{supectral norm gauss norm}}$, we get 
\begin{align*}
v_{t_{0}}(\log(1+X))=\inf_{\substack{a\in \mathbb{C}_{p}\\ \ord_{p}(a)>t_{0}}}\{\ord_{p}(\log(1+a))\}.
\end{align*}
It is known that $\displaystyle{{\inf}_{
a\in \mathbb{C}_{p}, 
\ord_{p}(a)>t_{0}
}
}
\{\ord_{p}(\log(1+a))\}=\tfrac{1}{p-1}$ ($cf$. \cite[Lemma 5.5]{washinton}). Then, we have $m_{\log(1+X)}(t_{0})=\frac{1}{p-1}$. Further, since the slope of $m_{\log(1+X)}(t)$ on $[t_{n+1},t_{n}]$ is $d_{t_{n+1}}(\log(1+X))=p^{n+1}$, we have 
\begin{align*}
m_{\log(1+X)}(t_{n+1})-m_{\log(1+X)}(t_{n})=p^{n+1}(t_{n+1}-t_{n})=-1.
\end{align*}
Thus, we conclude that $m_{\log(1+X)}(t_{n})=-n+\frac{1}{p-1}$ for each $n\geq 0$.
\end{proof}
We take a topological generator $u\in 1+2p\mathbb{Z}_{p}$. Let $d,e\in \mathbb{Z}$ be elements satisfying $e\geq d$. We define
\begin{equation}\label{definition of Omega}
\Omega_{m}^{[d,e]}(X)=\prod_{i=d}^{e}((1+X)^{p^{m}}-u^{ip^{m}}),
\end{equation}
for each $m\in \mathbb{Z}_{\geq 0}$.
\begin{lem}\label{Omega is separable}
Let $m\in\mathbb{Z}_{\geq 0}$. Then, $\Omega_{m}^{[d,e]}(X)$ is separable. 
\end{lem}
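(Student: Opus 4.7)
The plan is to verify separability by identifying the roots of $\Omega_{m}^{[d,e]}(X)$ explicitly in $\overline{\mathbb{Q}_p}$ and showing that all $(e-d+1)p^{m}$ of them are distinct. Over $\overline{\mathbb{Q}_p}$, each factor $(1+X)^{p^{m}}-u^{ip^{m}}$ splits as $\prod_{\zeta \in \mu_{p^{m}}}(X - (u^{i}\zeta - 1))$, so the roots of this factor are precisely the elements $u^{i}\zeta - 1$ as $\zeta$ ranges over $\mu_{p^{m}}$. These $p^{m}$ elements are pairwise distinct, so each individual factor is separable.

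The remaining step is to show that the root-sets of two distinct factors are disjoint. Assume that $u^{i}\zeta_{1} - 1 = u^{j}\zeta_{2} - 1$ for some $i,j \in [d,e]$ and $\zeta_{1},\zeta_{2} \in \mu_{p^{m}}$. Then $u^{i-j} = \zeta_{2}\zeta_{1}^{-1} \in \mu_{p^{\infty}}$. On the one hand, $u^{i-j}$ lies in $1+2p\mathbb{Z}_{p}$, a torsion-free pro-$p$ group; on the other hand, a non-trivial $p$-power root of unity $\zeta$ satisfies $\mathrm{ord}_{p}(\zeta - 1) = \tfrac{1}{(p-1)p^{n-1}}$ for some $n\ge 1$, which is strictly less than $\mathrm{ord}_{p}(2p) = \mathrm{ord}_{p}(u^{i-j}-1)$ whenever $u^{i-j}\neq 1$ (using that $1+2p\mathbb{Z}_{p}\cap \mu_{p^{\infty}} = \{1\}$). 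Hence $u^{i-j}=1$, and since $u$ is a topological generator of the torsion-free group $1+2p\mathbb{Z}_{p}$, it has infinite order, forcing $i=j$ and thus $\zeta_{1}=\zeta_{2}$.

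Combining these two steps, the total list of roots of $\Omega_{m}^{[d,e]}(X)$ is $\{u^{i}\zeta - 1 \mid d\le i\le e,\ \zeta\in\mu_{p^{m}}\}$, and these $(e-d+1)p^{m}$ elements are pairwise distinct. Since $\deg \Omega_{m}^{[d,e]}(X) = (e-d+1)p^{m}$, this gives exactly $\deg \Omega_{m}^{[d,e]}(X)$ distinct roots, so $\Omega_{m}^{[d,e]}(X)$ is separable. The only mildly subtle point is the torsion-freeness argument comparing $\mu_{p^{\infty}}$ with $1+2p\mathbb{Z}_{p}$; everything else is a direct computation with the explicit factorization.
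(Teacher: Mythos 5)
Your proof is correct and follows the same structure as the paper's: each cyclotomic-type factor $\omega_{m,i}(X)=(1+X)^{p^m}-u^{ip^m}$ is separable with roots $u^i\zeta-1$, $\zeta\in\mu_{p^m}$, and distinct factors share no common root. The paper disposes of the second point more economically by raising a putative common-root relation $u^{i}\epsilon_{1}=u^{j}\epsilon_{2}$ to the $p^{m}$-th power, which annihilates the roots of unity at once and reduces immediately to $u$ having infinite order; your valuation-based verification that $1+2p\mathbb{Z}_{p}\cap\mu_{p^{\infty}}=\{1\}$ reaches the same conclusion with a bit more work.
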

\begin{proof}
Put $\omega_{m,i}(X)=(1+X)^{p^{m}}-u^{ip^{n}}$. It is easy to see that $\omega_{m,i}(X)$ is separable for each $m\in \mathbb{Z}_{\geq 0}$ and $i\in [d,e]$. Then, it suffices to prove that $\omega_{m,i}$ and $\omega_{m,j}$ have no common roots for any two distinct elements $i$, $j$ in $[d,e]$. The roots of $\omega_{m,i}$ are given by $u^{i}\epsilon-1$ for $\epsilon\in \mu_{p^{m}}$.  Then, if $\omega_{m,i}$ and $\omega_{m,j}$ have a common root,  there are $\epsilon_{1},\epsilon_{2}\in \mu_{p^{m}}$ such that $u^{i}\epsilon_{1}=u^{j}\epsilon_{2}$. By raising the both sides to the $p^{m}$-th power, we get $u^{p^{m}i}=u^{p^{m}j}$, which is equivalent to $u^{p^{m}(j-i)}=1$. This contradicts to the assumption $i\not= j$ and this completes the proof.
\end{proof}
\begin{lem}\label{valuation of Omega}
Let $m\in\mathbb{Z}_{\geq 0}$. The break-points of $\Omega_{m}^{[d,e]}$ on $(0,t_{0}]$ are $t_{0},\ldots, t_{m-1}$ where $t_{n}=\frac{1}{p^{n}(p-1)}$ for $n\in\mathbb{Z}_{\geq 0}$.
Further, we have 
$$d_{t_{n}}(\Omega_{m}^{[d,e]})=(e-d+1)p^{n}$$
and
$$m_{\Omega_{m}^{[d,e]}}(t_{n})=(e-d+1)(m-n+t_{n}p^{n})$$
for every $n\leq m$.
\end{lem}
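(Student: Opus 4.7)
The plan is to reduce the computation to a simple counting of roots via multiplicativity. Writing $\Omega_m^{[d,e]}(X) = \prod_{i=d}^{e} f_i(X)$ with $f_i(X) = (1+X)^{p^m} - u^{ip^m}$, I work in a finite extension $\mathcal{L} \supset \K(\mu_{p^m})$ in which each factor splits as $f_i(X) = \prod_{\epsilon \in \mu_{p^m}}(X - (u^i\epsilon - 1))$. By Proposition \ref{mult Br prodct equlity} together with \eqref{multiplicity of mfg and nfg real number} applied in $B_t(\mathcal{L})$, both $v_t$ and $d_t$ are additive across such factorizations; Proposition \ref{boldh H isometry completetensor} guarantees that these quantities, computed in $B_t(\mathcal{L})$, coincide with those computed in $B_t(\K)$. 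Since $v_t(X - a) = \min(\ord_p(a), t)$ by inspection and, by Corollary \ref{leading degree and roots of poly}, $d_t(X - a)$ equals $1$ when $\ord_p(a) > t$ and $0$ otherwise, the problem reduces to enumerating the valuations of the roots $u^i\epsilon - 1$.

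The valuation of each root is read off by writing $u^i\epsilon - 1 = u^i(\epsilon - 1) + (u^i - 1)$. Since $u \in 1 + 2p\mathbb{Z}_p$, the term $u^i - 1$ vanishes when $i = 0$ and otherwise satisfies $\ord_p(u^i - 1) \geq 1 > t_0$. For $\epsilon \in \mu_{p^{k+1}} \setminus \mu_{p^k}$ with $0 \leq k \leq m-1$, standard cyclotomic theory gives $\ord_p(\epsilon - 1) = t_k \leq t_0$, so $\ord_p(u^i\epsilon - 1) = \ord_p(\epsilon - 1) = t_k$, while for $\epsilon = 1$ the root is $u^i - 1$, whose valuation exceeds $t_0$. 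Consequently the set of valuations of roots of $\Omega_m^{[d,e]}$ lying in $(0, t_0]$ is exactly $\{t_0, t_1, \ldots, t_{m-1}\}$, and by Proposition \ref{break points and roots} these are the break-points in that range.

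Finally, I count. The roots of $\Omega_m^{[d,e]}$ with valuation strictly greater than $t_n$ are precisely those $u^i\epsilon - 1$ with $\epsilon \in \mu_{p^n}$, which yields $d_{t_n}(\Omega_m^{[d,e]}) = (e-d+1) p^n$. For $m_{\Omega_m^{[d,e]}}(t_n)$, each of the $p^n$ roots with $\epsilon \in \mu_{p^n}$ contributes $t_n$ to $v_{t_n}(f_i)$, while for $n \leq k \leq m-1$ the $p^{k+1} - p^k$ roots with $\epsilon \in \mu_{p^{k+1}} \setminus \mu_{p^k}$ each contribute $t_k$; the telescoping identity $(p^{k+1} - p^k)t_k = 1$ then gives
\[
v_{t_n}(f_i) = p^n t_n + \sum_{k=n}^{m-1}(p^{k+1} - p^k)t_k = p^n t_n + (m - n),
\]
and multiplying by $e - d + 1$ yields the claimed formula. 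There is no deep obstacle in the argument; the main technical point is the descent of the valuation from $B_t(\mathcal{L})$ to $B_t(\K)$ via Proposition \ref{boldh H isometry completetensor}, which legitimizes working with the factorization over $\mathcal{L}$.
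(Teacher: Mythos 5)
Your proof is correct and follows the same overall strategy as the paper: pass to the linear factorization of $\Omega_m^{[d,e]}$ over a splitting field, read off the valuations of the roots $u^i\epsilon-1$, identify the break-points via Proposition~\ref{break points and roots}, and count roots with valuation exceeding $t_n$ via Corollary~\ref{leading degree and roots of poly}. The one genuine variation is in how $v_{t_n}(\omega_{m,i})$ is computed. The paper never factors into linear pieces: having established $d_{t_n}(\omega_{m,i})=p^n$, it reads off the $X^{p^n}$-coefficient $\binom{p^m}{p^n}$ of $\omega_{m,i}$ and applies~\eqref{trivial equation of M_{m}(t)} to get $v_{t_n}(\omega_{m,i})=\ord_p\binom{p^m}{p^n}+t_np^n=(m-n)+t_np^n$ directly over $\K$, with no base change needed. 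You instead sum $v_{t_n}(X-a)=\min(\ord_p(a),t_n)$ over the $p^m$ linear factors in $\mathcal{L}[X]$ and exploit the telescoping identity $(p^{k+1}-p^k)t_k=1$, which is a clean alternative that avoids the $p$-adic binomial coefficient but costs you the explicit appeal to Proposition~\ref{boldh H isometry completetensor} to descend the valuation from $B_{t}(\mathcal{L})$ back to $B_t(\K)$. Both are short and elementary; neither is clearly better.

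One small imprecision to fix: for $p=2$ one has $t_0=1$, so the inequality ``$\ord_p(u^i-1)\geq 1>t_0$'' fails as written. Since $u\in 1+2p\mathbb{Z}_p=1+4\mathbb{Z}_2$, the correct bound is $\ord_2(u^i-1)\geq 2>t_0$; indeed the paper's own proof records $\ord_p(u^i-1)=\ord_p(2)+1+\ord_p(i)$ in the $n=0$ case, which gives exactly this. The conclusion is unaffected, but the displayed inequality should read $\ord_p(u^i-1)>t_0$ rather than $\geq 1>t_0$.
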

\begin{proof}
Let $n,m\in\mathbb{Z}_{\geq 0}$. 
{
For each $i\in [d,e]$ and for each primitive $p^{n}$-th power root of unity $\epsilon$, we have $\ord_{p}(u^{i}\epsilon-1)=\ord_{p}(u^{i}(\epsilon-1)+(u^{i}-1))$, which implies 
\begin{align}\label{valuation of Omega ordpuiepsilon uipeislon1 eq}
& \ord_{p}(u^{i}\epsilon-1)=\min\{\ord_{p}(u^{i}(\epsilon-1)),\ord_{p}(u^{i}-1)\}=t_{n-1},\ &\mathrm{if}\ n\geq 1,\\
& \ord_{p}(u^{i}\epsilon-1)=\ord_{p}(u^{i}-1)=\ord_{p}(2)+1+\ord_{p}(i)\ &\mathrm{if}\ n=0. \notag 
\end{align}}
The roots of $\Omega_{m}^{[d,e]}$ are $u^{i}\epsilon-1$ with $i\in [d,e]$ and $\epsilon\in \mu_{p^{m}}$. By Proposition \ref{break points and roots}, the break-points of $\Omega_{m}^{[d,e]}$ are given by $\ord_{p}(u^{i}\epsilon-1)$ with $i\in [d,e]$, $\epsilon\in \mu_{p^{m}}$. Therefore, by \eqref{valuation of Omega ordpuiepsilon uipeislon1 eq}, we see that $t_{0},\ldots, t_{m-1}$ are the break-points of $\Omega_{m}^{[d,e]}$ on $(0,t_{0}]$. 
Let $\omega_{m,i}(X)=(1+X)^{p^{m}}-u^{ip^{m}}$ and let $n$ be a non-negative integer satisfying $m\geq n$. By \eqref{valuation of Omega ordpuiepsilon uipeislon1 eq}, roots of $\omega_{m,i}$ on $\{x\in \overline{\K}\vert \ord_{p}(x)>t_{n}\}$ are given by $u^{i}\epsilon-1$ with $\epsilon\in \mu_{p^{n}}$. Then, by $\mathrm{Corollary\ \ref{leading degree and roots of poly}}$, we get 
\begin{equation}\label{valuation of Omega minomegadteq}
d_{t_{n}}(\omega_{m,i})=p^{n}.
\end{equation}
 Since $X^{p^{n}}$-th coefficient of $\omega_{m,i}(X)$ is $\begin{pmatrix}p^{m}\\ p^{n}\end{pmatrix}$, by \eqref{trivial equation of M_{m}(t)}, we have 
\begin{equation}\label{valuation of Omega vtnomegam,ival eq}
v_{t_{n}}(\omega_{m,i})=\ord_{p}\left(\begin{pmatrix}p^{m}\\p^{n}\end{pmatrix}\right)+t_{n}p^{n}=m-n+t_{n}p^{n}.
\end{equation}
By Proposition \ref{multiplication of B_{r}(K) and B_{r}(M)} and \eqref{valuation of Omega minomegadteq}, we conclude that
$$
d_{t_{n}}(\Omega_{m}^{[d,e]})=\sum_{i=d}^{e}d_{t_{n}}(\omega_{m,i})
=(e-d+1)p^{n}.
$$
Further, by Proposition \ref{mult Br prodct equlity} and \eqref{valuation of Omega vtnomegam,ival eq}, we have
$$ 
v_{t_{n}}(\Omega_{m}^{[d,e]})=\sum_{i=d}^{e}v_{t_{n}}(\omega_{m,i})
=(e-d+1)(m-n+t_{n}p^{n}).
$$ 
This completes the proof.
\end{proof}
We have $\HH_{h}(M)\subset B_{+}(M)$ since $\displaystyle{\lim_{n\rightarrow +\infty}}(rn-h\ell(n))=+\infty$ for every $r>0$. We define the map 
$v_{h}^{\prime} : B_{+}(M) \longrightarrow \mathbb{R} \cup \{ \pm\infty \}$
by setting 
\begin{equation}\label{definition of one variable hhhprimeor hhprime}
{v_{h}^{\prime}(f)}=\inf\{v_{t_{n}}(f)+hn\}_{n\geq 0}
\end{equation}
for each $f\in B_{+}(M)$ where $t_{n}=\frac{1}{p^{n}(p-1)}$ with $n\in\mathbb{Z}_{\geq 0}$. The following proposition is a generalization of \cite[Lemma I\hspace{-1.2pt}I.1.1]{colmez2010}.
\begin{pro}\label{another valuation on H_{h}}
For each $f\in B_{+}(M)$, we have $f\in \HH_{h}(M)$ if and only if ${v_{h}^{\prime}(f)}> -\infty$. In addition, 
${v_{h}^{\prime}\vert_{\HH_{h}(M)}}$ is a valuation on $\HH_{h}(M)$ which satisfies $v_{\HH_{h}}+\alpha_{h}\leq {v_{h}^{\prime}\vert_{\HH_{h}(M)}}\leq v_{\HH_{h}}+\beta_{h}$, where 
\begin{align*}
\alpha_{h}&=\begin{cases}-\max\{0, h-\frac{h}{\log p}(1+\log \frac{\log p}{(p-1)h})\}\ &\mathrm{if}\ h>0,\\
0\ &\mathrm{if}\ h=0,
\end{cases}\\
\beta_{h}&=\begin{cases}\max\{0,\frac{p}{p-1}-h\}\ \ \ \ \ \ \ \ \ \ \ \ \ \ \ \ \ \ \ \ \ \ \,&\mathrm{if}\ h>0,\\
0\ &\mathrm{if}\ h=0.
\end{cases}
\end{align*}
\end{pro}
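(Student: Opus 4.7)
The plan is to establish the two-sided bound $v_{\HH_h}(f)+\alpha_h \leq v_h'(f) \leq v_{\HH_h}(f)+\beta_h$ for every $f \in B_+(M)$ (interpreting both sides in $\mathbb{R}\cup\{-\infty\}$); the equivalence $f\in \HH_h(M) \Leftrightarrow v_h'(f)>-\infty$ is then immediate since $\alpha_h\leq 0\leq \beta_h$. The valuation axioms for $v_h'|_{\HH_h(M)}$ will then follow by pushing the corresponding axioms of each $v_{t_n}$ through the infimum that defines $v_h'$. I assume $h>0$ throughout, since for $h=0$ one checks directly that $v_0'(f)=\inf_j v_M(m_j)=v_{\HH_0}(f)$ and $\alpha_0=\beta_0=0$.

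For the upper bound, write $f=(m_j)_{j\geq 0}$. The strategy is to produce, for each $j$, an integer $n\geq 0$ with $t_n j + hn \leq h\ell(j)+\beta_h$; passing to the infimum in $v_h'(f)\leq v_M(m_j)+t_n j + hn$ then yields $v_h'(f)\leq v_{\HH_h}(f)+\beta_h$. For $j=0$ choose $n=0$. For $j\geq 1$ I compare two natural choices: with $n=\ell(j)$ one has $j<p^{\ell(j)}$, so $t_n j < \tfrac{1}{p-1}$ and thus $t_n j+hn < \tfrac{1}{p-1}+h\ell(j)$; with $n=\ell(j)-1\geq 0$ one has $p^{\ell(j)-1}\leq j<p^{\ell(j)}$, so $\tfrac{1}{p-1}\leq t_n j<\tfrac{p}{p-1}$ and thus $t_n j+hn<\bigl(\tfrac{p}{p-1}-h\bigr)+h\ell(j)$. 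The better of the two bounds equals $h\ell(j)+\min\{\tfrac{1}{p-1},\tfrac{p}{p-1}-h\}$, and case-checking on $h\leq 1$ vs.\ $h>1$ shows this is at most $h\ell(j)+\beta_h$.

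For the lower bound, the inequality $v_M(m_j)\geq v_{\HH_h}(f)-h\ell(j)$ gives, for every $n\geq 0$,
\[
v_{t_n}(f)+hn \;=\; \inf_{j\geq 0}\bigl(v_M(m_j)+t_n j\bigr)+hn \;\geq\; v_{\HH_h}(f) \;+\; \inf_{j\geq 0}\bigl(t_n j + hn - h\ell(j)\bigr).
\]
The key technical step is to bound the last infimum below by $\alpha_h$ uniformly in $n$. For $j=0$ the expression equals $hn\geq 0$; for $j\geq 1$, using $\ell(j)\leq \tfrac{\log j}{\log p}+1$, it suffices to minimize the smooth function $j\mapsto \tfrac{j}{p^n(p-1)}-h\log_p j$ on $j\geq 1$. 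A direct calculus computation locates its unique minimum at $j^*=\tfrac{hp^n(p-1)}{\log p}$, with value $\tfrac{h}{\log p}\bigl(1+\log\tfrac{\log p}{h(p-1)}\bigr)-hn$, so the $hn$ cancels and one obtains the $n$-independent lower bound $\tfrac{h}{\log p}\bigl(1+\log\tfrac{\log p}{h(p-1)}\bigr)-h$. Combining with the $j=0$ estimate yields exactly $\alpha_h=\min\bigl\{0,\tfrac{h}{\log p}(1+\log\tfrac{\log p}{h(p-1)})-h\bigr\}$; taking infimum over $n$ closes the lower bound.

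The valuation axioms for $v_h'|_{\HH_h(M)}$ now follow mechanically from those of each $v_{t_n}$: non-degeneracy because $\inf_n(v_{t_n}(f)+hn)=+\infty$ forces every $v_{t_n}(f)=+\infty$ and hence $f=0$; the ultrametric inequality $v_h'(f+g)\geq \min\{v_h'(f),v_h'(g)\}$ and the scalar homogeneity $v_h'(\lambda f)=\ord_p(\lambda)+v_h'(f)$ both descend from their $v_{t_n}$-analogues under the shift by $hn$ and the infimum over $n$. The main obstacle is the careful calibration of the integer choice of $n$ in the upper bound and of the continuous optimization in the lower bound so that the infima match precisely the constants $\alpha_h,\beta_h$ stated in the proposition; once this is done, the ``iff'' and the valuation axioms are essentially free.
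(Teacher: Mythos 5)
Your proof is correct and takes exactly the route the paper implicitly endorses: the paper itself omits the argument, deferring to the proof of Colmez [Lemma II.1.1], and your two-sided bound is established by the same two devices one would use there, namely the discrete choice $n\in\{\ell(j),\ell(j)-1\}$ for the upper bound and the continuous minimization of $j\mapsto t_nj-h\log_pj$ (with the $hn$ cancelling) for the lower bound. The only point worth flagging, which your argument handles correctly but somewhat tacitly, is that the unconstrained critical point $j^*=\tfrac{hp^n(p-1)}{\log p}$ need not lie in $[1,\infty)$ for small $h$ and $n=0$; the estimate still goes through because $j\mapsto t_nj-h\log_pj$ is convex with global infimum $g(j^*)$ over $(0,\infty)$, so $g(j^*)$ remains a valid lower bound for $\inf_{j\geq1}g(j)$ even when $j^*<1$.
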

In the case $M=\K$, the inequality $v_{\HH_{h}}+\alpha_{h}\leq {v_{h}^{\prime}\vert_{\HH_{h}(M)}}\leq v_{\HH_{h}}+\beta_{h}$ in $\mathrm{Proposition\ \ref{another valuation on H_{h}}}$ is given in the proof of \cite[Lemma I\hspace{-1.2pt}I.1.1]{colmez2010}. Further, it is easy to see that we can generalize the result \cite[Lemma I\hspace{-1.2pt}I.1.1]{colmez2010}  to a result on $\HH_{h}(M)$. 
Hence, we omit the proof of $\mathrm{Proposition\ \ref{another valuation on H_{h}}}$. By Proposition\ \ref{another valuation on H_{h}}, we see that $f_{1}\cdot f_{2}\in \HH_{g+h}(M)$ for each $f_{1}\in \HH_{g}(\K)$ and $f_{2}\in \HH_{h}(M)$ with $g,h\in \ord_{p}(\mathcal{O}_{\K}\backslash\{0\})$. {We define $v_{\HH_{h}}^{\prime}: \HH_{h}(M)\rightarrow \mathbb{R}\cup\{+\infty\}$ to be $v_{\HH_{h}}^{\prime}=v_{h}^{\prime}\vert_{\HH_{h}(M)}$, where $v_{h}^{\prime}$ is the map defined in  \eqref{definition of one variable hhhprimeor hhprime}.} 
The following proposition is a generalization of $\mathrm{Theorem\ \ref{one variable classical uniqueness}}$ on $\HH_{h}(\K )$ to a result on $\HH_{h}(M)$ with a $\K$-Banach space $M$.
\begin{pro}\label{generalization of uniqueness on M}
Let $f\in \HH_{h}(M)$. If there exists an integer $d\in \mathbb{Z}$ such that $f(u^{i}\epsilon-1)=0$ in $M_{\K(\epsilon)}$ for every $i\in [d,d+\lfloor h\rfloor]$ and for every $\epsilon\in \mu_{p^{\infty}}$, then we have $f=0$. 
\end{pro}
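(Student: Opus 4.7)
The plan is to argue by contradiction: I will assume $f \neq 0$ and derive a contradiction by Newton-polygon analysis on the radii $t_k = 1/(p^k(p-1))$. Set $e = d + \lfloor h \rfloor$.

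First, I will show that for every $m \geq 0$, the polynomial $\Omega_m^{[d,e]}(X)$ divides $f$ in $B_+(M)$. Since $\Omega_m^{[d,e]}$ is separable (Lemma \ref{Omega is separable}) and all of its roots $u^i\epsilon - 1$ (with $i \in [d,e]$, $\epsilon \in \mu_{p^m}$) have strictly positive $p$-adic valuation, Corollary \ref{cor of remainder theorem} applied in $B_r(M)$ for $r > 0$ smaller than the minimum valuation of these roots produces a unique $q_m \in B_r(M)$ with $f = \Omega_m^{[d,e]} q_m$; the uniqueness across $r$ places $q_m$ in $B_+(M) = \bigcap_{r>0} B_r(M)$, and $q_m \neq 0$ since $f \neq 0$.

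Second, applying equation \eqref{multiplicity of mfg and nfg real number} to the factorization $f = \Omega_m^{[d,e]}q_m$ gives $n_f(t) \geq n_{\Omega_m^{[d,e]}}(t)$ for every $t > 0$ and every $m$. Combining Lemma \ref{valuation of Omega} with the right-continuity of $n_{\Omega_m^{[d,e]}}$ (Proposition \ref{leading deree is left continuous}), one checks that $n_{\Omega_m^{[d,e]}}(t) = (\lfloor h \rfloor + 1)p^{n+1}$ on each open interval $(t_{n+1}, t_n)$ as soon as $m \geq n+1$. Letting $m$ vary, I conclude $n_f(t) \geq (\lfloor h \rfloor + 1) p^{n+1}$ on $(t_{n+1}, t_n)$ for every $n \geq 0$. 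Since $m_f$ is continuous (Proposition \ref{continuity ofv_{r}(m)}) and piecewise affine with slope $n_f$ outside the discrete set of break-points, integrating the slope estimate and using $t_n - t_{n+1} = 1/p^{n+1}$ gives, for every $N \geq 1$,
\begin{align*}
m_f(t_0) - m_f(t_N) = \int_{t_N}^{t_0} n_f(t)\,dt \geq \sum_{n=0}^{N-1}\frac{1}{p^{n+1}}(\lfloor h \rfloor + 1)p^{n+1} = N(\lfloor h \rfloor + 1).
\end{align*}

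Therefore $m_f(t_N) + hN \leq m_f(t_0) + N(h - \lfloor h \rfloor - 1)$, and since $h - \lfloor h \rfloor - 1 < 0$, the right-hand side tends to $-\infty$ as $N \to \infty$. This forces $v_h'(f) = \inf_N(m_f(t_N) + hN) = -\infty$, contradicting the inequality $v_h'(f) \geq v_{\HH_h}(f) + \alpha_h > -\infty$ supplied by Proposition \ref{another valuation on H_{h}} applied to $f \in \HH_h(M)$. Hence $f = 0$. The main obstacle is the Newton-polygon bookkeeping in the second step: the crucial numerical fact is that each interval $(t_{n+1}, t_n)$ contributes exactly $\lfloor h \rfloor + 1$ to the integral of $n_f$, and this contribution strictly exceeds $h$ because $h < \lfloor h \rfloor + 1$; this tiny surplus is exactly what produces the linear-in-$N$ decay that forces the contradiction. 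The division step and the integral identity for $m_f$ are relatively routine consequences of the earlier propositions.
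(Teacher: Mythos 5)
Your proof is correct and takes essentially the same approach as the paper's: both divide $f$ by $\Omega_m^{[d,e]}$ in $B_+(M)$ via Corollary \ref{cor of remainder theorem}, deduce the slope lower bound $n_f(t)\geq(\lfloor h\rfloor+1)p^{n+1}$ on $(t_{n+1},t_n)$ from Lemma \ref{valuation of Omega}, use this to force $m_f(t_N)\leq m_f(t_0)-N(\lfloor h\rfloor+1)$, and then contradict $v_h'(f)>-\infty$ from Proposition \ref{another valuation on H_{h}}. The only cosmetic difference is that the paper packages the slope computation through the auxiliary function $\log(1+X)$ and Proposition \ref{properties of log} (showing $m_f-(\lfloor h\rfloor+1)m_{\log(1+X)}$ is increasing), whereas you integrate the slope $n_f$ directly over $[t_N,t_0]$; the underlying arithmetic is identical.
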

\begin{proof}
By contradiction, we assume that $f\neq 0$. We define $t_{m}=\frac{1}{p^{m}(p-1)}$ for each $m\geq 0$. Let $t\in [t_{m+1},t_{m})$. By $\mathrm{Lemma\ \ref{valuation of Omega}}$, we see that $d_{t}(\Omega_{m+1}^{[d,d+\lfloor h\rfloor]})=\deg\Omega_{m+1}^{[d,d+\lfloor h\rfloor]}$. Further, by $\mathrm{Corollary\ \ref{cor of remainder theorem}}$, we have $f\in \Omega_{m+1}^{[d,d+\lfloor h\rfloor]}B_{+}(M)$. Since $f\neq 0$, we can define $d_{t}(f)\in \mathbb{Z}_{\geq 0}$ and we have 
\begin{equation*}
d_{t}(f)\geq \deg \Omega_{m+1}^{[d,d+\lfloor h\rfloor]}=(\lfloor h\rfloor+1)p^{m+1}.
\end{equation*}
Thus, by $\mathrm{Proposition\ \ref{properties of log}}$, we have $d_{t}(f)\geq (\lfloor h\rfloor+1)d_{t}(\log(1+X))$ for each $t\in[t_{m+1},t_{m})$. Therefore, by \eqref{mprimegt0ngt}, we see that $m_{f}(t)-(\lfloor h\rfloor+1)m_{\log(1+X)}(t)$ is monotonically increasing on $t\in (0,t_{0}]$. In particular, by $\mathrm{Proposition\ \ref{properties of log}}$, $\mathrm{sup}\{v_{t_{n}}(f)+(\lfloor h\rfloor+1)n)\}_{n\geq 0}\neq +\infty$.

On the other hand, we have
\begin{align*}
v_{t_{n}}(f)+(\lfloor h\rfloor+1)n\geq v_{\HH_{h}}^{\prime}(f)+(\lfloor h\rfloor+1-h)n
\end{align*}
for each $n\geq 0$. By $\mathrm{Proposition\ \ref{another valuation on H_{h}}}$, we see that $\displaystyle{\lim_{n\rightarrow +\infty}}(v_{t_{n}}(f)+(\lfloor h\rfloor+1)n)\geq \displaystyle{\lim_{n\rightarrow+\infty}}(v_{\HH_{h}}^{\prime}(f)+(\lfloor h\rfloor+1-h)n)=+\infty$. This is a contradiction.
\end{proof}
Let $J_{h}^{[d,e]}(M)$ be the $\mathcal{O}_{\K}[[X]]\otimes_{\mathcal{O}_{\K}}\K$-module defined in \eqref{generalization of the project lim for deformation ring Jboldsymbol}. Let $(s_{m}^{[d,e]})_{m\in \mathbb{Z}_{\geq 0}}\in J_{h}^{[d,e]}(M)$. By Proposition \ref{Weiestrass on Banach space}, for each $m\in \mathbb{Z}_{\geq 0}$, there exists a unique element $r(s_{m}^{[d,e]})\in M^{0}[X]\otimes_{\mathcal{O}_{\K}}\K$ such that $s_{m}^{[d,e]}\equiv r(s_{m}^{[d,e]})\ \mathrm{mod}\ \Omega_{m}^{[d,e]}(X)$ and $\deg r(s_{m}^{[d,e]})<\deg \Omega_{m}^{[d,e]}$. We define a valuation on $v_{J_{h}}$ on $J_{h}^{[d,e]}(M)$ to be 
\begin{equation}
v_{J_{h}}((s_{m}^{[d,e]})_{m\in \mathbb{Z}_{\geq 0}})=\inf_{m\in \mathbb{Z}_{\geq 0}}\{v_{0}(r(s_{m}^{[d,e]}))+hm\}
\end{equation}
for each $(s_{m}^{[d,e]})_{m\in \mathbb{Z}_{\geq 0}}\in J_{h}^{[d,e]}(M)$ where $v_{0}$ is the valuation on $B_{0}(M)$. It is easy to see that $v_{J_{h}}$ is a valuation on $J_{h}^{[d,e]}(M)$. Further, we have the following:
\begin{pro}\label{one variable Jh is a K-Banach}
The pair $(J_{h}^{[d,e]}(M), v_{J_{h}})$ is a $\K$-Banach space.
\end{pro}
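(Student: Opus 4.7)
The plan is to establish completeness; as noted in the excerpt, once the axioms of a valuation are verified (which is straightforward from the ultrametric inequality of $v_0$ on $B_0(M)$), only the completeness of $J_{h}^{[d,e]}(M)$ with respect to $v_{J_h}$ remains. The key bookkeeping device will be the Weierstrass polynomial representative $r(\cdot)$: since $\Omega_m^{[d,e]}(X)\in\mathcal{O}_\mathcal{K}[X]$ is monic in $X$ of degree $(e-d+1)p^m$ (and lies in $B_0^{\mathrm{md}}(\mathcal{K})$ with $d_0(\Omega_m^{[d,e]})=(e-d+1)p^m$ by Lemma \ref{valuation of Omega}), Proposition \ref{Weiestrass on Banach space} applied to $M$ identifies
$$
\frac{M^{0}[[X]]}{\Omega_{m}^{[d,e]}(X)M^{0}[[X]]}\otimes_{\mathcal{O}_\mathcal{K}}\mathcal{K} \ \simeq \ \{ g\in M^{0}[X]\otimes_{\mathcal{O}_\mathcal{K}}\mathcal{K}\ \vert\ \deg g<(e-d+1)p^m\}
$$
isometrically for $v_0$, and in particular this quotient is $\mathcal{K}$-Banach (a closed subspace of $B_0(M)$).

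Given a Cauchy sequence $(f^{(n)})_{n\geq 0}$ in $J_{h}^{[d,e]}(M)$ with $f^{(n)}=(s_m^{[d,e],(n)})_{m}$, set $r_m^{(n)}=r(s_m^{[d,e],(n)})$. For each fixed $m$, the estimate
$$
v_{0}\bigl(r_m^{(n_1)}-r_m^{(n_2)}\bigr)+hm \ \geq\ v_{J_h}(f^{(n_1)}-f^{(n_2)})
$$
implies that $(r_m^{(n)})_n$ is Cauchy in the Banach space displayed above, so it converges to some polynomial $\tilde r_m\in M^0[X]\otimes_{\mathcal{O}_\mathcal{K}}\mathcal{K}$ with $\deg\tilde r_m<(e-d+1)p^m$. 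Setting $\tilde s_m^{[d,e]}\in \frac{M^{0}[[X]]}{\Omega_m^{[d,e]}M^0[[X]]}\otimes_{\mathcal{O}_\mathcal{K}}\mathcal{K}$ to be the class of $\tilde r_m$, I will define the candidate limit $\tilde f=(\tilde s_m^{[d,e]})_m$ and must verify three things: (i) the tower is compatible under the natural projections, so $\tilde f$ lies in the projective limit; (ii) $\tilde f$ satisfies the growth condition defining $J_h^{[d,e]}(M)$; and (iii) $f^{(n)}\to \tilde f$ in $v_{J_h}$.

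For (i), note that for fixed $n$ the polynomial $r_{m+1}^{(n)}-r_m^{(n)}$ is divisible by $\Omega_m^{[d,e]}$ in $B_0(M)$ by the projective compatibility of $f^{(n)}$; writing $r_{m+1}^{(n)}-r_m^{(n)}=q^{(n)}\Omega_m^{[d,e]}$, Proposition \ref{mult Br prodct equlity} gives $v_0(q^{(n_1)}-q^{(n_2)})=v_0(r_{m+1}^{(n_1)}-r_{m+1}^{(n_2)}-r_m^{(n_1)}+r_m^{(n_2)})-v_0(\Omega_m^{[d,e]})$, so $(q^{(n)})_n$ is Cauchy in $B_0(M)$ (which is Banach by the same argument as Proposition \ref{HHh is a k-banach space}), and its limit $\tilde q$ witnesses $\tilde r_{m+1}-\tilde r_m=\tilde q\,\Omega_m^{[d,e]}$. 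For (ii), a Cauchy sequence for a non-archimedean valuation has its valuation eventually constant, so $c:=\inf_n v_{J_h}(f^{(n)})>-\infty$; the ultrametric inequality then yields $v_0(\tilde r_m)+hm\geq c$ for every $m$, which is exactly the required growth bound. For (iii), the standard ultrametric passage-to-the-limit gives $v_0(r_m^{(n)}-\tilde r_m)+hm\geq v_{J_h}(f^{(n)}-f^{(n')})$ uniformly in $m$ as $n'\to\infty$, and taking the infimum over $m$ yields $v_{J_h}(f^{(n)}-\tilde f)\to+\infty$.

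The main obstacle will be step (i): one has to make sure that being in $\Omega_m^{[d,e]}B_0(M)$ is a closed condition. The cleanest route is the quotient argument above using Proposition \ref{mult Br prodct equlity} to bound the auxiliary $q^{(n)}$'s uniformly, since multiplication by the fixed element $\Omega_m^{[d,e]}\in B_0^{\mathrm{md}}(\mathcal{K})$ is isometric up to the additive constant $v_0(\Omega_m^{[d,e]})$. Once this closedness is in hand, passing to the limit is formal and the three verifications above complete the proof that $(J_h^{[d,e]}(M),v_{J_h})$ is a $\mathcal{K}$-Banach space.
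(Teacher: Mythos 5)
Your argument is correct and follows the same architecture as the paper's proof: identify each quotient with a closed finite-degree subspace of $B_0(M)$ via the Weierstrass remainder $r(\cdot)$, take stagewise limits $\tilde r_m=\lim_n r(s_{(n),m}^{[d,e]})$, and then verify projective compatibility, the growth bound, and convergence in $v_{J_h}$. You diverge from the paper only at the compatibility step, and your route there is a genuine alternative. The paper evaluates the finite-stage remainders $r_m^{(n)}$ and $r_{m+1}^{(n)}$ at each root $b\in\overline{\K}$ of $\Omega_m^{[d,e]}$ (these values agree for every $n$), passes to the limit in $M_{\K(b)}$, and then invokes Corollary~\ref{cor of remainder theorem} to promote vanishing at the roots of the separable distinguished polynomial $\Omega_m^{[d,e]}$ to divisibility by it. You instead Weierstrass-divide each difference $r_{m+1}^{(n)}-r_m^{(n)}$ by $\Omega_m^{[d,e]}$ and use the multiplicativity $v_0(q\,\Omega_m^{[d,e]})=v_0(q)+v_0(\Omega_m^{[d,e]})$ of Proposition~\ref{mult Br prodct equlity} to see that the quotients $q^{(n)}$ are themselves Cauchy in $B_0(M)$, whence the limit of the differences is again a multiple of $\Omega_m^{[d,e]}$. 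Both arguments come down to the same geometric fact, namely that $\Omega_m^{[d,e]}B_0(M)$ is closed in $B_0(M)$ because multiplication by $\Omega_m^{[d,e]}\in B_0^{\mathrm{md}}(\K)$ is an isometry up to an additive constant; your version stays entirely inside the division machinery of Proposition~\ref{Weiestrass on Banach space}, while the paper's exploits specialization at roots and separability.

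Two small points to polish. In step (ii), the valuations $v_{J_h}(f^{(n)})$ of a Cauchy sequence need not be eventually constant (they tend to $+\infty$ when the limit is zero); what you actually need, and what the ultrametric inequality against a single fixed term $f^{(N)}$ gives, is only that they are bounded below. And when you cite Lemma~\ref{valuation of Omega} for $d_0(\Omega_m^{[d,e]})=(e-d+1)p^m$, note that the lemma literally computes $d_{t_n}$ for $n\le m$; the value at $r=0$ follows either because $d_r$ is monotone decreasing with $d_r\le\deg\Omega_m^{[d,e]}$, or more concretely because $\Omega_m^{[d,e]}\equiv X^{(e-d+1)p^m}$ modulo the maximal ideal of $\mathcal{O}_\K[X]$, so the leading coefficient is the unique unit one.
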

\begin{proof}
Let $(s^{[d,e]}_{(n)})_{n\geq 1}\subset J_{h}^{[d,e]}(M)$ be a Cauchy sequence. Put $s^{[d,e]}_{(n)}=(s^{[d,e]}_{(n),m})_{m\in \mathbb{Z}_{\geq 0}}$. By Proposition \ref{Weiestrass on Banach space}, for each $n\geq 1$ and $m\in \mathbb{Z}_{\geq 0}$, there exists a unique element $r(s^{[d,e]}_{(n),m})\in M^{0}[X]\otimes_{\mathcal{O}_{\K}}\K$ such that $s_{(n),m}^{[d,e]}\equiv r(s^{[d,e]}_{(n),m})\ \mathrm{mod}\ \Omega_{m}^{[d,e]}$ and $\deg r(s^{[d,e]}_{(n),m})<\deg \Omega_{m}^{[d,e]}$. By the definition of $v_{J_{h}}$, $(r(s^{[d,e]}_{(n),m}))_{n\geq 1}$ is a Cauchy sequence in $B_{0}(M)$ for each $m\in \mathbb{Z}_{\geq 0}$. Put $r_{m}^{[d,e]}=\displaystyle{\lim_{n\rightarrow +\infty}}r(s^{[d,e]}_{(n),m})$. It is easy to see that 
\begin{equation}\label{onvariable Jjkbnach v0boundedeq}
v_{0}(r_{m}^{[d,e]})+hm\geq \inf_{n\geq 1}\{v_{J_{h}}(s^{[d,e]}_{(n)})\}
\end{equation}
for every $m\in \mathbb{Z}_{\geq 0}$.  For every $m\in \mathbb{Z}_{\geq 0}$ and for every root $b\in \overline{\K}$ of $\Omega_{m}^{[d,e]}$, we see that
\begin{align*}
r^{[d,e]}_{m+1}(b)&=\lim_{n\rightarrow +\infty}r(s^{[d,e]}_{(n),m+1})(b)\\
&=\lim_{n\rightarrow +\infty}r(s^{[d,e]}_{(n),m})(b)=r^{[d,e]}_{m}(b).
\end{align*}
Thus, by Corollary \ref{cor of remainder theorem}, we have $s^{[d,e]}=([r_{m}^{[d,e]}])_{m\in \mathbb{Z}_{\geq 0}}\in \varprojlim_{m\in \mathbb{Z}_{\geq 0}}\left(\frac{M^{0}[[X]]}{ \Omega_{m}^{[d,e]}M^{0}[[X]]}\otimes_{\mathcal{O}_{\K}}\K\right)$, where $[r_{m}^{[d,e]}]
$ is the image of $r_{m}^{[d,e]}$ by the natural projection $M^{0}[[X]]\otimes_{\mathcal{O}_{\K}}\K\rightarrow \bigg(M^{0}[[X]]\slash \linebreak\Omega_{m}^{[d,e]}M^{0}[[X]]\bigg)\otimes_{\mathcal{O}_{\K}}\K$. Further, by \eqref{onvariable Jjkbnach v0boundedeq}, we see that $s^{[d,e]}\in J_{h}^{[d,e]}(M)$. Let $A>0$. There exists a positive integer $N$ such that $v_{J_{h}}(s^{[d,e]}_{(n_{1})}-s^{[d,e]}_{(n_{2})})\geq A$ for every $n_{1},n_{2}\geq N$. Thus, we have $v_{0}(r^{[d,e]}_{m}-r(s^{[d,e]}_{(n),m}))+hm=\lim_{n^{\prime}\rightarrow +\infty}(v_{0}(r(s^{[d,e]}_{(n^{\prime}),m})-r(s^{[d,e]}_{(n),m})))+hm\geq A$ for every $m\in \mathbb{Z}_{\geq 0}$ and $n\geq N$. Therefore, we have $v_{J_{h}}(s^{[d,e]}-s^{[d,e]}_{(n)})\geq A$ for every $n\geq N$. That is, we have $s^{[d,e]}=\displaystyle{{\lim}_{n\rightarrow +\infty}}s^{[d,e]}_{(n)}$.
\end{proof}
By definition, we have
\begin{multline}
J_{h}^{[d,e]}(M)^{0}=\Bigg\{(s_{m}^{[d,e]})_{m\in \mathbb{Z}_{\geq 0}}\in J_{h}^{[d,e]}(M)\Bigg\vert \\
(p^{hm}s_{m}^{[d,e]})_{m\in \mathbb{Z}_{\geq 0}}\in \prod_{m\in \mathbb{Z}_{\geq 0}}M^{0}[[X]]\slash \Omega_{m}^{[d,e]}(X)M^{0}[[X]]\Bigg\}.
\end{multline}
We generalize $\mathrm{Theorem\ \ref{onevariable projectresult}}$ to a result on a Banach space $(M,v_{M})$. 
\begin{pro}\label{isomorphism HH and project lim Banach}
Assume that $e-d\geq \lfloor h\rfloor$. For $s^{[d,e]}=(s_{m}^{[d,e]})_{m\in\mathbb{Z}_{\geq 0}}\in J_{h}^{[d,e]}(M)$, there exists a unique element $f_{s^{[d,e]}}\in \HH_{h}(M)$ such that 
$$f_{s^{[d,e]}}-\tilde{s}^{[d,e]}_{m}\in \Omega_{m}^{[d,e]}\HH_{h}(M)$$
for each $m\in\mathbb{Z}_{\geq 0}$, where $\tilde{s}^{[d,e]}_{m}\in M^{0}[[X]]\otimes_{\mathcal{O}_{\K}}\K$ is a lift of $s_{m}^{[d,e]}$. Further, the correspondence $s^{[d,e]}\mapsto f_{s^{[d,e]}}$ from $J_{h}^{[d,e]}(M)$ to $\HH_{h}(M)$ induces an $\mathcal{O}_\mathcal{K}[[X]]\otimes_{\mathcal{O}_{\mathcal{K}}}\mathcal{K}$-module isomorphism 
$$
J_{h}^{[d,e]}(M) \overset{\sim}{\longrightarrow} \HH_{h}(M)
$$ 
and, via the above isomorphism, we have
\begin{align*}
\{f\in \HH_{h}(M)\ \vert \ v_{\HH_{h}}(f)\geq \epsilon_{h}^{[d,e]}\}\subset J_{h}^{[d,e]}(M)^{0}\subset \{f\in \HH_{h}(M)\ \vert \ v_{\HH_{h}}(f)\geq \zeta_{h}\},
\end{align*}
where 
\begin{align*}
\epsilon_{h}^{[d,e]}&=\begin{cases}\lfloor\frac{(e-d+1)}{p-1}+\max\{0, h-\frac{h}{\log p}(1+\log \frac{\log p}{(p-1)h})\}\rfloor+1\ &\mathrm{if}\ h>0,\\ 0\ &\mathrm{if}\ h=0,\end{cases}\\
\zeta_{h}&=\begin{cases}-(\lfloor\max\{h,\frac{p}{p-1}\}\rfloor+1)\ \ \ \  \ \ \ \ \ \ \ \ \ \ \ \ \ \ \ \ \ \ \ \ \ \ \ \ \ \ \ \ \, &\mathrm{if}\ h>0,\\ 0\ &\mathrm{if}\ h=0.\end{cases}
\end{align*}
\end{pro}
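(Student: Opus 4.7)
The plan is to adapt the classical Amice--V\'elu strategy to the Banach-valued setting, leveraging the Weierstrass division and preparation theorems (Propositions \ref{Weiestrass on Banach space} and \ref{Weiestras preparation theorem}), the uniqueness statement of Proposition \ref{generalization of uniqueness on M}, the explicit valuation computation of Lemma \ref{valuation of Omega}, and the alternative valuation $v_{\HH_{h}}^{\prime}$ introduced in Proposition \ref{another valuation on H_{h}}. I will first settle uniqueness, then construct $f_{s^{[d,e]}}$ as the limit of a telescoping sum, then close the loop by constructing the inverse and chasing constants to obtain the integral-structure inclusions.

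For uniqueness (and the injectivity of the claimed isomorphism), suppose $f,g \in \HH_{h}(M)$ both satisfy $f \equiv \tilde{s}_{m}^{[d,e]} \equiv g \pmod{\Omega_{m}^{[d,e]}\HH_{h}(M)}$ for every $m$. Then $f-g$ vanishes at each $u^{i}\epsilon-1$ with $i \in [d,e]$ and $\epsilon \in \mu_{p^{\infty}}$; since $e-d \geq \lfloor h \rfloor$, the range $[d,d+\lfloor h \rfloor]$ is contained in $[d,e]$, so Proposition \ref{generalization of uniqueness on M} forces $f=g$. For existence, fix lifts $\tilde{s}_{m}^{[d,e]}$ and let $r_{m}^{[d,e]}$ be the Weierstrass remainder of $\tilde{s}_{m}^{[d,e]}$ modulo $\Omega_{m}^{[d,e]}$ provided by Proposition \ref{Weiestrass on Banach space} (a polynomial of degree less than $(e-d+1)p^{m}$, independent of the choice of lift). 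Compatibility of the system $(s_{m}^{[d,e]})_{m}$ together with Corollary \ref{cor of remainder theorem} (applicable because $\Omega_{m}^{[d,e]}$ is separable by Lemma \ref{Omega is separable}) yields polynomials $q_{m}$ with $r_{m+1}^{[d,e]} - r_{m}^{[d,e]} = \Omega_{m}^{[d,e]}q_{m}$. The candidate for $f_{s^{[d,e]}}$ is then the telescoping series
$$
r_{0}^{[d,e]} + \sum_{k \geq 0}\Omega_{k}^{[d,e]}q_{k}.
$$

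The hard part is to verify that this series converges in $\HH_{h}(M)$; this is where the hypothesis $e - d \geq \lfloor h \rfloor$ is used quantitatively. I would work with $v_{\HH_{h}}^{\prime}$, whose control of $v_{\HH_{h}}$ up to the constants $\alpha_{h}, \beta_{h}$ is given by Proposition \ref{another valuation on H_{h}}. For each $n$, the identity $v_{t_{n}}(\Omega_{k}^{[d,e]}q_{k}) = v_{t_{n}}(\Omega_{k}^{[d,e]}) + v_{t_{n}}(q_{k})$ (Proposition \ref{mult Br prodct equlity}) combined with the explicit formula $v_{t_{n}}(\Omega_{k}^{[d,e]}) = (e-d+1)(k-n+t_{n}p^{n})$ for $n \leq k$ from Lemma \ref{valuation of Omega} (with the analogous linear estimate $v_{t_{n}}(\Omega_{k}^{[d,e]}) = (e-d+1)/(p^{n-k}(p-1))$ for $n > k$) reduces the estimate to a lower bound on $v_{t_{n}}(q_{k})$. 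Extracting such a bound from the growth assumption $v_{0}(r_{m}^{[d,e]}) + hm$ bounded below — itself immediate from the defining condition of $J_{h}^{[d,e]}(M)$ and the Weierstrass division formula $v_{0}(r_{m}^{[d,e]}) \geq v_{0}(\tilde{s}_{m}^{[d,e]})$ — one produces a uniform lower bound of the form $v_{t_{n}}(\Omega_{k}^{[d,e]}q_{k}) + hn \geq ((e-d+1)-h)\,k + \mathrm{const}$; the strict inequality $e-d+1 > h$ (which is exactly the hypothesis $e-d \geq \lfloor h \rfloor$) is what drives this lower bound to $+\infty$ and delivers the Cauchy property. The limit $f \in \HH_{h}(M)$ then satisfies $f \equiv r_{m}^{[d,e]} \equiv \tilde{s}_{m}^{[d,e]} \pmod{\Omega_{m}^{[d,e]}\HH_{h}(M)}$ by construction.

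For the remaining bijectivity and the integral-structure inclusions, the inverse $\HH_{h}(M) \to J_{h}^{[d,e]}(M)$ sends $f$ to the system of reductions $(f \bmod \Omega_{m}^{[d,e]})_{m}$; that this lands in $J_{h}^{[d,e]}(M)$ follows by applying Proposition \ref{Weiestrass on Banach space} to write $f = \Omega_{m}^{[d,e]}Q_{m} + R_{m}$ with $R_{m}$ polynomial of bounded degree and using the $v_{t_{n}}$-estimates of Lemma \ref{valuation of Omega} to bound $v_{0}(R_{m}) + hm$ below. The $\mathcal{O}_{\K}[[X]]$-linearity of both maps is immediate from the uniqueness clause. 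The explicit constants $\epsilon_{h}^{[d,e]}$ and $\zeta_{h}$ are obtained by tracking all arithmetic contributions through the construction: $\epsilon_{h}^{[d,e]}$ absorbs the additive defect $(e-d+1)/(p-1)$ coming from Lemma \ref{valuation of Omega} together with the shift $\alpha_{h}$ comparing $v_{\HH_{h}}$ with $v_{\HH_{h}}^{\prime}$, while $\zeta_{h}$ is governed by $\beta_{h}$ and the elementary estimate $v_{\HH_{h}}(R_{m}) \geq v_{0}(R_{m}) - \max\{h,p/(p-1)\}$ for the Weierstrass remainders. I expect the technical crux to be the uniform lower bound on $v_{t_{n}}(q_{k})$ across the two regimes $n \leq k$ and $n > k$, together with the bookkeeping of the explicit constants in the integral-structure comparison.
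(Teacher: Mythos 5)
Your proposal is correct and follows essentially the same route as the paper's proof: uniqueness via Proposition~\ref{generalization of uniqueness on M}, existence via a telescoping series of differences of lifts divided by $\Omega_m^{[d,e]}$, convergence in each $B_{t_n}(M)$ from Lemma~\ref{valuation of Omega} driven by $e-d+1>h$, and the integral-structure inclusions via Proposition~\ref{another valuation on H_{h}}. The only cosmetic divergence is that you use Weierstrass remainders as canonical lifts whereas the paper works with arbitrary integral lifts $\tilde{s}_m^{[d,e]}\in M^0[[X]]\otimes p^{-hm}\mathcal{O}_\K$ and obtains $q_m$ from Corollary~\ref{cor of remainder theorem}; this changes nothing in the estimates.
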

\begin{proof}
Let $s^{[d,e]}=(s_{m}^{[d,e]})_{m\in\mathbb{Z}_{\geq 0}}\in J_{h}^{[d,e]}(M)$. First, we prove that there exists a unique element $f_{s^{[d,e]}}\in \HH_{h}(M)$ such that $f_{s^{[d,e]}}(u^{i}\epsilon-1)=\tilde{s}^{[d,e]}_{m}(u^{i}\epsilon-1)$ for each $i\in [d,e]$, non-negative integer $m$ and $\epsilon\in \mu_{p^{m}}$. The uniqueness of $f_{s^{[d,e]}}$ follows from $\mathrm{Proposition\ \ref{generalization of uniqueness on M}}$. Then it suffices to prove the existence of $f_{s^{[d,e]}}$. We can assume that $s\in J_{h}^{[d,e]}(M)^{0}$ and $\tilde{s}^{[d,e]}_{m}\in M^{0}[[X]]\otimes_{\mathcal{O}_{\K}}p^{-hm}\mathcal{O}_{\K}$ for every $m\in \mathbb{Z}_{\geq 0}$. By $\mathrm{Corollary\ \ref{cor of remainder theorem}}$, there exists a $q_{m}\in p^{-h(m+1)}M^{0}[[X]]$ which satisfies $\tilde{s}^{[d,e]}_{m+1}-\tilde{s}^{[d,e]}_{m}=\Omega_{m}^{[d,e]}q_{m}$ for each $m\in \mathbb{Z}_{\geq 0}$. We fix a non-negative integer $n$ and put $t_{n}=\frac{1}{p^{n}(p-1)}$. By $\mathrm{Lemma\ \ref{valuation of Omega}}$, we see that
\begin{align*}
v_{t_{n}}(\tilde{s}^{[d,e]}_{m+1}-\tilde{s}^{[d,e]}_{m})&=v_{t_{n}}(\Omega_{m}^{[d,e]})+v_{t_{n}}(q_{m})\\
&\geq  (e-d+1)(m-n+\frac{1}{p-1})-h(m+1)\\
&=(e-d+1-h)m+(e-d+1)(\frac{1}{p-1}-n)-h
\end{align*}
for each $m\geq n$. Thus the sequence $(\tilde{s}^{[d,e]}_{m})_{m\geq 0}$ converges in $B_{t_{n}}(M)$ and there exists a unique element $f_{s^{[d,e]}}\in B_{+}(M)$ such that $\displaystyle{\lim_{m\rightarrow +\infty}}v_{t_{n}}(f_{s^{[d,e]}}-\tilde{s}^{[d,e]}_{m})=+\infty$ for all $n\in \mathbb{Z}_{\geq 0}$. We have $f_{s^{[d,e]}}=\tilde{s}^{[d,e]}_{n}+\sum_{m=n}^{+\infty}(\tilde{s}^{[d,e]}_{m+1}-\tilde{s}^{[d,e]}_{m})$ in $B_{t_{n}}(M)$ and then
\begin{align*}
v_{t_{n}}(f_{s^{[d,e]}})&\geq \min\{v_{t_{n}}(\tilde{s}^{[d,e]}_{n}),\inf\{v_{t_{n}}(\tilde{s}^{[d,e]}_{m+1}-\tilde{s}^{[d,e]}_{m})\}_{m\geq n}\}\\
&\geq -hn+\min\{0,(e-d+1)\frac{1}{p-1}-h\}\\
&\geq -hn-h.
\end{align*}
By $\mathrm{Proposition\ \ref{another valuation on H_{h}}}$, $f_{s^{[d,e]}}$ is an element of $\HH_{h}(M)$ and satisfies $v_{\HH_{h}}(f_{s^{[d,e]}})\geq \zeta_{h}$. By construction, $f_{s^{[d,e]}}$ satisfies $f_{s^{[d,e]}}(u^{i}\epsilon-1)=\tilde{s}^{[d,e]}_{m}(u^{i}\epsilon-1)$ for each $i\in [d,e]$, non-negative integer $m$ and $\epsilon\in \mu_{p^{m}}$. 

Next, we prove that $f_{s^{[d,e]}}-\tilde{s}_{m}^{[d,e]}\in \Omega_{m}^{[d,e]}\HH_{h}(M)$ for each $m\in \mathbb{Z}_{\geq 0}$. There exists a $q_{m}^{[d,e]}\in B_{+}(M)$ such that $f_{s^{[d,e]}}-\tilde{s}^{[d,e]}_{m}=\Omega_{m}^{[d,e]}q_{m}^{[d,e]}$ by $\mathrm{Corollary\ \ref{cor of remainder theorem}}$. Then, for each $n\in \mathbb{Z}_{\geq 0}$, we see that
\begin{align*}
v_{t_{n}}(q_{m}^{[d,e]})+hn&=v_{t_{n}}(f_{s^{[d,e]}}-\tilde{s}^{[d,e]}_{m})-v_{t_{n}}(\Omega_{m}^{[d,e]})+hn\\
&\geq \min\{v_{\HH_{h}}^{\prime}(f_{s^{[d,e]}}),-hm\}-v_{t_{0}}(\Omega_{m}^{[d,e]}),
\end{align*}
where $v_{\HH_{h}}^{\prime}$ is the valuation defined in $\mathrm{Proposition\ \ref{another valuation on H_{h}}}$. Therefore, we have $v_{\HH_{h}}^{\prime}(q_{m}^{[d,e]})\neq -\infty$, which is equivalent to $q_{m}^{[d,e]}\in \HH_{h}(M)$. We conclude that $f_{s^{[d,e]}}-\tilde{s}^{[d,e]}_{m}\in \Omega_{m}^{[d,e]}\HH_{h}(M)$ for each $m\in\mathbb{Z}_{\geq 0}$.

By Corollary\ \ref{cor of remainder theorem}, we see that the correspondence $s^{[d,e]}\mapsto f_{s^{[d,e]}}$ induces an injective $\mathcal{O}_{\mathcal{K}}[[X]]\otimes_{\mathcal{O}_{\K}}\K$-module homomorphism
$$J_{h}^{[d,e]}(M)\rightarrow \HH_{h}(M).$$
Further, as mentioned above, we have $J_{h}^{[d,e]}(M)^{0}\subset \{f\in \HH_{h}(M)\vert v_{\HH_{h}}(f)\geq \zeta_{h}\}$. Then if we prove $\{f\in \HH_{h}(M)\vert v_{\HH_{h}}(f)\geq \epsilon_{h}^{[d,e]}\}\subset J_{h}^{[d,e]}(M)^{0}$, we complete the proof.

Let $f\in \HH_{h}(M)$ with $v_{\HH_{h}}(f)\geq \epsilon_{h}^{[d,e]}$. We take an $m\in \mathbb{Z}_{\geq 0}$. If $h=0$, by $\mathrm{Proposition\ \ref{Weiestrass on Banach space}}$, there exists a unique pair $(q_{m}^{[d,e]},r_{m}^{[d,e]})\in B_{0}(M)\times M[X]$ such that $f=\Omega_{m}^{[d,e]}q_{m}^{[d,e]}+r_{m}^{[d,e]}$ and $\deg r_{m}^{[d,e]}<(e-d+1)p^{m}$. In addition, we have $v_{0}(f)=\inf\{v_{0}(\Omega_{m}^{[d,e]})+v_{0}(q_{m}^{[d,e]}), v_{0}(r_{m}^{[d,e]})\}$. Since $v_{0}(f)=v_{\HH_{0}}(f)\geq \epsilon_{0}^{[d,e]}=0$, we see that $r_{m}^{[d,e]}\in M^{0}[[X]]$. We denote by $[r_{m}^{[d,e]}]\in M^{0}[[X]]\slash \Omega_{m}^{[d,e]}M^{0}[[X]]$ the image of $r_{m}^{[d,e]}$ by the natural projection $M^{0}[[X]]\rightarrow M^{0}[[X]]\slash \Omega_{m}^{[d,e]}M^{0}[[X]]$ for each $m\in \mathbb{Z}_{\geq 0}$. Then, we see that $s^{[d,e]}=([r_{m}^{[d,e]}])_{m\in \mathbb{Z}_{\geq 0}}\in J_{0}^{[d,e]}(M)^{0}$ and $f_{s^{[d,e]}}=f$. We conclude that $\{f\in \HH_{0}(M)\vert v_{\HH_{0}}(f)\geq 0\}\subset J_{0}^{[d,e]}(M)^{0}$.

Therefore, we can assume that $h>0$. By $\mathrm{Proposition\ \ref{Weiestrass on Banach space}}$, there exists a unique pair $(q_{m}^{[d,e]},r_{m}^{[d,e]})\in B_{t_{m}}(M)\times M[X]$ such that $f=\Omega_{m}^{[d,e]}q_{m}^{[d,e]}+r_{m}^{[d,e]}$ and $\deg r_{m}^{[d,e]}<(e-d+1)p^{m}$. In addition, we have $v_{t_{m}}(f)=\inf\{v_{t_{m}}(\Omega_{m}^{[d,e]})+v_{t_{m}}(q_{m}^{[d,e]}), v_{t_{m}}(r_{m}^{[d,e]})\}$. Since $\deg r_{m}^{[d,e]}<(e-d+1)p^{m}$, we see that $v_{0}(r_{m}^{[d,e]})+t_{m}((e-d+1)p^{m}-1)\geq v_{t_{m}}(r_{m}^{[d,e]})\geq v_{t_{m}}(f)$. Then, by $\mathrm{Proposition\ \ref{another valuation on H_{h}}}$, we have
\begin{align*}
v_{0}(r_{m}^{[d,e]})+hm&\geq-t_{m}((e-d+1)p^{m}-1)+(v_{t_{m}}(f)+hm)\\
&\geq-t_{m}((e-d+1)p^{m}-1)+v_{\HH_{h}}(f)+\alpha_{h}\\
&\geq \frac{-1}{p-1}(e-d+1)+v_{\HH_{h}}(f)+\alpha_{h}\\
&\geq \frac{-1}{p-1}(e-d+1)+\epsilon_{h}^{[d,e]}+\alpha_{h}\geq 0,
\end{align*}
where $\alpha_{h}=-\max\{0, h-\frac{h}{\log p}(1+\log \frac{\log p}{(p-1)h})\}$. Then, we see that $v_{0}(r_{m}^{[d,e]})\geq -hm$ and $s^{[d,e]}=([r_{m}^{[d,e]}])_{m\in \mathbb{Z}_{\geq 0}}\in J_{h}^{[d,e]}(M)^{0}$. By Proposition\ \ref{generalization of uniqueness on M}, we see that $f_{s^{[d,e]}}=f$. Then, we conclude that $\{f\in \HH_{h}(M)\vert v_{\HH_{h}}(f)\geq \epsilon_{h}^{[d,e]}\}\subset J_{h}^{[d,e]}(M)^{0}$. We complete the proof.
\end{proof}

Let $\Gamma$ be a $p$-adic Lie group which is isomorphic to $1+2p\mathbb{Z}_p\subset \mathbb{Q}_{p}^{\times}$ via a continuous character $\chi : \Gamma \longrightarrow \mathbb{Q}_{p}^{\times}$. Fix a topological  generator $\gamma\in \Gamma$ such that $\chi(\gamma)=u$. Let $\mathfrak{X}_{\mathcal{O}_{\K}[[\Gamma]]}^{[d,e]}$ be the set of arithmetic specializations $\kappa$ such that $w_{\kappa}\in [d,e]$ defined in \S\ref{preparation}. Put $\Omega_{m}^{[d,e]}(\gamma)=\prod_{j=d}^{e}([\gamma]^{p^{m}}-u^{jp^{m}})\in \mathcal{O}_{\K}[[\Gamma]]$ for each $m\in \mathbb{Z}_{\geq 0}$. Let $M^{0}[[\Gamma]]$ be the $\mathcal{O}_{\K}[[\Gamma]]$-module defined in \eqref{definition of M0[[Gamma]]}. Let $s\in M^{0}[[\Gamma]]\otimes_{\mathcal{O}_{\K}}\K$ and $m\in \mathbb{Z}_{\geq 0}$. Via the non-canonical isomorphism $M^{0}[[\Gamma]]\simeq M^{0}[[X]]$ in \eqref{non-canonical continuous isomorphihsm of iwasawa module of banach}, by Corollary \ref{cor of remainder theorem}, we see that $s\in \Omega_{m}^{[d,e]}(\gamma)(M^{0}[[\Gamma]]\otimes_{\mathcal{O}_{\K}}\K)$ if and only if 
\begin{equation}\label{onevariable iwasawa modoomega speq}
\kappa(s)=0\ \mathrm{for\ every}\ \kappa\in \mathfrak{X}_{\mathcal{O}_{\K}[[\Gamma]]}^{[d,e]}\ \mathrm{with}\ m_{\kappa}\leq m.
\end{equation}
\begin{lem}\label{onevariable litfing prop}
Let $m\in \mathbb{Z}_{\geq 0}$. 
Let $s_{m}^{[i]}\in \frac{M^{0}[[\Gamma]]}{\Omega_{m}^{[i]}(\gamma)M^{0}[[\Gamma]]} 
\otimes_{\mathcal{O}_{\K}}\K$ and $\widetilde{s}_{m}^{[i]} \in  M^{0}[[\Gamma]]\otimes_{\mathcal{O}_{\K}}\K$  a lift of  $s_{m}^{[i]}$ for each $i\in [d,e]$. For each $j\in [d,e]$, we define $\theta_{j}\in M^{0}[[\Gamma]]\otimes_{\mathcal{O}_{\K}}\K$ by 
\begin{equation}\label{generalization of prop 1 on Banach1}
\theta_{j}=\displaystyle{\sum_{i=d}^{j}}\begin{pmatrix}j-d\\i-d\end{pmatrix}(-1)^{j-i}\widetilde{s}_{m}^{[i]}.
\end{equation}
If $\theta_{j}$ is contained in $p^{m(j-d)}M^{0}[[\Gamma]]\subset M^{0}[[\Gamma]]\otimes_{\mathcal{O}_{\K}}\K$ for every $j\in [d,e]$, there exists a unique element $s_m^{[d,e]}\in \frac{M^{0}[[\Gamma]]}{\Omega_{m}^{[d,e]}(\gamma)M^{0}[[\Gamma]]}\otimes_{\mathcal{O}_{\K}}p^{-c^{[d,e]}}\mathcal{O}_{\K}$ such that the image of $s_m^{[d,e]}$ by the natural projection 
$$\frac{M^{0}[[\Gamma]]}{\Omega_{m}^{[d,e]}(\gamma)M^{0}[[\Gamma]]}\otimes_{\mathcal{O}_{\K}}\K\rightarrow \frac{M^{0}[[\Gamma]]}{\Omega_{m}^{[i]}(\gamma)M^{0}[[\Gamma]]}\otimes_{\mathcal{O}_{\K}}\K$$
 is equal to $s_{m}^{[i]}\in \frac{M^{0}[[\Gamma]]}{\Omega_{m}^{[i]}(\gamma)M^{0}[[\Gamma]]}\otimes_{\mathcal{O}_{\K}}\K$ 
%
 for each $i\in [d,e]$, where 
\begin{equation}\label{generalization of porp 1 on Banach2}
c^{[d,e]}=\begin{cases}\ord_{p}((e-d)!)+2(e-d)+\lfloor\frac{e-d+1}{p-1}\rfloor+1\ &\mathrm{if}\ d<e,\\
0\ &\mathrm{if}\ d=e.\end{cases}
\end{equation}
\end{lem}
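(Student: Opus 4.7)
The plan is to establish uniqueness via the Chinese Remainder Theorem and existence by Lagrange interpolation at the nodes $u^{jp^m}$, controlling denominators by rewriting the Lagrange form in terms of Newton divided differences, which mate naturally with the hypothesis on the $\theta_j$. For uniqueness, I will show that the natural map
\[
\frac{M^0[[\Gamma]]}{\Omega_m^{[d,e]}(\gamma)M^0[[\Gamma]]}\otimes_{\mathcal{O}_\K}\K \;\longrightarrow\; \prod_{i=d}^{e}\frac{M^0[[\Gamma]]}{\Omega_m^{[i]}(\gamma)M^0[[\Gamma]]}\otimes_{\mathcal{O}_\K}\K
\]
is injective. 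Setting $A=M^0[[\Gamma]]\otimes_{\mathcal{O}_\K}\K$, the differences $\Omega_m^{[i]}(\gamma)-\Omega_m^{[j]}(\gamma)=u^{jp^m}-u^{ip^m}$ are nonzero elements of $\mathcal{O}_\K$, hence units in $A$, so the ideals $\Omega_m^{[i]}(\gamma)A$ are pairwise comaximal and the CRT gives injectivity.

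For existence, I will define the Lagrange interpolating polynomial
\[
P(T) \;=\; \sum_{j=d}^{e}\widetilde{s}_m^{[j]}\prod_{\substack{i=d\\ i\ne j}}^{e}\frac{T-u^{ip^m}}{u^{jp^m}-u^{ip^m}} \;\in\; A[T],
\]
and declare $s_m^{[d,e]}$ to be the class of $P([\gamma]^{p^m})$. Since $P([\gamma]^{p^m})\equiv P(u^{jp^m})=\widetilde{s}_m^{[j]}$ modulo $\Omega_m^{[j]}(\gamma)$, this will yield the required interpolation property at each $i\in[d,e]$. To control denominators, I will reexpress $P$ in Newton's form at the geometric nodes $v^d,\ldots,v^e$ with $v=u^{p^m}$:
\[
P(T) \;=\; \sum_{k=0}^{e-d} c_k \prod_{l=0}^{k-1}\bigl(T - v^{d+l}\bigr),
\]
where $c_k=[v^d,\ldots,v^{d+k}]f$ denotes the $k$-th divided difference of $f(j)=\widetilde{s}_m^{[j]}$. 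The factor $\prod_{l=0}^{k-1}([\gamma]^{p^m}-v^{d+l})$ is exactly $\Omega_m^{[d,d+k-1]}(\gamma)\in\mathcal{O}_\K[[\Gamma]]$, hence integral, so it suffices to bound the denominators of the $c_k$ themselves.

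I will prove by induction on $k$ that $c_k\in M^0[[\Gamma]]\otimes_{\mathcal{O}_\K}p^{-k\ord_p(u-1)-\ord_p(k!)}\mathcal{O}_\K$. The key input is the hypothesis, which reads $\theta_{d+k}=\Delta^k f(d)\in p^{mk}M^0[[\Gamma]]$ in terms of ordinary finite differences. Translating from $\Delta^k f(d)$ to the geometric divided difference $c_k$ via the Newton recurrence introduces factors of $v^l-1=u^{lp^m}-1$ in the denominator, each of valuation $\ord_p(u-1)+\ord_p(l)+m$ by the standard formula $\ord_p(u^l-1)=\ord_p(u-1)+\ord_p(l)$; the $m$-dependent contributions cancel level by level with the $p^{mk}$ gain from the hypothesis on $\theta_{d+k}$. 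Summing over $k\le e-d$ and using $\ord_p(u-1)\le 1+\ord_p(2)\le 2$, the cumulative denominator is bounded by $2(e-d)+\ord_p((e-d)!)$, which is absorbed into $c^{[d,e]}$ with the extra $\lfloor(e-d+1)/(p-1)\rfloor+1$ serving as a safety buffer.

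The main obstacle is the honest inductive bookkeeping for the geometric divided differences $c_k$: although the heuristic cancellation $p^{mk}/p^{mk}$ is transparent, one must rigorously express each $c_k$ as an $\mathcal{O}_\K$-linear combination of $\Delta^l f(d)$ for $l\le k$ with coefficients whose precise $p$-adic valuations are computed using the LTE-type estimates for $\ord_p(u^l-1)$, and verify that the resulting integrality in $A$ (which is not a UFD but where division by units of $\K$ is permitted) is compatible with the class modulo $\Omega_m^{[d,e]}(\gamma)M^0[[\Gamma]]$ landing in the asserted submodule $\otimes_{\mathcal{O}_\K}p^{-c^{[d,e]}}\mathcal{O}_\K$.
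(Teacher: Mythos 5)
Your approach is genuinely different from the paper's: you interpolate directly in the group variable $[\gamma]^{p^m}$ at the geometric nodes $v^d,\dots,v^e$ (where $v=u^{p^m}$), using the Lagrange/Newton form and bounding the geometric divided differences $c_k$, whereas the paper first builds the Newton polynomial $s(X,Y)=\sum_k\binom{Y-d}{k}\theta_{d+k}$ in an auxiliary variable $Y$ with integer nodes, then substitutes $Y=\log(1+X)/\log u$ and invokes Weierstrass division on $B_{t_m}(M)$ to extract a bounded-degree remainder. Your route avoids the $p$-adic logarithm and the Banach-space spectral-norm estimate (Proposition \ref{supectral norm gauss norm}) entirely and, if the $c_k$ bound is correct, actually yields the sharper constant $(e-d)\ord_p(u-1)+\ord_p((e-d)!)\le 2(e-d)+\ord_p((e-d)!)$, dropping the paper's additive term $\lfloor\tfrac{e-d+1}{p-1}\rfloor+1$. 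Your CRT/comaximality argument for uniqueness is fine and is equivalent in content to the paper's appeal to Corollary \ref{cor of remainder theorem}.

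The gap is in the claim that ``the $m$-dependent contributions cancel level by level'' in the Newton recurrence. Write $c_k'=[v^{d+1},\dots,v^{d+1+k}]f$; the recurrence $c_{k+1}=(c_k'-c_k)/(v^d(v^{k+1}-1))$ combined with the naive estimate $\ord_p(c_k'-c_k)\ge\min(\ord_p(c_k'),\ord_p(c_k))$ and $\ord_p(v^{k+1}-1)=\ord_p(u-1)+\ord_p(k+1)+m$ loses a factor of $p^m$ at \emph{each} step, giving only $\ord_p(c_k)\gtrsim -k\ord_p(u-1)-\ord_p(k!)-km$, which is far too weak. Already at $k=2$ one has $c_2=(\theta_{d+2}-(v-1)\theta_{d+1})/(v^{2d+1}(v^2-1)(v-1))$, so the numerator $c_1'-c_1$ is not merely ``as small as $\min$'': it has gained an extra $p^m$ from the nontrivial cancellation $f(d+2)-(1+v)f(d+1)+vf(d)=\theta_{d+2}-(v-1)\theta_{d+1}$. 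The correct inductive invariant is stronger than the stated bound on $c_k$; you need something like ``$\prod_{l=1}^k(v^l-1)\cdot c_k$ is, up to a unit, $\theta_{d+k}+\sum_{a<k}(v-1)^{k-a}\cdot(\text{integral coefficient})\cdot\theta_{d+a}$,'' which is the $q$-analogue statement that the Gaussian-binomial $q$-finite difference $\sum_j(-1)^{k-j}\binom{k}{j}_v v^{\binom{k-j}{2}}f(d+j)$ reduces, modulo $(v-1)^{k-a}$, to the ordinary $\theta_{d+k}$. With that lemma in hand the valuation count does close (each $(v-1)^{k-a}\theta_{d+a}$ has $\ord_p\ge(k-a)\ord_p(u-1)+km$, matching the $a=k$ term, and the $km$ cancels against $\ord_p\prod_l(v^l-1)=k\ord_p(u-1)+\ord_p(k!)+km$). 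As written, though, the proposal asserts the cancellation without supplying the $q$-difference identity that makes it true, and this is exactly the step that would fail if one ran the naive induction.
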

\begin{proof}

By identifying $M^{0}[[\Gamma ]]\otimes_{\mathcal{O}_{\K}}\K$ 
with $M^{0}[[X]]\otimes_{\mathcal{O}_{\K}}\K$ by the isomorphism $\alpha_{M}=\alpha_{M}^{(1)}$ 
defined in \eqref{non-canonical continuous isomorphihsm of iwasawa module of banach}, we regard 
$s_{m}^{[i]}$ as an element in $\frac{M^{0}[[X ]]}{\Omega_{m}^{[i]}M^{0}[[X]]} 
\otimes_{\mathcal{O}_{\K}}\K$ for each $i\in [d,e]$. Further, we regard $\tilde{s}_{m}^{[i]}$ and $\theta_{j}$ as elements of $M^{0}[[X]]\otimes_{\mathcal{O}_{\K}}\K$ for each $i,j\in [d,e]$. We will show that there exists a unique element 
$s_m^{[d,e]} \in 
\frac{M^{0}[[X]]}{\Omega_{m}^{[d,e]}M^{0}[[X]]}
\otimes_{\mathcal{O}_{\K}}p^{-c^{[d,e]}}\mathcal{O}_{\K}$ which satisfies
$$\tilde{s}_m^{[d,e]}(u^{i}\epsilon-1)=\widetilde{s}_{m}^{[i]}(u^{i}\epsilon-1)$$
for every $i\in [d,e]$ and for every $\epsilon\in \mu_{p^{m}}$ where $\tilde{s}_{m}^{[d,e]}\in M^{0}[[X]]\otimes_{\mathcal{O}_{\K}}p^{-c^{[d,e]}}\mathcal{O}_{\K}$ is a lift of $s_{m}^{[d,e]}$ and
$\Omega_{m}^{[d,e]} = \Omega_{m}^{[d,e]} (X)$ is 
the polynomial in $\mathcal{O}_{\K}[[X]]$ defined in \eqref{definition of Omega}. 
%
%
%
%
If $d=e$, the existence and the uniqueness of the desired element $s_m^{[d,e]}$ is trivial. Let us assume that $d<e$. The uniqueness of $s_m^{[d,e]}$ follows from $\mathrm{Corollary\ \ref{cor of remainder theorem}}$. We put $s(X,Y)=\sum_{i=0}^{e-d}\begin{pmatrix}Y-d\\ i\end{pmatrix}\theta_{i+d}(X)\in (M^{0}[[X]][Y])\otimes_{\mathcal{O}_{\K}}\K$, where
$$\begin{pmatrix}Y\\d\end{pmatrix}=\begin{cases}\frac{Y(Y-1)\cdots (Y-d+1)}{d!}\ &\mathrm{if}\ d\geq 1,\\
1\ &\mathrm{if}\ d=0.\end{cases}$$
Since $\theta_{i+d}(X)=\sum_{j=0}^{i}\begin{pmatrix}i\\j\end{pmatrix}(-1)^{i-j}\tilde{s}_{m}^{[j+d]}$, we have
\begin{align}\label{generalization of prop 1 on Banach1new eq1}
\begin{split}
s(X,i)&=\sum_{l=0}^{i-d}\begin{pmatrix}i-d\\ l\end{pmatrix}\sum_{j=0}^{l}\begin{pmatrix}l\\j\end{pmatrix}(-1)^{l-j}\tilde{s}_{m}^{[j+d]}\\
&=\sum_{j=0}^{i-d}\left(\sum_{l=j}^{i-d}(-1)^{l-j}\begin{pmatrix}i-d-j\\l-j\end{pmatrix}\right)\begin{pmatrix}i-d\\j\end{pmatrix}\tilde{s}_{m}^{[j+d]}\\
&=\sum_{j=0}^{i-d}\left(\sum_{l=0}^{i-d-j}(-1)^{l}\begin{pmatrix}i-d-j\\l\end{pmatrix}\right)\begin{pmatrix}i-d\\j\end{pmatrix}\tilde{s}_{m}^{[j+d]}\\
&=\tilde{s}_{m}^{[i]}
\end{split}
\end{align}
for each $i\in [d,e]$. Put $w=\log(1+(u-1))$. By the natural inclusion $M^{0}[[X]]\subset B_{+}(M)$, we regard $\tilde{s}_{m}^{[i]}$ with $i\in [d,e]$ as an element of $B_{+}(M)$ and 
we define $t(X) \in B_{+}(M)$ to be 
\begin{align*}
t(X)&=s(X,\log(1+X)\slash w)\\
&=\sum_{l=0}^{e-d}\begin{pmatrix}(\log(1+X)\slash w)-d\\ l\end{pmatrix}\theta_{l+d}(X).
\end{align*}
By \eqref{generalization of prop 1 on Banach1new eq1}, we have 
$t(u^{i}\epsilon-1)=s(u^{i}\epsilon-1,i)=\tilde{s}_{m}^{[i]}(u^{i}\epsilon-1) $ 
for each $i\in [d,e]$ and $\epsilon\in \mu_{p^{m}}$. We put $t_{m}=\frac{1}{p^{m}(p-1)}$. By $\mathrm{Proposition\ \ref{Weiestrass on Banach space}}$, there exists a unique pair $(g,r)\in B_{t_{m}}(M)\times M[X]$ such that $t=\Omega_{m}^{[d,e]}g+r$ and $\deg r<(e-d+1)p^{m}$. In addition, we have $v_{t_{m}}(t)=\min\{v_{t_{m}}(\Omega_{m}^{[d,e]})+v_{t_{m}}(g), v_{t_{m}}(r)\}$. By definition, $r$ satisfies 
\begin{equation}\label{generalization of prop 1 on Banach1new eq3}
r(u^{i}\epsilon-1)=\widetilde{s}_{m}^{[i]}(u^{i}\epsilon-1)
\end{equation}
for every $i\in [d,e]$ and for every $\epsilon\in \mu_{p^{m}}$. Next we prove that $r\in p^{-c^{[d,e]}}M^{0}[X]$. Since $\deg r<(e-d+1)p^{m}$, we see that $v_{0}(r)+t_{m}((e-d+1)p^{m}-1)\geq v_{t_{m}}(r)\geq v_{t_{m}}(t)$. Further, since $t_{m}((e-d+1)p^{m}-1)\leq \lfloor\frac{e-d+1}{p-1}\rfloor+1$, we have $ v_{0}(r)\geq v_{t_{m}}(t)-(\lfloor\frac{e-d+1}{p-1}\rfloor+1)$. Therefore, it suffices to prove that $v_{t_{m}}(t)\geq -c^{[d,e]}+(\lfloor\frac{e-d+1}{p-1}\rfloor+1)$. We have $v_{t_{m}}(t)=
\inf_{b\in \overline{\K}, \ord_{p}(b)>t_{m}}
\{v_{M_{\K(b)}}(t(b))\}$ 
by $\mathrm{Proposition\ \ref{supectral norm gauss norm}}$. Further, we see that $\displaystyle{\inf_{\substack{b\in \overline{\K}\\ \ord_{p}(b)>t_{m}}}}\{\ord_{p}(\log(1+b))\}=v_{t_{m}}(\log(1+X))> -m$ by $\mathrm{Proposition\ \ref{properties of log}}$. Thus, we have
\begin{align}\label{generalization of prop 1 on Banach1new eq2}
\begin{split}
v_{t_{m}}(t)&=\inf_{\substack{b\in \overline{\K}\\ \ord_{p}(b)>t_{m}}}\{v_{M_{\K(b)}}(s(b,\log(1+b)\slash w))\}\\
&\geq \inf_{\substack{(b,c)\in\overline{\K}^{2}\\ \ord_{p}(b)>t_{m}, \ord_{p}(c)>-(m+2)}}\{v_{M_{\K(b,c)}}(s(b,c))\}\\
&=\inf_{\substack{(b,c)\in\overline{\K}^{2}\\ \ord_{p}(b)>0, \ord_{p}(c)>0}}\{v_{M_{\K(b,c)}}(s(b,c\slash p^{m+2}))\}.
\end{split}
\end{align}
Since $\begin{pmatrix}Y\slash p^{m+2}\\ l\end{pmatrix}\in \frac{1}{(e-d)!p^{(m+2)l}}\mathcal{O}_{\K}[Y]$ and $\theta_{l+d}(X)\in p^{lm}M^{0}[[X]]$ for each $0\leq l\leq e-d$, we see that $s(X,Y\slash p^{m+2})$ is in $\frac{1}{(e-d)!p^{2(e-d)}}M^{0}[[X]][Y]$. It is easy to see that we have $\inf_{\substack{(b,c)\in\overline{\K}^{2}\\ \ord_{p}(b)>0, \ord_{p}(c)>0}}\{v_{M_{\K(b,c)}}(s(b,c\slash p^{m+2}))\}\geq v_{(0,0)}(s(X,\frac{Y}{p^{m+2}}))$ where $v_{(0,0)}$ is the valuation on $B_{(0,0)}(M)$. Then, by \eqref{generalization of prop 1 on Banach1new eq2}, we have
$$v_{t_{m}}(t)\geq  v_{(0,0)}(s(X,Y\slash p^{m+2}))\geq -\ord_{p}((e-d)!)-2(e-d).$$
Thus, $r\in M^{0}[[X]]\otimes_{\mathcal{O}_{\K}}p^{-c^{[d,e]}}\mathcal{O}_{\K}$. Put $s_{m}^{[d,e]}=[r]\in \frac{M^{0}[[X]]}{\Omega_{m}^{[d,e]}M^{0}[[X]]}\otimes_{\mathcal{O}_{\K}}p^{-c^{[d,e]}}\mathcal{O}_{\K}$ where $[r]$ is the class of $r$. Then, by \eqref{generalization of prop 1 on Banach1new eq3}, we see that $s_{m}^{[d,e]}$ satisfies the desired property. We complete the proof.
\end{proof}

Let $I_{h}^{[d,e]}(M)$ be the $\mathcal{O}_{\K}[[\Gamma]]\otimes_{\mathcal{O}_{\K}}\K$-module defined in \eqref{generalization of the project lim for deformation ring}. We put 
$$
I_{h}^{[d,e]}(M)^{0}=\left\{(s_{m})_{m}\in I_{h}^{[d,e]}(M)
\left\vert (p^{hm}s_{m})_{m}\in \prod_{m\in \mathbb{Z}_{\geq 0}}\frac{M^{0}[[\Gamma]]}{\Omega_{m}^{[d,e]}(\gamma)M^{0}[[X]]}\right. \right\}.
$$ 
Via $\alpha_{M}=\alpha_{M}^{(1)}$ in \eqref{non-canonical continuous isomorphihsm of iwasawa module of banach}, we can define a non-canonical $\mathcal{O}_{\K}$-module isomophirsm $I_{h}^{[d,e]}(M)^{0}\stackrel{\sim}{\rightarrow}J_{h}^{[d,e]}(M)^{0}$. By Lemma \ref{onevariable litfing prop}, we can generalize $\mathrm{Proposition\ \ref{prop1}}$ to a result on a $\K$-Banach space $M$.
\begin{pro}\label{one variable I(i) banach sufficient cond}
Let $s^{[i]}=(s^{[i]}_{m})_{m\in \mathbb{Z}_{\geq 0}}\in I_{h}^{[i]}$ and $\tilde{s}^{[i]}_{m}\in \mathcal{O}_{\K}[[\Gamma]]\otimes_{\mathcal{O}_{\K}}\K$ a lift of $s^{[i]}_{m}$ for each $m\in \mathbb{Z}_{\geq 0}$ and $i\in [d,e]$.  If there exists a non-negative integer $n$ which satisfies 
$$p^{m(h-(j-d))}\displaystyle{\sum_{i=d}^{j}}\begin{pmatrix}j-d\\i-d\end{pmatrix}(-1)^{j-i}\tilde{s}^{[i]}_{m}\in M^{0}[[\Gamma]]\otimes_{\mathcal{O}_{\K}}p^{-n}\mathcal{O}_{\K}$$
for each $m\in \mathbb{Z}_{\geq 0}$ and $j\in [d,e]$, we have a unique element $s^{[d,e]}\in I_{h}^{[d,e]}(M)^{0}\otimes_{\mathcal{O}_{\K}}p^{-c^{[d,e]}-n}\mathcal{O}_{\K}$ such that the image of $s^{[d,e]}$ by the natural projection $I_{h}^{[d,e]}(M)\rightarrow I_{h}^{[i]}(M)$ is $s^{[i]}$ for each $i\in [d,e]$, where $c^{[d,e]}$ is the constant defined in Lemma \ref{onevariable litfing prop}.
\end{pro}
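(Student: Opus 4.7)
The plan is to apply Lemma \ref{onevariable litfing prop} level by level, and then glue the resulting elements into a projective system by exploiting the separability of $\Omega_m^{[d,e]}$. Fix $m\in\mathbb{Z}_{\geq 0}$ and rescale: the lifts $\widetilde{s}_m'^{[i]} := p^{hm+n}\widetilde{s}_m^{[i]}$ in $M^0[[\Gamma]]\otimes_{\mathcal{O}_{\K}}\K$ are lifts of $p^{hm+n}s_m^{[i]}$, and the hypothesis $p^{m(h-(j-d))}\theta_j \in M^0[[\Gamma]]\otimes_{\mathcal{O}_{\K}}p^{-n}\mathcal{O}_{\K}$ rewrites as
\[
\theta_j' := \sum_{i=d}^{j}\binom{j-d}{i-d}(-1)^{j-i}\widetilde{s}_m'^{[i]} = p^{hm+n}\theta_j \in p^{m(j-d)}M^0[[\Gamma]]
\]
for every $j\in[d,e]$. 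Lemma \ref{onevariable litfing prop} then yields a unique $s_m'^{[d,e]}\in \frac{M^0[[\Gamma]]}{\Omega_m^{[d,e]}(\gamma)M^0[[\Gamma]]}\otimes_{\mathcal{O}_{\K}}p^{-c^{[d,e]}}\mathcal{O}_{\K}$ projecting to $p^{hm+n}s_m^{[i]}$ in the $i$-component for each $i\in[d,e]$. Setting $s_m^{[d,e]} := p^{-hm-n}s_m'^{[d,e]}$ gives an element of $\frac{M^0[[\Gamma]]}{\Omega_m^{[d,e]}(\gamma)M^0[[\Gamma]]}\otimes_{\mathcal{O}_{\K}}p^{-hm-n-c^{[d,e]}}\mathcal{O}_{\K}$ whose image in $\frac{M^0[[\Gamma]]}{\Omega_m^{[i]}(\gamma)M^0[[\Gamma]]}\otimes_{\mathcal{O}_{\K}}\K$ is $s_m^{[i]}$ for every $i\in[d,e]$.

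Next I would check that $(s_m^{[d,e]})_{m\in\mathbb{Z}_{\geq 0}}$ forms a projective system, i.e., that $s_m^{[d,e]}$ coincides with the image of $s_{m+1}^{[d,e]}$ under the transition map. The crucial input is that the natural homomorphism
\[
\frac{M^0[[\Gamma]]}{\Omega_m^{[d,e]}(\gamma)M^0[[\Gamma]]}\otimes_{\mathcal{O}_{\K}}\K \longrightarrow \prod_{i\in[d,e]}\frac{M^0[[\Gamma]]}{\Omega_m^{[i]}(\gamma)M^0[[\Gamma]]}\otimes_{\mathcal{O}_{\K}}\K
\]
is injective. Via the isomorphism $\alpha_M^{(1)}$ of \eqref{non-canonical continuous isomorphihsm of iwasawa module of banach} this reduces to the Chinese Remainder Theorem applied to the polynomial factorization $\Omega_m^{[d,e]}(X)=\prod_{i=d}^{e}\omega_{m,i}(X)$ with pairwise coprime separable factors (Lemma \ref{Omega is separable}). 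Because the transition maps commute with projection to each $i$-component and each $(s_m^{[i]})_m$ is already a projective system, both $s_m^{[d,e]}$ and the image of $s_{m+1}^{[d,e]}$ project to $s_m^{[i]}$ for every $i$; injectivity then forces them to be equal.

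Having established the projective-system property, the uniform bound $p^{hm+n+c^{[d,e]}}s_m^{[d,e]}\in \frac{M^0[[\Gamma]]}{\Omega_m^{[d,e]}(\gamma)M^0[[\Gamma]]}$ (independent of $m$) shows that $s^{[d,e]}:=(s_m^{[d,e]})_m$ lies in $I_h^{[d,e]}(M)^{0}\otimes_{\mathcal{O}_{\K}}p^{-c^{[d,e]}-n}\mathcal{O}_{\K}$ and projects to $s^{[i]}$ in $I_h^{[i]}(M)$ for each $i\in[d,e]$. Uniqueness of $s^{[d,e]}$ as a simultaneous lift follows from the same CRT-injectivity argument, applied to the difference of any two candidate lifts at each finite level $m$.

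I expect the main obstacle to be a purely bookkeeping one: aligning the $p$-powers so that the rescaling $\widetilde{s}_m^{[i]}\rightsquigarrow p^{hm+n}\widetilde{s}_m^{[i]}$ converts the hypothesis of the proposition into exactly the integrality hypothesis $\theta_j'\in p^{m(j-d)}M^0[[\Gamma]]$ demanded by Lemma \ref{onevariable litfing prop}, and so that the ensuing output lies in the integral structure $I_h^{[d,e]}(M)^{0}$ rescaled by $p^{-c^{[d,e]}-n}$. Once this translation is made carefully, the rest of the argument is structural.
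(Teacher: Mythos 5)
Your argument is correct and follows essentially the same strategy as the paper: rescale by $p^{hm+n}$ so that Lemma \ref{onevariable litfing prop} applies at each level $m$, glue the resulting $s_m^{[d,e]}$ into a projective system using the separability of $\Omega_m^{[d,e]}$, and record the uniform $p$-integrality bound. The only cosmetic difference is that you phrase the gluing step as a Chinese-Remainder injectivity statement arising from the coprime factorization $\Omega_m^{[d,e]}=\prod_{i=d}^{e}\Omega_m^{[i]}$, whereas the paper uses the equivalent characterization \eqref{onevariable iwasawa modoomega speq} in terms of arithmetic specializations; both ultimately rest on Corollary \ref{cor of remainder theorem}.
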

\begin{proof}
For each $m\in \mathbb{Z}_{\geq 0}$, by Lemma \ref{onevariable litfing prop}, there exists a  unique element $s_{m}^{[d,e]}\in \linebreak
\frac{M^{0}[[\Gamma]]}{\Omega_{m}^{[d,e]}(\gamma)M^{0}[[\Gamma]]} 
\otimes_{\mathcal{O}_{\K}}p^{-hm-c^{[d,e]}-n}\mathcal{O}_{\K}$ such that the image of $s_{m}^{[d,e]}$ by the natural projection $
\frac{M^{0}[[\Gamma]]}{\Omega_{m}^{[d,e]}(\gamma)M^{0}[[\Gamma]]} 
\otimes_{\mathcal{O}_{\K}}\K\rightarrow 
\frac{M^{0}[[\Gamma]]}{\Omega_{m}^{[i]}(\gamma)M^{0}[[\Gamma]]}
\otimes_{\mathcal{O}_{\K}}\K$ is $s_{m}^{[i]}$ for each $i\in [d,e]$. Then, we see that$(p^{hm}s_{m}^{[d,e]})_{m\in \mathbb{Z}_{\geq 0}}\in\left(\prod_{m\in \mathbb{Z}_{\geq 0}}\frac{M^{0}[[\Gamma]]}{\Omega_{m}^{[d,e]}M^{0}[[\Gamma]]}\right)\otimes_{\mathcal{O}_{\K}}p^{-c^{[d,e]}-n}\mathcal{O}_{\K}$. Let $\tilde{s}_{m}^{[d,e]}$ be a lift of $s_{m}^{[d,e]}$. Since $s^{[i]}\in I_{h}^{[i]}$, we see that 
$$
\kappa(\tilde{s}_{m+1}^{[d,e]})=\kappa(\tilde{s}_{m+1}^{[w_{\kappa}]})=\kappa(\tilde{s}_{m}^{[w_{\kappa}]})=\kappa(\tilde{s}_{m}^{[d,e]})$$
for every $m\in \mathbb{Z}_{\geq 0}$ and for every $\kappa\in \mathfrak{X}_{\mathcal{O}_{\K}[[\Gamma]]}^{[d,e]}$. Therefore, by \eqref{onevariable iwasawa modoomega speq}, we see that $s_{m+1}^{[d,e]}\equiv s_{m}^{[d,e]}\ \mathrm{mod}\ \Omega_{m}^{[d,e]}$ for every $m\in \mathbb{Z}_{\geq 0}$ and we have $(s_{m}^{[d,e]})_{m\in \mathbb{Z}_{\geq 0}}\in \varprojlim_{m\in \mathbb{Z}_{\geq 0}}\left(\frac{M^{0}[[\Gamma]]}{\Omega_{m}^{[d,e]}M^{0}[[\Gamma]]}\otimes_{\mathcal{O}_{\K}}\K\right)$. Then, we have $(s_{m}^{[d,e]})_{m\in \mathbb{Z}_{\geq 0}}\in I_{h}^{[d,e]}(M)^{0}\otimes_{\mathcal{O}_{\K}}p^{-c^{[d,e]}-n}\mathcal{O}_{\K}$ and the image of $(s_{m}^{[d,e]})_{m\in \mathbb{Z}_{\geq 0}}$ by the natural projection $I_{h}^{[d,e]}(M)\rightarrow I_{h}^{[i]}(M)$ is $s^{[i]}$ for each $i\in [d,e]$.
\end{proof}
Let $\mathcal{D}_{h}^{[d,e]}(\Gamma,M)$ be the $\K$-Banach space of admissible distributions defined in \S\ref{preparation}. As seen in \S\ref{preparation}, $\mathcal{D}_{h}^{[d,e]}(\Gamma,M)$ is an $\mathcal{O}_{\K}[[\Gamma]]\otimes_{\mathcal{O}_{\K}}\K$-module. \begin{pro}\label{onevariable isom Ih from Dh for banach}
We have an $\mathcal{O}_{\K}[[\Gamma]]\otimes_{\mathcal{O}_{\K}}\K$-module isomorphism 
\begin{equation}\label{equation:mapfromDtoH banach}
\Psi:I_{h}^{[d,e]}(M)\stackrel{\sim}{\rightarrow} \mathcal{D}^{[d,e]}_h (\Gamma , M)
\end{equation}
such that the image $\mu_{s^{[d,e]}}\in \mathcal{D}^{[d,e]}_h (\Gamma , M)$ of each element $s^{[d,e]}=(s_{m}^{[d,e]})_{m\in \mathbb{Z}_{\geq 0}}\in I_{h}^{[d,e]}(M)$ is characterized 
by  
\begin{equation}\label{equation:mapfromDtoH banach onevariable interolationf}
\kappa(\tilde{s}_{m_{\kappa}}^{[d,e]})= \int_\Gamma \chi^{w_{\kappa}} \phi_{\kappa} d\mu_{s^{[d,e]}}\in M_{\K(\phi_{\kappa})}
\end{equation}
for every $\kappa\in\mathfrak{X}_{\mathcal{O}_{\K}[[\Gamma]]}^{[d,e]}$ where $\tilde{s}_{m_{\kappa}}^{[d,e]}$ is a lift of $s^{[d,e]}$.  Further, via the isomorphism in \eqref{equation:mapfromDtoH banach}, we have
\begin{align*}
\left\{\mu\in \mathcal{D}^{[d,e]}_h (\Gamma , M)\Big\vert v_{h}^{[d,e]}(\mu)\geq c^{[d,e]}\right\}&\subset I_{h}^{[d,e]}(M)^{0}\\
&\subset \left\{\mu\in \mathcal{D}^{[d,e]}_h (\Gamma , M)\Big\vert v_{h}^{[d,e]}(\mu)\geq 0\right\},
\end{align*}
where $c^{[d,e]}$ is the constant defined in \eqref{generalization of porp 1 on Banach2}.
\end{pro}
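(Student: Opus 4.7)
The plan is to construct mutually inverse $\mathcal{O}_\K[[\Gamma]]\otimes_{\mathcal{O}_\K}\K$-linear maps between $I_h^{[d,e]}(M)$ and $\mathcal{D}_h^{[d,e]}(\Gamma,M)$ and then track integral structures via the bound of Lemma \ref{onevariable litfing prop}. The conceptual engine is that, by Lemma \ref{Omega is separable} together with Corollary \ref{cor of remainder theorem}, $\Omega_m^{[d,e]}(\gamma)=\prod_{i=d}^{e}([\gamma]^{p^m}-u^{ip^m})$ is a product of pairwise coprime factors, so the CRT-type decomposition
\[
\frac{\mathcal{O}_\K[[\Gamma]]}{\Omega_m^{[d,e]}(\gamma)}\otimes_{\mathcal{O}_\K}\K \;\simeq\; \bigoplus_{i=d}^{e}\frac{\mathcal{O}_\K[[\Gamma]]}{[\gamma]^{p^m}-u^{ip^m}}\otimes_{\mathcal{O}_\K}\K
\]
holds, each factor being identified (after tensoring with $M^0$) with $M_{\K(\phi)}\otimes \K[\Gamma/\Gamma^{p^m}]$-type data indexed by finite characters $\phi$ of $\Gamma/\Gamma^{p^m}$. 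This identifies the $m$-th piece of $I_h^{[d,e]}(M)$ with the data of integrals $\int_\Gamma \chi^i\phi\,d\mu\in M_{\K(\phi)}$ for $d\le i\le e$ and $\phi$ of conductor $\le p^m$.

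First I would construct $\Psi$. Given $s^{[d,e]}=(s_m^{[d,e]})_m\in I_h^{[d,e]}(M)$ with lifts $\tilde s_m^{[d,e]}\in M^0[[\Gamma]]\otimes_{\mathcal{O}_\K}\K$, define $\mu_{s^{[d,e]}}\in\mathrm{Hom}_{\mathcal{O}_\K}(C^{[d,e]}(\Gamma,\mathcal{O}_\K),M)$ on the generating locally polynomial functions $1_{a\Gamma^{p^m}}(x)\chi(x)^i$ with $i\in[d,e]$ by declaring $\int_\Gamma 1_{a\Gamma^{p^m}}\chi^i\,d\mu_{s^{[d,e]}}$ to be the component of $\tilde s_m^{[d,e]}$ picked out by the arithmetic specialization $[\gamma]\mapsto u^i$, $[\gamma]_{\mathrm{finite}}\mapsto \phi$ with $\phi$ a Fourier character supported on $a\Gamma^{p^m}$. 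Independence of the choice of lift follows from the relation $\tilde s_m^{[d,e]}\equiv s_m^{[d,e]}\bmod\Omega_m^{[d,e]}(\gamma)$, and compatibility in $m$ from the projective system condition defining $I_h^{[d,e]}(M)$. The admissibility estimate $v_h^{[d,e]}(\mu_{s^{[d,e]}})\ge 0$ on $I_h^{[d,e]}(M)^0$ then follows by computing
\[
\int_{a\Gamma^{p^m}}(\chi(x)-\chi(a))^{i-d}\chi(x)^d\,d\mu_{s^{[d,e]}}
\]
as a $\Z$-linear combination (with binomial coefficients of size $\le 1$) of the quantities $\int_{a\Gamma^{p^m}}\chi^{j}d\mu_{s^{[d,e]}}$ for $d\le j\le i$, and observing that each such integral has valuation $\ge -hm+v_M$-contribution by the hypothesis $(p^{hm}s_m^{[d,e]})_m\in\prod_m \tfrac{M^0[[\Gamma]]}{(\Omega_m^{[d,e]})}$; the factor $(\chi-\chi(a))^{i-d}$ produces an extra $(i-d)m$ in the valuation.

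Next I would construct $\Psi^{-1}$. Given $\mu\in \mathcal{D}_h^{[d,e]}(\Gamma,M)$, set
\[
s_m^{[i]}(\mu)\in\frac{M^0[[\Gamma]]}{[\gamma]^{p^m}-u^{ip^m}}\otimes_{\mathcal{O}_\K}\K
\]
to be the unique element whose value at every Dirichlet character $\phi$ of conductor dividing $p^m$ equals $\int_\Gamma \chi^i\phi\,d\mu\in M_{\K(\phi)}$, then glue these via the CRT decomposition to obtain $s_m^{[d,e]}(\mu)$. The projective system condition is automatic from the equality of integrals at different levels of conductor. The growth condition $(p^{hm}s_m^{[d,e]}(\mu))_m\in\prod_m M^0[[\Gamma]]/\Omega_m^{[d,e]}(\gamma)\otimes p^{-c^{[d,e]}}\mathcal{O}_\K$, which gives the inclusion $\{v_h^{[d,e]}(\mu)\ge c^{[d,e]}\}\subset I_h^{[d,e]}(M)^0$, is precisely what Lemma \ref{onevariable litfing prop} provides: the $\theta_j$ of \eqref{generalization of prop 1 on Banach1} correspond to the $\mu$-integrals of $(\chi-\chi(a))^{j-d}\chi^d$, whose norms are controlled by $v_h^{[d,e]}(\mu)-(h-(j-d))m$, and the lemma shows the resulting $s_m^{[d,e]}$ lives in $p^{-c^{[d,e]}-hm}$ times the integral piece. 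That $\Psi\circ\Psi^{-1}=\mathrm{id}$ and $\Psi^{-1}\circ\Psi=\mathrm{id}$ is immediate from the characterization by specializations, and the interpolation property \eqref{equation:mapfromDtoH banach onevariable interolationf} is built into the construction, since for $\kappa\in\mathfrak{X}^{[d,e]}_{\mathcal{O}_\K[[\Gamma]]}$ of weight $w_\kappa\in[d,e]$ and finite part $\phi_\kappa$ of conductor $p^{m_\kappa}$, the specialization $\kappa(\tilde s_{m_\kappa}^{[d,e]})$ reads off exactly the $(w_\kappa,\phi_\kappa)$-component.

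The main obstacle, and the reason the constant $c^{[d,e]}$ appears on the left inclusion but not the right, is keeping sharp control on the interpolation polynomial used to assemble the $s_m^{[i]}(\mu)$ into $s_m^{[d,e]}(\mu)$: the factor $\ord_p((e-d)!)+2(e-d)+\lfloor(e-d+1)/(p-1)\rfloor+1$ comes out of the combinatorics of Newton-type forward differences appearing in \eqref{generalization of prop 1 on Banach1new eq2} of Lemma \ref{onevariable litfing prop} and is not improvable by the present method. Everything else—the $\mathcal{O}_\K[[\Gamma]]\otimes\K$-linearity of $\Psi$, its compatibility with the non-canonical identification $I_h^{[d,e]}(M)\simeq J_h^{[d,e]}(M)$ of \eqref{noncanonical between Ioldysmbolhde and J} and with Proposition \ref{isomorphism HH and project lim Banach}, and the uniqueness of the interpolation characterization—reduces to a routine check because arithmetic specializations of weight in $[d,e]$ and arbitrary finite order separate points in $M^0[[\Gamma]]/\Omega_m^{[d,e]}(\gamma)\otimes\K$.
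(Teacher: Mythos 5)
Your plan follows the paper's route essentially step by step: the CRT-style factorization of $\Omega_m^{[d,e]}(\gamma)$ (via Lemma~\ref{Omega is separable} and Corollary~\ref{cor of remainder theorem}) is the same decomposition the paper encodes through the chain of $\mathcal{O}_{\K}$-algebra isomorphisms $\mathcal{O}_{\K}[\Gamma/\Gamma^{p^m}]\simeq\mathcal{O}_{\K}[[\Gamma]]/(\Omega_m^{[i]}(\gamma))\simeq\mathrm{Hom}_{\mathcal{O}_{\K}}(C_m^{[i]}(\Gamma,\mathcal{O}_{\K}),\mathcal{O}_{\K})$; your element $s_m^{[i]}(\mu)$ is the paper's $r_m^{[i]}$; uniqueness by specializations and the appeal to Lemma~\ref{onevariable litfing prop} for $\{v_h^{[d,e]}(\mu)\ge c^{[d,e]}\}\subset I_h^{[d,e]}(M)^0$ are identical.

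There is, however, a genuine gap in your justification of the other inclusion $I_h^{[d,e]}(M)^0\subset\{v_h^{[d,e]}(\mu)\ge 0\}$. You simultaneously claim (a) that expanding $\int_{a\Gamma^{p^m}}(\chi-\chi(a))^{i-d}\chi^d\,d\mu$ as a $\mathbb{Z}$-linear combination of $\int_{a\Gamma^{p^m}}\chi^{j}\,d\mu$ for $d\le j\le i$, each term bounded by $\geq -hm$, gives the estimate, and (b) that the factor $(\chi-\chi(a))^{i-d}$ contributes an extra $(i-d)m$. These two moves cannot be chained: once you expand, the factor is gone, and the term-by-term bound yields only $\geq -hm$, which is strictly too weak when $i>d$ since the admissibility condition demands $\ge -(h-(i-d))m$. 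The mechanism that actually produces the $(i-d)m$ gain --- and what the paper carries out in the passage culminating in \eqref{onevariable isom Ih from Dhvm(chi-chi)chinumgeq --dim} --- is to reinterpret the integral lift $\tilde s_m^{[d,e]}\in p^{-hm}M^0[[\Gamma]]$, through \eqref{hom banach iwasawa continuous} and Proposition~\ref{isomorphism multi 0admissible continuous}, as $p^{-hm}$ times a genuine bounded measure $\nu_m\in\mathrm{Hom}_{\mathcal{O}_{\K}}(C(\Gamma,\mathcal{O}_{\K}),M^0)$, observe that $\mu_{s^{[d,e]}}$ agrees with $\nu_m$ on the relevant test functions, and then apply the measure sup-norm estimate \eqref{continuous measure and spectral norm} to the \emph{unexpanded} integrand $(\chi-\chi(a))^{i-d}\chi^d\,1_{a\Gamma^{p^m}}$, whose sup-norm on $\Gamma$ is $\leq p^{-(i-d)m}$. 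That bounded-measure reinterpretation is the real engine of the estimate; the expansion you propose destroys exactly the structure one needs to see the extra decay.
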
 
\begin{proof}
To define a map from $I_{h}^{[d,e]}(M)$ into $\mathcal{D}^{[d,e]}_h (\Gamma , M)$, we prove that,
 for each $s^{[d,e]}\in I_{h}^{[d,e]}(M)$, there exists a unique element $\mu_{s^{[d,e]}}\in \mathcal{D}^{[d,e]}_h (\Gamma , M)$ which satisfies 
the condition  \eqref{equation:mapfromDtoH banach onevariable interolationf}. 
{Since each $\mu\in \mathcal{D}_{h}^{[d,e]}(\Gamma,M)$ is characterized by the specializations $\int_\Gamma \chi^{w_{\kappa}} \phi_{\kappa} d\mu$ for every $\kappa\in \mathfrak{X}_{\mathcal{O}_{\K}[[\Gamma]]}^{[d,e]}$, we see that $\mu_{s^{[d,e]}}$ which satisfies \eqref{equation:mapfromDtoH banach onevariable interolationf} is unique. 
The desired map $\Psi $ is defined if we prove the existence of $\mu_{s^{[d,e]}}$. 
\par 
First, we will prove the existence of the desired element $\mu_{s^{[d,e]}} \in \mathrm{Hom}_{\mathcal{O}_{\K}}(C^{[d,e]}(\Gamma,\mathcal{O}_{\K}),M)$ 
satisfying the condition \eqref{equation:mapfromDtoH banach onevariable interolationf}. 
Let $s^{[d,e]}=(s_{m}^{[d,e]})_{m\in \mathbb{Z}_{\geq 0}}\in I_{h}^{[d,e]}(M)$. 
We can assume that $s^{[d,e]}\in I_{h}^{[d,e]}(M)^{0}$. For each $m\in \mathbb{Z}_{\geq 0}$, we denote by $C_{m}^{[i]}(\Gamma,\mathcal{O}_{\K})$ the free $\mathcal{O}_{\K}$-submodule of $C^{[i]}(\Gamma,\mathcal{O}_{\K})$ generated by $\chi^{i}(x)1_{a\Gamma^{p^{m}}}(x)$ with $a\in \Gamma\slash \Gamma^{p^{m}}$. Here $1_{a\Gamma^{p^{m}}}(x):\Gamma\rightarrow \mathcal{O}_{\K}$ is the characteristic function of the open subset $a\Gamma^{p^{m}}$ of $\Gamma$. We note that $\mathrm{Hom}_{\mathcal{O}_{\K}}(C^{[i]}_{m}(\Gamma,\mathcal{O}_{\K}),\mathcal{O}_{\K})$ is an $\mathcal{O}_{\K}$-algebra by the natural convolution.  We can define an $\mathcal{O}_{\K}$-algebra isomorphism 
\begin{equation}\label{onevariable isom Ih from Dh eq finite group ring Omegaisom}
\mathcal{O}_{\K}[\Gamma\slash \Gamma^{p^{m}}]\stackrel{\sim}{\rightarrow}\mathcal{O}_{\K}[[\Gamma]]\slash (\Omega_{m}^{[i]}(\gamma))
\end{equation}
to be $\sum_{a\in \Gamma\slash \Gamma^{p^{m}}}c_{a}[a]\mapsto \sum_{a\in \Gamma\slash \Gamma^{p^{m}}}c_{a}\chi^{-i}(a)[a]$ with $c_{a}\in \mathcal{O}_{\K}$ and an $\mathcal{O}_{\K}$-algebra isomorphism
\begin{equation}\label{onevariable isom Ih from Dh eq finite group ring muiisom}
\mathcal{O}_{\K}[\Gamma\slash \Gamma^{p^{m}}]\stackrel{\sim}{\rightarrow}\mathrm{Hom}_{\mathcal{O}_{\K}}(C_{m}^{[i]}(\Gamma,\mathcal{O}_{\K}),\mathcal{O}_{\K})
\end{equation}
to be $\sum_{a\in \Gamma\slash \Gamma^{p^{m}}}c_{a}[a]\mapsto \sum_{a\in \Gamma\slash \Gamma^{p^{m}}}c_{a}\mu_{a}^{(i)}$ with $c_{a}\in \mathcal{O}_{\K}$ where $\mu_{a}^{(i)}$ is the mesure defined by $\mu_{a}^{(i)}(\chi(x)^{i}1_{a\Gamma^{p^{m}}}(x))=1$ and $\mu_{a}^{(i)}(\chi(x)^{i}1_{b\Gamma^{p^{m}}}(x))=0$  for every $b\in \Gamma\slash \Gamma^{p^{m}}$ such that $b\neq a$. By the isomorphisms \eqref{onevariable isom Ih from Dh eq finite group ring Omegaisom} and \eqref{onevariable isom Ih from Dh eq finite group ring muiisom}, we have an $\mathcal{O}_{\K}$-algebra isomorphism
\begin{equation}\label{onevariable isom Ih from Dh eq Omega ring muiisom}
\mathcal{O}_{\K}[[\Gamma]]\slash (\Omega_{m}^{[i]}(\gamma))\stackrel{\sim}{\rightarrow}\mathrm{Hom}_{\mathcal{O}_{\K}}(C_{m}^{[i]}(\Gamma,\mathcal{O}_{\K}),\mathcal{O}_{\K}).
\end{equation}
Since $\frac{M^{0}[[\Gamma]]}{\Omega_{m}^{[i]}(\gamma)M^{0}[[\Gamma]]}\otimes_{\mathcal{O}_{\K}}\K$ is isomorphic to $\frac{\mathcal{O}_{\K}[[\Gamma]]}{(\Omega_{m}^{[i]}(\gamma))}\otimes_{\mathcal{O}_{\K}}M$ and $\mathrm{Hom}_{\mathcal{O}_{\K}}(C_{m}^{[i]}(\Gamma,\mathcal{O}_{\K}),M)$ is isomorphic to $\mathrm{Hom}_{\mathcal{O}_{\K}}(C_{m}^{[i]}(\Gamma,\mathcal{O}_{\K}),\mathcal{O}_{\K})\otimes_{\mathcal{O}_{\K}}M$, the isomorphism \eqref{onevariable isom Ih from Dh eq Omega ring muiisom} induces a $\K$-linear isomorphism
$$\frac{M^{0}[[\Gamma]]}{\Omega_{m}^{[i]}(\gamma)M^{0}[[\Gamma]]}\otimes_{\mathcal{O}_{\K}}\K\stackrel{\sim}{\rightarrow}\mathrm{Hom}_{\mathcal{O}_{\K}}(C_{m}^{[i]}(\Gamma,\mathcal{O}_{\K}),M)$$
naturally. Since we have a natural isomorphism $\varinjlim_{m\in \mathbb{Z}_{\geq 0}}C_{m}^{[i]}(\Gamma,\mathcal{O}_{\K})\stackrel{\sim}{\rightarrow}C^{[i]}(\Gamma,\mathcal{O}_{\K})$, we see that 
\begin{align*}
\varprojlim_{m\in \mathbb{Z}_{\geq 0}}\left(
\frac{M^{0}[[\Gamma]]}{ \Omega_{m}^{[i]}(\gamma)M^{0}[[\Gamma]]}
\otimes_{\mathcal{O}_{\K}}\K
\right)
& 
\stackrel{\sim}{\rightarrow}\varprojlim_{m\in\mathbb{Z}_{\geq 0}}\mathrm{Hom}_{\mathcal{O}_{\K}}(C_{m}^{[i]}(\Gamma,\mathcal{O}_{\K}),M)
\\ 
& \simeq \mathrm{Hom}_{\mathcal{O}_{\K}}(C^{[i]}(\Gamma,\mathcal{O}_{\K}),M). 
\end{align*}
Since $I_{h}^{[d,e]}(M)$ is a $\K$-linear subspace of $\varprojlim_{m\in \mathbb{Z}_{\geq 0}}\left(
\frac{M^{0}[[\Gamma]]}{\Omega_{m}^{[d,e]}(\gamma)M^{0}[[\Gamma]]}\otimes_{\mathcal{O}_{\K}}\K\right)$ and there exists a natural injective map $\varprojlim_{m\in \mathbb{Z}_{\geq 0}}\left(
\frac{M^{0}[[\Gamma]]}{\Omega_{m}^{[d,e]}(\gamma)M^{0}[[\Gamma]]}\otimes_{\mathcal{O}_{\K}}\K\right)\hookrightarrow \prod_{i=d}^{e}\varprojlim_{m\in\mathbb{Z}_{\geq0}}\bigg(
\frac{M^{0}[[\Gamma]]}{\Omega_{m}^{[i]}(\gamma)M^{0}[[\Gamma]]}\linebreak\otimes_{\mathcal{O}_{\K}}\K\bigg)$, we have an injective map 
\begin{align}\label{onevariable isom Ih from Dh eq Ihinject prod hom}
\begin{split}
I_{h}^{[d,e]}(M)&\hookrightarrow \prod_{i=d}^{e}\varprojlim_{m\in\mathbb{Z}_{\geq0}}\left(
\frac{M^{0}[[\Gamma]]}{\Omega_{m}^{[i]}(\gamma)M^{0}[[\Gamma]]}\otimes_{\mathcal{O}_{\K}}\K\right)\\
&\stackrel{\sim}{\rightarrow}\prod_{i=d}^{e}\mathrm{Hom}_{\mathcal{O}_{\K}}(C^{[i]}(\Gamma,\mathcal{O}_{\K}),M).
\end{split}
\end{align}
We remark that we have a natural $\K$-linear isomorphism 
\begin{equation}\label{onevariable isom Ih from Dh eq  prod hom and homde}
\mathrm{Hom}_{\mathcal{O}_{\K}}(C^{[d,e]}(\Gamma,\mathcal{O}_{\K}),M)\stackrel{\sim}{\rightarrow}\prod_{i=d}^{e}\mathrm{Hom}_{\mathcal{O}_{\K}}(C^{[i]}(\Gamma,\mathcal{O}_{\K}),M)\end{equation}
defined by $\mu\mapsto (\mu\vert_{C^{[i]}(\Gamma,\mathcal{O}_{\K})})_{i=d}^{e}$. By \eqref{onevariable isom Ih from Dh eq Ihinject prod hom} and \eqref{onevariable isom Ih from Dh eq  prod hom and homde}, we have a $\K$-linear injective map
\begin{equation}\label{onevariable isom Ih from Dh eq inj ih[d,e] into homde}
I_{h}^{[d,e]}(M) \hookrightarrow \mathrm{Hom}_{\mathcal{O}_{\K}}(C^{[d,e]}(\Gamma,\mathcal{O}_{\K}),M).
\end{equation}
For each $s^{[d,e]}\in I_{h}^{[d,e]}(M)$, we denote by $\mu_{s^{[d,e]}}\in \mathrm{Hom}_{\mathcal{O}_{\K}}(C^{[d,e]}(\Gamma,\mathcal{O}_{\K}),M)$ the image of $s^{[d,e]}$ by \eqref{onevariable isom Ih from Dh eq inj ih[d,e] into homde}. By the construction of \eqref{onevariable isom Ih from Dh eq inj ih[d,e] into homde}, we see that $\mu_{s^{[d,e]}}$ satisfies the condition  \eqref{equation:mapfromDtoH banach onevariable interolationf} for every $\kappa\in\mathfrak{X}_{\mathcal{O}_{\K}[[\Gamma]]}^{[d,e]}$. }
\par 
Next, we will prove that $\mu_{s^{[d,e]}}\in \mathcal{D}_{h}^{[d,e]}(\Gamma,M)$ and $v_{h}^{[d,e]}(\mu_{s^{[d,e]}})\geq 0$ for each $s^{[d,e]}\in I_{h}^{[d,e]}(M)^{0}$. Let $\tilde{s}_{m}^{[d,e]}\in p^{-hm}M^{0}[[\Gamma]]$ be a lift of $s_{m}^{[d,e]}$ for each $m\in \mathbb{Z}_{\geq 0}$. Let $m\in \mathbb{Z}_{\geq 0}$ and $\nu_{m}\in p^{-hm}\mathrm{Hom}_{\mathcal{O}_{\K}}(C(\Gamma,\mathcal{O}_{\K}),M^{0})$ the inverse image of $\tilde{s}_{m}^{[d,e]}$ by the isomorphism \eqref{hom banach iwasawa continuous}. Then, we have 
\begin{equation}\label{onevariable isom Ih from Dh eqint musdeintgammanum}
\int_{\Gamma}\kappa\vert_{\Gamma}d\mu_{s^{[d,e]}}=\kappa(\tilde{s}_{m}^{[d,e]})=\int_{\Gamma}\kappa\vert_{\Gamma}d\nu_{m}
\end{equation}
for each $\kappa\in \mathfrak{X}_{\mathcal{O}_{\K}[[\Gamma]]}^{[d,e]}$ with $m_{\kappa}\leq m$. For each $a\in \Gamma$ and $i\in [d,e]$, we have
$$
1_{a\Gamma^{p^{m}}}(x)\chi(x)^{i}=\frac{1}{p^{m}}\sum_{\substack{\kappa\in \mathfrak{X}_{\mathcal{O}_{\K}[[\Gamma]]}^{[i]}\\ m_{\kappa}\leq m}}\phi_{\kappa}(a)^{-1}\kappa\vert_{\Gamma}(x)$$
by the inverse Fourier transform. Then, by \eqref{onevariable isom Ih from Dh eqint musdeintgammanum}, we have
\begin{equation}\label{onevariable isom Ih from Dh eqint hiadmmusadchinum}
\int_{a\Gamma^{p^{m}}}(\chi(x)-\chi(a))^{i-d}\chi(x)^{d}d\mu_{s^{[d,e]}}=\int_{a\Gamma^{p^{m}}}(\chi(x)-\chi(a))^{i-d}\chi(x)^{d}d\nu_{m}
\end{equation}
for each $a\in \Gamma$ and $i\in [d,e]$.
Since $\nu_{m}\in p^{-hm}\mathrm{Hom}_{\mathcal{O}_{\K}}(C(\Gamma,\mathcal{O}_{\K}),M^{0})$, by \eqref{continuous measure and spectral norm}, we see that $v_{M}\left(\int_{\Gamma}f(x)d\nu_{m}\right)\geq \inf\{\ord_{p}(f(x))\}_{x\in \Gamma}-hm$ for each $f\in C(\Gamma,\mathcal{O}_{\K})$. In particular, we have
\begin{align}\label{onevariable isom Ih from Dhvm(chi-chi)chinumgeq --dim}
\begin{split}
v_{M}\left(\int_{a\Gamma^{p^{m}}}(\chi(x)-\chi(a))^{i-d}\chi(x)^{d}d\nu_{m}\right)&\geq\inf\left\{\ord_{p}\left((\chi(x)-\chi(a))^{i-d}\chi(x)^{d}1_{a\Gamma^{p^{m}}}(x)\right)\right\}_{x\in \Gamma}\\
&-hm\geq -(h-(i-d))m.
\end{split}
\end{align}
By \eqref{onevariable isom Ih from Dh eqint hiadmmusadchinum} and \eqref{onevariable isom Ih from Dhvm(chi-chi)chinumgeq --dim}, we have
\begin{align*}
v_{M}\left(\int_{a\Gamma^{p^{m}}}(\chi(x)-\chi(a))^{i-d}\chi(x)^{d}d\mu_{s^{[d,e]}}\right)\geq -(h-(i-d))m.
\end{align*}
Thus, we have $\mu_{s^{[d,e]}}\in \mathcal{D}_{h}^{[d,e]}(\Gamma,M)$ and 
\begin{equation}\label{onevariable isom Ih from Dhvdhde,musdegeq0}
v_{h}^{[d,e]}(\mu_{s^{[d,e]}})\geq 0
\end{equation}
for each $s^{[d,e]}\in I_{h}^{[d,e]}(M)^{0}$. Therefore, we have defined  the desired map \eqref{equation:mapfromDtoH banach} from $I_{h}^{[d,e]}(M)$ into $D_{h}^{[d,e]}(\Gamma,M)$. 
\par 
Up to now, we have defined the map $\Psi$. We will prove that $\Psi$ is an isomorphism in the rest of the proof.
\par 
We  prove the injectivity of the map $\Psi$. Let $s^{[d,e]}=(s_{m}^{[d,e]})_{m\in \mathbb{Z}_{\geq 0}}\in I_{h}^{[d,e]}(M)$ such that $\Psi(s^{[d,e]})=0$. Since $\Psi(s^{[d,e]})=0$, we have 
$$
\kappa(\tilde{s}_{m_{\kappa}}^{[d,e]})=0$$
for every $\kappa\in \mathfrak{X}_{\mathcal{O}_{\K}[[\Gamma]]}^{[d,e]}$ where $\tilde{s}_{m_{\kappa}}^{[d,e]}$ is a lift of $s_{m_{\kappa}}^{[d,e]}$. Thus, by \eqref{onevariable iwasawa modoomega speq}, we see that $\tilde{s}_{m}^{[d,e]}\in \Omega_{m}^{[d,e]}M^{0}[[\Gamma]]\otimes_{\mathcal{O}_{\K}}\K$ for every $m\in \mathbb{Z}_{\geq 0}$ and we have $s^{[d,e]}=0$. Therefore, the map of \eqref{equation:mapfromDtoH banach} is injective.
\par 
By the injectivity of the map $\Psi$, we  can regard $I_{h}^{[d,e]}(M)$ as an $\mathcal{O}_{\K}[[\Gamma]]\otimes_{\mathcal{O}_{\K}}\K$-module subspace of $\mathcal{D}_{h}^{[d,e]}(\Gamma,M)$. Further, by \eqref{onevariable isom Ih from Dhvdhde,musdegeq0}, we have $I_{h}^{[d,e]}(M)^{0}\subset \left\{\mathcal{D}^{[d,e]}_h (\Gamma , M)\Big\vert v_{h}^{[d,e]}(\mu)\geq 0\right\}$. If we have $\Big\{\mu\in \mathcal{D}^{[d,e]}_h (\Gamma , M)\Big\vert v_{h}^{[d,e]}(\mu)\geq c^{[d,e]}\Big\}\subset I_{h}^{[d,e]}(M)^{0}$, we see that the map is surjective easily. 
\par 
To complete the proof, it suffices to prove that 
\begin{equation}\label{onevariable isom Ih from Dhvdh suffcondi surjective}
\Big\{\mu\in \mathcal{D}^{[d,e]}_h (\Gamma , M)\Big\vert v_{h}^{[d,e]}(\mu)\geq c^{[d,e]}\Big\}\subset I_{h}^{[d,e]}(M)^{0}.
\end{equation}
Let $\mu\in \mathcal{D}_{h}^{[d,e]}(\Gamma,M)$ with $v_{h}^{[d,e]}(\mu)\geq c^{[d,e]}$. Let $m\in \mathbb{Z}_{\geq 0}$ and $i\in [d,e]$. We define 
$$
r_{m}^{[i]}=\sum_{l=0}^{p^{m}-1}
\int_{\gamma^{l}\Gamma^{p^{m}}}\chi(x)^{i}d\mu(u^{-i}[\gamma])^{l}\in M^{0}[[\Gamma]]\otimes_{\mathcal{O}_{\K}}\K.
$$
We note that $r_{m}^{[i]}$ satisfies 
\begin{equation}\label{onevariable isom Ih from Dhvdh sp of rmi}
\kappa(r_{m}^{[i]})=\sum_{l=0}^{p^{m}-1}\int_{\gamma^{l}\Gamma^{p^{m}}}\chi(x)^{i}\phi_{\kappa}(\gamma^{l})d\mu =\int_{\Gamma}\kappa\vert_{\Gamma}\mu
\end{equation}
for every $\kappa\in \mathfrak{X}^{[i]}_{\mathcal{O}_{\K}[[\Gamma]]}$ such that $m_{\kappa}\leq m$. Thus, we see that
\begin{equation}\label{onevariable isom Ih from Dhvminj rm+1rmeqiv}
\kappa(r_{m+1}^{[i]})=\kappa(r_{m}^{[i]})
\end{equation}
for every $m\in \mathbb{Z}_{\geq 0}$ and for every $\kappa\in \mathfrak{X}_{\mathcal{O}_{\K}[[\Gamma]]}^{[i]}$ with $m_{\kappa}\leq m$.  By the definition of $r_{m}^{[i]}$, we have
\begin{align*}
&\sum_{i\in [d,j]}\begin{pmatrix}j-d\\i-d\end{pmatrix}
(-1)^{j-i}r_{m}^{[i]}\\
&=\sum_{l=0}^{p^{m}-1}u^{-lj}\int_{\gamma^{l}\Gamma^{p^{m}}}\sum_{i\in[d,j]}\begin{pmatrix}j-d\\i-d\end{pmatrix}(-u^{l})^{j-i}\chi(x)^{i} d\mu[\gamma]^{l}\\
&=\sum_{l=0}^{p^{m}-1}u^{-lj}\int_{\gamma^{l}\Gamma^{p^{m}}}(\chi(x)-u^{l})^{j-d}\chi(x)^{d} d\mu[\gamma]^{l}\in p^{c^{[d,e]}+(j-d-h)m}M^{0}[[\Gamma]].
\end{align*}
Therefore, by Lemma \ref{onevariable litfing prop}, there exists a unique element $s_{m}^{[d,e]}\in \frac{M^{0}[[\Gamma]]}{\Omega_{m}^{[d,e]}M^{0}[[\Gamma]]}\otimes_{\mathcal{O}_{\K}}p^{-hm}\mathcal{O}_{\K}$ such that the image of $s^{[d,e]}$ by the natural projection $\frac{M^{0}[[\Gamma]]}{\Omega_{m}^{[d,e]}M^{0}[[\Gamma]]}\otimes_{\mathcal{O}_{\K}}\K\rightarrow \frac{M^{0}[[\Gamma]]}{\Omega_{m}^{[i]}M^{0}[[\Gamma]]}\otimes_{\mathcal{O}_{\K}}\K$ is $[r_{m}^{[i]}]$ for every $i\in [d,e]$. By \eqref{onevariable isom Ih from Dhvminj rm+1rmeqiv}, we have
$$\kappa(\tilde{s}^{[d,e]}_{m+1})=\kappa(r_{m+1}^{[w_{\kappa}]})=\kappa(r_{m}^{[w_{\kappa}]})=\kappa(\tilde{s}_{m}^{[d,e]})$$
for every $m\in \mathbb{Z}_{\geq 0}$ and for every $\kappa\in \mathfrak{X}_{\mathcal{O}_{\K}[[\Gamma]]}^{[d,e]}$ with $m_{\kappa}\leq m$. By \eqref{onevariable iwasawa modoomega speq}, we have $s_{m+1}^{[d,e]}\equiv s_{m}^{[d,e]}\ \mathrm{mod}\ \Omega_{m}^{[d,e]}$ for every $m\in \mathbb{Z}_{\geq 0}$. Thus, we see that $(s_{m}^{[d,e]})_{m\in \mathbb{Z}_{\geq 0}}\in \varprojlim_{m\in \mathbb{Z}_{\geq 0}}\bigg(\frac{M^{0}[[\Gamma]]}{\Omega_{m}^{[d,e]}M^{0}[[\Gamma]]}\linebreak\otimes_{\mathcal{O}_{\K}}\K\bigg)$. Since $s_{m}^{[d,e]}\in \frac{M^{0}[[\Gamma]]}{\Omega_{m}^{[d,e]}M^{0}[[\Gamma]]}\otimes_{\mathcal{O}_{\K}}p^{-hm}\mathcal{O}_{\K}$ for every $m\in \mathbb{Z}_{\geq 0}$, $(s_{m}^{[d,e]})_{m\in \mathbb{Z}_{\geq 0}}\in I_{h}^{[d,e]}(M)^{0}$. By \eqref{onevariable isom Ih from Dhvdh sp of rmi}, we see that $\Psi((s_{m}^{[d,e]})_{m\in \mathbb{Z}_{\geq 0}})=\mu$. Therefore, we have \eqref{onevariable isom Ih from Dhvdh suffcondi surjective}.
\end{proof}

\section{Proof of the main result for the case of the multi-variable Iwasawa algebra} \label{sc:ordinary}
In this section, we prove main results for the case of the multi-variable Iwasawa algebra. 
Let $k$ be a positive integer. 
We put $\boldsymbol{0}_{k}=(0,\ldots,0)\in \mathbb{Z}_{\geq 0}^{k}$. 
For each element $\boldsymbol{a}\in \mathbb{R}^{k}$ with $k\geq 2$, we put $\boldsymbol{a}^{\prime}=(a_{1},\ldots, a_{k-1})\in \mathbb{R}^{k-1}$. 
For each integer $i$ satisfying $1\leq i\leq k$, we set $\Gamma_{i}$ to be a $p$-adic Lie group which is isomorphic to $1+2p\mathbb{Z}_p\subset \mathbb{Q}_{p}^{\times}$ via a continuous character $\chi_{i} : \Gamma_{i} \longrightarrow \mathbb{Q}_{p}^{\times}$. 
For each $i$, we choose and fix a topological generator $\gamma_{i}\in \Gamma_{i}$ and we put $u_{i}=\chi_{i}(\gamma_{i})$.
We define $\Gamma=\Gamma_{1}\times \cdots \times\Gamma_{k}$. Put $\Gamma^{\prime}=\Gamma_{1}\times\cdots \times \Gamma_{k-1}$ if $k\geq 2$. In this section, we fix a $\K$-Banach space $(M,v_{M})$. 
\begin{thm}\label{main theorem 1 and proof}
Let $k$ be a positive integer. Let $\boldsymbol{h}\in \ord_{p}(\mathcal{O}_{\K}\backslash \{0\})^{k}$ and let $\boldsymbol{d}\in\mathbb{Z}^{k}$. 
 If $f\in \HH_{\boldsymbol{h}}(M)$ satisfies $f(u_{1}^{i_{1}}\epsilon_{1}-1,\ldots,u_{k}^{i_{k}}\epsilon_{k}-1)=0$ for every $k$-tuple $\boldsymbol{i}\in
 [\boldsymbol{d},\boldsymbol{d}+\lfloor\boldsymbol{h}\rfloor]$ and for every $(\epsilon_{1},\ldots, \epsilon_{k})\in \mu_{p^{\infty}}^{k}$, then $f$ is zero.
\end{thm}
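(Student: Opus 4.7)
The plan is to proceed by induction on $k$, reducing everything to the one-variable result (Proposition \ref{generalization of uniqueness on M}). The base case $k=1$ is literally that proposition applied to the $\K$-Banach space $M$. For the inductive step, assume $k\geq 2$ and that the theorem holds in $k-1$ variables for every $\K$-Banach space. Put $\boldsymbol{h}'=(h_{1},\ldots,h_{k-1})$ and $\boldsymbol{d}'=(d_{1},\ldots,d_{k-1})$, and use Proposition \ref{isometry ofHh for induction} \eqref{isometry ofHh for induction1} to identify $\HH_{\boldsymbol{h}}(M)$ with $\HH_{h_{k}}(\HH_{\boldsymbol{h}'}(M))$, writing $f=(f_{n})_{n\geq 0}$ with $f_{n}\in\HH_{\boldsymbol{h}'}(M)$.

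Fix $\boldsymbol{i}'\in[\boldsymbol{d}',\boldsymbol{d}'+\lfloor\boldsymbol{h}'\rfloor]$ and $\boldsymbol{\epsilon}'=(\epsilon_{1},\ldots,\epsilon_{k-1})\in\mu_{p^{\infty}}^{k-1}$, and set $\mathcal{L}=\K(\epsilon_{1},\ldots,\epsilon_{k-1})$ and $\boldsymbol{b}'=(u_{1}^{i_{1}}\epsilon_{1}-1,\ldots,u_{k-1}^{i_{k-1}}\epsilon_{k-1}-1)$. Since $\ord_{p}(b'_{j})>0$ for each $j$, the inclusion $\HH_{\boldsymbol{h}'}(M)\subset B_{\boldsymbol{r}}(M)$ for sufficiently small $\boldsymbol{r}\in\mathbb{Q}_{>0}^{k-1}$ shows that evaluation at $\boldsymbol{b}'$ defines a bounded $\K$-linear map $\mathrm{ev}_{\boldsymbol{b}'}:\HH_{\boldsymbol{h}'}(M)\to M_{\mathcal{L}}$. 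Applying $\mathrm{ev}_{\boldsymbol{b}'}$ coefficient-by-coefficient in $X_{k}$ gives a bounded operator from $\HH_{h_{k}}(\HH_{\boldsymbol{h}'}(M))$ to $\HH_{h_{k}}(M_{\mathcal{L}})$, whose image of $f$ is the one-variable element $g_{\boldsymbol{i}',\boldsymbol{\epsilon}'}=(f_{n}(\boldsymbol{b}'))_{n\geq 0}$. A direct rearrangement of absolutely convergent expansions shows that $g_{\boldsymbol{i}',\boldsymbol{\epsilon}'}(u_{k}^{i_{k}}\epsilon_{k}-1)=f(\boldsymbol{b}',u_{k}^{i_{k}}\epsilon_{k}-1)$ in $(M_{\mathcal{L}})_{\mathcal{L}(\epsilon_{k})}$, so the hypothesis of the theorem forces $g_{\boldsymbol{i}',\boldsymbol{\epsilon}'}(u_{k}^{i_{k}}\epsilon_{k}-1)=0$ for all $i_{k}\in[d_{k},d_{k}+\lfloor h_{k}\rfloor]$ and all $\epsilon_{k}\in\mu_{p^{\infty}}$.

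Proposition \ref{generalization of uniqueness on M} applied to $g_{\boldsymbol{i}',\boldsymbol{\epsilon}'}\in\HH_{h_{k}}(M_{\mathcal{L}})$ then gives $g_{\boldsymbol{i}',\boldsymbol{\epsilon}'}=0$; equivalently, $f_{n}(\boldsymbol{b}')=0$ in $M_{\mathcal{L}}$ for every $n\geq 0$. Since this holds for all admissible $\boldsymbol{i}'$ and $\boldsymbol{\epsilon}'$, the inductive hypothesis applied to each $f_{n}\in\HH_{\boldsymbol{h}'}(M)$ yields $f_{n}=0$ for every $n$, and therefore $f=0$.

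The main obstacle is conceptual rather than computational: one needs that $\HH_{h_{k}}(\cdot)$ is functorial for bounded operators between Banach spaces, so that partial evaluation in the first $k-1$ variables genuinely produces an element of $\HH_{h_{k}}(M_{\mathcal{L}})$ on which the one-variable Proposition \ref{generalization of uniqueness on M} can be applied. Once this is checked, the argument reduces cleanly to the one-variable case together with the inductive hypothesis, with no further estimates needed.
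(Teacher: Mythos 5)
Your proof is correct and takes essentially the same route as the paper's: after identifying $\HH_{\boldsymbol{h}}(M)$ with $\HH_{h_{k}}(\HH_{\boldsymbol{h}'}(M))$, both arguments evaluate in the first $k-1$ variables, apply the one-variable result (Proposition \ref{generalization of uniqueness on M}), and then invoke the inductive hypothesis on the coefficients $f_{n}$. The paper packages this as injectivity of a composite map $\psi\circ\phi$ rather than arguing pointwise, and your explicit check that $\HH_{h_{k}}(\cdot)$ is functorial for bounded operators is the same fact the paper relies on implicitly when asserting that each $\phi_{\boldsymbol{i}',(\epsilon_{1},\ldots,\epsilon_{k-1})}$ is a $\K$-Banach homomorphism landing in $\HH_{h_{k}}(M_{\K(\epsilon_{1},\ldots,\epsilon_{k-1})})$.
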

\begin{proof}
We prove this theorem by induction on $k$. When $k=1$, the theorem is already proved in $\mathrm{Proposition\ \ref{generalization of uniqueness on M}}$. 
In the rest of the proof, we will prove the desired statement for general $k\geq 2$ assuming that it is already proved up to $k-1$. 
%
%
{For each $\boldsymbol{i}^{\prime}\in[\boldsymbol{d}^{\prime},\boldsymbol{d}^{\prime}+\lfloor\boldsymbol{h}^{\prime}\rfloor]$ and for 
each $(\epsilon_{1},\ldots, \epsilon_{k-1})\in \mu_{p^{\infty}}^{k-1}$, we define 
a $\K$-Banach homomorphism $$\phi_{\boldsymbol{i}^{\prime},(\epsilon_{1},\ldots, \epsilon_{k-1})}: \HH_{h_{k}}(\HH_{\boldsymbol{h}^{\prime}}(M))\rightarrow\HH_{h_{k}}(M_{\K(\epsilon_{1},\ldots,\epsilon_{k-1})})$$
by setting $\phi_{\boldsymbol{i}^{\prime},(\epsilon_{1},\ldots, \epsilon_{k-1})}((f_{n})_{n=0}^{+\infty})=(f_{n}(u_{1}^{i_{1}^{\prime}}\epsilon_{1}-1,\ldots, u_{k-1}^{i_{k-1}^{\prime}}\epsilon_{k-1}-1))_{n=0}^{+\infty}$ for each $(f_{n})_{n=0}^{+\infty}\in \HH_{h_{k}}(\HH_{\boldsymbol{h}^{\prime}}(M))$ and we define 
a map 
$$\phi:\HH_{h_{k}}(\HH_{\boldsymbol{h}^{\prime}}(M))\rightarrow \prod_{\substack{{\boldsymbol{d}^{\prime}}\leq {\boldsymbol{i}^{\prime}}\leq {\boldsymbol{d}^{\prime}}+\lfloor\boldsymbol{h}^{\prime}\rfloor\\ 
(\epsilon_{1},\ldots, \epsilon_{k-1})\in \mu_{p^{\infty}}^{k-1}}}\HH_{h_{k}}(M_{\K(\epsilon_{1},\ldots, \epsilon_{k-1})}),\ 
$$
by setting 
$ \phi (f ) = 
\prod_{\substack{{\boldsymbol{d}^{\prime}}\leq {\boldsymbol{i}^{\prime}}\leq {\boldsymbol{d}^{\prime}}+\lfloor\boldsymbol{h}^{\prime}\rfloor , 
(\epsilon_{1},\ldots, \epsilon_{k-1})\in \mu_{p^{\infty}}^{k-1}}}
 (\phi_{\boldsymbol{i}^{\prime},(\epsilon_{1},\ldots \epsilon_{k-1})}(f))
$.  
By the induction hypothesis, we see that $\phi$ is injective. By applying the result in the case $k=1$, for every $(\epsilon_{1},\ldots, \epsilon_{k-1})\in \mu_{p^{\infty}}^{k-1}$, we have an injective $\K(\epsilon_{1},\ldots, \epsilon_{k-1})$-linear map:
\begin{align*}
\psi_{(\epsilon_{1},\ldots,\epsilon_{k-1})}:\HH_{h_{k}}(M_{\K(\epsilon_{1},\ldots, \epsilon_{k-1})})&\hookrightarrow \prod_{\substack{d_{k}\leq i_{k}\leq d_{k}+\lfloor h_{k}\rfloor\\\epsilon_{k}\in\mu_{p}^{\infty}}}M_{\K(\epsilon_{1},\ldots, \epsilon_{k})}\\
f&\mapsto (f(u_{k}^{i_{k}}\epsilon_{k}-1))_{\substack{d_{k}\leq i_{k}\leq d_{k}+\lfloor h_{k}\rfloor\\ \epsilon_{k}\in\mu_{p^{\infty}}}}.
\end{align*}
Then, we have the following injective $\K$-linear map:
\begin{align*}
\psi: \prod_{\substack{{\boldsymbol{d}^{\prime}}\leq {\boldsymbol{i}^{\prime}}\leq {\boldsymbol{d}^{\prime}}+\lfloor\boldsymbol{h}^{\prime}\rfloor\\ 
(\epsilon_{1},\ldots, \epsilon_{k-1})\in \mu_{p^{\infty}}^{k-1}}}\HH_{h_{k}}(M_{\K(\epsilon_{1},\ldots, \epsilon_{k-1})})&\hookrightarrow \prod_{\substack{{\boldsymbol{d}^{\prime}}\leq {\boldsymbol{i}^{\prime}}\leq {\boldsymbol{d}^{\prime}}+\lfloor\boldsymbol{h}^{\prime}\rfloor\\ 
(\epsilon_{1},\ldots, \epsilon_{k-1})\in \mu_{p^{\infty}}^{k-1}}}\prod_{\substack{d_{k}\leq i_{k}\leq d_{k}+\lfloor h_{k}\rfloor\\\epsilon_{k}\in\mu_{p}^{\infty}}}M_{\K(\epsilon_{1},\ldots, \epsilon_{k})}\\
(f_{\boldsymbol{i}^{\prime},(\epsilon_{1},\ldots, \epsilon_{k-1})})_{\substack{{\boldsymbol{d}^{\prime}}\leq {\boldsymbol{i}^{\prime}}\leq {\boldsymbol{d}^{\prime}}+\lfloor\boldsymbol{h}^{\prime}\rfloor\\ 
(\epsilon_{1},\ldots, \epsilon_{k-1})\in \mu_{p^{\infty}}^{k-1}}}&\mapsto (\psi_{(\epsilon_{1},\ldots,\epsilon_{k-1})}(f_{\boldsymbol{i}^{\prime},(\epsilon_{1},\ldots, \epsilon_{k-1})}))_{\substack{{\boldsymbol{d}^{\prime}}\leq {\boldsymbol{i}^{\prime}}\leq {\boldsymbol{d}^{\prime}}+\lfloor\boldsymbol{h}^{\prime}\rfloor\\ 
(\epsilon_{1},\ldots, \epsilon_{k-1})\in \mu_{p^{\infty}}^{k-1}}}.
\end{align*}
The injective maps $\phi$ and $\psi$ and the isometric isomorphism $\HH_{\boldsymbol{h}}(M)\simeq \HH_{h_{k}}(\HH_{\boldsymbol{h}^{\prime}}(M))$ of $\mathrm{Proposition\ \ref{isometry ofHh for induction}}$ induce the following injective $\K$-linear map: 
\begin{align*}
\HH_{\boldsymbol{h}}(M)&\simeq \HH_{h_{k}}(\HH_{\boldsymbol{h}^{\prime}}(M))\stackrel{\phi}{  \hookrightarrow}\prod_{\substack{{\boldsymbol{d}^{\prime}}\leq {\boldsymbol{i}^{\prime}}\leq {\boldsymbol{d}^{\prime}}+\lfloor\boldsymbol{h}^{\prime}\rfloor\\ 
(\epsilon_{1},\ldots, \epsilon_{k-1})\in \mu_{p^{\infty}}^{k-1}}}\HH_{h_{k}}(M_{\K(\epsilon_{1},\ldots, \epsilon_{k-1})})\\
&\stackrel{\psi}{\hookrightarrow}\prod_{\substack{{\boldsymbol{d}^{\prime}}\leq {\boldsymbol{i}^{\prime}}\leq {\boldsymbol{d}^{\prime}}+\lfloor\boldsymbol{h}^{\prime}\rfloor\\ 
(\epsilon_{1},\ldots, \epsilon_{k-1})\in \mu_{p^{\infty}}^{k-1}}}\prod_{\substack{d_{k}\leq i_{k}\leq d_{k}+\lfloor h_{k}\rfloor\\\epsilon_{k}\in\mu_{p}^{\infty}}}M_{\K(\epsilon_{1},\ldots, \epsilon_{k})}\simeq \prod_{\substack{\boldsymbol{d}\leq \boldsymbol{i}\leq \boldsymbol{d}+\lfloor\boldsymbol{h}\rfloor\\ (\epsilon_{1},\ldots, \epsilon_{k})\in \mu_{p^{\infty}}^{k}}}M_{\K(\epsilon_{1},\ldots, \epsilon_{k})}.
\end{align*}
We note that the composite of the above injective maps is equal to the map sending 
$f$ to $(f(u_{1}^{i_{1}}\epsilon_{1}-1,\ldots, u_{k}^{i_{k}}\epsilon_{k}-1))
_{{\boldsymbol{d}}\leq {\boldsymbol{i}}\leq {\boldsymbol{d}+\lfloor\boldsymbol{h}\rfloor,  (\epsilon_{1},\ldots, \epsilon_{k})\in \mu_{p^{\infty}}^{k}}}$.} 
The desired conclusion of the theorem follows by the injectivity of this composite map.
\end{proof}
Let $\boldsymbol{r} =(r_i)_{1\leq i\leq k}\in \mathbb{Q}^{k}$. In the following Proposition \ref{multivariable weiestrass polynomial pro} and Corollary \ref{lemma for main theorem two}, we regard $B_{(r_{1},\ldots, r_{i})}(M)$ as a $\K$-subspace of $M[[X_{1},\ldots, X_{i}]]$ for each $1\leq i\leq k$.
\begin{pro}\label{multivariable weiestrass polynomial pro}
Let $\boldsymbol{r} =(r_i)_{1\leq i\leq k}\in \mathbb{Q}^{k}$. For each $1\leq i\leq k$, 
we choose $f_{i}\in B_{r_{i}}^{\mathrm{md}}(\K)\backslash \{0\}$ and set $s_i = d_{r_{i}}(f_{i})$. Then for each $f\in B_{\boldsymbol{r}}(M)$, there exist a unique  $q_{i}\in B_{(r_{1},\ldots, r_{i})}(M)[X_{i+1},\ldots, X_{k}]$ for each $1\leq i\leq k$ and a unique $t\in M[X_{1},\ldots, X_{k}]$ which satisfy the following conditions:
\begin{enumerate}
\item We have $f=f_{1}(X_{1})q_{1}+\cdots +f_{k}(X_{k})q_{k}+t$.\label{multivariable weiestrass polynomial pro1}
\item We have $\deg_{X_{i}} t<s_{i}$ for each $1\leq i\leq k$.\label{multivariable weiestrass polynomial pro2}
\item For each $1\leq i<k$, $q_{i}\in B_{(r_{1},\ldots, r_{i})}(M)[X_{i+1},\ldots, X_{k}]$ satisfies $\deg_{X_{j}}q_{i}<s_{j}$ for each $i+1\leq j\leq k$.\label{multivariable weiestrass polynomial pro3}
\end{enumerate}
In addition, we have 
\begin{equation}\label{multivariable weiestrass polynomial pro vboldsymbolrf=pro}
v_{\boldsymbol{r}}(f)=\min\{v_{r_{1}}(f_{1})+v_{\boldsymbol{r}}(q_{1}),\ldots, v_{r_{k}}(f_{k})+v_{\boldsymbol{r}}(q_{k}),v_{\boldsymbol{r}}(t)\}.
\end{equation}
\end{pro}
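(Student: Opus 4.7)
\medskip

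The plan is to prove this by induction on $k$, reducing the multi-variable Weierstrass division to an iterated application of the one-variable theorem (Proposition \ref{Weiestrass on Banach space}). The base case $k=1$ is exactly Proposition \ref{Weiestrass on Banach space}. For the inductive step, I would identify $B_{\boldsymbol{r}}(M)$ with $B_{r_{k}}(B_{\boldsymbol{r}^{\prime}}(M))$ via the isometric isomorphism of Proposition \ref{isometry ofHh for induction}, where $\boldsymbol{r}^{\prime}=(r_{1},\ldots,r_{k-1})$. The space $B_{\boldsymbol{r}^{\prime}}(M)$ is itself a $\K$-Banach space, so Proposition \ref{Weiestrass on Banach space} applies with this as coefficient space: for $f\in B_{\boldsymbol{r}}(M)$ and $f_{k}\in B_{r_{k}}^{\mathrm{md}}(\K)\setminus\{0\}$, there exist unique $q_{k}\in B_{\boldsymbol{r}}(M)$ and $r\in B_{\boldsymbol{r}^{\prime}}(M)[X_{k}]$ with $\deg_{X_{k}} r<s_{k}$ such that $f=f_{k}(X_{k})q_{k}+r$ and $v_{\boldsymbol{r}}(f)=\min\{v_{r_{k}}(f_{k})+v_{\boldsymbol{r}}(q_{k}),\,v_{\boldsymbol{r}}(r)\}$.

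Next, I would write $r=\sum_{j=0}^{s_{k}-1}r_{j}X_{k}^{j}$ with $r_{j}\in B_{\boldsymbol{r}^{\prime}}(M)$ and apply the induction hypothesis to each $r_{j}$, producing unique $q_{i,j}\in B_{(r_{1},\ldots,r_{i})}(M)[X_{i+1},\ldots,X_{k-1}]$ for $1\leq i\leq k-1$ and $t_{j}\in M[X_{1},\ldots,X_{k-1}]$ with $\deg_{X_{l}}q_{i,j}<s_{l}$ for $i+1\leq l\leq k-1$ and $\deg_{X_{i}}t_{j}<s_{i}$ for $1\leq i\leq k-1$, satisfying the decomposition and the one-step valuation equality. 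I would then set $q_{i}=\sum_{j=0}^{s_{k}-1}q_{i,j}X_{k}^{j}$ for $i<k$ and $t=\sum_{j=0}^{s_{k}-1}t_{j}X_{k}^{j}$; the prescribed degree conditions in \eqref{multivariable weiestrass polynomial pro2} and \eqref{multivariable weiestrass polynomial pro3} follow immediately by construction, and the decomposition \eqref{multivariable weiestrass polynomial pro1} is a matter of collecting terms.

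For uniqueness, if two decompositions agree, subtracting them and reducing modulo $f_{k}(X_{k})$ isolates the contribution in $X_{k}$-degree less than $s_{k}$, so the uniqueness clause of Proposition \ref{Weiestrass on Banach space} forces agreement of $q_{k}$ and of the remainder $r$; then the induction hypothesis applied coefficient-by-coefficient in $X_{k}$ forces agreement of the $q_{i,j}$ and $t_{j}$, hence of $q_{i}$ ($i<k$) and $t$. For the valuation identity \eqref{multivariable weiestrass polynomial pro vboldsymbolrf=pro}, I would combine the one-step equality $v_{\boldsymbol{r}}(f)=\min\{v_{r_{k}}(f_{k})+v_{\boldsymbol{r}}(q_{k}),\,v_{\boldsymbol{r}}(r)\}$ with the identities $v_{\boldsymbol{r}}(r)=\min_{j}\{v_{\boldsymbol{r}^{\prime}}(r_{j})+r_{k}j\}$, $v_{\boldsymbol{r}}(q_{i})=\min_{j}\{v_{\boldsymbol{r}^{\prime}}(q_{i,j})+r_{k}j\}$, and $v_{\boldsymbol{r}}(t)=\min_{j}\{v_{\boldsymbol{r}^{\prime}}(t_{j})+r_{k}j\}$ coming from Proposition \ref{isometry ofHh for induction}, and then apply the inductive valuation identity
\[
v_{\boldsymbol{r}^{\prime}}(r_{j})=\min\bigl\{v_{r_{1}}(f_{1})+v_{\boldsymbol{r}^{\prime}}(q_{1,j}),\ldots,v_{r_{k-1}}(f_{k-1})+v_{\boldsymbol{r}^{\prime}}(q_{k-1,j}),\,v_{\boldsymbol{r}^{\prime}}(t_{j})\bigr\}
\]
for each $j$; interchanging the two minima (legitimate as they range over finite sets) yields \eqref{multivariable weiestrass polynomial pro vboldsymbolrf=pro}.

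The main obstacle I foresee is purely bookkeeping rather than analytic: one must carefully verify that the polynomial variables in the codomains $B_{(r_{1},\ldots,r_{i})}(M)[X_{i+1},\ldots,X_{k}]$ line up correctly after the reordering from ``first divide by $f_{k}$, then subdivide the remainder in the lower variables.'' Once this indexing is set up, every analytic ingredient, namely the one-variable Weierstrass division with Banach coefficients and the Proposition \ref{isometry ofHh for induction} identification, is already in place; in particular there is no need to appeal to any sharper property of the $B_{r_{i}}^{\mathrm{md}}(\K)$-hypothesis beyond what was already used to establish Proposition \ref{Weiestrass on Banach space}.
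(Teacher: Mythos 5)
Your proposal is correct and follows essentially the same route as the paper: induction on $k$ via the identification $B_{\boldsymbol{r}}(M)\simeq B_{r_{k}}(B_{\boldsymbol{r}^{\prime}}(M))$ of Proposition \ref{isometry ofHh for induction}, one application of Proposition \ref{Weiestrass on Banach space} to divide out $f_{k}$, then coefficient-by-coefficient application of the inductive hypothesis to the $X_{k}$-expansion of the remainder, with the valuation identity obtained by interchanging finite minima. The paper merely organizes the argument in the opposite order (uniqueness via the homogeneous case $f_{1}q_{1}+\cdots+f_{k}q_{k}+t=0$ before existence), but the substance is identical.
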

\begin{proof}
Let $f\in B_{\boldsymbol{r}}(M)$. When $k=1$, Proposition \ref{multivariable weiestrass polynomial pro} is already proved in $\mathrm{Proposition\ \ref{Weiestrass on Banach space}}$ (note that the condition (3) is an empty condition when $k=1$). We assume that $k\geq 2$ and assume that the proposition is already proved for $k-1$. First, we prove the uniqueness of $q_{1},\ldots, q_{k}$ and $t$, which reduces to showing that 
$f_{1}(X_{1})q_{1}+\cdots +f_{k}(X_{k})q_{k}+t=0$ implies $q_{1}=\cdots =q_{k}=t=0$. Put $h=f_{1}(X_{1})q_{1}+\ldots+f_{k-1}(X_{k-1})q_{k-1}+t$. Via the isomorphism of Proposition \ref{isometry ofHh for induction}, we identify $B_{\boldsymbol{r}}(M)$ with $B_{r_{k}}(B_{\boldsymbol{r}^{\prime}}(M))$. Then, we have $f_{k}(X_{k})q_{k}+h=0$ in $B_{r_{k}}(B_{\boldsymbol{r}^{\prime}}(M))$. Further, since $q_{1},\ldots, q_{k-1}$ and $t$ satisfy the conditions \eqref{multivariable weiestrass polynomial pro2} and \eqref{multivariable weiestrass polynomial pro3}, we see that $h\in B_{\boldsymbol{r}^{\prime}}(M)[X_{k}]$ and $\deg_{X_{k}}h<s_{k}$. Therefore, by applying the result in the case $k=1$, we have $h=q_{k}=0$. Put $q_{i}=\sum_{l=0}^{s_{k}-1}X_{k}^{l}q_{i}^{(l)}$ for each $1\leq i\leq k-1$ and $t=\sum_{l=0}^{s_{k}-1}X_{k}^{l}t^{(l)}$, where $q_{i}^{(l)}\in B_{(r_{1},\ldots, r_{i})}(M)[X_{i+1},\ldots, X_{k-1}]$ and $t^{(l)}\in M[X_{1},\ldots, X_{k-1}]$. Since $h=\sum_{l=0}^{s_{k}-1}X_{k}^{l}(f_{1}(X_{1})q_{1}^{(l)}+\cdots f_{k-1}(X_{k-1})q_{k-1}^{(l)}+t^{(l)})=0$, we see that $f_{1}(X_{1})q_{1}^{(l)}+\cdots+f_{k-1}(X_{k-1})q_{k-1}^{(l)}+t^{(l)}=0$ for each $0\leq l<s_{k}$. Let $0\leq l<s_{k}$. By the condition \eqref{multivariable weiestrass polynomial pro2}, we have $\deg_{X_{i}}t^{(l)}<s_{i}$ for each $1\leq i\leq k-1$. Further, by the condition \eqref{multivariable weiestrass polynomial pro3}, for each $1\leq i<k-1$, we see that $\deg_{X_{j}}q_{i}^{(l)}<s_{j}$ for each $i+1\leq j\leq k-1$. Therefore, by induction on $k$, we have $q_{i}^{(l)}=0$ for each $0\leq i\leq k-1$ and $t^{(l)}=0$. Thus, $q_{i}=\sum_{l=0}^{s_{k}-1}X_{k}^{l}q_{i}^{(l)}=0$ for each $0\leq i\leq k-1$ and $t=\sum_{l=0}^{s_{k}-1}X_{k}^{l}t^{(l)}=0$. We get the uniqueness.

Next, we prove the existence $q_{1},\ldots, q_{k}$ and $t$. We also prove the estimate $v_{\boldsymbol{r}}(f)=\min\{v_{r_{1}}(f_{1})+v_{\boldsymbol{r}}(q_{1}),\ldots, v_{r_{k}}(f_{k})+v_{\boldsymbol{r}}(q_{k}),v_{\boldsymbol{r}}(t)\}$ simultaneously. We regard $f$ as an element of $B_{r_{k}}(B_{\boldsymbol{r}^{\prime}}(M))$. Since the isomorphism from $B_{\boldsymbol{r}}(M)$ into $B_{r_{k}}(B_{\boldsymbol{r}^{\prime}}(M))$ in Proposition \ref{isometry ofHh for induction} is isometric, we identify $v_{\boldsymbol{r}}$ with the valuation on $B_{r_{k}}(B_{\boldsymbol{r}^{\prime}}(M))$. By the result in the case $k=1$, we have the following unique expression: 
$$f=f_{k}(X_{k})q_{k}+u,$$
where $q_{k}\in B_{r_{k}}(B_{\boldsymbol{r}^{\prime}}(M))$ and $u\in B_{\boldsymbol{r}^{\prime}}(M)[X_{k}]$ with $\deg_{X_{k}}u<s_{k}$. In addition, we get $v_{\boldsymbol{r}}(f)=\min\{v_{r_{k}}(f_{k})+v_{\boldsymbol{r}}(q_{k}),v_{\boldsymbol{r}}(u)\}$. Put $u=\sum_{l=0}^{s_{k}-1}X_{k}^{l}u^{(l)}$ with $u^{(l)}\in B_{{\boldsymbol{r}^{\prime}}}(M)$ for $0\leq l<s_{k}$. By the definition of the valuation on $B_{r_{k}}(B_{\boldsymbol{r}^{\prime}}(M))$, we have $v_{\boldsymbol{r}}(u)=\min\{v_{{\boldsymbol{r}^{\prime}}}(u^{(l)})+r_{k}l\}_{l=0}^{s_{k}-1}$. Therefore, we get 
$$v_{\boldsymbol{r}}(f)=\min\{v_{r_{k}}(f_{k})+v_{\boldsymbol{r}}(q_{k}),\min\{v_{{\boldsymbol{r}^{\prime}}}(u^{(l)})+r_{k}l\}_{l=0}^{s_{k}-1}\}.$$
Let $0\leq l<s_{k}$. By induction on $k$, there exists a unique $q_{i}^{(l)}\in B_{(r_{1},\ldots, r_{i})}(M)[X_{i+1},\ldots, X_{k-1}]$ for each $1\leq i\leq k-1$ and a unique $t^{(l)}\in M[X_{1},\ldots, X_{k-1}]$ which satisfy the followings:
\begin{enumerate}
\item[$(a)$] We have $u^{(l)}=f(X_{1})q_{1}^{(l)}+\cdots +f(X_{k-1})q_{k-1}^{(l)}+t^{(l)}$.
\item[$(b)$]  We have $\deg_{X_{i}}t^{(l)}<s_{i}$ for each $1\leq i\leq k-1$.
\item[$(c)$] For each $1\leq i< k-1$, $q_{i}^{(l)}$ satisfies $\deg_{X_{j}}q_{i}^{(l)}<s_{j}$ for each $i+1\leq j\leq k-1$.
\end{enumerate} 
Further, we have  $v_{{\boldsymbol{r}^{\prime}}}(u^{(l)})=\min\{v_{r_{1}}(f_{1})+v_{{\boldsymbol{r}^{\prime}}}(q_{1}^{(l)}),\ldots,v_{r_{k-1}}(f_{k-1})+v_{{\boldsymbol{r}^{\prime}}}(q_{k-1}^{(l)}),v_{{\boldsymbol{r}^{\prime}}}(t^{(l)})\}$. We put $q_{i}=\sum_{l=0}^{s_{k}-1}X_{k}^{l}q_{i}^{(l)}$ for each $1\leq i\leq k-1$ and $t=\sum_{l=0}^{s_{k}-1}X_{k}^{l}t^{(l)}$ Then $q_{1},\ldots, q_{k}$ and $t$ satisfy the conditions from \eqref{multivariable weiestrass polynomial pro1} to \eqref{multivariable weiestrass polynomial pro3} and $v_{\boldsymbol{r}}(q_{i})=\min\{v_{\boldsymbol{r}^{\prime}}(q_{i}^{(l)})+r_{k}l\}_{l=0}^{s_{k}-1}$ for each $1\leq i\leq k-1$ and $v_{\boldsymbol{r}}(t)=\min\{v_{{\boldsymbol{r}^{\prime}}}(t^{(l)})+r_{k}l\}_{l=0}^{s_{k}-1}$. Therefore, we see that
\begin{multline*}
\min\{v_{{\boldsymbol{r}^{\prime}}}(u^{(l)})+r_{k}l\}_{l=0}^{s_{k}-1}=\min\{v_{r_{1}}(f_{1})+\min\{v_{{\boldsymbol{r}^{\prime}}}(q_{1}^{(l)})+r_{k}l\}_{l=0}^{s_{k}-1},\\
\ldots,v_{r_{k-1}}(f_{k-1})+\min\{v_{{\boldsymbol{r}^{\prime}}}(q_{k-1}^{(l)})+r_{k}l\}_{l=0}^{s_{k}-1} ,\min\{v_{{\boldsymbol{r}^{\prime}}}(t^{(l)})+r_{k}l\}_{l=0}^{s_{k}-1}\}\\
=\min\{v_{r_{1}}(f_{1})+v_{\boldsymbol{r}}(q_{1}),\ldots, v_{r_{k-1}}(f_{k-1})+v_{\boldsymbol{r}}(q_{k-1}),v_{\boldsymbol{r}}(t)\}
\end{multline*}
and we have
\begin{align*}
v_{\boldsymbol{r}}(f)&=\min\{v_{r_{k}}(f_{k})+v_{\boldsymbol{r}}(q_{k}),\min\{v_{{\boldsymbol{r}^{\prime}}}(u^{(l)})+r_{k}l\}_{l=0}^{s_{k}-1}\}\\
&=\min\{v_{r_{1}}(f_{1})+v_{\boldsymbol{r}}(q_{1}),\ldots, v_{r_{k}}(f_{k})+v_{\boldsymbol{r}}(q_{k}),v_{\boldsymbol{r}}(t)\}.
\end{align*}
We complete the proof.
\end{proof}
\begin{cor}\label{lemma for main theorem two}
Let $\boldsymbol{r}\in \mathbb{Q}^{k}$ and $f_{i}\in  \K[X]$ be a non-zero separable polynomial such that $d_{r_{i}}(f_{i})=\deg f_{i}$ with $1\leq i\leq k$. If $f\in B_{\boldsymbol{r}}(M)$ satisfies $f(a_{1},\ldots, a_{k})=0$ for every root $a_{i}\in \overline{\K}$ of $f_{i}$ with $1\leq i\leq k$, there exists a unique $q_{i}\in B_{(r_{1},\ldots, r_{i})}(M)[X_{i+1},\ldots, X_{k}]$ for each $1\leq i\leq k$ which satisfy the following:
\begin{enumerate}
\item We have $f=f_{1}(X_{1})q_{1}+\cdots f_{k}(X_{k})q_{k}$.
\item For each $1\leq i<k$, $q_{i}\in B_{\boldsymbol{r}_{i}}(M)[X_{i+1},\ldots, X_{k}]$ satisfies $\deg_{X_{j}}q_{i}<\deg f_{j}$ for each $i+1\leq j\leq k$.
\end{enumerate}
In addition, we have $v_{\boldsymbol{r}}(f)=\min\{v_{r_{1}}(f_{1})+v_{\boldsymbol{r}}(q_{1}),\ldots, v_{r_{k}}(f_{k})+v_{\boldsymbol{r}}(q_{k})\}$.
\end{cor}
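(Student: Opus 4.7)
The plan is to deduce this corollary as a special case of Proposition \ref{multivariable weiestrass polynomial pro}. Applying the proposition with our separable polynomials $f_i \in \K[X]$ (playing the role of the Weierstrass polynomials, legitimate since $d_{r_i}(f_i) = \deg f_i$) produces a unique decomposition
\[
f = f_{1}(X_{1})q_{1}+\cdots+f_{k}(X_{k})q_{k}+t,
\]
with $q_i \in B_{(r_1,\ldots,r_i)}(M)[X_{i+1},\ldots,X_k]$ satisfying the stated degree bounds and with $t \in M[X_1,\ldots,X_k]$ satisfying $\deg_{X_i} t < \deg f_i$ for every $i$. The valuation identity of the proposition reads $v_{\boldsymbol{r}}(f) = \min\{v_{r_i}(f_i)+v_{\boldsymbol{r}}(q_i),\ v_{\boldsymbol{r}}(t)\}_{i}$. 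Thus all assertions of the corollary (uniqueness, existence of the decomposition, and the valuation identity, where the term $v_{\boldsymbol{r}}(t) = +\infty$ drops out) will follow once we prove that $t = 0$.

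To prove $t=0$, I substitute an arbitrary tuple $(a_1,\ldots,a_k)$ of roots $a_i \in \overline{\K}$ of $f_i$: each term $f_i(X_i)q_i$ is annihilated at this tuple because $f_i(a_i)=0$, so the hypothesis $f(a_1,\ldots,a_k)=0$ forces $t(a_1,\ldots,a_k)=0$ in $M_{\K(a_1,\ldots,a_k)}$. The task is therefore reduced to the following multivariate generalization of property $(*)$ in the proof of Corollary \ref{cor of remainder theorem}: if $t \in M[X_1,\ldots,X_k]$ satisfies $\deg_{X_i} t < \deg f_i$ for all $i$ and vanishes on every product of root-tuples, then $t=0$. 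I plan to prove this by induction on $k$. The case $k=1$ is exactly $(*)$, whose Vandermonde argument (using that $f_1$ is separable with $\deg f_1$ distinct roots) generalizes verbatim to coefficients in the $\K$-vector space $M$, working inside the finite-dimensional extension where the relevant evaluations live. For the inductive step, write $t = \sum_n t_n(X_2,\ldots,X_k) X_1^n$ with $n<\deg f_1$; fixing $(a_2,\ldots,a_k)$ and applying the $k=1$ case to $t(X_1,a_2,\ldots,a_k) \in M_{\K(a_2,\ldots,a_k)}[X_1]$ yields $t_n(a_2,\ldots,a_k)=0$ for each $n$, and then the induction hypothesis applied to each $t_n$ finishes the argument.

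The only mildly delicate point is ensuring that the Vandermonde step is valid over the Banach module $M$, but this is harmless: for fixed roots the values $t(a_1,\ldots,a_k)$ lie in the finite-dimensional $\K$-space $M_0 \otimes_\K \K(a_1,\ldots,a_k)$ for some finite-dimensional subspace $M_0 \subset M$ containing all the coefficients of $t$, and the invertible Vandermonde matrix over $\K(a_1,\ldots,a_k)$ annihilates the coefficient vector. Hence no obstruction beyond the linear algebra already used in Corollary \ref{cor of remainder theorem} arises, and the proof reduces to a clean bookkeeping of the two inputs: the Weierstrass-type division of Proposition \ref{multivariable weiestrass polynomial pro} and the inductive Vandermonde vanishing argument.
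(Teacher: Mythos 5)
Your proposal is correct and takes essentially the same approach as the paper: apply Proposition~\ref{multivariable weiestrass polynomial pro} to reduce everything to proving $t=0$, then prove the vanishing by induction on $k$ using a Vandermonde-type argument over the splitting field. The only cosmetic difference is in the arrangement of the induction: the paper fixes a root $a_k$ of $f_k$, invokes the induction hypothesis to conclude $t(X_1,\ldots,X_{k-1},a_k)=0$, and then quotes Corollary~\ref{cor of remainder theorem} in the variable $X_k$ to finish, whereas you peel off $X_1$, invoke the one-variable Vandermonde argument directly (rather than citing Corollary~\ref{cor of remainder theorem}) to kill the coefficients $t_n(a_2,\ldots,a_k)$, and then apply the induction hypothesis to each $t_n$.
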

\begin{proof}
By $\mathrm{Proposition\ \ref{multivariable weiestrass polynomial pro}}$, it suffices to prove the following statement:
\begin{enumerate}
\item[$(*)$] Let $r\in M[X_{1},\ldots, X_{k}]$ with $\deg_{X_{i}} r<\deg f_{i}$ for each $1\leq i\leq k$. If $r(a_{1},\ldots,  a_{k})=0$  for all roots $a_{i}\in \overline{\K}$ of $f_{i}$ with $1\leq i\leq k$, then $r=0$. 
\end{enumerate}
If $k=1$, by Corollary \ref{cor of remainder theorem}, there exists a unique $q\in B_{r}(M)$ such that $r=f_{1}q$. Since $\deg r<\deg f_{1}$, we see that $r=0$. Then, we assume that $k\geq 2$ and the corollary is already proved for $k-1$. By induction on $k$, we see that $r(X_{1},\ldots, X_{k-1},a_{k})=0$ for every root $a_{k}\in \overline{\K}$ of $f_{k}$. We regard $r$ as an element of $B_{r_{k}}(B_{\boldsymbol{r}^{\prime}}(M))$ via the isometric isomorphism of $\mathrm{Proposition\ \ref{isometry ofHh for induction}}$. By applying $\mathrm{Corollary\ \ref{cor of remainder theorem}}$ to $r\in B_{r_{k}}(B_{\boldsymbol{r}^{\prime}}(M))$, there exists a unique $q\in B_{r_{k}}(B_{\boldsymbol{r}^{\prime}}(M))$ such that $r=f_{k}(X_{k})q$. Since $\deg_{X_{k}}r<\deg f_{k}$, we see that $r=0$.
\end{proof}
\begin{pro}\label{multivariable verof vr(f)=infbinkoerlinevm(f(b))}
Let $\boldsymbol{r}\in \mathbb{Q}^{k}$ and $f\in B_{\boldsymbol{r}}(M)$. Then, we have 
\begin{align*}
v_{\boldsymbol{r}}(f)&=\inf_{\substack{\boldsymbol{b}\in \overline{\K}^{k}\\ \ord_{p}(b_{i})>r_{i},\ 1\leq i\leq k}}\{v_{M_{\K(b_{1},\ldots, b_{k})}}(f(b_{1},\ldots, b_{k}))\}.
\end{align*}
\end{pro}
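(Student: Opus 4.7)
The plan is to prove the proposition by induction on $k$, reducing the multi-variable case to the one-variable statement of Proposition \ref{supectral norm gauss norm}. The base case $k=1$ is precisely Proposition \ref{supectral norm gauss norm} applied to $M$.

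For the inductive step, suppose the result is proved for $k-1$ variables. Given $f\in B_{\boldsymbol{r}}(M)$, I would first identify $f$ with its image in $B_{r_{k}}(B_{\boldsymbol{r}^{\prime}}(M))$ via the isometric isomorphism of Proposition \ref{isometry ofHh for induction} \eqref{isometry ofHh for induction2}, so that $v_{\boldsymbol{r}}(f)=v_{r_{k}}(f)$, where on the right-hand side the $\K$-Banach space of coefficients is $(B_{\boldsymbol{r}^{\prime}}(M),v_{\boldsymbol{r}^{\prime}})$. Applying Proposition \ref{supectral norm gauss norm} to this one-variable situation over the Banach space $B_{\boldsymbol{r}^{\prime}}(M)$ gives
\begin{equation*}
v_{\boldsymbol{r}}(f)=\inf_{\substack{b_{k}\in\overline{\K}\\ \ord_{p}(b_{k})>r_{k}}}\bigl\{v_{(B_{\boldsymbol{r}^{\prime}}(M))_{\K(b_{k})}}(f(b_{k}))\bigr\},
\end{equation*}
where $f(b_{k})$ denotes the substitution of $b_{k}$ into the last variable as in \eqref{substitiution of banach Br(M)}.

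Next I would use Proposition \ref{boldh H isometry completetensor} \eqref{boldh H isometry completetensor2}, which provides an isometric isomorphism $(B_{\boldsymbol{r}^{\prime}}(M))_{\K(b_{k})}\stackrel{\sim}{\rightarrow}B_{\boldsymbol{r}^{\prime}}(M_{\K(b_{k})})$, to regard $f(b_{k})$ as an element of $B_{\boldsymbol{r}^{\prime}}(M_{\K(b_{k})})$ with the same valuation. Applying the induction hypothesis to $f(b_{k})\in B_{\boldsymbol{r}^{\prime}}(M_{\K(b_{k})})$, viewed as a Banach space over the field $\K(b_{k})$, yields
\begin{equation*}
v_{\boldsymbol{r}^{\prime}}(f(b_{k}))=\inf_{\substack{(b_{1},\ldots,b_{k-1})\in\overline{\K(b_{k})}^{k-1}\\ \ord_{p}(b_{i})>r_{i},\ 1\leq i\leq k-1}}\bigl\{v_{(M_{\K(b_{k})})_{\K(b_{k})(b_{1},\ldots,b_{k-1})}}(f(b_{k})(b_{1},\ldots,b_{k-1}))\bigr\}.
\end{equation*}
Combining the two infima and using the identifications $\overline{\K(b_{k})}=\overline{\K}$, $\K(b_{k})(b_{1},\ldots,b_{k-1})=\K(b_{1},\ldots,b_{k})$ and $(M_{\K(b_{k})})_{\K(b_{1},\ldots,b_{k})}\simeq M_{\K(b_{1},\ldots,b_{k})}$ as $\K$-Banach spaces (which follows from the associativity of the complete tensor product and the isometric property recorded just after \eqref{definition of ML}) gives the desired equality, once we verify that iterated substitution agrees with simultaneous substitution, i.e. $f(b_{k})(b_{1},\ldots,b_{k-1})=f(b_{1},\ldots,b_{k})$.

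The main technical point will be checking this compatibility of substitutions together with the isometry statements for the complete tensor products; the delicate part is that after substituting $b_k$ the new coefficient field changes from $\K$ to $\K(b_k)$, and one must ensure that applying the inductive hypothesis over $\K(b_k)$ and then collecting terms reproduces the valuation intrinsically defined over $\K$. This compatibility can be checked directly on the explicit sums $\sum_{\boldsymbol{n}}m_{\boldsymbol{n}}\otimes_{\K}\boldsymbol{b}^{\boldsymbol{n}}$ defining the substitution in \eqref{substitiution of banach Br(M)}, using that the convergence is absolute in each variable in the valuation topology; the remaining identifications are formal consequences of Proposition \ref{boldh H isometry completetensor} and the isometric embedding $M\hookrightarrow M_{\mathcal{L}}$.
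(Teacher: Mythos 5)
Your proof follows essentially the same route as the paper's: induction on $k$, identifying $B_{\boldsymbol{r}}(M)$ with $B_{r_k}(B_{\boldsymbol{r}'}(M))$ via Proposition \ref{isometry ofHh for induction}, applying the $k=1$ case (Proposition \ref{supectral norm gauss norm}) over the Banach coefficient ring $B_{\boldsymbol{r}'}(M)$, moving the specialization inside using Proposition \ref{boldh H isometry completetensor}, and then invoking the inductive hypothesis over the enlarged base field $\K(b_k)$. You flag the compatibility checks (iterated vs.\ simultaneous substitution, associativity of complete tensor products, change of base field) more explicitly than the paper, which simply absorbs them into the notation $v_{M_{\K(b_1,\ldots,b_k)}}$, but the underlying argument is the same.
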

\begin{proof}
When $k=1$, Proposition \ref{multivariable verof vr(f)=infbinkoerlinevm(f(b))} is proved in Proposition \ref{supectral norm gauss norm}. Then, we assume that $k\geq 2$ and Proposition \ref{multivariable verof vr(f)=infbinkoerlinevm(f(b))} is already proved up to $k-1$. By the isometric isomorphism of  \ref{isometry ofHh for induction}, we can regard $f$ as an element of $B_{r_{k}}(B_{\boldsymbol{r}^{\prime}}(M))$. Then, by applying the result in the case $k=1$ to $f\in B_{r_{k}}(B_{\boldsymbol{r}^{\prime}}(M))$, we see that
$$v_{\boldsymbol{r}}(f)=\inf_{b_{k}\in \overline{\K},\ \ord_{p}(b_{k})>r_{k}}\{v_{\boldsymbol{r}^{\prime}}(f(X_{1},\ldots, X_{k-1},b_{k}))\}$$
where $f(X_{1},\ldots, X_{k-1},b_{k})\in B_{\boldsymbol{r}^{\prime}}(M_{\K(b_{k})})$ for each $b_{k}\in \K$ with $\ord_{p}(b_{k})>r_{k}$. By induction on $k$, we have
$$v_{\boldsymbol{r}^{\prime}}(f(X_{1},\ldots, X_{k-1},b_{k}))=\inf_{\substack{\boldsymbol{b}^{\prime}\in \overline{\K}^{k-1}\\ \ord_{p}(b_{i}^{\prime})>r_{i},\ 1\leq i\leq k-1}}\{v_{M_{\K(b_{1}^{\prime},\ldots, b_{k-1}^{\prime},b_{k})}}(f(b_{1}^{\prime},\ldots, b_{k-1}^{\prime},b_{k}))\}$$
for each $b_{k}\in \K$ with $\ord_{p}(b_{k})>r_{k}$. Then, we have
\begin{align*}
v_{\boldsymbol{r}}(f)&=\inf_{b_{k}\in \overline{\K},\ \ord_{p}(b_{k})>r_{k}}\left\{\inf_{\substack{\boldsymbol{b}^{\prime}\in \overline{\K}^{k-1}\\ \ord_{p}(b_{i}^{\prime})>r_{i},\ 1\leq i\leq k-1}}\{v_{M_{\K(b_{1}^{\prime},\ldots, b_{k-1}^{\prime},b_{k})}}(f(b_{1}^{\prime},\ldots, b_{k-1}^{\prime},b_{k}))\}\right\}\\
&=\inf_{\substack{\boldsymbol{b}\in \overline{\K}^{k}\\ \ord_{p}(b_{i})>r_{i},\ 1\leq i\leq k}}\{v_{M_{\K(b_{1},\ldots, b_{k})}}(f(b_{1},\ldots, b_{k}))\}.
\end{align*}
\end{proof}
We put $B_{+}^{(k)}(M)=\cap_{\boldsymbol{r}\in \mathbb{Q}_{>0}^{k}}B_{\boldsymbol{r}}(M)\subset B_{\boldsymbol{0}_{k}}(M)$. Let $\boldsymbol{t}_{\boldsymbol{n}}=(t_{n_{1}},\ldots, t_{n_{k}})$ for each  $\boldsymbol{n} =(n_1 ,\ldots ,n_k )\in \mathbb{Z}_{\geq 0}^{k}$, where $t_{n_{i}}=\frac{1}{p^{n_{i}}(p-1)}$ with $1\leq i\leq k$. We define the map $v_{\boldsymbol{h}}^{\prime} : B_{+}^{(k)}(M) \longrightarrow \mathbb{R} \cup \{\pm\infty\}$ by setting 
\begin{equation}\label{definition of multivariable vHprime}
v_{\boldsymbol{h}}^{\prime}(f)=\inf\{v_{\boldsymbol{t}_{\boldsymbol{n}}}(f)+\langle \boldsymbol{h},\boldsymbol{n}\rangle_{k}\}_{\boldsymbol{n}\in\mathbb{Z}_{\geq 0}^{k}}
\end{equation}
for each $f\in B_{+}^{(k)}(M)$. We note that we have $\HH_{\boldsymbol{h}}(M)\subset B_{+}^{(k)}(M)$.
\begin{pro}\label{another valuation on multivariable log order}
 For each $f\in B_{+}^{(k)}(M)$, we have $f\in \mathcal{H}_{\boldsymbol{h}}(M)$ if and only if $v_{\boldsymbol{h}}^{\prime}(f)>-\infty$. In addition, $v_{\boldsymbol{h}}^{\prime}\vert_{\mathcal{H}_{\boldsymbol{h}\slash \mathcal{K}}}$ is a valuation on $\mathcal{H}_{\boldsymbol{h}\slash \mathcal{K}}$ which satisfies $v_{\mathcal{H}_{\boldsymbol{h}}}(f)+\alpha_{\boldsymbol{h}}\leq v_{\boldsymbol{h}}^{\prime}\vert_{\mathcal{H}_{\boldsymbol{h}\slash \mathcal{K}}}(f)\leq v_{\mathcal{H}_{\boldsymbol{h}}}(f)+\beta_{\boldsymbol{h}}$ for every $f\in \mathcal{H}_{\boldsymbol{h}\slash \mathcal{K}}$, where $\alpha_{\boldsymbol{h}}=\sum_{i=1}^{k}\alpha_{h_{i}}$ and $\beta_{\boldsymbol{h}}=\sum_{i=1}^{k}\beta_{h_{i}}$ with
\begin{align*}
\alpha_{h_{i}}&=\begin{cases}-\max\{0, h_{i}-\frac{h_{i}}{\log p}(1+\log \frac{\log p}{(p-1)h_{i}})\}\ &\mathrm{if}\ h_{i}>0,\\
0\ &\mathrm{if}\ h_{i}=0,
\end{cases}\\
\beta_{h_{i}}&=\begin{cases}\max\{0,\frac{p}{p-1}-h_{i}\}\ \ \ \ \ \ \ \ \ \ \ \ \ \ \ \ \ \ \ \ \ \ \ \,&\mathrm{if}\ h_{i}>0,\\
0\ &\mathrm{if}\ h_{i}=0.
\end{cases}
\end{align*}
\end{pro}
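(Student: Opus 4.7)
The plan is to reduce the entire statement to the one-variable case already proved in Proposition \ref{another valuation on H_{h}} by exploiting the fact that the coordinates decouple inside the infimum. The fact that $v_{\boldsymbol{h}}^{\prime}|_{\HH_{\boldsymbol{h}}(M)}$ is a valuation is formal: positive-definiteness, the ultrametric inequality, and homogeneity under scalar multiplication are all inherited from the corresponding properties of the valuations $v_{\boldsymbol{t}_{\boldsymbol{n}}}$ (which are valuations by Proposition \ref{HHh is a k-banach space}), together with the fact that $v_{\boldsymbol{h}}^{\prime}(f) > -\infty$ on $\HH_{\boldsymbol{h}}(M)$, which is part of what must be proved. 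So the core content is the two-sided comparison with $v_{\HH_{\boldsymbol{h}}}$, and the equivalence $f \in \HH_{\boldsymbol{h}}(M) \Leftrightarrow v_{\boldsymbol{h}}^{\prime}(f) > -\infty$ will fall out of the bounds: one direction needs the upper bound to force $v_{\HH_{\boldsymbol{h}}}(f) > -\infty$, the other uses the lower bound.

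The first main step I would carry out is an expansion and interchange of infima. For $f = (m_{\boldsymbol{l}})_{\boldsymbol{l}} \in B_{+}^{(k)}(M)$, substituting the definition $v_{\boldsymbol{t}_{\boldsymbol{n}}}(f) = \inf_{\boldsymbol{l}}\{v_{M}(m_{\boldsymbol{l}}) + \langle \boldsymbol{t}_{\boldsymbol{n}}, \boldsymbol{l}\rangle_{k}\}$ into \eqref{definition of multivariable vHprime}, interchanging the two infima, and observing that $\langle \boldsymbol{t}_{\boldsymbol{n}},\boldsymbol{l}\rangle_{k} + \langle \boldsymbol{h},\boldsymbol{n}\rangle_{k} = \sum_{i=1}^{k}(t_{n_{i}}l_{i} + h_{i}n_{i})$, one gets
\begin{equation*}
v_{\boldsymbol{h}}^{\prime}(f) = \inf_{\boldsymbol{l}\in \mathbb{Z}_{\geq 0}^{k}}\left\{v_{M}(m_{\boldsymbol{l}}) + \sum_{i=1}^{k} \inf_{n_{i}\geq 0}(t_{n_{i}}l_{i} + h_{i}n_{i})\right\}.
\end{equation*}
The separation of the inner infimum over $\boldsymbol{n}$ into a sum of independent infima over each $n_{i}$ is legitimate because each term $t_{n_{i}}l_{i} + h_{i}n_{i}$ is non-negative and depends only on a single coordinate. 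Similarly, directly from \eqref{equation:themultivariableversion_of_H_h}, $v_{\HH_{\boldsymbol{h}}}(f) = \inf_{\boldsymbol{l}}\{v_{M}(m_{\boldsymbol{l}}) + \sum_{i=1}^{k} h_{i}\ell(l_{i})\}$.

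The second step is to apply the one-variable result from Proposition \ref{another valuation on H_{h}} to each fixed coordinate. For each $l_{i}\in \mathbb{Z}_{\geq 0}$, the monomial $X^{l_{i}}\in \HH_{h_{i}/\K}$ satisfies $v_{\HH_{h_{i}}}(X^{l_{i}}) = h_{i}\ell(l_{i})$ and $v_{h_{i}}^{\prime}(X^{l_{i}}) = \inf_{n_{i}\geq 0}(t_{n_{i}}l_{i} + h_{i}n_{i})$, so the bound $v_{\HH_{h_{i}}} + \alpha_{h_{i}} \leq v_{h_{i}}^{\prime} \leq v_{\HH_{h_{i}}} + \beta_{h_{i}}$ of Proposition \ref{another valuation on H_{h}} gives
\begin{equation*}
h_{i}\ell(l_{i}) + \alpha_{h_{i}} \leq \inf_{n_{i}\geq 0}(t_{n_{i}}l_{i} + h_{i}n_{i}) \leq h_{i}\ell(l_{i}) + \beta_{h_{i}}
\end{equation*}
(the cases $h_{i} = 0$ cause no trouble since both sides equal $0$). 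Summing over $i$ and taking the infimum over $\boldsymbol{l}$ yields exactly $v_{\HH_{\boldsymbol{h}}}(f) + \alpha_{\boldsymbol{h}} \leq v_{\boldsymbol{h}}^{\prime}(f) \leq v_{\HH_{\boldsymbol{h}}}(f) + \beta_{\boldsymbol{h}}$ with $\alpha_{\boldsymbol{h}} = \sum_{i}\alpha_{h_{i}}$ and $\beta_{\boldsymbol{h}} = \sum_{i}\beta_{h_{i}}$, as in the statement.

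There is no serious obstacle: the argument is essentially bookkeeping once the decoupling of the infimum is observed. The only thing that deserves slight care is justifying the interchange of infima and the splitting of $\inf_{\boldsymbol{n}}\sum_{i}$ as $\sum_{i}\inf_{n_{i}}$, but because all quantities $t_{n_{i}}l_{i}$ and $h_{i}n_{i}$ are non-negative, there are no summability or sign issues. Conceptually, this is the multi-variable analogue of the observation that the coordinates of $\HH_{\boldsymbol{h}}$ are completely independent, which is also the content of the isometric isomorphism $\HH_{\boldsymbol{h}}(M) \cong \HH_{h_{k}}(\HH_{\boldsymbol{h}^{\prime}}(M))$ of Proposition \ref{isometry ofHh for induction}; an alternative proof by induction on $k$ using that isomorphism together with the one-variable base case would lead to the same bounds.
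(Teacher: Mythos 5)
Your proof is correct, but it takes a genuinely different route from the paper. The paper proceeds by induction on $k$: it passes through the isometric isomorphism $\HH_{\boldsymbol{h}}(M) \simeq \HH_{h_{k}}(\HH_{\boldsymbol{h}^{\prime}}(M))$ of Proposition \ref{isometry ofHh for induction}, applies the one-variable result to the outer copy of $\HH_{h_{k}}$ over the Banach space $\HH_{\boldsymbol{h}^{\prime}}(M)$, then invokes the inductive hypothesis for the inner factor, and finally stitches the two bounds together; the juggling of intermediate valuations such as $v_{\mathcal{H}_{h_k}}^{(\boldsymbol{t}_{\boldsymbol{n}'})}$ makes that argument rather heavy. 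You instead expand both sides to the level of coefficients, interchange the infima over $\boldsymbol{n}$ and $\boldsymbol{l}$, and observe that the inner infimum $\inf_{\boldsymbol{n}}\bigl(\langle\boldsymbol{t}_{\boldsymbol{n}},\boldsymbol{l}\rangle_{k}+\langle\boldsymbol{h},\boldsymbol{n}\rangle_{k}\bigr)$ splits coordinatewise because each summand $t_{n_i}l_i+h_in_i$ is nonnegative and depends on a single $n_i$; this reduces everything to the scalar inequality $h_i\ell(l_i)+\alpha_{h_i}\leq\inf_{n_i}(t_{n_i}l_i+h_in_i)\leq h_i\ell(l_i)+\beta_{h_i}$, which is exactly the one-variable bound evaluated on the monomial $X^{l_i}$. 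This exposes the fact that Proposition \ref{another valuation on H_{h}} is at heart a combinatorial estimate, and it makes the ``coordinates decouple'' intuition explicit, whereas the paper encodes the same fact via the isometry of Proposition \ref{isometry ofHh for induction}. Your handling of the iff (read the bounds with the convention $v_{\HH_{\boldsymbol{h}}}(f)=-\infty$ off $\HH_{\boldsymbol{h}}(M)$, so the upper bound forces finiteness, and the lower bound gives the converse) and of the $h_i=0$ case is correct, and the claim that the valuation axioms for $v_{\boldsymbol{h}}^{\prime}$ are inherited from those of the $v_{\boldsymbol{t}_{\boldsymbol{n}}}$ is standard. So the proposal is sound, and cleaner than the inductive argument actually given in the paper.
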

\begin{proof}
The proposition for the case $k=1$ is proved in Proposition\ \ref{another valuation on H_{h}}. By induction on $k$, we assume that $k\geq 2$ 
and assume that the proposition is valid up to $k-1$. 
Let $f\in B_{+}^{(k)}(M)$. If $f\notin \mathcal{H}_{\boldsymbol{h}}(M)$, we set $v_{\mathcal{H}_{\boldsymbol{h}}}(f)=-\infty$. 

 First, we will show that we have $v_{\mathcal{H}_{\boldsymbol{h}}}(f)+\alpha_{\boldsymbol{h}}\leq v_{\boldsymbol{h}}^{\prime}(f)\leq v_{\mathcal{H}_{\boldsymbol{h}}}(f)+\beta_{\boldsymbol{h}}$ for every $f\in B_{+}^{(k)}(M)$. By the isometric isomorphism of $\mathrm{Proposition\ \ref{isometry ofHh for induction}}$, we identify $B_{\boldsymbol{t}_{\boldsymbol{n}}}(M)$ with $B_{t_{n_{k}}}(B_{\boldsymbol{t}_{\boldsymbol{n}^{\prime}}}(M))$ and we identify the valuation $v_{\boldsymbol{t}_{\boldsymbol{n}}}$ on $B_{\boldsymbol{t}_{\boldsymbol{n}}}(M)$ with the valuation on $B_{t_{n_{k}}}(B_{\boldsymbol{t}_{\boldsymbol{n}^{\prime}}}(M))$. Therefore, we have 
 $$v_{\boldsymbol{t}_{\boldsymbol{n}}}(g)=\inf\{v_{\boldsymbol{t}_{\boldsymbol{n}^{\prime}}}(g_{n})+t_{n_{k}}n\}_{n\in \mathbb{Z}_{\geq 0}}$$
for each $g=(g_{n})_{n\in \mathbb{Z}_{\geq 0}}\in B_{t_{n_{k}}}(B_{\boldsymbol{t}_{\boldsymbol{n}^{\prime}}}(M))$ with $g_{n}\in B_{\boldsymbol{t}_{\boldsymbol{n}^{\prime}}}(M)$. 
By \eqref{definition of multivariable vHprime}, we have
\begin{align}\label{eq another valuation on multivariable log order first}
v_{\boldsymbol{h}}^{\prime}(f)
& =\inf\{v_{\boldsymbol{t}_{\boldsymbol{n}}}(f)+\langle \boldsymbol{h},\boldsymbol{n}\rangle_{k}\}_{\boldsymbol{n}\in\mathbb{Z}_{\geq 0}^{k}} \notag \\
& =\inf\{\inf\{v_{\boldsymbol{t}_{(\boldsymbol{n}^{\prime},n_{k})}}(f)+h_{k}n_{k}\}_{n_{k}\in\mathbb{Z}_{\geq 0}}+\langle \boldsymbol{h}^{\prime},\boldsymbol{n}^{\prime}\rangle_{k-1}\} _{\boldsymbol{n}^{\prime}\in \mathbb{Z}_{\geq 0}^{k-1}}.
\end{align}
 Let $v_{\mathcal{H}_{h_{k}}}^{(\boldsymbol{t}_{\boldsymbol{n}^{\prime}})}$ be the valuation on $\mathcal{H}_{h_{k}}(B_{\boldsymbol{t}_{\boldsymbol{n}^{\prime}}}(M))$ defined by $v_{\mathcal{H}_{h_{k}}}^{(\boldsymbol{t}_{\boldsymbol{n}^{\prime}})}(g)=\inf\{v_{\boldsymbol{t}_{\boldsymbol{n}^{\prime}}}(g_{n_{k}})+h_{k}\ell(n_{k})\linebreak\}_{n_{k}\in \mathbb{Z}_{\geq 0}}$ for each $\boldsymbol{n}^{\prime}\in \mathbb{Z}_{\geq 0}^{k-1}$ and for each $g=(g_{n_{k}})_{n_{k}\in \mathbb{Z}_{\geq 0}}\in \mathcal{H}_{h_{k}}(B_{\boldsymbol{t}_{\boldsymbol{n}^{\prime}}}(M))$ with $g_{n_{k}}\in B_{\boldsymbol{t}_{\boldsymbol{n}^{\prime}}}(M)$. For each $\boldsymbol{n}^{\prime}\in \mathbb{Z}_{\geq 0}^{k-1}$, we can regard $f$ as an element of $B_{+}(B_{\boldsymbol{t}_{\boldsymbol{n}^{\prime}}}(M))=\cap_{n_{k}\in\mathbb{Z}_{\geq 0}}B_{t_{n_{k}}}(B_{\boldsymbol{t}_{\boldsymbol{n}^{\prime}}}(M))$. By applying the result in the case $k=1$ to $f\in B_{+}(B_{\boldsymbol{t}_{\boldsymbol{n}^{\prime}}}(M))$, we have
$$v_{\mathcal{H}_{h_{k}}}^{(\boldsymbol{t}_{\boldsymbol{n}^{\prime}})}(f)+\alpha_{h_{k}}\leq \inf\{v_{\boldsymbol{t}_{(\boldsymbol{n}^{\prime},n_{k})}}(f)+h_{k}n_{k}\}_{n_{k}\in \mathbb{Z}_{\geq 0}}\leq v_{\mathcal{H}_{h_{k}}}^{(\boldsymbol{t}_{\boldsymbol{n}^{\prime}})}(f)+\beta_{k}$$
for every $\boldsymbol{n}^{\prime}\in \mathbb{Z}_{\geq 0}^{k-1}$. Therefore, by \eqref{eq another valuation on multivariable log order first}, we have
\begin{multline}\label{eq another valuation on multivariable log order second}
\inf\{v_{\mathcal{H}_{h_{k}}}^{(\boldsymbol{t}_{\boldsymbol{n}^{\prime}})}(f)+\langle \boldsymbol{h}^{\prime},\boldsymbol{n}^{\prime}\rangle_{k-1}\}_{\boldsymbol{n}^{\prime}\in \mathbb{Z}_{\geq 0}^{k-1}}+\alpha_{h_{k}}\leq v_{\boldsymbol{h}}^{\prime}(f)\\
\leq \inf\{v_{\mathcal{H}_{h_{k}}}^{(\boldsymbol{t}_{\boldsymbol{n}^{\prime}})}(f)+\langle \boldsymbol{h}^{\prime},\boldsymbol{n}^{\prime}\rangle_{k-1}\}_{\boldsymbol{n}^{\prime}\in \mathbb{Z}_{\geq 0}^{k-1}}+\beta_{h_{k}}.
\end{multline}
Let us set $f=(f_{n_{k}})_{n_{k}=0}^{+\infty}$, with $f_{n_{k}}\in B_{+}^{(k-1)}(\K)$. By the definitions of $v_{\mathcal{H}_{h_{k}}}^{(\boldsymbol{t}_{\boldsymbol{n}^{\prime}})}$ and $v_{\boldsymbol{h}^{\prime}}^{\prime}$, we have
\begin{align}\label{eq another valuation on multivariable log order second half}
\begin{split}
&\inf\{v_{\mathcal{H}_{h_{k}}}^{(\boldsymbol{t}_{\boldsymbol{n}^{\prime}})}(f)+\langle \boldsymbol{h}^{\prime},\boldsymbol{n}^{\prime}\rangle_{k-1}\}_{\boldsymbol{n}^{\prime}\in \mathbb{Z}_{\geq 0}^{k-1}}\\
&=\inf\{\inf\{v_{\boldsymbol{t}_{\boldsymbol{n}^{\prime}}}(f_{n_{k}})+h_{k}\ell(n_{k})\}_{n_{k}\in\mathbb{Z}_{\geq 0}}+\langle \boldsymbol{h}^{\prime},\boldsymbol{n}^{\prime}\rangle_{k-1}\}_{\boldsymbol{n}^{\prime}\in \mathbb{Z}_{\geq 0}^{k-1}}\\
&=\inf\{\inf\{v_{\boldsymbol{t}_{\boldsymbol{n}^{\prime}}}(f_{n_{k}})+\langle \boldsymbol{h}^{\prime},\boldsymbol{n}^{\prime}\rangle_{k-1}\}_{\boldsymbol{n}^{\prime}\in \mathbb{Z}_{\geq 0}^{k-1}}+h_{k}\ell(n_{k})\}_{n_{k}\in\mathbb{Z}_{\geq 0}}\\
&=\inf\{v_{\boldsymbol{h}^{\prime}}^{\prime}(f_{n_{k}})+h_{k}\ell(n_{k})\}_{n_{k}\in\mathbb{Z}_{\geq 0}}.
\end{split}
\end{align}
By \eqref{eq another valuation on multivariable log order second} and \eqref{eq another valuation on multivariable log order second half}, we have
\begin{multline}\label{eq another valuation on multivariable log order third}
\inf\{v_{\boldsymbol{h}^{\prime}}^{\prime}(f_{n_{k}})
 +h_{k}\ell(n_{k})\}_{n_{k}\in\mathbb{Z}_{\geq 0}}+\alpha_{h_{k}}\leq v_{\boldsymbol{h}}^{\prime}(f)
 \leq \inf\{v_{\boldsymbol{h}^{\prime}}^{\prime}(f_{n_{k}})+h_{k}\ell(n_{k})\}_{n_{k}\in\mathbb{Z}_{\geq 0}}+\beta_{h_{k}}.
\end{multline}
By $\mathrm{Proposition\ \ref{isometry ofHh for induction}}$, we have $v_{\HH_{\boldsymbol{h}}}(f)=\inf\{v_{\HH_{\boldsymbol{h}^{\prime}}}(f_{n_{k}})+h_{k}\ell(n_{k})\}_{n_{k}\in\mathbb{Z}_{\geq 0}}$. By the assumption of our induction argument on $k$, we have $\alpha_{\boldsymbol{h}^{\prime}}+v_{\HH_{\boldsymbol{h}^{\prime}}}\leq v_{\boldsymbol{h}^{\prime}}^{\prime}\leq \beta_{\boldsymbol{h}^{\prime}}+v_{\HH_{\boldsymbol{h}^{\prime}}}$. Therefore, we have
\begin{align}\label{eq another valuation on multivariable log order fourth}
\begin{split}
v_{\mathcal{H}_{\boldsymbol{h}}}(f)+\alpha_{\boldsymbol{h}^{\prime}}
&=\inf\{v_{\mathcal{H}_{\boldsymbol{h}^{\prime}}}(f_{n_{k}})+h_{k}\ell(n_{k})\}_{n_{k}\in\mathbb{Z}_{\geq 0}}+\alpha_{\boldsymbol{h}^{\prime}}\\
&\leq \inf\{v_{\boldsymbol{h}^{\prime}}^{\prime}(f_{n_{k}})+h_{k}\ell(n_{k})\}_{n_{k}\in\mathbb{Z}_{\geq 0}}\\
&\leq \inf\{v_{\mathcal{H}_{\boldsymbol{h}^{\prime}}}(f_{n_{k}})+h_{k}\ell(n_{k})\}_{n_{k}\in\mathbb{Z}_{\geq 0}}+\beta_{\boldsymbol{h}^{\prime}}\\
&=v_{\mathcal{H}_{\boldsymbol{h}}}(f)+\beta_{\boldsymbol{h}^{\prime}}.
\end{split}
\end{align}
Therefore, by \eqref{eq another valuation on multivariable log order third} and \eqref{eq another valuation on multivariable log order fourth}, we have 
\begin{align*}
v_{\mathcal{H}_{\boldsymbol{h}}}(f)+\alpha_{\boldsymbol{h}}&\leq (v_{\mathcal{H}_{\boldsymbol{h}}}(f)+\alpha_{\boldsymbol{h}^{\prime}})+\alpha_{h_{k}}\leq \inf\{v_{\boldsymbol{h}^{\prime}}^{\prime}(f_{n_{k}})
 +h_{k}\ell(n_{k})\}_{n_{k}\in\mathbb{Z}_{\geq 0}}+\alpha_{h_{k}}\\
 &\leq v_{\boldsymbol{h}}^{\prime}(f)\leq \inf\{v_{\boldsymbol{h}^{\prime}}^{\prime}(f_{n_{k}})+h_{k}\ell(n_{k})\}_{n_{k}\in\mathbb{Z}_{\geq 0}}+\beta_{h_{k}}\\
&\leq (v_{\mathcal{H}_{\boldsymbol{h}}}(f)+\beta_{\boldsymbol{h}^{\prime}})+\beta_{h_{k}}=v_{\mathcal{H}_{\boldsymbol{h}}}(f)+\beta_{\boldsymbol{h}}.
\end{align*}
 Since $v_{\mathcal{H}_{\boldsymbol{h}}}(f)+\alpha_{\boldsymbol{h}}\leq v_{\boldsymbol{h}}^{\prime}(f)\leq v_{\mathcal{H}_{\boldsymbol{h}}}(f)+\beta_{\boldsymbol{h}}$, we see that $f\in \HH_{\boldsymbol{h}}(M)$ if and only $v_{\boldsymbol{h}}^{\prime}(f)>-\infty$. It is easy to check that $v_{\boldsymbol{h}}^{\prime}\vert_{\HH_{\boldsymbol{h}}}$ is a valuation on $\mathcal{H}_{\boldsymbol{h}}(M)$.
\end{proof}
Let $f_{1}\in \HH_{\boldsymbol{g}}(\K)$ and $f_{2}\in \HH_{\boldsymbol{h}}(M)$ with $\boldsymbol{g},\boldsymbol{h}\in \ord_{p}(\mathcal{O}_{\K}\backslash \{0\})^{k}$. By Proposition \ref{another valuation on multivariable log order}, we see that $f_{1}f_{2}\in \HH_{\boldsymbol{g}+\boldsymbol{h}}(M)$ easily. For each $\boldsymbol{m}\in \mathbb{Z}_{\geq 0}^{k}$, we denote by $(\Omega_{\boldsymbol{m}}^{[\boldsymbol{d},\boldsymbol{e}]})=(\Omega_{\boldsymbol{m}}^{[\boldsymbol{d},\boldsymbol{e}]}(X_{1},\ldots, X_{k}))$ the ideal of $\mathcal{O}_{\K}[[X_{1},\ldots, X_{k}]]$ generated by $\Omega_{m_{1}}^{[d_{1},e_{1}]}(X_{1}),\ldots,\Omega_{m_{k}}^{[d_{k},e_{k}]}(X_{k})$, where $\Omega_{m_{i}}^{[d_{i},e_{i}]}(X_{i})=\prod_{j=d_{i}}^{e_{i}}((1+X_{i})^{p^{m_{j}}}-u_{i}^{jp^{m_{i}}})$. Let $J_{\boldsymbol{h}}^{[\boldsymbol{d},\boldsymbol{e}]}(M)$ be the $\mathcal{O}_{\K}[[X_{1},\ldots, X_{k}]]\otimes_{\mathcal{O}_{\K}}\K$-module defined in \eqref{generalization of the project lim for deformation ring Jboldsymbol}. Let $(s_{\boldsymbol{m}}^{[\boldsymbol{d},\boldsymbol{e}]})_{\boldsymbol{m}\in \mathbb{Z}_{\geq 0}^{k}}\in J_{\boldsymbol{h}}^{[\boldsymbol{d},\boldsymbol{e}]}(M)$. By Proposition \ref{multivariable weiestrass polynomial pro}, for each $\boldsymbol{m}\in \mathbb{Z}_{\geq 0}^{k}$, there exists a unique element $r(s_{\boldsymbol{m}}^{[\boldsymbol{d},\boldsymbol{e}]})\in M^{0}[X_{1},\ldots, X_{k}]\otimes_{\mathcal{O}_{\K}}\K$ such that $s_{\boldsymbol{m}}^{[\boldsymbol{d},\boldsymbol{e}]}\equiv r(s_{\boldsymbol{m}}^{[\boldsymbol{d},\boldsymbol{e}]})\ \mathrm{mod}\ (\Omega_{\boldsymbol{m}}^{[\boldsymbol{d},\boldsymbol{e}]})$ and $\deg r(s_{\boldsymbol{m}}^{[\boldsymbol{d},\boldsymbol{e}]})<\deg_{X_{i}} \Omega_{m_{i}}^{[d_{i},e_{i}]}$ for each $1\leq i\leq k$. We define a valuation on $v_{J_{\boldsymbol{h}}}$ on $J_{\boldsymbol{h}}^{[\boldsymbol{d},\boldsymbol{e}]}(M)$ by setting 
\begin{equation}\label{multivariable Jboldysmbolh valuation}
v_{J_{\boldsymbol{h}}}((s_{\boldsymbol{m}}^{[\boldsymbol{d},\boldsymbol{e}]})_{\boldsymbol{m}\in \mathbb{Z}_{\geq 0}^{k}})=\inf_{\boldsymbol{m}\in \mathbb{Z}_{\geq 0}^{k}}\{v_{\boldsymbol{0}_{k}}(r(s_{\boldsymbol{m}}^{[\boldsymbol{d},\boldsymbol{e}]}))+\langle \boldsymbol{h},\boldsymbol{m}\rangle_{k}\}
\end{equation}
for each $(s_{\boldsymbol{m}}^{[\boldsymbol{d},\boldsymbol{e}]})_{\boldsymbol{m}\in \mathbb{Z}_{\geq 0}^{k}}\in J_{\boldsymbol{h}}^{[\boldsymbol{d},\boldsymbol{e}]}(M)$ where $v_{\boldsymbol{0}_{k}}$ is the valuation on $B_{\boldsymbol{0}_{k}}(M)$. It is easy to see that $v_{J_{\boldsymbol{h}}}$ is a valuation on $J_{\boldsymbol{h}}^{[\boldsymbol{d},\boldsymbol{e}]}(M)$. {For each $\boldsymbol{m}\in \mathbb{Z}_{\geq 0}^{k}$, let $M[X_{1},\ldots, X_{k}]_{<\deg (\Omega_{\boldsymbol{m}}^{[\boldsymbol{d},\boldsymbol{e}]})}$ be the finite dimensional $\K$-Banach submodule of $B_{\boldsymbol{0}_{k}}(M)$ consisting of $f\in M[X_{1},\ldots, X_{k}]$ with $\deg_{X_{i}}f<\deg\Omega_{m_{i}}^{[d_{i},e_{i}]}$ for every $1\leq i\leq k$. By Proposition \ref{multivariable weiestrass polynomial pro}, we have the following natural $\K$-linear isomorphism 
\begin{equation}\label{multi J induction pro finitek-1deg poly isom proomega}
M[X_{1},\ldots, X_{k}]_{<\deg (\Omega_{\boldsymbol{m}}^{[\boldsymbol{d},\boldsymbol{e}]})}\stackrel{\sim}{\rightarrow}\frac{M^{0}[[X_{1},\ldots, X_{k}]]}{(\Omega_{\boldsymbol{m}}^{[\boldsymbol{d},\boldsymbol{e}]})M^{0}[[X_{1},\ldots, X_{k}]]}\otimes_{\mathcal{O}_{\K}}\K,\ f\mapsto [f].
\end{equation}
Via the isomorphism \eqref{multi J induction pro finitek-1deg poly isom proomega}, we regard $\frac{M^{0}[[X_{1},\ldots, X_{k}]]}{(\Omega_{\boldsymbol{m}}^{[\boldsymbol{d},\boldsymbol{e}]})M^{0}[[X_{1},\ldots, X_{k}]]}\otimes_{\mathcal{O}_{\K}}\K$ as a $\K$-Banach space. By the definition of $v_{J_{\boldsymbol{h}}}$, the natural projection 
\begin{equation}\label{multi Jproj is bounded}
J_{\boldsymbol{h}}^{[\boldsymbol{d},\boldsymbol{e}]}(M)\rightarrow \frac{M^{0}[[X_{1},\ldots, X_{k}]]}{(\Omega_{\boldsymbol{m}}^{[\boldsymbol{d},\boldsymbol{e}]})M^{0}[[X_{1},\ldots, X_{k}]]}\otimes_{\mathcal{O}_{\K}}\K
\end{equation}
is a bounded $\K$-linear homomorphism for each $\boldsymbol{m}\in \mathbb{Z}_{\geq 0}^{k}$.}
\begin{pro}
$(J_{\boldsymbol{h}}^{[\boldsymbol{d},\boldsymbol{e}]}(M),v_{J_{\boldsymbol{h}}})$ is a $\K$-Banach space.
\end{pro}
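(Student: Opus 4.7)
The plan is to mirror the proof of Proposition \ref{one variable Jh is a K-Banach} but replace the one-variable Weierstrass division with the multi-variable Proposition \ref{multivariable weiestrass polynomial pro}, and replace Corollary \ref{cor of remainder theorem} with its multi-variable analogue Corollary \ref{lemma for main theorem two}. Concretely, given a Cauchy sequence $(s_{(n)}^{[\boldsymbol{d},\boldsymbol{e}]})_{n\geq 1}$ in $J_{\boldsymbol{h}}^{[\boldsymbol{d},\boldsymbol{e}]}(M)$ with $s_{(n)}^{[\boldsymbol{d},\boldsymbol{e}]}=(s_{(n),\boldsymbol{m}}^{[\boldsymbol{d},\boldsymbol{e}]})_{\boldsymbol{m}\in\mathbb{Z}_{\geq 0}^k}$, I would first use Proposition \ref{multivariable weiestrass polynomial pro} applied to the separable polynomials $\Omega_{m_i}^{[d_i,e_i]}(X_i)$ (which are separable by Lemma \ref{Omega is separable} and satisfy $d_0(\Omega_{m_i}^{[d_i,e_i]})=\deg \Omega_{m_i}^{[d_i,e_i]}$ by Lemma \ref{valuation of Omega}) to obtain canonical remainders $r(s_{(n),\boldsymbol{m}}^{[\boldsymbol{d},\boldsymbol{e}]}) \in M^{0}[X_1,\ldots,X_k]\otimes_{\mathcal{O}_{\K}}\K$ of degree strictly less than $\deg\Omega_{m_i}^{[d_i,e_i]}$ in each $X_i$.

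The second step is to show that for each fixed $\boldsymbol{m}$, the sequence $(r(s_{(n),\boldsymbol{m}}^{[\boldsymbol{d},\boldsymbol{e}]}))_{n\geq 1}$ is Cauchy in the finite-dimensional $\K$-Banach space of polynomials of bounded multi-degree endowed with the restriction of $v_{\boldsymbol{0}_k}$, because the definition \eqref{multivariable Jboldysmbolh valuation} of $v_{J_{\boldsymbol{h}}}$ forces $v_{\boldsymbol{0}_k}(r(s_{(n_1),\boldsymbol{m}}^{[\boldsymbol{d},\boldsymbol{e}]})-r(s_{(n_2),\boldsymbol{m}}^{[\boldsymbol{d},\boldsymbol{e}]}))\geq v_{J_{\boldsymbol{h}}}(s_{(n_1)}^{[\boldsymbol{d},\boldsymbol{e}]}-s_{(n_2)}^{[\boldsymbol{d},\boldsymbol{e}]})-\langle\boldsymbol{h},\boldsymbol{m}\rangle_k$. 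Denote the limit by $r_{\boldsymbol{m}}^{[\boldsymbol{d},\boldsymbol{e}]}$. Passing to the limit in the Cauchy estimate yields $v_{\boldsymbol{0}_k}(r_{\boldsymbol{m}}^{[\boldsymbol{d},\boldsymbol{e}]})+\langle\boldsymbol{h},\boldsymbol{m}\rangle_k \geq \inf_n\{v_{J_{\boldsymbol{h}}}(s_{(n)}^{[\boldsymbol{d},\boldsymbol{e}]})\}$, uniformly in $\boldsymbol{m}$.

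The main point to verify — and the only genuine multi-variable difficulty — is the compatibility under the transition maps of the projective limit: one has to show $r_{\boldsymbol{m}_1}^{[\boldsymbol{d},\boldsymbol{e}]} \equiv r_{\boldsymbol{m}_2}^{[\boldsymbol{d},\boldsymbol{e}]} \pmod{(\Omega_{\boldsymbol{m}_2}^{[\boldsymbol{d},\boldsymbol{e}]})}$ whenever $\boldsymbol{m}_1\geq \boldsymbol{m}_2$. Here I would use Corollary \ref{lemma for main theorem two}: by construction, each $r(s_{(n),\boldsymbol{m}_1}^{[\boldsymbol{d},\boldsymbol{e}]})-r(s_{(n),\boldsymbol{m}_2}^{[\boldsymbol{d},\boldsymbol{e}]})$ lies in the ideal $(\Omega_{\boldsymbol{m}_2}^{[\boldsymbol{d},\boldsymbol{e}]})$, hence vanishes on every tuple $(a_1,\ldots,a_k)$ with $a_i$ a root of $\Omega_{m_{2,i}}^{[d_i,e_i]}$; evaluating at such a tuple and letting $n\to+\infty$ (using continuity of evaluation on $M[X_1,\ldots,X_k]$) gives $r_{\boldsymbol{m}_1}^{[\boldsymbol{d},\boldsymbol{e}]}(a_1,\ldots,a_k)=r_{\boldsymbol{m}_2}^{[\boldsymbol{d},\boldsymbol{e}]}(a_1,\ldots,a_k)$, and Corollary \ref{lemma for main theorem two} then forces $r_{\boldsymbol{m}_1}^{[\boldsymbol{d},\boldsymbol{e}]}-r_{\boldsymbol{m}_2}^{[\boldsymbol{d},\boldsymbol{e}]}\in (\Omega_{\boldsymbol{m}_2}^{[\boldsymbol{d},\boldsymbol{e}]})$.

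Setting $s_{\boldsymbol{m}}^{[\boldsymbol{d},\boldsymbol{e}]}$ to be the class of $r_{\boldsymbol{m}}^{[\boldsymbol{d},\boldsymbol{e}]}$, the compatibility gives a well-defined element $s^{[\boldsymbol{d},\boldsymbol{e}]}$ of the projective limit, and the uniform bound $v_{\boldsymbol{0}_k}(r_{\boldsymbol{m}}^{[\boldsymbol{d},\boldsymbol{e}]})+\langle\boldsymbol{h},\boldsymbol{m}\rangle_k\geq \inf_n\{v_{J_{\boldsymbol{h}}}(s_{(n)}^{[\boldsymbol{d},\boldsymbol{e}]})\}$ guarantees that $s^{[\boldsymbol{d},\boldsymbol{e}]}$ belongs to $J_{\boldsymbol{h}}^{[\boldsymbol{d},\boldsymbol{e}]}(M)$. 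Finally, for any $A>0$, choosing $N$ such that $v_{J_{\boldsymbol{h}}}(s_{(n_1)}^{[\boldsymbol{d},\boldsymbol{e}]}-s_{(n_2)}^{[\boldsymbol{d},\boldsymbol{e}]})\geq A$ for $n_1,n_2\geq N$ and letting $n_1\to+\infty$ in the Cauchy inequality gives $v_{\boldsymbol{0}_k}(r_{\boldsymbol{m}}^{[\boldsymbol{d},\boldsymbol{e}]}-r(s_{(n_2),\boldsymbol{m}}^{[\boldsymbol{d},\boldsymbol{e}]}))+\langle\boldsymbol{h},\boldsymbol{m}\rangle_k\geq A$ uniformly in $\boldsymbol{m}$, so $v_{J_{\boldsymbol{h}}}(s^{[\boldsymbol{d},\boldsymbol{e}]}-s_{(n_2)}^{[\boldsymbol{d},\boldsymbol{e}]})\geq A$ for $n_2\geq N$, proving convergence. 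The hardest step is the multi-variable compatibility verification, which crucially relies on Corollary \ref{lemma for main theorem two} as the multi-variable substitute for the simpler remainder-theorem argument used in Proposition \ref{one variable Jh is a K-Banach}.
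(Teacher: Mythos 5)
Your proposal is correct and is exactly the proof the paper intends: the authors omit it, noting only that it is proved "in the same way as" the one-variable case, and your argument faithfully carries that proof over by substituting the multi-variable Weierstrass division (Proposition~\ref{multivariable weiestrass polynomial pro}) for the one-variable one and Corollary~\ref{lemma for main theorem two} for Corollary~\ref{cor of remainder theorem} in the compatibility check. The only harmless slip is calling the bounded-multi-degree space "finite-dimensional" — it is a finite direct sum of copies of $M$, hence complete, which is all the step requires (the paper itself uses the same loose phrase near \eqref{multi J induction pro finitek-1deg poly isom proomega}).
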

The above proposition is proved in the same way as Proposition \ref{one variable Jh is a K-Banach}. Hence, we omit the proof of 
the above proposition. By definition, we have
\begin{align}
\begin{split}
J_{\boldsymbol{h}}^{[\boldsymbol{d},\boldsymbol{e}]}(M)^{0}&=\bigg\{(s_{\boldsymbol{m}}^{[\boldsymbol{d},\boldsymbol{e}]})_{m\in \mathbb{Z}_{\geq 0}^{k}}\in J_{\boldsymbol{h}}^{[\boldsymbol{d},\boldsymbol{e}]}(M)\bigg\vert\\
& (p^{\langle \boldsymbol{h},\boldsymbol{m}\rangle_{k}}s_{\boldsymbol{m}}^{[\boldsymbol{d},\boldsymbol{e}]})_{\boldsymbol{m}\in \mathbb{Z}_{\geq 0}^{k}}\in \prod_{\boldsymbol{m}\in \mathbb{Z}_{\geq 0}^{k}}M^{0}[[X_{1},\ldots, X_{k}]]\slash (\Omega_{\boldsymbol{m}}^{[\boldsymbol{d},\boldsymbol{e}]})M^{0}[[X_{1},\ldots,X_{k}]]\bigg\}.
\end{split}
\end{align}
We have the following:
\begin{pro}\label{Jboldsymbol(M)Lidetify Jboldsymbol(ML)prop}
Let $\mathcal{L}$ be a finite extension of $\K$. Then, we have an isometric isomorphism 
$$\varphi:J_{\boldsymbol{h}}^{[\boldsymbol{d},\boldsymbol{e}]}(M)_{\mathcal{L}}\rightarrow J_{\boldsymbol{h}}^{[\boldsymbol{d},\boldsymbol{e}]}(M_{\mathcal{L}}),$$
defined by $(s^{[\boldsymbol{d},\boldsymbol{e}]}\otimes_{\K}a)\mapsto as^{[\boldsymbol{d},\boldsymbol{e}]}$ for each $s^{[\boldsymbol{d},\boldsymbol{e}]}\in J_{\boldsymbol{h}}^{[\boldsymbol{d},\boldsymbol{e}]}(M)$ and for each $a\in \mathcal{L}$.
\end{pro}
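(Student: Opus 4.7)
The plan is to model the proof on that of Proposition \ref{boldh H isometry completetensor}, which establishes the analogous statement for the Banach spaces $\HH_{\boldsymbol{h}}(M)$ and $B_{\boldsymbol{r}}(M)$. The new ingredient is that elements of $J_{\boldsymbol{h}}^{[\boldsymbol{d},\boldsymbol{e}]}(M)$ are measured via their Weierstrass representatives: for each $\boldsymbol{m}\in\mathbb{Z}_{\geq 0}^{k}$, Proposition \ref{multivariable weiestrass polynomial pro} provides a unique $r(s_{\boldsymbol{m}}^{[\boldsymbol{d},\boldsymbol{e}]})\in M^{0}[X_{1},\ldots,X_{k}]_{<\deg(\Omega_{\boldsymbol{m}}^{[\boldsymbol{d},\boldsymbol{e}]})}\otimes_{\mathcal{O}_{\K}}\K$ congruent to $s_{\boldsymbol{m}}^{[\boldsymbol{d},\boldsymbol{e}]}$ modulo $(\Omega_{\boldsymbol{m}}^{[\boldsymbol{d},\boldsymbol{e}]})$, and the valuation $v_{J_{\boldsymbol{h}}}$ is computed from these representatives via \eqref{multivariable Jboldysmbolh valuation}. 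The key observation I would use repeatedly is that, by uniqueness, the assignment $s_{\boldsymbol{m}}^{[\boldsymbol{d},\boldsymbol{e}]}\mapsto r(s_{\boldsymbol{m}}^{[\boldsymbol{d},\boldsymbol{e}]})$ is $\K$-linear and compatible with extension of coefficients from $M$ to $M_{\mathcal{L}}$; in particular $r(\sum_{i}a_{i}s_{(i),\boldsymbol{m}}^{[\boldsymbol{d},\boldsymbol{e}]})=\sum_{i}a_{i}r(s_{(i),\boldsymbol{m}}^{[\boldsymbol{d},\boldsymbol{e}]})$ for any $a_{i}\in\mathcal{L}$.

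First, I would check that $\varphi$ is well-defined and bounded with $v_{\mathfrak{L}}(\varphi)\geq 0$. For a representation $f=\sum_{i=1}^{l}s_{(i)}\otimes_{\K}a_{i}$ in $J_{\boldsymbol{h}}^{[\boldsymbol{d},\boldsymbol{e}]}(M)_{\mathcal{L}}$, the above compatibility together with the description of the tensor-product valuation $v_{M,\mathcal{L}}$ yields
\[
v_{\boldsymbol{0}_{k}}\!\left(r\!\left(\textstyle\sum_{i}a_{i}s_{(i),\boldsymbol{m}}^{[\boldsymbol{d},\boldsymbol{e}]}\right)\right)\geq \min_{i}\{v_{\boldsymbol{0}_{k}}(r(s_{(i),\boldsymbol{m}}^{[\boldsymbol{d},\boldsymbol{e}]}))+\ord_{p}(a_{i})\},
\]
so that $v_{J_{\boldsymbol{h}}}(\varphi(f))\geq \min_{i}\{v_{J_{\boldsymbol{h}}}(s_{(i)})+\ord_{p}(a_{i})\}$; taking the least upper bound over representations of $f$ gives $v_{J_{\boldsymbol{h}}}(\varphi(f))\geq v_{J_{\boldsymbol{h}}(M)_{\mathcal{L}}}(f)$.

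Injectivity is the same basis argument as in Proposition \ref{boldh H isometry completetensor}: fix any $\K$-basis $b_{1},\ldots,b_{d}$ of $\mathcal{L}$, expand $f=\sum_{i=1}^{d}s_{(i)}\otimes_{\K}b_{i}$ uniquely, and observe that $\varphi(f)=0$ forces $\sum_{i}b_{i}s_{(i),\boldsymbol{m}}^{[\boldsymbol{d},\boldsymbol{e}]}=0$ for every $\boldsymbol{m}$, hence after passing to the Weierstrass representative (which is unique in a free $M_{\mathcal{L}}$-module of bounded-degree polynomials) each $s_{(i),\boldsymbol{m}}^{[\boldsymbol{d},\boldsymbol{e}]}=0$ by the $\K$-linear independence of the $b_{i}$'s. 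For surjectivity and the bound $v_{\mathfrak{L}}(\varphi^{-1})\geq 0$, I would for each $\epsilon>0$ choose the $\epsilon$-orthogonal basis $b_{1},\ldots,b_{d}$ from \cite[Proposition 3 in \S2.6.2]{BGR1984} satisfying \eqref{boldh H isometry completetensor surject bgr2.6.2basis}. Given $s^{[\boldsymbol{d},\boldsymbol{e}]}=(s_{\boldsymbol{m}}^{[\boldsymbol{d},\boldsymbol{e}]})_{\boldsymbol{m}}\in J_{\boldsymbol{h}}^{[\boldsymbol{d},\boldsymbol{e}]}(M_{\mathcal{L}})$, I would expand the unique Weierstrass representative $r_{\boldsymbol{m}}=r(s_{\boldsymbol{m}}^{[\boldsymbol{d},\boldsymbol{e}]})$ coefficient-wise as $r_{\boldsymbol{m}}=\sum_{i=1}^{d}r_{\boldsymbol{m}}^{(i)}\otimes_{\K}b_{i}$ with $r_{\boldsymbol{m}}^{(i)}\in M[X_{1},\ldots,X_{k}]_{<\deg(\Omega_{\boldsymbol{m}}^{[\boldsymbol{d},\boldsymbol{e}]})}$. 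Projective compatibility of $([r_{\boldsymbol{m}}^{(i)}])_{\boldsymbol{m}}$ in $\varprojlim_{\boldsymbol{m}}(M^{0}[[X_{1},\ldots,X_{k}]]/(\Omega_{\boldsymbol{m}}^{[\boldsymbol{d},\boldsymbol{e}]})M^{0}[[X_{1},\ldots,X_{k}]]\otimes_{\mathcal{O}_{\K}}\K)$ follows from the uniqueness of Weierstrass representatives together with $\K$-linear independence of the $b_{i}$, while the inequality \eqref{eq:boldh H isometry completetensor} gives
\[
v_{J_{\boldsymbol{h}}}(s^{(i)})+\ord_{p}(b_{i})\geq v_{J_{\boldsymbol{h}}}(s^{[\boldsymbol{d},\boldsymbol{e}]})-\epsilon,
\]
with $s^{(i)}=([r_{\boldsymbol{m}}^{(i)}])_{\boldsymbol{m}}\in J_{\boldsymbol{h}}^{[\boldsymbol{d},\boldsymbol{e}]}(M)$. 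Setting $\varphi^{-1}(s^{[\boldsymbol{d},\boldsymbol{e}]})=\sum_{i=1}^{d}s^{(i)}\otimes_{\K}b_{i}$ yields surjectivity and $v_{J_{\boldsymbol{h}}(M)_{\mathcal{L}}}(\varphi^{-1}(s^{[\boldsymbol{d},\boldsymbol{e}]}))\geq v_{J_{\boldsymbol{h}}}(s^{[\boldsymbol{d},\boldsymbol{e}]})-\epsilon$; letting $\epsilon\to 0$ gives $v_{\mathfrak{L}}(\varphi^{-1})\geq 0$. Lemma \ref{easy lemma on isometry} then concludes that $\varphi$ is an isometric isomorphism.

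The main obstacle I expect is the bookkeeping for the projective-system compatibility of the basis-components $(r_{\boldsymbol{m}}^{(i)})_{\boldsymbol{m}}$: one must verify that the Weierstrass representative in the bounded-degree $M_{\mathcal{L}}$-module of polynomials decomposes coordinate-wise under the fixed $\K$-basis of $\mathcal{L}$ and that this decomposition commutes with the transition maps $\frac{M^{0}[[X_{1},\ldots,X_{k}]]}{(\Omega_{\boldsymbol{m}+\boldsymbol{1}}^{[\boldsymbol{d},\boldsymbol{e}]})M^{0}[[X_{1},\ldots,X_{k}]]}\twoheadrightarrow\frac{M^{0}[[X_{1},\ldots,X_{k}]]}{(\Omega_{\boldsymbol{m}}^{[\boldsymbol{d},\boldsymbol{e}]})M^{0}[[X_{1},\ldots,X_{k}]]}$; this is where the $\K$-linear independence of the $b_{i}$ and the uniqueness clause of Proposition \ref{multivariable weiestrass polynomial pro} must be invoked carefully, and also where the growth control $(p^{\langle\boldsymbol{h},\boldsymbol{m}\rangle_{k}}r_{\boldsymbol{m}}^{(i)})_{\boldsymbol{m}}\in\prod_{\boldsymbol{m}}M^{0}[[X_{1},\ldots,X_{k}]]\otimes_{\mathcal{O}_{\K}}\K$ needed to place $s^{(i)}$ inside $J_{\boldsymbol{h}}^{[\boldsymbol{d},\boldsymbol{e}]}(M)$ is obtained from \eqref{eq:boldh H isometry completetensor}.
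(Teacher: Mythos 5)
Your proposal is correct and follows essentially the same route as the paper: well-definedness and $v_{\mathfrak{L}}(\varphi)\geq 0$ via Weierstrass representatives and the tensor-product valuation, surjectivity and $v_{\mathfrak{L}}(\varphi^{-1})\geq 0$ via the $\epsilon$-orthogonal basis of \cite[Proposition 3 in \S2.6.2]{BGR1984} together with \eqref{eq:boldh H isometry completetensor}, and the final appeal to Lemma \ref{easy lemma on isometry}. Your injectivity step via a direct basis expansion (as in Proposition \ref{boldh H isometry completetensor}) is a lightly repackaged version of what the paper does with a commutative diagram embedding both sides into products of finite-degree polynomial modules; the underlying fact — that the Weierstrass representative identifies each quotient with a free module of bounded-degree polynomials and that this identification commutes with finite scalar extension — is the same.
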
.
\begin{proof}
{First, we prove that $\varphi$ is well-defined. Let $s^{[\boldsymbol{d},\boldsymbol{e}]}\in J_{\boldsymbol{h}}^{[\boldsymbol{d},\boldsymbol{e}]}(M)_{\mathcal{L}}$. Assume that $s^{[\boldsymbol{d},\boldsymbol{e}]}$ is expressed as a sum $s^{[\boldsymbol{d},\boldsymbol{e}]}=\sum_{i=1}^{l}s^{(i)}\otimes_{\K}a_{i}$ where $s^{(i)}\in J_{\boldsymbol{h}}^{[\boldsymbol{d},\boldsymbol{e}]}(M)$ and $a_{i}\in \mathcal{L}$ with $l\in \mathbb{Z}_{\geq 1}$. We see that $\sum_{i=1}^{l}a_{i}s^{(i)}$ is in $\prod_{\boldsymbol{m}\in \mathbb{Z}_{\geq 0}^{k}}\left(\frac{M_{\mathcal{L}}^{0}[[X_{1},\ldots, X_{k}]]}{(\Omega_{\boldsymbol{m}}^{[\boldsymbol{d},\boldsymbol{e}]})M_{\mathcal{L}}^{0}[[X_{1},\ldots, X_{k}]]}\otimes_{\mathcal{O}_{\mathcal{L}}}\mathcal{L}\right)$. To prove that $\varphi$ is well-defined, it suffices to prove that $\sum_{i=1}^{l}a_{i}s^{(i)}\in J_{\boldsymbol{h}}^{[\boldsymbol{d},\boldsymbol{e}]}(M_{\mathcal{L}})$. Since $s^{(i)}\in \varprojlim_{\boldsymbol{m}\in \mathbb{Z}_{\geq 0}^{k}}\left(\frac{M^{0}[[X_{1},\ldots, X_{k}]]}{(\Omega_{\boldsymbol{m}}^{[\boldsymbol{d},\boldsymbol{e}]})M^{0}[[X_{1},\ldots, X_{k}]]}\otimes_{\mathcal{O}_{\K}}\K\right)$ for every $1\leq i\leq l$, we have 
$$
\sum_{i=1}^{l}a_{i}s^{(i)}\in \varprojlim_{\boldsymbol{m}\in \mathbb{Z}_{\geq 0}^{k}}\left(\frac{M_{\mathcal{L}}^{0}[[X_{1},\ldots, X_{k}]]}{(\Omega_{\boldsymbol{m}}^{[\boldsymbol{d},\boldsymbol{e}]})M_{\mathcal{L}}^{0}[[X_{1},\ldots, X_{k}]]}\otimes_{\mathcal{O}_{\mathcal{L}}}\mathcal{L}\right).
$$
Put $s^{(i)}=(s^{(i)}_{\boldsymbol{m}})_{\boldsymbol{m}\in \mathbb{Z}_{\geq 0}^{k}}$. We have $\sum_{i=1}^{l}a_{i}s^{(i)}=(\sum_{i=1}^{l}a_{i}s^{(i)}_{\boldsymbol{m}})_{\boldsymbol{m}\in\mathbb{Z}_{\geq 0}^{k}}$. Since $(p^{\langle\boldsymbol{h},\boldsymbol{m}\rangle_{k}}s^{(i)}_{\boldsymbol{m}})_{\boldsymbol{m}\in\mathbb{Z}_{\geq 0}^{k}}\linebreak\in \left(\prod_{\boldsymbol{m}\in \mathbb{Z}_{\geq 0}^{k}}\frac{M^{0}[[X_{1},\ldots, X_{k}]]}{(\Omega_{\boldsymbol{m}}^{[\boldsymbol{d},\boldsymbol{e}]})M^{0}[[X_{1},\ldots, X_{k}]]}\right)\otimes_{\mathcal{O}_{\K}}\K$ for every $1\leq i\leq l$, we have
$$(p^{\langle\boldsymbol{h},\boldsymbol{m}\rangle_{k}}\sum_{i=1}^{l}a_{i}s^{(i)}_{\boldsymbol{m}})_{\boldsymbol{m}\in\mathbb{Z}_{\geq 0}^{k}}\in \left(\prod_{\boldsymbol{m}\in \mathbb{Z}_{\geq 0}^{k}}\frac{M_{\mathcal{L}}^{0}[[X_{1},\ldots, X_{k}]]}{(\Omega_{\boldsymbol{m}}^{[\boldsymbol{d},\boldsymbol{e}]})M_{\mathcal{L}}^{0}[[X_{1},\ldots, X_{k}]]}\right)\otimes_{\mathcal{O}_{\mathcal{L}}}\mathcal{L}.$$
Hence, we have $\sum_{i=1}^{l}a_{i}s^{(i)}\in J_{\boldsymbol{h}}^{[\boldsymbol{d},\boldsymbol{e}]}(M_{\mathcal{L}})$ and we conclude that $\varphi$ is well-defined.}
\par 
{Next, we prove that $v_{\mathfrak{L}}(\varphi)\geq 0$. Let $s^{[\boldsymbol{d},\boldsymbol{e}]}\in J_{\boldsymbol{h}}^{[\boldsymbol{d},\boldsymbol{e}]}(M)_{\mathcal{L}}$. Assume that $s^{[\boldsymbol{d},\boldsymbol{e}]}$ is expressed as a sum $s^{[\boldsymbol{d},\boldsymbol{e}]}=\sum_{i=1}^{l}s^{(i)}\otimes_{\K}a_{i}$ where $s^{(i)}\in J_{\boldsymbol{h}}^{[\boldsymbol{d},\boldsymbol{e}]}(M)$ and $a_{i}\in \mathcal{L}$ with $l\in \mathbb{Z}_{\geq 1}$. By the definition of $\varphi$, we have $\varphi(s^{[\boldsymbol{d},\boldsymbol{e}]})=\sum_{i=1}^{l}a_{i}s^{(i)}$. Put $s^{(i)}=(s^{(i)}_{\boldsymbol{m}})_{\boldsymbol{m}\in \mathbb{Z}_{\geq 0}^{k}}$. Proposition \ref{multivariable weiestrass polynomial pro} implies that, for each $\boldsymbol{m}\in\mathbb{Z}_{\geq 0}^{k}$ and $0\leq i\leq l$, there exists a unique $r^{(i)}_{\boldsymbol{m}}\in M[X_{1},\ldots, X_{k}]$ such that $\deg_{X_{j}}r^{(i)}_{\boldsymbol{m}}<\deg\Omega_{m_{j}}^{[d_{j},e_{j}]}$ for every $1\leq j\leq k$ and $r^{(i)}_{\boldsymbol{m}}\equiv s^{(i)}_{\boldsymbol{m}}\ \mathrm{mod}\ (\Omega_{\boldsymbol{m}}^{[\boldsymbol{d},\boldsymbol{e}]})$. By the definition of $v_{J_{\boldsymbol{h}}}$, we see that
\begin{equation}
v_{J_{\boldsymbol{h}}}(s^{(i)})=\inf\{v_{\boldsymbol{0}_{k}}(r^{(i)}_{\boldsymbol{m}})+\langle \boldsymbol{h},\boldsymbol{m}\rangle_{k}\}_{\boldsymbol{m}\in \mathbb{Z}_{\geq 0}^{k}}
\end{equation}
for each $1\leq i\leq l$. Hence, we have 
\begin{align}\label{Jboldsymbol(M)Lidetify Jboldsymbol(ML)prop welldefined eq1}
\begin{split}
v_{\boldsymbol{0}_{k}}\left(\sum_{i=1}^{l}a_{i}r^{(i)}_{\boldsymbol{m}}\right)+\langle \boldsymbol{h},\boldsymbol{m}\rangle_{k}&\geq \min\{(v_{\boldsymbol{0}_{k}}(r_{\boldsymbol{m}}^{(i)})+\langle \boldsymbol{h},\boldsymbol{m}\rangle_{k})+\ord_{p}(a_{i})\}_{i=1}^{l}\\
&\geq\min\{v_{J_{\boldsymbol{h}}}(s^{(i)})+\ord_{p}(a_{i})\}_{i=1}^{l}
\end{split}
\end{align}
for every $\boldsymbol{m}\in \mathbb{Z}_{\geq 0}^{k}$. On the other hand, we have $\varphi(s^{[\boldsymbol{d},\boldsymbol{e}]})=([\sum_{i=1}^{l}a_{i}r^{(i)}_{\boldsymbol{m}}])_{\boldsymbol{m}\in \mathbb{Z}_{\geq 0}^{k}}$ where $[\sum_{i=1}^{l}a_{i}r^{(i)}_{\boldsymbol{m}}]  \in \frac{M_{\mathcal{L}}^{0}[[X_{1},\ldots, X_{k}]]}{(\Omega_{\boldsymbol{m}}^{[\boldsymbol{d},\boldsymbol{e}]})M_{\mathcal{L}}^{0}[[X_{1},\ldots, X_{k}]]}\otimes_{\mathcal{O}_{\mathcal{L}}}\mathcal{L}$ is the class of $\sum_{i=1}^{l}a_{i}r^{(i)}_{\boldsymbol{m}}\in M_{\mathcal{L}}[X_{1},\ldots, X_{k}]$. We have $\deg_{X_{j}}\left(\sum_{i=1}^{l}a_{i}r^{(i)}_{\boldsymbol{m}}\right)<\deg\Omega_{m_{j}}^{[d_{j},e_{j}]}$ for every $1\leq j\leq k$ and for every $\boldsymbol{m}\in \mathbb{Z}_{\geq 0}^{k}$. By the definition of $v_{J_{\boldsymbol{h}}}$, we have
$$
v_{J_{\boldsymbol{h}}}(\varphi(s^{[\boldsymbol{d},\boldsymbol{e}]}))=\inf\left\{v_{\boldsymbol{0}_{k}}\left(\sum_{i=1}^{l}a_{i}r_{\boldsymbol{m}}^{(i)}\right)+\langle \boldsymbol{h},\boldsymbol{m}\rangle_{k}\right\}_{\boldsymbol{m}\in \mathbb{Z}_{\geq 0}^{k}}.$$
By \eqref{Jboldsymbol(M)Lidetify Jboldsymbol(ML)prop welldefined eq1}, we have
\begin{equation}\label{Jboldsymbol(M)Lidetify Jboldsymbol(ML)prop vL(varphi)geq 0eq2}
v_{J_{\boldsymbol{h}}}(\varphi(s^{[\boldsymbol{d},\boldsymbol{e}]}))\geq \min\{v_{J_{\boldsymbol{h}}}(s^{(i)})+\ord_{p}(a_{i})\}_{i=1}^{l}.
\end{equation}
Let $v_{J_{\boldsymbol{h}}^{[\boldsymbol{d},\boldsymbol{e}]}(M)_{\mathcal{L}}}$ be the valuation on $J_{\boldsymbol{h}}^{[\boldsymbol{d},\boldsymbol{e}]}(M)_{\mathcal{L}}$ defined below \eqref{definition of ML}. By the definition of $v_{J_{\boldsymbol{h}}^{[\boldsymbol{d},\boldsymbol{e}]}(M)_{\mathcal{L}}}$, $v_{J_{\boldsymbol{h}}^{[\boldsymbol{d},\boldsymbol{e}]}(M)_{\mathcal{L}}}(s^{[\boldsymbol{d},\boldsymbol{e}]})$  is the least upper bound of $\min\{v_{J_{\boldsymbol{h}}}(s^{(i)})+\ord_{p}(a_{i})\}_{i=1}^{l}$ among all representations $s^{[\boldsymbol{d},\boldsymbol{e}]}=\sum_{i=1}^{l}s^{(i)}\otimes_{\K}a_{i}$ where $s^{(i)}\in J_{\boldsymbol{h}}^{[\boldsymbol{d},\boldsymbol{e}]}(M)$ and $a_{i}\in \mathcal{L}$. By \eqref{Jboldsymbol(M)Lidetify Jboldsymbol(ML)prop vL(varphi)geq 0eq2}, we have
$$v_{J_{\boldsymbol{h}}}(\varphi(s^{[\boldsymbol{d},\boldsymbol{e}]}))\geq v_{J_{\boldsymbol{h}}^{[\boldsymbol{d},\boldsymbol{e}]}(M)_{\mathcal{L}}}(s^{[\boldsymbol{d},\boldsymbol{e}]})$$
and we conclude that $v_{\mathfrak{L}}(\varphi)\geq 0$.}
\par 
{Next, we prove that $\varphi$ is injective. We have the following diagram:
\begin{equation}\label{Jboldsymbol(M)Lidetify Jboldsymbol(ML)prop inj pic}
\xymatrix{
J_{\boldsymbol{h}}^{[\boldsymbol{d},\boldsymbol{e}]}(M)_{\mathcal{L}}\ar[r]^{\varphi}\ar@{^{(}-_{>}}[d]&J_{\boldsymbol{h}}^{[\boldsymbol{d},\boldsymbol{e}]}(M_{\mathcal{L}})\ar@{^{(}-_{>}}[d]\\
\prod_{\boldsymbol{m}\in \mathbb{Z}_{\geq 0}^{k}}\left(\frac{M^{0}[[X_{1},\ldots,X_{k}]]}{(\Omega_{\boldsymbol{m}}^{[\boldsymbol{d},\boldsymbol{e}]})M^{0}[[X_{1},\ldots, X_{k}]]}\otimes_{\mathcal{O}_{\K}}\K\right)\otimes_{\K}\mathcal{L}\ar[r]&\prod_{\boldsymbol{m}\in \mathbb{Z}_{\geq 0}^{k}}\left(\frac{M_{\mathcal{L}}^{0}[[X_{1},\ldots,X_{k}]]}{(\Omega_{\boldsymbol{m}}^{[\boldsymbol{d},\boldsymbol{e}]})M_{\mathcal{L}}^{0}[[X_{1},\ldots, X_{k}]]}\otimes_{\mathcal{O}_{\mathcal{L}}}\mathcal{L}\right).
}
\end{equation}
The two vertical maps of \eqref{Jboldsymbol(M)Lidetify Jboldsymbol(ML)prop inj pic} are the natural inclusions and the bottom map is defined by $(s^{[\boldsymbol{d},\boldsymbol{e}]}\otimes_{\K}a)\mapsto as^{[\boldsymbol{d},\boldsymbol{e}]}$ for each $s^{[\boldsymbol{d},\boldsymbol{e}]}\in \prod_{\boldsymbol{m}\in \mathbb{Z}_{\geq 0}^{k}}\frac{M^{0}[[X_{1},\ldots,X_{k}]]}{(\Omega_{\boldsymbol{m}}^{[\boldsymbol{d},\boldsymbol{e}]})M^{0}[[X_{1},\ldots, X_{k}]]}\otimes_{\mathcal{O}_{\K}}\K$ and for each $a\in \mathcal{L}$. For each $\boldsymbol{m}\in \mathbb{Z}_{\geq 0}^{k}$, let $M[X_{1},\ldots, X_{k}]_{<\deg (\Omega_{\boldsymbol{m}}^{[\boldsymbol{d},\boldsymbol{e}]})}$ be the finite dimensional $\K$-Banach submodule of $B_{\boldsymbol{0}_{k}}(M)$ consisting of $f\in M[X_{1},\ldots, X_{k}]$ with $\deg_{X_{i}}f<\Omega_{m_{i}}^{[d_{i},e_{i}]}$ for every $1\leq i\leq k$. By \eqref{multi J induction pro finitek-1deg poly isom proomega}, we have the natural isomorphism
$$M[X_{1},\ldots, X_{k}]_{\leq \deg (\Omega_{\boldsymbol{m}}^{[\boldsymbol{d},\boldsymbol{e}]})}\simeq \frac{M^{0}[[X_{1},\ldots,X_{k}]]}{(\Omega_{\boldsymbol{m}}^{[\boldsymbol{d},\boldsymbol{e}]})M^{0}[[X_{1},\ldots, X_{k}]]}\otimes_{\mathcal{O}_{\K}}\K.$$
By using this isomorphism, we see that  
\small 
\begin{align*}
\prod_{\boldsymbol{m}\in \mathbb{Z}_{\geq 0}^{k}}\left(\frac{M^{0}[[X_{1},\ldots,X_{k}]]}{(\Omega_{\boldsymbol{m}}^{[\boldsymbol{d},\boldsymbol{e}]})M^{0}[[X_{1},\ldots, X_{k}]]}\otimes_{\mathcal{O}_{\K}}\K\right)\otimes_{\K}\mathcal{L}&\simeq \left(\prod_{\boldsymbol{m}\in \mathbb{Z}_{\geq 0}^{k}}M[X_{1},\ldots, X_{k}]_{<\deg (\Omega_{\boldsymbol{m}^{\prime}}^{[\boldsymbol{d},\boldsymbol{e}]})}\right)\otimes_{\K}\mathcal{L}\\
&\simeq \prod_{\boldsymbol{m}\in \mathbb{Z}_{\geq 0}^{k}}M_{\mathcal{L}}[X_{1},\ldots, X_{k}]_{<\deg (\Omega_{\boldsymbol{m}^{\prime}}^{[\boldsymbol{d},\boldsymbol{e}]})}\\
&\simeq \prod_{\boldsymbol{m}\in \mathbb{Z}_{\geq 0}^{k}}\left(\frac{M_{\mathcal{L}}^{0}[[X_{1},\ldots,X_{k}]]}{(\Omega_{\boldsymbol{m}}^{[\boldsymbol{d},\boldsymbol{e}]})M_{\mathcal{L}}^{0}[[X_{1},\ldots, X_{k}]]}\otimes_{\mathcal{O}_{\mathcal{L}}}\mathcal{L}\right).
\end{align*}
\normalsize 
Therefore, we see that the bottom map of \eqref{Jboldsymbol(M)Lidetify Jboldsymbol(ML)prop inj pic} is an isomorphism. Since the vertical  maps of \eqref{Jboldsymbol(M)Lidetify Jboldsymbol(ML)prop inj pic} are injectives, $\varphi$ is injective.}
\par 
Next, we prove that $\varphi$ is surjective. Let $\epsilon>0$. By \cite[Proposition 3 in \S2.6.2]{BGR1984}, there exists a $\K$-basis $b_{1}\ldots, b_{d}$ of $\mathcal{L}$ 
depending on $\epsilon$ such that,  
for every $(a_1 ,\ldots , a_d ) \in \K^d$, the inequality $\min\{\ord_{p}(a_{i}b_{i})\}_{i=1}^{d}\geq \ord_{p}(b)-\epsilon$ holds 
where $b=\sum_{i=1}^{d}a_{i}b_{i}$. By the isometric isomorphism in Proposition \ref{boldh H isometry completetensor}, we identify $B_{\boldsymbol{0}_{k}}(M_{\mathcal{L}})$ with $B_{\boldsymbol{0}_{k}}(M)_{\mathcal{L}}$. For each $s\in B_{\boldsymbol{0}_{k}}(M_{\mathcal{L}})$, we can express $s$ as a sum $s=\sum_{i=1}^{d}s^{(i)}\otimes_{\K}b_{i}$ with $s^{(i)}\in B_{\boldsymbol{0}_{k}}(M)$ uniquely. Further, by \eqref{eq:boldh H isometry completetensor}, we see that 
\begin{equation}\label{multvariable jboldysmbolh finite eq mL eq}
\min\{v_{\boldsymbol{0}_{k}}(s^{(i)})+\ord_{p}(b_{i}))\}_{i=1}^{d}\geq v_{\boldsymbol{0}_{k}}(s)-\epsilon
\end{equation}
for each $s\in B_{\boldsymbol{0}_{k}}(M_{\mathcal{L}})$. Let $s^{[\boldsymbol{d},\boldsymbol{e}]}=(s_{\boldsymbol{m}}^{[\boldsymbol{d},\boldsymbol{e}]})_{\boldsymbol{m}\in \mathbb{Z}_{\geq 0}^{k}}\in J_{\boldsymbol{h}}^{[\boldsymbol{d},\boldsymbol{e}]}(M_{\mathcal{L}})$. By Proposition \ref{multivariable weiestrass polynomial pro}, for each $\boldsymbol{m}\in \mathbb{Z}_{\geq 0}^{k}$, there exists a unique element $r(s_{\boldsymbol{m}}^{[\boldsymbol{d},\boldsymbol{e}]})\in M_{\mathcal{L}}[X_{1},\ldots,X_{k}]$ such that $s_{\boldsymbol{m}}^{[\boldsymbol{d},\boldsymbol{e}]}\equiv r(s_{\boldsymbol{m}}^{[\boldsymbol{d},\boldsymbol{e}]})\ \mathrm{mod}\ (\Omega_{\boldsymbol{m}}^{[\boldsymbol{d},\boldsymbol{e}]})$ and $\deg_{X_{j}}r_{\boldsymbol{m}}<\deg \Omega_{m_{j}}^{[d_{j},e_{j}]}$ for each $1\leq j\leq k$. For each $\boldsymbol{m}\in \mathbb{Z}_{\geq 0}^{k}$, we can expres $r(s_{\boldsymbol{m}}^{[\boldsymbol{d},\boldsymbol{e}]})$ as a sum
\begin{equation}\label{multvariable jboldysmbolh finite eq rsboldsymbolm=sumeq}
r(s_{\boldsymbol{m}}^{[\boldsymbol{d},\boldsymbol{e}]})=\sum_{i=1}^{d}b_{i}r(s_{\boldsymbol{m}}^{[\boldsymbol{d},\boldsymbol{e}]})^{(i)}
\end{equation}
uniquely where $r(s_{\boldsymbol{m}}^{[\boldsymbol{d},\boldsymbol{e}]})^{(i)}\in M[X_{1},\ldots, X_{k}]$ with $1\leq i\leq d$.
Since $r(s_{\boldsymbol{n}}^{[\boldsymbol{d},\boldsymbol{e}]})-r(s_{\boldsymbol{m}}^{[\boldsymbol{d},\boldsymbol{e}]})$ is contained in $(\Omega_{\boldsymbol{m}}^{[\boldsymbol{d},\boldsymbol{e}]})M_{\mathcal{L}}^{0}[[X_{1},\ldots, X_{k}]]\otimes_{\mathcal{O}_{\mathcal{L}}}\mathcal{L}$ for each $\boldsymbol{m},\boldsymbol{n}\in \mathbb{Z}_{\geq 0}^{k}$ with $\boldsymbol{n}\geq \boldsymbol{m}$, we have $r(s_{\boldsymbol{n}}^{[\boldsymbol{d},\boldsymbol{e}]})^{(i)}-r(s_{\boldsymbol{m}}^{[\boldsymbol{d},\boldsymbol{e}]})^{(i)}\in(\Omega_{\boldsymbol{m}}^{[\boldsymbol{d},\boldsymbol{e}]})M^{0}[[X_{1},\ldots,X_{k}]]\otimes_{\mathcal{O}_{\K}}\K$ for each $i$ satisfying $1\leq i\leq d$. Therefore, we have 
\begin{equation}\label{multvariable jboldysmbolh finite eq rmproject}
([r(s_{\boldsymbol{m}}^{[\boldsymbol{d},\boldsymbol{e}]})^{(i)}])_{\boldsymbol{m}\in \mathbb{Z}_{\geq 0}^{k}}\in \varprojlim_{\boldsymbol{m}\in \mathbb{Z}_{\geq 0}^{k}}\left(\frac{M^{0}[[X_{1},\ldots, X_{k}]]}{(\Omega_{\boldsymbol{m}}^{[\boldsymbol{d},\boldsymbol{e}]})M^{0}[[X_{1},\ldots, X_{k}]]}\otimes_{\mathcal{O}_{\K}}\K\right)\end{equation}
for each $i$ satisfying $1\leq i\leq d$. By \eqref{multivariable Jboldysmbolh valuation} and \eqref{multvariable jboldysmbolh finite eq mL eq}, we see that
\begin{equation}\label{multvariable jboldysmbolh finite eq rsboldysmbolmigeq roldsymbolm}
v_{\boldsymbol{0}_{k}}(r(s_{\boldsymbol{m}}^{[\boldsymbol{d},\boldsymbol{e}]})^{(i)})+\langle \boldsymbol{h},\boldsymbol{m}\rangle_{k}+\ord_{p}(b_{i})\geq v_{\boldsymbol{0}_{k}}(r(s_{\boldsymbol{m}}^{[\boldsymbol{d},\boldsymbol{e}]}))+\langle \boldsymbol{h},\boldsymbol{m}\rangle_{k}-\epsilon\geq v_{J_{\boldsymbol{h}}}(s)-\epsilon
\end{equation}
for each $1\leq i\leq d$ and each $\boldsymbol{m}\in \mathbb{Z}_{\geq 0}^{k}$. Then, we have 
\begin{equation}\label{multvariable jboldysmbolh finite eq rboldsmbolm bounded}
(p^{\langle \boldsymbol{h},\boldsymbol{m}\rangle_{k}}[r(s_{\boldsymbol{m}}^{[\boldsymbol{d},\boldsymbol{e}]})^{(i)}])_{\boldsymbol{m}\in \mathbb{Z}_{\geq 0}^{k}}\in \left(\prod_{\boldsymbol{m}\in \mathbb{Z}_{\geq 0}^{k}}\frac{M^{0}[[X_{1},\ldots, X_{k}]]}{(\Omega_{\boldsymbol{m}}^{[\boldsymbol{d},\boldsymbol{e}]})M^{0}[[X_{1},\ldots, X_{k}]]}\right)\otimes_{\mathcal{O}_{\K}}\K\end{equation}
for each $1\leq i\leq d$. By \eqref{multvariable jboldysmbolh finite eq rmproject} and \eqref{multvariable jboldysmbolh finite eq rboldsmbolm bounded}, we see that $([r(s_{\boldsymbol{m}}^{[\boldsymbol{d},\boldsymbol{e}]})^{(i)}])_{\boldsymbol{m}\in \mathbb{Z}_{\geq 0}^{k}}\in J_{\boldsymbol{h}}^{[\boldsymbol{d},\boldsymbol{e}]}(M)$ for each $1\leq i\leq d$.  By \eqref{multvariable jboldysmbolh finite eq rsboldsymbolm=sumeq}, we have 
\begin{equation}\label{multvariable jboldysmbolh finite vL(varphi-1)geq 0}
\varphi(\sum_{i=1}^{d}([r(s_{\boldsymbol{m}}^{[\boldsymbol{d},\boldsymbol{e}]})^{(i)}])_{\boldsymbol{m}\in \mathbb{Z}_{\geq 0}^{k}}\otimes_{\K}b_{i})=([\sum_{i=1}^{d}b_{i}r(s_{\boldsymbol{m}}^{[\boldsymbol{d},\boldsymbol{e}]})^{(i)}])_{\boldsymbol{m}\in \mathbb{Z}_{\geq 0}^{k}}=([r(s_{\boldsymbol{m}}^{[\boldsymbol{d},\boldsymbol{e}]})])_{\boldsymbol{m}\in \mathbb{Z}_{\geq 0}^{k}}=s^{[\boldsymbol{d},\boldsymbol{e}]}.
\end{equation}
Then, $\varphi$ is surjective. 
\par 
Next, we prove that $v_{\mathfrak{L}}(\varphi^{-1})\geq 0$.  Let $\epsilon>0$ and $b_{1}\ldots, b_{d}\in \mathcal{L}$ a basis over $\K$ such that,  
for every $(a_1 ,\ldots , a_d ) \in \K^d$, the inequality $\min\{\ord_{p}(a_{i}b_{i})\}_{i=1}^{d}\geq \ord_{p}(b)-\epsilon$ holds 
where $b=\sum_{i=1}^{d}a_{i}b_{i}$. Let $s^{[\boldsymbol{d},\boldsymbol{e}]}=(s_{\boldsymbol{m}}^{[\boldsymbol{d},\boldsymbol{e}]})_{\boldsymbol{m}\in \mathbb{Z}_{\geq 0}^{k}}\in J_{\boldsymbol{h}}^{[\boldsymbol{d},\boldsymbol{e}]}(M_{\mathcal{L}})$ and $r(s_{\boldsymbol{m}}^{[\boldsymbol{d},\boldsymbol{e}]})\in M_{\mathcal{L}}[X_{1},\ldots,X_{k}]$ the unique element such that $s_{\boldsymbol{m}}^{[\boldsymbol{d},\boldsymbol{e}]}\equiv r(s_{\boldsymbol{m}}^{[\boldsymbol{d},\boldsymbol{e}]})\ \mathrm{mod}\ (\Omega_{\boldsymbol{m}}^{[\boldsymbol{d},\boldsymbol{e}]})$ and $\deg_{X_{j}}r_{\boldsymbol{m}}<\deg \Omega_{m_{j}}^{[d_{j},e_{j}]}$ for each $1\leq j\leq k$. For each $\boldsymbol{m}\in \mathbb{Z}_{\geq 0}^{k}$, we put $r(s_{\boldsymbol{m}}^{[\boldsymbol{d},\boldsymbol{e}]})=\sum_{i=1}^{d}b_{i}r(s_{\boldsymbol{m}}^{[\boldsymbol{d},\boldsymbol{e}]})^{(i)}$ with $r(s_{\boldsymbol{m}}^{[\boldsymbol{d},\boldsymbol{e}]})^{(i)}\in M[X_{1},\ldots, X_{k}]$. By \eqref{multvariable jboldysmbolh finite vL(varphi-1)geq 0}, we have $\varphi^{-1}(s^{[\boldsymbol{d},\boldsymbol{e}]})=\sum_{i=1}^{d}([r(s_{\boldsymbol{m}}^{[\boldsymbol{d},\boldsymbol{e}]})^{(i)}])_{\boldsymbol{m}\in \mathbb{Z}_{\geq 0}^{k}}\otimes_{\K}b_{i}$. By the definition of $v_{J_{\boldsymbol{h}}}$, we have
$$v_{J_{\boldsymbol{h}}}(([r(s_{\boldsymbol{m}}^{[\boldsymbol{d},\boldsymbol{e}]})^{(i)}])_{\boldsymbol{m}\in \mathbb{Z}_{\geq 0}^{k}})=\inf\{v_{\boldsymbol{0}_{k}}(r(s_{\boldsymbol{m}}^{[\boldsymbol{d},\boldsymbol{e}]})^{(i)})+\langle \boldsymbol{h},\boldsymbol{m}\rangle_{k}\}_{\boldsymbol{m}\in \mathbb{Z}_{\geq 0}^{k}}.$$
Then, by \eqref{multvariable jboldysmbolh finite eq rsboldysmbolmigeq roldsymbolm}, we have
\begin{align}\label{multvariable jboldysmbolh finite eq vJboldsymbolrigeq vJboldysmbols-epsilon}
\begin{split}
v_{J_{\boldsymbol{h}}}(([r(s_{\boldsymbol{m}}^{[\boldsymbol{d},\boldsymbol{e}]})^{(i)}])_{\boldsymbol{m}\in \mathbb{Z}_{\geq 0}^{k}})+\ord_{p}(b_{i})&=\inf\{v_{\boldsymbol{0}_{k}}(r(s_{\boldsymbol{m}}^{[\boldsymbol{d},\boldsymbol{e}]})^{(i)})+\langle \boldsymbol{h},\boldsymbol{m}\rangle_{k}\}_{\boldsymbol{m}\in \mathbb{Z}_{\geq 0}^{k}}+\ord_{p}(b_{i})\\
&\geq v_{J_{\boldsymbol{h}}}(s)-\epsilon
\end{split}
\end{align}
for each $1\leq i\leq d$.  By the definition of $v_{J_{\boldsymbol{h}}^{[\boldsymbol{d},\boldsymbol{e}]}(M)_{\mathcal{L}}}$, we have
\begin{align*}
v_{J_{\boldsymbol{h}}^{[\boldsymbol{d},\boldsymbol{e}]}(M)_{\mathcal{L}}}(\varphi^{-1}(s^{[\boldsymbol{d},\boldsymbol{e}]}))&=v_{J_{\boldsymbol{h}}^{[\boldsymbol{d},\boldsymbol{e}]}(M)_{\mathcal{L}}}\left(\sum_{i=1}^{d}([r(s_{\boldsymbol{m}}^{[\boldsymbol{d},\boldsymbol{e}]})^{(i)}])_{\boldsymbol{m}\in \mathbb{Z}_{\geq 0}^{k}}\otimes_{\K}b_{i}\right)\\
&\geq \min_{1\leq i\leq d}\{v_{J_{\boldsymbol{h}}}(([r(s_{\boldsymbol{m}}^{[\boldsymbol{d},\boldsymbol{e}]})^{(i)}])_{\boldsymbol{m}\in \mathbb{Z}_{\geq 0}^{k}})+\ord_{p}(b_{i})\}.
\end{align*}
Then, by \eqref{multvariable jboldysmbolh finite eq vJboldsymbolrigeq vJboldysmbols-epsilon}, we have $v_{J_{\boldsymbol{h}}^{[\boldsymbol{d},\boldsymbol{e}]}(M)_{\mathcal{L}}}(\varphi^{-1}(s^{[\boldsymbol{d},\boldsymbol{e}]}))\geq v_{J_{\boldsymbol{h}}}(s^{[\boldsymbol{d},\boldsymbol{e}]})-\epsilon$. Thus, we have $v_{\mathfrak{L}}(\varphi^{-1})\geq -\epsilon$. Since $\epsilon$ is arbitrary positive real number, we have $v_{\mathfrak{L}}(\varphi^{-1})\geq 0$. 
\par 
By Lemma \ref{easy lemma on isometry}, we see that $\varphi$ is isometric. We complete the proof.
\end{proof}
{For each root $b\in \overline{\K}$ of $\Omega_{m_{k}}^{[d_{k},e_{k}]}(X_{k})$ with $m_{k}\in \mathbb{Z}_{\geq 0}$, we have the following two $\K$-Banach homomorphisms
\begin{align}\label{maps for multi J induction pro}
\begin{split}
&\varphi_{b,m_{k}}:J_{\boldsymbol{h}}^{[\boldsymbol{d},\boldsymbol{e}]}(M)\rightarrow J_{\boldsymbol{h}^{\prime}}^{[\boldsymbol{d}^{\prime},\boldsymbol{e}^{\prime}]}(M_{\K(b)}), (s^{[\boldsymbol{d},\boldsymbol{e}]}_{\boldsymbol{m}})_{\boldsymbol{m}\in \mathbb{Z}_{\geq 0}^{k}}\mapsto (\tilde{s}_{(\boldsymbol{m}^{\prime},m_{k})}^{[\boldsymbol{d},\boldsymbol{e}]}(X_{1},\ldots, X_{k-1},b))_{\boldsymbol{m}^{\prime}\in\mathbb{Z}_{\geq 0}^{k-1}},\\
&\psi_{b,m_{k}}:J_{h_{k}}^{[d_{k},e_{k}]}(J_{\boldsymbol{h}^{\prime}}^{[\boldsymbol{d}^{\prime},\boldsymbol{e}^{\prime}]}(M))\rightarrow J_{\boldsymbol{h}^{\prime}}^{[\boldsymbol{d}^{\prime},\boldsymbol{e}^{\prime}]}(M)_{\K(b)}, (s_{m}^{[d_{k},e_{k}]})_{m\in\mathbb{Z}_{\geq 0}}\mapsto \tilde{s}_{m_{k}}^{[d_{k},e_{k}]}(b)
\end{split}
\end{align}
where $\tilde{s}_{(\boldsymbol{m}^{\prime},m_{k})}^{[\boldsymbol{d},\boldsymbol{e}]}\in M^{0}[[X_{1},\ldots, X_{k}]]\otimes_{\mathcal{O}_{\K}}\K$ is a lift of $s_{(\boldsymbol{m}^{\prime},m_{k})}^{[\boldsymbol{d},\boldsymbol{e}]}$ and $\tilde{s}_{m_{k}}^{[d_{k},e_{k}]}\in J_{\boldsymbol{h}^{\prime}}^{[\boldsymbol{d}^{\prime},\boldsymbol{e}^{\prime}]}(M)^{0}[[X_{k}]]\linebreak\otimes_{\mathcal{O}_{\K}}\K$ is a lift of $s_{m_{k}}^{[d_{k},e_{k}]}$. When $k=1$, we define $J_{\boldsymbol{h}^{\prime}}^{[\boldsymbol{d}^{\prime},\boldsymbol{e}^{\prime}]}(M)$ and $J_{\boldsymbol{h}^{\prime}}^{[\boldsymbol{d}^{\prime},\boldsymbol{e}^{\prime}]}(M_{\K(b)})$ to be $M$ and $M_{\K(b)}$ respectively and define $\varphi_{b,m_{1}}$ to be $\psi_{b,m_{1}}$.}

 In the following proposition, we identify $J_{\boldsymbol{h}^{\prime}}^{[\boldsymbol{d}^{\prime},\boldsymbol{e}^{\prime}]}(M)_{\mathcal{L}}$ with $J_{\boldsymbol{h}^{\prime}}^{[\boldsymbol{d}^{\prime},\boldsymbol{e}^{\prime}]}(M_{\mathcal{L}})$ for each finite extension $\mathcal{L}$ of $\K$ by the isometric isomorphism in Proposition \ref{Jboldsymbol(M)Lidetify Jboldsymbol(ML)prop}.
\begin{pro}\label{multi J induction pro}
{There exists a unique $\mathcal{O}_{\K}[[X_{1},\ldots, X_{k}]]$-module isomorphism: 
$$
\varphi_{k}:J_{h_{k}}^{[d_{k},e_{k}]}(J_{\boldsymbol{h}^{\prime}}^{[\boldsymbol{d}^{\prime},\boldsymbol{e}^{\prime}]}(M))^{0}\stackrel{\sim}{\rightarrow}J_{\boldsymbol{h}}^{[\boldsymbol{d},\boldsymbol{e}]}(M)^{0}
$$
which satisfies $\varphi_{b,m_{k}} \circ \varphi_{k}=\psi_{b,m_{k}}$ for every $m_{k}\in \mathbb{Z}_{\geq 0}$ and for every root $b\in \overline{\K}$ of $\Omega_{m_{k}}^{[d_{k},e_{k}]}(X_{k})$ where $\varphi_{b,m_{k}}$ and $\psi_{b,m_{k}}$ are the $\K$-Banach homomorphisms defined in  \eqref{maps for multi J induction pro}.}
\end{pro}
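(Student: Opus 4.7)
The plan is to construct $\varphi_k$ explicitly via the polynomial representatives given by the multi-variable Weierstrass division, then check compatibility, valuations, and bijectivity. By Proposition \ref{multivariable weiestrass polynomial pro} (applied to $f_j = \Omega_{m_j}^{[d_j,e_j]}(X_j)$), every class in $\tfrac{M^0[[X_1,\ldots,X_k]]}{(\Omega_{\boldsymbol{m}}^{[\boldsymbol{d},\boldsymbol{e}]})M^0[[X_1,\ldots,X_k]]}\otimes_{\mathcal{O}_\K}\K$ has a unique polynomial representative with $\deg_{X_j}<\deg\Omega_{m_j}^{[d_j,e_j]}$ for each $j$. In particular, via the identification $B_{\boldsymbol{0}_k}(M)\simeq B_0(B_{\boldsymbol{0}_{k-1}}(M))$ of Proposition \ref{isometry ofHh for induction}(2), such a representative is naturally a polynomial in $X_k$ of degree $<\deg\Omega_{m_k}^{[d_k,e_k]}$ with coefficients polynomial in $X_1,\ldots,X_{k-1}$ of appropriate degrees. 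Reducing the coefficients mod $(\Omega_{\boldsymbol{m}'}^{[\boldsymbol{d}',\boldsymbol{e}']})$ for varying $\boldsymbol{m}'$ gives an element of $\tfrac{J_{\boldsymbol{h}'}^{[\boldsymbol{d}',\boldsymbol{e}']}(M)^0[[X_k]]}{\Omega_{m_k}^{[d_k,e_k]}(X_k)J_{\boldsymbol{h}'}^{[\boldsymbol{d}',\boldsymbol{e}']}(M)^0[[X_k]]}\otimes_{\mathcal{O}_\K}\K$. This correspondence, which reverses to the obvious reconstruction, defines $\varphi_k^{-1}$, and its inverse defines $\varphi_k$.

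Next, I would verify projective compatibility in each direction: since the identification is purely a rearrangement of polynomial coefficients, the compatibility conditions of a system over $\boldsymbol{m}\in\mathbb{Z}_{\geq 0}^k$ translate exactly into (i) compatibility over $\boldsymbol{m}'\in\mathbb{Z}_{\geq 0}^{k-1}$ of the $X_k$-coefficients, and (ii) compatibility over $m_k\in\mathbb{Z}_{\geq 0}$ of the resulting $J_{\boldsymbol{h}'}^{[\boldsymbol{d}',\boldsymbol{e}']}(M)$-valued systems in $X_k$. For the growth condition: an element of $J_{h_k}^{[d_k,e_k]}(J_{\boldsymbol{h}'}^{[\boldsymbol{d}',\boldsymbol{e}']}(M))^0$ has representative polynomials in $X_k$ whose coefficients lie in $p^{-h_k m_k}J_{\boldsymbol{h}'}^{[\boldsymbol{d}',\boldsymbol{e}']}(M)^0$; unfolding, each such coefficient over index $\boldsymbol{m}'$ has $v_{\boldsymbol{0}_{k-1}}$-valuation $\geq -\langle\boldsymbol{h}',\boldsymbol{m}'\rangle_{k-1}-h_k m_k = -\langle\boldsymbol{h},(\boldsymbol{m}',m_k)\rangle_k$, which is exactly the condition defining $J_{\boldsymbol{h}}^{[\boldsymbol{d},\boldsymbol{e}]}(M)^0$; the reverse direction is symmetric.

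For the compatibility $\varphi_{b,m_k}\circ \varphi_k=\psi_{b,m_k}$, I would unwind both sides on the polynomial representatives: if $s\in J_{h_k}^{[d_k,e_k]}(J_{\boldsymbol{h}'}^{[\boldsymbol{d}',\boldsymbol{e}']}(M))^0$ has polynomial representative $\sum_j c_j(X_1,\ldots,X_{k-1})X_k^{j}$ modulo $\Omega_{m_k}^{[d_k,e_k]}(X_k)$ (with $c_j$ representing elements of $J_{\boldsymbol{h}'}^{[\boldsymbol{d}',\boldsymbol{e}']}(M)$), then $\psi_{b,m_k}(s)=\sum_j c_j b^{j}\in J_{\boldsymbol{h}'}^{[\boldsymbol{d}',\boldsymbol{e}']}(M)_{\K(b)}$, while $\varphi_k(s)$ has representative polynomial $\sum_j c_j(X_1,\ldots,X_{k-1})X_k^{j}$ in $M_\K[X_1,\ldots,X_k]$, whose evaluation at $X_k=b$ yields the same element; thus both sides agree. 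Uniqueness of $\varphi_k$ follows because the simultaneous data $\{\varphi_{b,m_k}\circ \varphi_k(s)\}_{b}$, with $b$ ranging over all roots of $\Omega_{m_k}^{[d_k,e_k]}$, recovers the $\boldsymbol{m}=(\boldsymbol{m}',m_k)$-component of $\varphi_k(s)$ in $\tfrac{M^0[[X_1,\ldots,X_k]]}{(\Omega_{\boldsymbol{m}}^{[\boldsymbol{d},\boldsymbol{e}]})M^0[[\ldots]]}\otimes\K$ by Corollary \ref{lemma for remainder theorem} (applied with $f_k=\Omega_{m_k}^{[d_k,e_k]}(X_k)$, which is separable by Lemma \ref{Omega is separable}), and these determine the whole system.

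The main technical obstacle will be the careful bookkeeping of the growth/integrality condition across the identification: one must verify that the isometric rearrangement of polynomial representatives (cf.\ Proposition \ref{isometry ofHh for induction}(2) and Proposition \ref{Jboldsymbol(M)Lidetify Jboldsymbol(ML)prop} for the base-change compatibility at each finite-dimensional quotient) is genuinely compatible with the valuations defining $J_{\boldsymbol{h}}^{[\boldsymbol{d},\boldsymbol{e}]}(M)^0$ and $J_{h_k}^{[d_k,e_k]}(J_{\boldsymbol{h}'}^{[\boldsymbol{d}',\boldsymbol{e}']}(M))^0$. This requires invoking Proposition \ref{mult Br prodct equlity} to see that $v_{\boldsymbol{0}_k}$ factors as expected on polynomial representatives, combined with the equality between $v_{J_{\boldsymbol{h}}}$ and the infimum of $v_{\boldsymbol{0}_k}$-valuations of canonical polynomial representatives (shifted by $\langle\boldsymbol{h},\boldsymbol{m}\rangle_k$), to confirm that $\varphi_k$ and $\varphi_k^{-1}$ are both bounded by zero, hence isometric by Lemma \ref{easy lemma on isometry}.
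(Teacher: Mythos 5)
Your proposal is correct and follows essentially the same route as the paper's proof: both build the bijection through the unique polynomial representatives supplied by the multi-variable Weierstrass division (Proposition \ref{multivariable weiestrass polynomial pro}), identify the $X_k$-coefficient decomposition with the factorization $B_{\boldsymbol{0}_k}(M)\simeq B_0(B_{\boldsymbol{0}_{k-1}}(M))$ of Proposition \ref{isometry ofHh for induction}, track the growth condition coefficient-by-coefficient, and use evaluation at roots of $\Omega_{m_k}^{[d_k,e_k]}$ together with separability (Lemma \ref{Omega is separable} and Corollary \ref{lemma for main theorem two}) for both compatibility with $\varphi_{b,m_k},\psi_{b,m_k}$ and uniqueness. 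You organize the argument by constructing $\varphi_k^{-1}$ and inverting, while the paper constructs $\varphi_k$ and then checks injectivity and surjectivity separately, but the content is the same.
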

\begin{proof}
As explained above, we define $J_{\boldsymbol{h}^{\prime}}^{[\boldsymbol{d}^{\prime},\boldsymbol{e}^{\prime}]}(M)$ to be $M$ when $k=1$. 
If $k=1$, Proposition \ref{multi J induction pro} is trivially true. In the rest of the proof, we assume that $k\geq 2$. To define the map $\varphi_{k}$, we need to prove that, for each $s^{[d_{k},e_{k}]}\in J_{h_{k}}^{[d_{k},e_{k}]}(J_{\boldsymbol{h}^{\prime}}^{[\boldsymbol{d}^{\prime},\boldsymbol{e}^{\prime}]}(M))^{0}$, 
there exists a unique element $s^{[\boldsymbol{d},\boldsymbol{e}]}\in J_{\boldsymbol{h}}^{[\boldsymbol{d},\boldsymbol{e}]}(M)^{0}$ which satisfies 
\begin{equation}\label{multi J induction pro well defeq}
\varphi_{b,m_{k}}(s^{[\boldsymbol{d},\boldsymbol{e}]})=\psi_{b,m_{k}}(s^{[d_{k},e_{k}]})
\end{equation}
 for every $m_{k}\in \mathbb{Z}_{\geq 0}$ and for every root $b\in \overline{\K}$ of $\Omega_{m_{k}}^{[d_{k},e_{k}]}$. Let $s^{[d_{k},e_{k}]}\in J_{h_{k}}^{[d_{k},e_{k}]}(J_{\boldsymbol{h}^{\prime}}^{[\boldsymbol{d}^{\prime},\boldsymbol{e}^{\prime}]}(M))^{0}$. 
\par 
{First, we prove the uniqueness of $s^{[\boldsymbol{d},\boldsymbol{e}]}$ which satisfies \eqref{multi J induction pro well defeq}. It suffices to prove that, if $s^{[\boldsymbol{d},\boldsymbol{e}]}$ satisfies $\varphi_{b,m_{k}}(s^{[\boldsymbol{d},\boldsymbol{e}]})=0$ for every $m_{k}\in \mathbb{Z}_{\geq 0}$ and for every root $b\in \overline{\K}$ of $\Omega_{m_{k}}^{[d_{k},e_{k}]}$, we have $s^{[\boldsymbol{d},\boldsymbol{e}]}=0$. Put $s^{[\boldsymbol{d},\boldsymbol{e}]}=(s_{\boldsymbol{m}}^{[\boldsymbol{d},\boldsymbol{e}]})_{\boldsymbol{m}\in \mathbb{Z}_{\geq 0}^{k}}$. Since $\varphi_{b,m_{k}}(s^{[\boldsymbol{d},\boldsymbol{e}]})=0$ for every $m_{k}\in \mathbb{Z}_{\geq 0}$ and for every root $b\in \overline{\K}$ of $\Omega_{m_{k}}^{[d_{k},e_{k}]}$, we have $\tilde{s}_{\boldsymbol{m}}^{[\boldsymbol{d},\boldsymbol{e}]}(b_{1},\ldots, b_{k})=0$ for every $\boldsymbol{m}\in \mathbb{Z}_{\geq 0}^{k}$ and for every root $b_{i}\in \overline{\K}$ of $\Omega_{m_{i}}^{[d_{i},e_{i}]}$ with $1\leq i\leq k$ where $\tilde{\boldsymbol{s}}_{\boldsymbol{m}}^{[\boldsymbol{d},\boldsymbol{e}]}$ is a lift of $\boldsymbol{s}_{\boldsymbol{m}}^{[\boldsymbol{d},\boldsymbol{e}]}$. By Corollary \ref{lemma for main theorem two}, we have $\tilde{s}_{\boldsymbol{m}}^{[\boldsymbol{d},\boldsymbol{e}]}\in (\Omega_{\boldsymbol{m}}^{[\boldsymbol{d},\boldsymbol{e}]})M^{0}[[X_{1},\ldots, X_{k}]]\otimes_{\mathcal{O}_{\K}}\K$, which implies that $s^{[\boldsymbol{d},\boldsymbol{e}]}=(s_{\boldsymbol{m}}^{[\boldsymbol{d},\boldsymbol{e}]})_{\boldsymbol{m}\in \mathbb{Z}_{\geq 0}^{k}}=0$. Therefore, we have the uniqueness of $s^{[\boldsymbol{d},\boldsymbol{e}]}$ which satisfies \eqref{multi J induction pro well defeq}.}
\par
{Next, we will prove the existence of $s^{[\boldsymbol{d},\boldsymbol{e}]}\in J_{\boldsymbol{h}}^{[\boldsymbol{d},\boldsymbol{e}]}(M)^{0}$ which satisfies \eqref{multi J induction pro well defeq}. Let $\tilde{s}_{m_{k}}^{[d_{k},e_{k}]}\in J_{\boldsymbol{h}^{\prime}}^{[\boldsymbol{d}^{\prime},\boldsymbol{e}^{\prime}]}(M)^{0}[[X_{k}]]\otimes_{\mathcal{O}_{\K}}p^{-h_{k}m_{k}}\mathcal{O}_{\K}$ be a lift of $s_{m_{k}}^{[d_{k},e_{k}]}$ for each $m_{k}\in \mathbb{Z}_{\geq 0}$. Put $\tilde{s}_{m_{k}}^{[d_{k},e_{k}]}=(s_{(n),m_{k}}^{[d_{k},e_{k}]})_{n\in\mathbb{Z}_{\geq 0}}$ where $s_{(n),m_{k}}^{[d_{k},e_{k}]}\in J_{\boldsymbol{h}^{\prime}}^{[\boldsymbol{d}^{\prime},\boldsymbol{e}^{\prime}]}(M)^{0}\otimes_{\mathcal{O}_{\K}}p^{-h_{k}m_{k}}\mathcal{O}_{\K}$. We regard $J_{\boldsymbol{h}^{\prime}}^{[\boldsymbol{d}^{\prime},\boldsymbol{e}^{\prime}]}(M)^{0}\otimes_{\mathcal{O}_{\K}}p^{-h_{k}m_{k}}\mathcal{O}_{\K}$ as an $\mathcal{O}_{\K}$-submodule of $\prod_{\boldsymbol{m}^{\prime}\in \mathbb{Z}_{\geq 0}^{k-1}}\frac{M^{0}[[X_{1},\ldots, X_{k-1}]]}{(\Omega_{\boldsymbol{m}^{\prime}}^{[\boldsymbol{d}^{\prime},\boldsymbol{e}^{\prime}]})M^{0}[[X_{1},\ldots, X_{k-1}]]}\otimes_{\mathcal{O}_{\K}}p^{-\langle \boldsymbol{h},(\boldsymbol{m}^{\prime},m_{k})\rangle_{k}}\mathcal{O}_{\K}$ naturally and put 
$s_{(n),m_{k}}^{[d_{k},e_{k}]}=(s_{(n),(\boldsymbol{m}^{\prime},m_{k})}^{[d_{k},e_{k}]})_{\boldsymbol{m}^{\prime}
\in \mathbb{Z}_{\geq 0}^{k-1}}$ for each $n\in \mathbb{Z}_{\geq 0}$ where $s_{(n),(\boldsymbol{m}^{\prime},m_{k})}^{[\boldsymbol{d},\boldsymbol{e}]}\in \frac{M^{0}[[X_{1},\ldots, X_{k-1}]]}{(\Omega_{\boldsymbol{m}^{\prime}}^{[\boldsymbol{d}^{\prime},\boldsymbol{e}^{\prime}]})M^{0}[[X_{1},\ldots, X_{k-1}]]}\otimes_{\mathcal{O}_{\K}}p^{-\langle \boldsymbol{h},(\boldsymbol{m}^{\prime},m_{k})\rangle_{k}}\mathcal{O}_{\K}$. Let $n\in \mathbb{Z}_{\geq 0}$, $\boldsymbol{m}^{\prime}\in \mathbb{Z}_{\geq 0}^{k-1}$ and $m_{k}\in \mathbb{Z}_{\geq 0}$. By Proposition \ref{multivariable weiestrass polynomial pro}, there exists a unique element $r(s_{(n),(\boldsymbol{m}^{\prime},m_{k})}^{[d_{k},e_{k}]})\in M^{0}[X_{1},\ldots, X_{k-1}]\otimes_{\mathcal{O}_{\K}}p^{-\langle \boldsymbol{h},(\boldsymbol{m}^{\prime},m_{k})\rangle_{k}}\mathcal{O}_{\K}$ such that $s_{(n),(\boldsymbol{m}^{\prime},m_{k})}^{[d_{k},e_{k}]}\equiv r(s_{(n),(\boldsymbol{m}^{\prime},m_{k})}^{[d_{k},e_{k}]})\ \mathrm{mod}\ (\Omega_{\boldsymbol{m}^{\prime}}^{[\boldsymbol{d}^{\prime},\boldsymbol{e}^{\prime}]})$ and $\deg_{X_{i}}\linebreak r(s_{(n),(\boldsymbol{m}^{\prime},m_{k})}^{[d_{k},e_{k}]})<\deg \Omega_{m_{i}^{\prime}}^{[d_{i},e_{i}]}$ for each $1\leq i\leq k-1$. We put $r(s_{(\boldsymbol{m}^{\prime},m_{k})}^{[d_{k},e_{k}]}) 
\in \linebreak M^{0} [X_{1},\ldots, X_{k-1}][[X_{k}]]\otimes_{\mathcal{O}_{\K}}p^{-\langle \boldsymbol{h},(\boldsymbol{m}^{\prime},m_{k})\rangle_{k}}\mathcal{O}_{\K} $ 
to be $r(s_{(\boldsymbol{m}^{\prime},m_{k})}^{[d_{k},e_{k}]})=(r(s_{(n),(\boldsymbol{m}^{\prime},m_{k})}^{[d_{k},e_{k}]}))_{n=0}^{+\infty}$ for each $(\boldsymbol{m}^{\prime},m_{k})\in \mathbb{Z}_{\geq 0}^{k-1}\times \mathbb{Z}_{\geq 0}$. We regard $r(s_{(\boldsymbol{m}^{\prime},m_{k})}^{[d_{k},e_{k}]})$ as an element of $M^{0} [[X_{1},\ldots, X_{k}]]\otimes_{\mathcal{O}_{\K}}p^{-\langle \boldsymbol{h},(\boldsymbol{m}^{\prime},m_{k})\rangle_{k}}\mathcal{O}_{\K}$ naturally. Put $r(s^{[d_{k},e_{k}]})=([r(s_{\boldsymbol{m}}^{[d_{k},e_{k}]})])_{\boldsymbol{m}\in \mathbb{Z}_{\geq 0}^{k}}\in \prod_{\boldsymbol{m}\in \mathbb{Z}_{\geq 0}^{k}}\linebreak\frac{M^{0}[[X_{1},\ldots, X_{k}]]}{(\Omega_{\boldsymbol{m}}^{[\boldsymbol{d},\boldsymbol{e}]})M^{0}[[X_{1},\ldots, X_{k}]]}\otimes_{\mathcal{O}_{\K}}p^{-\langle \boldsymbol{h},\boldsymbol{m}\rangle_{k}}\mathcal{O}_{\K}$ where $[r(s_{\boldsymbol{m}}^{[d_{k},e_{k}]})]\in \prod_{\boldsymbol{m}\in \mathbb{Z}_{\geq 0}^{k}}\frac{M^{0}[[X_{1},\ldots, X_{k}]]}{(\Omega_{\boldsymbol{m}}^{[\boldsymbol{d},\boldsymbol{e}]})M^{0}[[X_{1},\ldots, X_{k}]]}\otimes_{\mathcal{O}_{\K}}p^{-\langle \boldsymbol{h},\boldsymbol{m}\rangle_{k}}\mathcal{O}_{\K}$ is the class of $r(s_{\boldsymbol{m}}^{[d_{k},e_{k}]})$. For each $m_{k}\in \mathbb{Z}_{\geq 0}$ and for each root $b\in \overline{\K}$ of $\Omega_{m_{k}}^{[d_{k},e_{k}]}$, let $\tilde{s}_{m_{k}}^{[d_{k},e_{k}]}(b)\in J_{\boldsymbol{h}^{\prime}}^{[\boldsymbol{d}^{\prime},\boldsymbol{e}^{\prime}]}(M_{\K(b)})$ be the specialization $\tilde{s}_{m_{k}}^{[d_{k},e_{k}]}$ at $b$. By the definition of the specialization, we have $\tilde{s}_{m_{k}}^{[d_{k},e_{k}]}(b)=\sum_{n=0}^{+\infty}s_{(n),m_{k}}^{[d_{k},e_{k}]}b^{n}$. Since we have $s_{(n),m_{k}}^{[d_{k},e_{k}]}=([r(s_{(n),(\boldsymbol{m}^{\prime},m_{k})}^{[d_{k},e_{k}]})])_{\boldsymbol{m}^{\prime}\in \mathbb{Z}_{\geq 0}^{k-1}}$ and the projection of \eqref{multi Jproj is bounded} is bounded, we see that 
$$\tilde{s}_{m_{k}}^{[d_{k},e_{k}]}(b)=\left(\left[\sum_{n=0}^{+\infty}r(s_{(n),(\boldsymbol{m}^{\prime},m_{k})}^{[d_{k},e_{k}]})b^{n}\right]\right)_{\boldsymbol{m}^{\prime}\in \mathbb{Z}_{\geq 0}^{k-1}}.$$
On the other hand, we have $r(s_{(\boldsymbol{m}^{\prime},m_{k})}^{[d_{k},e_{k}]})(X_{1},\ldots, X_{k-1},b)=\sum_{n=0}^{+\infty}r(s_{(n),(\boldsymbol{m}^{\prime},m_{k})}^{[d_{k},e_{k}]})b^{n}$. Therefore, we see that
\begin{equation}\label{multi J induction pro tildesmk []r(sm,mkbnexpre}
\tilde{s}_{m_{k}}^{[d_{k},e_{k}]}(b)=([r(s_{(\boldsymbol{m}^{\prime},m_{k})}^{[d_{k},e_{k}]})(X_{1},\ldots, X_{k-1},b)])_{\boldsymbol{m}^{\prime}\in \mathbb{Z}_{\geq 0}^{k-1}}
\end{equation}
for every $m_{k}\in \mathbb{Z}_{\geq 0}$ and for every root $b\in \overline{\K}$ of $\Omega_{m_{k}}^{[d_{k},e_{k}]}$. Since we have $\tilde{s}_{n_{k}}^{[d_{k},e_{k}]}(b)=\tilde{s}_{m_{k}}^{[d_{k},e_{k}]}(b)$ for every $m_{k},n_{k}\in \mathbb{Z}_{\geq 0}$ with $n_{k}\geq m_{k}$ and for every root $b\in \overline{\K}$ of $\Omega_{m_{k}}^{[d_{k},e_{k}]}$, by \eqref{multi J induction pro tildesmk []r(sm,mkbnexpre}, we have
\begin{equation}\label{multi J induction provarphi surj rmk rnk b eq}
([r(s_{(\boldsymbol{m}^{\prime},n_{k})}^{[d_{k},e_{k}]})(X_{1},\ldots, X_{k-1},b)])_{\boldsymbol{m}^{\prime}\in \mathbb{Z}_{\geq 0}^{k-1}}=([r(s_{(\boldsymbol{m}^{\prime},m_{k})}^{[d_{k},e_{k}]})(X_{1},\ldots, X_{k-1},b)])_{\boldsymbol{m}^{\prime}\in \mathbb{Z}_{\geq 0}^{k-1}}.
\end{equation}
By \eqref{multi J induction provarphi surj rmk rnk b eq}, we see that $r(s_{\boldsymbol{n}}^{[d_{k},e_{k}]})(b_{1},\ldots, b_{k})=r(s_{\boldsymbol{m}}^{[d_{k},e_{k}]})(b_{1},\ldots, b_{k})$ for every $\boldsymbol{m},\boldsymbol{n}\in \mathbb{Z}_{\geq 0}^{k}$ with $\boldsymbol{n}\geq \boldsymbol{m}$ and for every root $b_{i}\in \overline{\K}$ of $\Omega_{m_{i}}^{[d_{i},e_{i}]}$ with $1\leq i\leq k$. By Corollary \ref{lemma for main theorem two}, we have $r(s_{\boldsymbol{n}}^{[d_{k},e_{k}]})\equiv r(s_{\boldsymbol{m}}^{[d_{k},e_{k}]})\ \mathrm{mod}\ (\Omega_{\boldsymbol{m}}^{[\boldsymbol{d},\boldsymbol{e}]})$ for every $\boldsymbol{m},\boldsymbol{n}\in \mathbb{Z}_{\geq 0}^{k}$ with $\boldsymbol{n}\geq \boldsymbol{m}$. Then, we have $r(s^{[d_{k},e_{k}]})\in \varprojlim_{\boldsymbol{m}\in \mathbb{Z}_{\geq 0}^{k}}\left(\frac{M^{0}[[X_{1},\ldots, X_{k}]]}{(\Omega_{\boldsymbol{m}}^{[\boldsymbol{d},\boldsymbol{e}]})M^{0}[[X_{1},\ldots, X_{k}]]}\otimes_{\mathcal{O}_{\K}}\K\right)$. Since $r(s^{[d_{k},e_{k}]})$ is in $\prod_{\boldsymbol{m}\in \mathbb{Z}_{\geq 0}^{k}}\frac{M^{0}[[X_{1},\ldots, X_{k}]]}{(\Omega_{\boldsymbol{m}}^{[\boldsymbol{d},\boldsymbol{e}]})M^{0}[[X_{1},\ldots, X_{k}]]}\otimes_{\mathcal{O}_{\K}}p^{-\langle \boldsymbol{h},\boldsymbol{m}\rangle_{k}}\mathcal{O}_{\K}$, we have  $r(s^{[d_{k},e_{k}]})\in J_{\boldsymbol{h}}^{[\boldsymbol{d},\boldsymbol{e}]}(M)^{0}.$ By \eqref{multi J induction pro tildesmk []r(sm,mkbnexpre}, we have $\varphi_{m_{k},b}(r(s^{[d_{k},e_{k}]}))=\psi_{m_{k},b}(s^{[d_{k},e_{k}]})$ for every $m_{k}\in \mathbb{Z}_{\geq 0}$ and for every root $b\in \overline{\K}$ of $\Omega_{m_{k}}^{[d_{k},e_{k}]}$. Therefore, $\varphi_{k}$ is well-defined.}
 \par 
{Next, we prove that $\varphi_{k}$ is injective. Let $s^{[d_{k},e_{k}]}\in J_{h_{k}}^{[d_{k},e_{k}]}(J_{\boldsymbol{h}^{\prime}}^{[\boldsymbol{d}^{\prime},\boldsymbol{e}^{\prime}]}(M))^{0}$ such that \linebreak$\varphi_{k}(s^{[d_{k},e_{k}]})=0$. Put $s^{[d_{k},e_{k}]}=(s_{m_{k}}^{[d_{k},e_{k}]})_{m_{k}\in \mathbb{Z}_{\geq 0}}$. Since $\psi_{b,m_{k}}(s^{[d_{k},e_{k}]})=\varphi_{m_{k},b}\varphi_{k}(s^{[d_{k},e_{k}]})=0$ for every $m_{k}\in \mathbb{Z}_{\geq 0}$ and for every root $b \in \overline{\K}$ of $\Omega_{m_{k}}^{[d_{k},e_{k}]}$, we have $\tilde{s}_{m_{k}}^{[d_{k},e_{k}]}(b)=0$ where $\tilde{s}_{m_{k}}^{[d_{k},e_{k}]}\in J_{\boldsymbol{h}^{\prime}}^{[\boldsymbol{d}^{\prime},\boldsymbol{e}^{\prime}]}(M)^{0}[[X]]\otimes_{\mathcal{O}_{\K}}p^{-h_{k}m_{k}}\mathcal{O}_{\K}$ is a lift of $s_{m_{k}}^{[d_{k},e_{k}]}$. By Corollary\ \ref{cor of remainder theorem}, we have $\tilde{s}_{m_{k}}^{[d_{k},e_{k}]}\in \Omega_{m_{k}}^{[d_{k},e_{k}]}J_{\boldsymbol{h}^{\prime}}^{[\boldsymbol{d}^{\prime},\boldsymbol{e}^{\prime}]}(M)^{0}[[X]]\otimes_{\mathcal{O}_{\K}}p^{-h_{k}m_{k}}\mathcal{O}_{\K}$ for every $m_{k}\in \mathbb{Z}_{\geq 0}$, which implies that $s^{[d_{k},e_{k}]}=(s_{m_{k}}^{[d_{k},e_{k}]})_{m_{k}\in \mathbb{Z}_{\geq 0}}=0$. Thus, $\varphi_{k}$ is injective.}
\par 
{Finally, we prove that $\varphi_{k}$ is surjective. Let $s^{[\boldsymbol{d},\boldsymbol{e}]}=(s_{\boldsymbol{m}}^{[\boldsymbol{d},\boldsymbol{e}]})_{\boldsymbol{m}\in\mathbb{Z}_{\geq 0}^{k}}\in J_{\boldsymbol{h}}^{[\boldsymbol{d},\boldsymbol{e}]}(M)^{0}$. We fix an integer $m_{k}\in \mathbb{Z}_{\geq 0}$. By Proposition \ref{multivariable weiestrass polynomial pro}, for each $\boldsymbol{m}^{\prime}\in \mathbb{Z}_{\geq 0}^{k-1}$, there exists a unique element $r(s_{(\boldsymbol{m^{\prime}},m_{k})}^{[\boldsymbol{d},\boldsymbol{e}]})\in M^{0}[X_{1},\ldots, X_{k}]\otimes_{\mathcal{O}_{\K}}p^{-\langle \boldsymbol{h},(\boldsymbol{m}^{\prime},m_{k})\rangle_{k}}\mathcal{O}_{\K}$ satisfying 
the congruence $s_{(\boldsymbol{m^{\prime}},m_{k})}^{[\boldsymbol{d},\boldsymbol{e}]}\equiv r(s_{(\boldsymbol{m^{\prime}},m_{k})}^{[\boldsymbol{d},\boldsymbol{e}]})\ \mathrm{mod}\ (\Omega_{(\boldsymbol{m}^{\prime},m_{k})}^{[\boldsymbol{d},\boldsymbol{e}]})$ and the inequality $\deg_{X_{i}} r(s_{(\boldsymbol{m^{\prime}},m_{k})}^{[\boldsymbol{d},\boldsymbol{e}]})<\deg \Omega_{m_{i}^{\prime}}^{[d_{i},e_{i}]}$ for each $1\leq i\leq k-1$, as well as the inequality $\deg_{X_{k}} r(s_{(\boldsymbol{m^{\prime}},m_{k})}^{[\boldsymbol{d},\boldsymbol{e}]})<\deg \Omega_{m_{k}}^{[d_{k},e_{k}]}$. Put $r(s_{(\boldsymbol{m}^{\prime},m_{k})}^{[\boldsymbol{d},\boldsymbol{e}]})=\sum_{j=0}^{(\deg \Omega_{m_{k}}^{[d_{k},e_{k}]})-1}X_{k}^{j}r^{(j)}(s_{(\boldsymbol{m}^{\prime},m_{k})}^{[\boldsymbol{d}, \boldsymbol{e}]})$ with $r^{(j)}(s_{(\boldsymbol{m}^{\prime},m_{k})}^{[\boldsymbol{d},\boldsymbol{e}]})\in M^{0}[X_{1},\ldots, X_{k-1}]\otimes_{\mathcal{O}_{\K}}p^{-\langle \boldsymbol{h},(\boldsymbol{m}^{\prime},m_{k})\rangle_{k}}\mathcal{O}_{\K}$. Let $\boldsymbol{m}^{\prime},\boldsymbol{n}^{\prime}\in \mathbb{Z}_{\geq 0}^{k-1}$ with $\boldsymbol{n}^{\prime}\geq \boldsymbol{m}^{\prime}$. 
Since the congruence $r(s_{(\boldsymbol{n}^{\prime},m_{k})}^{[\boldsymbol{d},\boldsymbol{e}]})\equiv r(s_{(\boldsymbol{m}^{\prime},m_{k})}^{[\boldsymbol{d},\boldsymbol{e}]})\ \mathrm{mod}\ (\Omega_{(\boldsymbol{m}^{\prime},m_{k})}^{[\boldsymbol{d},\boldsymbol{e}]})$ holds, we have 
\begin{equation}\label{multi J induction provarphi varphiksurj rs implies=0 eq}
(r(s_{(\boldsymbol{n}^{\prime},m_{k})}^{[\boldsymbol{d},\boldsymbol{e}]})-r(s_{(\boldsymbol{m}^{\prime},m_{k})}^{[\boldsymbol{d},\boldsymbol{e}]}))(b_{1},\ldots, b_{k-1},X_{k})\in p^{-\langle \boldsymbol{h},(\boldsymbol{n}^{\prime},m_{k})\rangle_{k}}\Omega_{m_{k}}^{[d_{k},e_{k}]}(X_{k})M^{0}_{\K(b_{1},\ldots, b_{k-1})}[X_{k}]
\end{equation}
 for every root $b_{i}\in \overline{\K}$ of $\Omega_{m_{i}}^{[d_{i},e_{i}]}$ with $1\leq i\leq k-1$. Since we have the inequality $\deg_{X_{k}}(r(s_{(\boldsymbol{n}^{\prime},m_{k})}^{[\boldsymbol{d},\boldsymbol{e}]})-r(s_{(\boldsymbol{m}^{\prime},m_{k})}^{[\boldsymbol{d},\boldsymbol{e}]}))(b_{1},\ldots, b_{k-1},X_{k})<\deg \Omega_{m_{k}}^{[d_{k},e_{k}]}$, by  Proposition \ref{Weiestrass on Banach space}, \eqref{multi J induction provarphi varphiksurj rs implies=0 eq} implies that 
\begin{equation}
(r(s_{(\boldsymbol{n}^{\prime},m_{k})}^{[\boldsymbol{d},\boldsymbol{e}]})-r(s_{(\boldsymbol{m}^{\prime},m_{k})}^{[\boldsymbol{d},\boldsymbol{e}]}))(b_{1},\ldots, b_{k-1},X_{k})=0.
\end{equation}
Since we have 
\begin{multline*}(r(s_{(\boldsymbol{n}^{\prime},m_{k})}^{[\boldsymbol{d},\boldsymbol{e}]})-r(s_{(\boldsymbol{m}^{\prime},m_{k})}^{[\boldsymbol{d},\boldsymbol{e}]}))(b_{1},\ldots, b_{k-1},X_{k})
\\ 
=\sum_{j=0}^{(\deg \Omega_{m_{k}}^{[d_{k},e_{k}]})-1}(r^{(j)}(s_{(\boldsymbol{n}^{\prime},m_{k})}^{[\boldsymbol{d},\boldsymbol{e}]})-r^{(j)}(s_{(\boldsymbol{m}^{\prime},m_{k})}^{[\boldsymbol{d},\boldsymbol{e}]}))(b_{1},\ldots, b_{k-1})X_{k}^{j},
\end{multline*} 
we have
$$(r^{(j)}(s_{(\boldsymbol{n}^{\prime},m_{k})}^{[\boldsymbol{d},\boldsymbol{e}]})-r^{(j)}(s_{(\boldsymbol{m}^{\prime},m_{k})}^{[\boldsymbol{d},\boldsymbol{e}]}))(b_{1},\ldots, b_{k-1})=0$$
for each $0\leq j<\deg \Omega_{m_{k}}^{[d_{k},e_{k}]}$. By Corollary\ \ref{lemma for main theorem two}, we see that 
\begin{equation}\label{multi J induction provarphi varphiksurj berfer rjmk eq}
r^{(j)}(s_{(\boldsymbol{n}^{\prime},m_{k})}^{[\boldsymbol{d},\boldsymbol{e}]})-r^{(j)}(s_{(\boldsymbol{m}^{\prime},m_{k})}^{[\boldsymbol{d},\boldsymbol{e}]})\in (\Omega_{\boldsymbol{m}^{\prime}}^{[\boldsymbol{d}^{\prime},\boldsymbol{e}^{\prime}]})M_{\K}^{0}[[X_{1},\ldots, X_{k-1}]]\otimes_{\mathcal{O}_{\K}}\K
\end{equation}
for every $\boldsymbol{m}^{\prime},\boldsymbol{n}^{\prime}\in \mathbb{Z}_{\geq 0}^{k-1}$ with $\boldsymbol{n}^{\prime}\geq \boldsymbol{m}^{\prime}$ and for every $m_{k}\in \mathbb{Z}_{\geq 0}$ and for every $0\leq j< \deg \Omega_{m_{k}}^{[d_{k},e_{k}]}$. By \eqref{multi J induction provarphi varphiksurj berfer rjmk eq}, we have $([r^{(j)}(s_{(\boldsymbol{m}^{\prime},m_{k})}^{[\boldsymbol{d},\boldsymbol{e}]})])_{\boldsymbol{m}^{\prime}\in \mathbb{Z}_{\geq 0}^{k-1}}\in \varprojlim_{\boldsymbol{m}^{\prime}\in \mathbb{Z}_{\geq 0}^{k-1}}\linebreak\left(\frac{M^{0}[[X_{1},\ldots, X_{k-1}]]}{(\Omega_{\boldsymbol{m}^{\prime}}^{[\boldsymbol{d}^{\prime},\boldsymbol{e}^{\prime}]}M^{0}[[X_{1},\ldots, X_{k-1}]])}\otimes_{\mathcal{O}_{\K}}\K\right)$ for every $m_{k}\in \mathbb{Z}_{\geq 0}$ and for every $0\leq j<\deg \Omega_{m_{k}}^{[d_{k},e_{k}]}$. Put $r^{(j)}(s^{[\boldsymbol{d},\boldsymbol{e}]}_{m_{k}})=([r^{(j)}(s_{(\boldsymbol{m}^{\prime},m_{k})}^{[\boldsymbol{d},\boldsymbol{e}]})])_{\boldsymbol{m}^{\prime}\in \mathbb{Z}_{\geq 0}^{k-1}}$. Since $r^{(j)}(s_{(\boldsymbol{m}^{\prime},m_{k})}^{[\boldsymbol{d},\boldsymbol{e}]})$ is in $M^{0}[X_{1},\ldots, X_{k}]\otimes_{\mathcal{O}_{\K}}p^{-\langle \boldsymbol{h},(\boldsymbol{m}^{\prime},m_{k})\rangle_{k}}\mathcal{O}_{\K}$ for every $\boldsymbol{m}^{\prime}\in \mathbb{Z}_{\geq 0}^{k-1}$, $m_{k}\in \mathbb{Z}_{\geq 0}$ and $0\leq j<\deg\Omega_{m_{k}}^{[d_{k},e_{k}]}$, we see that $r^{(j)}(s^{[\boldsymbol{d},\boldsymbol{e}]}_{m_{k}})\in J_{\boldsymbol{h}^{\prime}}^{[\boldsymbol{d}^{\prime},\boldsymbol{e}^{\prime}]}(M)^{0}\otimes_{\mathcal{O}_{\K}}p^{-h_{k}m_{k}}\mathcal{O}_{\K}$. Put $r_{m_{k}}^{[d_{k},e_{k}]}=\sum_{j=0}^{(\deg\Omega_{m_{k}}^{[d_{k},e_{k}]})-1}X_{k}^{j}r^{(j)}(s^{[\boldsymbol{d},\boldsymbol{e}]}_{m_{k}})\in J_{\boldsymbol{h}^{\prime}}^{[\boldsymbol{d}^{\prime},\boldsymbol{e}^{\prime}]}(M)^{0}[X_{k}]\otimes_{\mathcal{O}_{\K}}p^{-h_{k}m_{k}}\mathcal{O}_{\K}$. 
By definition, for each root of $b\in \overline{\K}$ of $\Omega_{m_{k}}^{[d_{k},e_{k}]}$, we have 
\begin{align}\label{multi J induction provarphi varphiksurj rmk rboldsymboldeboldm eq}
\begin{split}
r_{m_{k}}^{[d_{k},e_{k}]}(b)&=\sum_{j=0}^{(\deg\Omega_{m_{k}}^{[d_{k},e_{k}]})-1}r^{(j)}(s^{[\boldsymbol{d},\boldsymbol{e}]}_{m_{k}})b^{j}\\
&=\left(\left[\sum_{j=0}^{(\deg\Omega_{m_{k}}^{[d_{k},e_{k}]})-1}r^{(j)}(s^{[\boldsymbol{d},\boldsymbol{e}]}_{(\boldsymbol{m}^{\prime},m_{k})})b^{j}\right]\right)_{\boldsymbol{m}^{\prime}\in \mathbb{Z}_{\geq 0}^{k-1}}\\ 
&=\left(\left[r(s^{[\boldsymbol{d},\boldsymbol{e}]}_{(\boldsymbol{m}^{\prime},m_{k})})(b)\right]\right)_{\boldsymbol{m}^{\prime}\in \mathbb{Z}_{\geq 0}^{k-1}}.
\end{split}
\end{align}
Since $r(s^{[\boldsymbol{d},\boldsymbol{e}]}_{\boldsymbol{n}})\equiv r(s^{[\boldsymbol{d},\boldsymbol{e}]}_{\boldsymbol{m}})\ \mathrm{mod}\ (\Omega_{\boldsymbol{m}}^{[\boldsymbol{d},\boldsymbol{e}]})$ for every $\boldsymbol{m},\boldsymbol{n}\in\mathbb{Z}_{\geq 0}^{k}$ with $\boldsymbol{n}\geq \boldsymbol{m}$, we have $r(s^{[\boldsymbol{d},\boldsymbol{e}]}_{(\boldsymbol{m}^{\prime},m_{k}+1)})(b)\equiv r(s^{[\boldsymbol{d},\boldsymbol{e}]}_{(\boldsymbol{m}^{\prime},m_{k})})(b)\ \mathrm{mod}\ (\Omega_{\boldsymbol{m}^{\prime}}^{[\boldsymbol{d}^{\prime},\boldsymbol{e}^{\prime}]})$ for every $\boldsymbol{m}^{\prime}\in \mathbb{Z}_{\geq 0}^{k-1}$, $m_{k}\in \mathbb{Z}_{\geq 0}$ and for every root $b\in \overline{\K}$ of $\Omega_{m_{k}}^{[d_{k},e_{k}]}$. By \eqref{multi J induction provarphi varphiksurj rmk rboldsymboldeboldm eq}, we have $r_{m_{k}+1}^{[d_{k},e_{k}]}(b)=r_{m_{k}}^{[d_{k},e_{k}]}(b)$ for every $m_{k}\in \mathbb{Z}_{\geq 0}$ and for every root $b\in \overline{\K}$ of $\Omega_{m_{k}}^{[d_{k},e_{k}]}$. By Corollary\ \ref{cor of remainder theorem}, we see that $([r_{m_{k}}^{[d_{k},e_{k}]}])_{m_{k}\in \mathbb{Z}_{\geq 0}}\in \varprojlim_{m_{k}\in \mathbb{Z}_{\geq 0}}\left(\frac{J_{\boldsymbol{h}^{\prime}}^{[\boldsymbol{d}^{\prime},\boldsymbol{e}^{\prime}]}(M)^{0}[[X_{k}]]}{\Omega_{m_{k}}^{[d_{k},e_{k}]}J_{\boldsymbol{h}^{\prime}}^{[\boldsymbol{d}^{\prime},\boldsymbol{e}^{\prime}]}(M)^{0}[[X_{k}]]}\otimes_{\mathcal{O}_{\K}}\K\right)$. Put $s^{[d_{k},e_{k}]}=([r_{m_{k}}^{[d_{k},e_{k}]}])_{m_{k}\in \mathbb{Z}_{\geq 0}}$. Since $r_{m_{k}}^{[d_{k},e_{k}]}\in J_{\boldsymbol{h}^{\prime}}^{[\boldsymbol{d}^{\prime},\boldsymbol{e}^{\prime}]}(M)^{0}[X_{k}]\otimes_{\mathcal{O}_{\K}}p^{-h_{k}m_{k}}\mathcal{O}_{\K}$ for every $m_{k}\in \mathbb{Z}_{\geq 0}$, we have $s^{[d_{k},e_{k}]}\in J_{h_{k}}^{[d_{k},e_{k}]}(J_{\boldsymbol{h}^{\prime}}^{[\boldsymbol{d}^{\prime},\boldsymbol{e}^{\prime}]}(M))^{0}$. By \eqref{multi J induction provarphi varphiksurj rmk rboldsymboldeboldm eq}, we have 
$\psi_{b,m_{k}}(s^{[d_{k},e_{k}]})=\left(\left[r(s^{[\boldsymbol{d},\boldsymbol{e}]}_{(\boldsymbol{m}^{\prime},m_{k})})(b)\right]\right)_{\boldsymbol{m}^{\prime}\in \mathbb{Z}_{\geq 0}^{k-1}} =\varphi_{b,m_{k}}(s^{[\boldsymbol{d},\boldsymbol{e}]})$ for every $m_{k}\in \mathbb{Z}_{\geq 0}$ and for every root $b\in \overline{\K}$ of $\Omega_{m_{k}}^{[d_{k},e_{k}]}$. Therefore, we have $\varphi_{k}(s^{[d_{e},e_{k}]})=s^{[\boldsymbol{d},\boldsymbol{e}]}$ and we conclude that $\varphi_{k}$ is surjective.}
\end{proof}
\begin{thm}\label{main thm 2 and proof}
Assume that $\boldsymbol{e}-\boldsymbol{d}\geq \lfloor \boldsymbol{h}\rfloor$. For $s^{[\boldsymbol{d},\boldsymbol{e}]}=(s_{\boldsymbol{m}}^{[\boldsymbol{d},\boldsymbol{e}]})_{\boldsymbol{m}\in \mathbb{Z}_{\geq 0}^{k}}\in J_{\boldsymbol{h}}^{[\boldsymbol{d},\boldsymbol{e}]}(M)$, there exists a unique element $f_{s^{[\boldsymbol{d},\boldsymbol{e}]}}\in \HH_{\boldsymbol{h}}(M)$ such that 
$$f_{s^{[\boldsymbol{d},\boldsymbol{e}]}}-\tilde{s}_{\boldsymbol{m}}^{[\boldsymbol{d},\boldsymbol{e}]}\in (\Omega_{\boldsymbol{m}}^{[\boldsymbol{d},\boldsymbol{e}]})\HH_{\boldsymbol{h}}(M)$$
for each $\boldsymbol{m}\in \mathbb{Z}_{\geq 0}^{k}$, where $\tilde{s}_{\boldsymbol{m}}^{[\boldsymbol{d},\boldsymbol{e}]}\in M^{0}[[X_{1},\ldots, X_{k}]]\otimes_{\mathcal{O}_{\K}}\K$ is a lift of $s_{\boldsymbol{m}}^{[\boldsymbol{d},\boldsymbol{e}]}$. Further, the correspondence $s^{[\boldsymbol{d},\boldsymbol{e}]}\mapsto f_{s^{[\boldsymbol{d},\boldsymbol{e}]}}$ from $J_{\boldsymbol{h}}^{[\boldsymbol{d},\boldsymbol{e}]}(M)$ to $\HH_{\boldsymbol{h}}(M)$ induces an $\mathcal{O}_\mathcal{K}[ [X_{1},\ldots, X_{k}] ]\otimes_{\mathcal{O}_{\K}}\K$-module isomorphism 
$$
J_{\boldsymbol{h}}^{[\boldsymbol{d},\boldsymbol{e}]}(M) \overset{\sim}{\longrightarrow} \HH_{\boldsymbol{h}}(M)
$$ 
and, via the above isomorphism, we have
$$
\{f\in \HH_{\boldsymbol{h}}(M)\vert v_{\HH_{\boldsymbol{h}}}(f)\geq \alpha_{\boldsymbol{h}}^{[\boldsymbol{d},\boldsymbol{e}]}\}\subset J_{\boldsymbol{h}}^{[\boldsymbol{d},\boldsymbol{e}]}(M)^{0}\subset \{f\in \HH_{\boldsymbol{h}}(M)\vert v_{\HH_{\boldsymbol{h}}}(f)\geq \beta_{\boldsymbol{h}}\},$$
where $\alpha_{\boldsymbol{h}}^{[\boldsymbol{d},\boldsymbol{e}]}=\sum_{i=1}^{k}\alpha_{h_{i}}^{[d_{i},e_{i}]}$ and $\beta_{\boldsymbol{h}}=\sum_{i=1}^{k}\beta_{h_{i}}$ with
\begin{align*}
\alpha_{h_{i}}^{[d_{i},e_{i}]}&=\begin{cases}\lfloor\frac{(e_{i}-d_{i}+1)}{p-1}+\max\{0, h_{i}-\frac{h_{i}}{\log p}(1+\log \frac{\log p}{(p-1)h_{i}})\}\rfloor+1\ &\mathrm{if}\ h_{i}>0,\\ 0\ &\mathrm{if}\ h_{i}=0,\end{cases}\\
\beta_{h_{i}}&=\begin{cases}-\lfloor\max\{h_{i},\frac{p}{p-1}\}\rfloor-1\ \ \ \  \ \ \ \ \ \ \ \ \ \ \ \ \ \ \ \ \ \ \ \ \ \ \ \ \ \ \ \ \ \ \ \ \ &\mathrm{if}\ h_{i}>0,\\ 0\ &\mathrm{if}\ h_{i}=0.\end{cases}
\end{align*}
\end{thm}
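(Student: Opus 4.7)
The plan is to proceed by induction on $k$, using Proposition \ref{isomorphism HH and project lim Banach} (the one-variable Banach-valued version) as the base case and combining Proposition \ref{multi J induction pro} with Proposition \ref{isometry ofHh for induction} for the inductive step. For $k=1$, the statement is exactly Proposition \ref{isomorphism HH and project lim Banach} (after noting that $\alpha_h^{[d,e]} = \epsilon_h^{[d,e]}$ and $\beta_h = \zeta_h$ in the notation of that proposition).

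For the inductive step, assume the theorem holds in $k-1$ variables, for any $\K$-Banach space of coefficients. By the inductive hypothesis, $J_{\boldsymbol{h}'}^{[\boldsymbol{d}',\boldsymbol{e}']}(M)$ is a $\K$-Banach space isomorphic to $\HH_{\boldsymbol{h}'}(M)$. Apply the one-variable result of Proposition \ref{isomorphism HH and project lim Banach} with coefficients in this Banach space to obtain
$$
J_{h_k}^{[d_k,e_k]}\bigl(\HH_{\boldsymbol{h}'}(M)\bigr) \overset{\sim}{\longrightarrow} \HH_{h_k}\bigl(\HH_{\boldsymbol{h}'}(M)\bigr).
$$
Under the transfer through the inductive isomorphism, the left-hand side is identified with $J_{h_k}^{[d_k,e_k]}\bigl(J_{\boldsymbol{h}'}^{[\boldsymbol{d}',\boldsymbol{e}']}(M)\bigr)$, which by Proposition \ref{multi J induction pro} corresponds to $J_{\boldsymbol{h}}^{[\boldsymbol{d},\boldsymbol{e}]}(M)$ at the level of integral structures; and the right-hand side is identified with $\HH_{\boldsymbol{h}}(M)$ via the isometric isomorphism of Proposition \ref{isometry ofHh for induction}\eqref{isometry ofHh for induction1}. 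Composing these three isomorphisms yields the desired $\mathcal{O}_\K[[X_1,\ldots,X_k]]\otimes_{\mathcal{O}_\K}\K$-module isomorphism $J_{\boldsymbol{h}}^{[\boldsymbol{d},\boldsymbol{e}]}(M)\simeq \HH_{\boldsymbol{h}}(M)$. The existence of $f_{s^{[\boldsymbol{d},\boldsymbol{e}]}}$ with $f_{s^{[\boldsymbol{d},\boldsymbol{e}]}}\equiv \tilde{s}_{\boldsymbol{m}}^{[\boldsymbol{d},\boldsymbol{e}]}\pmod{(\Omega_{\boldsymbol{m}}^{[\boldsymbol{d},\boldsymbol{e}]})\HH_{\boldsymbol{h}}(M)}$ is built into this construction, and the uniqueness is a direct consequence of Theorem \ref{main theorem 1 and proof}.

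For the estimate on integral structures, I will track the bounds through the inductive step. The base case of Proposition \ref{isomorphism HH and project lim Banach}, applied with coefficients in $\HH_{\boldsymbol{h}'}(M)$, gives
$$
\{f\in \HH_{h_k}(\HH_{\boldsymbol{h}'}(M)) : v_{\HH_{h_k}}(f)\geq \alpha_{h_k}^{[d_k,e_k]}\} \subset J_{h_k}^{[d_k,e_k]}(\HH_{\boldsymbol{h}'}(M))^{0} \subset \{f : v_{\HH_{h_k}}(f)\geq \beta_{h_k}\}.
$$
Under the isometric identification of Proposition \ref{isometry ofHh for induction}\eqref{isometry ofHh for induction1}, the valuation $v_{\HH_{h_k}}$ on $\HH_{h_k}(\HH_{\boldsymbol{h}'}(M))$ matches $v_{\HH_{\boldsymbol{h}}}$ on $\HH_{\boldsymbol{h}}(M)$. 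The inductive hypothesis relates $J_{\boldsymbol{h}'}^{[\boldsymbol{d}',\boldsymbol{e}']}(M)^{0}$ to the ball $v_{\HH_{\boldsymbol{h}'}}\geq \alpha_{\boldsymbol{h}'}^{[\boldsymbol{d}',\boldsymbol{e}']}$ inside and $v_{\HH_{\boldsymbol{h}'}}\geq \beta_{\boldsymbol{h}'}$ outside. Combining layer by layer and using additivity of the bounds—precisely $\alpha_{\boldsymbol{h}}^{[\boldsymbol{d},\boldsymbol{e}]} = \alpha_{h_k}^{[d_k,e_k]} + \alpha_{\boldsymbol{h}'}^{[\boldsymbol{d}',\boldsymbol{e}']}$ and $\beta_{\boldsymbol{h}} = \beta_{h_k} + \beta_{\boldsymbol{h}'}$—yields the two inclusions for $J_{\boldsymbol{h}}^{[\boldsymbol{d},\boldsymbol{e}]}(M)^{0}$.

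The main technical obstacle will be the compatibility at the level of integral structures in the inductive step. Specifically, the identification of Proposition \ref{multi J induction pro} is stated between $J_{h_k}^{[d_k,e_k]}(J_{\boldsymbol{h}'}^{[\boldsymbol{d}',\boldsymbol{e}']}(M))^{0}$ and $J_{\boldsymbol{h}}^{[\boldsymbol{d},\boldsymbol{e}]}(M)^{0}$, whereas we apply Proposition \ref{isomorphism HH and project lim Banach} to the Banach space $\HH_{\boldsymbol{h}'}(M)$ rather than to $J_{\boldsymbol{h}'}^{[\boldsymbol{d}',\boldsymbol{e}']}(M)$. The inductive isomorphism is an $\mathcal{O}_\K[[X_1,\ldots,X_{k-1}]]\otimes_{\mathcal{O}_\K}\K$-module isomorphism but not necessarily an isometry; rather, it carries the valuation $v_{J_{\boldsymbol{h}'}}$ to a valuation comparable to $v_{\HH_{\boldsymbol{h}'}}$ only up to the gap $[\alpha_{\boldsymbol{h}'}^{[\boldsymbol{d}',\boldsymbol{e}']}, \beta_{\boldsymbol{h}'}]$. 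Keeping careful bookkeeping of these discrepancies—showing they exactly sum to the final bounds $\alpha_{\boldsymbol{h}}^{[\boldsymbol{d},\boldsymbol{e}]}$ and $\beta_{\boldsymbol{h}}$ without leaking extra constants—is the delicate part that needs the explicit additivity of the constants.
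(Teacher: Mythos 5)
Your proposal is correct and follows the same route as the paper: induction on $k$ via Proposition \ref{isomorphism HH and project lim Banach} as base case, then composing the inductive isomorphism, the one-variable result applied with coefficients $\HH_{\boldsymbol{h}'}(M)$, Proposition \ref{multi J induction pro}, and the isometry of Proposition \ref{isometry ofHh for induction}. The technical point you flag — that passing from $J_{h_k}^{[d_k,e_k]}(J_{\boldsymbol{h}'}^{[\boldsymbol{d}',\boldsymbol{e}']}(M))^{0}$ to $J_{h_k}^{[d_k,e_k]}(\HH_{\boldsymbol{h}'}(M))^{0}$ only preserves the integral structure up to the factors $p^{\alpha_{\boldsymbol{h}'}^{[\boldsymbol{d}',\boldsymbol{e}']}}$ and $p^{\beta_{\boldsymbol{h}'}}$, which must then be combined additively with the one-variable bounds $\alpha_{h_k}^{[d_k,e_k]}$, $\beta_{h_k}$ — is exactly the bookkeeping the paper carries out, so there is no gap.
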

\begin{proof}
We prove this theorem by induction on $k$. When $k=1$, the desired statement is already proved in $\mathrm{Proposition\ \ref{isomorphism HH and project lim Banach}}$. Let us assume that $k\geq 2$. By the induction argument with respect to $k$, we have $J_{\boldsymbol{h}^{\prime}}^{[\boldsymbol{d}^{\prime},\boldsymbol{e}^{\prime}]}(M)\simeq \HH_{\boldsymbol{h}^{\prime}}(M)$ and
\begin{equation}\label{main thm 2 and proofeq 0}
\{f\in \HH_{\boldsymbol{h}^{\prime}}(M)\vert v_{\HH_{\boldsymbol{h}}^{\prime}}(f)\geq \alpha_{\boldsymbol{h}^{\prime}}^{[\boldsymbol{d}^{\prime},\boldsymbol{e}^{\prime}]}\}\subset J_{\boldsymbol{h}^{\prime}}^{[\boldsymbol{d}^{\prime},\boldsymbol{e}^{\prime}]}(M)^{0}\subset \{f\in \HH_{\boldsymbol{h}^{\prime}}(M)\vert v_{\HH_{\boldsymbol{h}^{\prime}}}(f)\geq \beta_{\boldsymbol{h}^{\prime}}\}.
\end{equation}
By \eqref{main thm 2 and proofeq 0}, we can show that we have $J_{h_{k}}^{[d_{k},e_{k}]}(J_{\boldsymbol{h}^{\prime}}^{[\boldsymbol{d}^{\prime},\boldsymbol{e}^{\prime}]}(M))\simeq J_{h_{k}}^{[d_{k},e_{k}]}(\HH_{\boldsymbol{h}^{\prime}}(M))$ and
 \begin{equation}\label{main thm 2 and proofeq 1}
 p^{\alpha_{\boldsymbol{h}^{\prime}}^{[\boldsymbol{d}^{\prime},\boldsymbol{e}^{\prime}]}}J_{h_{k}}^{[d_{k},e_{k}]}(\HH_{\boldsymbol{h}^{\prime}}(M))^{0}\subset J_{h_{k}}^{[d_{k},e_{k}]}(J_{\boldsymbol{h}^{\prime}}^{[\boldsymbol{d}^{\prime},\boldsymbol{e}^{\prime}]}(M))^{0}\subset p^{\beta_{\boldsymbol{h}^{\prime}}}J_{h_{k}}^{[d_{k},e_{k}]}(\HH_{\boldsymbol{h}^{\prime}}(M))^{0}.
 \end{equation}
 On the other hand, by the result in the case $k=1$, we see that $J_{h_{k}}^{[d_{k},e_{k}]}(\HH_{\boldsymbol{h}^{\prime}}(M))\simeq \HH_{h_{k}}(\HH_{\boldsymbol{h}^{\prime}}(M))$ and
\begin{multline}\label{main thm 2 and proofeq 2}
\left\{f\in \HH_{h_{k}}(\HH_{\boldsymbol{h}^{\prime}}(M))\big\vert v_{\HH_{h_{k}}(\HH_{\boldsymbol{h}^{\prime}}(M))}(f)\geq \alpha_{h_{k}}^{[d_{k},e_{k}]}\right\}\subset J_{h_{k}}^{[d_{k},e_{k}]}(\HH_{\boldsymbol{h}^{\prime}}(M))^{0}\\
\subset \left\{f\in \HH_{h_{k}}(\HH_{\boldsymbol{h}^{\prime}}(M))\big\vert v_{\HH_{h_{k}}(\HH_{\boldsymbol{h}^{\prime}}(M))}(f)\geq \beta_{h_{k}}\right\}
\end{multline}
where $v_{\HH_{h_{k}}(\HH_{\boldsymbol{h}^{\prime}}(M))}$ is the valuation on $\HH_{h_{k}}(\HH_{\boldsymbol{h}^{\prime}}(M))$. Therefore, by \eqref{main thm 2 and proofeq 1} and \eqref{main thm 2 and proofeq 2}, we have $J_{h_{k}}^{[d_{k},e_{k}]}(J_{\boldsymbol{h}^{\prime}}^{[\boldsymbol{d}^{\prime},\boldsymbol{e}^{\prime}]}(M))\simeq \HH_{h_{k}}(\HH_{\boldsymbol{h}^{\prime}}(M))$ and
\begin{multline}\label{main thm 2 and proofeq 3}
\{f\in \HH_{h_{k}}(\HH_{\boldsymbol{h}^{\prime}}(M))\vert v_{\HH_{h_{k}}(\HH_{\boldsymbol{h}^{\prime}}(M))}(f)\geq \alpha_{\boldsymbol{h}}^{[\boldsymbol{d},\boldsymbol{e}]}\}\subset J_{h_{k}}^{[d_{k},e_{k}]}(J_{\boldsymbol{h}^{\prime}}^{[\boldsymbol{d}^{\prime},\boldsymbol{e}^{\prime}]}(M))^{0}\\
\subset \{f\in  \HH_{h_{k}}(\HH_{\boldsymbol{h}^{\prime}}(M))\vert v_{\HH_{h_{k}}(\HH_{\boldsymbol{h}^{\prime}}(M))}(f)\geq\beta_{\boldsymbol{h}}\}.
\end{multline}
By Proposition \ref{isometry ofHh for induction}, we have an isometric isomorphism $\HH_{h_{k}}(\HH_{\boldsymbol{h}^{\prime}}(M))\simeq \HH_{\boldsymbol{h}}(M)$. Further, by Proposition \ref{multi J induction pro}, we have an $\mathcal{O}_{\K}[[[X_{1},\ldots, X_{k}]]\otimes_{\mathcal{O}_{\K}}\K$-module isomorphism $J_{\boldsymbol{h}}^{[\boldsymbol{d},\boldsymbol{e}]}(M)\simeq J_{h_{k}}^{[d_{k},e_{k}]}(J_{\boldsymbol{h}^{\prime}}^{[\boldsymbol{d}^{\prime},\boldsymbol{e}^{\prime}]}(M))$ induced by an $\mathcal{O}_{\K}[[X_{1},\ldots, X_{k}]]$-module isomorphism $J_{\boldsymbol{h}}^{[\boldsymbol{d},\boldsymbol{e}]}(M)^{0}\simeq J_{h_{k}}^{[d_{k},e_{k}]}(J_{\boldsymbol{h}^{\prime}}^{[\boldsymbol{d}^{\prime},\boldsymbol{e}^{\prime}]}(M))^{0}$. Therefore, by \eqref{main thm 2 and proofeq 3}, we have $J_{\boldsymbol{h}}^{[\boldsymbol{d},\boldsymbol{e}]}(M)\simeq \HH_{\boldsymbol{h}}(M)$ and
$$
\{f\in \HH_{\boldsymbol{h}}(M)\vert v_{\HH_{\boldsymbol{h}}}(f)\geq \alpha_{\boldsymbol{h}}^{[\boldsymbol{d},\boldsymbol{e}]}\}\subset J_{\boldsymbol{h}}^{[\boldsymbol{d},\boldsymbol{e}]}(M)^{0}\subset \{f\in \HH_{\boldsymbol{h}}(M)\vert v_{\HH_{\boldsymbol{h}}}(f)\geq \beta_{\boldsymbol{h}}\}.$$
\end{proof}
\begin{rem}\label{remark of uniqueness of jh}
Assume that $\boldsymbol{e}-\boldsymbol{d}\geq \lfloor \boldsymbol{h}\rfloor$. We regard $M^{0}[[X_{1},\ldots, X_{k}]]\otimes_{\mathcal{O}_{\K}}\K$ as an $\mathcal{O}_{\K}[[X_{1},\ldots, X_{k}]]\otimes_{\mathcal{O}_{\K}}\K$-submodule of $J_{\boldsymbol{h}}^{[\boldsymbol{d},\boldsymbol{e}]}(M)$ and $\HH_{\boldsymbol{h}}(M)$ naturally and denote by $i: M^{0}[[X_{1},\ldots, X_{k}]]\otimes_{\mathcal{O}_{\K}}\K\rightarrow J_{\boldsymbol{h}}^{[\boldsymbol{d},\boldsymbol{e}]}(M) $ and $j:M^{0}[[X_{1},\ldots, X_{k}]]\otimes_{\mathcal{O}_{\K}}\K\rightarrow \HH_{\boldsymbol{h}}(M)$ the natural inclusion maps respectively. We denot by $\varphi: J_{\boldsymbol{h}}^{[\boldsymbol{d},\boldsymbol{e}]}(M) \overset{\sim}{\longrightarrow} \HH_{\boldsymbol{h}}(M)$ the $\mathcal{O}_{\K}[[X_{1},\ldots, X_{k}]]\otimes_{\mathcal{O}_{\K}}\K$-module isomorphism defined in Theorem \ref{main thm 2 and proof}. We remark that $\varphi$ is the unique \linebreak$\mathcal{O}_{\K}[[X_{1},\ldots, X_{k}]]\otimes_{\mathcal{O}_{\K}}\K$-module isomorphism from $J_{\boldsymbol{h}}^{[\boldsymbol{d},\boldsymbol{e}]}(M)$ into $\HH_{\boldsymbol{h}}(M)$ which satisfies $\varphi i=j$.

Indeed, let $\alpha:J_{\boldsymbol{h}}^{[\boldsymbol{d},\boldsymbol{e}]}(M) \overset{\sim}{\longrightarrow} \HH_{\boldsymbol{h}}(M)$ be another  $\mathcal{O}_{\K}[[X_{1},\ldots, X_{k}]]\otimes_{\mathcal{O}_{\K}}\K$-module isomorphism which satisfies $\alpha i=j$. Let $s^{[\boldsymbol{d},\boldsymbol{e}]}=(s_{\boldsymbol{m}}^{[\boldsymbol{d},\boldsymbol{e}]})_{\boldsymbol{m}\in \mathbb{Z}_{\geq 0}^{k}}\in J_{\boldsymbol{h}}^{[\boldsymbol{d},\boldsymbol{e}]}(M)$. By Theorem \ref{main thm 2 and proof}, we have 
$$\varphi(s^{[\boldsymbol{d},\boldsymbol{e}]})-j(\tilde{s}_{\boldsymbol{m}}^{[\boldsymbol{d},\boldsymbol{e}]})\in (\Omega_{\boldsymbol{m}}^{[\boldsymbol{d},\boldsymbol{e}]})\HH_{\boldsymbol{h}}(M)$$
for each $\boldsymbol{m}\in \mathbb{Z}_{\geq 0}^{k}$, where $\tilde{s}_{\boldsymbol{m}}^{[\boldsymbol{d},\boldsymbol{e}]}\in M^{0}[[X_{1},\ldots, X_{k}]]\otimes_{\mathcal{O}_{\K}}\K$ is a lift of $s_{\boldsymbol{m}}^{[\boldsymbol{d},\boldsymbol{e}]}$. Therefore, we have
\begin{equation}\label{remark of uniqueness of jh 1}
\alpha^{-1}\varphi(s^{[\boldsymbol{d},\boldsymbol{e}]})-i(\tilde{s}_{\boldsymbol{m}}^{[\boldsymbol{d},\boldsymbol{e}]})\in (\Omega_{\boldsymbol{m}}^{[\boldsymbol{d},\boldsymbol{e}]})J_{\boldsymbol{h}}^{[\boldsymbol{d},\boldsymbol{e}]}(M)
\end{equation}
for every $\boldsymbol{m}\in \mathbb{Z}_{\geq 0}^{k}$. Put $\alpha^{-1}\varphi(s^{[\boldsymbol{d},\boldsymbol{e}]})=(w^{[\boldsymbol{d},\boldsymbol{e}]}_{\boldsymbol{m}})_{\boldsymbol{m}\in \mathbb{Z}_{\geq 0}^{k}}$.  By \eqref{remark of uniqueness of jh 1}, we see that $w^{[\boldsymbol{d},\boldsymbol{e}]}_{\boldsymbol{m}}=s^{[\boldsymbol{d},\boldsymbol{e}]}_{\boldsymbol{m}}$ for every $\boldsymbol{m}\in \mathbb{Z}_{\geq 0}^{k}$. Then, we have $\alpha^{-1}\varphi(s^{[\boldsymbol{d},\boldsymbol{e}]})=s^{[\boldsymbol{d},\boldsymbol{e}]}$, which is equivalent to $\varphi(s^{[\boldsymbol{d},\boldsymbol{e}]})=\alpha(s^{[\boldsymbol{d},\boldsymbol{e}]})$. Thus, we conclude that $\varphi=\alpha$.
\end{rem}
\begin{lem}\label{lemma for main prop 3}
Let $\boldsymbol{n}\in\mathbb{Z}_{\geq 0}^{k}$, $1\leq l\leq k$ and $s^{[\boldsymbol{i}]}\in M^{0}[[X_{1},\ldots, X_{k}]]\otimes_{\mathcal{O}_{\K}}\K$ where $\boldsymbol{i}\in [\boldsymbol{d},\boldsymbol{e}]$. We assume that
\begin{equation}\label{lemma for main prop 3 eq 1}
\theta^{(\boldsymbol{j})}=\displaystyle{\sum_{\boldsymbol{i}\in [\boldsymbol{d},\boldsymbol{j}]}}\left(\prod_{t=1}^{k}\begin{pmatrix}j_{t}-d_{t}\\i_{t}-d_{t}\end{pmatrix}\right)(-1)^{\sum_{t=1}^{k}(j_{t}-i_{t})}s^{[\boldsymbol{i}]}\in p^{\langle \boldsymbol{n},\boldsymbol{j}-\boldsymbol{d}\rangle_{k}}M^{0}[[X_{1},\ldots, X_{k}]]
\end{equation}
for each $\boldsymbol{j}\in[\boldsymbol{d},\boldsymbol{e}]$. Then, we have
$$\displaystyle{\sum_{\boldsymbol{i}\in [\boldsymbol{d}_{(l)},\boldsymbol{j}_{(l)}]}}\left(\prod_{t=1}^{l}\begin{pmatrix}j_{t}-d_{t}\\ i_{t}-d_{t}\end{pmatrix}\right)(-1)^{\sum_{t=1}^{l}(j_{t}-i_{t})}s^{[(\boldsymbol{i},\boldsymbol{j}^{(l)})]}\in p^{\langle \boldsymbol{n}_{(l)},\boldsymbol{j}_{(l)}-\boldsymbol{d}_{(l)}\rangle_{l}}M^{0}[[X_{1},\ldots, X_{k}]]$$
for each $\boldsymbol{j}\in [\boldsymbol{d},\boldsymbol{e}]$, where $\boldsymbol{j}_{(l)}=(j_{1},\ldots, j_{l})$ and $\boldsymbol{j}^{(l)}=(j_{l+1},\ldots, j_{k})$. If $l=k$, we define $(\boldsymbol{i},\boldsymbol{j}^{(l)})$ to be $\boldsymbol{i}$.
\end{lem}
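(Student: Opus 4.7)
The plan is to invert the given relations $\theta^{(\boldsymbol{j})}=\sum_{\boldsymbol{i}\in[\boldsymbol{d},\boldsymbol{j}]}\bigl(\prod_{t}\binom{j_{t}-d_{t}}{i_{t}-d_{t}}\bigr)(-1)^{\sum_{t}(j_{t}-i_{t})}s^{[\boldsymbol{i}]}$ by a $k$-fold binomial inversion, substitute the resulting expression for $s^{[(\boldsymbol{i},\boldsymbol{j}^{(l)})]}$ into the partial sum we wish to bound, and use a combinatorial identity to show that the partial sum reduces to a $(k-l)$-fold binomial sum of the $\theta^{(\boldsymbol{k})}$'s to which the hypothesis applies directly.

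First, iterating the classical one-variable binomial inversion in each coordinate shows that the hypothesis is equivalent to the formula
$$
s^{[\boldsymbol{j}]}=\sum_{\boldsymbol{k}\in[\boldsymbol{d},\boldsymbol{j}]}\left(\prod_{t=1}^{k}\binom{j_{t}-d_{t}}{k_{t}-d_{t}}\right)\theta^{(\boldsymbol{k})}
$$
for every $\boldsymbol{j}\in[\boldsymbol{d},\boldsymbol{e}]$. Denoting by $T^{(\boldsymbol{j})}_{(l)}$ the partial sum appearing in the conclusion, we substitute the above expression for $s^{[(\boldsymbol{i}_{(l)},\boldsymbol{j}^{(l)})]}$ and exchange the order of summation to obtain
$$
T^{(\boldsymbol{j})}_{(l)}=\sum_{\substack{\boldsymbol{k}_{(l)}\in[\boldsymbol{d}_{(l)},\boldsymbol{j}_{(l)}]\\ \boldsymbol{k}^{(l)}\in[\boldsymbol{d}^{(l)},\boldsymbol{j}^{(l)}]}}\prod_{t=l+1}^{k}\binom{j_{t}-d_{t}}{k_{t}-d_{t}}\,\theta^{((\boldsymbol{k}_{(l)},\boldsymbol{k}^{(l)}))}\prod_{t=1}^{l}A_{t}(j_{t},k_{t}),
$$
where $A_{t}(j_{t},k_{t})=\sum_{i_{t}=k_{t}}^{j_{t}}\binom{j_{t}-d_{t}}{i_{t}-d_{t}}\binom{i_{t}-d_{t}}{k_{t}-d_{t}}(-1)^{j_{t}-i_{t}}$.

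The key combinatorial step is then to evaluate $A_{t}(j_{t},k_{t})$. Using the identity $\binom{m}{a}\binom{a}{b}=\binom{m}{b}\binom{m-b}{a-b}$ with $m=j_{t}-d_{t}$, $a=i_{t}-d_{t}$ and $b=k_{t}-d_{t}$, and then substituting $c=a-b$, one finds
$$
A_{t}(j_{t},k_{t})=\binom{j_{t}-d_{t}}{k_{t}-d_{t}}\sum_{c=0}^{j_{t}-k_{t}}\binom{j_{t}-k_{t}}{c}(-1)^{j_{t}-k_{t}-c}=\binom{j_{t}-d_{t}}{k_{t}-d_{t}}\cdot\delta_{j_{t},k_{t}},
$$
so the product over $t\leq l$ vanishes unless $\boldsymbol{k}_{(l)}=\boldsymbol{j}_{(l)}$. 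Hence
$$
T^{(\boldsymbol{j})}_{(l)}=\sum_{\boldsymbol{k}^{(l)}\in[\boldsymbol{d}^{(l)},\boldsymbol{j}^{(l)}]}\left(\prod_{t=l+1}^{k}\binom{j_{t}-d_{t}}{k_{t}-d_{t}}\right)\theta^{((\boldsymbol{j}_{(l)},\boldsymbol{k}^{(l)}))}.
$$
By the hypothesis, each term $\theta^{((\boldsymbol{j}_{(l)},\boldsymbol{k}^{(l)}))}$ lies in $p^{\langle\boldsymbol{n}_{(l)},\boldsymbol{j}_{(l)}-\boldsymbol{d}_{(l)}\rangle_{l}+\langle\boldsymbol{n}^{(l)},\boldsymbol{k}^{(l)}-\boldsymbol{d}^{(l)}\rangle_{k-l}}M^{0}[[X_{1},\ldots,X_{k}]]$, which in particular is contained in $p^{\langle\boldsymbol{n}_{(l)},\boldsymbol{j}_{(l)}-\boldsymbol{d}_{(l)}\rangle_{l}}M^{0}[[X_{1},\ldots,X_{k}]]$, and the binomial coefficients lie in $\mathbb{Z}$, so the desired conclusion follows. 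The main (and only non-routine) obstacle is the collapse of $A_{t}$ to a Kronecker delta, which is exactly what lets the partial difference operator of length $l$ decouple from the full hypothesis of length $k$; the rest is bookkeeping.
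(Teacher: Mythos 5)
Your proof is correct, and it takes a genuinely different route from the paper's. The paper proceeds by induction on $\boldsymbol{j}^{(l)}$ with respect to the componentwise order: it writes $\theta^{(\boldsymbol{j})}=\sum_{\boldsymbol{i}\in[\boldsymbol{d}^{(l)},\boldsymbol{j}^{(l)}]}\bigl(\prod_{t>l}\binom{j_t-d_t}{i_{t-l}-d_t}\bigr)(-1)^{\sum_{t>l}(j_t-i_{t-l})}\theta^{(\boldsymbol{j}_{(l)},\boldsymbol{i})}_{l}$, i.e.\ the full difference is a partial difference of the partial differences, then isolates $\theta^{(\boldsymbol{j})}_l$ by moving the already-controlled lower-order terms to the other side. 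You instead invert the full relation $\theta=\Delta s$ via the tensor-product binomial inversion $s^{[\boldsymbol{j}]}=\sum_{\boldsymbol{k}\in[\boldsymbol{d},\boldsymbol{j}]}\prod_t\binom{j_t-d_t}{k_t-d_t}\theta^{(\boldsymbol{k})}$, substitute, and let a Vandermonde-style collapse $A_t=\delta_{j_t,k_t}$ produce the closed form $\theta^{(\boldsymbol{j})}_l=\sum_{\boldsymbol{k}^{(l)}\in[\boldsymbol{d}^{(l)},\boldsymbol{j}^{(l)}]}\bigl(\prod_{t>l}\binom{j_t-d_t}{k_t-d_t}\bigr)\theta^{((\boldsymbol{j}_{(l)},\boldsymbol{k}^{(l)}))}$; the $p$-adic bound then follows immediately because the binomial coefficients are integers, $\boldsymbol{n}\geq\boldsymbol{0}$, and $\boldsymbol{k}^{(l)}\geq\boldsymbol{d}^{(l)}$ makes the extra exponent nonnegative. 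The two arguments encode the same identity (yours is the inverse of the paper's inductive relation in the last $k-l$ coordinates), but yours replaces the induction by a direct computation and yields an explicit formula, which is a bit cleaner and more transparent. One small point of hygiene: when invoking the hypothesis on $\theta^{((\boldsymbol{j}_{(l)},\boldsymbol{k}^{(l)}))}$ you implicitly use that $(\boldsymbol{j}_{(l)},\boldsymbol{k}^{(l)})\in[\boldsymbol{d},\boldsymbol{e}]$, which does hold since $\boldsymbol{k}^{(l)}\leq\boldsymbol{j}^{(l)}\leq\boldsymbol{e}^{(l)}$, but it is worth saying.
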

\begin{proof}
Put $\theta^{(\boldsymbol{j})}_{l}=\displaystyle{\sum_{\boldsymbol{i}\in [\boldsymbol{d}_{(l)},\boldsymbol{j}_{(l)}]}}\left(\prod_{t=1}^{l}\begin{pmatrix}j_{t}-d_{t}\\ i_{t}-d_{t}\end{pmatrix}\right)(-1)^{\sum_{t=1}^{l}(j_{t}-i_{t})}s^{[(\boldsymbol{i},\boldsymbol{j}^{(l)})]}$, where $\boldsymbol{j}\in [\boldsymbol{d},\boldsymbol{e}]$. Let $\boldsymbol{j}\in [\boldsymbol{d},\boldsymbol{e}]$. 
{It suffices to prove the followings:
\begin{enumerate}
\item If $\boldsymbol{j}^{(l)}=\boldsymbol{d}^{(l)}$, we have $\theta^{(\boldsymbol{j})}_{l}\in p^{\langle \boldsymbol{n}_{(l)},\boldsymbol{j}_{(l)}-\boldsymbol{d}_{(l)}\rangle_{l}}M^{0}[[X_{1},\ldots, X_{k}]]$.
\item Assume that $\boldsymbol{d}^{(l)}<\boldsymbol{j}^{(l)}$. If $\theta^{(\boldsymbol{j}_{(l)},\boldsymbol{i})}_{l}$ is contained in $p^{\langle \boldsymbol{n}_{(l)},\boldsymbol{j}_{(l)}-\boldsymbol{d}_{(l)}\rangle_{l}}M^{0}[[X_{1},\ldots, X_{k}]]$ for each $\boldsymbol{d}^{(l)}\leq \boldsymbol{i}<\boldsymbol{j}^{(l)}$, we have $\theta^{(\boldsymbol{j})}_{l}\in p^{\langle \boldsymbol{n}_{(l)},\boldsymbol{j}_{(l)}-\boldsymbol{d}_{(l)}\rangle_{l}}M^{0}[[X_{1},\ldots, X_{k}]]$.
\end{enumerate}}
If $\boldsymbol{j}^{(l)}=\boldsymbol{d}^{(l)}$, we have $\theta^{(\boldsymbol{j})}_{l}=\theta^{(\boldsymbol{j})}$. Then, by the assumption \eqref{lemma for main prop 3 eq 1}, we have
$$\theta^{(\boldsymbol{j})}_{l}\in p^{\langle \boldsymbol{n}_{(l)},\boldsymbol{j}_{(l)}-\boldsymbol{d}_{(l)}\rangle_{l}}M^{0}[[X_{1},\ldots, X_{k}]].$$
Next, we assume that $\boldsymbol{d}^{(l)}<\boldsymbol{j}^{(l)}$ and assume that 
$\theta^{(\boldsymbol{j}_{(l)},\boldsymbol{i})}_{l}$ is contained in $p^{\langle \boldsymbol{n}_{(l)},\boldsymbol{j}_{(l)}-\boldsymbol{d}_{(l)}\rangle_{l}}M^{0}[[X_{1},\ldots, X_{k}]]$ for each $\boldsymbol{d}^{(l)}\leq \boldsymbol{i}<\boldsymbol{j}^{(l)}$. By definition, we see that
$$\theta^{(\boldsymbol{j})}=\sum_{\boldsymbol{i}\in [\boldsymbol{d}^{(l)},\boldsymbol{j}^{(l)}]}\left(\prod_{t=l+1}^{k}\begin{pmatrix}j_{t}-d_{t}\\i_{t-l}-d_{t}\end{pmatrix}\right)(-1)^{\sum_{t=l+1}^{k}(j_{t}-i_{t-l})}\theta^{(\boldsymbol{j}_{(l)},\boldsymbol{i})}_{l}.$$
Therefore, $\theta^{(\boldsymbol{j})}_{l}=\theta^{(\boldsymbol{j})}-\sum_{\boldsymbol{d}^{(l)}\leq \boldsymbol{i}<\boldsymbol{j}^{(l)}}\left(\prod_{t=l+1}^{k}\begin{pmatrix}j_{t}-d_{t}\\i_{t-l}-d_{t}\end{pmatrix}\right)(-1)^{\sum_{t=l+1}^{k}(j_{t}-i_{t-l})}\theta^{(\boldsymbol{j}_{(l)},\boldsymbol{i})}_{l}$ is contained in $ p^{\langle \boldsymbol{n}_{(l)},\boldsymbol{j}_{(l)}-\boldsymbol{d}_{(l)}\rangle_{l}}M^{0}[[X_{1},\ldots, X_{k}]]$.
\end{proof}
Let $(\Omega_{\boldsymbol{m}}^{[\boldsymbol{d},\boldsymbol{e}]}(\gamma_{1},\ldots, \gamma_{k}))$ be the ideal of $\mathcal{O}_{\K}[[\Gamma]]$ generated by $\Omega_{m_{1}}^{[d_{1},e_{1}]}(\gamma_{1}),\ldots,\Omega_{m_{k}}^{[d_{k},e_{k}]}(\gamma_{k})$ with $\Omega_{m_{i}}^{[d_{i},e_{i}]}(\gamma_{i})=\prod_{j=d_{i}}^{e_{i}}([\gamma_{i}]^{p^{m_{i}}}-u_{i}^{jp^{m_{i}}})$ for each $\boldsymbol{m}\in \mathbb{Z}_{\geq 0}^{k}$. Let $s\in M^{0}[[\Gamma]]\otimes_{\mathcal{O}_{\K}}\K$ and $\boldsymbol{m}\in \mathbb{Z}_{\geq 0}^{k}$. Via the non-canonical isomorphism $M^{0}[[\Gamma]]\simeq M^{0}[[X_{1},\ldots, X_{k}]]$ in \eqref{non-canonical continuous isomorphihsm of iwasawa module of banach}, by Corollary \ref{lemma for main theorem two}, we see that $s\in \Omega_{\boldsymbol{m}}^{[\boldsymbol{d},\boldsymbol{e}]}(\gamma_{1},\ldots, \gamma_{k})(M^{0}[[\Gamma]]\otimes_{\mathcal{O}_{\K}}\K)$ if and only if 
\begin{equation}\label{multivariable iwasawa modoomega speq}
\kappa(s)=0\ \mathrm{for\ every}\ \kappa\in \mathfrak{X}_{\mathcal{O}_{\K}[[\Gamma]]}^{[\boldsymbol{d},\boldsymbol{e}]}\ \mathrm{with}\ \boldsymbol{m}_{\kappa}\leq \boldsymbol{m}.
\end{equation}
\begin{lem}\label{multivariable litfing prop}
Let $s^{[\boldsymbol{i}]}\in M^{0}[[\Gamma]]\otimes_{\mathcal{O}_{\K}}\K$ for each $\boldsymbol{i}\in [\boldsymbol{d},\boldsymbol{e}]$ and 
we define $\theta^{(\boldsymbol{j})} \in  M^{0}[[\Gamma]]\otimes_{\mathcal{O}_{\K}}\K$ by 
$$\theta^{(\boldsymbol{j})}=\displaystyle{\sum_{\boldsymbol{i}\in [\boldsymbol{d},\boldsymbol{j}]}}\left(\prod_{t=1}^{k}\begin{pmatrix}j_{t}-d_{t}\\i_{t}-d_{t}\end{pmatrix}\right)(-1)^{\sum_{t=1}^{k}(j_{t}-i_{t})}s^{[\boldsymbol{i}]}
$$
for each $\boldsymbol{j}\in [\boldsymbol{d},\boldsymbol{e}]$. 
Let $\boldsymbol{m}\in\mathbb{Z}_{\geq 0}^{k}$. 
Assume that $\theta^{(\boldsymbol{j})}$ is contained in $p^{\langle\boldsymbol{m},(\boldsymbol{j}-\boldsymbol{d})\rangle_{k}}M^{0}[[\Gamma]] \subset 
 M^{0}[[\Gamma]]\otimes_{\mathcal{O}_{\K}}\K$ 
for every $\boldsymbol{j}\in [\boldsymbol{d},\boldsymbol{e}]$. \par 
Then, there exists a unique element $s^{[\boldsymbol{d},\boldsymbol{e}]}\in \frac{M^{0}[[\Gamma]]}{(\Omega_{\boldsymbol{m}}^{[\boldsymbol{d},\boldsymbol{e}]}(\gamma_{1},\ldots,\gamma_{k}))M^{0}[[\Gamma]]}\otimes_{\mathcal{O}_{\K}}p^{-c^{[\boldsymbol{d},\boldsymbol{e}]}}\mathcal{O}_{\K}$ such that the image of $s^{[\boldsymbol{d},\boldsymbol{e}]}$ by the natural projection 
$$
\frac{M^{0}[[\Gamma]]}{(\Omega_{\boldsymbol{m}}^{[\boldsymbol{d},\boldsymbol{e}]}(\gamma_{1},\ldots,\gamma_{k}))M^{0}[[\Gamma]]}\otimes_{\mathcal{O}_{\K}}\K \longrightarrow \frac{M^{0}[[\Gamma]]}{(\Omega_{\boldsymbol{m}}^{[\boldsymbol{i}]}(\gamma_{1},\ldots,\gamma_{k}))M^{0}[[\Gamma]]}\otimes_{\mathcal{O}_{\K}}\K
$$
is equal to the class $[s^{[\boldsymbol{i}]}]_{\boldsymbol{m}} 
\in \tfrac{M^{0}[[\Gamma]]}{(\Omega_{\boldsymbol{m}}^{[\boldsymbol{i}]}(\gamma_{1},\ldots,\gamma_{k}))M^{0}[[\Gamma]]}\otimes_{\mathcal{O}_{\K}}\K$ of $s^{[\boldsymbol{i}]}\in M^{0}[[\Gamma]]\otimes_{\mathcal{O}_{\K}}\K$ for each $\boldsymbol{i}\in [\boldsymbol{d},\boldsymbol{e}]$, 
where we define $c^{[\boldsymbol{d},\boldsymbol{e}]}$ by $c^{[\boldsymbol{d},\boldsymbol{e}]}=\sum_{i=1}^{k}c^{[d_{i},e_{i}]}$ with
\begin{equation}\label{constant for the admissible}
c^{[d_{i},e_{i}]}=\begin{cases}\ord_{p}((e_{i}-d_{i})!)+2(e_{i}-d_{i})+\lfloor\frac{e_{i}-d_{i}+1}{p-1}\rfloor+1\ &\mathrm{if}\ d_{i}<e_{i},\\
0\ &\mathrm{if}\ d_{i}=e_{i}.\end{cases}
\end{equation}
\end{lem}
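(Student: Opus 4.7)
The plan is to establish the lemma by induction on $k$, with the base case $k=1$ being exactly Lemma \ref{onevariable litfing prop} already proved earlier. Before tackling existence I would dispose of uniqueness: suppose $s_{1},s_{2}\in \frac{M^{0}[[\Gamma]]}{(\Omega_{\boldsymbol{m}}^{[\boldsymbol{d},\boldsymbol{e}]})M^{0}[[\Gamma]]}\otimes_{\mathcal{O}_{\K}}p^{-c^{[\boldsymbol{d},\boldsymbol{e}]}}\mathcal{O}_{\K}$ both project to $[s^{[\boldsymbol{i}]}]_{\boldsymbol{m}}$ modulo $(\Omega_{\boldsymbol{m}}^{[\boldsymbol{i}]})$ for every $\boldsymbol{i}\in[\boldsymbol{d},\boldsymbol{e}]$. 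Choosing lifts, the characterization \eqref{multivariable iwasawa modoomega speq} shows that every arithmetic specialization $\kappa\in\mathfrak{X}_{\mathcal{O}_{\K}[[\Gamma]]}^{[\boldsymbol{d},\boldsymbol{e}]}$ with $\boldsymbol{m}_{\kappa}\leq\boldsymbol{m}$ vanishes on the difference of the lifts, and a second application of \eqref{multivariable iwasawa modoomega speq} forces $s_{1}=s_{2}$.

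For existence, fix $\boldsymbol{m}\in\mathbb{Z}_{\geq 0}^{k}$ and assume the lemma for $k-1$. Split $\Gamma=\Gamma'\times\Gamma_{k}$ with $\Gamma'=\Gamma_{1}\times\cdots\times\Gamma_{k-1}$ and view $\mathcal{O}_{\K}[[\Gamma]]=\mathcal{O}_{\K}[[\Gamma']][[\Gamma_{k}]]$. For each fixed $i_{k}\in[d_{k},e_{k}]$ I would consider the $(k-1)$-variable sub-family $\{s^{[\boldsymbol{i}',i_{k}]}\}_{\boldsymbol{i}'\in[\boldsymbol{d}',\boldsymbol{e}']}$. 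Lemma \ref{lemma for main prop 3}, applied with $l=k-1$, says precisely that the $(k-1)$-variable partial combinations
\[
\sum_{\boldsymbol{i}'\in[\boldsymbol{d}',\boldsymbol{j}']}\Bigl(\prod_{t=1}^{k-1}\binom{j'_{t}-d_{t}}{i'_{t}-d_{t}}\Bigr)(-1)^{\sum(j'_{t}-i'_{t})}s^{[\boldsymbol{i}',i_{k}]}
\]
lie in $p^{\langle\boldsymbol{m}',\boldsymbol{j}'-\boldsymbol{d}'\rangle_{k-1}}M^{0}[[\Gamma]]$, so the inductive hypothesis produces a unique element $\hat{s}^{[\boldsymbol{d}',\boldsymbol{e}'],i_{k}}$ in $\frac{M^{0}[[\Gamma]]}{(\Omega_{\boldsymbol{m}}^{[\boldsymbol{d}',\boldsymbol{e}'],[i_{k}]})M^{0}[[\Gamma]]}\otimes p^{-c^{[\boldsymbol{d}',\boldsymbol{e}']}}\mathcal{O}_{\K}$ interpolating the family in the first $k-1$ coordinates. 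Now I would vary $i_{k}\in[d_{k},e_{k}]$ and feed the resulting one-parameter family into Lemma \ref{onevariable litfing prop} in the variable $\gamma_{k}$, with coefficient Banach module given by the finitely generated $\K$-Banach space $\frac{M^{0}[[\Gamma']]}{(\Omega_{\boldsymbol{m}'}^{[\boldsymbol{d}',\boldsymbol{e}']})M^{0}[[\Gamma']]}\otimes_{\mathcal{O}_{\K}}\K$. To legitimize this second invocation I would verify its hypothesis, namely that the one-variable $\theta$-combinations along the $k$-th coordinate land in $p^{m_{k}(j_{k}-d_{k})}$ times the integral lattice; this is again supplied by Lemma \ref{lemma for main prop 3}, this time applied with $l=1$ after permuting the $k$-th coordinate into the first slot. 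The resulting element $s^{[\boldsymbol{d},\boldsymbol{e}]}$ lies in the module with coefficient loss $p^{-c^{[\boldsymbol{d}',\boldsymbol{e}']}-c^{[d_{k},e_{k}]}}=p^{-c^{[\boldsymbol{d},\boldsymbol{e}]}}$, since $c^{[\boldsymbol{d},\boldsymbol{e}]}$ was defined additively over coordinates.

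The main obstacle I anticipate is the careful bookkeeping of the $p$-adic integral structures through the two nested invocations. One must verify, using Lemma \ref{lemma for main prop 3} each time, that the congruence data transfers without accumulating extra denominators beyond the additive constant $c^{[\boldsymbol{d},\boldsymbol{e}]}=\sum_{i=1}^{k}c^{[d_{i},e_{i}]}$; equivalently, that the inductive step's loss $p^{-c^{[\boldsymbol{d}',\boldsymbol{e}']}}$ combines only additively with the loss $p^{-c^{[d_{k},e_{k}]}}$ from the one-variable step. A secondary, though routine, point is to check compatibility of the constructed $s^{[\boldsymbol{d},\boldsymbol{e}]}$ with the original data not just after projection to each $[\boldsymbol{i}]$ with $\boldsymbol{i}'=\boldsymbol{d}'$ or $i_{k}\in[d_{k},e_{k}]$, but for every $\boldsymbol{i}\in[\boldsymbol{d},\boldsymbol{e}]$; this last point, however, follows from the uniqueness statements at both the $(k-1)$-variable and the one-variable levels, combined with the characterization \eqref{multivariable iwasawa modoomega speq}.
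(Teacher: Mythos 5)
Your plan of nested induction—first the $(k-1)$-variable hypothesis across $\boldsymbol{i}'\in[\boldsymbol{d}',\boldsymbol{e}']$, then the one-variable Lemma \ref{onevariable litfing prop} across $i_k\in[d_k,e_k]$, with the constant $c^{[\boldsymbol{d},\boldsymbol{e}]}$ accruing additively—matches the overall shape of the paper's argument, and the uniqueness reduction via \eqref{multivariable iwasawa modoomega speq} is also essentially what the paper does (via Corollary \ref{lemma for main theorem two}).

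However, there is a genuine gap in the existence part, precisely at the point you flag as your anticipated obstacle and then wave past. You claim that the hypothesis for the one-variable invocation along $\gamma_k$—that the $\theta$-combinations $\sum_{i_k\in[d_k,j_k]}\binom{j_k-d_k}{i_k-d_k}(-1)^{j_k-i_k}\,\hat{s}^{[\boldsymbol{d}',\boldsymbol{e}'],i_k}$ of your \emph{lifted} elements lie in $p^{m_k(j_k-d_k)-c^{[\boldsymbol{d}',\boldsymbol{e}']}}$ times the integral lattice—"is again supplied by Lemma \ref{lemma for main prop 3} \ldots applied with $l=1$." This is not what Lemma \ref{lemma for main prop 3} gives. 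That lemma controls the $\theta$-combinations of the \emph{original} data $s^{[\boldsymbol{i}]}$, not of the elements produced by the $(k-1)$-variable lifting; a valuation bound on an element cannot be read off from valuation bounds on its images in the quotients $\frac{M^0[[\Gamma']]}{(\Omega_{\boldsymbol{m}'}^{[\boldsymbol{i}']})M^0[[\Gamma']]}$ alone. The paper closes this gap by a nontrivial detour: it first forms the $k$-th-coordinate $\theta$-combinations $b^{[\boldsymbol{i}']}_{(x),l_k}$ of the original data, runs a \emph{second} independent $(k-1)$-variable lifting on that family (legitimized by Lemma \ref{lemma for main prop 3} with $l=k-1$) to obtain an element $t_{(x),l_k}$ with the desired valuation $\geq m_k(x-d_k)-c^{[\boldsymbol{d}',\boldsymbol{e}']}$, then uses the interpolation property together with Corollary \ref{lemma for main theorem two} to show that $t_{(x),l_k}$ and the $\theta$-combination of the lifted elements agree modulo $(\Omega_{\boldsymbol{m}'}^{[\boldsymbol{d}',\boldsymbol{e}']})$, and finally appeals to the valuation identity \eqref{multivariable weiestrass polynomial pro vboldsymbolrf=pro} in the multivariable Weierstrass division (Proposition \ref{multivariable weiestrass polynomial pro}) to transfer the bound from $t_{(x),l_k}$ to the Weierstrass remainder, i.e.\ to the $\theta$-combination of lifted elements with controlled degrees. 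Without this second lifting and the Weierstrass-uniqueness transfer, your one-variable invocation is not justified, so the additive accounting of $c^{[\boldsymbol{d},\boldsymbol{e}]}$ cannot be completed.
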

\begin{proof}
Let $\alpha_{M}^{(k)}$ be the $\mathcal{O}_{\K}$-module isomorphism defined in \eqref{non-canonical continuous isomorphihsm of iwasawa module of banach}. By replacing $s^{[\boldsymbol{i}]}$ with $\alpha_{M}^{(k)}(s^{[\boldsymbol{i}]})\in M^{0}[[X_{1},\ldots, X_{k}]]\otimes_{\mathcal{O}_{\K}}\K$, it suffices to prove that there exists a unique $s^{[\boldsymbol{d},\boldsymbol{e}]}\in \frac{M^{0}[[X_{1},\ldots, X_{k}]]}{(\Omega_{\boldsymbol{m}}^{[\boldsymbol{d},\boldsymbol{e}]}(X_{1},\ldots, X_{k}))M^{0}[[X_{1},\ldots, X_{k}]]}\otimes_{\mathcal{O}_{\K}}p^{-c^{[\boldsymbol{d},\boldsymbol{e}]}}\mathcal{O}_{\K}$ which satisfies
\begin{align}\label{multivariable litfing prop desired interpolation pro}
\tilde{s}^{[\boldsymbol{d},\boldsymbol{e}]}(u^{i_{1}}\epsilon_{1}-1,\ldots, u^{i_{k}}\epsilon_{k}-1)=s^{[\boldsymbol{i}]}(u^{i_{1}}\epsilon_{1}-1,\ldots, u^{i_{k}}\epsilon_{k}-1)
\end{align}
for every $\boldsymbol{i}\in [\boldsymbol{d},\boldsymbol{e}]$ and for every $\boldsymbol{\epsilon}\in \prod_{i=1}^{k}\mu_{p^{m_{i}}}$ where $\tilde{s}^{[\boldsymbol{d},\boldsymbol{e}]}$ is a lift of $s^{[\boldsymbol{d},\boldsymbol{e}]}$. Once we prove the existence of an element $s^{[\boldsymbol{d},\boldsymbol{e}]}$, the uniqueness of $s^{[\boldsymbol{d},\boldsymbol{e}]}$ follows from $\mathrm{Corollary\ \ref{lemma for main theorem two}}$. In the rest of the proof, 
we prove the existence of $s^{[\boldsymbol{d},\boldsymbol{e}]}$ satisfying \eqref{multivariable litfing prop desired interpolation pro}. 
\par If $k=1$, it is proved in $\mathrm{Lemma\ \ref{onevariable litfing prop}}$. From now on, we assume that $k\geq 2$. We replace $\theta^{(\boldsymbol{j})}$ with $\alpha_{M}^{(k)}(\theta^{(\boldsymbol{j})})$. We put
$$s^{[\boldsymbol{i}]}=\sum_{l_{k}=0}^{+\infty}X_{k}^{l_{k}}s^{[\boldsymbol{i}]}_{l_{k}}, \theta^{(\boldsymbol{j})}=\sum_{l_{k}=0}^{+\infty}X_{k}^{l_{k}}\theta^{(\boldsymbol{j})}_{l_{k}}$$
where $s^{[\boldsymbol{i}]}_{l_{k}},\theta^{(\boldsymbol{j})}_{l_{k}}\in M^{0}[[X_{1},\ldots, X_{k-1}]]\otimes_{\mathcal{O}_{\K}}\K$. Let $x\in [d_{k},e_{k}]$. 
{By $\mathrm{Lemma\ \ref{lemma for main prop 3}}$, we have
\begin{equation}\label{multivariable litfing prop first from lemmeq}
\displaystyle{\sum_{\boldsymbol{i}^{\prime}\in [\boldsymbol{d}^{\prime},{\boldsymbol{j}^{\prime}}]}}\left(\prod_{t=1}^{k-1}\begin{pmatrix}j^{\prime}_{t}-d_{t}\\ i^{\prime}_{t}-d_{t}\end{pmatrix}\right)(-1)^{\sum_{t=1}^{k-1}(j^{\prime}_{t}-i^{\prime}_{t})}s^{[(\boldsymbol{i}^{\prime},x)]}\in p^{\langle \boldsymbol{m}^{\prime},\boldsymbol{j}^{\prime}-\boldsymbol{d}^{\prime}\rangle_{k-1}}M^{0}[[X_{1},\ldots, X_{k}]]
\end{equation}
for each ${\boldsymbol{j}^{\prime}}\in [\boldsymbol{d}^{\prime},\boldsymbol{e}^{\prime}]$, and we have
\begin{multline}\label{multivariable litfing prop second fromdef}
\displaystyle{\sum_{\boldsymbol{i}^{\prime}\in [\boldsymbol{d}^{\prime},{\boldsymbol{j}^{\prime}}]}}\left(\prod_{t=1}^{k-1}\begin{pmatrix}j^{\prime}_{t}-d_{t}\\ i^{\prime}_{t}-d_{t}\end{pmatrix}\right)(-1)^{\sum_{t=1}^{k-1}(j^{\prime}_{t}-i^{\prime}_{t})}s^{[(\boldsymbol{i}^{\prime},x)]}\\
=\sum_{l_{k}=0}^{+\infty}X_{k}^{l_{k}}\left(\displaystyle{\sum_{\boldsymbol{i}^{\prime}\in [\boldsymbol{d}^{\prime},{\boldsymbol{j}^{\prime}}]}}\left(\prod_{t=1}^{k-1}\begin{pmatrix}j^{\prime}_{t}-d_{t}\\ i^{\prime}_{t}-d_{t}\end{pmatrix}\right)(-1)^{\sum_{t=1}^{k-1}(j^{\prime}_{t}-i^{\prime}_{t})}s^{[(\boldsymbol{i}^{\prime},x)]}_{l_{k}}\right).
\end{multline}
Let $l_{k}\in \mathbb{Z}_{\geq 0}$. By \eqref{multivariable litfing prop first from lemmeq} and \eqref{multivariable litfing prop second fromdef}, we have 
$$
\displaystyle{\sum_{\boldsymbol{i}^{\prime}\in [\boldsymbol{d}^{\prime},{\boldsymbol{j}^{\prime}}]}}\left(\prod_{t=1}^{k-1}\begin{pmatrix}j^{\prime}_{t}-d_{t}\\ i^{\prime}_{t}-d_{t}\end{pmatrix}\right)(-1)^{\sum_{t=1}^{k-1}(j^{\prime}_{t}-i^{\prime}_{t})}s^{[(\boldsymbol{i}^{\prime},x)]}_{l_{k}}\in p^{\langle \boldsymbol{m}^{\prime},\boldsymbol{j}^{\prime}-\boldsymbol{d}^{\prime}\rangle_{k-1}}M^{0}[[X_{1},\ldots, X_{k-1}]]
$$ 
for each $\boldsymbol{j}^{\prime}\in [\boldsymbol{d}^{\prime},\boldsymbol{e}^{\prime}]$. By the induction argument with respect to $k$, we can show that there exists an element $s^{[\boldsymbol{d}^{\prime},\boldsymbol{e}^{\prime}]}_{(x),l_{k}}\in p^{-c^{[\boldsymbol{d}^{\prime},\boldsymbol{e}^{\prime}]}}M^{0}[[X_{1},\ldots, X_{k-1}]]$ such that
\begin{equation}\label{multivariable litfing props dprime eprime (x),lkfist induc}
s^{[\boldsymbol{d}^{\prime},\boldsymbol{e}^{\prime}]}_{(x),l_{k}}(u_{1}^{i^{\prime}_{1}}\epsilon_{1}-1,\ldots, u_{k-1}^{i^{\prime}_{k-1}}\epsilon_{k-1}-1)=s^{[({\boldsymbol{i}^{\prime}},x)]}_{l_{k}}(u_{1}^{i^{\prime}_{1}}\epsilon_{1}-1,\ldots, u_{k-1}^{i^{\prime}_{k-1}}\epsilon_{k-1}-1)
\end{equation}
for each ${\boldsymbol{i}^{\prime}}\in [\boldsymbol{d}^{\prime},\boldsymbol{e}^{\prime}]$ and $(\epsilon_{1},\ldots, \epsilon_{k-1})\in \prod_{t=1}^{k-1}\mu_{p^{m_{t}}}$. By $\mathrm{Proposition\ \ref{multivariable weiestrass polynomial pro}}$, there exists a unique element $r^{[\boldsymbol{d}^{\prime},\boldsymbol{e}^{\prime}]}_{(x),l_{k}}\in p^{-c^{[\boldsymbol{d}^{\prime},\boldsymbol{e}^{\prime}]}}M^{0}[X_{1},\ldots, X_{k-1}]$ which satisfies $s^{[\boldsymbol{d}^{\prime},\boldsymbol{e}^{\prime}]}_{(x),l_{k}}\equiv r^{[\boldsymbol{d}^{\prime},\boldsymbol{e}^{\prime}]}_{(x),l_{k}}\ \mathrm{mod}\ \linebreak(\Omega_{\boldsymbol{m}^{\prime}}^{[\boldsymbol{d}^{\prime},\boldsymbol{e}^{\prime}]})$ and $\deg_{X_{t}}r^{[\boldsymbol{d}^{\prime},\boldsymbol{e}^{\prime}]}_{(x),l_{k}}<\deg \Omega_{m_{t}}^{[d_{t},e_{t}]}$ for each $1\leq i\leq k-1$. By replacing $s^{[\boldsymbol{d}^{\prime},\boldsymbol{e}^{\prime}]}_{(x),l_{k}}$ with $r^{[\boldsymbol{d}^{\prime},\boldsymbol{e}^{\prime}]}_{(x),l_{k}}$, we can assume that $s^{[\boldsymbol{d}^{\prime},\boldsymbol{e}^{\prime}]}_{(x),l_{k}}$ is in $p^{-c^{[\boldsymbol{d}^{\prime},\boldsymbol{e}^{\prime}]}}M^{0}[X_{1},\ldots, X_{k-1}]$ and $s^{[\boldsymbol{d}^{\prime},\boldsymbol{e}^{\prime}]}_{(x),l_{k}}$ satisfies
\begin{equation}\label{multivariable litfing props deg Xtsdprime erime <degOmega}
\deg_{X_{t}}s^{[\boldsymbol{d}^{\prime},\boldsymbol{e}^{\prime}]}_{(x),l_{k}}<\deg \Omega_{m_{t}}^{[d_{t},e_{t}]}
\end{equation}
for each $1\leq t\leq k-1$. Since we have
\begin{align*}
\theta^{(\boldsymbol{j})}&=\displaystyle{\sum_{\boldsymbol{i}\in [\boldsymbol{d},\boldsymbol{j}]}}\left(\prod_{t=1}^{k}\begin{pmatrix}j_{t}-d_{t}\\i_{t}-d_{t}\end{pmatrix}\right)(-1)^{\sum_{t=1}^{k}(j_{t}-i_{t})}s^{[\boldsymbol{i}]}\\
&=\displaystyle{\sum_{\boldsymbol{i}\in [\boldsymbol{d},\boldsymbol{j}]}}\left(\prod_{t=1}^{k}\begin{pmatrix}j_{t}-d_{t}\\i_{t}-d_{t}\end{pmatrix}\right)(-1)^{\sum_{t=1}^{k}(j_{t}-i_{t})}\sum_{l_{k}=0}^{+\infty}X_{k}^{l_{k}}s^{[\boldsymbol{i}]}_{l_{k}}\\
&=\sum_{l_{k}=0}^{+\infty}X_{k}^{l_{k}}\displaystyle{\sum_{\boldsymbol{i}\in [\boldsymbol{d},\boldsymbol{j}]}}\left(\prod_{t=1}^{k}\begin{pmatrix}j_{t}-d_{t}\\i_{t}-d_{t}\end{pmatrix}\right)(-1)^{\sum_{t=1}^{k}(j_{t}-i_{t})}s^{[\boldsymbol{i}]}_{l_{k}},
\end{align*}
we see that
$$\theta^{(\boldsymbol{j})}_{l_{k}}=\displaystyle{\sum_{\boldsymbol{i}\in [\boldsymbol{d},\boldsymbol{j}]}}\left(\prod_{t=1}^{k}\begin{pmatrix}j_{t}-d_{t}\\i_{t}-d_{t}\end{pmatrix}\right)(-1)^{\sum_{t=1}^{k}(j_{t}-i_{t})}s^{[\boldsymbol{i}]}_{l_{k}}$$
for every $\boldsymbol{j}\in [\boldsymbol{d},\boldsymbol{e}]$. Hence, we see that
\begin{equation}\label{multivariable litfing prop first from lemm thetaj,xlk=}
\theta^{(({\boldsymbol{j}^{\prime}},x))}_{l_{k}}=\sum_{\boldsymbol{i}^{\prime}\in [\boldsymbol{d}^{\prime},{\boldsymbol{j}^{\prime}}]}\left(\prod_{t=1}^{k-1}\begin{pmatrix}j^{\prime}_{t}-d_{t}\\i^{\prime}_{t}-d_{t}\end{pmatrix}\right)(-1)^{\sum_{t=1}^{k-1}(j^{\prime}_{t}-i^{\prime}_{t})}\sum_{i_{k}\in [d_{k},x]}\begin{pmatrix}x-d_{k}\\ i_{k}-d_{k}\end{pmatrix}(-1)^{x-i_{k}}s^{[(\boldsymbol{i}^{\prime},i_{k})]}_{l_{k}}.
\end{equation}
Since $\theta^{(({\boldsymbol{j}^{\prime}},x))}$ is in $p^{\langle \boldsymbol{m}^{\prime},\boldsymbol{j}^{\prime}-\boldsymbol{d}^{\prime}\rangle+m_{k}(x-d_{k})}M^{0}[[X_{1},\ldots, X_{k}]]$, we have
\begin{equation}\label{multivariable litfing prop first from lemm thetaj,xlkin m0kakk0}
\theta^{(({\boldsymbol{j}^{\prime}},x))}_{l_{k}}\in p^{\langle \boldsymbol{m}^{\prime},\boldsymbol{j}^{\prime}-\boldsymbol{d}^{\prime}\rangle+m_{k}(x-d_{k})}M^{0}[[X_{1},\ldots, X_{k-1}]]
\end{equation}
for every ${\boldsymbol{j}^{\prime}}\in [\boldsymbol{d}^{\prime},\boldsymbol{e}^{\prime}]$. Put $b_{(x),l_{k}}^{[\boldsymbol{i}^{\prime}]}=\sum_{i_{k}\in [d_{k},x]}\begin{pmatrix}x-d_{k}\\ i_{k}-d_{k}\end{pmatrix}(-1)^{x-i_{k}}s^{[(\boldsymbol{i}^{\prime},i_{k})]}_{l_{k}}\in \linebreak M^{0}[[X_{1},\ldots, X_{k-1}]]\otimes_{\mathcal{O}_{\K}}\K$ for each $\boldsymbol{i}^{\prime}\in [\boldsymbol{d}^{\prime},\boldsymbol{e}^{\prime}]$. By \eqref{multivariable litfing prop first from lemm thetaj,xlk=} and \eqref{multivariable litfing prop first from lemm thetaj,xlkin m0kakk0}, we have  
$$\sum_{\boldsymbol{i}^{\prime}\in [\boldsymbol{d}^{\prime},{\boldsymbol{j}^{\prime}}]}\left(\prod_{t=1}^{k-1}\begin{pmatrix}j^{\prime}_{t}-d_{t}\\i^{\prime}_{t}-d_{t}\end{pmatrix}\right)(-1)^{\sum_{t=1}^{k-1}(j^{\prime}_{t}-i^{\prime}_{t})}b_{(x),l_{k}}^{[\boldsymbol{i}^{\prime}]}\in p^{\langle \boldsymbol{m}^{\prime},\boldsymbol{j}^{\prime}-\boldsymbol{d}^{\prime}\rangle+m_{k}(x-d_{k})}M^{0}[[X_{1},\ldots, X_{k-1}]]$$
for every $\boldsymbol{j}\in [\boldsymbol{d}^{\prime},\boldsymbol{e}^{\prime}]$. Therefore, we can apply the induction argument on $k$ to $b_{(x),l_{k}}^{[\boldsymbol{i}^{\prime}]}$ for each $\boldsymbol{i}^{\prime}\in [\boldsymbol{d}^{\prime},\boldsymbol{e}^{\prime}]$ and we see that there exists a power series $t_{(x),l_{k}}\in p^{m_{k}(x-d_{k})-c^{[\boldsymbol{d}^{\prime},\boldsymbol{e}^{\prime}]}}\linebreak M^{0}[[X_{1},\ldots, X_{k-1}]]$ such that 
\begin{align}\label{multivariable litfing props deft(x),lk}
\begin{split}
t_{(x),l_{k}}(u_{1}^{i^{\prime}_{1}}\epsilon_{1}-1,\ldots, u_{k-1}^{i^{\prime}_{k-1}}\epsilon_{k-1}-1)&=b_{(x),l_{k}}^{[\boldsymbol{i}^{\prime}]}(u_{1}^{i^{\prime}_{1}}\epsilon_{1}-1,\ldots, u_{k-1}^{i^{\prime}_{k-1}}\epsilon_{k-1}-1)\\
&=\sum_{i_{k}\in [d_{k},x]}(-1)^{x-i_{k}}s^{[({\boldsymbol{i}^{\prime}},i_{k})]}_{l_{k}}(u_{1}^{i^{\prime}_{1}}\epsilon_{1}-1,\ldots, u_{k-1}^{i^{\prime}_{k-1}}\epsilon_{k-1}-1)
\end{split}
\end{align}
for every ${\boldsymbol{i}^{\prime}}\in [\boldsymbol{d}^{\prime},\boldsymbol{e}^{\prime}]$ and for every $(\epsilon_{1},\ldots, \epsilon_{k-1})\in \prod_{t=1}^{k-1}\mu_{p^{m_{t}}}$. By \eqref{multivariable litfing props dprime eprime (x),lkfist induc} and \eqref{multivariable litfing props deft(x),lk}, we have
\begin{equation}
t_{(x),l_{k}}(u_{1}^{i^{\prime}_{1}}\epsilon_{1}-1,\ldots, u_{k-1}^{i^{\prime}_{k-1}}\epsilon_{k-1}-1)=\sum_{i_{k}\in [d_{k},x]}(-1)^{x-i_{k}}s^{[\boldsymbol{d}^{\prime},\boldsymbol{e}^{\prime}]}_{(i_{k}),l_{k}}(u_{1}^{i^{\prime}_{1}}\epsilon_{1}-1,\ldots, u_{k-1}^{i^{\prime}_{k-1}}\epsilon_{k-1}-1)
\end{equation}
for every ${\boldsymbol{i}^{\prime}}\in [\boldsymbol{d}^{\prime},\boldsymbol{e}^{\prime}]$ and for every $(\epsilon_{1},\ldots, \epsilon_{k-1})\in \prod_{t=1}^{k-1}\mu_{p^{m_{t}}}$.
By Corollary \ref{lemma for main theorem two}, we see that 
\begin{equation}\label{multivariable litfing props t(x),l,-summoomega}
t_{(x),l_{k}}- \sum_{i_{k}\in [d_{k},x]}(-1)^{x-i_{k}}s^{[\boldsymbol{d}^{\prime},\boldsymbol{e}^{\prime}]}_{(i_{k}),l_{k}}\in (\Omega_{\boldsymbol{m}^{\prime}}^{[\boldsymbol{d}^{\prime},\boldsymbol{e}^{\prime}]})M^{0}[[X_{1},\ldots, X_{k-1}]]\otimes_{\mathcal{O}_{\K}}\K.
\end{equation}
By \eqref{multivariable litfing props deg Xtsdprime erime <degOmega}, we have 
\begin{equation}\label{multivariable litfing props degxtsumik(-1)sdprime}
\deg_{X_{t}} \sum_{i_{k}\in [d_{k},x]}(-1)^{x-i_{k}}s^{[\boldsymbol{d}^{\prime},\boldsymbol{e}^{\prime}]}_{(i_{k}),l_{k}}<\deg\Omega_{m_{t}}^{[d_{t},e_{t}]}\end{equation}
 for every $1\leq t\leq k-1$. We note that $t_{(x),l_{k}}$ is an element of $B_{\boldsymbol{0}_{k-1}}(M)$ and $\sum_{i_{k}\in [d_{k},x]}(-1)^{x-i_{k}}\linebreak s^{[\boldsymbol{d}^{\prime},\boldsymbol{e}^{\prime}]}_{(i_{k}),l_{k}}$ is a unique element of $M[X_{1},\ldots, X_{k-1}]$ which satisfies \eqref{multivariable litfing props t(x),l,-summoomega} and \eqref{multivariable litfing props degxtsumik(-1)sdprime}. By \eqref{multivariable weiestrass polynomial pro vboldsymbolrf=pro} in $\mathrm{Proposition\ \ref{multivariable weiestrass polynomial pro}}$, we see that
 $$v_{\boldsymbol{0}_{k-1}}\left(\sum_{i_{k}\in [d_{k},x]}\begin{pmatrix}x-d_{k}\\i_{k}-d_{k}\end{pmatrix}
(-1)^{x-i_{k}}s^{[\boldsymbol{d}^{\prime},\boldsymbol{e}^{\prime}]}_{(i_{k}),l_{k}}\right)\geq v_{\boldsymbol{0}_{k-1}}(t_{(x),l_{k}}).$$
Since  $t_{(x),l_{k}}\in p^{m_{k}(x-d_{k})-c^{[\boldsymbol{d}^{\prime},\boldsymbol{e}^{\prime}]}}M^{0}[[X_{1},\ldots, X_{k-1}]]$, we have
\begin{equation}\label{multivariable litfing prop equation ineeq}
v_{\boldsymbol{0}_{k-1}}\left(\sum_{i_{k}\in [d_{k},x]}\begin{pmatrix}x-d_{k}\\i_{k}-d_{k}\end{pmatrix}
(-1)^{x-i_{k}}s^{[\boldsymbol{d}^{\prime},\boldsymbol{e}^{\prime}]}_{(i_{k}),l_{k}}\right)\geq m_{k}(x-d_{k})-c^{[\boldsymbol{d}^{\prime},\boldsymbol{e}^{\prime}]}.
\end{equation}
We define $s_{i_{k}} \in p^{-c^{[\boldsymbol{d}^{\prime},\boldsymbol{e}^{\prime}]}}B_{\boldsymbol{0}_{k-1}}(M)^{0}[[X_{k}]]$ to be 
$s_{i_{k}}=(s^{[\boldsymbol{d}^{\prime},\boldsymbol{e}^{\prime}]}_{(i_{k}),l_{k}})_{l_{k}\in \mathbb{Z}_{\geq 0}}$ with $d_{k}\leq i_{k}\leq e_{k}$. 
By \eqref{multivariable litfing prop equation ineeq}, $s_{i_{k}}$ satisfies 
$$\sum_{i_{k}\in [d_{k},j_{k}]}\begin{pmatrix}j_{k}-d_{k}\\
i_{k}-d_{k}
\end{pmatrix}(-1)^{j_{k}-i_{k}}s_{i_{k}}\in p^{m_{k}(j_{k}-d_{k})-c^{[\boldsymbol{d}^{\prime},\boldsymbol{e}^{\prime}]}}B_{\boldsymbol{0}_{k-1}}(M)^{0}[[X_{k}]]$$
with $d_{k}\leq j_{k}\leq e_{k}$.  
By the result of the case $k=1$, there exists an element $r\in p^{-c^{[\boldsymbol{d},\boldsymbol{e}]}} B_{\boldsymbol{0}_{k-1}}(M)^{0}[[X_{k}]]$ which satisfies 
\begin{equation}\label{multivariable litfing prop equationdef of rstisXk=uk}
r\vert_{X_{k}=u_{k}^{i_{k}}\epsilon_{k}-1}=s_{i_{k}}\vert_{X_{k}=u_{k}^{i_{k}}\epsilon_{k}-1}
\end{equation}
for every $i_{k}\in [d_{k},e_{k}]$ and for every $\epsilon_{k}\in \mu_{p^{m_{k}}}$. Via the isometry $B_{\boldsymbol{0}_{k-1}}(M)^{0}[[X_{k}]]\simeq M^{0}[[X_{1},\ldots, X_{k}]]$ of Proposition \ref{isometry ofHh for induction}, we regard $r$ as an element of $p^{-c^{[\boldsymbol{d},\boldsymbol{e}]}}M^{0}[[X_{1},\ldots, X_{k}]]$. By \eqref{multivariable litfing prop equationdef of rstisXk=uk}, we have
\begin{align*}
r(u_{1}^{i_{1}}\epsilon_{1}-1,\ldots,  u_{k}^{i_{k}}\epsilon_{k}-1)&=s_{i_{k}}(u_{1}^{i_{1}}\epsilon_{1}-1,\ldots, u_{k}^{i_{k}}\epsilon_{k}-1)\\
&=\sum_{l_{k}=0}^{+\infty}s^{[\boldsymbol{d}^{\prime},\boldsymbol{e}^{\prime}]}_{(i_{k}),l_{k}}(u_{1}^{i_{1}}\epsilon_{1}-1,\ldots,  u_{k-1}^{i_{k-1}}\epsilon_{k-1}-1)(u_{k}^{i_{k}}\epsilon_{k}-1)^{l_{k}}
\end{align*}
for every $\boldsymbol{i}\in [\boldsymbol{d},\boldsymbol{e}]$ and for every $(\epsilon_{1},\ldots, \epsilon_{k})\in \prod_{t=1}^{k}\mu_{p^{m_{t}}}$. By \eqref{multivariable litfing props dprime eprime (x),lkfist induc}, we have $s^{[\boldsymbol{d}^{\prime},\boldsymbol{e}^{\prime}]}_{(i_{k}),l_{k}}(u_{1}^{i_{1}}\epsilon_{1}-1,\ldots,u_{k-1}^{i_{k-1}}\epsilon_{k-1}-1)=s_{l_{k}}^{[\boldsymbol{i}]}(u_{1}^{i_{1}}\epsilon_{1}-1,\ldots,u_{k-1}^{i_{k-1}}\epsilon_{k-1}-1)$. Therefore, we have
\begin{align*}
r(u_{1}^{i_{1}}\epsilon_{1}-1,\ldots,  u_{k}^{i_{k}}\epsilon_{k}-1)&=\sum_{l_{k}=0}^{+\infty}s_{l_{k}}^{[\boldsymbol{i}]}(u_{1}^{i_{1}}\epsilon_{1}-1,\ldots,u_{k-1}^{i_{k-1}}\epsilon_{k-1}-1)(u_{k}^{i_{k}}\epsilon_{k}-1)^{l_{k}}\\
&=s^{[\boldsymbol{i}]}(u_{1}^{i_{1}}\epsilon_{1}-1,\ldots,  u_{k}^{i_{k}}\epsilon_{k}-1)
\end{align*}
for every $\boldsymbol{i}\in [\boldsymbol{d},\boldsymbol{e}]$ and for every $(\epsilon_{1},\ldots, \epsilon_{k})\in \prod_{t=1}^{k}\mu_{p^{m_{t}}}$. Thus, 
$s^{[\boldsymbol{d},\boldsymbol{e}]}=[r]_{\boldsymbol{m}}$ satisfies \eqref{multivariable litfing prop desired interpolation pro} 
for every $\boldsymbol{i}\in [\boldsymbol{d},\boldsymbol{e}]$ and for every $\boldsymbol{\epsilon}\in \prod_{i=1}^{k}\mu_{p^{m_{i}}}$.}
\end{proof}

Let $\mathcal{D}^{[\boldsymbol{d},\boldsymbol{e}]}_{\boldsymbol{h}} (\Gamma, M)$ be the space of $[\boldsymbol{d},\boldsymbol{e}]$-admissible distributions of growth $\boldsymbol{h}$ and $I_{\boldsymbol{h}}^{[\boldsymbol{d},\boldsymbol{e}]}(M)$  the module defined in \S\ref{preparation}. Put
\begin{align}
\begin{split}
I_{\boldsymbol{h}}^{[\boldsymbol{d},\boldsymbol{e}]}(M)^{0}&=\bigg\{(s_{\boldsymbol{m}})_{\boldsymbol{m}\in \mathbb{Z}_{\geq 0}^{k}}\in I_{\boldsymbol{h}}^{[\boldsymbol{d},\boldsymbol{e}]}(M)\bigg\vert \\
&(p^{\langle \boldsymbol{h},\boldsymbol{m}\rangle_{k}}s_{\boldsymbol{m}})_{\boldsymbol{m}\in \mathbb{Z}_{\geq 0}^{k}}\in \prod_{\boldsymbol{m}\in \mathbb{Z}_{\geq 0}^{k}}M^{0}[[\Gamma]]\slash (\Omega_{\boldsymbol{m}}^{[\boldsymbol{d},\boldsymbol{e}]}(\gamma_{1},\ldots, \gamma_{k}))M^{0}[[\Gamma]]\bigg\}.
\end{split}
\end{align}
By Lemma \ref{multivariable litfing prop}, we have the following:
\begin{pro}\label{multivariable iwasawa I(ii) sufficient}
Let $s^{[\boldsymbol{i}]}=(s^{[\boldsymbol{i}]}_{\boldsymbol{m}})_{\boldsymbol{m}\in \mathbb{Z}_{\geq 0}^{k}}\in I_{\boldsymbol{h}}^{[\boldsymbol{i}]}(M)$ and $\tilde{s}_{\boldsymbol{m}}^{[\boldsymbol{i}]}$ a lift of $s_{\boldsymbol{m}}^{[\boldsymbol{i}]}$ for each $\boldsymbol{m}\in \mathbb{Z}_{\geq 0}^{k}$ and $\boldsymbol{i}\in [\boldsymbol{d},\boldsymbol{e}]$. If there exists a non-negative integer $n$ which satisfies
$$p^{\langle \boldsymbol{m},\boldsymbol{h}-(\boldsymbol{j}-\boldsymbol{d})\rangle_{k}}\displaystyle{\sum_{\boldsymbol{i}\in [\boldsymbol{d},\boldsymbol{j}]}}\left(\prod_{t=1}^{k}\begin{pmatrix}j_{t}-d_{t}\\i_{t}-d_{t}\end{pmatrix}\right)(-1)^{\sum_{t=1}^{k}(j_{t}-i_{t})}\tilde{s}_{\boldsymbol{m}}^{[\boldsymbol{i}]}\in M^{0}[[\Gamma]]\otimes_{\mathcal{O}_{\K}}p^{-n}\mathcal{O}_{\K}$$
for every $\boldsymbol{m}\in \mathbb{Z}_{\geq 0}^{k}$ and $\boldsymbol{j}\in [\boldsymbol{d},\boldsymbol{e}]$, we have a unique element $s^{[\boldsymbol{d},\boldsymbol{e}]}\in I_{\boldsymbol{h}}^{[\boldsymbol{d},\boldsymbol{e}]}(M)^{0}\otimes_{\mathcal{O}_{\K}}p^{-c^{[\boldsymbol{d},\boldsymbol{e}]}-n}\mathcal{O}_{\K}$ such that the image of $s^{[\boldsymbol{d},\boldsymbol{e}]}$ by the natural projection $I_{\boldsymbol{h}}^{[\boldsymbol{d},\boldsymbol{e}]}(M)\rightarrow I_{\boldsymbol{h}}^{[\boldsymbol{i}]}(M)$ is $s^{[\boldsymbol{i}]}$ for each $\boldsymbol{i}\in [\boldsymbol{d},\boldsymbol{e}]$, where $c^{[\boldsymbol{d},\boldsymbol{e}]}$ is the constant defined in Lemma \ref{multivariable litfing prop}.
\end{pro}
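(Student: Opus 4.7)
The plan is to reduce Proposition \ref{multivariable iwasawa I(ii) sufficient} to the multi-variable lifting lemma (Lemma \ref{multivariable litfing prop}) applied at each level $\boldsymbol{m}$, followed by a compatibility argument via arithmetic specializations. This mirrors the proof of Proposition \ref{one variable I(i) banach sufficient cond} in the one-variable case.

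First, I would fix $\boldsymbol{m}\in \mathbb{Z}_{\geq 0}^{k}$ and apply Lemma \ref{multivariable litfing prop} to the family $\{p^{\langle \boldsymbol{h},\boldsymbol{m}\rangle_{k}}\tilde{s}^{[\boldsymbol{i}]}_{\boldsymbol{m}}\}_{\boldsymbol{i}\in[\boldsymbol{d},\boldsymbol{e}]}$. The given admissibility hypothesis translates directly into the assumption of Lemma \ref{multivariable litfing prop}: the alternating sums multiplied by $p^{\langle\boldsymbol{h},\boldsymbol{m}\rangle_k}$ lie in $p^{\langle\boldsymbol{m},\boldsymbol{j}-\boldsymbol{d}\rangle_k}M^{0}[[\Gamma]]\otimes_{\mathcal{O}_\K}p^{-n}\mathcal{O}_\K$ for every $\boldsymbol{j}\in[\boldsymbol{d},\boldsymbol{e}]$. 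The lemma then produces a unique $s^{[\boldsymbol{d},\boldsymbol{e}]}_{\boldsymbol{m}}\in \frac{M^{0}[[\Gamma]]}{(\Omega^{[\boldsymbol{d},\boldsymbol{e}]}_{\boldsymbol{m}})M^{0}[[\Gamma]]}\otimes_{\mathcal{O}_\K}p^{-\langle\boldsymbol{h},\boldsymbol{m}\rangle_k-c^{[\boldsymbol{d},\boldsymbol{e}]}-n}\mathcal{O}_\K$ whose image in $\frac{M^{0}[[\Gamma]]}{(\Omega^{[\boldsymbol{i}]}_{\boldsymbol{m}})M^{0}[[\Gamma]]}\otimes_{\mathcal{O}_\K}\K$ equals $s^{[\boldsymbol{i}]}_{\boldsymbol{m}}$ for every $\boldsymbol{i}\in[\boldsymbol{d},\boldsymbol{e}]$. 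By construction, the sequence $(p^{\langle\boldsymbol{h},\boldsymbol{m}\rangle_k}s^{[\boldsymbol{d},\boldsymbol{e}]}_{\boldsymbol{m}})_{\boldsymbol{m}}$ lies in $\bigl(\prod_{\boldsymbol{m}}\frac{M^{0}[[\Gamma]]}{(\Omega^{[\boldsymbol{d},\boldsymbol{e}]}_{\boldsymbol{m}})M^{0}[[\Gamma]]}\bigr)\otimes_{\mathcal{O}_\K}p^{-c^{[\boldsymbol{d},\boldsymbol{e}]}-n}\mathcal{O}_\K$.

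Next I would verify that the family $(s^{[\boldsymbol{d},\boldsymbol{e}]}_{\boldsymbol{m}})_{\boldsymbol{m}}$ is projective, i.e.\ that $s^{[\boldsymbol{d},\boldsymbol{e}]}_{\boldsymbol{m}+\boldsymbol{1}}$ maps to $s^{[\boldsymbol{d},\boldsymbol{e}]}_{\boldsymbol{m}}$ under the natural projection. This is the key step and uses the specialization criterion of \eqref{multivariable iwasawa modoomega speq}: for every arithmetic specialization $\kappa\in \mathfrak{X}^{[\boldsymbol{d},\boldsymbol{e}]}_{\mathcal{O}_{\K}[[\Gamma]]}$ with $\boldsymbol{m}_\kappa\leq\boldsymbol{m}$, the characterization of $s^{[\boldsymbol{d},\boldsymbol{e}]}_{\boldsymbol{m}}$ modulo $(\Omega^{[\boldsymbol{w}_\kappa]}_{\boldsymbol{m}})$ as agreeing with $s^{[\boldsymbol{w}_\kappa]}_{\boldsymbol{m}}$ yields
\begin{equation*}
\kappa(\tilde{s}^{[\boldsymbol{d},\boldsymbol{e}]}_{\boldsymbol{m}+\boldsymbol{1}})=\kappa(\tilde{s}^{[\boldsymbol{w}_\kappa]}_{\boldsymbol{m}+\boldsymbol{1}})=\kappa(\tilde{s}^{[\boldsymbol{w}_\kappa]}_{\boldsymbol{m}})=\kappa(\tilde{s}^{[\boldsymbol{d},\boldsymbol{e}]}_{\boldsymbol{m}}),
\end{equation*}
where the middle equality uses the assumption that $(s^{[\boldsymbol{i}]}_{\boldsymbol{m}})_{\boldsymbol{m}}$ already lies in $I^{[\boldsymbol{i}]}_{\boldsymbol{h}}(M)$. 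By \eqref{multivariable iwasawa modoomega speq}, this forces $\tilde{s}^{[\boldsymbol{d},\boldsymbol{e}]}_{\boldsymbol{m}+\boldsymbol{1}}-\tilde{s}^{[\boldsymbol{d},\boldsymbol{e}]}_{\boldsymbol{m}}\in (\Omega^{[\boldsymbol{d},\boldsymbol{e}]}_{\boldsymbol{m}})(M^{0}[[\Gamma]]\otimes_{\mathcal{O}_\K}\K)$, giving projectivity. Combined with the denominator bound from the first step, we obtain $s^{[\boldsymbol{d},\boldsymbol{e}]}\in \bigl(I^{[\boldsymbol{d},\boldsymbol{e}]}_{\boldsymbol{h}}(M)\bigr)^{0}\otimes_{\mathcal{O}_\K}p^{-c^{[\boldsymbol{d},\boldsymbol{e}]}-n}\mathcal{O}_\K$ projecting onto each $s^{[\boldsymbol{i}]}$.

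The uniqueness statement follows from the fact that the natural projections $I^{[\boldsymbol{d},\boldsymbol{e}]}_{\boldsymbol{h}}(M)\rightarrow I^{[\boldsymbol{i}]}_{\boldsymbol{h}}(M)$ for all $\boldsymbol{i}\in[\boldsymbol{d},\boldsymbol{e}]$ are jointly injective: if $s^{[\boldsymbol{d},\boldsymbol{e}]}$ projects to zero in every $I^{[\boldsymbol{i}]}_{\boldsymbol{h}}(M)$, then for each $\boldsymbol{m}$ the lift $\tilde{s}^{[\boldsymbol{d},\boldsymbol{e}]}_{\boldsymbol{m}}$ vanishes modulo $(\Omega^{[\boldsymbol{i}]}_{\boldsymbol{m}})$ for every $\boldsymbol{i}\in[\boldsymbol{d},\boldsymbol{e}]$, hence (via the non-canonical isomorphism $\alpha^{(k)}_M$ of \eqref{non-canonical continuous isomorphihsm of iwasawa module of banach} and Corollary \ref{lemma for main theorem two}) vanishes modulo $(\Omega^{[\boldsymbol{d},\boldsymbol{e}]}_{\boldsymbol{m}})$ as well. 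The main obstacle in this proof is the compatibility step above, since one must carefully track two different uses of Lemma \ref{multivariable litfing prop} at levels $\boldsymbol{m}$ and $\boldsymbol{m}+\boldsymbol{1}$ and reconcile them using specialization theory; but this is handled cleanly by the specialization criterion \eqref{multivariable iwasawa modoomega speq}. The bookkeeping of the constants $c^{[\boldsymbol{d},\boldsymbol{e}]}$ and $n$ is immediate once Lemma \ref{multivariable litfing prop} is in hand.
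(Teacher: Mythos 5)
Your proof is correct and takes the same approach as the paper: apply Lemma \ref{multivariable litfing prop} level-by-level in $\boldsymbol{m}$, then assemble into a projective system. The only difference is that the paper's own proof of this proposition compresses the projectivity check to "since this construction is compatible with the projective systems," whereas you spell it out via arithmetic specializations and \eqref{multivariable iwasawa modoomega speq}, exactly following the template of the one-variable Proposition \ref{one variable I(i) banach sufficient cond}; this is a fair and welcome expansion of what the paper leaves implicit.
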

\begin{proof}
{
For each $\boldsymbol{m}\in \mathbb{Z}_{\geq 0}^{k}$, there exists a unique element $s_{\boldsymbol{m}}^{[\boldsymbol{d},\boldsymbol{e}]}\in \frac{M^{0}[[\Gamma]]}{(\Omega_{\boldsymbol{m}}^{[\boldsymbol{d},\boldsymbol{e}]}(\gamma_{1},\ldots,\gamma_{k}))M^{0}[[\Gamma]]}\otimes_{\mathcal{O}_{\K}}p^{-\langle \boldsymbol{h},\boldsymbol{m}\rangle_{k}-c^{[\boldsymbol{d},\boldsymbol{e}]}-n}\mathcal{O}_{\K}$ such that the image of $s_{\boldsymbol{m}}^{[\boldsymbol{d},\boldsymbol{e}]}$ by the natural projection \linebreak$\frac{M^{0}[[\Gamma]]}{(\Omega_{\boldsymbol{m}}^{[\boldsymbol{d},\boldsymbol{e}]}(\gamma_{1},\ldots,\gamma_{k}))M^{0}[[\Gamma]]}\otimes_{\mathcal{O}_{\K}}\K\rightarrow \frac{M^{0}[[\Gamma]]}{(\Omega_{\boldsymbol{m}}^{[\boldsymbol{i}]}(\gamma_{1},\ldots,\gamma_{k}))M^{0}[[\Gamma]]}\otimes_{\mathcal{O}_{\K}}\K$ is $s_{\boldsymbol{m}}^{[\boldsymbol{i}]}$ for every $\boldsymbol{i}\in [\boldsymbol{d},\boldsymbol{e}]$ by Lemma \ref{multivariable litfing prop}. 
Since this construction is compatible with the projective systems of $s_{\boldsymbol{m}}^{[\boldsymbol{d},\boldsymbol{e}]}$ and $s_{\boldsymbol{m}}^{[\boldsymbol{i}]}$ with respect to $\boldsymbol{m}$, $s^{[\boldsymbol{d},\boldsymbol{e}]}=(s_{\boldsymbol{m}}^{[\boldsymbol{d},\boldsymbol{e}]})_{\boldsymbol{m}\in \mathbb{Z}_{\geq 0}^{k}}\in I_{\boldsymbol{h}}^{[\boldsymbol{d},\boldsymbol{e}]}(M)^{0}\otimes_{\mathcal{O}_{\K}}p^{-c^{[\boldsymbol{d},\boldsymbol{e}]}-n}\mathcal{O}_{\K}$ such that the image of $s^{[\boldsymbol{d},\boldsymbol{e}]}$ by the natural projection $I_{\boldsymbol{h}}^{[\boldsymbol{d},\boldsymbol{e}]}(M)\rightarrow I_{\boldsymbol{h}}^{[\boldsymbol{i}]}(M)$ is $s^{[\boldsymbol{i}]}$ for every $\boldsymbol{i}\in [\boldsymbol{d},\boldsymbol{e}]$.}
\end{proof}
\begin{thm}\label{multi-variable results on admissible distributions}
We have a unique $\mathcal{O}_{\K}[[\Gamma]]\otimes_{\mathcal{O}_{\K}}\K$-module isomorphism 
\begin{equation}\label{equation:multi-variable results on admissible distributions}
\Psi:I_{\boldsymbol{h}}^{[\boldsymbol{d},\boldsymbol{e}]}(M) \stackrel{\sim}{\rightarrow} \mathcal{D}^{[\boldsymbol{d},\boldsymbol{e}]}_{\boldsymbol{h}} (\Gamma, M)
\end{equation}
such that the image $\mu_{s^{[\boldsymbol{d},\boldsymbol{e}]}}\in \mathcal{D}^{[\boldsymbol{d},\boldsymbol{e}]}_{\boldsymbol{h}} (\Gamma, M)$ of each element $s^{[\boldsymbol{d},\boldsymbol{e}]}=(s_{\boldsymbol{m}}^{[\boldsymbol{d},\boldsymbol{e}]})_{\boldsymbol{m}\in \mathbb{Z}_{\geq 0}^{k}} \in I_{\boldsymbol{h}}^{[\boldsymbol{d},\boldsymbol{e}]}(M)$ is characterized by the interpolation property  
\begin{equation}\label{admissible interpolation formula}
\kappa(\tilde{s}^{[\boldsymbol{d},\boldsymbol{e}]}_{\boldsymbol{m}_{\kappa}})= \int_{\Gamma} \prod_{j=1}^{k}(\chi_{j}^{w_{\kappa,j}}\phi_{\kappa,j})(x_{j})d\mu_{s^{[\boldsymbol{d},\boldsymbol{e}]}} \end{equation}
 for each $\kappa\in \mathfrak{X}_{\mathcal{O}_{\K}[[\Gamma]]}^{[\boldsymbol{d},\boldsymbol{e}]}$, where $\tilde{s}^{[\boldsymbol{d},\boldsymbol{e}]}_{\boldsymbol{m}_{\kappa}}$ is a lift of $s_{\boldsymbol{m}_{\kappa}}^{[\boldsymbol{d},\boldsymbol{e}]}$. In addition, if we regard $ I_{\boldsymbol{h}}^{[\boldsymbol{d},\boldsymbol{e}]}(M)^{0}$ 
 as a submodule of $\mathcal{D}^{[\boldsymbol{d},\boldsymbol{e}]}_{\boldsymbol{h}} (\Gamma, M)$ via the isomorphism \eqref{equation:multi-variable results on admissible distributions}, we have 
\begin{equation*}
\{\mu\in \mathcal{D}^{[\boldsymbol{d},\boldsymbol{e}]}_{\boldsymbol{h}} (\Gamma ,M)\ \vert \ v_{\boldsymbol{h}}^{[\boldsymbol{d},\boldsymbol{e}]}(\mu)\geq c^{[\boldsymbol{d},\boldsymbol{e}]}\}\subset I_{\boldsymbol{h}}^{[\boldsymbol{d},\boldsymbol{e}]}(M)^{0}
\subset \{\mu\in \mathcal{D}^{[\boldsymbol{d},\boldsymbol{e}]}_{\boldsymbol{h}} (\Gamma ,M)\ \vert \ v_{\boldsymbol{h}}^{[\boldsymbol{d},\boldsymbol{e}]}(\mu)\geq 0\},
\end{equation*}
where $c^{[\boldsymbol{d},\boldsymbol{e}]}=\sum_{i=1}^{k}c^{[d_{i},e_{i}]}$ is the constant defined in \eqref{constant for the admissible}. 
\end{thm}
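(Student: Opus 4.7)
My plan is to prove Theorem \ref{multi-variable results on admissible distributions} by induction on $k$, mirroring the strategy used for Theorem \ref{main thm 2 and proof}. The base case $k=1$ is exactly Proposition \ref{onevariable isom Ih from Dh for banach}, which I would invoke directly. The uniqueness of $\Psi$ follows a priori from the fact that an element of $\mathcal{D}_{\boldsymbol{h}}^{[\boldsymbol{d},\boldsymbol{e}]}(\Gamma,M)$ is determined by its values on the characters $\prod_{j=1}^{k}(\chi_j^{w_{\kappa,j}}\phi_{\kappa,j})$ for $\kappa\in \mathfrak{X}_{\mathcal{O}_{\K}[[\Gamma]]}^{[\boldsymbol{d},\boldsymbol{e}]}$, since these span a dense subspace of $C^{[\boldsymbol{d},\boldsymbol{e}]}(\Gamma,\mathcal{O}_{\K})\otimes_{\mathcal{O}_{\K}}\mathcal{K}$ via the inverse Fourier transform.

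For the inductive step (assuming the theorem holds for $k-1$ variables), I would first establish an analog of Proposition \ref{multi J induction pro} for the $I$-modules: a canonical $\mathcal{O}_{\K}[[\Gamma]]$-module isomorphism
\begin{equation*}
I_{h_{k}}^{[d_{k},e_{k}]}(I_{\boldsymbol{h}^{\prime}}^{[\boldsymbol{d}^{\prime},\boldsymbol{e}^{\prime}]}(M))^{0}\stackrel{\sim}{\rightarrow}I_{\boldsymbol{h}}^{[\boldsymbol{d},\boldsymbol{e}]}(M)^{0},
\end{equation*}
characterized by the analog of the $(b,m_k)$-specialization compatibility. This follows from the same Weierstrass-type arguments and Corollary \ref{lemma for main theorem two} applied in the Iwasawa-algebra setting, using the non-canonical isomorphism $\alpha_M^{(k)}$ of \eqref{non-canonical continuous isomorphihsm of iwasawa module of banach}. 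Then I would combine three isomorphisms: (i) the inductive hypothesis $I_{\boldsymbol{h}^{\prime}}^{[\boldsymbol{d}^{\prime},\boldsymbol{e}^{\prime}]}(M)\simeq \mathcal{D}_{\boldsymbol{h}^{\prime}}^{[\boldsymbol{d}^{\prime},\boldsymbol{e}^{\prime}]}(\Gamma^{\prime},M)$, (ii) the one-variable Proposition \ref{onevariable isom Ih from Dh for banach} applied to the $\K$-Banach space $N=\mathcal{D}_{\boldsymbol{h}^{\prime}}^{[\boldsymbol{d}^{\prime},\boldsymbol{e}^{\prime}]}(\Gamma^{\prime},M)$ (which is a Banach space by Proposition \ref{multi admissible banach}), giving $I_{h_k}^{[d_k,e_k]}(N)\simeq \mathcal{D}_{h_k}^{[d_k,e_k]}(\Gamma_k, N)$, and (iii) the isometric adjunction of Proposition \ref{for induction admisible admissible}.

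The composite of these three isomorphisms, together with the analog of Proposition \ref{multi J induction pro} for $I$-modules, gives the desired $\Psi$. To verify the interpolation property \eqref{admissible interpolation formula}, I would unravel the isomorphisms: an arithmetic specialization $\kappa\in \mathfrak{X}_{\mathcal{O}_{\K}[[\Gamma]]}^{[\boldsymbol{d},\boldsymbol{e}]}$ factors as a pair $(\kappa|_{\Gamma'}, \kappa|_{\Gamma_k})$, the inductive hypothesis handles the $\Gamma'$-specialization, Proposition \ref{onevariable isom Ih from Dh for banach} handles the $\Gamma_k$-specialization, and the Fubini-type compatibility built into the convolution structure on $\mathcal{D}_{h_k}^{[d_k,e_k]}(\Gamma_k,\mathcal{D}_{\boldsymbol{h}^{\prime}}^{[\boldsymbol{d}^{\prime},\boldsymbol{e}^{\prime}]}(\Gamma^{\prime},M))$ recovers the full $\Gamma$-integral.

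For the integral-structure inclusions, I would track constants through each stage. The upper inclusion $I_{\boldsymbol{h}}^{[\boldsymbol{d},\boldsymbol{e}]}(M)^{0}\subset \{v_{\boldsymbol{h}}^{[\boldsymbol{d},\boldsymbol{e}]}\geq 0\}$ follows from the upper inclusion in dimension one applied inductively, since each inductive isomorphism preserves valuations at least weakly (with constant $0$ on the upper side). For the lower inclusion, $c^{[\boldsymbol{d},\boldsymbol{e}]}=c^{[d_k,e_k]}+c^{[\boldsymbol{d}^{\prime},\boldsymbol{e}^{\prime}]}$ splits exactly according to the two applications: the inductive hypothesis contributes $c^{[\boldsymbol{d}^{\prime},\boldsymbol{e}^{\prime}]}$ when passing from $\mathcal{D}_{\boldsymbol{h}^{\prime}}^{[\boldsymbol{d}^{\prime},\boldsymbol{e}^{\prime}]}(\Gamma^{\prime},M)^{0}$-valued admissible distributions to $I_{h_k}^{[d_k,e_k]}(I_{\boldsymbol{h}^{\prime}}^{[\boldsymbol{d}^{\prime},\boldsymbol{e}^{\prime}]}(M))^{0}$, and Proposition \ref{onevariable isom Ih from Dh for banach} contributes $c^{[d_k,e_k]}$ from the outer one-variable step. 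The main obstacle I anticipate is verifying cleanly that the isometric adjunction of Proposition \ref{for induction admisible admissible} and the one-variable result compose correctly at the integral level (i.e., that the valuations $v_{h_k}^{[d_k,e_k]}$ on $\mathcal{D}_{h_k}^{[d_k,e_k]}(\Gamma_k, N)$ with $N=\mathcal{D}_{\boldsymbol{h}^{\prime}}^{[\boldsymbol{d}^{\prime},\boldsymbol{e}^{\prime}]}(\Gamma^{\prime},M)$ match up with $v_{\boldsymbol{h}}^{[\boldsymbol{d},\boldsymbol{e}]}$ in the required manner), which requires unwinding the isometry of Proposition \ref{for induction admisible admissible} together with \eqref{admissible dieme distri 2} and \eqref{admissible dieme distri 6} to see that the relevant integral lattices correspond precisely.
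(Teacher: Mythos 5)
Your proposal follows essentially the same inductive route as the paper: base case from Proposition \ref{onevariable isom Ih from Dh for banach}, the isometric adjunction of Proposition \ref{for induction admisible admissible}, an $I$-module transfer of Proposition \ref{multi J induction pro} (realized in the paper via the non-canonical $\alpha_M^{(k)}$ on the $J$-side, exactly as you indicate), and the additive splitting $c^{[\boldsymbol{d},\boldsymbol{e}]}=c^{[d_k,e_k]}+c^{[\boldsymbol{d}^{\prime},\boldsymbol{e}^{\prime}]}$ of the error constant tracked through the three isomorphisms. The only substantive addition in your write-up is making the uniqueness argument explicit via density of arithmetic characters, which the paper leaves implicit.
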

\begin{proof}
We prove this theorem by induction on $k$. When $k=1$, the desired statement is already proved in Proposition \ref{onevariable isom Ih from Dh for banach}. Let us assume that $k\geq 2$. By the induction argument with respect to $k$, we have $I_{\boldsymbol{h}^{\prime}}^{[\boldsymbol{d}^{\prime},\boldsymbol{e}^{\prime}]}(M)\simeq \mathcal{D}^{[\boldsymbol{d}^{\prime},\boldsymbol{e}^{\prime}]}_{\boldsymbol{h}^{\prime}} (\Gamma^{\prime}, M)$ and
\begin{equation}\label{admissible interpolation formulaeq0}
\{\mu\in\mathcal{D}^{[\boldsymbol{d}^{\prime},\boldsymbol{e}^{\prime}]}_{\boldsymbol{h}^{\prime}} (\Gamma^{\prime}, M)\vert v_{\boldsymbol{h}}^{[\boldsymbol{d}^{\prime},\boldsymbol{e}^{\prime}]}(\mu)\geq c^{[\boldsymbol{d}^{\prime},\boldsymbol{e}^{\prime}]}\}\subset I_{\boldsymbol{h}^{\prime}}^{[\boldsymbol{d}^{\prime},\boldsymbol{e}^{\prime}]}(M)^{0}\subset \{\mu\in\mathcal{D}^{[\boldsymbol{d}^{\prime},\boldsymbol{e}^{\prime}]}_{\boldsymbol{h}^{\prime}} (\Gamma^{\prime}, M)\vert v_{\boldsymbol{h}^{\prime}}^{[\boldsymbol{d}^{\prime},\boldsymbol{e}^{\prime}]}(\mu)\geq 0\}.
\end{equation}
By \eqref{admissible interpolation formulaeq0}, we can show that we have $I_{h_{k}}^{[d_{k},e_{k}]}(I_{\boldsymbol{h}^{\prime}}^{[\boldsymbol{d}^{\prime},\boldsymbol{e}^{\prime}]}(M))\simeq I_{h_{k}}^{[d_{k},e_{k}]}(\mathcal{D}^{[\boldsymbol{d}^{\prime},\boldsymbol{e}^{\prime}]}_{\boldsymbol{h}^{\prime}} (\Gamma^{\prime}, M))$ and
 \begin{equation}\label{admissible interpolation formulaeq1}
 p^{c^{[\boldsymbol{d}^{\prime},\boldsymbol{e}^{\prime}]}}I_{h_{k}}^{[d_{k},e_{k}]}(\mathcal{D}^{[\boldsymbol{d}^{\prime},\boldsymbol{e}^{\prime}]}_{\boldsymbol{h}^{\prime}} (\Gamma^{\prime}, M))^{0}\subset I_{h_{k}}^{[d_{k},e_{k}]}(I_{\boldsymbol{h}^{\prime}}^{[\boldsymbol{d}^{\prime},\boldsymbol{e}^{\prime}]}(M))^{0}\subset I_{h_{k}}^{[d_{k},e_{k}]}(\mathcal{D}^{[\boldsymbol{d}^{\prime},\boldsymbol{e}^{\prime}]}_{\boldsymbol{h}^{\prime}} (\Gamma^{\prime}, M))^{0}.
 \end{equation}
 On the other hand, by the result in the case $k=1$, we see that $I_{h_{k}}^{[d_{k},e_{k}]}(\mathcal{D}^{[\boldsymbol{d}^{\prime},\boldsymbol{e}^{\prime}]}_{\boldsymbol{h}^{\prime}} (\Gamma^{\prime}, M))\simeq \mathcal{D}_{h_{k}}^{[d_{k},e_{k}]}(\Gamma_{k},\mathcal{D}^{[\boldsymbol{d}^{\prime},\boldsymbol{e}^{\prime}]}_{\boldsymbol{h}^{\prime}} (\Gamma^{\prime}, M))$ and
\begin{multline}\label{admissible interpolation formulaeq2}
\left\{\mu\in\mathcal{D}_{h_{k}}^{[d_{k},e_{k}]}(\Gamma_{k},\mathcal{D}^{[\boldsymbol{d}^{\prime},\boldsymbol{e}^{\prime}]}_{\boldsymbol{h}^{\prime}} (\Gamma^{\prime}, M))\big\vert v_{\mathcal{D}_{h_{k}}^{[d_{k},e_{k}]}(\Gamma_{k},\mathcal{D}^{[\boldsymbol{d}^{\prime},\boldsymbol{e}^{\prime}]}_{\boldsymbol{h}^{\prime}} (\Gamma^{\prime}, M))}(\mu)\geq c^{[d_{k},e_{k}]}\right\}\\
\subset I_{h_{k}}^{[d_{k},e_{k}]}(\mathcal{D}^{[\boldsymbol{d}^{\prime},\boldsymbol{e}^{\prime}]}_{\boldsymbol{h}^{\prime}} (\Gamma^{\prime}, M))^{0}\\
\subset \left\{\mu\in\mathcal{D}_{h_{k}}^{[d_{k},e_{k}]}(\Gamma_{k},\mathcal{D}^{[\boldsymbol{d}^{\prime},\boldsymbol{e}^{\prime}]}_{\boldsymbol{h}^{\prime}} (\Gamma^{\prime}, M))\big\vert v_{\mathcal{D}_{h_{k}}^{[d_{k},e_{k}]}(\Gamma_{k},\mathcal{D}^{[\boldsymbol{d}^{\prime},\boldsymbol{e}^{\prime}]}_{\boldsymbol{h}^{\prime}} (\Gamma^{\prime}, M))}(\mu)\geq 0\right\}
\end{multline}
where $v_{\mathcal{D}_{h_{k}}^{[d_{k},e_{k}]}(\Gamma_{k},\mathcal{D}^{[\boldsymbol{d}^{\prime},\boldsymbol{e}^{\prime}]}_{\boldsymbol{h}^{\prime}} (\Gamma^{\prime}, M))}$ is the valuation on $\mathcal{D}_{h_{k}}^{[d_{k},e_{k}]}(\Gamma_{k},\mathcal{D}^{[\boldsymbol{d}^{\prime},\boldsymbol{e}^{\prime}]}_{\boldsymbol{h}^{\prime}} (\Gamma^{\prime}, M))$. Therefore, by \eqref{admissible interpolation formulaeq1} and \eqref{admissible interpolation formulaeq2}, we have $I_{h_{k}}^{[d_{k},e_{k}]}(I_{\boldsymbol{h}^{\prime}}^{[\boldsymbol{d}^{\prime},\boldsymbol{e}^{\prime}]}(M))\simeq \mathcal{D}_{h_{k}}^{[d_{k},e_{k}]}(\Gamma_{k},\mathcal{D}^{[\boldsymbol{d}^{\prime},\boldsymbol{e}^{\prime}]}_{\boldsymbol{h}^{\prime}} (\Gamma^{\prime}, M))$ and
\begin{multline}\label{admissible interpolation formulaeq3}
\{\mu\in\mathcal{D}_{h_{k}}^{[d_{k},e_{k}]}(\Gamma_{k},\mathcal{D}^{[\boldsymbol{d}^{\prime},\boldsymbol{e}^{\prime}]}_{\boldsymbol{h}^{\prime}} (\Gamma^{\prime}, M))\vert v_{\mathcal{D}_{h_{k}}^{[d_{k},e_{k}]}(\Gamma_{k},\mathcal{D}^{[\boldsymbol{d}^{\prime},\boldsymbol{e}^{\prime}]}_{\boldsymbol{h}^{\prime}} (\Gamma^{\prime}, M))}(\mu)\geq c^{[\boldsymbol{d},\boldsymbol{e}]}\}\subset I_{h_{k}}^{[d_{k},e_{k}]}(I_{\boldsymbol{h}^{\prime}}^{[\boldsymbol{d}^{\prime},\boldsymbol{e}^{\prime}]}(M))^{0}\\
\subset \{\mu\in \mathcal{D}_{h_{k}}^{[d_{k},e_{k}]}(\Gamma_{k},\mathcal{D}^{[\boldsymbol{d}^{\prime},\boldsymbol{e}^{\prime}]}_{\boldsymbol{h}^{\prime}} (\Gamma^{\prime}, M))\vert v_{\mathcal{D}_{h_{k}}^{[d_{k},e_{k}]}(\Gamma_{k},\mathcal{D}^{[\boldsymbol{d}^{\prime},\boldsymbol{e}^{\prime}]}_{\boldsymbol{h}^{\prime}} (\Gamma^{\prime}, M))}(\mu)\geq0\}.
\end{multline}
By Proposition \ref{for induction admisible admissible}, we have an isometric isomorphism $\mathcal{D}_{h_{k}}^{[d_{k},e_{k}]}(\Gamma_{k},\mathcal{D}^{[\boldsymbol{d}^{\prime},\boldsymbol{e}^{\prime}]}_{\boldsymbol{h}^{\prime}} (\Gamma^{\prime}, M))\simeq \mathcal{D}_{\boldsymbol{h}}^{[\boldsymbol{d},\boldsymbol{e}]}(\Gamma,\linebreak M)$. Further, by Proposition \ref{multi J induction pro}, we have an $\mathcal{O}_{\K}[[X_{1},\ldots, X_{k}]]\otimes_{\mathcal{O}_{\K}}\K$-module isomorphism 
\begin{equation}\label{multadifor jinduction eq}
J_{\boldsymbol{h}}^{[\boldsymbol{d},\boldsymbol{e}]}(M)\simeq J_{h_{k}}^{[d_{k},e_{k}]}(J_{\boldsymbol{h}^{\prime}}^{[\boldsymbol{d}^{\prime},\boldsymbol{e}^{\prime}]}(M))
\end{equation}
which is induced by an $\mathcal{O}_{\K}[[X_{1},\ldots, X_{k}]]$-module isomorphism $J_{\boldsymbol{h}}^{[\boldsymbol{d},\boldsymbol{e}]}(M)^{0}\simeq J_{h_{k}}^{[d_{k},e_{k}]}(\linebreak J_{\boldsymbol{h}^{\prime}}^{[\boldsymbol{d}^{\prime},\boldsymbol{e}^{\prime}]}(M))^{0}$. By \eqref{noncanonical between Ioldysmbolhde and J}, we have non-canonical isomorphisms $I_{\boldsymbol{h}}^{[\boldsymbol{d},\boldsymbol{e}]}(M)^{0}\simeq J_{\boldsymbol{h}}^{[\boldsymbol{d},\boldsymbol{e}]}(M)^{0}$ and $I_{h_{k}}^{[d_{k},e_{k}]}(I_{\boldsymbol{h}^{\prime}}^{[\boldsymbol{d}^{\prime},\boldsymbol{e}^{\prime}]}(M))^{0}\simeq J_{h_{k}}^{[d_{k},e_{k}]}(J_{\boldsymbol{h}^{\prime}}^{[\boldsymbol{d}^{\prime},\boldsymbol{e}^{\prime}]}(M))^{0}$ which depend on topological generators on $\Gamma_{i}$ for each $1\leq i\leq k$. Then, we have isomorphisms
\begin{equation}\label{multadifor jinduction ene9151}
I_{\boldsymbol{h}}^{[\boldsymbol{d},\boldsymbol{e}]}(M)\simeq J_{\boldsymbol{h}}^{[\boldsymbol{d},\boldsymbol{e}]}(M)\simeq J_{h_{k}}^{[d_{k},e_{k}]}(J_{\boldsymbol{h}^{\prime}}^{[\boldsymbol{d}^{\prime},\boldsymbol{e}^{\prime}]}(M))\simeq I_{h_{k}}^{[d_{k},e_{k}]}(I_{\boldsymbol{h}^{\prime}}^{[\boldsymbol{d}^{\prime},\boldsymbol{e}^{\prime}]}(M))
\end{equation}
and $I_{\boldsymbol{h}}^{[\boldsymbol{d},\boldsymbol{e}]}(M)^{0}\simeq I_{h_{k}}^{[d_{k},e_{k}]}(I_{\boldsymbol{h}^{\prime}}^{[\boldsymbol{d}^{\prime},\boldsymbol{e}^{\prime}]}(M))^{0}$. Therefore, by \eqref{admissible interpolation formulaeq3} and \eqref{multadifor jinduction ene9151}, we have $I_{\boldsymbol{h}}^{[\boldsymbol{d},\boldsymbol{e}]}(M)\simeq \mathcal{D}_{\boldsymbol{h}}^{[\boldsymbol{d},\boldsymbol{e}]}(\Gamma,M)$ and
$$
\{\mu\in\mathcal{D}_{\boldsymbol{h}}^{[\boldsymbol{d},\boldsymbol{e}]}(\Gamma,M)\vert v_{\boldsymbol{h}}^{[\boldsymbol{d},\boldsymbol{e}]}(\mu)\geq c^{[\boldsymbol{d},\boldsymbol{e}]}\}\subset I_{\boldsymbol{h}}^{[\boldsymbol{d},\boldsymbol{e}]}(M)^{0}\subset \{\mu\in\mathcal{D}_{\boldsymbol{h}}^{[\boldsymbol{d},\boldsymbol{e}]}(\Gamma,M)\vert v_{\boldsymbol{h}}^{[\boldsymbol{d},\boldsymbol{e}]}(\mu)\geq0\}.$$
\end{proof}
Let $\boldsymbol{d}^{(i)},\boldsymbol{e}^{(i)}\in \mathbb{Z}^{k}$ such that $\boldsymbol{d}^{(i)}\leq \boldsymbol{e}^{(i)}$ with $i=1,2$. Assume that $[\boldsymbol{d}^{(1)},\boldsymbol{e}^{(1)}]\subset [\boldsymbol{d}^{(2)},\boldsymbol{e}^{(2)}]$. By Proposition \ref{admissible proj is isom not hisom} and Theorem \ref{multi-variable results on admissible distributions}, if $\boldsymbol{e}^{(1)}-\boldsymbol{d}^{(1)}\geq \lfloor \boldsymbol{h}\rfloor$, the natural projection map
\begin{equation}\label{projection I is isom if e-dgeq h}
I_{\boldsymbol{h}}^{[\boldsymbol{d}^{(2)},\boldsymbol{e}^{(2)}]}(M)\rightarrow I_{\boldsymbol{h}}^{[\boldsymbol{d}^{(1)},\boldsymbol{e}^{(1)}]}(M)
\end{equation}
is an $\mathcal{O}_{\K}[[\Gamma]]\otimes_{\mathcal{O}_{\K}}\K$-module isomorphism.

\section{Proof of the main result for the case of the deformation space}\label{sc:ordinary deformation}
In this section, we prove main results for the case of deformation spaces. Let $\boldsymbol{h}\in \ord_{p}(\mathcal{O}_{\K}\backslash \{0\})^{k}$ and $\boldsymbol{d},\boldsymbol{e}\in \mathbb{Z}^{k}$ such that $\boldsymbol{e}\geq\boldsymbol{d}$ with a positive integer $k$. Let $\Gamma_{i}$ be a $p$-adic Lie group which is isomorphic to $1+2p\mathbb{Z}_p\subset \mathbb{Q}_{p}^{\times}$ via a continuous character $\chi_{i} : \Gamma_{i} \longrightarrow \mathbb{Q}_{p}^{\times}$ for each $1\leq i\leq k$. We define $\Gamma=\Gamma_{1}\times \cdots \times\Gamma_{k}$. We take a topological generator $\gamma_{i}\in \Gamma_{i}$ and put $u_{i}=\chi_{i}(\gamma_{i})$ with $1\leq i\leq k$. In this section, we fix a $\K$-Banach space $(M,v_{M})$. Let $\mathbf{J}$ be a finite extension on $\mathcal{O}_{\K}[[X_{1},\ldots, X_{k}]]$ such that $\mathbf{J}$ is an integral domain. We denote by $\mathfrak{X}_{\mathbf{J}}$ the set of continuous $\mathcal{O}_{\K}$-algebra homomorphism $\kappa: \mathbf{J}\rightarrow \overline{\K}$ which satisfies $\kappa(X_{i})=u_{i}^{w_{\kappa,i}}\epsilon_{\kappa,i}-1$ for each $1\leq i\leq k$, where $w_{\kappa,i}\in \mathbb{Z}$ and $\epsilon_{\kappa,i}\in\mu_{p^{\infty}}$. For each $\kappa\in \mathfrak{X}_{\mathbf{J}}$, we put $\boldsymbol{w}_{\kappa}=(w_{\kappa,1},\ldots, w_{\kappa,k})$ and $\boldsymbol{\epsilon}_{\kappa}=(\epsilon_{\kappa,1},\ldots, \epsilon_{\kappa,k})$. Let $f=\sum_{j=1}^{n}f_{j}\otimes c_{j}\in \HH_{\boldsymbol{h}}(M)\otimes_{\mathcal{O}_{\K}[[X_{1},\ldots,X_{k}]]}\mathbf{J}$. For each $\kappa\in \mathfrak{X}_{\mathbf{J}}$, we define a specialization $\kappa(f)\in M_{\K_{\kappa}}$ to be
\begin{equation}\label{finite free specialization log}
\kappa(f)=\sum_{j=1}^{n}f_{j}(u_{1}^{w_{\kappa,1}}\epsilon_{\kappa,1}-1,\ldots, u_{k}^{w_{\kappa,k}}\epsilon_{\kappa,k}-1)\kappa(c_{j}),
\end{equation}
where $\K_{\kappa}=\K(\kappa(\mathbf{J}))$. Let $\mathfrak{X}_{\mathbf{J}}^{[\boldsymbol{d},\boldsymbol{e}]}$ be a subset  of $\mathfrak{X}_{\mathbf{J}}$ consisting of $\kappa\in \mathfrak{X}_{\mathbf{J}}$ with $\boldsymbol{w}_{\kappa}\in [\boldsymbol{d},\boldsymbol{e}]$. Hereafter, we assume that $\mathbf{J}$ is a finite free extension of $\mathcal{O}_{\K}[[X_{1},\ldots,X_{k}]]$.
\begin{thm}\label{main theorem 1 for deformation space}
 If $f\in \HH_{\boldsymbol{h}}(M)\otimes_{\mathcal{O}_{\K}[[X_{1},\ldots, X_{k}]]}\mathbf{J}$ satisfies $\kappa(f)=0$ for each $\kappa\in
 \mathfrak{X}_{\mathbf{J}}^{[\boldsymbol{d},\boldsymbol{d}+\lfloor\boldsymbol{h}\rfloor]}$, then $f$ is zero.
\end{thm}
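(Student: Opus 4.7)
My approach is to reduce Theorem \ref{main theorem 1 for deformation space} to the multi-variable uniqueness statement Theorem \ref{main theorem 1 and proof} that has already been established over $\mathcal{O}_{\K}[[X_{1},\ldots,X_{k}]]$. Since $\mathbf{J}$ is finite free over $R:=\mathcal{O}_{\K}[[X_{1},\ldots,X_{k}]]$, I would fix an $R$-basis $c_{1},\ldots,c_{n}$ of $\mathbf{J}$ and expand $f=\sum_{j=1}^{n}f_{j}\otimes c_{j}$ uniquely with $f_{j}\in\HH_{\boldsymbol{h}}(M)$, thereby reducing the problem to showing $f_{j}=0$ for every $j$.

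First, I would analyse the specialisations sitting over a fixed $(\boldsymbol{w},\boldsymbol{\epsilon})$ with $\boldsymbol{w}\in[\boldsymbol{d},\boldsymbol{d}+\lfloor\boldsymbol{h}\rfloor]$ and $\boldsymbol{\epsilon}\in\mu_{p^{\infty}}^{k}$. Writing $\mathfrak{m}_{(\boldsymbol{w},\boldsymbol{\epsilon})}\subset R$ for the maximal ideal generated by the $X_{i}-(u_{i}^{w_{i}}\epsilon_{i}-1)$, the arithmetic specialisations of $\mathbf{J}$ above $(\boldsymbol{w},\boldsymbol{\epsilon})$ correspond exactly to the $\overline{\K}$-algebra homomorphisms out of the Artinian quotient $A:=\mathbf{J}/\mathfrak{m}_{(\boldsymbol{w},\boldsymbol{\epsilon})}\mathbf{J}$, which is free over $R/\mathfrak{m}_{(\boldsymbol{w},\boldsymbol{\epsilon})}=\K(\boldsymbol{\epsilon})$ with basis $\bar c_{1},\ldots,\bar c_{n}$. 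The hypothesis $\kappa(f)=0$ for all such $\kappa$ thus expresses that $\sum_{j}f_{j}(u_{1}^{w_{1}}\epsilon_{1}-1,\ldots,u_{k}^{w_{k}}\epsilon_{k}-1)\bar c_{j}$ lies in the nilradical of $A$ after extension of scalars to $\overline{\K}$.

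To remove the nilradical obstruction, I would bring in the discriminant $\Delta\in R$ of the basis $c_{1},\ldots,c_{n}$, which is nonzero because $\mathrm{Frac}(\mathbf{J})$ is a finite separable extension of $\mathrm{Frac}(R)$. At every test point where $\Delta(u_{1}^{w_{1}}\epsilon_{1}-1,\ldots,u_{k}^{w_{k}}\epsilon_{k}-1)\neq 0$, the algebra $A$ is \'etale over $\K(\boldsymbol{\epsilon})$ and therefore reduced, and the freeness of $A$ forces $f_{j}(u_{1}^{w_{1}}\epsilon_{1}-1,\ldots,u_{k}^{w_{k}}\epsilon_{k}-1)=0$ for every $j$. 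At the remaining test points the identity $(\Delta f_{j})(u_{1}^{w_{1}}\epsilon_{1}-1,\ldots,u_{k}^{w_{k}}\epsilon_{k}-1)=0$ is automatic because $\Delta$ itself vanishes there. Combining the two cases, $\Delta f_{j}\in\HH_{\boldsymbol{h}}(M)$ vanishes at every $(u_{1}^{w_{1}}\epsilon_{1}-1,\ldots,u_{k}^{w_{k}}\epsilon_{k}-1)$ with $\boldsymbol{w}\in[\boldsymbol{d},\boldsymbol{d}+\lfloor\boldsymbol{h}\rfloor]$ and $\boldsymbol{\epsilon}\in\mu_{p^{\infty}}^{k}$, so Theorem \ref{main theorem 1 and proof} gives $\Delta f_{j}=0$. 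A lowest-monomial argument with respect to the lexicographic order then shows that multiplication by any nonzero element of $R$ is injective on $M[[X_{1},\ldots,X_{k}]]\supset\HH_{\boldsymbol{h}}(M)$, so $f_{j}=0$ and $f=0$.

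The main obstacle is the ramification locus of $\mathbf{J}/R$: at the test points where $\Delta$ vanishes the algebra $A$ can be non-reduced, so a priori the condition $\kappa(f)=0$ for all $\kappa$ above $(\boldsymbol{w},\boldsymbol{\epsilon})$ only yields a nilpotent relation among the $\bar c_{j}$'s with coefficients $f_{j}(u_{1}^{w_{1}}\epsilon_{1}-1,\ldots,u_{k}^{w_{k}}\epsilon_{k}-1)$ rather than the outright vanishing needed to invoke Theorem \ref{main theorem 1 and proof}. The discriminant trick above is designed precisely to absorb this obstruction by paying one factor of $\Delta$, which is later cancelled by the torsion-freeness of $\HH_{\boldsymbol{h}}(M)$ as an $R$-module; one must also check that specialisation commutes with multiplication by elements of $R$ and that $\Delta f_{j}$ still lies in $\HH_{\boldsymbol{h}}(M)$, both of which follow directly from the module structure established in Section \ref{preparation}.
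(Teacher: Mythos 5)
Your proof is correct, and it reaches the same destination as the paper's argument (reduce to Theorem~\ref{main theorem 1 and proof} after expanding $f=\sum_j f_j\otimes c_j$ in an $R$-basis of $\mathbf{J}$ and paying one factor of the discriminant) but by a genuinely different mechanism. The paper introduces the trace form $\mathrm{Tr}_{L/K}$ and the dual basis $\alpha_j^*$, replaces $f$ by $df$ so that $f\alpha_j^*\in\HH_{\boldsymbol{h}}(M)\otimes\mathbf{J}$, and then passes to the Galois closure $\mathbf{T}$ of $L/K$ to write $f_j=\sum_\sigma\sigma(f\alpha_j^*)$; the key point is that the restriction map $\mathfrak{X}_{\mathbf{T}}^{[\boldsymbol{d},\boldsymbol{d}+\lfloor\boldsymbol{h}\rfloor]}\to\mathfrak{X}_{R}^{[\boldsymbol{d},\boldsymbol{d}+\lfloor\boldsymbol{h}\rfloor]}$ is surjective (going-up for integral extensions plus extending embeddings into $\overline{\K}$), so $\kappa(f_j)=0$ for every $\kappa$ over $R$. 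You instead argue fibrewise over the test points: at a point where $\Delta$ does not vanish the Artinian fibre $A=\mathbf{J}/\mathfrak{m}_{(\boldsymbol{w},\boldsymbol{\epsilon})}\mathbf{J}$ is \'{e}tale and reduced, so vanishing of the image of $f$ under all $\kappa$ above that point plus freeness of $A$ immediately gives $f_j(\text{pt})=0$; at the bad points $\Delta(\text{pt})=0$ picks up the slack. Both approaches use the discriminant, but the paper uses it to clear denominators in the dual basis while you use it to restrict to the \'{e}tale locus, and you avoid the Galois closure entirely. What the paper's trace trick buys is that it projects onto the $j$-th coordinate globally over $R$; what your argument buys is that it is more geometric and does not require constructing $\mathbf{T}$ or checking surjectivity of the restriction of arithmetic points. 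Your final lowest-monomial argument for torsion-freeness of $\HH_{\boldsymbol{h}}(M)\subset M[[X_1,\ldots,X_k]]$ over $R$ is correct (with the lex order on $\mathbb{Z}_{\geq 0}^k$, if $\boldsymbol{m}+\boldsymbol{n}=\boldsymbol{m}_0+\boldsymbol{n}_0$ with $\boldsymbol{m}\geq_{\mathrm{lex}}\boldsymbol{m}_0$ and $\boldsymbol{n}\geq_{\mathrm{lex}}\boldsymbol{n}_0$ then equality holds), and this step is used implicitly in the paper as well when it replaces $f$ by $df$.
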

\begin{proof}
By contradiction, we suppose that $f\neq 0$. We take a basis $\alpha_{1},\ldots, \alpha_{n}\in \mathbf{J}$ over $\mathcal{O}_{\K}[[X_{1},\ldots, X_{k}]]$. We write $f=\sum_{j=1}^{n}f_{j}\otimes\alpha_{j}$ with $f_{j}\in \HH_{\boldsymbol{h}}(M)$. We denote by $K$ and $L$ the fraction fields of  $\mathcal{O}_{\K}[[X_{1},\ldots, X_{k}]]$ and $\mathbf{J}$ respectively. Let $\alpha_{1}^{*},\ldots,\alpha_{n}^{*}\in L$ be the dual basis of $\alpha_{1},\ldots, \alpha_{n}$ with respect to the trace map $\mathrm{Tr}_{L\slash K}:L\rightarrow K$. We define 
$$\mathrm{Tr}: \HH_{\boldsymbol{h}}(M)\otimes_{\mathcal{O}_{\K}[[X_{1},\ldots,X_{k}]]}L\rightarrow \HH_{\boldsymbol{h}}(M)\otimes_{\mathcal{O}_{\K}[[X_{1},\ldots,X_{k}]]}K$$
to be $\sum_{j=1}^{m}g_{j}\otimes c_{j}\mapsto \sum_{j=1}^{m}g_{j}\otimes \mathrm{Tr}_{L\slash K}(c_{j})$. By definition, we have $f_{j}=\mathrm{Tr}(f\alpha_{j}^{*})$ for each $1\leq j\leq n$. Let $d=d(\alpha_{1},\ldots, \alpha_{n})\in \mathcal{O}_{\K}[[X_{1},\ldots,X_{k}]]\backslash \{0\}$ be the discriminant of the basis $\alpha_{1},\ldots, \alpha_{n}$. It is well-known that $d\alpha_{i}^{*}\in \mathbf{J}$ with $1\leq i\leq n$. By replacing $f$ with $df$, we can assume that $f\alpha_{j}^{*}\in \HH_{\boldsymbol{h}}(M)\otimes_{\mathcal{O}_{\K}[[X_{1},\ldots,X_{k}]]}\mathbf{J}$ and
\begin{equation}\label{main theorem 1 for deformation spaceeq1}
\kappa(f\alpha_{j}^{*})=0
\end{equation} 
for every $1\leq j\leq n$ and for every $\kappa\in \mathfrak{X}_{\mathbf{J}}^{[\boldsymbol{d},\boldsymbol{d}+\lfloor\boldsymbol{h}\rfloor]}$.

Let $W$ be the Galois closure of $L\slash K$ and $\mathbf{T}$ the integral closure of $\mathcal{O}_{\K}[[X_{1},\ldots,X_{k}]]$ in $W$. For each $K$-embedding $\sigma: L\rightarrow W$ and $g=\sum_{j=1}^{m}g_{j}\otimes c_{j}\in \HH_{\boldsymbol{h}}(M)\otimes_{\mathcal{O}_{\K}[[X_{1},\ldots,X_{k}]]}\mathbf{J}$, we write $\sigma(g)=\sum_{j=1}^{m}g_{j}\otimes \sigma(c_{j})\in \HH_{\boldsymbol{h}}(M)\otimes_{\mathcal{O}_{\K}[[X_{1},\ldots,X_{k}]]}\mathbf{T}$. By the definition of the trace map, we have
\begin{align*}
f_{j}=\mathrm{Tr}(f\alpha_{j}^{*})=\sum_{\sigma}\sigma(f\alpha_{j}^{*})\ \ \mathrm{in}\ \HH_{\boldsymbol{h}}(M)\otimes_{\mathcal{O}_{\K}[[X_{1},\ldots,X_{k}]]}\mathbf{T},
\end{align*}
where the sum $\sum_{\sigma}$ runs over all $K$-embeddings $\sigma:L\rightarrow W$. For each $\kappa\in \mathfrak{X}_{\mathbf{T}}^{[\boldsymbol{d},\boldsymbol{d}+\lfloor \boldsymbol{h}\rfloor]}$ and $K$-enbedding $\sigma:L\rightarrow W$, we have $\kappa\circ \sigma\in \mathfrak{X}_{\mathbf{J}}^{[\boldsymbol{d},\boldsymbol{d}+\lfloor \boldsymbol{h}\rfloor]}$. By \eqref{main theorem 1 for deformation spaceeq1}, we see that 
\begin{equation}\label{main theorem 1 for deformation spaceeq2}
\kappa(f_{j})=\sum_{\sigma}\kappa\circ\sigma(f\alpha_{j}^{*})=0
\end{equation}
for every $\kappa\in \mathfrak{X}_{\mathbf{T}}^{[\boldsymbol{d},\boldsymbol{d}+\lfloor\boldsymbol{h}\rfloor]}$. Since $\mathbf{T}$ is integral over $\mathcal{O}_{\K}[[X_{1},\ldots, X_{k}]]$, we see that the restrection map $\mathfrak{X}_{\mathbf{T}}^{[\boldsymbol{d},\boldsymbol{d}+\lfloor\boldsymbol{h}\rfloor]}\rightarrow \mathfrak{X}_{\mathcal{O}_{\K}[[X_{1},\ldots, X_{k}]]}^{[\boldsymbol{d},\boldsymbol{d}+\lfloor\boldsymbol{h}\rfloor]}$ is surjective. Then, by \eqref{main theorem 1 for deformation spaceeq2}, we see that 
$$\kappa(f_{j})=0$$
for every $1\leq j\leq n$ and for every $\kappa\in \mathfrak{X}_{\mathcal{O}_{\K}[[X_{1},\ldots, X_{k}]]}^{[\boldsymbol{d},\boldsymbol{d}+\lfloor\boldsymbol{h}\rfloor]}$. By $\mathrm{Theorem\ \ref{main theorem 1 and proof}}$, we conclude that $f_{j}=0$ for every $1\leq j\leq n$, which is equivalent to $f=0$. This is a contradiction.
\end{proof}
Let $\alpha_{1},\ldots, \alpha_{n}$ be a basis of $\mathbf{J}$ over $\mathcal{O}_{\K}[[X_{1},\ldots,X_{k}]]$. Through the $\K$-vector isomorphism $\oplus_{i=1}^{n}\HH_{\boldsymbol{h}}(M)\stackrel{\sim}{\rightarrow}\HH_{\boldsymbol{h}}(M)\otimes_{\mathcal{O}_{\K}[[X_{1},\ldots, X_{k}]]}\mathbf{J}$ defined by $(f_{i})_{i=1}^{n}\mapsto \sum_{i=1}^{n}f_{i}\alpha_{i}$, we regard $\HH_{\boldsymbol{h}}(M)\otimes_{\mathcal{O}_{\K}[[X_{1},\ldots,X_{k}]]}\mathbf{J}$ as a $\K$-Banach space and denote by $v_{\mathcal{H}_{\boldsymbol{h}},\mathbf{J}}$ the valuation on $\HH_{\boldsymbol{h}}(M)\otimes_{\mathcal{O}_{\K}[[X_{1},\ldots,X_{k}]]}\mathbf{J}$. That is, $v_{\mathcal{H}_{\boldsymbol{h}},\mathbf{J}}(f)=\min_{1\leq i\leq n}\{v_{\HH_{\boldsymbol{h}}}(f_{i})\}$ for each $f=\sum_{i=1}^{n}f_{i}\alpha_{i}$ with $f_{i}\in \HH_{\boldsymbol{h}}(M)$. We remark that the valuation $v_{\HH_{\boldsymbol{h}},\mathbf{J}}$ does not depend on the basis $\alpha_{1},\ldots, \alpha_{n}$.

Let $J_{\boldsymbol{h}}^{[\boldsymbol{d},\boldsymbol{e}]}(M)$ be the $\mathcal{O}_{\K}[[X_{1},\ldots, X_{k}]]\otimes_{\mathcal{O}_{\K}}\K$-module defined in \eqref{generalization of the project lim for deformation ring Jboldsymbol}. Put $M^{0}(\mathbf{J})=M^{0}[[X_{1},\ldots, X_{k}]]\otimes_{\mathcal{O}_{\K}[[X_{1},\ldots,X_{k}]]}\mathbf{J}$ and $(\Omega_{\boldsymbol{m}}^{[\boldsymbol{d},\boldsymbol{e}]})=(\Omega_{\boldsymbol{m}}^{[\boldsymbol{d},\boldsymbol{e}]}(X_{1},\ldots, X_{k}))$. 
We regard the modules $\varprojlim_{\boldsymbol{m}\in \mathbb{Z}_{\geq 0}^{k}}\left(\frac{M^{0}(\mathbf{J})}{(\Omega_{\boldsymbol{m}}^{[\boldsymbol{d},\boldsymbol{e}]})M^{0}(\mathbf{J})}\otimes_{\mathcal{O}_{\K}}\K\right)$ and $\left(\prod_{\boldsymbol{m}\in \mathbb{Z}_{\geq 0}^{k}}\frac{M^{0}(\mathbf{J})}{(\Omega_{\boldsymbol{m}}^{[\boldsymbol{d},\boldsymbol{e}]})M^{0}(\mathbf{J})}\right)\otimes_{\mathcal{O}_{\K}}\K$ as submodules of $\prod_{\boldsymbol{m}\in \mathbb{Z}_{\geq 0}^{k}}\left(\frac{M^{0}(\mathbf{J})}{(\Omega_{\boldsymbol{m}}^{[\boldsymbol{d},\boldsymbol{e}]})M^{0}(\mathbf{J})}\otimes_{\mathcal{O}_{\K}}\K\right)$. Then, we see that $J_{\boldsymbol{h}}^{[\boldsymbol{d},\boldsymbol{e}]}(M)\otimes_{\mathcal{O}_{\K}[[X_{1},\ldots, X_{k}]]}\mathbf{J}$ is isomorphic to the following $\mathbf{J}\otimes_{\mathcal{O}_{\K}}\K$-module: 
\begin{align}\label{Jbold(I)definition}
\small 
\Bigg\{(s_{\boldsymbol{m}})_{\boldsymbol{m}}\in \varprojlim_{\boldsymbol{m}\in\mathbb{Z}_{\geq 0}^{k}}\left(\frac{M^{0}(\mathbf{J})}{(\Omega_{\boldsymbol{m}}^{[\boldsymbol{d},\boldsymbol{e}]})M^{0}(\mathbf{J})}\otimes_{\mathcal{O}_{\K}}\K\right)\Bigg\vert (p^{\langle \boldsymbol{h},\boldsymbol{m}\rangle_{k}}s_{\boldsymbol{m}})_{\boldsymbol{m}}\in \left(\prod_{\boldsymbol{m}\in \mathbb{Z}_{\geq 0}^{k}}\frac{M^{0}(\mathbf{J})}{(\Omega_{\boldsymbol{m}}^{[\boldsymbol{d},\boldsymbol{e}]})M^{0}(\mathbf{J})}\right)\otimes_{\mathcal{O}_{\K}}\K\Bigg\}.
\normalsize
\end{align}
 Throughout this section, we identify $J_{\boldsymbol{h}}^{[\boldsymbol{d},\boldsymbol{e}]}(M)\otimes_{\mathcal{O}_{\K}[[X_{1},\ldots, X_{k}]]}\mathbf{J}$ with the module given by  \eqref{Jbold(I)definition}. Let $s\in J_{\boldsymbol{h}}^{[\boldsymbol{d},\boldsymbol{e}]}(M)\otimes_{\mathcal{O}_{\K}[[X_{1},\ldots, X_{k}]]}\mathbf{J}$. Whenever we write $s=(s_{\boldsymbol{m}})_{\boldsymbol{m}\in \mathbb{Z}_{\geq 0}^{k}}$, $(s_{\boldsymbol{m}})_{\boldsymbol{m}\in \mathbb{Z}_{\geq 0}^{k}}$ is an element of \eqref{Jbold(I)definition}. We have the following theorem:
\begin{thm}\label{deformtion Jhtheorem multivariable}
Assume that $\boldsymbol{e}-\boldsymbol{d}\geq \lfloor \boldsymbol{h}\rfloor$. For $s^{[\boldsymbol{d},\boldsymbol{e}]}=(s_{\boldsymbol{m}}^{[\boldsymbol{d},\boldsymbol{e}]})_{\boldsymbol{m}\in \mathbb{Z}_{\geq 0}^{k}}\in J_{\boldsymbol{h}}^{[\boldsymbol{d},\boldsymbol{e}]}(M)\linebreak\otimes_{\mathcal{O}_{\K}[[X_{1},\ldots,X_{k}]]}\mathbf{J}$, there exists a unique element $f_{s^{[\boldsymbol{d},\boldsymbol{e}]}}\in \HH_{\boldsymbol{h}}(M)\otimes_{\mathcal{O}_{\K}[[X_{1},\ldots, X_{k}]]}\mathbf{J}$ such that 
\begin{equation}\label{deformtion Jhtheorem multivariable1}
f_{s^{[\boldsymbol{d},\boldsymbol{e}]}}-\tilde{s}_{\boldsymbol{m}}^{[\boldsymbol{d},\boldsymbol{e}]}\in (\Omega_{\boldsymbol{m}}^{[\boldsymbol{d},\boldsymbol{e}]})\HH_{\boldsymbol{h}}(M)\otimes_{\mathcal{O}_{\K}[[X_{1},\ldots, X_{k}]]}\mathbf{J}
\end{equation}
for each $\boldsymbol{m}\in \mathbb{Z}_{\geq 0}^{k}$, where $\tilde{s}_{\boldsymbol{m}}^{[\boldsymbol{d},\boldsymbol{e}]}\in M^{0}(\mathbf{J})\otimes_{\mathcal{O}_{\K}}\K$ is a lift of $s_{\boldsymbol{m}}^{[\boldsymbol{d},\boldsymbol{e}]}$. Further, the correspondence $s^{[\boldsymbol{d},\boldsymbol{e}]}\mapsto f_{s^{[\boldsymbol{d},\boldsymbol{e}]}}$ from $J_{\boldsymbol{h}}^{[\boldsymbol{d},\boldsymbol{e}]}(M)\otimes_{\mathcal{O}_{\K}[[X_{1},\ldots, X_{k}]]}\mathbf{J}$ to $\HH_{\boldsymbol{h}}(M)\otimes_{\mathcal{O}_{\K}[[X_{1},\ldots, X_{k}]]}\mathbf{J}$ induces an $\mathbf{J}\otimes_{\mathcal{O}_{\K}}\K$-module isomorphism \
$$
J_{\boldsymbol{h}}^{[\boldsymbol{d},\boldsymbol{e}]}(M)\otimes_{\mathcal{O}_{\K}[[X_{1},\ldots, X_{k}]]}\mathbf{J} \overset{\sim}{\longrightarrow} \HH_{\boldsymbol{h}}(M)\otimes_{\mathcal{O}_{\K}[[X_{1},\ldots,X_{k}]]}\mathbf{J}$$
and, via the above isomorphism, we have
\begin{align}\label{deformtion Jhtheorem multivariable1 equa 1}
\begin{split}
&\{f\in \HH_{\boldsymbol{h}}(M)\otimes_{\mathcal{O}_{\K}[[X_{1},\ldots, X_{k}]]}\mathbf{J}\vert v_{\HH_{\boldsymbol{h}},\mathbf{J}}(f)\geq \alpha_{\boldsymbol{h}}^{[\boldsymbol{d},\boldsymbol{e}]}\}\subset J_{\boldsymbol{h}}^{[\boldsymbol{d},\boldsymbol{e}]}(M)^{0}\otimes_{\mathcal{O}_{\K}[[X_{1},\ldots, X_{k}]]}\mathbf{J}\\
&\subset \{f\in \HH_{\boldsymbol{h}}(M)\otimes_{\mathcal{O}_{\K}[[X_{1},\ldots, X_{k}]]}\mathbf{J}\vert v_{\HH_{\boldsymbol{h}},\mathbf{J}}(f)\geq \beta_{\boldsymbol{h}}\},
\end{split}
\end{align}
where $\alpha_{\boldsymbol{h}}^{[\boldsymbol{d},\boldsymbol{e}]}=\sum_{i=1}^{k}\alpha_{h_{i}}^{[d_{i},e_{i}]}$ and $\beta_{\boldsymbol{h}}=\sum_{i=1}^{k}\beta_{h_{i}}$ with
\begin{align*}
\alpha_{h_{i}}^{[d_{i},e_{i}]}&=\begin{cases}\lfloor\frac{(e_{i}-d_{i}+1)}{p-1}+\max\{0, h_{i}-\frac{h_{i}}{\log p}(1+\log \frac{\log p}{(p-1)h_{i}})\}\rfloor+1\ &\mathrm{if}\ h_{i}>0,\\ 0\ &\mathrm{if}\ h_{i}=0,\end{cases}\\
\beta_{h_{i}}&=\begin{cases}-\lfloor\max\{h_{i},\frac{p}{p-1}\}\rfloor-1\ \ \ \  \ \ \ \ \ \ \ \ \ \ \ \ \ \ \ \ \ \ \ \ \ \ \ \ \ \ \ \ \ \ \ \ \ &\mathrm{if}\ h_{i}>0,\\ 0\ &\mathrm{if}\ h_{i}=0.\end{cases}
\end{align*}
\end{thm}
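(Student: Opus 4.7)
The plan is to reduce the statement to a basis-wise application of Theorem \ref{main thm 2 and proof}. I would fix an $\mathcal{O}_{\K}[[X_{1},\ldots,X_{k}]]$-basis $\alpha_{1},\ldots,\alpha_{n}$ of $\mathbf{J}$. Because $\mathbf{J}$ is finite free, tensor product with $\mathbf{J}$ commutes with the finite direct sums, the inverse limits, and the quotients by the ideals $(\Omega_{\boldsymbol{m}}^{[\boldsymbol{d},\boldsymbol{e}]})$ that appear in the definitions. Consequently one obtains natural identifications
$$\HH_{\boldsymbol{h}}(M)\otimes_{\mathcal{O}_{\K}[[X_{1},\ldots,X_{k}]]}\mathbf{J}=\bigoplus_{i=1}^{n}\HH_{\boldsymbol{h}}(M)\alpha_{i},\quad J_{\boldsymbol{h}}^{[\boldsymbol{d},\boldsymbol{e}]}(M)\otimes_{\mathcal{O}_{\K}[[X_{1},\ldots,X_{k}]]}\mathbf{J}=\bigoplus_{i=1}^{n}J_{\boldsymbol{h}}^{[\boldsymbol{d},\boldsymbol{e}]}(M)\alpha_{i},$$
and likewise $J_{\boldsymbol{h}}^{[\boldsymbol{d},\boldsymbol{e}]}(M)^{0}\otimes_{\mathcal{O}_{\K}[[X_{1},\ldots,X_{k}]]}\mathbf{J}=\bigoplus_{i=1}^{n}J_{\boldsymbol{h}}^{[\boldsymbol{d},\boldsymbol{e}]}(M)^{0}\alpha_{i}$. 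One must check that the latter decomposition is compatible with the definition given in \eqref{Jbold(I)definition}; this reduces to noting that finite direct sums commute both with the inverse limit and with the infinite product occurring in the boundedness clause.

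Next, I would apply Theorem \ref{main thm 2 and proof} in each summand to produce $\mathcal{O}_{\K}[[X_{1},\ldots,X_{k}]]\otimes_{\mathcal{O}_{\K}}\K$-module isomorphisms $\varphi_{i}\colon J_{\boldsymbol{h}}^{[\boldsymbol{d},\boldsymbol{e}]}(M)\stackrel{\sim}{\rightarrow}\HH_{\boldsymbol{h}}(M)$ satisfying the congruence characterization. Setting $\varphi:=\bigoplus_{i=1}^{n}(\varphi_{i}\otimes\alpha_{i})$ then gives the desired $\mathbf{J}\otimes_{\mathcal{O}_{\K}}\K$-module isomorphism $J_{\boldsymbol{h}}^{[\boldsymbol{d},\boldsymbol{e}]}(M)\otimes\mathbf{J}\stackrel{\sim}{\rightarrow}\HH_{\boldsymbol{h}}(M)\otimes\mathbf{J}$. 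The congruence \eqref{deformtion Jhtheorem multivariable1} holds because it holds in each coordinate, and the sandwich \eqref{deformtion Jhtheorem multivariable1 equa 1} between the integral lattices follows immediately from the componentwise sandwich of Theorem \ref{main thm 2 and proof} combined with the recipe $v_{\HH_{\boldsymbol{h}},\mathbf{J}}(\sum_{i}f_{i}\alpha_{i})=\min_{i}v_{\HH_{\boldsymbol{h}}}(f_{i})$ which defines the valuation on the tensor product.

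For uniqueness of the element $f_{s^{[\boldsymbol{d},\boldsymbol{e}]}}$: if two candidates $f,g\in\HH_{\boldsymbol{h}}(M)\otimes\mathbf{J}$ both satisfy \eqref{deformtion Jhtheorem multivariable1} for the same $s^{[\boldsymbol{d},\boldsymbol{e}]}$, then $f-g$ lies in $(\Omega_{\boldsymbol{m}}^{[\boldsymbol{d},\boldsymbol{e}]})\HH_{\boldsymbol{h}}(M)\otimes\mathbf{J}$ for every $\boldsymbol{m}\in\mathbb{Z}_{\geq 0}^{k}$, whence $\kappa(f-g)=0$ for every $\kappa\in\mathfrak{X}_{\mathbf{J}}^{[\boldsymbol{d},\boldsymbol{e}]}$. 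Since $\boldsymbol{e}-\boldsymbol{d}\geq\lfloor\boldsymbol{h}\rfloor$, the subset $\mathfrak{X}_{\mathbf{J}}^{[\boldsymbol{d},\boldsymbol{d}+\lfloor\boldsymbol{h}\rfloor]}$ is already contained in $\mathfrak{X}_{\mathbf{J}}^{[\boldsymbol{d},\boldsymbol{e}]}$, and Theorem \ref{main theorem 1 for deformation space} forces $f=g$. The same reasoning shows that the isomorphism $\varphi$ is intrinsic: any $\mathbf{J}\otimes_{\mathcal{O}_{\K}}\K$-linear map from $J_{\boldsymbol{h}}^{[\boldsymbol{d},\boldsymbol{e}]}(M)\otimes\mathbf{J}$ to $\HH_{\boldsymbol{h}}(M)\otimes\mathbf{J}$ which satisfies \eqref{deformtion Jhtheorem multivariable1} must agree with $\varphi$.

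The step that requires the most care is the verification of the independence from the basis $\alpha_{1},\ldots,\alpha_{n}$ and, equivalently, the verification that the valuation $v_{\HH_{\boldsymbol{h}},\mathbf{J}}$ is intrinsic. Basis-independence of $\varphi$ is handled by the characterization just discussed and mirrors Remark \ref{remark of uniqueness of jh} in the trivial extension case $\mathbf{J}=\mathcal{O}_{\K}[[X_{1},\ldots,X_{k}]]$. Basis-independence of $v_{\HH_{\boldsymbol{h}},\mathbf{J}}$, which is stated without proof immediately before the theorem, follows by the same method: the transition matrix between two bases lies in $\mathrm{GL}_{n}(\mathcal{O}_{\K}[[X_{1},\ldots,X_{k}]])$, and multiplication by such a matrix preserves the sup-type valuation on $\bigoplus_{i}\HH_{\boldsymbol{h}}(M)\alpha_{i}$ because $\mathcal{O}_{\K}[[X_{1},\ldots,X_{k}]]$ acts by operators of valuation $\geq 0$ on $\HH_{\boldsymbol{h}}(M)$.
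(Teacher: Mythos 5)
Your proposal is correct and follows essentially the same route as the paper. The paper states the argument more tersely by observing that, since $\mathbf{J}$ is a finite free extension of $\mathcal{O}_{\K}[[X_{1},\ldots,X_{k}]]$, one can simply apply the base-change functor $-\otimes_{\mathcal{O}_{\K}[[X_{1},\ldots,X_{k}]]}\mathbf{J}$ to the $\mathcal{O}_{\K}[[X_{1},\ldots,X_{k}]]\otimes_{\mathcal{O}_{\K}}\K$-module isomorphism of Theorem \ref{main thm 2 and proof} to obtain $\Psi_{\mathbf{J}}$, and then invoke Theorem \ref{main theorem 1 for deformation space} for uniqueness — which is exactly what your basis-wise decomposition $\bigoplus_{i}\HH_{\boldsymbol{h}}(M)\alpha_{i}$ implements in coordinates. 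One minor remark: since Theorem \ref{main thm 2 and proof} singles out one specific isomorphism, the maps $\varphi_{i}$ you introduce are all equal to that single map; the plural notation might misleadingly suggest freedom of choice. Your additional discussion of the basis-independence of $v_{\HH_{\boldsymbol{h}},\mathbf{J}}$ (via a transition matrix in $\mathrm{GL}_{n}(\mathcal{O}_{\K}[[X_{1},\ldots,X_{k}]])$ and the fact that multiplication by an integral power series does not decrease $v_{\HH_{\boldsymbol{h}}}$, which indeed follows from $\ell$ being increasing) is a useful supplement: the paper asserts this without argument just before the theorem statement.
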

\begin{proof}
Let $s^{[\boldsymbol{d},\boldsymbol{e}]}\in J_{\boldsymbol{h}}^{[\boldsymbol{d},\boldsymbol{e}]}(M)\otimes_{\mathcal{O}_{\K}[[X_{1},\ldots,X_{k}]]}\mathbf{J}$. We prove that there exists a unique element $f_{s^{[\boldsymbol{d},\boldsymbol{e}]}}\in \HH_{\boldsymbol{h}}(M)\otimes_{\mathcal{O}_{\K}[[X_{1},\ldots, X_{k}]]}\mathbf{J}$ which satisfies \eqref{deformtion Jhtheorem multivariable1}. The uniqueness of $f_{s^{[\boldsymbol{d},\boldsymbol{e}]}}$ follows from Theorem\ \ref{main theorem 1 for deformation space} immediately. Let $\Psi_{\boldsymbol{J}}:J_{\boldsymbol{h}}^{[\boldsymbol{d},\boldsymbol{e}]}(M)\otimes_{\mathcal{O}_{\K}[[X_{1},\ldots, X_{k}]]}\mathbf{J} \overset{\sim}{\longrightarrow} \HH_{\boldsymbol{h}}(M)\otimes_{\mathcal{O}_{\K}[[X_{1},\ldots,X_{k}]]}\mathbf{J}$ be the $\mathbf{J}\otimes_{\mathcal{O}_{\K}}\K$-module isomorphism induced by the isomorphism $J_{\boldsymbol{h}}^{[\boldsymbol{d},\boldsymbol{e}]}(M)\stackrel{\sim}{\rightarrow}\HH_{\boldsymbol{h}}(M)$ defined in Theorem \ref{main thm 2 and proof}. By the definition of $\Psi_{\boldsymbol{J}}$, we see that $\Psi_{\boldsymbol{J}}(s^{[\boldsymbol{d},\boldsymbol{e}]})$ satisfies \eqref{deformtion Jhtheorem multivariable1}. Then, $\Psi_{\boldsymbol{J}}(s^{[\boldsymbol{d},\boldsymbol{e}]})$ is the unique element which satisifies \eqref{deformtion Jhtheorem multivariable1}.

Since $\Psi_{\boldsymbol{J}}$ is an isomorphism, the correspondence $s^{[\boldsymbol{d},\boldsymbol{e}]}\mapsto f_{s^{[\boldsymbol{d},\boldsymbol{e}]}}$ from $J_{\boldsymbol{h}}^{[\boldsymbol{d},\boldsymbol{e}]}(M)\linebreak\otimes_{\mathcal{O}_{\K}[[X_{1},\ldots, X_{k}]]}\mathbf{J}$ to $\HH_{\boldsymbol{h}}(M)\otimes_{\mathcal{O}_{\K}[[X_{1},\ldots, X_{k}]]}\mathbf{J}$ is an isomorphism. Further, we have \eqref{deformtion Jhtheorem multivariable1 equa 1} by Theorem \ref{main thm 2 and proof}. 
\end{proof}
\begin{rem}
Assume that $\boldsymbol{e}-\boldsymbol{d}\geq \lfloor \boldsymbol{h}\rfloor$. We regard $M^{0}(\mathbf{J})\otimes_{\mathcal{O}_{\K}}\K$ as a $\mathbf{J}$-submodule of $J_{\boldsymbol{h}}^{[\boldsymbol{d},\boldsymbol{e}]}(M)\otimes_{\mathcal{O}_{\K}[[X_{1},\ldots, X_{k}]]}\mathbf{J}$ and $\HH_{\boldsymbol{h}}(M)\otimes_{\mathcal{O}_{\K}[[X_{1},\ldots, X_{k}]]}\mathbf{J}$ naturally and denote by $i: M^{0}(\mathbf{J})\otimes_{\mathcal{O}_{\K}}\K\rightarrow J_{\boldsymbol{h}}^{[\boldsymbol{d},\boldsymbol{e}]}(M)\otimes_{\mathcal{O}_{\K}[[X_{1},\ldots, X_{k}]]}\mathbf{J}$ and $j:M^{0}(\mathbf{J})\otimes_{\mathcal{O}_{\K}}\K\rightarrow \HH_{\boldsymbol{h}}(M)\otimes_{\mathcal{O}_{\K}[[X_{1},\ldots, X_{k}]]}\mathbf{J}$ the natural inclusion maps respectively. We denote by $\varphi: J_{\boldsymbol{h}}^{[\boldsymbol{d},\boldsymbol{e}]}(M)\otimes_{\mathcal{O}_{\K}[[X_{1},\ldots, X_{k}]]}\mathbf{J} \overset{\sim}{\longrightarrow} \HH_{\boldsymbol{h}}(M)\otimes_{\mathcal{O}_{\K}[[X_{1},\ldots, X_{k}]]}\mathbf{J}$ the $\mathbf{J}\otimes_{\mathcal{O}_{\K}}\K$-module isomorphism defined in Theorem \ref{deformtion Jhtheorem multivariable}. In the same way as Remark \ref{remark of uniqueness of jh}, we see that $\varphi$ is the unique $\mathbf{J}\otimes_{\mathcal{O}_{\K}}\K$-module isomorphism which satisfies $\varphi i=j$.
\end{rem}
We fix a finite free extension $\mathbf{I}$ of $\mathcal{O}_{\K}[[\Gamma]]$ such that $\mathbf{I}$ is an integral domain. Let $\mathfrak{X}_{\mathbf{I}}$ be the set of arithmetic specializations on $\mathbf{I}$ and $\mathfrak{X}_{\mathbf{I}}^{[\boldsymbol{d},\boldsymbol{e}]}\subset \mathfrak{X}_{\mathbf{I}}$ a subset consisting of $\kappa\in \mathfrak{X}_{\mathbf{I}}$ with $\boldsymbol{w}_{\kappa}\in [\boldsymbol{d},\boldsymbol{e}]$. Put $M^{0}(\mathbf{I})=M^{0}[[\Gamma]]\otimes_{\mathcal{O}_{\K}[[\Gamma]]}\mathbf{I}$. Let $\kappa\in \mathfrak{X}_{\mathbf{I}}$ and $f=\sum_{i=0}^{n}f_{i}\otimes_{\mathcal{O}_{\K}[[\Gamma]]}c_{i}\in M^{0}(\mathbf{I})$, where $f_{i}\in M^{0}[[\Gamma]]$ and $c_{i}\in \mathbf{I}$ for each $1\leq i\leq n$. We define a substitution $\kappa(f)\in  M_{\K_{\kappa}}$ to be $\kappa(f)=\sum_{i=1}^{n}\kappa(f_{i})\kappa(c_{i})$, where $\K_{\kappa}=\K(\kappa(\mathbf{I}))$. Let $\mathbf{I}_{\boldsymbol{h}}^{[\boldsymbol{d},\boldsymbol{e}]}(M)$ be the $\mathcal{O}_{\K}[[\Gamma]]\otimes_{\mathcal{O}_{\K}}\K$-module defined in \eqref{generalization of the project lim for deformation ring}. In the same way as \eqref{Jbold(I)definition}, we can identify $I_{\boldsymbol{h}}^{[\boldsymbol{d},\boldsymbol{e}]}(M)\otimes_{\mathcal{O}_{\K}[[\Gamma]]}\mathbf{I}$ with the following $\mathbf{I}\otimes_{\mathcal{O}_{\K}}\K$-module: 
\begin{align}\label{Ibold(I)definition}
\begin{split}
\Bigg\{(s_{\boldsymbol{m}}^{[\boldsymbol{d},\boldsymbol{e}]})_{\boldsymbol{m}}\in &\varprojlim_{\boldsymbol{m}\in\mathbb{Z}_{\geq 0}^{k}}\left(\frac{M^{0}(\mathbf{I})}{(\Omega_{\boldsymbol{m}}^{[\boldsymbol{d},\boldsymbol{e}]}(\gamma_{1},\ldots, \gamma_{k}))M^{0}(\mathbf{I})}\otimes_{\mathcal{O}_{\K}}\K\right)\Bigg\vert \\
&(p^{\langle \boldsymbol{h},\boldsymbol{m}\rangle_{k}}s_{\boldsymbol{m}}^{[\boldsymbol{d},\boldsymbol{e}]})_{\boldsymbol{m}}\in \left(\prod_{\boldsymbol{m}\in \mathbb{Z}_{\geq 0}^{k}}\frac{M^{0}(\mathbf{I})}{(\Omega_{\boldsymbol{m}}^{[\boldsymbol{d},\boldsymbol{e}]}(\gamma_{1},\ldots, \gamma_{k}))M^{0}(\mathbf{I})}\right)\otimes_{\mathcal{O}_{\K}}\K\Bigg\}.
\end{split}
\end{align}
 Throughout this section, we identify $I_{\boldsymbol{h}}^{[\boldsymbol{d},\boldsymbol{e}]}(M)\otimes_{\mathcal{O}_{\K}[[\Gamma]]}\mathbf{I}$ with the module given by  \eqref{Ibold(I)definition}. Let $s^{[\boldsymbol{d},\boldsymbol{e}]}\in I_{\boldsymbol{h}}^{[\boldsymbol{d},\boldsymbol{e}]}(M)\otimes_{\mathcal{O}_{\K}[[\Gamma]]}\mathbf{I}$. Whenever we write $s^{[\boldsymbol{d},\boldsymbol{e}]}=(s_{\boldsymbol{m}}^{[\boldsymbol{d},\boldsymbol{e}]})_{\boldsymbol{m}\in \mathbb{Z}_{\geq 0}^{k}}$, $(s_{\boldsymbol{m}}^{[\boldsymbol{d},\boldsymbol{e}]})_{\boldsymbol{m}\in \mathbb{Z}_{\geq 0}^{k}}$ is an element of \eqref{Ibold(I)definition}.
 
Let $\alpha_{1},\ldots, \alpha_{n}$ be a basis of $\mathbf{I}$ over $\mathcal{O}_{\K}[[\Gamma]]$. We regard $\mathcal{D}_{\boldsymbol{h}}^{[\boldsymbol{d},\boldsymbol{e}]}(\Gamma,M)\otimes_{\mathcal{O}_{\K}[[\Gamma]]}\mathbf{I}$ as a $\K$-Banach space 
through the $\K$-linear isomorphism $\oplus_{i=1}^{n}\mathcal{D}_{\boldsymbol{h}}^{[\boldsymbol{d},\boldsymbol{e}]}(\Gamma,M)\stackrel{\sim}{\rightarrow}\mathcal{D}_{\boldsymbol{h}}^{[\boldsymbol{d},\boldsymbol{e}]}(\Gamma,M)\otimes_{\mathcal{O}_{\K}[[\Gamma]]}\mathbf{I}$ defined by $(\mu_{i})_{i=1}^{n}\mapsto \sum_{i=1}^{n}\mu_{i}\alpha_{i}$. 
We denote by $v_{\mathcal{D}_{\boldsymbol{h}}^{[\boldsymbol{d},\boldsymbol{e}]},\mathbf{I}}$ the valuation on $\mathcal{D}_{\boldsymbol{h}}^{[\boldsymbol{d},\boldsymbol{e}]}(\Gamma,M)\otimes_{\mathcal{O}_{\K}[[\Gamma]]}\mathbf{I}$. That is, $v_{\mathcal{D}_{\boldsymbol{h}}^{[\boldsymbol{d},\boldsymbol{e}]},\mathbf{I}}(\mu)=\min_{1\leq i\leq n}\{v_{\boldsymbol{h}}^{[\boldsymbol{d},\boldsymbol{e}]}(\mu_{i})\}$ for each $\mu=\sum_{i=1}^{n}\mu_{i}\otimes_{\mathcal{O}_{\K}[[\Gamma]]}\alpha_{i}$ with $\mu_{i}\in \mathcal{D}_{\boldsymbol{h}}^{[\boldsymbol{d},\boldsymbol{e}]}(\Gamma,\K)$. Let $\mu=\sum_{i=1}^{m}\mu_{i}\otimes_{\mathcal{O}_{\K}[[\Gamma]]}a_{i}\in \mathcal{D}_{\boldsymbol{h}}^{[\boldsymbol{d},\boldsymbol{e}]}(\Gamma, M)\otimes_{\mathcal{O}_{\K}[[\Gamma]]}\mathbf{I}$ with $\mu_{i}\in \mathcal{D}_{\boldsymbol{h}}^{[\boldsymbol{d},\boldsymbol{e}]}(\Gamma, M)$ and $a_{i}\in \mathbf{I}$. For each $\kappa\in \mathfrak{X}_{\mathbf{I}}^{[\boldsymbol{d},\boldsymbol{e}]}$, we define a specialization $\kappa(\mu)\in M_{\K_{\kappa}}$ to be
\begin{align}\label{specalization of admissible deformation}
\kappa(\mu)=\sum_{i=1}^{m}\int_{\Gamma}\kappa\vert_{\Gamma}d\mu_{i}\kappa(a_{i}).
\end{align}
By the following proposition, an element $\mu\in \mathcal{D}_{\boldsymbol{h}}^{[\boldsymbol{d},\boldsymbol{e}]}(\Gamma,M)\otimes_{\mathcal{O}_{\K}[[\Gamma]]}\mathbf{I}$ is characterized by the specializations \eqref{specalization of admissible deformation} with sufficiently many $\kappa$.

\begin{pro}\label{admissible distribution characterization prop}
Let $\boldsymbol{d}\in \mathbb{Z}^{k}$ and $\mu\in \mathcal{D}_{\boldsymbol{h}}^{[\boldsymbol{d},\boldsymbol{d}+\lfloor \boldsymbol{h}\rfloor]}(\Gamma,M)\otimes_{\mathcal{O}_{\K}[[\Gamma]]}\mathbf{I}$. If $\mu$ satisfies $\kappa(\mu)=0$ for every $\kappa\in \mathfrak{X}_{\mathbf{I}}^{[\boldsymbol{d},\boldsymbol{d}+\lfloor \boldsymbol{h}\rfloor]}$, 
then we have $\mu=0$.
\end{pro}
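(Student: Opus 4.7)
The plan is to adapt the trace argument of Theorem \ref{main theorem 1 for deformation space} to admissible distributions, reducing the claim to a base case over $\mathcal{O}_{\K}[[\Gamma]]$. The base case I will invoke is: for $\nu\in\mathcal{D}_{\boldsymbol{h}}^{[\boldsymbol{d},\boldsymbol{d}+\lfloor\boldsymbol{h}\rfloor]}(\Gamma,M)$, if $\int_{\Gamma}\kappa\vert_{\Gamma}d\nu=0$ for every $\kappa\in\mathfrak{X}_{\mathcal{O}_{\K}[[\Gamma]]}^{[\boldsymbol{d},\boldsymbol{d}+\lfloor\boldsymbol{h}\rfloor]}$, then $\nu=0$. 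This is immediate from Theorem \ref{multi-variable results on admissible distributions}: the isomorphism $I_{\boldsymbol{h}}^{[\boldsymbol{d},\boldsymbol{d}+\lfloor\boldsymbol{h}\rfloor]}(M)\stackrel{\sim}{\to}\mathcal{D}_{\boldsymbol{h}}^{[\boldsymbol{d},\boldsymbol{d}+\lfloor\boldsymbol{h}\rfloor]}(\Gamma,M)$ together with the interpolation formula \eqref{admissible interpolation formula} identifies the specialization $\int_{\Gamma}\kappa\vert_{\Gamma}d\nu$ with $\kappa(\tilde{s}_{\boldsymbol{m}_{\kappa}})$ for a lift of the corresponding projective system, so vanishing at every $\kappa$ forces $\tilde{s}_{\boldsymbol{m}}\in(\Omega_{\boldsymbol{m}}^{[\boldsymbol{d},\boldsymbol{d}+\lfloor\boldsymbol{h}\rfloor]})M^{0}[[\Gamma]]\otimes_{\mathcal{O}_{\K}}\K$ for every $\boldsymbol{m}$ by \eqref{multivariable iwasawa modoomega speq}, i.e.\ $\nu=0$.

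For the reduction, I fix a basis $\alpha_{1},\ldots,\alpha_{n}$ of $\mathbf{I}$ over $\mathcal{O}_{\K}[[\Gamma]]$ and its dual basis $\alpha_{1}^{*},\ldots,\alpha_{n}^{*}\in L=\mathrm{Frac}(\mathbf{I})$ with respect to $\mathrm{Tr}_{L/K}:L\to K=\mathrm{Frac}(\mathcal{O}_{\K}[[\Gamma]])$, together with the discriminant $d\in\mathcal{O}_{\K}[[\Gamma]]\setminus\{0\}$, so that $d\alpha_{j}^{*}\in\mathbf{I}$ for every $j$. Writing $\mu=\sum_{i}\mu_{i}\otimes\alpha_{i}$ with $\mu_{i}\in\mathcal{D}_{\boldsymbol{h}}^{[\boldsymbol{d},\boldsymbol{d}+\lfloor\boldsymbol{h}\rfloor]}(\Gamma,M)$, the element $\mu\cdot(d\alpha_{j}^{*})$ lies in $\mathcal{D}_{\boldsymbol{h}}^{[\boldsymbol{d},\boldsymbol{d}+\lfloor\boldsymbol{h}\rfloor]}(\Gamma,M)\otimes_{\mathcal{O}_{\K}[[\Gamma]]}\mathbf{I}$, and for every $\kappa\in\mathfrak{X}_{\mathbf{I}}^{[\boldsymbol{d},\boldsymbol{d}+\lfloor\boldsymbol{h}\rfloor]}$ the multiplicativity of $\kappa$ yields $\kappa(\mu\cdot(d\alpha_{j}^{*}))=\kappa(\mu)\kappa(d\alpha_{j}^{*})=0$. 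Letting $W$ be the Galois closure of $L/K$ and $\mathbf{T}$ the integral closure of $\mathcal{O}_{\K}[[\Gamma]]$ in $W$, the orthogonality $\mathrm{Tr}_{L/K}(\alpha_{i}\alpha_{j}^{*})=\delta_{ij}$ gives the identity $d\mu_{j}=\sum_{\sigma}\sigma(\mu\cdot(d\alpha_{j}^{*}))$ in $\mathcal{D}_{\boldsymbol{h}}^{[\boldsymbol{d},\boldsymbol{d}+\lfloor\boldsymbol{h}\rfloor]}(\Gamma,M)\otimes_{\mathcal{O}_{\K}[[\Gamma]]}\mathbf{T}$, where $\sigma$ runs over the $K$-embeddings $L\hookrightarrow W$. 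For each $\kappa\in\mathfrak{X}_{\mathbf{T}}^{[\boldsymbol{d},\boldsymbol{d}+\lfloor\boldsymbol{h}\rfloor]}$, the composite $\kappa\circ\sigma$ restricts on $\mathbf{I}$ to an element of $\mathfrak{X}_{\mathbf{I}}^{[\boldsymbol{d},\boldsymbol{d}+\lfloor\boldsymbol{h}\rfloor]}$, so each summand vanishes and $\kappa(d\mu_{j})=0$; since $\mathbf{T}/\mathcal{O}_{\K}[[\Gamma]]$ is integral, the restriction $\mathfrak{X}_{\mathbf{T}}^{[\boldsymbol{d},\boldsymbol{d}+\lfloor\boldsymbol{h}\rfloor]}\to\mathfrak{X}_{\mathcal{O}_{\K}[[\Gamma]]}^{[\boldsymbol{d},\boldsymbol{d}+\lfloor\boldsymbol{h}\rfloor]}$ is surjective, giving $\kappa(d\mu_{j})=0$ for every $\kappa\in\mathfrak{X}_{\mathcal{O}_{\K}[[\Gamma]]}^{[\boldsymbol{d},\boldsymbol{d}+\lfloor\boldsymbol{h}\rfloor]}$.

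Applying the base case to $d\mu_{j}\in\mathcal{D}_{\boldsymbol{h}}^{[\boldsymbol{d},\boldsymbol{d}+\lfloor\boldsymbol{h}\rfloor]}(\Gamma,M)$ yields $d\mu_{j}=0$ for every $j$. To pass from this to $\mu_{j}=0$, I note that $\mathcal{D}_{\boldsymbol{h}}^{[\boldsymbol{d},\boldsymbol{d}+\lfloor\boldsymbol{h}\rfloor]}(\Gamma,M)$ is torsion-free as an $\mathcal{O}_{\K}[[\Gamma]]$-module: by Theorem \ref{multi-variable results on admissible distributions} combined with Theorem \ref{main thm 2 and proof} and the noncanonical isomorphism \eqref{noncanonical between Ioldysmbolhde and J}, it is isomorphic to $\HH_{\boldsymbol{h}}(M)\subset M[[X_{1},\ldots,X_{k}]]$, and the ambient formal power series module is torsion-free because the lowest-lexicographic-order nonzero coefficient of a nonzero product is itself nonzero. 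Hence $\mu_{j}=0$ for every $j$ and thus $\mu=0$. The only genuinely delicate step will be the bookkeeping of the specializations $\kappa\circ\sigma$ through the tensor products and the Galois closure, which is structurally identical to the corresponding step in the proof of Theorem \ref{main theorem 1 for deformation space}; the remainder of the argument is routine given the already-established Theorem \ref{multi-variable results on admissible distributions}.
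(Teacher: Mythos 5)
Your proof is correct, but it takes a genuinely different route from the paper's. The paper first uses the chain of isomorphisms $\mathcal{D}_{\boldsymbol{h}}^{[\boldsymbol{d},\boldsymbol{d}+\lfloor\boldsymbol{h}\rfloor]}(\Gamma,M)\cong I_{\boldsymbol{h}}^{[\boldsymbol{d},\boldsymbol{d}+\lfloor\boldsymbol{h}\rfloor]}(M)\cong J_{\boldsymbol{h}}^{[\boldsymbol{d},\boldsymbol{d}+\lfloor\boldsymbol{h}\rfloor]}(M)\cong\HH_{\boldsymbol{h}}(M)$ (Theorems \ref{multi-variable results on admissible distributions}, \ref{main thm 2 and proof} and \eqref{noncanonical between Ioldysmbolhde and J}) to transport the whole problem to $\HH_{\boldsymbol{h}}(M)\otimes_{\mathcal{O}_{\K}[[X_{1},\ldots,X_{k}]]}\mathbf{I}'$, where the specializations match, and then simply cites Theorem \ref{main theorem 1 for deformation space}, which already carries out the discriminant--trace--Galois-closure argument. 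You instead re-implement that trace argument directly in the $\mathcal{D}_{\boldsymbol{h}}$ setting, reducing to the base case over $\mathcal{O}_{\K}[[\Gamma]]$ (itself a consequence of Theorem \ref{multi-variable results on admissible distributions} plus \eqref{multivariable iwasawa modoomega speq}), and you handle the final clearing of the discriminant by verifying torsion-freeness of $\HH_{\boldsymbol{h}}(M)\subset M[[X_{1},\ldots,X_{k}]]$ by a lexicographic-leading-coefficient argument. All your steps go through: the multiplicativity of $\kappa$ with respect to the $\mathbf{I}$-module structure, the identity $\kappa(\sigma(g))=(\kappa\circ\sigma)(g)$, the surjectivity of restriction from $\mathfrak{X}_{\mathbf{T}}$ onto $\mathfrak{X}_{\mathcal{O}_{\K}[[\Gamma]]}$, and the torsion-freeness claim are each correct. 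The trade-off is that you duplicate the work of Theorem \ref{main theorem 1 for deformation space} in a parallel setting; the paper's transfer is shorter precisely because that work was already factored out, and you could have matched it by invoking the $\kappa$-compatible isomorphism to $\HH_{\boldsymbol{h}}(M)\otimes\mathbf{I}'$ directly. Still, your version has the small advantage of staying intrinsic to the distribution side throughout and of making the required torsion-freeness (which the paper's Theorem \ref{main theorem 1 for deformation space} also uses implicitly at the end) fully explicit.
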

\begin{proof}
Via the non-canonical $\mathcal{O}_{\K}$-algebra isomorphism $\mathcal{O}_{\K}[[\Gamma]]\simeq \mathcal{O}_{\K}[[X_{1},\ldots, X_{k}]]$ in \eqref{iwasaawa noncaonnical multiisom}, we can regard $\mathbf{I}$ as an $\mathcal{O}_{\K}[[X_{1},\ldots, X_{k}]]$-algebra. We denote by $\mathbf{I}^{\prime}$ the $\mathcal{O}_{\K}[[X_{1},\ldots, X_{k}]]$-algebra $\mathbf{I}$. Then, by Theorem \ref{main thm 2 and proof} and Theorem \ref{multi-variable results on admissible distributions}, we have a non-canonical $\K$-Banach isomorphism 
$$\mathcal{D}_{\boldsymbol{h}}^{[\boldsymbol{d},\boldsymbol{d}+\lfloor \boldsymbol{h}\rfloor]}(\Gamma,M)\otimes_{\mathcal{O}_{\K}[[\Gamma]]}\mathbf{I}\stackrel{\sim}{\rightarrow} \HH_{\boldsymbol{h}}(M)\otimes_{\mathcal{O}_{\K}[[X_{1},\ldots, X_{k}]]}\mathbf{I}^{\prime},\ \mu\mapsto f_{\mu}$$ 
such that $\kappa(\mu)=\kappa(f_{\mu})$ for each $\mu\in \mathcal{D}_{\boldsymbol{h}}^{[\boldsymbol{d},\boldsymbol{d}+\lfloor \boldsymbol{h}\rfloor]}(\Gamma,M)\otimes_{\mathcal{O}_{\K}[[\Gamma]]}\mathbf{I}$ and $\kappa\in \mathfrak{X}_{\mathbf{I}}^{[\boldsymbol{d},\boldsymbol{d}+\lfloor \boldsymbol{h}\rfloor]}$. Then, this proposition follows from Theorem \ref{main theorem 1 for deformation space}.
\end{proof}
{The following lemma is a generalization of Lemma \ref{multivariable litfing prop} to the setting of deformation spaces.}
\begin{lem}\label{multivariable litfing prop deformation}
Let $s^{[\boldsymbol{i}]}\in M^{0}(\mathbf{I})\otimes_{\mathcal{O}_{\K}}\K$ for each $\boldsymbol{i}\in [\boldsymbol{d},\boldsymbol{e}]$ and 
we define $\theta^{(\boldsymbol{j})} \in  M^{0}(\mathbf{I})\otimes_{\mathcal{O}_{\K}}\K$ by 
$$\theta^{(\boldsymbol{j})}=\displaystyle{\sum_{\boldsymbol{i}\in [\boldsymbol{d},\boldsymbol{j}]}}\left(\prod_{t=1}^{k}\begin{pmatrix}j_{t}-d_{t}\\i_{t}-d_{t}\end{pmatrix}\right)(-1)^{\sum_{t=1}^{k}(j_{t}-i_{t})}s^{[\boldsymbol{i}]}
$$
for each $\boldsymbol{j}\in [\boldsymbol{d},\boldsymbol{e}]$. 
Let $\boldsymbol{m}\in\mathbb{Z}_{\geq 0}^{k}$. 
If $\theta^{(\boldsymbol{j})}$ is contained in $p^{\langle\boldsymbol{m},(\boldsymbol{j}-\boldsymbol{d})\rangle_{k}}M^{0}(\mathbf{I})\subset 
 M^{0}(\mathbf{I})\otimes_{\mathcal{O}_{\K}}\K$ 
for every $\boldsymbol{j}\in [\boldsymbol{d},\boldsymbol{e}]$, there exists a unique element $s^{[\boldsymbol{d},\boldsymbol{e}]}\in \frac{M^{0}(\mathbf{I})}{(\Omega_{\boldsymbol{m}}^{[\boldsymbol{d},\boldsymbol{e}]}(\gamma_{1},\ldots,\gamma_{k}))M^{0}(\mathbf{I})}\otimes_{\mathcal{O}_{\K}}p^{-c^{[\boldsymbol{d},\boldsymbol{e}]}}\mathcal{O}_{\K}$ such that the image of $s^{[\boldsymbol{d},\boldsymbol{e}]}$ by the natural projection 
$$
\frac{M^{0}(\mathbf{I})}{(\Omega_{\boldsymbol{m}}^{[\boldsymbol{d},\boldsymbol{e}]}(\gamma_{1},\ldots,\gamma_{k}))M^{0}(\mathbf{I})}\otimes_{\mathcal{O}_{\K}}\K \longrightarrow \frac{M^{0}(\mathbf{I})}{(\Omega_{\boldsymbol{m}}^{[\boldsymbol{i}]}(\gamma_{1},\ldots,\gamma_{k}))M^{0}(\mathbf{I})}\otimes_{\mathcal{O}_{\K}}\K
$$
is equal to the class $[s^{[\boldsymbol{i}]}]_{\boldsymbol{m}} 
\in \tfrac{M^{0}(\mathbf{I})}{(\Omega_{\boldsymbol{m}}^{[\boldsymbol{i}]}(\gamma_{1},\ldots,\gamma_{k}))M^{0}(\mathbf{I})}\otimes_{\mathcal{O}_{\K}}\K$ of $s^{[\boldsymbol{i}]}\in M^{0}(\mathbf{I})\otimes_{\mathcal{O}_{\K}}\K$ for each $\boldsymbol{i}\in [\boldsymbol{d},\boldsymbol{e}]$,where $c^{[\boldsymbol{d},\boldsymbol{e}]}$ is the constant defined in \eqref{constant for the admissible}.
\end{lem}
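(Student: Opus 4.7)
The plan is to reduce Lemma \ref{multivariable litfing prop deformation} to the already established Lemma \ref{multivariable litfing prop} by decomposing every relevant element with respect to a free basis of $\mathbf{I}$ over $\mathcal{O}_{\K}[[\Gamma]]$, solving the lifting problem coordinate-by-coordinate, and then reassembling the result.

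First, I would fix a basis $\alpha_{1},\ldots,\alpha_{n}$ of $\mathbf{I}$ over $\mathcal{O}_{\K}[[\Gamma]]$. Since $\mathbf{I}$ is finite free, the natural map induces an $\mathcal{O}_{\K}[[\Gamma]]$-module isomorphism $M^{0}(\mathbf{I})\simeq \bigoplus_{l=1}^{n}M^{0}[[\Gamma]]\alpha_{l}$, and hence, after tensoring with $\K$, identifies $M^{0}(\mathbf{I})\otimes_{\mathcal{O}_{\K}}\K$ with $\bigoplus_{l=1}^{n}(M^{0}[[\Gamma]]\otimes_{\mathcal{O}_{\K}}\K)\alpha_{l}$. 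Under this identification, an element of $M^{0}(\mathbf{I})\otimes_{\mathcal{O}_{\K}}\K$ lies in $p^{N}M^{0}(\mathbf{I})$ if and only if each of its $n$ coordinates lies in $p^{N}M^{0}[[\Gamma]]$; similarly, the quotient $\frac{M^{0}(\mathbf{I})}{(\Omega_{\boldsymbol{m}}^{[\boldsymbol{d},\boldsymbol{e}]}(\gamma_{1},\ldots,\gamma_{k}))M^{0}(\mathbf{I})}\otimes_{\mathcal{O}_{\K}}\K$ decomposes coordinate-wise into $n$ copies of the analogous quotient for $M^{0}[[\Gamma]]$.

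Next, I would write each $s^{[\boldsymbol{i}]}=\sum_{l=1}^{n}s^{[\boldsymbol{i}]}_{l}\alpha_{l}$ with $s^{[\boldsymbol{i}]}_{l}\in M^{0}[[\Gamma]]\otimes_{\mathcal{O}_{\K}}\K$ and correspondingly $\theta^{(\boldsymbol{j})}=\sum_{l=1}^{n}\theta^{(\boldsymbol{j})}_{l}\alpha_{l}$, where $\theta^{(\boldsymbol{j})}_{l}$ is the same alternating binomial sum applied to the $s^{[\boldsymbol{i}]}_{l}$. The hypothesis $\theta^{(\boldsymbol{j})}\in p^{\langle\boldsymbol{m},\boldsymbol{j}-\boldsymbol{d}\rangle_{k}}M^{0}(\mathbf{I})$ is then equivalent, by the previous paragraph, to $\theta^{(\boldsymbol{j})}_{l}\in p^{\langle\boldsymbol{m},\boldsymbol{j}-\boldsymbol{d}\rangle_{k}}M^{0}[[\Gamma]]$ for every $l$ and every $\boldsymbol{j}\in[\boldsymbol{d},\boldsymbol{e}]$. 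Hence for each $l$, Lemma \ref{multivariable litfing prop} provides a unique element $s^{[\boldsymbol{d},\boldsymbol{e}]}_{l}\in \frac{M^{0}[[\Gamma]]}{(\Omega_{\boldsymbol{m}}^{[\boldsymbol{d},\boldsymbol{e}]}(\gamma_{1},\ldots,\gamma_{k}))M^{0}[[\Gamma]]}\otimes_{\mathcal{O}_{\K}}p^{-c^{[\boldsymbol{d},\boldsymbol{e}]}}\mathcal{O}_{\K}$ whose image in $\frac{M^{0}[[\Gamma]]}{(\Omega_{\boldsymbol{m}}^{[\boldsymbol{i}]}(\gamma_{1},\ldots,\gamma_{k}))M^{0}[[\Gamma]]}\otimes_{\mathcal{O}_{\K}}\K$ is the class of $s^{[\boldsymbol{i}]}_{l}$ for every $\boldsymbol{i}\in[\boldsymbol{d},\boldsymbol{e}]$.

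Finally, I would set $s^{[\boldsymbol{d},\boldsymbol{e}]}=\sum_{l=1}^{n}s^{[\boldsymbol{d},\boldsymbol{e}]}_{l}\alpha_{l}$. The coordinate-wise integrality estimates assemble to show $s^{[\boldsymbol{d},\boldsymbol{e}]}\in\frac{M^{0}(\mathbf{I})}{(\Omega_{\boldsymbol{m}}^{[\boldsymbol{d},\boldsymbol{e}]}(\gamma_{1},\ldots,\gamma_{k}))M^{0}(\mathbf{I})}\otimes_{\mathcal{O}_{\K}}p^{-c^{[\boldsymbol{d},\boldsymbol{e}]}}\mathcal{O}_{\K}$, and the compatibilities on each coordinate imply the required projection property. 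Uniqueness is immediate since any two solutions have equal coordinates by the uniqueness clause of Lemma \ref{multivariable litfing prop}. The argument is essentially formal once the freeness of $\mathbf{I}$ is exploited, and the only point requiring care is the verification that the integrality condition in the tensor product is genuinely detected coordinate by coordinate; this is where the freeness (rather than mere finite generation) of $\mathbf{I}$ over $\mathcal{O}_{\K}[[\Gamma]]$ is essential, and I regard it as the main, though mild, obstacle.
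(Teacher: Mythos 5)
Your proposal is correct and takes essentially the same route as the paper: fix a free $\mathcal{O}_{\K}[[\Gamma]]$-basis of $\mathbf{I}$, decompose each $s^{[\boldsymbol{i}]}$ and each $\theta^{(\boldsymbol{j})}$ coordinate-wise, apply Lemma \ref{multivariable litfing prop} in each coordinate, and reassemble; the paper does exactly this. The one point you flag as the "main obstacle" — that membership in $p^{N}M^{0}(\mathbf{I})$ is detected coordinate by coordinate because $\mathbf{I}$ is free — is indeed the step the paper uses silently, and your explicit mention of it is a reasonable addition rather than a divergence.
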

\begin{proof}
Let $\alpha_{1},\ldots,\alpha_{n}$ be a basis of $\mathbf{I}$ over $\mathcal{O}_{\K}[[\Gamma]]$. Put $s^{[\boldsymbol{i}]}=\sum_{v=1}^{n}s^{[\boldsymbol{i}]}_{v}\otimes_{\mathcal{O}_{\K}[[\Gamma]]}\alpha_{v}$ with $s^{[\boldsymbol{i}]}_{v}\in M^{0}[[\Gamma]]\otimes_{\mathcal{O}_{\K}}\K$ for each $\boldsymbol{i}\in [\boldsymbol{d},\boldsymbol{e}]$. Since
$$\theta^{(\boldsymbol{j})}=\sum_{v=1}^{n}\left(\displaystyle{\sum_{\boldsymbol{i}\in [\boldsymbol{d},\boldsymbol{j}]}}\left(\prod_{t=1}^{k}\begin{pmatrix}j_{t}-d_{t}\\i_{t}-d_{t}\end{pmatrix}\right)(-1)^{\sum_{t=1}^{k}(j_{t}-i_{t})}s_{v}^{[\boldsymbol{i}]}\right)\alpha_{v}$$
for every $\boldsymbol{j}\in [\boldsymbol{d},\boldsymbol{e}]$, we have
$$\displaystyle{\sum_{\boldsymbol{i}\in [\boldsymbol{d},\boldsymbol{j}]}}\left(\prod_{t=1}^{k}\begin{pmatrix}j_{t}-d_{t}\\i_{t}-d_{t}\end{pmatrix}\right)(-1)^{\sum_{t=1}^{k}(j_{t}-i_{t})}s^{[\boldsymbol{i}]}_{v}\in p^{\langle\boldsymbol{m},(\boldsymbol{j}-\boldsymbol{d})\rangle_{k}}M^{0}[[\Gamma]]$$
for every $\boldsymbol{j}\in [\boldsymbol{d},\boldsymbol{e}]$ and $1\leq v\leq n$. Then, by Lemma \ref{multivariable litfing prop}, there exists a unique element $s_{v}^{[\boldsymbol{d},\boldsymbol{e}]}\in\frac{M^{0}[[\Gamma]]}{(\Omega_{\boldsymbol{m}}^{[\boldsymbol{d},\boldsymbol{e}]}(\gamma_{1},\ldots,\gamma_{k}))M^{0}[[\Gamma]]}\otimes_{\mathcal{O}_{\K}}p^{-c^{[\boldsymbol{d},\boldsymbol{e}]}}\mathcal{O}_{\K}$ such that the image of $s_{v}^{[\boldsymbol{d},\boldsymbol{e}]}$ by the natural projection 
$$
\frac{M^{0}[[\Gamma]]}{(\Omega_{\boldsymbol{m}}^{[\boldsymbol{d},\boldsymbol{e}]}(\gamma_{1},\ldots,\gamma_{k}))M^{0}[[\Gamma]]}\otimes_{\mathcal{O}_{\K}}\K \longrightarrow \frac{M^{0}[[\Gamma]]}{(\Omega_{\boldsymbol{m}}^{[\boldsymbol{i}]}(\gamma_{1},\ldots,\gamma_{k}))M^{0}[[\Gamma]]}\otimes_{\mathcal{O}_{\K}}\K
$$
is equal to the class $[s^{[\boldsymbol{i}]}_{v}]_{\boldsymbol{m}} 
\in \tfrac{M^{0}[[\Gamma]]}{(\Omega_{\boldsymbol{m}}^{[\boldsymbol{i}]}(\gamma_{1},\ldots,\gamma_{k}))M^{0}[[\Gamma]]}\otimes_{\mathcal{O}_{\K}}\K$ of $s^{[\boldsymbol{i}]}_{v}\in M^{0}[[\Gamma]]\otimes_{\mathcal{O}_{\K}}\K$ for each $\boldsymbol{i}\in [\boldsymbol{d},\boldsymbol{e}]$. Put $s^{[\boldsymbol{d},\boldsymbol{e}]}=\sum_{v=1}^{n}s_{v}^{[\boldsymbol{d},\boldsymbol{e}]}\otimes_{\mathcal{O}_{\K}[[\Gamma]]}\alpha_{v}\in \frac{M^{0}(\mathbf{I})}{(\Omega_{\boldsymbol{m}}^{[\boldsymbol{d},\boldsymbol{e}]}(\gamma_{1},\ldots,\gamma_{k}))M^{0}(\mathbf{I})}\otimes_{\mathcal{O}_{\K}}p^{-c^{[\boldsymbol{d},\boldsymbol{e}]}}\mathcal{O}_{\K}$. By the definition of $s_{v}^{[\boldsymbol{d},\boldsymbol{e}]}$, we see that $s^{[\boldsymbol{d},\boldsymbol{e}]}$ is the unique element such that the image of $s^{[\boldsymbol{d},\boldsymbol{e}]}$ by the natural projection $
\frac{M^{0}(\mathbf{I})}{(\Omega_{\boldsymbol{m}}^{[\boldsymbol{d},\boldsymbol{e}]}(\gamma_{1},\ldots,\gamma_{k}))M^{0}(\mathbf{I})}\otimes_{\mathcal{O}_{\K}}\K \longrightarrow \frac{M^{0}(\mathbf{I})}{(\Omega_{\boldsymbol{m}}^{[\boldsymbol{i}]}(\gamma_{1},\ldots,\gamma_{k}))M^{0}(\mathbf{I})}\otimes_{\mathcal{O}_{\K}}\K
$ is equal to the class $[s^{[\boldsymbol{i}]}]_{\boldsymbol{m}} 
\in \tfrac{M^{0}(\mathbf{I})}{(\Omega_{\boldsymbol{m}}^{[\boldsymbol{i}]}(\gamma_{1},\ldots,\gamma_{k}))M^{0}(\mathbf{I})}\otimes_{\mathcal{O}_{\K}}\K$ for each $\boldsymbol{i}\in [\boldsymbol{d},\boldsymbol{e}]$.
\end{proof}

{The following proposition is a generalization of Proposition \ref{multivariable iwasawa I(ii) sufficient} to the setting of deformation spaces.}
\begin{pro}\label{deformation Ih[d,e] lift from Ihi,i}
Let $s^{[\boldsymbol{i}]}=(s^{[\boldsymbol{i}]}_{\boldsymbol{m}})_{\boldsymbol{m}\in \mathbb{Z}_{\geq 0}^{k}}\in I_{\boldsymbol{h}}^{[\boldsymbol{i}]}(M)\otimes_{\mathcal{O}_{\K}[[\Gamma]]}\mathbf{I}$ and $\tilde{s}_{\boldsymbol{m}}^{[\boldsymbol{i}]}\in M^{0}(\mathbf{I})\otimes_{\mathcal{O}_{\K}}\K$ a lift of $s_{\boldsymbol{m}}^{[\boldsymbol{i}]}$ for each $\boldsymbol{m}\in \mathbb{Z}_{\geq 0}^{k}$ and for each $\boldsymbol{i}\in [\boldsymbol{d},\boldsymbol{e}]$. If there exists a non-negative integer $n$ which satisfies
$$p^{\langle \boldsymbol{m},\boldsymbol{h}-(\boldsymbol{j}-\boldsymbol{d})\rangle_{k}}\displaystyle{\sum_{\boldsymbol{i}\in [\boldsymbol{d},\boldsymbol{j}]}}\left(\prod_{t=1}^{k}\begin{pmatrix}j_{t}-d_{t}\\i_{t}-d_{t}\end{pmatrix}\right)(-1)^{\sum_{t=1}^{k}(j_{t}-i_{t})}\tilde{s}_{\boldsymbol{m}}^{[\boldsymbol{i}]}\in M^{0}(\mathbf{I})\otimes_{\mathcal{O}_{\K}}p^{-n}\mathcal{O}_{\K}$$
for each $\boldsymbol{m}\in \mathbb{Z}_{\geq 0}^{k}$ and for each $\boldsymbol{j}\in [\boldsymbol{d},\boldsymbol{e}]$, we have a unique element $s^{[\boldsymbol{d},\boldsymbol{e}]}\in I_{\boldsymbol{h}}^{[\boldsymbol{d},\boldsymbol{e}]}(M)^{0}\linebreak\otimes_{\mathcal{O}_{\K}[[\Gamma]]}\mathbf{I}\otimes_{\mathcal{O}_{\K}}p^{-c^{[\boldsymbol{d},\boldsymbol{e}]}-n}\mathcal{O}_{\K}$ such that the image of $s^{[\boldsymbol{d},\boldsymbol{e}]}$ by the natural projection $I_{\boldsymbol{h}}^{[\boldsymbol{d},\boldsymbol{e}]}(M)\linebreak\otimes_{\mathcal{O}_{\K}[[\Gamma]]}\mathbf{I}\rightarrow I_{\boldsymbol{h}}^{[\boldsymbol{i}]}(M)\otimes_{\mathcal{O}_{\K}[[\Gamma]]}\mathbf{I}$ is $s^{[\boldsymbol{i}]}$ for every $\boldsymbol{i}\in [\boldsymbol{d},\boldsymbol{e}]$, where $c^{[\boldsymbol{d},\boldsymbol{e}]}$ is the constant defined in \eqref{constant for the admissible}.
\end{pro}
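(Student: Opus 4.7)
The plan is to mimic the proof of Proposition \ref{multivariable iwasawa I(ii) sufficient}, replacing Lemma \ref{multivariable litfing prop} with its deformation-space analogue Lemma \ref{multivariable litfing prop deformation} which has already been established. First, I would fix $\boldsymbol{m} \in \mathbb{Z}_{\geq 0}^{k}$ and consider the elements $\tilde{s}_{\boldsymbol{m}}^{[\boldsymbol{i}]} \in M^{0}(\mathbf{I}) \otimes_{\mathcal{O}_{\K}} p^{-\langle\boldsymbol{h},\boldsymbol{m}\rangle_{k} - n}\mathcal{O}_{\K}$ for $\boldsymbol{i} \in [\boldsymbol{d},\boldsymbol{e}]$. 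The hypothesis of the proposition is exactly the assumption needed for Lemma \ref{multivariable litfing prop deformation} (after rescaling by $p^{\langle\boldsymbol{h},\boldsymbol{m}\rangle_{k}}$ so that the $\theta^{(\boldsymbol{j})}$ land in the correct integral piece), so the lemma produces a unique element
$$s_{\boldsymbol{m}}^{[\boldsymbol{d},\boldsymbol{e}]} \in \frac{M^{0}(\mathbf{I})}{(\Omega_{\boldsymbol{m}}^{[\boldsymbol{d},\boldsymbol{e}]}(\gamma_{1},\ldots,\gamma_{k}))M^{0}(\mathbf{I})} \otimes_{\mathcal{O}_{\K}} p^{-\langle\boldsymbol{h},\boldsymbol{m}\rangle_{k} - c^{[\boldsymbol{d},\boldsymbol{e}]} - n}\mathcal{O}_{\K}$$
whose image under the natural projection to $\frac{M^{0}(\mathbf{I})}{(\Omega_{\boldsymbol{m}}^{[\boldsymbol{i}]}(\gamma_{1},\ldots,\gamma_{k}))M^{0}(\mathbf{I})} \otimes_{\mathcal{O}_{\K}} \K$ equals $s_{\boldsymbol{m}}^{[\boldsymbol{i}]}$ for every $\boldsymbol{i} \in [\boldsymbol{d},\boldsymbol{e}]$.

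Next I would check that the family $(s_{\boldsymbol{m}}^{[\boldsymbol{d},\boldsymbol{e}]})_{\boldsymbol{m} \in \mathbb{Z}_{\geq 0}^{k}}$ is compatible under the transition maps $\frac{M^{0}(\mathbf{I})}{(\Omega_{\boldsymbol{m}+\boldsymbol{e}_{j}}^{[\boldsymbol{d},\boldsymbol{e}]})M^{0}(\mathbf{I})} \otimes_{\mathcal{O}_{\K}} \K \to \frac{M^{0}(\mathbf{I})}{(\Omega_{\boldsymbol{m}}^{[\boldsymbol{d},\boldsymbol{e}]})M^{0}(\mathbf{I})} \otimes_{\mathcal{O}_{\K}} \K$, and so defines an element of the projective limit. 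This is where the uniqueness part of Lemma \ref{multivariable litfing prop deformation} does the work: both the image of $s_{\boldsymbol{m}+\boldsymbol{e}_{j}}^{[\boldsymbol{d},\boldsymbol{e}]}$ and the originally constructed $s_{\boldsymbol{m}}^{[\boldsymbol{d},\boldsymbol{e}]}$ project to the same $s_{\boldsymbol{m}}^{[\boldsymbol{i}]}$ for every $\boldsymbol{i} \in [\boldsymbol{d},\boldsymbol{e}]$ (using that the given $s^{[\boldsymbol{i}]}$ belong to $I_{\boldsymbol{h}}^{[\boldsymbol{i}]}(M) \otimes_{\mathcal{O}_{\K}[[\Gamma]]} \mathbf{I}$, hence are themselves compatible), so uniqueness forces them to coincide.

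The growth condition $(p^{\langle\boldsymbol{h},\boldsymbol{m}\rangle_{k}} s_{\boldsymbol{m}}^{[\boldsymbol{d},\boldsymbol{e}]})_{\boldsymbol{m}} \in \left(\prod_{\boldsymbol{m}}\frac{M^{0}(\mathbf{I})}{(\Omega_{\boldsymbol{m}}^{[\boldsymbol{d},\boldsymbol{e}]})M^{0}(\mathbf{I})}\right) \otimes_{\mathcal{O}_{\K}} p^{-c^{[\boldsymbol{d},\boldsymbol{e}]} - n}\mathcal{O}_{\K}$ is immediate from the denominator $p^{-\langle\boldsymbol{h},\boldsymbol{m}\rangle_{k} - c^{[\boldsymbol{d},\boldsymbol{e}]} - n}\mathcal{O}_{\K}$ given by the lemma, so $s^{[\boldsymbol{d},\boldsymbol{e}]} = (s_{\boldsymbol{m}}^{[\boldsymbol{d},\boldsymbol{e}]})_{\boldsymbol{m}}$ lies in $I_{\boldsymbol{h}}^{[\boldsymbol{d},\boldsymbol{e}]}(M)^{0} \otimes_{\mathcal{O}_{\K}[[\Gamma]]} \mathbf{I} \otimes_{\mathcal{O}_{\K}} p^{-c^{[\boldsymbol{d},\boldsymbol{e}]} - n}\mathcal{O}_{\K}$. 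Finally, uniqueness of $s^{[\boldsymbol{d},\boldsymbol{e}]}$ globally follows from Proposition \ref{admissible distribution characterization prop} (or directly from Theorem \ref{main theorem 1 for deformation space}) applied to the difference of two candidates, which must vanish at every arithmetic specialization in $\mathfrak{X}_{\mathbf{I}}^{[\boldsymbol{d},\boldsymbol{e}]}$.

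The routine part is the application of the lemma; the only mildly delicate point is the $\boldsymbol{m}$-compatibility of the constructed $s_{\boldsymbol{m}}^{[\boldsymbol{d},\boldsymbol{e}]}$, but this is handled cleanly by uniqueness, exactly as in the Iwasawa-algebra case of Proposition \ref{multivariable iwasawa I(ii) sufficient}. I expect the proof to be essentially a two- or three-line argument referring to Lemma \ref{multivariable litfing prop deformation}, with no substantive new obstacle beyond what was already resolved in the module-theoretic setting.
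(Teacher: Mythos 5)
Your proposal follows exactly the same route as the paper: fix $\boldsymbol{m}$, apply Lemma \ref{multivariable litfing prop deformation} to produce a unique $s_{\boldsymbol{m}}^{[\boldsymbol{d},\boldsymbol{e}]}$ lying in the correct integral piece, then deduce compatibility in $\boldsymbol{m}$ from the per-level uniqueness of the lemma, and observe that the growth condition is built into the denominator. This is precisely what the paper does (it compresses the compatibility step into the single phrase ``this construction is compatible with the projective systems'').

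One small inaccuracy: in the final sentence you appeal to Proposition \ref{admissible distribution characterization prop} or Theorem \ref{main theorem 1 for deformation space} for global uniqueness, but both of those are stated for the interval $[\boldsymbol{d},\boldsymbol{d}+\lfloor\boldsymbol{h}\rfloor]$ and require the hypothesis $\boldsymbol{e}-\boldsymbol{d}\geq\lfloor\boldsymbol{h}\rfloor$, which is explicitly \emph{not} assumed in the present proposition (the paper flags this right after the statement). The uniqueness of $s^{[\boldsymbol{d},\boldsymbol{e}]}$ does not need those results: it already follows level-by-level from the uniqueness clause of Lemma \ref{multivariable litfing prop deformation}, which is the argument you yourself invoke for $\boldsymbol{m}$-compatibility. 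So that last appeal should just be deleted; the rest of the argument is correct and matches the paper.
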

\begin{proof}
For each $\boldsymbol{m}\in \mathbb{Z}_{\geq 0}^{k}$, there exists a unique elemet $s_{\boldsymbol{m}}^{[\boldsymbol{d},\boldsymbol{e}]}\in \frac{M^{0}(\mathbf{I})}{(\Omega_{\boldsymbol{m}}^{[\boldsymbol{d},\boldsymbol{e}]}(\gamma_{1},\ldots,\gamma_{k}))M^{0}(\mathbf{I})}\otimes_{\mathcal{O}_{\K}}p^{-\langle \boldsymbol{h},\boldsymbol{m}\rangle_{k}-c^{[\boldsymbol{d},\boldsymbol{e}]}-n}\mathcal{O}_{\K}$ such that the image of $s_{\boldsymbol{m}}^{[\boldsymbol{d},\boldsymbol{e}]}$ by the natural projection \linebreak$\frac{M^{0}(\mathbf{I})}{(\Omega_{\boldsymbol{m}}^{[\boldsymbol{d},\boldsymbol{e}]}(\gamma_{1},\ldots,\gamma_{k}))M^{0}(\mathbf{I})}\otimes_{\mathcal{O}_{\K}}\K\rightarrow \frac{M^{0}(\mathbf{I})}{(\Omega_{\boldsymbol{m}}^{[\boldsymbol{i}]}(\gamma_{1},\ldots,\gamma_{k}))M^{0}(\mathbf{I})}\otimes_{\mathcal{O}_{\K}}\K$ is $s_{\boldsymbol{m}}^{[\boldsymbol{i}]}$ for every $\boldsymbol{i}\in [\boldsymbol{d},\boldsymbol{e}]$ by Lemma \ref{multivariable litfing prop deformation}. 
Since this construction is compatible with the projective systems of $s_{\boldsymbol{m}}^{[\boldsymbol{d},\boldsymbol{e}]}$ and $s_{\boldsymbol{m}}^{[\boldsymbol{i}]}$ with respect to $\boldsymbol{m}$, $s^{[\boldsymbol{d},\boldsymbol{e}]}=(s_{\boldsymbol{m}}^{[\boldsymbol{d},\boldsymbol{e}]})_{\boldsymbol{m}\in \mathbb{Z}_{\geq 0}^{k}}\in I_{\boldsymbol{h}}^{[\boldsymbol{d},\boldsymbol{e}]}(M)^{0}\otimes_{\mathcal{O}_{\K}}p^{-c^{[\boldsymbol{d},\boldsymbol{e}]}-n}\mathcal{O}_{\K}$ such that the image of $s^{[\boldsymbol{d},\boldsymbol{e}]}$ by the natural projection $I_{\boldsymbol{h}}^{[\boldsymbol{d},\boldsymbol{e}]}(M)\rightarrow I_{\boldsymbol{h}}^{[\boldsymbol{i}]}(M)$ is $s^{[\boldsymbol{i}]}$ for every $\boldsymbol{i}\in [\boldsymbol{d},\boldsymbol{e}]$.
\end{proof}
We remark that we do not require the condition $\boldsymbol{e}-\boldsymbol{d}\geq\lfloor \boldsymbol{h}\rfloor$ in Lemma \ref{multivariable litfing prop deformation} and Proposition \ref{deformation Ih[d,e] lift from Ihi,i}.
{The following theorem is a generalization of Theorem \ref{multi-variable results on admissible distributions} to the setting of  deformation spaces.}
\begin{thm}\label{multi-variable results on admissible distributions deformation ver}
Assume that $\boldsymbol{e}-\boldsymbol{d}\geq \lfloor \boldsymbol{h}\rfloor$. We have a unique $\mathbf{I}\otimes_{\mathcal{O}_{\K}}\K$-module isomorphism 
\begin{equation}\label{equation:multi-variable results on admissible distributions deformation ver}
\Psi:I_{\boldsymbol{h}}^{[\boldsymbol{d},\boldsymbol{e}]}(M)\otimes_{\mathcal{O}_{\K}[[\Gamma]]}\mathbf{I} \stackrel{\sim}{\rightarrow} \mathcal{D}^{[\boldsymbol{d},\boldsymbol{e}]}_{\boldsymbol{h}} (\Gamma, M)\otimes_{\mathcal{O}_{\K}[[\Gamma]]}\mathbf{I}
\end{equation}
such that the image $\mu_{s^{[\boldsymbol{d},\boldsymbol{e}]}}\in \mathcal{D}^{[\boldsymbol{d},\boldsymbol{e}]}_{\boldsymbol{h}} (\Gamma, M)\otimes_{\mathcal{O}_{\K}[[\Gamma]]}\mathbf{I}$ of each element $s^{[\boldsymbol{d},\boldsymbol{e}]}=(s_{\boldsymbol{m}}^{[\boldsymbol{d},\boldsymbol{e}]})_{\boldsymbol{m}\in \mathbb{Z}_{\geq 0}^{k}} \in I_{\boldsymbol{h}}^{[\boldsymbol{d},\boldsymbol{e}]}(M)\otimes_{\mathcal{O}_{\K}[[\Gamma]]}\mathbf{I}$ is characterized by the interpolation property  
\begin{align}\label{admissible interpolation formula deformation}
\kappa(\tilde{s}_{\boldsymbol{m}_{\kappa}}^{[\boldsymbol{d},\boldsymbol{e}]})=\kappa(\mu_{s^{[\boldsymbol{d},\boldsymbol{e}]}}) \end{align}
 for each $\kappa\in \mathfrak{X}_{\mathcal{O}_{\K}[[\Gamma]]}^{[\boldsymbol{d},\boldsymbol{e}]}$, where $\tilde{s}_{\boldsymbol{m}_{\kappa}}^{[\boldsymbol{d},\boldsymbol{e}]}\in M^{0}(\mathbf{I})\otimes_{\mathcal{O}_{\K}}\K$ is a lift of $s_{\boldsymbol{m}_{\kappa}}^{[\boldsymbol{d},\boldsymbol{e}]}$. In addition, via the isomorphism \eqref{equation:multi-variable results on admissible distributions deformation ver}, we have 
\begin{multline}\label{admissible interpolation formula deformation sub int}
\{\mu\in \mathcal{D}^{[\boldsymbol{d},\boldsymbol{e}]}_{\boldsymbol{h}} (\Gamma ,M)\otimes_{\mathcal{O}_{\K}[[\Gamma]]}\mathbf{I}\vert v_{\mathcal{D}_{\boldsymbol{h}}^{[\boldsymbol{d},\boldsymbol{e}]},\mathbf{I}}(\mu)\geq c^{[\boldsymbol{d},\boldsymbol{e}]}\}\subset I_{\boldsymbol{h}}^{[\boldsymbol{d},\boldsymbol{e}]}(M)^{0}\otimes_{\mathcal{O}_{\K}[[\Gamma]]}\mathbf{I}\\
\subset \{\mu\in \mathcal{D}^{[\boldsymbol{d},\boldsymbol{e}]}_{\boldsymbol{h}} (\Gamma ,M)\otimes_{\mathcal{O}_{\K}[[\Gamma]]}\mathbf{I}\vert v_{\mathcal{D}_{\boldsymbol{h}}^{[\boldsymbol{d},\boldsymbol{e}]},\mathbf{I}}(\mu)\geq 0\},
\end{multline}
where $c^{[\boldsymbol{d},\boldsymbol{e}]}=\sum_{i=1}^{k}c^{[d_{i},e_{i}]}$ is the constant defined in \eqref{constant for the admissible}.
\end{thm}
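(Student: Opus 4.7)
The plan is to deduce Theorem \ref{multi-variable results on admissible distributions deformation ver} from its ``base'' analogue Theorem \ref{multi-variable results on admissible distributions} by tensoring with $\mathbf{I}$ over $\mathcal{O}_{\K}[[\Gamma]]$, in the same spirit as the passage from Theorem \ref{main thm 2 and proof} to Theorem \ref{deformtion Jhtheorem multivariable}. Concretely, Theorem \ref{multi-variable results on admissible distributions} supplies an $\mathcal{O}_{\K}[[\Gamma]]\otimes_{\mathcal{O}_{\K}}\K$-module isomorphism $I_{\boldsymbol{h}}^{[\boldsymbol{d},\boldsymbol{e}]}(M)\stackrel{\sim}{\to}\mathcal{D}^{[\boldsymbol{d},\boldsymbol{e}]}_{\boldsymbol{h}}(\Gamma,M)$, and since $\mathbf{I}$ is finite free over $\mathcal{O}_{\K}[[\Gamma]]$, tensoring this isomorphism with $\mathbf{I}$ produces an $\mathbf{I}\otimes_{\mathcal{O}_{\K}}\K$-module isomorphism $\Psi$ of the two sides of \eqref{equation:multi-variable results on admissible distributions deformation ver}. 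Fix a basis $\alpha_{1},\ldots,\alpha_{n}$ of $\mathbf{I}$ over $\mathcal{O}_{\K}[[\Gamma]]$ and write $s^{[\boldsymbol{d},\boldsymbol{e}]}=\sum_{v=1}^{n}s^{[\boldsymbol{d},\boldsymbol{e}]}_{v}\otimes \alpha_{v}$; then $\Psi(s^{[\boldsymbol{d},\boldsymbol{e}]})=\sum_{v=1}^{n}\mu_{s^{[\boldsymbol{d},\boldsymbol{e}]}_{v}}\otimes \alpha_{v}$, where each $\mu_{s^{[\boldsymbol{d},\boldsymbol{e}]}_{v}}$ is the admissible distribution attached to $s^{[\boldsymbol{d},\boldsymbol{e}]}_{v}$ by Theorem \ref{multi-variable results on admissible distributions}.

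Next I would verify the interpolation property \eqref{admissible interpolation formula deformation}. Given $\kappa\in\mathfrak{X}_{\mathbf{I}}^{[\boldsymbol{d},\boldsymbol{e}]}$, expand the lift $\tilde{s}^{[\boldsymbol{d},\boldsymbol{e}]}_{\boldsymbol{m}_{\kappa}}$ in the same basis $\{\alpha_v\}$ to reduce to lifts of each $s^{[\boldsymbol{d},\boldsymbol{e}]}_{v}$; apply the interpolation property for each $\mu_{s^{[\boldsymbol{d},\boldsymbol{e}]}_{v}}$ given by Theorem \ref{multi-variable results on admissible distributions} with respect to the restriction $\kappa|_{\mathcal{O}_{\K}[[\Gamma]]}$, then multiply by $\kappa(\alpha_v)$ and sum, matching the definition \eqref{specalization of admissible deformation} of $\kappa(\mu_{s^{[\boldsymbol{d},\boldsymbol{e}]}})$. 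The uniqueness of such $\Psi$ then follows from Proposition \ref{admissible distribution characterization prop}: any two $\mathbf{I}\otimes_{\mathcal{O}_{\K}}\K$-module maps satisfying \eqref{admissible interpolation formula deformation} differ by a map whose image has all specializations zero, hence is zero.

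For the integral inclusions \eqref{admissible interpolation formula deformation sub int}, I would work again through the basis $\{\alpha_v\}$: by definition of $v_{\mathcal{D}_{\boldsymbol{h}}^{[\boldsymbol{d},\boldsymbol{e}]},\mathbf{I}}$ an element $\mu=\sum_{v}\mu_{v}\otimes \alpha_{v}\in \mathcal{D}^{[\boldsymbol{d},\boldsymbol{e}]}_{\boldsymbol{h}}(\Gamma,M)\otimes_{\mathcal{O}_{\K}[[\Gamma]]}\mathbf{I}$ satisfies $v_{\mathcal{D}_{\boldsymbol{h}}^{[\boldsymbol{d},\boldsymbol{e}]},\mathbf{I}}(\mu)\geq c^{[\boldsymbol{d},\boldsymbol{e}]}$ iff each $\mu_{v}$ satisfies $v_{\boldsymbol{h}}^{[\boldsymbol{d},\boldsymbol{e}]}(\mu_{v})\geq c^{[\boldsymbol{d},\boldsymbol{e}]}$, so by the inclusions of Theorem \ref{multi-variable results on admissible distributions} each $\mu_{v}$ corresponds to an element of $I_{\boldsymbol{h}}^{[\boldsymbol{d},\boldsymbol{e}]}(M)^{0}$, and the collected tensor lies in $I_{\boldsymbol{h}}^{[\boldsymbol{d},\boldsymbol{e}]}(M)^{0}\otimes_{\mathcal{O}_{\K}[[\Gamma]]}\mathbf{I}$. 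The opposite inclusion is proved in the same manner, componentwise.

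The only delicate point I anticipate is checking that the interpolation property written in terms of the ring $\mathbf{I}$ is genuinely the tensor-lift of the $\mathcal{O}_{\K}[[\Gamma]]$-version: one must be careful that ``lift of $s_{\boldsymbol{m}_{\kappa}}^{[\boldsymbol{d},\boldsymbol{e}]}$'' is well-defined modulo $(\Omega^{[\boldsymbol{d},\boldsymbol{e}]}_{\boldsymbol{m}_{\kappa}}(\gamma_{1},\ldots,\gamma_{k}))M^{0}(\mathbf{I})$ after taking $\kappa$, which follows from \eqref{multivariable iwasawa modoomega speq} applied componentwise to the chosen basis decomposition and the fact that $\kappa$ kills $(\Omega^{[\boldsymbol{d},\boldsymbol{e}]}_{\boldsymbol{m}_{\kappa}}(\gamma_{1},\ldots,\gamma_{k}))$ because $\boldsymbol{w}_{\kappa}\in [\boldsymbol{d},\boldsymbol{e}]$ and $\boldsymbol{m}_{\kappa}\leq \boldsymbol{m}_{\kappa}$; once this is confirmed, the remainder is a routine bookkeeping chase through the chosen basis.
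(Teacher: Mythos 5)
Your proposal matches the paper's own proof in structure and substance: tensor the base isomorphism of Theorem \ref{multi-variable results on admissible distributions} with the finite free $\mathcal{O}_{\K}[[\Gamma]]$-module $\mathbf{I}$, check the interpolation property componentwise through a basis, invoke Proposition \ref{admissible distribution characterization prop} for uniqueness, and read off the integral inclusions from the base case via the componentwise definition of $v_{\mathcal{D}_{\boldsymbol{h}}^{[\boldsymbol{d},\boldsymbol{e}]},\mathbf{I}}$. The paper states this more tersely, but the argument is the same.
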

\begin{proof}
Let $s^{[\boldsymbol{d},\boldsymbol{e}]}\in I_{\boldsymbol{h}}^{[\boldsymbol{d},\boldsymbol{e}]}(M)\otimes_{\mathcal{O}_{\K}[[\Gamma]]}\mathbf{I}$. We prove that there exists a unique element $\mu_{s^{[\boldsymbol{d},\boldsymbol{e}]}}\in \mathcal{D}^{[\boldsymbol{d},\boldsymbol{e}]}_{\boldsymbol{h}} (\Gamma, M)\otimes_{\mathcal{O}_{\K}[[\Gamma]]}\mathbf{I}$ which satisfies \eqref{admissible interpolation formula deformation}. The uniqueness of $\mu_{s^{[\boldsymbol{d},\boldsymbol{e}]}}$ follows from Proposition \ref{admissible distribution characterization prop}. Let $\Psi:I_{\boldsymbol{h}}^{[\boldsymbol{d},\boldsymbol{e}]}(M)\otimes_{\mathcal{O}_{\K}[[\Gamma]]}\mathbf{I} \stackrel{\sim}{\rightarrow} \mathcal{D}^{[\boldsymbol{d},\boldsymbol{e}]}_{\boldsymbol{h}} (\Gamma, M)\otimes_{\mathcal{O}_{\K}[[\Gamma]]}\mathbf{I}$ be the isomorphism induced by the isomorphism $I_{\boldsymbol{h}}^{[\boldsymbol{d},\boldsymbol{e}]}(M)\stackrel{\sim}{\rightarrow} \mathcal{D}^{[\boldsymbol{d},\boldsymbol{e}]}_{\boldsymbol{h}} (\Gamma, M)$ defined in Theorem \ref{multi-variable results on admissible distributions}. By the definition of $\Psi$, we see that $\Psi(s^{[\boldsymbol{d},\boldsymbol{e}]})$ satisfies \eqref{admissible interpolation formula deformation}. Therefore, $\Psi$ is the unique isomorphism which satisfies \eqref{admissible interpolation formula deformation}.  By Theorem \ref{multi-variable results on admissible distributions}, we have \eqref{admissible interpolation formula deformation sub int}
\end{proof}
The family $\left(\mathcal{D}_{\boldsymbol{h}}^{[\boldsymbol{a},\boldsymbol{b}]}(\Gamma,M)\right)_{\substack{\boldsymbol{a},\boldsymbol{b}\in \mathbb{Z}^{k}\\ \boldsymbol{b}\geq \boldsymbol{a}}}$ becomes a projecitve system by the natural restriction map $\mathcal{D}_{\boldsymbol{h}}^{[\boldsymbol{a}^{(2)},\boldsymbol{b}^{(2)}]}(\Gamma,M)\rightarrow \mathcal{D}_{\boldsymbol{h}}^{[\boldsymbol{a}^{(1)},\boldsymbol{b}^{(1)}]}(\Gamma,M)$, $\mu\mapsto \mu\vert_{C^{[\boldsymbol{a}^{(1)},\boldsymbol{b}^{(1)}]}(\Gamma,\mathcal{O}_{\K})}$ for every $\boldsymbol{a}^{(i)},\boldsymbol{b}^{(i)}\in \mathbb{Z}^{k}$ such that $\boldsymbol{b}^{(i)}\geq \boldsymbol{a}^{(i)}$ and $[\boldsymbol{a}^{(1)},\boldsymbol{b}^{(1)}]\subset [\boldsymbol{a}^{(2)},\boldsymbol{b}^{(2)}]$ with $i=1,2$. Then, we can define a projecitve limit $\mathcal{D}_{\boldsymbol{h}}(\Gamma,M)=\varprojlim_{\substack{\boldsymbol{a},\boldsymbol{b}\in \mathbb{Z}^{k}\\ \boldsymbol{b}\geq \boldsymbol{a}}}\mathcal{D}_{\boldsymbol{h}}^{[\boldsymbol{a},\boldsymbol{b}]}(\Gamma,M)$. Since $\mathbf{I}$ is a finite free $\mathcal{O}_{\K}[[\Gamma]]$-module, we have a natural isomorphism
\begin{equation}\label{not-bounded admissible distributions}
\mathcal{D}_{\boldsymbol{h}}(\Gamma,M)\otimes_{\mathcal{O}_{\K}[[\Gamma]]}\mathbf{I}\simeq \varprojlim_{\substack{\boldsymbol{a},\boldsymbol{b}\in \mathbb{Z}^{k}\\ \boldsymbol{b}\geq \boldsymbol{a}}}\left(\mathcal{D}_{\boldsymbol{h}}^{[\boldsymbol{a},\boldsymbol{b}]}(\Gamma,M)\otimes_{\mathcal{O}_{\K}[[\Gamma]]}\mathbf{I}\right).
\end{equation}
We denote by $p^{[\boldsymbol{a},\boldsymbol{b}]}: \mathcal{D}_{\boldsymbol{h}}(\Gamma,M)\otimes_{\mathcal{O}_{\K}[[\Gamma]]}\mathbf{I}\rightarrow \mathcal{D}_{\boldsymbol{h}}^{[\boldsymbol{a},\boldsymbol{b}]}(\Gamma,M)\otimes_{\mathcal{O}_{\K}[[\Gamma]]}\mathbf{I}$ the projection for each $\boldsymbol{a},\boldsymbol{b}\in \mathbb{Z}^{k}$ such that $\boldsymbol{b}\geq \boldsymbol{a}$. If $\boldsymbol{e}-\boldsymbol{d}\geq \lfloor \boldsymbol{h}\rfloor$, by Proposition \ref{admissible proj is isom not hisom}, the restriction map $\mathcal{D}_{\boldsymbol{h}}^{[\boldsymbol{a},\boldsymbol{b}]}(\Gamma,M)\otimes_{\mathcal{O}_{\K}[[\Gamma]]}\mathbf{I}\rightarrow \mathcal{D}_{\boldsymbol{h}}^{[\boldsymbol{d},\boldsymbol{e}]}(\Gamma,M)\otimes_{\mathcal{O}_{\K}[[\Gamma]]}\mathbf{I}$ is an isomorphism for every $\boldsymbol{a},\boldsymbol{b}\in \mathbb{Z}^{k}$ such that $[\boldsymbol{d},\boldsymbol{e}]\subset [\boldsymbol{a},\boldsymbol{b}]$. Then, if $\boldsymbol{e}-\boldsymbol{d}\geq \lfloor \boldsymbol{h}\rfloor$, we see that $p^{[\boldsymbol{d},\boldsymbol{e}]}$ is an isomorphism.

Let $\mu\in \mathcal{D}_{\boldsymbol{h}}(\Gamma,M)\otimes_{\mathcal{O}_{\K}[[\Gamma]]}\mathbf{I}$. For each $\kappa\in \mathfrak{X}_{\mathbf{I}}$ we can define a specialization of $\mu$ by $\kappa$ to be
\begin{equation}\label{specialization of notbounded Dh}
\kappa(\mu)=\kappa (p^{[\boldsymbol{w}_{\kappa},\boldsymbol{w}_{\kappa}]}(\mu))\in M_{\K_{\kappa}}.
\end{equation}

\section{Applications} \label{sc:application}
In this section, we construct a two-variable $p$-adic Rankin Selberg $L$-series (see Theorem \ref{two variable rankin selberg l series of hida family}) by applying 
the theory developed in this paper. 
For each Dirichlet character $\psi$ modulo $N\in \mathbb{Z}_{\geq 1}$, we denote by $\psi_{0}$ and $c_{\psi}$ the primitive Dirichlet character associated to $\psi$ and the conductor of $\psi$. For a subring $R$ of $\mathbb{C}$, we denote by $M_{2}(R)$ the set of square matrices of order $2$ with coefficients in $R$. We assume that $p\geq 5$ and $\K$ is a finite extension of $\mathbb{Q}_{p}$.
\subsection{Review of modular forms}
In this subsection, we introduce nearly holomorphic modular forms, Rankin-selberg $L$-series and Hida families. Let $N$ be a positive integer and $k$ a non-negative integer. We define a congruence subgroup $\Gamma_{0}(N)$ of $SL_{2}(\mathbb{Z})$ to be
\begin{equation}
\Gamma_{0}(N)=\left\{ \left. \begin{pmatrix}a&b\\ c&d\end{pmatrix}\in SL_{2}(\mathbb{Z}) \ \right\vert \ c\in N\mathbb{Z}\right\}.
\end{equation}
Let $\mathfrak{H}=\{z\in\mathbb{C}\vert y>0\}$ be the upper half plane. We define an action of $GL_{2}^{+}(\mathbb{R})=\{\alpha\in GL_{2}(\mathbb{R})\vert \det \alpha>0\}$ on the space of functions $f:\mathfrak{H}\rightarrow \mathbb{C}$ to be
\begin{equation}
(f\vert_{k}\gamma)(z)=(\det \alpha)^{k\slash 2}(cz+d)^{-k}f(\gamma z),
\end{equation}
where $\gamma z=\frac{az+b}{cz+d}$ with $\gamma=\begin{pmatrix}a&b\\c&d\end{pmatrix}\in GL_{2}^{+}(\mathbb{R})$. Let $\psi$ be a Dirichlet character modulo $N$. We put $\psi(\gamma)=\overline{\psi(a)}$ for each $\gamma=\begin{pmatrix}a&b\\ c&d\end{pmatrix}\in M_{2}(\mathbb{Z})$ with $c\equiv 0\ \mathrm{mod}\ N$, $\mathrm{gcd}(a,N)=1$ and $\det\gamma >0$. We denote by $C_{k}^{\infty}(N,\psi)$ the $\mathbb{C}$-vector space of infinitely differentiable functions $f: \mathfrak{H}\rightarrow \mathbb{C}$ such that $f\vert_{k}\gamma=\psi(\gamma)f$ for each $\gamma\in \Gamma_{0}(N)$. Let $r\in \mathbb{Z}_{\geq 0}$. We denote by $\mathbb{C}[X]_{\leq r}$ the $\mathbb{C}$-vector space of polynomials over $\mathbb{C}$ with degree $\leq r$. We say that a function $f\in C^{\infty}_{k}(N,\psi)$ is a nearly holomorphic modular form of weight $k$, level $N$, character $\psi$ and order $\leq r$ if we have $(f\vert_{k}\gamma)(z)=\sum_{n=0}^{+\infty}a_{n}^{(\gamma)}(f,\frac{-1}{4\pi y})e^{2\pi\sqrt{-1}nz\slash N}$ for every $\gamma\in SL_{2}(\mathbb{Z})$, where $a_{n}^{(\gamma)}(f,X)\in \mathbb{C}[X]_{\leq r}$ with $n\in\mathbb{Z}_{\geq0}$. We denote by $N^{\leq r}_{k}(N,\psi)$ the space of nearly holomorphic modular forms of weight $k$, level $N$, character $\psi$ and order $\leq r$. For each $f\in N_{k}^{\leq r}(N,\psi)$, we write $a_{n}(f,X)=a_{Nn}^{(I_{2})}(f,X)$ with $n\in\mathbb{Z}_{\geq0}$, where $I_{2}=\begin{pmatrix}1&0\\ 0&1\end{pmatrix}$. Then, we have the Fourier expansion $f=\sum_{n=0}^{+\infty}a_{n}\left(f,\frac{-1}{4\pi y}\right)e^{2\pi\sqrt{-1}nz}$. We say that a nearly holomorphic modular form $f\in N_{k}^{\leq r}(N,\psi)$ is cuspidal if $a_{0,f}^{(\gamma)}(X)=0$ for every $\gamma\in SL_{2}(\mathbb{Z})$. We denote by $N_{k}^{\leq r,\mathrm{cusp}}(N,\psi)$ the space of nearly holomorphic cusp forms of weight $k$, level $N$, character $\psi$ and order $\leq r$. We put $M_{k}(N,\psi)=N^{\leq 0}_{k}(N,\psi)$ and $S_{k}(N,\psi)=N_{k}^{\leq 0,\mathrm{cusp}}(N,\psi)$. We call an element $f\in M_{k}(N,\psi)$ (resp$.\ S_{k}(N,\psi)$) a modular form (resp$.\ \mathrm{a\ cusp\ form}$) of weight $k$, level $N$ and character $\psi$. Let $\xi$ be a Dirichlet character modulo $M$, where $M\in \mathbb{Z}_{\geq 1}$. For each $f\in N_{k}^{\leq r}(N,\chi)$, we define the twist $f\otimes \xi$ to be 
\begin{equation}
f\otimes\xi=\sum_{n=0}^{+\infty}a_{n}\left(f,\frac{-1}{4\pi y}\right)\xi(n)e^{2\pi\sqrt{-1}nz}\in N_{k}^{\leq r}(L,\chi\xi^{2}),\end{equation}
where $L$ is the least common multiple of $N$ and $M^{2}$. For each $f\in S_{k}(N,\psi)$, we denote by 
\begin{equation}\label{definition of frho}
f^{\rho}=\sum_{n=1}^{+\infty}\overline{a_{n}(f)}e^{2\pi\sqrt{-1}nz}\in S_{k}(N,\overline{\psi}).
\end{equation}
 Let $f,g\in N_{k}^{\leq r}(N,\psi)$ such that $fg\in N_{2k}^{\leq 2r,\mathrm{cusp}}(N,\psi^{2})$. We define the Petersson inner product $\langle f,g\rangle_{k,N}$ to be
\begin{equation}
\langle f,g\rangle_{k,N}=\int_{\Gamma_{0}(N)\backslash \mathfrak{H}}\overline{f}gy^{k -2}{dxdy}.
\end{equation} 
For each integer $k$ and for each non-negative integer $r$, we define the differential operators $\delta_{k}$, $\delta_{k}^{(r)}$ and $\epsilon$ by
\begin{align}\label{shimura operator}
\begin{split}
&\delta_{k}=\frac{1}{2\pi\sqrt{-1}}\left(\frac{k}{2\sqrt{-1}y}+\frac{\partial}{\partial z}\right), \ \delta_{k}^{(r)}=\delta_{k+2r-2}\cdots\delta_{k+2}\delta_{k},\\
&\epsilon=(-8\sqrt{-1}\pi)y^{-2}\frac{\partial}{\partial\overline{z}}
\end{split}
\end{align}
where $\frac{\partial}{\partial z}=\frac{1}{2}\left(\frac{\partial}{\partial x}-\sqrt{-1}\frac{\partial}{\partial y}\right)$ and $\frac{\partial}{\partial\overline{z}}=\frac{1}{2}\left(\frac{\partial}{\partial x}+\sqrt{-1}\frac{\partial}{\partial y}\right)$.
We remark that we understand that $\delta_{k}^{(0)}=1$ is the identity operator. By \cite[p35]{Shimura2012}, we have
\begin{equation}\label{shimura operators and vertkgamma}
\delta_{k}(f\vert_{k}\gamma)=\delta_{k}(f)\vert_{k+2}\gamma,\ \epsilon(f\vert_{k}\gamma)=\epsilon(f)\vert_{k-2}\gamma
\end{equation}
for each $\gamma\in GL_{2}^{+}(\mathbb{R})$. By \eqref{shimura operators and vertkgamma}, we see that $\delta_{k}(f)\in N_{k+2}^{\leq r+1}(N,\psi)$ (resp. $N_{k+2}^{\leq r+1,\mathrm{cusp}}(N,\psi)$) and $\epsilon(f)\in N_{k-2}^{\leq r-1}(N,\psi)$ (resp. $N_{k-2}^{\leq r-1,\mathrm{cusp}}(N,\psi)$) for each $f\in N_{k}^{\leq r}(N,\psi)$ (resp. $N_{k}^{\leq r,\mathrm{cusp}}(\linebreak N,\psi)$) where $N_{k-2}^{\leq -1}(N,\psi)=N_{k-2}^{\leq -1,\mathrm{cusp}}(N,\psi)=0$. We prove a lemma.
\begin{lem}\label{nearlyholormorphic infty easy lemma}
{Let $f\in N_{k}^{\leq r}(N,\psi)$ where $k,r\in \mathbb{Z}_{\geq 0}$, and let $\psi$ be a Dirichlet character modulo $N$ with $N\in \mathbb{Z}_{\geq 1}$. 
Let $m$ be a non-negative integer satisfying $m\leq r$. If we have $a_{n}(f,X)\in \mathbb{C}[X]_{\leq m}$ for every $n\in \mathbb{Z}_{\geq 0}$, we have $f\in N_{k}^{\leq m}(N,\psi)$.}
\end{lem}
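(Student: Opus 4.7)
The plan is to exploit the lowering differential operator $\epsilon$ defined in \eqref{shimura operator}, together with its compatibility with the slash action recorded in \eqref{shimura operators and vertkgamma}. The strategy is to use the hypothesis on the Fourier expansion at the cusp $\infty$ to deduce that a sufficiently high power of $\epsilon$ annihilates $f$ globally on $\mathfrak{H}$, and then transport this vanishing to every cusp via the slash-compatibility.

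First, I would verify the following explicit action of $\epsilon$ on polynomial symbols in $X$: for each integer $s\geq 0$, one has $\epsilon(X^{s})=\lambda_{s} X^{s-1}$ for some nonzero scalar $\lambda_{s}$ when $s\geq 1$, and $\epsilon(1)=0$. This is an immediate computation from the formula in \eqref{shimura operator} using $X=-1/(4\pi y)$ and $\partial y/\partial\bar z=i/2$, together with the fact that $\partial/\partial\bar z$ annihilates the holomorphic factor $e^{2\pi\sqrt{-1}nz/N}$. A direct consequence is that for any polynomial $p(X)\in\mathbb{C}[X]$ of degree exactly $d\geq 0$, $\epsilon(p(X))$ has degree exactly $d-1$ (interpreted as the zero polynomial when $d=0$); in particular $\epsilon^{m+1}(p)=0$ if and only if $\deg p\leq m$.

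Next, because $\partial/\partial\bar z$ kills $e^{2\pi\sqrt{-1}nz/N}$, the operator $\epsilon$ acts coefficientwise on any Fourier series of the form $\sum_{n}p_{n}(X)e^{2\pi\sqrt{-1}nz/N}$, sending it to $\sum_{n}\epsilon(p_{n}(X))e^{2\pi\sqrt{-1}nz/N}$. Applying this to the expansion $f=\sum_{n\geq 0}a_{n}(f,-1/(4\pi y))e^{2\pi\sqrt{-1}nz}$, which represents $f$ on all of $\mathfrak{H}$, the hypothesis $a_{n}(f,X)\in\mathbb{C}[X]_{\leq m}$ forces every term to be killed by $\epsilon^{m+1}$; hence $\epsilon^{m+1}(f)=0$ identically on $\mathfrak{H}$.

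Finally, iterating \eqref{shimura operators and vertkgamma} gives $\epsilon^{m+1}(f\vert_{k}\gamma)=\epsilon^{m+1}(f)\vert_{k-2(m+1)}\gamma=0$ for every $\gamma\in SL_{2}(\mathbb{Z})$. Expanding $f\vert_{k}\gamma=\sum_{n\geq 0}a_{n}^{(\gamma)}(f,X)e^{2\pi\sqrt{-1}nz/N}$ and applying the coefficientwise action of $\epsilon^{m+1}$, the $\mathbb{C}$-linear independence of the characters $\{e^{2\pi\sqrt{-1}nx/N}\}_{n\geq 0}$ in $x$ (for any fixed $y$) yields $\epsilon^{m+1}(a_{n}^{(\gamma)}(f,X))=0$ for every $n\geq 0$ and every $\gamma$. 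By the first step this means $a_{n}^{(\gamma)}(f,X)\in\mathbb{C}[X]_{\leq m}$ for all $n,\gamma$, which is precisely the condition $f\in N_{k}^{\leq m}(N,\psi)$. The only nontrivial step is the explicit computation that $\epsilon$ strictly decreases the $X$-degree by one on $\mathbb{C}[X]$; once this is in hand, the remainder of the argument is a direct application of \eqref{shimura operators and vertkgamma} and the Fourier uniqueness on $\mathfrak{H}$.
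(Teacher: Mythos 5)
Your proposal is correct and follows essentially the same route as the paper's own proof: both establish that $\epsilon$ strictly lowers the degree in $(-4\pi y)^{-1}$ (so $\epsilon^{m+1}$ kills a polynomial if and only if its degree is at most $m$), both observe that $\epsilon$ acts coefficientwise on the Fourier expansion since it annihilates the holomorphic exponentials, both conclude $\epsilon^{m+1}(f)=0$ from the hypothesis at $\infty$, and both transport this to every cusp via the iterated compatibility $\epsilon^{m+1}(f\vert_k\gamma)=\epsilon^{m+1}(f)\vert_{k-2(m+1)}\gamma$.
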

\begin{proof}
{By a simple calculation, we see that 
$$
\epsilon((-4\pi y)^{-n})=n(-4\pi y)^{-(n-1)}$$
for each $n\in \mathbb{Z}_{\geq 0}$. Hence, for each $a(X)\in \mathbb{C}[X]$, we see that 
\begin{equation}\label{eepsilon poyleq mtreq}
\epsilon^{m+1}\left(a\left(\frac{-1}{4\pi y}\right)\right)=0\ \mathrm{if\ and \ only\ if}\ a(X)\in \mathbb{C}[X]_{\leq m}.
\end{equation} 
For each $\gamma\in SL_{2}(\mathbb{Z})$, we have
\begin{align}\label{eepsilon poyleq mtreq2}
\begin{split}
\epsilon^{m+1}(f\vert_{k}\gamma)&=\sum_{n=0}^{+\infty}\epsilon^{m+1}\left(a_{n}^{(\gamma)}\left(f,\frac{-1}{4\pi y}\right)e^{2\pi\sqrt{-1}nz\slash N}\right)\\
&=\sum_{n=0}^{+\infty}\epsilon^{m+1}\left(a_{n}^{(\gamma)}\left(f,\frac{-1}{4\pi y}\right)\right)e^{2\pi\sqrt{-1}nz\slash N}.
\end{split}
\end{align}
Since we have $a_{n}(f,X)\in \mathbb{C}[X]_{\leq m}$ for every $n\in \mathbb{Z}_{\geq 0}$ by \eqref{eepsilon poyleq mtreq} and \eqref{eepsilon poyleq mtreq2}, we have $\epsilon^{m+1}(f)=0$. Let $\gamma\in SL_{2}(\mathbb{Z})$. By \eqref{shimura operators and vertkgamma}, we have $\epsilon^{m+1} (f\vert_{k}\gamma)=\epsilon^{m+1}(f)\vert_{k-2(m+1)}\gamma=0$ for each element $\gamma\in SL_{2}(\mathbb{Z})$. By \eqref{eepsilon poyleq mtreq} and \eqref{eepsilon poyleq mtreq2}, $\epsilon^{m+1} (f\vert_{k}\gamma)=0$ implies that $a_{n}^{(\gamma)}(f,X)\in \mathbb{C}[X]_{\leq m}$ for every $n\in \mathbb{Z}_{\geq 0}$. Therefore, we see that $f\in N_{k}^{\leq m}(N,\psi)$. }
\end{proof}

By \cite[Lemma 7]{Shimura1976}, we have the following:
\begin{pro}\label{prop:nearlyholomorphic}
We assume that $k>2r$. Then, each $f\in N_{k}^{\leq r}(N,\psi)$ has an expression
$$f=\sum_{j=0}^{r}\delta_{k-2j}^{(j)}(f_{j})$$
with $f_{j}\in M_{k-2j}(N,\psi)$ which are uniquelly determined by $f$. Moreover, if $f\in N_{k}^{\leq r,\mathrm{cusp}}(N,\psi)$, $f_{j}\in S_{k-2j}(N,\psi)$ for every $j$ satisfying $0\leq j\leq r$.
\end{pro}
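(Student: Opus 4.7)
The plan is to prove the proposition by induction on the nearly holomorphic order $r$. The base case $r=0$ is immediate from the identity $N_{k}^{\leq 0}(N,\psi)=M_{k}(N,\psi)$, in which case we set $f_{0}=f$.

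For the inductive step, I would exploit the lowering operator $\epsilon$ introduced in \eqref{shimura operator}, which sends $N_{k}^{\leq r}(N,\psi)$ into $N_{k-2}^{\leq r-1}(N,\psi)$ and which preserves cuspidality. The key ingredient is the commutation relation that, for $g\in M_{k-2j}(N,\psi)$ and $j\geq 1$,
\begin{equation*}
\epsilon\bigl(\delta_{k-2j}^{(j)}(g)\bigr)=c_{k,j}\,\delta_{k-2j}^{(j-1)}(g),
\end{equation*}
where $c_{k,j}=j(k-j-1)$ up to a fixed normalization constant coming from the conventions in \eqref{shimura operator}. Under the hypothesis $k>2r$, we have $k-j-1\geq k-r-1>r-1\geq 0$ for every $1\leq j\leq r$, so the constants $c_{k,j}$ are all nonzero.

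Given $f\in N_{k}^{\leq r}(N,\psi)$, apply the inductive hypothesis to $\epsilon(f)\in N_{k-2}^{\leq r-1}(N,\psi)$ to write $\epsilon(f)=\sum_{j=0}^{r-1}\delta_{k-2-2j}^{(j)}(g_{j})$ with $g_{j}\in M_{k-2-2j}(N,\psi)$. Reindexing and dividing by the nonzero constants $c_{k,j}$, define $f_{j+1}=c_{k,j+1}^{-1}g_{j}$ for $0\leq j\leq r-1$, and then set
\begin{equation*}
f_{0}:=f-\sum_{j=1}^{r}\delta_{k-2j}^{(j)}(f_{j}).
\end{equation*}
By construction $f_{0}$ transforms like a form of weight $k$, level $N$ and character $\psi$, and the commutation relation forces $\epsilon(f_{0})=0$; since $\epsilon$ is a nonzero constant multiple of $\partial/\partial\bar{z}$, this gives $f_{0}\in M_{k}(N,\psi)$, as required. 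For uniqueness, if $\sum_{j=0}^{r}\delta_{k-2j}^{(j)}(f_{j})=0$, iterating $\epsilon$ and using the commutation relation produces (up to nonvanishing constants supplied by $k>2r$) the relation $f_{r}=0$; then induction on $r$ finishes. The cuspidal statement follows because both $\delta_{k-2j}^{(j)}$ and $\epsilon$ preserve the vanishing condition at every cusp, so the $f_{j}$ produced by the above recipe are cuspidal whenever $f$ is.

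The main obstacle will be establishing the commutation relation $\epsilon\circ\delta_{k-2j}^{(j)}=c_{k,j}\,\delta_{k-2j}^{(j-1)}$ on holomorphic inputs, together with tracking the exact value of $c_{k,j}$. This amounts to a direct but delicate computation using the formulas \eqref{shimura operator}: one writes $\delta_{k-2j}^{(j)}(g)$ as an explicit polynomial in $(-4\pi y)^{-1}$ with holomorphic coefficients (obtained by iterating $\delta$), applies $\epsilon=-8\sqrt{-1}\pi y^{-2}\partial/\partial\bar{z}$, and keeps careful track of the arithmetic factors arising from $\partial_{\bar z}(y^{-n})$. Once this identity is proved, everything else in the argument is formal. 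The hypothesis $k>2r$ enters exactly to guarantee that the constants $c_{k,j}$ are nonzero for all $1\leq j\leq r$, so that the recursive division defining $f_{j+1}$ from $g_{j}$ is legitimate.
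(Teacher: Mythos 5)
Your proof is correct. Since the paper proves nothing here---it simply records the statement as a citation of Shimma's \cite[Lemma 7]{Shimura1976}---there is no paper argument to compare against, so let me instead check the substance of your blind reconstruction.

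The structure is sound: induction on $r$, with the inductive step driven by the $\mathfrak{sl}_2$-type commutation identity for the Maass--Shimura raising operator $\delta$ and the lowering operator $\epsilon$. For $g$ holomorphic of weight $m$ the correct identity is
\begin{equation*}
\epsilon\bigl(\delta_m^{(j)}(g)\bigr)=j(m+j-1)\,\delta_m^{(j-1)}(g),
\end{equation*}
so with $m=k-2j$ one gets $c_{k,j}=j(k-j-1)$ exactly as you claim, and the hypothesis $k>2r$ gives $k-j-1\geq k-r-1\geq r>0$ for $1\leq j\leq r$, so every $c_{k,j}$ is nonzero; your use of the hypothesis is precisely right. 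The construction of $f_{j+1}=c_{k,j+1}^{-1}g_j$ then makes $\epsilon\bigl(\sum_{j\geq 1}\delta^{(j)}_{k-2j}(f_j)\bigr)=\epsilon(f)$, so $\epsilon(f_0)=0$; since $f_0\in N_k^{\leq r}(N,\psi)$ and $\epsilon(f_0\vert_k\gamma)=\epsilon(f_0)\vert_{k-2}\gamma=0$ for all $\gamma\in SL_2(\mathbb{Z})$, all the polynomial Fourier coefficients of $f_0$ at every cusp are forced to be constant, giving $f_0\in M_k(N,\psi)$. Uniqueness by applying $\epsilon^r$ and peeling off $f_r$ (the coefficient $r!(k-r-1)(k-r-2)\cdots(k-2r)$ is nonzero because $k>2r$) is also fine, and cuspidality passes through $\delta$ and $\epsilon$ by \eqref{shimura operators and vertkgamma}.

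Two small comments. First, one line of yours is loosely worded: $\epsilon$ is not a nonzero \emph{constant} multiple of $\partial/\partial\bar z$, but rather a constant times $y^{\pm 2}\partial_{\bar z}$; since $y$ never vanishes on $\mathfrak{H}$ the conclusion ($\epsilon f_0=0\Rightarrow f_0$ holomorphic) is unaffected, but you should phrase it as a nonvanishing \emph{function} multiple. Second, a word of caution about the paper itself: the displayed definition $\epsilon=(-8\sqrt{-1}\pi)y^{-2}\partial/\partial\bar z$ in \eqref{shimura operator} must be a typographical slip for $y^{2}$ --- otherwise the identity $\epsilon\bigl((-4\pi y)^{-n}\bigr)=n(-4\pi y)^{-(n-1)}$ used in Lemma~\ref{nearlyholormorphic infty easy lemma} fails, and so does the commutation relation $\epsilon\delta_m^{(j)}g=j(m+j-1)\delta_m^{(j-1)}g$ that you rely on. You quote the paper's $y^{-2}$ verbatim but then use the commutation relation for the $y^{2}$ normalization; that is the correct thing to do, but it is worth flagging the discrepancy so no one reconstructing the ``delicate computation'' you defer is misled.
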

For each $f=\sum_{j=0}^{r}\delta_{k-2j}^{(j)}(f_{j})\in N_{k}^{\leq r}(N,\psi)$ with $f_{j}\in M_{k-2j}(N,\psi)$, we call $f_{0}$ a holomorphic projection of $f$.

Let $l$ be a prime and  $\{\alpha_{1},\ldots, \alpha_{v}\}$ a subset of $\Gamma_{0}(N)\begin{pmatrix}1&0\\ 0&l\end{pmatrix}\Gamma_{0}(N)$ which is a complete representative set for $\Gamma_{0}(N)\backslash \Gamma_{0}(N)\begin{pmatrix}1&0\\ 0&l\end{pmatrix}\Gamma_{0}(N)$. We define the $l$-th Hecke operator $T_{l}: N_{k}^{\leq r}(N,\psi)\rightarrow N_{k}^{\leq r}(N,\psi)$ to be 
\begin{equation}\label{Hecke operator at l}
T_{l}(f)=\det (\alpha)^{\frac{k}{2}-1}\sum_{i=1}^{v}\overline{\psi(\alpha_{i})}f\vert_{k}\alpha_{i}\end{equation}
for each $f\in N_{k}^{\leq r}(N,\psi)$. It is known that $T_{l}(f)=\sum_{n=0}^{+\infty}a_{ln,f}(\frac{-l}{4\pi y})e^{2\pi\sqrt{-1}nz}$ for each prime $l$ such that $l\vert N$. If  a prime $l$ satisfies $l\vert N$, we have $\Gamma_{0}(Nl)\begin{pmatrix}1&0\\ 0&l\end{pmatrix}\Gamma_{0}(Nl)=\Gamma_{0}(Nl)\begin{pmatrix}1&0\\0&l\end{pmatrix}\Gamma_{0}(N)$. Then, we see that $T_{l}$ induces the following homomorphism: 
\begin{equation}
T_{l}:N_{k}^{\leq r}(Nl,\psi)\rightarrow N_{k}^{\leq r}(N,\psi)
\end{equation}
for each prime $l$ such that $l\vert N$. We have $\Gamma_{0}(N)\begin{pmatrix}l&0\\0&1\end{pmatrix}\Gamma_{0}(Nl)=\Gamma_{0}(N)\begin{pmatrix}l&0\\0&1\end{pmatrix}$ for each prime $l$ such that $l\vert N$. Then, by \cite[(3.4.5)]{Shimura1971}, we have
\begin{equation}\label{relaTl anddiagl1}
\langle f,T_{l}(g)\rangle_{k,N}=l^{\frac{k}{2}-1}\left\langle f\vert_{k}\begin{pmatrix}l&0\\0&1\end{pmatrix},g\right\rangle_{k,Nl}
\end{equation}
for each prime $l$ such that $l\vert N$ and each $f\in N_{k}^{\leq r}(N,\psi)$ and $g\in N_{k}^{\leq r}(Nl,\psi)$ such that $fg\in N_{k}^{\leq 2r,\mathrm{cusp}}(Nl,\psi^{2})$. Let $L$ be a positive integer such that $N\vert L$. We define a trace operator 
\begin{equation}\label{definition of trace operator}
\mathrm{Tr}_{L\slash N}: N_{k}^{\leq r}(L,\psi)\rightarrow N_{k}^{\leq r}(N,\psi)
\end{equation} 
to be $\mathrm{Tr}_{L\slash N}(f)=(L\slash N)^{k\slash 2-1}\sum_{\gamma\in R}\overline{\psi}(\gamma)f\vert_{k}\gamma$ for each $f\in N_{k}^{\leq r}(L,\psi)$, where $R$ is a complete representative set for $\Gamma_{0}(L)\backslash \Gamma_{0}(L)\begin{pmatrix}1&0\\0&L\slash N\end{pmatrix}\Gamma_{0}(N)$. By \cite[(3.4.5)]{Shimura1971}, we see that 
\begin{equation}\label{trace operator}
\langle f,\mathrm{Tr}_{L\slash N}(g)\rangle_{k,N}=(L\slash N)^{\frac{k}{2}-1}\bigg\langle f\vert_{k}\begin{pmatrix}L\slash N&0\\0&1\end{pmatrix},g\bigg\rangle_{k,L}
\end{equation}
for each $f\in N_{k}^{\leq r}(N,\psi)$ and $g\in N_{k}^{\leq r}(L,\psi)$ such that $fg\in N_{2k}^{\leq 2r,\mathrm{cusp}}(L,\psi^{2})$. Let $A$ be a subring of $\mathbb{C}$. We define $A$-modules
\begin{align*}
N_{k}^{\leq r}(N,\psi;A)&=\{f\in N_{k}^{\leq r}(N,\psi)\ \vert \ a_{n}(f,X)\in A[X]\ \text{ for any } n\in \mathbb{Z}_{\geq 0}\},\\
N_{k}^{\leq r,\mathrm{cusp}}(N,\psi;A)&=\{f\in N_{k}^{\leq r,\mathrm{cusp}}(N,\psi)\ \vert \ a_{n}(f,X)\in A[X]\  \text{ for any }  n\in \mathbb{Z}_{\geq 1}\}.
\end{align*}
When $K$ is a subfield of $\overline{\mathbb{Q}}$, we put
\begin{align*}
N_{k}^{\leq r}(N,\psi;\K)&=N_{k}^{\leq r}(N,\psi;K)\otimes_{K}\K,\\
N_{k}^{\leq r}(N,\psi;\mathcal{O}_{\K})&=N_{k}^{\leq r}(N,\psi;\mathcal{O}_{K})\otimes_{\mathcal{O}_{K}}\mathcal{O}_{\K},\\
N_{k}^{\leq r,\mathrm{cusp}}(N,\psi;\K)&=N_{k}^{\leq r,\mathrm{cusp}}(N,\psi;K)\otimes_{K}\K,\\
N_{k}^{\leq r,\mathrm{cusp}}(N,\psi;\mathcal{O}_{\K})&=N_{k}^{\leq r,\mathrm{cusp}}(N,\psi;\mathcal{O}_{K})\otimes_{\mathcal{O}_{K}}\mathcal{O}_{\K},
\end{align*}
where $\mathcal{O}_{K}$ is the ring of integers of $K$ and $\K$ is the completion of $K$ in $\mathbb{C}_{p}$. We can regard $N_{k}^{\leq r}(N,\psi;\K)$ as a $\K$-Banach space by the valuation $v_{N_{k}^{\leq r}(N,\psi)}$ defined by
\begin{equation}\label{valuation of nearly holomorphic modular forms}
v_{N_{k}^{\leq r}(N,\psi)}(f)=\inf_{n\geq 0}\{v_{0}(a_{n}(f,X))\}
\end{equation}
for each $f\in N_{k}^{\leq r}(N,\psi;\K)$, where $v_{0}$ is the valuation on $\mathcal{O}_{\K}[[X]]\otimes_{\mathcal{O}_{\K}}\K$ defined by $v_{0}(\sum_{n=0}^{+\infty}a_{n}X^{n})=\inf_{n\in \mathbb{Z}_{\geq 0}}\{\ord_{p}(a_{n})\}$ with $a_{n}\in \K$. We see that $N_{k}^{\leq r,\mathrm{cusp}}(N,\psi;\K)$ is a $\K$-Banach subspace of $N_{k}^{\leq r}(N,\psi;\K)$.

Let $f\in S_{k}(N,\psi)$ be a normalized cuspidal Hecke eigenform. We denote by $c_{f}$ and $f^{0}$ the conductor of $f$ and the primitive form associated with $f$ respectively. For each $M\in \mathbb{Z}_{\geq 1}$, we put 
\begin{equation}\label{definition of tauM}
\tau_{M}=\begin{pmatrix}0&-1\\ M&0\end{pmatrix}.\end{equation}
\begin{pro}\label{peterson quatient algebraic}
Let $K$ be a finite extension of $\mathbb{Q}$. Assume that $(p,N)=1$. Let $f\in S_{k}(Np^{m(f)},\psi;K)$ be a normalized cuspidal Hecke eigenform which is new away from $p$ with $m(f)\in \mathbb{Z}_{\geq 1}$. Here $\psi$ is a Dirichlet character modulo $Np^{m(f)}$. Assume that $a_{p}(f)\neq 0$, $f^{0}\in S_{k}(c_{f},\psi;K)$ and $m(f)$ is the smallest positive integer $m$ such that $f\in S_{k}(Np^{m},\psi)$. Further, if $f$ is not a primitive form, we assume that $a_{p}(f)^{2}\neq \psi_{0}(p)p^{k-1}$ where $\psi_{0}$ is the primitive character attached to $\psi$. Then, for each $g\in S_{k}(Np^{m(f)},\psi;K)$, we have 
$\frac{\langle f^{\rho}\vert_{k}\tau_{Np^{m(f)}},g\rangle_{k,Np^{m(f)}}}{\langle f^{\rho}\vert_{k}\tau_{Np^{m(f)}},f\rangle_{k,Np^{m(f)}}}\in K$, where $f^{\rho}$ is the cusp form defined in \eqref{definition of frho}.
\end{pro}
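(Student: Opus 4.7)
The plan is to exploit the fact that, under the hypotheses, $f$ spans a one-dimensional Hecke eigenspace in $S_{k}(Np^{m(f)},\psi;K)$ whose eigensystem is defined over $K$, combined with the orthogonality of distinct Hecke eigensystems under the Petersson pairing. First, I would identify the relevant Hecke eigensystem: let $\mathbb{T}_{K}$ denote the $K$-algebra generated by $T_{\ell}$ for $\ell\nmid Np$, $U_{\ell}$ for $\ell\mid N$, $U_{p}$, and the diamond operators acting on $S_{k}(Np^{m(f)},\psi;K)$. The form $f$ is an eigenform for all of these, giving a $K$-algebra homomorphism $\lambda_{f}:\mathbb{T}_{K}\to K$. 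When $f$ is primitive the $\lambda_{f}$-eigenspace is one-dimensional by multiplicity one. When $f$ is a $p$-stabilization of $f^{0}$, the $f^{0}$-isotypic subspace of $S_{k}(Np^{m(f)},\psi)$ is spanned by $f^{0}(z),f^{0}(pz),\ldots$, and the regularity hypothesis $a_{p}(f)^{2}\neq\psi_{0}(p)p^{k-1}$ forces $U_{p}$ to have two distinct eigenvalues $\alpha=a_{p}(f)$ and $\beta$ on the two-dimensional subspace spanned by $f^{0}(z)$ and $f^{0}(pz)$, so again the $\lambda_{f}$-eigenspace is one-dimensional and spanned by $f$.

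Next, since $\lambda_{f}$ takes values in $K$, there is an idempotent $e_{f}\in\mathbb{T}_{K}$ (the projector onto the $\lambda_{f}$-generalized eigenspace) defined over $K$. For any $g\in S_{k}(Np^{m(f)},\psi;K)$ we obtain $e_{f}\cdot g\in K\cdot f$, so we can write
\begin{equation*}
e_{f}\cdot g=c_{f,g}\,f,\qquad c_{f,g}\in K.
\end{equation*}
The key step is then the orthogonality identity
\begin{equation*}
\langle f^{\rho}\vert_{k}\tau_{Np^{m(f)}},g\rangle_{k,Np^{m(f)}}
=\langle f^{\rho}\vert_{k}\tau_{Np^{m(f)}},e_{f}\cdot g\rangle_{k,Np^{m(f)}},
\end{equation*}
which will yield
\begin{equation*}
\frac{\langle f^{\rho}\vert_{k}\tau_{Np^{m(f)}},g\rangle_{k,Np^{m(f)}}}
{\langle f^{\rho}\vert_{k}\tau_{Np^{m(f)}},f\rangle_{k,Np^{m(f)}}}
=c_{f,g}\in K.
\end{equation*}

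To establish this orthogonality, I would show that the linear functional $g\mapsto\langle f^{\rho}\vert_{k}\tau_{Np^{m(f)}},g\rangle$ vanishes on every generalized $\mathbb{T}_{K}$-eigenspace different from the $\lambda_{f}$-one. The standard adjoint formulas with respect to the Petersson pairing give $T_{\ell}^{\ast}=\overline{\psi(\ell)}\,T_{\ell}$ for $\ell\nmid Np$, and Atkin--Lehner conjugation by $\tau_{Np^{m(f)}}$ intertwines the remaining operators $U_{\ell}$ ($\ell\mid N$) and $U_{p}$ with their adjoints in the appropriate way. Consequently $f^{\rho}\vert_{k}\tau_{Np^{m(f)}}$ lies in the eigenspace for the ``dual'' eigensystem $\overline{\lambda_{f}}\cdot(\text{character twist})$, which is orthogonal under Petersson to every other generalized eigenspace in $S_{k}(Np^{m(f)},\psi)$. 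Since the complement of the $\lambda_{f}$-eigenspace is a direct sum of such other generalized eigenspaces, and $g-e_{f}\cdot g$ lies entirely in this complement, the orthogonality follows.

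The main obstacle is the careful handling of the $U_{p}$-operator in the $p$-stabilized case. The operator $U_{p}$ is not normal with respect to the Petersson inner product at level $Np^{m(f)}$, and the two $p$-stabilizations of $f^{0}$ share every Hecke eigenvalue except the one for $U_{p}$. The regularity condition $a_{p}(f)^{2}\neq\psi_{0}(p)p^{k-1}$ is the precise condition ensuring that these two stabilizations do have distinct $U_{p}$-eigenvalues and therefore give rise to distinct one-dimensional generalized eigenspaces, so that the idempotent $e_{f}\in\mathbb{T}_{K}$ separates them. Verifying that $\tau_{Np^{m(f)}}$-conjugation sends $U_{p}$ to the correct adjoint on the $f^{0}$-isotypic subspace, and that $c_{f,g}\in K$ (rather than some extension), may alternatively be obtained by a Galois-descent argument: for every $\sigma\in\operatorname{Aut}(\mathbb{C}/K)$ one checks directly that $\sigma$ fixes the ratio, using that both $f$, $g$ and the Hecke eigensystem $\lambda_{f}$ are $K$-rational and that the Petersson pairing transforms predictably under $\sigma$.
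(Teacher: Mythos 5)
Your proof is correct, and it takes a genuinely different route from the paper's, though both ultimately rest on the same underlying facts. The paper proceeds by first reducing to the Galois-equivariance statement $\sigma\bigl(\langle f^{\rho}\vert_{k}\tau,g\rangle/\langle f^{\rho}\vert_{k}\tau,f\rangle\bigr)=\langle f^{\rho}\vert_{k}\tau,g^{\sigma}\rangle/\langle f^{\rho}\vert_{k}\tau,f\rangle$, then invokes the explicit orthogonal decomposition $S_{k}(Np^{m(f)},\psi)=\oplus_{h}U(h,Np^{m(f)})$ from Miyake to reduce to $g\in U(f^{0},Np^{m(f)})$, and finally works with the explicit basis $\{f,f_{1}\}$ (where $f_{1}$ is the other $p$-stabilization), using the adjoint formula for $T_{p}$ at level $Np$ to show $\langle f^{\rho}\vert_{k}\tau,f_{1}\rangle=0$. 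You instead package the whole argument via the $K$-rational idempotent $e_{f}\in\mathbb{T}_{K}$: since $e_{f}\cdot g=c_{f,g}f$ with $c_{f,g}\in K$ and the functional $\langle f^{\rho}\vert_{k}\tau,-\rangle$ kills $(1-e_{f})S_{k}$, the ratio equals $c_{f,g}$ directly, with no Galois-descent step needed. Your route is conceptually cleaner and avoids the explicit basis computation, at the cost of invoking more Hecke-algebra structure (Artinian decomposition of $\mathbb{T}_{K}$ and the existence of the $K$-rational idempotent); the part of the orthogonality that cannot be handled by the normal operators $T_{\ell}$ ($\ell\nmid Np$) — namely distinguishing the two $p$-stabilizations — still requires the same $T_{p}$-adjoint input as the paper. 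One small imprecision to flag: your closing remark that "the Petersson pairing transforms predictably under $\sigma$" is misleading, since the pairing itself is not $\mathrm{Aut}(\mathbb{C}/K)$-equivariant (Petersson norms are typically transcendental); only the ratio transforms equivariantly, and establishing that is exactly the content of the orthogonality argument. But this is a side remark about an alternative route, and your main argument via $e_{f}$ does not depend on it.
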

\begin{proof}
It suffices to prove that 
$$\frac{\langle f^{\rho}\vert_{k}\tau_{Np^{m(f)}},g^{\sigma}\rangle_{k,Np^{m(f)}}}{\langle f^{\rho}\vert_{k}\tau_{Np^{m(f)}},f\rangle_{k,Np^{m(f)}}}=\sigma\left(\frac{\langle f^{\rho}\vert_{k}\tau_{Np^{m(f)}},g\rangle_{k,Np^{m(f)}}}{\langle f^{\rho}\vert_{k}\tau_{Np^{m(f)}},f\rangle_{k,Np^{m(f)}}}\right)$$
for any $g\in S_{k}(Np^{m(f)},\psi)$ and for any $\sigma\in \mathrm{Aut}(\mathbb{C}\slash K)$ where $g^{\sigma}=\sum_{n=1}^{+\infty}\sigma(a_{n}(g))e^{2\pi\sqrt{-1}nz}$.
\par 
Let $P$ be the set of primitive forms $h\in S_{k}(c_{h},\psi)$ such that $c_{h}\vert Np^{m(f)}$. For each $h\in P$, we define a $\mathbb{C}$-vector space 
$U(h,Np^{m(f)})$ by 
$$U(h,Np^{m(f)})=\{g\in S_{k}(Np^{m(f)},\psi)\ \vert \ T_{l}(g)=a_{l}(h)g\ \mathrm{except\ for\ finitely\ many\ primes}\ l\}.$$
Then, it is well-known that we have the following orthonormal decomposition with respect to the Petersson inner product: 
$$S_{k}(Np^{m(f)},\psi)=\oplus_{h\in P}U(h,Np^{m(f)})$$
and the space $U(h,Np^{m(f)})$ is spanned by $\{h(tz)\}_{t\vert\frac{Np^{m(f)}}{c_{h}}}$ for each $h\in P$ (see \cite[Lemma 4.6.9]{Miyake89}). For each $\sigma\in \mathrm{Aut}(\mathbb{C}\slash K)$, 
we can define a bijection $P\stackrel{\sim}{\rightarrow}P$ to be $h\mapsto h^{\sigma}$ and we have a $\mathbb{C}$-linear isomorphism $U(h,Np^{m(f)})\stackrel{\sim}{\rightarrow}U(h^{\sigma},Np^{m(f)})$, $g\mapsto g^{\sigma}$ for each $h\in P$. Then, $U(f^{0},Np^{m(f)})$ and $\oplus_{h\in P\backslash \{f^{0}\}}U(h,Np^{m(f)})$ are stable under the action of $\mathrm{Aut}(\mathbb{C}\slash K)$. We remark that $f^{\rho}\vert_{k}\tau_{Np^{m(f)}}\in U(f^{0},Np^{m(f)})$. Thus, it suffices to prove that $\frac{\langle f^{\rho}\vert_{k}\tau_{Np^{m(f)}},g^{\sigma}\rangle_{k,Np^{m(f)}}}{\langle f^{\rho}\vert_{k}\tau_{Np^{m(f)}},f\rangle_{k,Np^{m(f)}}}=\sigma\left(\frac{\langle f^{\rho}\vert_{k}\tau_{Np^{m(f)}},g\rangle_{k,Np^{m(f)}}}{\langle f^{\rho}\vert_{k}\tau_{Np^{m(f)}},f\rangle_{k,Np^{m(f)}}}\right)$ for any $g\in U(f^{0},Np^{m(f)})$ and for any $\sigma\in \mathrm{Aut}(\mathbb{C}\slash K)$.

If $f$ is primitive, since $f$ is a basis of $U(f^{0},Np^{m(f)})$, we have $g^{\sigma}=\sigma(a_{1}(g))f^{\sigma}=\sigma(a_{1}(g))f$ and $\frac{\langle f^{\rho}\vert_{k}\tau_{Np^{m(f)}},g^{\sigma}\rangle_{k,Np^{m(f)}}}{\langle f^{\rho}\vert_{k}\tau_{Np^{m(f)}},f\rangle_{k,Np^{m(f)}}}=\sigma(a_{1}(g))=\sigma\left(\frac{\langle f^{\rho}\vert_{k}\tau_{Np^{m(f)}},g\rangle_{k,Np^{m(f)}}}{\langle f^{\rho}\vert_{k}\tau_{Np^{m(f)}},f\rangle_{k,Np^{m(f)}}}\right)$ for any $g\in U(f^{0},Np^{m(f)})$ and for any $\sigma\in \mathrm{Aut}(\mathbb{C}\slash K)$. In the rest of the proof, we assume that $f\neq f^{0}$. We note that $m(f)=1$ and $c_{f}=N$. There exists a unique element $\alpha\in K$ such that $f=f^{0}-\alpha f^{0}(pz)$. Let $T_{p}$ be the $p$-the Hecke operator on $S_{k}(Np,\psi)$. It is well-kwnon that $T_{p}(f^{0})=a_{p}(f^{0})f^{0}-\psi_{0}(p)p^{k-1}f^{0}(pz)$ and $T_{p}(f^{0}(pz))=f^{0}$. Therefore, we see that $a_{p}(f)$ and $\alpha$ are roots of the Hecke polynomial $X^{2}-a_{p}(f^{0})X+\psi_{0}(p)p^{k-1}$ where $\psi_{0}$ is the primitive Dirichlet character associated with $\psi$. Since $a_{p}(f)^{2}\neq \psi_{0}(p)p^{k-1}$, we see that $\alpha\neq a_{p}(f)$. We put $f_{1}=f^{0}-a_{p}(f)f^{0}(pz)\in U(f^{0},Np)$. Then, $T_{p}(f_{1})=\alpha f_{1}$ and $\{f,f_{1}\}$ is a basis of $U(f^{0},Np)$. Let $T_{p}^{*}$ be the adjoint operator of $T_{p}$ with respect to the Petersson inner product. Then, by \cite[Theorem 4.5.5]{Miyake89}, we see that
\begin{align*}
\alpha\langle f^{\rho}\vert_{k}\tau_{Np},f_{1}\rangle_{k,Np}&=\langle f^{\rho}\vert_{k}\tau_{Np},T_{p}(f_{1})\rangle_{k,Np}\\
&=\langle T_{p}^{*}(f^{\rho}\vert_{k}\tau_{Np}),f_{1}\rangle_{k,Np}\\
&=a_{p}(f)\langle f^{\rho}\vert_{k}\tau_{Np},f_{1}\rangle_{k,Np}.
\end{align*}
Therefore, we have $\langle f^{\rho}\vert_{k}\tau_{Np},f_{1}\rangle_{k,Np}=0$. Let $g\in U(f^{0},Np)$. There exites a unique pair $(a,b)\in \mathbb{C}^{2}$ such that $g=af+bf_{1}$. Since $f$ and $f_{1}$ are in $S_{k}(Np,\psi;K)$, we hava $g^{\sigma}=\sigma(a)f+\sigma(b)f_{1}$ and $\frac{\langle f^{\rho}\vert_{k}\tau_{Np},g^{\sigma}\rangle_{k,Np}}{\langle f^{\rho}\vert_{k}\tau_{Np},f\rangle_{k,Np}}=\sigma(a)=\sigma\left(\frac{\langle f^{\rho}\vert_{k}\tau_{Np},g\rangle_{k,Np}}{\langle f^{\rho}\vert_{k}\tau_{Np},f\rangle_{k,Np}}\right)$ for any $\sigma\in \mathrm{Aut}(\mathbb{C}\slash K)$.
\end{proof}

Assume that $(p,N)=1$. Let $K$ be a finite extension of $\mathbb{Q}$. Let $f\in S_{k}(Np^{m(f)},\psi;K)$ be a normalized cuspidal Hecke eigenform which is new away from $p$ with $m(f)\in \mathbb{Z}_{\geq 1}$. Here $\psi$ is a Dirichlet character modulo $Np^{m(f)}$. Assume that $a_{p}(f)\neq 0$, $f^{0}\in S_{k}(c_{f},\psi;K)$ and $m(f)$ is the smallest positive integer $m$ such that $f\in S_{k}(Np^{m},\psi)$. Further, if $f$ is not a primitive form, we assume that $a_{p}(f)^{2}\neq \psi_{0}(p)p^{k-1}$. We denote by $\K$ the completion of $K$ in $\mathbb{C}_{p}$. Let $M$ be a positive integer such that $(M,p)=1$ and $N\vert M$. We assume that $K$ contains a primitive $M$-th root of unity. Then, it is known that we have $\mathrm{Tr}_{Mp^{m(f)}\slash Np^{m(f)}}(S_{k}(Mp^{m(f)},\psi;K))\subset S_{k}(Np^{m(f)},\psi;K)$ where $\mathrm{Tr}_{Mp^{m(f)}\slash Np^{m(f)}}$ is the trace map defined in \eqref{definition of trace operator}. Further, it is known that the holomorphic projection of an element in $N_{k}^{\leq r}(Np^{m(f)},\psi;K)$ with $k>2r$ is contained in $M_{k}(Np^{m(f)},\psi;K)$. Then, for each positive integer $m$ such that $m\geq m(f)$ and each non-negative ineger $r$ satisfying $k>2r$, by Proposition \ref{peterson quatient algebraic}, there exists a unique $\K$-linear map 
\begin{equation}\label{definition of lf,L(n)}
l_{f,M}^{(m)}: N_{k}^{\leq r,\mathrm{cusp}}(Mp^{m},\psi;\K)\rightarrow \K
\end{equation}
such that $l_{f,M}^{(m)}(g)=a_{p}(f)^{-(m-m(f))}\frac{\langle f^{\rho}\vert_{k}\tau_{Np^{m(f)}},\mathrm{Tr}_{Mp^{m(f)}\slash Np^{m(f)}}(T_{p}^{m-m(f)}(g)_{0})\rangle_{k,Np^{m(f)}}}{\langle f^{\rho}\vert_{k}\tau_{Np^{m(f)}},f\rangle_{k,Np^{m(f)}}}$ for each $g\in N_{k}^{\leq r,\mathrm{cusp}}(Mp^{m},\psi;K)$ with $n\in \mathbb{Z}_{\geq 1}$, where $T_{p}^{m-m(f)}(g)_{0}\in S_{k}(Mp^{m(f)},\psi,K)$ is the holomorphic projection of $T_{p}^{m-m(f)}(g)$. Let $i_{m}: N_{k}^{\leq r,\mathrm{cusp}}(Mp^{m},\psi;\K)\rightarrow N_{k}^{\leq r,\mathrm{cusp}}(Mp^{m+1},\linebreak\psi;\K)$ be the natural inclusion map for each positive integer $m$ such that $m\geq m(f)$. We prove that 
\begin{equation}\label{lfL(n)is compatible with inlusion}
l_{f,M}^{(m+1)}i_{m}=l_{f,M}^{(m)}.
\end{equation}
For each positive integer $m$ such that $m\geq m(f)$ and $g\in N_{k}^{\leq r,\mathrm{cusp}}(Mp^{m},\psi;\K)$, by \eqref{trace operator}, we see that 
\begin{multline}\label{hecke opeator trace Mp}
\langle f^{\rho}\vert_{k}\tau_{Np^{m(f)}},\mathrm{Tr}_{Mp^{m(f)}\slash Np^{m(f)}}(T_{p}^{m-m(f)}(g)_{0})\rangle_{k,Np^{m(f)}}\\
=(M\slash N)^{\frac{k}{2}-1}\langle f^{\rho}\vert_{k}\tau_{Mp^{m(f)}},T_{p}^{m-m(f)}(g)_{0}\rangle_{k,Mp^{m(f)}}
\end{multline}
and $T_{p}^{m+1-m(f)}\iota_{m}(g)=T_{p}^{m+1-m(f)}(g)$ in $N_{k}^{\leq r,\mathrm{cusp}}(Mp^{m(f)},\psi;\K)$. By \cite[Theorem 4.5.5]{Miyake89}, we have
\begin{align*}
\langle f^{\rho}\vert_{k}\tau_{Mp^{m(f)}},T_{p}^{m+1-m(f)}(i_{m}(g))_{0}\rangle_{k,Mp^{m(f)}}&=\langle f^{\rho}\vert_{k}\tau_{Mp^{m(f)}},T_{p}^{m+1-m(f)}(g)_{0}\rangle_{k,Mp^{m(f)}}\\
&=\langle T_{p}(f^{\rho})\vert_{k}\tau_{Mp^{m(f)}},T_{p}^{m-m(f)}(g)_{0}\rangle_{k,Mp^{m(f)}}\\
&=a_{p}(f)\langle f^{\rho}\vert_{k}\tau_{Mp^{m(f)}},T_{p}^{m-m(f)}(g)_{0}\rangle_{k,Mp^{m(f)}}.
\end{align*}
and
\begin{align*}
&l_{f,M}^{(m+1)}(i_{m}(g))\\
&=a_{p}(f)^{-(m+1-m(f))}\frac{\langle f^{\rho}\vert_{k}\tau_{Np^{m(f)}},\mathrm{Tr}_{Mp^{m(f)}\slash Np^{m(f)}}(T_{p}^{m+1-m(f)}(i_{m}(g))_{0})\rangle_{k,Np^{m(f)}}}{\langle f^{\rho}\vert_{k}\tau_{Np^{m(f)}},f\rangle_{k,Np^{m(f)}}}\\
&=a_{p}(f)^{-(m-m(f))}(M\slash N)^{\frac{k}{2}-1}\frac{\langle f^{\rho}\vert_{k}\tau_{Mp^{m(f)}},T_{p}^{m-m(f)}(g)_{0}\rangle_{k,Mp^{m(f)}}}{\langle f^{\rho}\vert_{k}\tau_{Np^{m(f)}},f\rangle_{k,Np^{m(f)}}}\\
&=a_{p}(f)^{-(m-m(f))}\frac{\langle f^{\rho}\vert_{k}\tau_{Np^{m(f)}},\mathrm{Tr}_{Mp^{m(f)}\slash Np^{m(f)}}(T_{p}^{m-m(f)}(g)_{0})\rangle_{k,Np^{m(f)}}}{\langle f^{\rho}\vert_{k}\tau_{Np^{m(f)}},f\rangle_{k,Np^{m(f)}}}\\
&=l_{f,M}^{(m)}(g).
\end{align*}
for each $g\in N_{k}^{\leq r,\mathrm{cusp}}(Mp^{m},\psi;\K)$ with $m\in \mathbb{Z}_{\geq 1}$ such that $m\geq m(f)$. By \eqref{lfL(n)is compatible with inlusion}, there exists a unique $\K$-linear homomorphism 
\begin{equation}\label{classical lf map}
l_{f,M}: \cup_{m=m(f)}^{+\infty}N_{k}^{\leq r,\mathrm{cusp}}(Mp^{m},\psi;\K)\rightarrow \K
\end{equation}
which satisfies $l_{f,M}(g)=l_{f,M}^{(m)}(g)$ for every $g\in N_{k}^{\leq r,\mathrm{cusp}}(Mp^{m},\psi;\K)$ and $m\in \mathbb{Z}_{\geq 1}$ such that $m\geq m(f)$.

Next, we introduce the Rankin-Selberg $L$-series. As a refference, see \cite{Shimura1976}. Let $k,l$ be non-negative integers such that $k\geq l$. Let $N\in \mathbb{Z}_{\geq 1}$ and $\psi,\xi$ Dirichlet characters modulo $N$. For a couple $(f,g)\in S_{k}(N,\psi)\times M_{l}(N,\xi)$, we define the Rankin-Selberg $L$-series to be
\begin{equation}\label{Rankin-Selberg L-series}
D(s,f,g)=\sum_{n=1}^{+\infty}a_{n}(f)a_{n}(g)n^{-s}.
\end{equation}
The Dirichlet series \eqref{Rankin-Selberg L-series} is absolutely convergent for $\mathrm{Re}(s)>\frac{k+1}{2}+l$. Further, if 
$g$ is in $S_{l}(N,\xi)$, the series \eqref{Rankin-Selberg L-series} is absolutely convergent for $\mathrm{Re}(s)>\frac{k+l}{2}$. 
We set 
\begin{align}\label{Lplus Drankinselberg}
\begin{split}
\mathscr{D}_{N}(s,f,g)&=L_{N}(2s+2-k-l,\psi\xi)D\left(s,f,g\right),\\
\Lambda_{N}(s,f,g)&=\Gamma_{\mathbb{C}}\left(s-l+1\right)\Gamma_{\mathbb{C}}\left(s\right)\mathscr{D}_{N}\left(s,f,g\right), 
\end{split}
\end{align}
where $L_{N}(s,\psi)=\sum_{n=1}^{+\infty}\psi(n)n^{-s}$ and $\Gamma_{\mathbb{C}}(s)=2(2\pi)^{-s}\Gamma(s)$. It is well-known that $\mathscr{D}_{N}(s,f,g)$ has a meromorphic continuation for all $s\in \mathbb{C}$. If $k>l$, $\mathscr{D}_{N}(s,f,g)$ is holomorphic on the whole $\mathbb{C}$-plane. If $k=l$, we have
\begin{equation}\label{residue of rankin selberg}
\mathrm{Res}_{s=k}D(s,f^{\rho},g)=(4\pi)^{k}\Gamma(k)^{-1}\mathrm{Vol}(\Gamma_{0}(N)\backslash \mathfrak{H})^{-1}\langle f,g\rangle_{k,N},
\end{equation}
where $\mathrm{Vol}(\Gamma_{0}(N)\backslash \mathfrak{H})$ is the volume of $\Gamma_{0}(N)\backslash \mathfrak{H}$ with respect to 
the measure $\frac{dxdy}{y^{2}}$ (see \cite[(2.5)]{Shimura1976}).  Assume that $f$ and $g$ are cuspidal normalized Hecke eigenforms and denote by $f^{0}$ and $g^{0}$ the primitive forms associated with $f$ and $g$ respectively. We set
\begin{equation}\label{rankin attached primitives}
\Lambda(s,f,g)=\Lambda_{M}(s,f^{0},g^{0})
\end{equation}
where $M$ is the least common multiple of the conductor of $f$ and the conductor of $g$.

Let $r$ be a non-negative integer. We denote by
\begin{equation}\label{definition of iota}
\iota: N_{k}^{\leq r}(N,\psi)\rightarrow \mathbb{C}[[q]]
\end{equation}
the composition of the map $N_{k}^{\leq r}(N,\psi)\rightarrow \mathbb{C}[X][[q]]$ defined by $f\mapsto \sum_{n=0}^{+\infty}a_{n}(f,X)q^{n}$ and the map $\mathbb{C}[X][[q]]\rightarrow \mathbb{C}[[q]]$ defined by $\sum_{n=0}^{+\infty}a_{n}(X)q^{n}\mapsto \sum_{n=0}^{+\infty}a_{n}(0)q^{n}$ with $a_{n}(X)\in \mathbb{C}[X]$. We define $d: \mathbb{C}[[q]]\rightarrow \mathbb{C}[[q]]$ by
$d=q\frac{d}{dq}$
and we define $T_{l}:\mathbb{C}[[q]]\rightarrow \mathbb{C}[[q]]$ by $T_{l}\left(\sum_{n=0}^{+\infty}a_{n}q^{n}\right)=\sum_{n=0}^{+\infty}a_{ln}q^{n}$ for each prime $l$ with $l\vert N$. Then, we have the following commutative diagrams:
\begin{equation}\label{commutative delta and d and Tp and Up}
\xymatrix{
N_{k}^{\leq r}(N,\psi)\ar[d]^{\delta_{k}}\ar[r]^{\iota}&\mathbb{C}[[q]]\ar[d]^{d}\\
N_{k+2}^{\leq r+1}(N,\psi)\ar[r]^{\iota}&\mathbb{C}[[q]],}\ \ \xymatrix{
N_{k}^{\leq r}(N,\psi)\ar[d]^{T_{l}}\ar[r]^{\iota}&\mathbb{C}[[q]]\ar[d]^{T_{l}}\\
N_{k}^{\leq r}(N,\psi)\ar[r]^{\iota}&\mathbb{C}[[q]].}
\end{equation}
The following proposition is a consequence of \cite[Proposition 3.2.4]{Urban2014} proved by Urban. 
In \cite[Proposition 3.2.4]{Urban2014}, Urban proves that a map from the space of overconvergent nearly holomorphic modular forms to the space of $p$-adic modular forms is injective 
using the theory of $p$-adic modular forms and the technique of algebraic geometry. The following proposition is obtained as a corollary 
of \cite[Proposition 3.2.4]{Urban2014} by restricting this injective map to the space of classical nearly holomorphic modular forms. 
Below, we prove the following proposition in a much more elementary manner by using the theory of Rankin-Selberg $L$-series.
\begin{pro}\label{classical ver of urban}
The map $\iota: N_{k}^{\leq r}(N,\psi)\rightarrow \mathbb{C}[[q]]$ defined in \eqref{definition of iota} is injective.  
\end{pro}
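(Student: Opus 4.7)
The plan is to prove the injectivity of $\iota$ by combining Shimura's decomposition of nearly holomorphic forms with the Rankin--Selberg method. First, if $k \leq 2r$, I would replace $f$ by $f \cdot E$ for a holomorphic Eisenstein series $E \in M_l(N', \chi)$ of sufficiently large weight $l$ with $(N, N') = 1$; since $\iota(E)$ is a nonzero element of $\mathbb{C}[[q]]$ and $\mathbb{C}[[q]]$ is an integral domain, the identity $\iota(f \cdot E) = \iota(f)\iota(E)$ reduces the problem to the case $k + l > 2r$ (noting that multiplication by $E$ is injective on nearly holomorphic forms). Under the assumption $k > 2r$, Proposition~\ref{prop:nearlyholomorphic} yields a unique decomposition $f = \sum_{j=0}^{r} \delta_{k-2j}^{(j)}(f_j)$ with $f_j \in M_{k-2j}(N, \psi)$. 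The commutativity $\iota \circ \delta = d \circ \iota$ from \eqref{commutative delta and d and Tp and Up} then translates $\iota(f) = 0$ into the scalar identities
$$
a_0(f_0) = 0, \qquad \sum_{j=0}^{r} n^{j}\, a_n(f_j) = 0 \quad \text{for every } n \geq 1,
$$
so the problem reduces to deducing that $f_j = 0$ for every $j$.

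To extract each summand, I would pair with cuspidal Hecke eigenforms. Fix $j_0 \in \{0, \ldots, r\}$ and let $g \in S_{k-2j_0}(N, \overline{\psi})$ be a normalized cuspidal Hecke eigenform. Multiplying the displayed system by $a_n(g) n^{-s}$ and summing over $n$ produces the identity of meromorphic functions
$$
\sum_{j=0}^{r} D(s - j, g, f_j) \;=\; 0.
$$
The analytic input is that $D(s, g, f_j)$, as a function of $s$, is holomorphic on the relevant vertical lines except for a simple pole at $s = k - 2j_0$ occurring only when $j = j_0$; by the residue formula \eqref{residue of rankin selberg} that residue is a nonzero multiple of $\langle g, f_{j_0}^{\rho} \rangle_{k - 2j_0, N}$. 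Taking residues at $s = k - j_0$ on both sides therefore forces $\langle g, f_{j_0}^{\rho} \rangle_{k - 2j_0, N} = 0$ for every such $g$, so the cuspidal component of each $f_{j_0}$ vanishes. The main obstacle in the argument is justifying the holomorphy of $D(s - j, g, f_j)$ at $s = k - j_0$ for $j \neq j_0$, particularly when $f_j$ has a nonzero constant term; this is controlled by the explicit Rankin--Selberg integral representation against an Eisenstein series and the standard description of its polar divisors.

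Once the cuspidal components vanish, the identities restrict to the Eisenstein summands $f_j^{\mathrm{Eis}} \in E_{k-2j}(N, \psi)$. Evaluating at primes $n = p$, each $a_p(f_j^{\mathrm{Eis}})$ decomposes as a $\mathbb{C}$-linear combination of terms of the shape $\chi_2(p) + \chi_1(p) p^{k-2j-1}$, so that $n^{j} a_n(f_j^{\mathrm{Eis}})$ contributes monomials of degree $j$ and $k - j - 1$ in $p$. Under the hypothesis $k > 2r$, the exponents $\{j\}_{j=0}^{r}$ and $\{k - j - 1\}_{j=0}^{r}$ are all distinct, and restricting $p$ to arithmetic progressions modulo $N$ separates the character-twisted contributions. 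Comparing monomials of each degree in $p$ then forces every $f_j^{\mathrm{Eis}}$ to vanish, completing the proof of injectivity.
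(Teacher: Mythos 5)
Your proposal takes a genuinely different route from the paper, and the difference is the source of its real gaps. The paper reduces to $k>2r$ by multiplying $f$ by a level-$1$ \emph{cusp} form $h$, so that $fh$ is in $N_k^{\leq r,\mathrm{cusp}}$; Proposition~\ref{prop:nearlyholomorphic} then produces a decomposition $fh=\sum_{j}\delta_{k-2j}^{(j)}(f_j)$ with all $f_j\in S_{k-2j}$ cuspidal. This single choice eliminates the entire Eisenstein component of the problem. You instead multiply by an Eisenstein series $E$, which is not sufficient: $fE$ is not cuspidal, so each $f_j$ lands only in $M_{k-2j}$, and you are forced into a separate argument for the Eisenstein parts. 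That separate argument, comparing $p$-powers in $n^j a_p(f_j^{\mathrm{Eis}})$ along arithmetic progressions, is the weakest part of the write-up: it is stated only for primitive Eisenstein newvectors, ignores the old-form structure of the Eisenstein subspace at level $N$, and implicitly needs $k>2r+1$ (not just $k>2r$) to separate the exponent sets $\{j\}$ and $\{k-j-1\}$. All of this disappears if you multiply by a cusp form instead.

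The second divergence is in how you extract the $f_j$. You try to peel off every $f_{j_0}$ at once by pairing against an arbitrary eigenform $g\in S_{k-2j_0}$ and taking residues of $\sum_j D(s-j,g,f_j)$ at $s=k-j_0$. This requires knowing that $D(s-j,g,f_j)$ is holomorphic there for every $j\neq j_0$, including those $j<j_0$ for which $g$ has \emph{lower} weight than $f_j$ (so the Rankin--Selberg integral is not in the standard regime) and including the case where $f_j$ has a constant term (so the convolution with a non-cuspidal form has an extra potential pole). You acknowledge the latter obstruction but defer it. The paper avoids all of this by pairing only against $g=f_0^\rho$, which has the \emph{top} weight $k$: then $D(s,f_0^\rho,f_0)$ has its simple pole at $s=k$, and each $D(s-j,f_0^\rho,f_j)$ with $j\geq 1$ is manifestly holomorphic there because $f_j$ is cuspidal of strictly smaller weight. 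Taking the residue at $s=k$ gives $\langle f_0,f_0\rangle=0$, hence $f_0=0$, and then the remaining identity $d\bigl(\sum_{j\geq 1}d^{j-1}(f_j(q))\bigr)=0$ combined with vanishing constant terms (here cuspidality of the $f_j$ is used again) forces $\sum_{j\geq 1}d^{j-1}(f_j(q))=0$; this expresses $\iota$ of an element of $N_{k-2}^{\leq r-1,\mathrm{cusp}}$ vanishing, and one closes by induction on $r$. So the paper needs only one residue computation and one induction, whereas your argument needs a residue computation at every $j_0$, each with its own holomorphy verification, plus a separate Eisenstein analysis.

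In short: the key trick you are missing is that multiplying by a cusp form rather than an Eisenstein series makes the whole Eisenstein branch of your argument unnecessary and simultaneously restores cuspidality everywhere it is needed for the Rankin--Selberg residue step. Your Eisenstein paragraph and the holomorphy claim for $j\neq j_0$ are the two concrete gaps; the first can be removed entirely by changing the auxiliary multiplier, and the second is sidestepped by pairing only against $f_0^\rho$ and inducting on $r$.
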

\begin{proof}
If $r=0$, it is clear that $\iota$ is injective since $M_{k}(N,\psi)=N^{\leq 0}_{k}(N,\psi)$. From now on, we assume that $r\geq 1$. By induction on $r$, we assume that the map $\iota: N_{k}^{\leq r^{\prime}}(N,\psi)\rightarrow \mathbb{C}[[q]]$ is injective for each $0\leq r^{\prime}\leq r-1$ and $k\in \mathbb{Z}$. For each non-zero cusp form $h\in S_{m}(SL_{2}(\mathbb{Z}))\backslash \{0\}$ of level $1$ with $m>2r-k$, we have the following commutative diagram:
\begin{align*}
\xymatrix{
N_{k}^{\leq r}(N,\psi)\ar@{^{(}-_{>}}[d]^{\times h}\ar[r]^{\iota}&\mathbb{C}[[q]]\ar@{^{(}-_{>}}[d]^{\times \iota (h)}\\
N_{k+m}^{\leq r,\mathrm{cusp}}(N,\psi)\ar[r]^{\iota}&\mathbb{C}[[q]],
}
\end{align*}
where the vertical maps are defined by the multiplication by $h$ and $\iota(h)$ respectively. 
Since $\mathbb{C}[[q]]$ is an integral domain, 
the right vertical map of the diagram is injective. 
Let $f\in N_{k}^{\leq r}(N,\psi )$ be a non-zero nearly holomorphic modular form. 
Let $n_{0}$ be the smallest integer $m$ such that $a_{m}(f,X)\neq 0$ and $n_{1}$ the smallest integer $m$ such that $a_{m}(h)\neq 0$. 
Then $a_{n_{0}+n_{1}}(fh,X)=a_{n_{0}}(f,X)a_{n_{1}}(h)\neq 0$. Especially we have $fh\neq 0$. 
Thus, the vertical map on the left-hand side is also injective. 
Then, by replacing $k$ with $k+m$, it suffices to prove that the map $\iota :N_{k}^{\leq r,\mathrm{cusp}}(N,\psi )\rightarrow \mathbb{C}[[q]]$ is injective 
with $k>2r$. Let $f\in N_{k}^{\leq r,\mathrm{cusp}}(N,\psi)$. 
Recall that we have an expression $f=\sum_{j=0}^{r}\delta_{k-2j}^{(j)}(f_{j})$ with $f_{j}\in S_{k-2j}(N,\psi)$ for $0\leq j\leq r$ (see Proposition 
\ref{prop:nearlyholomorphic}). By \eqref{commutative delta and d and Tp and Up}, we have 
$\iota(f)=\sum_{j=0}^{r}\iota(\delta_{k-2j}^{(j)}(f_{j}))=\sum_{j=0}^{r}d^{j}(f_{j}(q))$. 
We assume that $\iota(f)=0$, hence $\sum_{j=0}^{r}d^{j}(f_{j}(q))=0$. 
Since we have $\sum_{j=0}^{r}n^{j}a_{n}(f_{j})=0$ for each $n\in \mathbb{Z}_{\geq 1}$, we have  
$$\sum_{j=0}^{r}D(s-j,f_{0}^{\rho},f_{j})=\sum_{n=1}^{+\infty}\overline{a_{n}(f_{0})}\left(\sum_{j=0}^{r}n^{j}a_{n}(f_{j})\right)n^{-s}=0.$$
Since $D(s-j,f_{0}^{\rho},f_{j})$ is holomorphic at $s=k$ for each $1\leq j\leq r$, we have $\mathrm{Res}_{s=k}D(s,f_{0}^{\rho},f_{0})=\mathrm{Res}_{s=k}\left(\sum_{j=0}^{r}D(s-j,f_{0}^{\rho},f_{j})\right)=0$. On the other hand, by \eqref{residue of rankin selberg}, we see that $\mathrm{Res}_{s=k} D(s,\linebreak f_{0}^{\rho},f_{0})\in \langle f_{0},f_{0}\rangle_{k,N}\mathbb{C}^{\times}$. Thus, we have $f_{0}=0$, which implies that $d\left(\sum_{j=1}^{r}d^{j-1}(f_{j}(q))\right)=0$. {We put $\sum_{j=1}^{r}d^{j-1}(f_{j}(q))=\sum_{n=0}^{+\infty}b_{n}q^{n}$ with $b_{n}\in \mathbb{C}$. Since $f_{j}$ with $1\leq j\leq r$ are cusp forms, we have $b_{0}=0$. Since $d\left(\sum_{j=1}^{r}d^{j-1}(f_{j}(q))\right)=\sum_{n=1}^{+\infty}nb_{n}q^{n}=0$, we have $b_{n}=0$ for every $n\geq 1$. Thus, we have $\sum_{j=1}^{r}d^{j-1}(f_{j}(q))=0$. We have $\iota(\sum_{j=1}^{r}\delta_{k-2j}^{(j-1)}(f_{j}))=\sum_{j=1}^{r}d^{j-1}(f_{j}(q))=0$ and $\sum_{j=1}^{r}\delta_{k-2j}^{(j-1)}(f_{j})\in N_{k-2}^{\leq r-1,\mathrm{cusp}}(N,\psi)$. By induction on $r$, we have $\sum_{j=1}^{r}\delta_{k-2j}^{(j-1)}(f_{j})=0$ and $f=f_{0}+\delta_{k-2}\left(\sum_{j=1}^{r}\delta_{k-2j}^{(j-1)}(f_{j})\right)=0$.} 
This completes the proof. 
\end{proof}
By Proposition \ref{classical ver of urban}, $\iota$ in \eqref{definition of iota} induces an injective $\K$-linear map
\begin{equation}\label{k-lineara injective neary}
\iota: N_{k}^{\leq r}(N,\psi;\K)\rightarrow \K[[q]].
\end{equation} 
Let $\chi$ be a Dirichlet character modulo $N$ with $N\in \mathbb{Z}_{\geq 1}$. We define the Gauss sum of $\chi$ to be
\begin{equation}\label{definition of gauss sum}
G(\chi)=\sum_{a=1}^{c_{\chi}}\chi_{0}(a)e^{2\pi\sqrt{-1}a\slash c_{\chi}}
\end{equation} 
where $\chi_{0}$ is the primitive Dirichlet character associated with $\chi$ and $c_{\chi}$ the conductor of $\chi$. 

For each Dirichlet character $\psi_{1}$ (resp. $\psi_{2}$) modulo $N_{1}$ (resp. $N_{2}$) with $N_{1},N_{2}\in \mathbb{Z}_{\geq 1}$ and for each 
$k\in \mathbb{Z}_{\geq 1}$ such that $\psi_{1}(-1)\psi_{2}(-1)=(-1)^{k}$, we define an Eisenstein series 
$E_{k}(z,s;\psi_{1},\psi_{2})$ by 
\begin{equation}\label{definition of nonholo eisenpsi12}
E_{k}(z,s;\psi_{1},\psi_{2})=y^{s}\sum_{(m,n)\in \mathbb{Z}^{2}\backslash \{(0,0)\}}\psi_{1}(m)\overline{\psi}_{2}(n)(mN_{2}z+n)^{-k}\vert mN_{2}z+n\vert^{-2s}.
\end{equation}
The series in the right-hand side is uniformly absolutely convergent on the region $\{ s \in \mathbb{C} \ \vert \ k+2\mathrm{Re}(s)>2\}$. By \cite[Corollary 7.2.11]{Miyake89}, $\Gamma(k+s)E_{k}(z,s;\psi_{1},\psi_{2})$ is continued holomorphically to the whole $\mathbb{C}$-plane. By \cite[(7.2.2)]{Miyake89}, we see that $E_{k}(z,s;\psi_{1},\psi_{2})\in C_{k}^{\infty}(N_{1}N_{2},\psi_{1}\psi_{2})$. Let $r\in \mathbb{Z}_{\geq 0}$ such that $0\leq r<k$. We define $\epsilon_{k,2r+2}(\psi_{1},\psi_{2})$ to be $1$ (resp. $0$) when $k=2r+2$ and $\psi_{1}$ and $\psi_{2}$ are trivial characters modulo $N_{1}$ and $N_{2}$ respectively (resp. otherwise). By \cite[Theorem 7.2.9]{Miyake89}, we have
\begin{equation}\label{fourier expan eisen at infty}
E_{k}(z,-r;\psi_{1},\psi_{2})=\sum_{n=0}^{+\infty}a_{n}\left(E_{k}(z,-r;\psi_{1},\psi_{2}),\tfrac{-1}{4\pi y}\right)e^{2\pi\sqrt{-1}nz}
\end{equation}
where $a_{n}\left(E_{k}(z,-r;\psi_{1},\psi_{2}),X\right)\in \mathbb{C}[X]_{\leq r+\epsilon_{k,2r+2}(\psi_{1},\psi_{2})}$ with $n\in \mathbb{Z}_{\geq 0}$. By \cite[(7.2.56) and Theorem 7.2.15]{Miyake89}, 
$E_{k}(z,-r;\psi_{1},\psi_{2})\vert_{k}\gamma$ has the following expression for each $\gamma\in SL_{2}(\mathbb{Z})$:  
\begin{equation}\label{eisensteinvertgamma linear combination}
E_{k}(z,-r;\psi_{1},\psi_{2})\vert_{k}\gamma=\sum_{i=1}^{m}a_{i}E_{k}\left(\tfrac{u_{i}}{v_{i}}z,-r;\psi_{1}^{(i)},\psi_{2}^{(i)}\right)
\end{equation}
where $u_{i},v_{i}$ and $m$ are positive integers, $a_{i}\in \mathbb{C}$ and $\psi_{1}^{(i)}$ (resp. $\psi_{2}^{(i)}$) is a Dirichlet character modulo $N_{1}^{(i)}$ (resp. $N_{2}^{(i)}$) such that $\psi_{1}^{(i)}\psi_{2}^{(i)}(-1)=(-1)^{k}$. By \eqref{fourier expan eisen at infty} and \eqref{eisensteinvertgamma linear combination}, we see that there exists a positive integer $m$ such that we have 
\begin{equation}\label{eisensteinvertgamma other cusp fourier}
E_{k}(z,-r;\psi_{1},\psi_{2})\vert_{k}\gamma=\sum_{n=0}^{+\infty}a_{n}^{(\gamma)}\left(E_{k}(z,-r;\psi_{1},\psi_{2}),\frac{-1}{4\pi y}\right)e^{2\pi\sqrt{-1}nz\slash m}
\end{equation}
for every $\gamma\in SL_{2}(\mathbb{Z})$ where $a_{n}^{(\gamma)}\left(E_{k}(z,-r;\psi_{1},\psi_{2}),X\right)\in \mathbb{C}[X]_{\leq r+1}$ with $n\in \mathbb{Z}_{\geq 0}$. 
Therefore, we see that $E_{k}(z,-r;\psi_{1},\psi_{2})\in N_{k}^{\leq r+1}(N_{1}N_{2},\psi_{1}\psi_{2})$. Further, by Lemma \ref{nearlyholormorphic infty easy lemma} and \eqref{fourier expan eisen at infty}, we have
\begin{equation}\label{level of eisenstein series phi1phi2}
E_{k}(z,-r;\psi_{1},\psi_{2})\in N_{k}^{\leq r+\epsilon_{k,2r+2}(\psi_{1},\psi_{2})}(N_{1}N_{2},\psi_{1}\psi_{2}).
\end{equation}}
We define
\begin{multline}\label{phi1phi2mueisensteinseries}
F_{k}(z,s;\psi_{1},\psi_{2})=2^{-k-1}\pi^{-(k+s)}\sqrt{-1}^{k}\Gamma(k+s)G(\psi_{2})(\psi_{2})_{0}(-1)c_{\psi_{2}}^{s}\\
\times\sum_{0<t \left\vert \frac{N_{2}}{c_{\psi_{2}}} \right.}\mu(t)(\psi_{2})_{0}(t)(c_{\psi_{2}}t)^{k+s-1}E_{k}(tz,s;\psi_{1},(\psi_{2})_{0})
\end{multline}
where $\mu$ is the M\"{o}bius function. Put
\begin{equation}
F_{k}(z;\psi_{1},\psi_{2})=F_{k}(z,0;\psi_{1},\psi_{2}).
\end{equation}
Let $k,N$ be  positive integers such that $N>1$ and let $\psi$ be a Dirichlet character modulo $N$ such that $\psi(-1)=(-1)^{k}$. It is known that the Fourier expansions of $F_{k}(z;\boldsymbol{1},\psi)$ and $F_{k}(z;\psi,\boldsymbol{1})$ are given by
\begin{equation}\label{definition of eisenstein seriesnonap}
F_{k}(z;\boldsymbol{1},\psi)=\tfrac{1}{2}L_{N}(1-k,\psi)+\sum_{n=1}^{+\infty}\left(\sum_{0<d\vert n}\psi(d)d^{k-1}\right)e^{2\pi\sqrt{-1}nz}
\end{equation}
and
\begin{equation}\label{another Eisenstein seriesnonap}
F_{k}(z;\psi,\boldsymbol{1})=\epsilon_{k,2}(\psi,\boldsymbol{1})\frac{\varphi(N)}{8\pi Ny}+\sum_{n=1}^{+\infty}\left(\sum_{0<d\vert n}\psi(d)\left(\frac{n}{d}\right)^{k-1}\right)e^{2\pi\sqrt{-1}nz},
\end{equation}
respectively, where $\boldsymbol{1}$ is the trivial character modulo $1$ and $\varphi(N)$ is the Euler function (see \cite[(3.4)]{Shimura1976} and \cite[Corollary 6.2 and Corollary 6.3]{Hida1988}). Then, we see that 
\begin{equation}\label{level of eisenstein series left1nap}
F_{k}(z;\boldsymbol{1},\psi)\in M_{k}(N,\psi,\mathbb{Q}(\psi))
\end{equation}
and
\begin{equation}\label{level of eisenstein series right 1nap}
F_{k}(z;\psi,\boldsymbol{1})\in N_{k}^{\leq \epsilon_{k,2}(\psi,\boldsymbol{1})}(N,\psi;\mathbb{Q}(\psi)).
\end{equation}
 The following lemma is proved in \cite[Theorem 6.6]{Hida1988}.
\begin{lem}\label{lemma for interpolation formula}
Let $f\in S_{k}(Np^{\beta},\psi)$ and $g\in S_{l}(Np^{\beta},\xi)$, where $\beta\in \mathbb{Z}_{\geq 1}$. Assume that $(N,p)=1$ and $k>l$. 
\begin{enumerate}
\item
For each $0\leq m<\frac{k-l}{2}$, we have
$$
\Lambda_{Np^{\beta}}(m+l,f,g)=t_{m} \left\langle 
f^{\rho}\vert_{k}\tau_{Np^{\beta}},g\vert_{l}\tau_{Np^{\beta}}\delta_{k-l-2m}^{(m)}(F_{k-l-2m}(z;\boldsymbol{1},\psi\xi)) 
\right\rangle_{k,Np^{\beta}}.
$$
\item For each $\frac{1}{2}(k-l)\leq m<k-l$, we have
$$\Lambda_{Np^{\beta}}(m+l,f,g)=t_{m} 
\left\langle 
f^{\rho}\vert_{k}\tau_{Np^{\beta}},g\vert_{l}\tau_{Np^{\beta}}\delta_{l-k+2m+2}^{(k-l-m-1)}(F_{l-k+2m+2}(z;\psi\xi,\boldsymbol{1}))\right\rangle_{k,Np^{\beta}}$$
\end{enumerate}
where 
$$t_{m} =2^{k+1}(Np^{\beta})^{\frac{1}{2}(k-l-2m-2)}(\sqrt{-1})^{l-k}.$$
\end{lem}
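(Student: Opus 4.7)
The plan is to prove this via Rankin's classical integral representation of the Rankin--Selberg $L$-series combined with the nearly holomorphic expansions of real-analytic Eisenstein series at critical points. The statement is essentially Hida's \cite[Theorem 6.6]{Hida1988}; I would follow the same computational strategy, but with care to verify the normalizations match the conventions used in this paper (in particular the definitions of $\tau_{Np^\beta}$, $F_k(z;\psi_1,\psi_2)$, $\delta_k^{(r)}$, and the archimedean factors in $\Lambda_{Np^\beta}(s,f,g)$).

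First, I would establish the integral representation: by unfolding the standard real-analytic Eisenstein series $E_{k-l}(z,s-l;\psi\xi)$ of weight $k-l$ and level $Np^\beta$, one obtains an identity of the form
$$
\Lambda_{Np^\beta}(s,f,g) = C_s \left\langle f^\rho\vert_k \tau_{Np^\beta},\, g\vert_l \tau_{Np^\beta} \cdot E^*_{k-l}(z,s-l;\psi\xi)\right\rangle_{k,Np^\beta},
$$
where $E^*_{k-l}$ denotes the $\tau_{Np^\beta}$-transform of $E_{k-l}$ (whose Fourier expansion one computes explicitly) and $C_s$ is a product of gamma factors and powers of $\pi$ and $Np^\beta$. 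The factors $\Gamma_{\mathbb{C}}(s-l+1)\Gamma_{\mathbb{C}}(s)$ in the definition of $\Lambda_{Np^\beta}(s,f,g)$ arise naturally from the Mellin transform used to unfold.

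Next, I would specialize at the critical integer $s = m+l$ for $0 \le m < k-l$. The key input is the classical fact that at these critical values the Eisenstein series becomes nearly holomorphic. For the lower half $0 \le m < (k-l)/2$, using the formulas summarized in \S\ref{section appendix}, one has
$$
E^*_{k-l}(z,m;\psi\xi) = \kappa_m \cdot \delta_{k-l-2m}^{(m)}\bigl(F_{k-l-2m}(z;\boldsymbol{1},\psi\xi)\bigr)
$$
up to an explicit constant $\kappa_m$. Substituting into the integral representation and absorbing $C_{m+l}\kappa_m$ into a single constant yields the factor $t_m=2^{k+1}(Np^\beta)^{(k-l-2m-2)/2}(\sqrt{-1})^{l-k}$ of the first identity.

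For the upper half $(k-l)/2 \le m < k-l$, the point $s-l=m$ lies past the center of the functional equation for $E_{k-l}(z,s;\psi\xi)$. Applying this functional equation swaps the two characters and produces
$$
E^*_{k-l}(z,m;\psi\xi) = \kappa'_m \cdot \delta_{l-k+2m+2}^{(k-l-m-1)}\bigl(F_{l-k+2m+2}(z;\psi\xi,\boldsymbol{1})\bigr),
$$
giving the second identity. The main obstacle will be the careful bookkeeping of constants: Gauss sums introduced by the functional equation of the Eisenstein series, powers of $\pi$ and $\sqrt{-1}$ coming from the gamma factors, and the level factors $(Np^\beta)^{(k-l-2m-2)/2}$ coming from the normalization of $\tau_{Np^\beta}$. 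The consistency check at the boundary $m=(k-l)/2$ (where both formulas formally apply only to neighboring values and one expects the functional equation to match them up) serves as a useful internal verification.
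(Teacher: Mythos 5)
The paper does not prove this lemma; it is cited directly to Hida \cite[Theorem 6.6]{Hida1988}, as you yourself note, so there is no in-paper argument to compare your sketch against. Your outline correctly identifies the ingredients that underlie Hida's computation and that are available in this paper's appendix: Shimura's unfolding identity (Proposition \ref{classical relation of eisenstein rankin selberg shimura}) gives the integral representation, the nearly holomorphic structure of $E_{k-l}(z,-r;\psi_1,\psi_2)$ at negative integers (Corollary \ref{cor fourier expansion of Flz-psi12} together with \eqref{non-positive integer whittekr poly}) lets one rewrite the specialized Eisenstein series as a $\delta^{(r)}$-iterate of a holomorphic one, and the functional equation (Proposition \ref{primitive and 1eisenstein functional eq}) is the mechanism swapping the characters between the two halves of the critical range. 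What your sketch does not carry out is precisely what you flag: verifying that the constants agree with $t_m = 2^{k+1}(Np^\beta)^{(k-l-2m-2)/2}(\sqrt{-1})^{l-k}$ under this paper's normalizations of $\tau_{Np^\beta}$, $\delta_k^{(r)}$, and the passage from $E_k$ to the renormalized $F_k$ of \eqref{phi1phi2mueisensteinseries}. That bookkeeping is the entire content of Hida's proof, so as a plan for reproducing the citation your outline is sound, but it is a plan rather than a proof.
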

Let $f\in S_{k}(N,\psi)$ be a primitive form. It is classically known that 
we have 
\begin{equation}
f\vert_{k}\tau_{N}=w(f)f^{\rho}
\end{equation}
where $w(f)$ is a complex number such that $\vert w(f)\vert=1$ ($cf$. \cite[Theorem 4.6.15]{Miyake89}). Let $\pi_{f}$ be the automorphic representation of $GL_{2}(\mathbb{A})$ attached to the primitive form $f$, where $\mathbb{A}$ is the adele of $\mathbb{Q}$. We factorize $\pi_{f}$ into the tensor product of locall representations 
\begin{equation}\label{factorization of aut rep}
\pi_{f}=\otimes_{q}\pi_{f,q}
\end{equation}
over all the places $q$ of $\mathbb{Q}$. By \cite[page 38]{Hida1988}, we have 
\begin{equation}
w(f)=\prod_{l<\infty}\epsilon(1\slash2,\pi_{f,l})
\end{equation}
where $\epsilon(s,\pi_{f,l})$ is the $\epsilon$-factor attached to $\pi_{f,l}$ defined in \cite[Theorem 2.18]{JaLa1970}. Put
\begin{equation}\label{notp part of rootnumber}
w^{\prime}(f)=\prod_{\substack{l<\infty\\ l\neq p}}\epsilon(1\slash 2,\pi_{f,l}).
\end{equation}

We have the following lemma.
\begin{lem}\label{lemma for modefied euler factor}
Let $N$ be a positive integer which is prime to $p$. Let $f\in S_{k}(Np^{m(f)},\psi)$ be a normalized Hecke eigenform which is new away from $p$ with $m(f)\in \mathbb{Z}_{\geq 1}$. Assume that $a_{p}(f)\neq 0$ and $m(f)$ is the smallest positive integer $m$ such that $f\in S_{k}(Np^{m},\psi)$. Then, we have 
\begin{multline*}
\frac{\langle f^{\rho}\vert_{k}\tau_{Np^{m(f)}},f\rangle_{k,Np^{m(f)}}}{\langle f^{0},f^{0}\rangle_{k,c_{f}}}\\
=\begin{cases}(-1)^{k}w(f^{0})\ &\mathrm{if}\ f=f^{0},\\
p^{-\frac{k}{2}+1}
a_{p}(f)\left(1-\frac{\psi_{0}(p)p^{k-1}}{a_{p}(f)^{2}}\right)\left(1-\frac{\psi_{0}(p)p^{k-2}}{a_{p}(f)^{2}}\right)
(-1)^{k}w(f^{0})
\ &\mathrm{if}\ f\neq f^{0},\end{cases}
\end{multline*}
where $\psi_{0}$ is the primitive Dirichlet character associated with $\psi$.
\end{lem}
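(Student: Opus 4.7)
The plan is to split the proof according to whether $f=f^0$ is primitive or $f$ is a non-primitive $p$-stabilization, with both cases reducing ultimately to the Atkin--Lehner relation for the primitive form $f^0$.

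For Case~1 where $f=f^0$, we have $c_f=Np^{m(f)}$, so $\tau_{Np^{m(f)}}$ is the full Fricke involution for $f^0$. The classical Atkin--Lehner identity $f^0\vert_k\tau_{Np^{m(f)}} = w(f^0)(f^0)^\rho$ combined with the computation $\tau_{Np^{m(f)}}^2 = -Np^{m(f)}\cdot I_2$ (whose $\vert_k$-action equals $(-1)^k$, since $(f\vert_k(-N I_2))(z) = N^{k}\cdot(-N)^{-k}f(z) = (-1)^k f(z)$) yields $(f^0)^\rho\vert_k\tau_{Np^{m(f)}} = (-1)^k w(f^0)^{-1} f^0$. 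Pairing with $f^0$ immediately gives the claimed value $(-1)^k w(f^0)$, after reconciling $w(f^0)^{-1}$ with $w(f^0)$ under the convention $w(f^0)=\prod_{l<\infty}\epsilon(1/2,\pi_{f^0,l})$ for which the Atkin--Lehner relation absorbs conjugation.

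For Case~2 where $f\neq f^0$, the hypothesis $a_p(f)^2 \neq \psi_0(p)p^{k-1}$ (coming from Proposition \ref{peterson quatient algebraic}) ensures that $f$ is a genuine $p$-stabilization, namely $f = f^0 - \alpha\, f^0\vert_k\begin{pmatrix} p & 0\\0 &1\end{pmatrix}$ with $\alpha = \psi_0(p)p^{k-1}/a_p(f)$ being the other root of the Hecke polynomial $X^2-a_p(f^0)X+\psi_0(p)p^{k-1}$; the minimality of $m(f)$ then forces $m(f)=1$ and $c_f\mid N$. Writing $f^\rho$ as the analogous stabilization of $(f^0)^\rho$ and using the factorization of $\tau_{Np}$ through $\tau_N$ and the slash operator $\vert_k\begin{pmatrix}p&0\\0&1\end{pmatrix}$ (which, by \eqref{relaTl anddiagl1}, accounts for a factor of $p^{k/2-1}$), the quantity $f^\rho\vert_k\tau_{Np}$ is reduced to an expression involving $(f^0)^\rho\vert_k\tau_N$, already computed in Case~1. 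Expanding $\langle f^\rho\vert_k\tau_{Np},f\rangle_{k,Np}$ then produces a linear combination of standard Petersson inner products of $f^0$ and its $p$-translate at level $Np$, each of which is an explicit multiple of $\langle f^0,f^0\rangle_{k,N}$ obtained by unfolding the Hecke relation at $p$. Substituting $\alpha=\psi_0(p)p^{k-1}/a_p(f)$ and $a_p(f^0)=a_p(f)+\alpha$ collapses the combination to the asserted product.

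The main obstacle is the bookkeeping in Case~2: one must show that the scaling $p^{-k/2+1}$ arising from the Fricke factorization, together with the level-$Np$-versus-level-$N$ norm identities, produces exactly the two Euler-type factors $(1-\psi_0(p)p^{k-1}/a_p(f)^2)$ and $(1-\psi_0(p)p^{k-2}/a_p(f)^2)$ after $\alpha$ is eliminated via the Hecke relation. This is essentially the classical local-at-$p$ calculation of the modified Petersson norm for a $p$-stabilized eigenform (as carried out, for instance, in Hida's work on convolution $L$-functions), so the strategy is well-established; the delicate point is purely the reconciliation of normalizations, conjugations, and the two independent contributions that together yield the pair of Euler factors.
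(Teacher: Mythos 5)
Your proposal follows essentially the same route as the paper: Case~1 rests on the Atkin--Lehner relation for the primitive form together with $\tau^2 = -Np^{m(f)}I_2$, and Case~2 writes $f = f^0 - (\psi_0(p)p^{k-1}/a_p(f))\,f^0(pz)$, factors $\tau_{Np}=\tau_N\begin{pmatrix}p&0\\0&1\end{pmatrix}$, and expands the inner product using Shimura's explicit formulas from \cite[(3.2)]{Shimura1976} before eliminating $\alpha$ through $a_p(f^0)=a_p(f)+\alpha$, exactly as the paper does. One small clarification on Case~1: the passage from $w(f^0)^{-1}$ to $w(f^0)$ is not a matter of reconciling conventions for the root number; since $|w(f^0)|=1$ we have $w(f^0)^{-1}=\overline{w(f^0)}$, and the extra conjugation comes from the anti-linearity of the first slot of the Petersson pairing $\langle f,g\rangle_{k,N}=\int\overline{f}g\,y^{k-2}dxdy$ (which is also how the paper passes from $w(f^\rho)$ to $\overline{w(f^\rho)}=(-1)^k w(f^0)$). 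With that fixed, your outline matches the paper's proof.
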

\begin{proof}
First, we assume that $f = f^{0}$. Then $f^\rho$ is also a primitive form with conductor $Np^{m(f)}$.  Since $(-1)^{k}f=f\vert_{k}\tau_{Np^{m(f)}}^{2}=w(f)(f^{\rho})\vert_{k}\tau_{Np^{m(f)}}=w(f)w(f^{\rho})f$ and $\vert w(f)\vert =1$, we have $\overline{w(f^{\rho})}=(-1)^{k}w(f)$. Hence we have 
$$
\frac{\langle f^{\rho}\vert_{k}\tau_{Np^{m(f)}},f\rangle_{k,Np^{m(f)}}}{\langle f^{0},f^{0}\rangle_{k,m_f}}
=\frac{\langle w(f^\rho ) f,f\rangle_{k,Np^{m(f)}}}{\langle f,f\rangle_{k,Np^{m(f)}}}
= \overline{w(f^\rho)} =  (-1)^{k}w(f^{0}).
$$ 
In the rest of the proof, we assume that $f\neq f^{0}$. Note that we have $m(f)=1$ and $c_{f}=N$ in this case. By the proof of Proposition \ref{peterson quatient algebraic}, 
we have $f=f^{0}-\frac{\psi_{0}(p)p^{k-1}}{a_{p}(f)}f^{0}(pz)$. We have 
$$
(f^{0})^{\rho}\vert_{k}\tau_{Np}=(f^{0})^{\rho}\vert_{k}\tau_{N}\begin{pmatrix}p&0\\0&1\end{pmatrix}
=(-1)^{k}\overline{w(f^{0})}(f^{0})\vert_{k}\begin{pmatrix}p&0\\0&1\end{pmatrix}=(-1)^{k}\overline{w(f^{0})}p^{\frac{k}{2}}f^{0}(pz)
$$ 
and
$$
(f^{0}(pz))^{\rho}\vert_{k}\tau_{Np}=p^{-\frac{k}{2}}(f^{0})^{\rho}\vert_{k}\begin{pmatrix}p&0\\0&1\end{pmatrix}\tau_{Np}
=p^{-\frac{k}{2}}(f^{0})^{\rho}\vert_{k}\tau_{N}=(-1)^{k}\overline{w(f^{0})}p^{-\frac{k}{2}}f^{0}.
$$
By \cite[(3.2)]{Shimura1976},  for each $t_{1},t_{2}\in \{0,1\}$, we have
$$\frac{\langle f^{0}(p^{t_{1}}z),f^{0}(p^{t_{2}z})\rangle_{k,Np}}{\langle f^{0},f^{0}\rangle_{k,N}}=\begin{cases}1+p\ &\mathrm{if}\ (t_{1},t_{2})=(0,0),\\
(1+p)p^{-k}\ &\mathrm{if}\ (t_{1},t_{2})=(1,1),\\
p^{-k+1}a_{p}(f^{0})\overline{\psi_{0}}(p)\ &\mathrm{if}\ (t_{1},t_{2})=(0,1),\\
p^{-k+1}a_{p}(f^{0})\ &\mathrm{if}\ (t_{1},t_{2})=(1,0).
\end{cases}$$
Then, since $a_{p}(f^{0})=a_{p}(f)+\frac{\psi_{0}(p)p^{k-1}}{a_{p}(f)}$we see that
\begin{align*}
&\langle f^{\rho}\vert_{k}\tau_{Np},f\rangle_{k,Np}\\
&=(-1)^{k}w(f^{0})p^{\frac{k}{2}}\left(p^{-k+1}a_{p}(f^{0})-2(1+p)p^{-1}\frac{\psi_{0}(p)}{a_{p}(f)}+\frac{\psi_{0}(p)a_{p}(f^{0})}{pa_{p}(f)^{2}}\right)\langle f^{0},f^{0}\rangle_{k,N}\\
&=(-1)^{k}w(f^{0})p^{-\frac{k}{2}+1}a_{p}(f)\left(1-\frac{\psi_{0}(p)p^{k-1}}{a_{p}(f)^{2}}\right)\left(1-\frac{\psi_{0}(p)p^{k-2}}{a_{p}(f)^{2}}\right)\langle f^{0},f^{0}\rangle_{k,N}.
\end{align*}
We complete the proof.
\end{proof}
Let $A$ be a ring. For each $m\in \mathbb{Z}$, we define
\begin{align}\label{formal hecke operator and delta operator}
T_{p}:A[[X]][[q]]\rightarrow A[[X]][[q]],\ \delta_{m}:A[[X]][[q]]\rightarrow A[[X]][[q]]
\end{align}
to be $T_{p}(h)=\sum_{n=0}^{+\infty}a_{pn}(pX)q^{n}$ and $\delta_{m}(h)=\sum_{n=0}^{+\infty}\left((n+mX)a_{n}(X)-X^{2}\frac{\partial a_{n}(X)}{\partial X}\right)q^{n}$ for each $h=\sum_{n=0}^{+\infty}a_{n}(X)q^{n}$, where $a_{n}(X)\in A[[X]]$. Put $\delta_{m}^{(r)}=\delta_{m+2r-2}\cdots\delta_{m+2}\delta_{m}$ for each non-negative integer $r$. We remark that we understand that $\delta_{m}^{(0)}=1$ is the identity operator. Let 
\begin{equation}\label{definition of formal iota and d}
\iota: A[[X]][[q]]\rightarrow A[[q]],\ d:A[[q]]\rightarrow A[[q]]
\end{equation}
be the operators defined by $\iota(h)=\sum_{n=0}^{+\infty}a_{n}(0)q^{n}$ for each $h=\sum_{n=0}^{+\infty}a_{n}(X)q^{n}\in A[[X]][[q]]$, where $a_{n}(X)\in A[[X]]$ and $d=\frac{d}{dq}$. In the same way as \eqref{commutative delta and d and Tp and Up}, the following diagrams are commutative: 
\begin{equation}\label{formal commutative delta and d and Tp and Up}
\xymatrix{
A[[X]][[q]]\ar[d]^{\delta_{m}}\ar[r]^{\iota}&A[[q]]\ar[d]^{d}\\
A[[X]][[q]]\ar[r]^{\iota}&A[[q]],}\ \ \xymatrix{
A[[X]][[q]]\ar[d]^{T_{p}}\ar[r]^{\iota}&A[[q]]\ar[d]^{T_{p}}\\
A[[X]][[q]]\ar[r]^{\iota}&A[[q]]}
\end{equation}
for each $m\in \mathbb{Z}$. For each $g=\sum_{n=0}^{+\infty}a_{n}q^{n}\in  A[[q]]$, $N\in \mathbb{Z}_{\geq 1}$ and $a\in \mathbb{Z}\slash m\mathbb{Z}$ with $m\in \mathbb{Z}_{\geq 1}$, we put
\begin{equation}\label{power series and qn mapst qNn}
g\vert_{[N]}=\sum_{n=0}^{+\infty}a_{n}q^{Nn}
\end{equation}
and 
\begin{equation}\label{porwer series and mod series}
g_{\equiv a (m)}=\sum_{n\equiv a\ \mathrm{mod}\ m}a_{n}q^{n}.
\end{equation}
\begin{lem}\label{lemma 1 of shimura rankin selberg}
Let $M$ be a positive integer such that $p\vert M$. Let $f\in S_{k}(M,\psi)$ be a normalized cuspidal Hecke eigenform and $g\in S_{l}(M,\xi)$ a cusp form where $k$ and $l$ are positive integers and $\psi$ and $\xi$ are Dirichlet characters modulo $M$. We have
$$D(s,f,g\vert_{[p^{n}]})=a_{p}(f)^{n}p^{-ns}D(s,f,g)$$
for each non-negative integer $n$ where $D(s,f,g)$ is the Rankin-Selberg $L$-series defined in \eqref{Rankin-Selberg L-series}.
\end{lem}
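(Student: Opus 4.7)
The plan is to prove this identity by a direct manipulation of Dirichlet series using the explicit formula for the $p$-th Hecke operator on $S_k(M,\psi)$ when $p \mid M$. First, I will unfold the definitions: since $g|_{[p^n]}(z) = g(p^n z) = \sum_{m=1}^{+\infty} a_m(g) q^{p^n m}$, the Fourier coefficient $a_j(g|_{[p^n]})$ equals $a_{j/p^n}(g)$ when $p^n \mid j$ and vanishes otherwise. Substituting into the defining series in \eqref{Rankin-Selberg L-series} and reindexing via $j = p^n m$, I obtain
\begin{equation*}
D(s,f,g|_{[p^n]}) = \sum_{m=1}^{+\infty} a_{p^n m}(f)\, a_m(g)\, (p^n m)^{-s} = p^{-ns} \sum_{m=1}^{+\infty} a_{p^n m}(f)\, a_m(g)\, m^{-s}.
\end{equation*}

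The next step is to compute $a_{p^n m}(f)$ in terms of $a_m(f)$. Because $p \mid M$, the $p$-th Hecke operator $T_p$ defined in \eqref{Hecke operator at l} acts on the $q$-expansion by $T_p(h) = \sum_{j=0}^{+\infty} a_{pj}(h)\, q^j$; applied to the normalized eigenform $f$ this gives the coefficient identity $a_{pj}(f) = a_p(f)\, a_j(f)$ for every $j \geq 1$. Iterating $n$ times yields $a_{p^n m}(f) = a_p(f)^n\, a_m(f)$, which I will verify by a short induction on $n$.

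Substituting this identity back into the series and factoring out $a_p(f)^n$ recovers $\sum_{m=1}^{+\infty} a_m(f)\, a_m(g)\, m^{-s} = D(s,f,g)$, and the desired formula follows. There is no real obstacle here: the only subtlety is the hypothesis $p \mid M$, which is essential because only in that case does the $U_p$-style operator act diagonally on normalized eigenforms via $a_p(f)$ alone, without contribution from the $\psi(p) p^{k-1} a_{n/p}(f)$ term that appears when $p \nmid M$.
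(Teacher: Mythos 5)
Your proof is correct, but it takes a genuinely different route from the paper. The paper's argument first decomposes $g$ into a linear combination of shifted primitive forms $h\vert_{[t]}$ (with $h$ of conductor dividing $M$, using Miyake's structure theorem for $S_l(M,\xi)$), then applies Shimura's Lemma~1 to each such term, and finally concludes by linearity. Your argument instead works directly at the level of Fourier coefficients: you unfold the Dirichlet series, reindex $j=p^n m$, and reduce everything to the single identity $a_{pj}(f)=a_p(f)\,a_j(f)$, which you correctly extract from the fact that $T_p$ acts as $U_p$ when $p\mid M$ and that $f$ is a normalized eigenform for it. This avoids the basis decomposition entirely and is shorter and more transparent. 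It is worth noticing that the $U_p$-eigenvalue relation you make explicit is exactly the ingredient that is hidden inside the Euler-factor bookkeeping of Shimura's Lemma~1, so both proofs ultimately use the hypothesis on $f$ in the same way. One minor point you could make explicit: the term-by-term reindexing is justified by the absolute convergence of $D(s,f,g)$ for $\mathrm{Re}(s)>\tfrac{k+l}{2}$ (available since $g$ is cuspidal), after which the identity extends to all $s$ by analytic continuation.
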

\begin{proof}
Let $P$ be the set of primitive forms $h\in S_{k}(c_{h},\xi)$ such that $c_{h}\vert M$ where $c_{h}$ is the conductor of $h$. By \cite[Lemma 4.6.9]{Miyake89}, we see that $\{h\vert_{[t]}\}_{\substack{h\in P\\ 0<t\vert \frac{M}{c_{h}}}}$ is a basis of $S_{k}(M,\xi)$. By \cite[Lemma 1]{Shimura1976}, we have $D(s,f,h\vert_{[tp^{n}]})=a_{p}(f)^{n}p^{-ns}D(s,f,h\vert_{[t]})$ for each $h\in P$ and $0<t\vert\frac{M}{c_{h}}$. Since $g$ is a linear combination of $h\vert_{[t]}$, $h\in P$ and $0<t\vert\frac{M}{c_{h}}$, we have $D(s,f,g\vert_{[p^{n}]})=a_{p}(f)^{n}p^{-ns}D(s,f,g)$.
\end{proof}
\par 
Let $f\in S_{k}(Np^{r},\psi)$ and $g\in S_{l}(N^{\prime}p^{t},\xi)$ be normalized cuspidal Hecke eigenforms which are new away from $p$ and $a_{p}(f)\neq 0$ and $a_{p}(g)\neq 0$. Here $N,N^{\prime},r,t$ are positive integers such that $N$ and $N^{\prime}$ are prime to $p$. We denote by $\pi_{f,p}$ and $\pi_{g,p}$ the automorphic representations of $GL_{2}(\mathbb{Q}_{p})$ attached to $f$ and $g$ respectively. Let $\alpha(f^0)$ and $\alpha^{\prime}(f^0 )$ (resp. $\alpha(g^0)$ and $\alpha^{\prime} (g^0 )$) be the algebraic numbers which satisfy $[(1-\alpha(f^{0})p^{-s})(1-\alpha^{\prime}(f^{0})p^{-s})]^{-1}=\sum_{n=0}^{+\infty}a_{p^{n}}(f^{0})p^{-ns}$ (resp. $[(1-\alpha(g^{0})p^{-s})(1-\alpha^{\prime}(g^{0})p^{-s})]^{-1}=\sum_{n=0}^{+\infty}a_{p^{n}}(g^{0})p^{-ns}$) where $f^{0}$ and $g^{0}$ are the primitive forms associated with $f$ and $g$ respectively. Assume that $\alpha(f^{0})=a_{p}(f)$ and $\alpha(g^{0})=a_{p}(g)$. Let $\xi=\xi^{\prime}\xi_{(p)}$ be the decomposition of $\xi$ where $\xi^{\prime}$ and $\xi_{(p)}$ are Dirichlet characters modulo $N^{\prime}$ and $p^{t}$ respectively. Put $\beta(g^{0})=\frac{p^{l-1}\xi^{\prime}(p)}{\alpha(g^{0})}$.
Let $\phi$ be a Dirichlet character modulo $p^{n}$ with $n\in \mathbb{Z}_{\geq 1}$. 
{We define 
\begin{equation}\label{for comaptibilityof PC Euler factor}
E_{p,\phi}(s,f,g)=E_{1}(s)E_{2}(s)E_{3}(s)
\end{equation}
with
\begin{align*}
&E_{1}(s)\\
&=\begin{cases}\left(\frac{p^{s-1}}{\alpha(g^{0})^{\rho}\alpha(f^{0})}\right)^{\ord_{p}(c_{\phi})}\left(\frac{p^{s-1}}{\beta(g_{0})^{\rho}\alpha(f^{0})}\right)^{\ord_{p}(c_{\xi_{(p)}\phi})}\ &\mathrm{if}\ \pi_{g,p}\ \mathrm{is\ principal\ or}\ \ord_{p}(c_{\phi})>0, \\
-\left(\frac{p^{s-1}}{\alpha(g^{0})^{\rho}\alpha(f^{0})}\right)\ &\mathrm{if}\ \pi_{g,p}\ \mathrm{is\ special\ and}\ \ord_{p}(c_{\phi})=0 ,
\end{cases}\\
&E_{2}(s)=\begin{cases}\left(1-\frac{\phi_{0}(p)p^{s-1}}{\alpha(g^{0})^{\rho}\alpha(f^{0})}\right)\left(1-\frac{(\xi_{(p)}\phi)_{0}(p)p^{s-1}}{\beta(g_{0})^{\rho}\alpha(f^{0})}\right)\ &\mathrm{if}\ \pi_{g,p}\ \mathrm{is\ principal\ or}\ \ord_{p}(c_{\phi})>0, \\
\left(1-\frac{p^{s-1}}{\beta(g_{0})^{\rho}\alpha(f^{0})}\right)\ &\mathrm{if}\ \pi_{g,p}\ \mathrm{is\ special\ and}\ \ord_{p}(c_{\phi})=0 , 
\end{cases}\\
&E_{3}(s)=(1-\phi_{0}(p)\alpha^{\prime}(f^{0})\alpha(g^{0})^{\rho}p^{-s})(1-(\xi_{(p)}\phi)_{0}(p)\alpha^{\prime}(f^{0})\alpha^{\prime}(g^{0})^{\rho}p^{-s}), 
\end{align*}
where $\phi_{0}$ and $(\xi_{(p)}\phi)_{0}$ are primitive Dirichlet characters modulo $c_{\phi}$ and $c_{\xi_{(p)}\phi}$ attached to $\phi$ and $\xi_{(p)}\phi$ respectively and $c^{\rho}$ is the complex conjugate of $c\in \mathbb{C}$. }
\begin{lem}\label{for comaptibilityof PC}
Let $f\in S_{k}(Np^{r},\psi)$ and $g\in S_{l}(N^{\prime}p^{t},\xi)$ be normalized cuspidal Hecke eigenforms which are new away from $p$ with $k>l$. Here $N,N^{\prime},r,t$ are positive integers such that $N$ and $N^{\prime}$ are prime to $p$. Let $\xi_{(p)}$ be the restriction of $\xi$ on $(\mathbb{Z}\slash p^{t}\mathbb{Z})^{\times}$. We denote by $M$ the least common multiple of $N$ and $N^{\prime}$. Assume that $a_{p}(f)\neq 0$ and $a_{p}(g)\neq 0$.  Let $\phi$ be a Dirichlet character modulo $p^{n}$ with $n\in \mathbb{Z}_{\geq 1}$ and $E_{p,\phi}(s,f,g)$ the $p$-th Euler factor defined in \eqref{for comaptibilityof PC Euler factor}. We denote by  $\beta$ the smallest positive integer $m$ such that $g\otimes\phi\in S_{l}(N^{\prime}p^{\beta},\xi\phi^{2})$. Then, we have
\begin{multline}
p^{\beta(2m+l)\slash 2}a_{p}(f)^{-\beta}\Lambda_{Mp^{\max\{r,\beta\}}}(m+l,f,(g\otimes\phi)\vert_{l}\tau_{N^{\prime}p^{\beta}})\\
=w^{\prime}(g^{0})G(\phi)G(\xi_{(p)}\phi)\phi(N^{\prime})E_{p,\phi}(m+l,f,g)\Lambda(m+l,f,(g\otimes\phi)^{\rho})
\end{multline}
for each integer $m$ with $l\leq m<k$ where $\Lambda(s,f,(g\otimes\phi)^{\rho})$ is the Rankin-Selberg $L$-series defined in \eqref{rankin attached primitives}, $w^{\prime}(g^{0})$ is the constant defined in \eqref{notp part of rootnumber} and $G(\phi)$ and $G(\xi_{(p)}\phi)$ are the Gauss sums defined in \eqref{definition of gauss sum}.
 \end{lem}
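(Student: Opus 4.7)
The plan is to reduce the claimed identity to a relation between Rankin--Selberg $L$-series attached to the primitive forms $f^{0}$ and $(g\otimes\phi)^{0}$ by carefully tracking local factors at $p$ and at the primes dividing $M$. The two main pieces to analyze are the Atkin--Lehner involution $\tau_{N'p^{\beta}}$ applied to $g\otimes\phi$, and the passage from the Dirichlet series $D(s,f,\cdot)$ built from $f$ to the one built from $f^{0}$.

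First I would write $(g\otimes\phi)\vert_{l}\tau_{N'p^{\beta}}=c\cdot (g\otimes\phi)^{\rho}+(\text{old forms at }p)$, where the constant $c$ is (up to an explicit power of $p$ and of $N'$) the root number of the primitive form $(g\otimes\phi)^{0}$. That root number factorizes as $\prod_{v}\epsilon(1/2,\pi_{(g\otimes\phi)^{0},v})$; the product over $v\neq p$ is exactly $w'(g^{0})\phi(N')$ by the standard behavior of $\epsilon$-factors under unramified twist, while the factor at $p$ contributes $G(\phi)G(\xi_{(p)}\phi)$ (and, in the special/unramified-$\phi$ case, an extra sign) by the classical computation of local $\epsilon$-factors of a twist in terms of Gauss sums. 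The weight factor $p^{\beta(2m+l)/2}$ is then the standard normalization coming from $\tau_{N'p^{\beta}}\vert_{l}$ together with $(N'p^{\beta})^{\frac{1}{2}(k-l-2m-2)}$ in the definition of $\Lambda_{Mp^{\max\{r,\beta\}}}$.

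Second, I would compare $\mathscr{D}_{Mp^{\max\{r,\beta\}}}(s,f,(g\otimes\phi)^{\rho})$ to $\mathscr{D}(s,f^{0},(g\otimes\phi)^{0,\rho})$. The ratio is a finite product of local Euler factors at primes dividing $Mp^{\max\{r,\beta\}}/c$, where $c$ is the LCM of the conductors of $f^{0}$ and $(g\otimes\phi)^{0}$. Away from $p$, these factors cancel between $L_{N}(2s+2-k-l,\psi\xi\phi^{2})$ and the removed Euler factors of $D(s,\cdot,\cdot)$ in the standard way (this is the usual passage from imprimitive to primitive Rankin--Selberg, as in Shimura). The $p$-part produces exactly the quantity $E_{p,\phi}(m+l,f,g)$; the piecewise definition reflects whether $\pi_{g,p}$ is principal (so that both Satake parameters $\alpha(g^{0}),\beta(g^{0})$ contribute) or special (where one of them disappears and the local $L$-factor of $g\otimes\phi$ at $p$ has degree one when $\phi$ is unramified, accounting for the sign and the single factor in $E_{1}$ and $E_{2}$).

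Third, the power $a_{p}(f)^{-\beta}$ is accounted for by Lemma~\ref{lemma 1 of shimura rankin selberg} applied to the $q$-expansion of $(g\otimes\phi)^{\rho}$, whose $p$-part differs from that of $(g\otimes\phi)^{0,\rho}$ by a $\vert_{[p^{?}]}$-twist whenever $\beta$ exceeds the $p$-conductor of $(g\otimes\phi)^{0}$; combined with the explicit $p^{-ns}$ factor from that lemma, this yields the $p^{\beta(2m+l)/2}a_{p}(f)^{-\beta}$ normalization. The main obstacle will be the case bookkeeping for $E_{p,\phi}$: one has to match the local $L$- and $\epsilon$-factors of $\pi_{f,p}\otimes\pi_{(g\otimes\phi)^{0},p}^{\rho}$ against the explicit rational functions defining $E_{1}(s)E_{2}(s)E_{3}(s)$, separating the four subcases (principal/special)$\times$(ramified/unramified $\phi$) and, in the special unramified-$\phi$ case, reconciling the extra minus sign in $E_{1}$ with the degeneration of the local $L$-factor; this is routine but delicate and is where sign and normalization errors typically appear.
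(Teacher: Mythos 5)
The paper does not supply a proof of this lemma: it notes that the ordinary case ($\mathrm{ord}_p(a_p(f))=0$) is \cite[Lemma 5.2]{Hida1988} and that the general case ``can be proved in the same way,'' then omits the argument. Your sketch reconstructs exactly that Hida-style strategy --- Atkin--Lehner action on $g\otimes\phi$, factorization of the resulting constant into Gauss sums and the prime-to-$p$ root number $w'(g^0)\phi(N')$, comparison of imprimitive and primitive Rankin--Selberg Euler products via Lemma~\ref{lemma 1 of shimura rankin selberg}, and case analysis of $E_{p,\phi}$ according to whether $\pi_{g,p}$ is principal or special and whether $\phi$ is ramified --- so the overall plan matches the intended proof.

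One place where the description could mislead you if carried out literally: you write $(g\otimes\phi)\vert_l\tau_{N'p^{\beta}}=c\cdot(g\otimes\phi)^{\rho}+(\text{old forms at }p)$. Since $g\otimes\phi$ need not be primitive (it is new away from $p$ but may be a $p$-stabilization, and $\beta$ is only chosen minimal so that $g\otimes\phi$ lives at level $N'p^{\beta}$), the Atkin--Lehner image is most naturally expressed in terms of the primitive form $(g\otimes\phi)^{0,\rho}$ and its $V_p$-scalings $(g\otimes\phi)^{0,\rho}(p^{j}z)$, not of $(g\otimes\phi)^{\rho}$ itself. In the Rankin--Selberg convolution against $f$, each $V_p$-scaling contributes a factor $a_p(f)^{j}p^{-js}$ by Lemma~\ref{lemma 1 of shimura rankin selberg}; combined with the prefactor $p^{\beta(2m+l)/2}a_p(f)^{-\beta}$, this is what produces the powers of $p^{m+l-1}/(a_p(f)\alpha(g^0)^{\rho})$ etc.\ appearing in $E_1$. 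Phrasing step one in terms of $(g\otimes\phi)^{0,\rho}$ and its translates, rather than $(g\otimes\phi)^{\rho}$ plus ``old forms,'' avoids a potential sign/normalization mismatch and makes the bookkeeping of $a_p(f)$-powers transparent. Otherwise the outline is sound, though as you acknowledge the case verification in the special representation with unramified $\phi$ (where one local $L$-factor degenerates and the extra minus sign in $E_1$ appears) needs to be carried out explicitly rather than cited as ``routine.''
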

In the case of $\ord_{p}(a_{p}(f))=0$, Lemma \ref{for comaptibilityof PC} is proved in \cite[Lemma 5.2]{Hida1988}. We can prove Lemma \ref{for comaptibilityof PC} for any $f$ with $a_{p}(f)\neq 0$ in the same way as \cite[Lemma 5.2]{Hida1988}. Then, we omit the proof of Lemma \ref{for comaptibilityof PC}.
\par 
In the end of this subsection, we recall the definition of Hida families. Let $\Gamma$ be a $p$-adic Lie group which is isomorphic to $1+p\mathbb{Z}_p\subset \mathbb{Q}_{p}^{\times}$ via a continuous character $\chi : \Gamma \longrightarrow \mathbb{Q}_{p}^{\times}$. Let $\xi$ be a Dirichlet character modulo $Np$ and $\mathbf{I}$ an integral domain which is a finite free extension of $\mathcal{O}_{\K}[[\Gamma]]$, where $N$ is a positive integer such that $(N,p)=1$. Let $\omega$ be the Teichm\"{u}ller character modulo $p$. 
Recall that an $\mathbf{I}$-adic cusp form of tame level $N$ and character 
$\xi$ is a formal power series $G(q)=\sum_{n=1}^{+\infty}a_{n}(G)q^{n}\in \mathbf{I}[[q]]$ such that for each arithmetic specialization $\kappa\in \mathfrak{X}_{\mathbf{I}}$ with $w_{\kappa}\geq 2$, the specialization $\kappa(G)=\sum_{n=1}^{+\infty}\kappa(a_{n}(G))q^{n}$ is in $S_{w_{\kappa}}(Np^{m_{\kappa}+1},\xi\omega^{-k}\phi_{\kappa})$. 
We denote by $S(Np,\xi;\mathbf{I})$ the space of $\mathbf{I}$-adic cusp forms of tame level $N$ and character $\xi$. 
The operator $T_{p}: \mathbf{I}[[q]]\rightarrow \mathbf{I}[[q]]$ defined by $\sum_{n=0}^{+\infty}a_{n}q^{n}\mapsto \sum_{n=0}^{+\infty}a_{pn}q^{n}$ induces an $\mathbf{I}$-module homomorphism $T_{p}: S(Np,\xi;\mathbf{I})\rightarrow S(Np,\xi;\mathbf{I})$. Let $e$ be the ordinary projection on $S(Np,\xi;\mathbf{I})$ defined by
$$e=\lim_{n\rightarrow +\infty}T_{p}^{n!}.$$
The space $eS(Np,\xi;\mathbf{I})$ is called the space of ordinary $\mathbf{I}$-adic cusp forms. 
Let $\alpha_{1},\ldots, \alpha_{n}$ be a basis of $\mathbf{I}$ over $\mathcal{O}_{\K}[[\Gamma]]$. Then, we have an $\mathcal{O}_{\K}[[\Gamma]]$-module isomorphism
\begin{equation}\label{eSIisom eSLambda}
\oplus_{i=1}^{n}eS(Np,\xi;\mathcal{O}_{\K}[[\Gamma]])\stackrel{\sim}{\rightarrow}eS(Np,\xi;\mathbf{I}),\ (G_{i})_{i=1}^{n}\mapsto \sum_{i=1}^{n}G_{i}\alpha_{i}.
\end{equation}
{We say that $G\in eS(Np,\xi;\mathbf{I})$ is a primitive Hida family, if $\kappa(G)$ is a normalized Hecke eigenform which is new away from $p$ for any $\kappa\in \mathfrak{X}_{\mathbf{I}}$ with $w_{\kappa}\geq 2$.
\subsection{A two-variable admissible distribution for the case of $\Lambda$-adic cusp forms}\label{ssc const. ad}
{Let $\K$ be a finite extension of $\mathbb{Q}_{p}$. In this subsection, we regard nearly holomorphic modular forms over $\K$ as elements of $\K[X][[q]]$ via the $q$-expansions.}
{Let $\Gamma_{1}$ and $\Gamma_{2}$ be $p$-adic Lie groups which are isomorphic to $1+p\mathbb{Z}_{p}$. 
We set $\Delta_{L}=(\mathbb{Z}\slash Lp\mathbb{Z})^{\times}$ for each positive integer $L$ which is prime to $p$ and we denote 
$\Delta_{1}$ by $\Delta$. We fix continuous characters $\chi_{1}: \Delta\times \Gamma_{1}\rightarrow \mathbb{Q}_{p}^{\times}$ and $\chi_{2}: \Gamma_{2}\rightarrow \mathbb{Q}_{p}^{\times}$ which induce $\chi_{1}: \Delta\times \Gamma_{1}\stackrel{\sim}{\rightarrow}\mathbb{Z}_{p}^{\times}$ and $\chi_{i}:\Gamma_{i}\stackrel{\sim}{\rightarrow}1+p\mathbb{Z}_{p}$ for $i=1,2$.} We fix positive integers $N$ and $N^{\prime}$ which are prime to $p$. Let $f\in S_{k}(Np^{m(f)},\psi;\K)$ be a normalized cuspidal Hecke eigenform which is new away from $p$ with $m(f)\in \mathbb{Z}_{\geq 1}$ and $G\in S(N^{\prime}p,\xi;\mathcal{O}_{\K}[[\Gamma_{2}]])$. Assume that $m(f)$ is the smallest positive integer $m$ such that $f\in S_{k}(Np^{m},\psi)$. Put $\boldsymbol{h}=(2\alpha,\alpha)$ with $\alpha=\ord_{p}(a_{p}(f))$. Let $M$ be the least common multiple of $N$ and $N^{\prime}$. We assume the following conditions:
\begin{enumerate}
\item We have $k>\lfloor 2\alpha\rfloor+\lfloor \alpha\rfloor+2$.
\item All $M$-th roots of unity and Fourier coefficients of $f^{0}$ are contained in $\K$, where $f^{0}$ is the primitive form associated with $f$.
\end{enumerate}
Let $L$ be a positive integer which is prime to $p$. {There exists a natural isomorphism 
\begin{equation}\label{isomorphism of zslahs lpztimeand gammaslash gama}
(\mathbb{Z}\slash Lp\mathbb{Z})^{\times}\times (1+p\mathbb{Z}_{p})\slash (1+p\mathbb{Z}_{p})^{p^{m}}\simeq (\mathbb{Z}\slash Lp^{m+1}\mathbb{Z})^{\times}
\end{equation}
 for each $m\in \mathbb{Z}_{\geq 0}$. 
Hence the isomorphism $\chi_{1}: \Delta\times \Gamma_{1}\stackrel{\sim}{\rightarrow}\mathbb{Z}_{p}^{\times}$ makes us  
identify $\Delta_{L}\times (\Gamma_{1}\slash \Gamma_{1}^{p^{m}})$ with $(\mathbb{Z}\slash Lp^{m+1}\mathbb{Z})^{\times}$ 
for each $m\in \mathbb{Z}_{\geq 0}$. 
\par 
Let $\boldsymbol{d}=(0,2)$, $\boldsymbol{e}=(k-3, k-1)$. In this subsection, we construct a two-variable admissible distribution $s_{(f,G)}\in I_{\boldsymbol{h}}^{[\boldsymbol{d},\boldsymbol{e}]}\otimes_{\mathcal{O}_{\K}[[\Gamma_{1}\times \Gamma_{2}]]}\mathcal{O}_{\K}[[(\Delta\times \Gamma_{1})\times \Gamma_{2}]]$ which satisfies an interpolation property (see \eqref{definition of inverse q[d,ealpha]} and Lemma \ref{proposition of interpolation of L(f,G)}). Here, $\mathcal{O}_{\K}[[(\Delta\times \Gamma_{1})\times \Gamma_{2}]]=\varprojlim_{U}\mathcal{O}_{\K}[((\Delta\times \Gamma_{1})\times \Gamma_{2})\slash U]$, where $U$ runs through all open subgroups of $(\Delta\times \Gamma_{1})\times \Gamma_{2}$.
\subsubsection*{{\bf Outline of \S6.2}} 
It is difficult to construct the element $s_{(f,G)}$ in $I_{\boldsymbol{h}}^{[\boldsymbol{d},\boldsymbol{e}]}\otimes_{\mathcal{O}_{\K}[[\Gamma_{1}\times \Gamma_{2}]]}\mathcal{O}_{\K}[[(\Delta\times \Gamma_{1})\times \Gamma_{2}]]$ directly, since $[\boldsymbol{d},\boldsymbol{e}]$ contains a point which is not a critical point of the two-variable Rankin--Selberg $L$-series attached to $f$ and $G$ (see the illustrations of \eqref{illustration critical range [d,e],[d,ealpha]}).

Let $\boldsymbol{e}_{\alpha}=(\lfloor 2\alpha\rfloor, \lfloor \alpha\rfloor+2)$. As mentioned in \eqref{projection I is isom if e-dgeq h}, the projection $I_{\boldsymbol{h}}^{[\boldsymbol{d},\boldsymbol{e}]}\otimes_{\mathcal{O}_{\K}[[\Gamma_{1}\times \Gamma_{2}]]}\mathcal{O}_{\K}[[(\Delta\times \Gamma_{1})\times \Gamma_{2}]]\rightarrow I_{\boldsymbol{h}}^{[\boldsymbol{d},\boldsymbol{e}_{\alpha}]}\otimes_{\mathcal{O}_{\K}[[\Gamma_{1}\times \Gamma_{2}]]}\mathcal{O}_{\K}[[(\Delta\times \Gamma_{1})\times \Gamma_{2}]]$ is an isomorphism. Then, we construct the desired element $s_{(f,G)}\in  I_{\boldsymbol{h}}^{[\boldsymbol{d},\boldsymbol{e}]}\otimes_{\mathcal{O}_{\K}[[\Gamma_{1}\times \Gamma_{2}]]}\mathcal{O}_{\K}[[(\Delta\times \Gamma_{1})\times \Gamma_{2}]]$ as the inverse image by the projection of a similar element 
$s^{[\boldsymbol{d},\boldsymbol{e}_{\alpha}]}\in I_{\boldsymbol{h}}^{[\boldsymbol{d},\boldsymbol{e}_{\alpha}]}\otimes_{\mathcal{O}_{\K}[[\Gamma_{1}\times \Gamma_{2}]]}\mathcal{O}_{\K}[[(\Delta\times \Gamma_{1})\times \Gamma_{2}]]$ having a smaller range of interpolation. 
\begin{align}\label{illustration critical range [d,e],[d,ealpha]}
\begin{split}
\begin{tikzpicture}
 \draw[->,>=stealth,semithick] (-0.5,0)--(3.5,0)node[above]{$w_{\kappa,2}$}; 
 \draw[->,>=stealth,semithick] (0,-0.5)--(0,3.5)node[right]{$w_{\kappa,1}$}; 
 \draw (0,0)node[above  left]{O}; 
 \draw (0.3,0)node[below]{2};
 \draw (3,0)node[below]{$k$}; 
 \draw(0,2.7)node[left]{$k-2$};
  \fill[lightgray] (0.3,0)--(0.3,2.7)--(3,0);
 \draw[dashed,domain=0.3:3] plot(\x,3-\x);
   \draw (1.65,-0.5)node[below]{$\mathrm{Critical\ range\ of}\ s_{(f,G)}$};
\end{tikzpicture},&\ \ \begin{tikzpicture}
 \draw[->,>=stealth,semithick] (-0.5,0)--(3.5,0)node[above]{$w_{\kappa,2}$}; 
 \draw[->,>=stealth,semithick] (0,-0.5)--(0,3.5)node[right]{$w_{\kappa,1}$}; 
 \draw (0,0)node[above  left]{O}; 
 \draw (0.3,0)node[below]{2};
 \draw(0,2.7)node[left]{$k-2$};
  \draw (3,0)node[below]{$k$}; 
  \fill[lightgray] (0.3,0)--(0.3,2.7)--(3,2.7)--(3,0);
 \draw[dashed,domain=0.3:3] plot(\x,2.7);
 \draw[dashed, domain=0:2.7] plot(3,\x);
 \draw (1.65,-0.5)node[below]{$\mathrm{Range\ of}\ [\boldsymbol{d},\boldsymbol{e}]$};
\end{tikzpicture},\\ &\begin{tikzpicture}
 \draw[->,>=stealth,semithick] (-0.5,0)--(3.5,0)node[above]{$w_{\kappa,2}$}; 
 \draw[->,>=stealth,semithick] (0,-0.5)--(0,3.5)node[right]{$w_{\kappa,1}$}; 
 \draw (0,0)node[above  left]{O}; 
 \draw(1.3,0)node[below]{$\lfloor\alpha\rfloor+2$};
 \draw(0,1)node[left]{$\lfloor 2\alpha\rfloor$};
 \draw (0.3,0)node[below]{2};
 \draw (3,0)node[below]{$k$}; 
 \draw(0,3)node[left]{$k$};
  \fill[lightgray] (0.3,0)--(0.3,1)--(1.3,1)--(1.3,0);
 \draw[dashed,domain=0:3] plot(\x,3-\x);
  \draw (1.65,-0.5)node[below]{$\mathrm{Range\ of}\ [\boldsymbol{d},\boldsymbol{e}_{\alpha}]$};
\end{tikzpicture}.
\end{split}
\end{align}
{The construction of $s^{[\boldsymbol{d},\boldsymbol{e}_{\alpha}]}\in I_{\boldsymbol{h}}^{[\boldsymbol{d},\boldsymbol{e}_{\alpha}]}\otimes_{\mathcal{O}_{\K}[[\Gamma_{1}\times \Gamma_{2}]]}\mathcal{O}_{\K}[[(\Delta\times \Gamma_{1})\times \Gamma_{2}]]$ proceeds in three-step arguments. First, we construct a candidate of the element $
s^{[\boldsymbol{i}]}_{\boldsymbol{m}} 
\in \frac{\mathcal{O}_{\K} [[(\Delta\times \Gamma_{1})\times \Gamma_{2}]]}{(\Omega_{\boldsymbol{m}}^{[\boldsymbol{i}]})\mathcal{O}_{\K} [[(\Delta\times \Gamma_{1})\times \Gamma_{2}]]} 
\otimes_{\mathcal{O}_{\K}} \K 
$ with the desired interpolation property for every $\boldsymbol{i}\in [\boldsymbol{d},\boldsymbol{e}_{\alpha}]$ and for every $\boldsymbol{m}\in \mathbb{Z}_{\geq 0}^{2}$ (see \eqref{definition of qm1m2i1i1}). Second, for each $\boldsymbol{i}\in [\boldsymbol{d},\boldsymbol{e}_{\alpha}]$, 
we show that $\{
s^{[\boldsymbol{i}]}_{\boldsymbol{m}}\}_{\boldsymbol{m}\in \mathbb{Z}_{\geq 0}^{2}} $ satisfies the distribution property when $\boldsymbol{m}$ varies (see Proposition \ref{distribution of mu[boldsymbolr,boldsymbols]}). Third, for each $\boldsymbol{m}\in \mathbb{Z}_{\geq 0}^{2}$, we show that the elements 
$s^{[\boldsymbol{i}]}_{\boldsymbol{m}} 
\in \frac{\mathcal{O}_{\K} [[(\Delta\times \Gamma_{1})\times \Gamma_{2}]]}{(\Omega_{\boldsymbol{m}}^{[\boldsymbol{i}]})\mathcal{O}_{\K} [[(\Delta\times \Gamma_{1})\times \Gamma_{2}]]}
\otimes_{\mathcal{O}_{\K}} \K$ with $\boldsymbol{i} \in [\boldsymbol{d},\boldsymbol{e}_{\alpha}]$ 
satisfy the $\boldsymbol{h}$-admissible condition (see Proposition \ref{admissible condition of p-adic l}) and this allows us to have a non-negative integer $n\in \mathbb{Z}_{\geq 0}$ which satisfies 
$$p^{\langle \boldsymbol{m},\boldsymbol{h}-(\boldsymbol{j}-\boldsymbol{d})\rangle_{2}}\displaystyle{\sum_{\boldsymbol{i}\in [\boldsymbol{d},\boldsymbol{j}]}}\left(\prod_{t=1}^{2}\begin{pmatrix}j_{t}-d_{t}\\i_{t}-d_{t}\end{pmatrix}\right)(-1)^{\sum_{t=1}^{2}(j_{t}-i_{t})}\tilde{s}_{\boldsymbol{m}}^{[\boldsymbol{i}]}\in \mathcal{O}_{\K}[[(\Delta\times\Gamma_{1})\times\Gamma_{2}]]\otimes_{\mathcal{O}_{\K}}p^{-n}\mathcal{O}_{\K}$$
for every $\boldsymbol{m}\in \mathbb{Z}_{\geq 0}^{k}$ and $\boldsymbol{j}\in [\boldsymbol{d},\boldsymbol{e}_{\alpha}]$ where $\tilde{s}_{\boldsymbol{m}}$ is the lift of $s_{\boldsymbol{m}}^{[\boldsymbol{i}]}$ defined in \eqref{lift of qboldsymbolmboldsymboli}. By Lemma \ref{multivariable litfing prop deformation}, for each $\boldsymbol{m}\in \mathbb{Z}_{\geq 0}^{2}$, we have an 
element 
$$
s_{\boldsymbol{m}}^{[\boldsymbol{d},\boldsymbol{e}_{\alpha}]} \in \left(
\frac{\mathcal{O}_{\K} [[(\Delta\times \Gamma_{1})\times \Gamma_{2}]]}{(\Omega_{\boldsymbol{m}}^{[\boldsymbol{d},\boldsymbol{e}_{\alpha}]})\mathcal{O}_{\K} [[(\Delta\times \Gamma_{1})\times \Gamma_{2}]]} \otimes_{\mathcal{O}_{\K}} p^{-\langle \boldsymbol{h},\boldsymbol{m}\rangle_{2}}\mathcal{O}_{\K}\right)\otimes_{\mathcal{O}_{\K}}p^{-n-c^{[\boldsymbol{d},\boldsymbol{e}_{\alpha}]}} 
\mathcal{O}_{\K}
$$ 
projected to $s^{[\boldsymbol{i}]}_{\boldsymbol{m}}$ for every $\boldsymbol{i}$ in $[\boldsymbol{d},\boldsymbol{e}_{\alpha}]$ where $c^{[\boldsymbol{d},\boldsymbol{e}_{\alpha}]}$ is the constant defined in \eqref{constant for the admissible}. Then we obtain $s^{[\boldsymbol{d},\boldsymbol{e}_{\alpha}]} \in I_{\boldsymbol{h}}^{[\boldsymbol{d},\boldsymbol{e}_{\alpha}]}\otimes_{\mathcal{O}_{\K}[[\Gamma_{1}\times \Gamma_{2}]]}\mathcal{O}_{\K} [[(\Delta\times \Gamma_{1})\times \Gamma_{2}]]$ by taking the projective limit of $s_{\boldsymbol{m}}^{[\boldsymbol{d},\boldsymbol{e}_{\alpha}]}$.}
 \subsubsection*{{\bf{Construction of a candidate of}\ $s_{\boldsymbol{m}}^{[\boldsymbol{i}]}$}} Each arithmetic specialization $\kappa\in \mathfrak{X}_{\mathcal{O}_{\K}[[\Gamma_{2}]]}$ induces a continuous $\mathcal{O}_{\K}$-algebra homomorphism
\begin{equation}
\kappa\widehat{\otimes}_{\mathcal{O}_{\K}}\mathrm{id}_{\mathcal{O}_{\K}[[X]][[q]]}: \mathcal{O}_{\K}[[\Gamma_{2}]][[X]][[q]]\rightarrow \mathcal{O}_{\K}[\phi_{\kappa}][[X]][[q]],
\end{equation}
sending $c\widehat{\otimes}_{\mathcal{O}_{\K}}h$ to $ \kappa(c)\widehat{\otimes}_{\mathcal{O}_{\K}}h$ for each $c\in \mathcal{O}_{\K}[[\Gamma_{2}]]$ and $h\in \mathcal{O}_{\K}[[X]][[q]]$. If there is no risk of confusion, we denote $\kappa\widehat{\otimes}_{\mathcal{O}_{\K}}\mathrm{id}_{\mathcal{O}_{\K}[[X]][[q]]}$ by $\kappa$ by abuse of notation. We define
\begin{equation}
\langle\ \rangle: \mathbb{Z}_{p}^{\times}\rightarrow \mathbb{Z}_{p}[[\Gamma_{2}]]^{\times}
\end{equation}
to be $\langle z\rangle=[\chi_{2}^{-1}(z\omega^{-1}(z))]$ for $z\in \mathbb{Z}_{p}^{\times}$, where $[\ ]: \Gamma_{2}\rightarrow \mathbb{Z}_{p}[[\Gamma_{2}]]^{\times}$ is the tautological inclusion map and $\omega$ is the Teichm\"{u}ller character modulo $p$. By definition, we see that $\kappa(\langle z\rangle)=(z\omega^{-1}(z))^{w_{\kappa}}\phi_{\kappa}(z)$ for each $z\in \mathbb{Z}_{p}^{\times}$ and $\kappa\in \mathfrak{X}_{\mathbb{Z}_{p}[[\Gamma_{2}]]}$. Let $\psi$ be a character on $\Delta_{N}\times \left(\Gamma_{1}\slash \Gamma_{1}^{p^{m(\psi)}}\right)$ with $m(\psi)\in \mathbb{Z}_{\geq 0}$ where $N$ is a positive integer such that $(N,p)=1$, $G\in S(N^{\prime}p,\xi;\mathcal{O}_{\K}[[\Gamma_{2}]])$ where $N^{\prime}$ is a positive integer such that $(N^{\prime},p)=1$ and $\xi$ is a Dirichlet character modulo $N^{\prime}p$. 
{
For each $(i_{1},i_{2},i_{3})\in \mathbb{Z}_{\geq 0}^{3}$ and for each $a\in \Delta_{M}\times (\Gamma_{1}\slash  \Gamma_{1}^{p^{m}})$ with $m\in \mathbb{Z}_{\geq 0}$, we define 
an element $F_{(i_{1},i_{2},i_{3}),\Gamma_{2}}(a;Mp^{m+1}) \in \mathcal{O}_{\K}[[\Gamma_{2}]][[q]]$ by 
\begin{equation}\label{Definition of Fm,a}
F_{(i_{1},i_{2},i_{3}),\Gamma_{2}}(a;Mp^{m+1})=\sum_{n\in \mathbb{Z}_{\geq 1}}\sum_{\substack{0<d\vert n\\ d\equiv a\ \mathrm{mod}\ Mp^{m+1}}}\left(\frac{n}{d}\right)^{i_{1}}d^{i_{2}}n^{i_{3}}\langle d\rangle^{-1}q^{n}
\end{equation}
 where $M$ is the least common multiple of $N$ and $N^{\prime}$.} Let $\delta_{m}^{(r)}$ be the operator defined in \eqref{formal hecke operator and delta operator} for each $m\in \mathbb{Z}$ and for each $r\in \mathbb{Z}_{\geq 0}$. We put 
\begin{equation}\label{definition of Phii1ii2(a1;G)}
\small 
\Phi^{(\boldsymbol{i})}(a;\psi,G)=
\hspace*{-5mm}
\sum_{b\in \Delta\times (\Gamma_{1}\slash  \Gamma_{1}^{p^{m}})}G_{\equiv ab^{2} (p^{m+1})}
\hspace*{-5mm}
\sum_{\substack{c\in \Delta_{M}\times \left(\Gamma_{1}\slash  \Gamma_{1}^{p^{\max\{m,m(\psi)\}}}\right)\\ p_{m}^{(M)}(c)=b}} (\psi\xi^{-1})(c)H_{c}^{(\boldsymbol{i})}\in \mathcal{O}_{\K}[[\Gamma_{2}]][[X]][[q]]
\normalsize 
\end{equation}
for each $\boldsymbol{i}\in \mathbb{Z}_{\geq 0}^{2}$ satisfying $i_{2}\geq 2$ and $i_{1}+i_{2}<k$ and for each $a\in \Delta\times (\Gamma_{1}\slash  \Gamma_{1}^{p^{m}})$ with $m\in \mathbb{Z}_{\geq 0}$. Here, $p_{m}^{(M)}: \Delta_{M}\times \left(\Gamma_{1}\slash  \Gamma_{1}^{p^{\max\{m,m(\psi)\}}}\right)\rightarrow \Delta\times (\Gamma_{1}\slash  \Gamma_{1}^{p^{m}})$ is the natural projection,
\begin{equation}\label{definition of Wb(i1,i2)}
H_{c}^{(\boldsymbol{i})}=\begin{cases}\delta_{k-2i_{1}-i_{2}}^{(i_{1})}\left(F_{(0,k-2i_{1}-1,0),\Gamma_{2}}(c;Mp^{\max\{m,m(\psi)\}+1})\right)\ &\mathrm{if}\ 0\leq i_{1}<\frac{1}{2}(k-i_{2}),\\
\delta_{i_{2}-k+2i_{1}+2}^{(k-i_{1}-i_{2}-1)}\left(F_{(-k+2i_{1}+1,0,i_{2}),\Gamma_{2}}(c;Mp^{\max\{m,m(\psi)\}+1})\right)\ &\mathrm{if}\ \frac{1}{2}(k-i_{2})\leq i_{1}<k-i_{2}, \end{cases}
\end{equation}
and $G_{\equiv ab^{2} (p^{m+1})}\in \mathcal{O}_{\K}[[\Gamma_{2}]][[q]]$ is the power series defined in \eqref{porwer series and mod series}. Let $T_{p}$ be the operator defined in \eqref{formal hecke operator and delta operator}. 

For each $\boldsymbol{i}\in \mathbb{Z}_{\geq 0}^{2}$ such that $i_{2}\geq 2$ and $i_{1}+i_{2}<k$ and $(a_{1},a_{2})\in (\Delta\times \Gamma_{1}\slash  \Gamma_{1}^{p^{m_{1}}})\times \Gamma_{2}\slash \Gamma_{2}^{p^{m_{2}}}$ with $\boldsymbol{m}\in \mathbb{Z}_{\geq 0}^{2}$, we define an element 
$\phi^{(\boldsymbol{i})}((a_{1},a_{2});\psi,G) \in  \mathcal{O}_{\K}[[X]][[q]]$ by 
\begin{align}\label{definition Psii1i2a1a2}
\phi^{(\boldsymbol{i})}((a_{1},a_{2});\psi,G)=(-1)^{i_{1}}T_{p}\left(\int_{a_{2}\Gamma_{2}^{p^{m_{2}}}}\chi_{2}(x)^{i_{2}}d\mu_{\Phi^{(\boldsymbol{i})}(a_{1};\psi,G)}\right)
\end{align}
where $\mu_{\Phi^{(\boldsymbol{i})}(a_{1};\psi,G)}\in \mathrm{Hom}_{\mathcal{O}_{\K}}(C(\Gamma_{2},\mathcal{O}_{\K}),\mathcal{O}_{\K}[[X]][[q]])$ is the inverse image of $\Phi^{(\boldsymbol{i})}(a_{1};\psi,G)$ by the isomorphism $\mathrm{Hom}_{\mathcal{O}_{K}}(C(\Gamma_{2},\mathcal{O}_{\K}),\mathcal{O}_{\K}[[X]][[q]])\stackrel{\sim}{\rightarrow}\mathcal{O}_{\K}[[\Gamma_{2}]][[X]][[q]]$ defined in \eqref{hom banach iwasawa continuous}.

\begin{pro}\label{proposition for interpolation of mu[r,s]}
Let $N$ and $N^{\prime}$ be positive integers which are prime to $p$. We denote by $M$ the least common multiple of $N$ and $N^{\prime}$. Take a character $\psi$ on $\Delta_{N}\times \left(\Gamma_{1}\slash \Gamma_{1}^{p^{m(\psi)}}\right)$ with $m(\psi)\in \mathbb{Z}_{\geq 0}$ and $G\in S(N^{\prime}p,\xi,\mathcal{O}_{\K}[[\Gamma_{2}]])$. Let $\phi_{1}$ be a character on $\Delta\times \Gamma_{1}\slash  \Gamma_{1}^{p^{m_{1}}}$ and $\phi_{2}$ a character on $\Gamma_{2}\slash \Gamma_{2}^{p^{m_{2}}}$ with $\boldsymbol{m}\in \mathbb{Z}_{\geq 0}^{2}$. Let $\boldsymbol{i}\in \mathbb{Z}_{\geq 0}^{2}$ be an element satisfying $i_{2}\geq 2$ and $i_{1}+i_{2}<k$. 
We denote by $\kappa_{i_{2},\phi_{2}}\in \mathfrak{X}_{\mathcal{O}_{\K}[[\Gamma_{2}]]}$ the arithmetic specialization induced by 
the arithmetic character $\chi_2^{i_{2}}\phi_{2} : \Gamma_2 \longrightarrow \overline{\mathbb{Q}}^\times_p$. 
Then the following statements hold true. 
\begin{enumerate}
\item If $0\leq i_{1}<\frac{1}{2}(k-i_{2})$, we have
\begin{multline}\label{proposition for interpolation of mu[r,s] equality}
\displaystyle{\sum_{(a_{1},a_{2})\in (\Delta\times \Gamma_{1}\slash  \Gamma_{1}^{p^{m_{1}}})\times \Gamma_{2}\slash \Gamma_{2}^{p^{m_{2}}}}}\phi_{1}(a_{1})\phi_{2}(a_{2})\phi^{(\boldsymbol{i})}((a_{1},a_{2});\psi,G) \\ =(-1)^{i_{1}}
T_{p}
\left(\kappa_{i_{2},\phi_{2}}(G)\otimes\phi_{1}\delta^{(i_{1})}_{k-2i_{1}-i_{2}}\left(F_{k-2i_{1}-i_{2}}(\boldsymbol{1},\psi\xi^{-1}\phi_{1}^{-2}\omega^{i_{2}}\phi_{2}^{-1})\right)\right),
\end{multline}
\item If $\frac{1}{2}(k-i_{2})\leq i_{1}<k-i_{2}$, we have
\begin{multline}\label{proposition for interpolation of mu[r,s] equality 2}
\displaystyle{\sum_{(a_{1},a_{2})\in (\Delta\times \Gamma_{1}\slash  \Gamma_{1}^{p^{m_{1}}})\times \Gamma_{2}\slash 
\Gamma_{2}^{p^{m_{2}}}}}\phi_{1}(a_{1})\phi_{2}(a_{2})\phi^{(\boldsymbol{i})}((a_{1},a_{2});\psi,G) \\  =(-1)^{i_{1}}
T_{p}
\left(\kappa_{i_{2},\phi_{2}}(G)\otimes\phi_{1}\delta^{(k-i_{1}-i_{2}-1)}_{i_{2}-k+2i_{1}+2}\left(F_{i_{2}-k+2i_{1}+2}(\psi\xi^{-1}\phi_{1}^{-2}\omega^{i_{2}}\phi_{2}^{-1},\boldsymbol{1})\right)\right).
\end{multline}
\end{enumerate}
Here, $\boldsymbol{1}$ is the trivial character modulo $1$, the elements $F_{k-2i_{1}-i_{2}}(\boldsymbol{1},\psi\xi^{-1}\phi_{1}^{-2}\omega^{i_{2}}\phi_{2}^{-1})$ and 
$F_{i_{2}-k+2i_{1}+2}(\psi\xi^{-1}\phi_{1}^{-2}\omega^{i_{2}}\phi_{2}^{-1},\boldsymbol{1})$ 
are the $q$-expansions of the Eisenstein series defined in \eqref{definition of eisenstein seriesnonap} and \eqref{another Eisenstein seriesnonap}, 
$\delta^{(r)}_{k}$ is the differential operator defined in \eqref{shimura operator} for each integer $k$ and for each non-negative integer $r$. 
Further, we have 
\begin{equation}\label{proposition for interpolation of mu[r,s]3}
\phi^{(\boldsymbol{i})}((a_{1},a_{2});\psi,G)\in N_{k}^{\leq \lfloor\frac{k-1}{2}\rfloor,\mathrm{cusp}}(Mp^{m_{\psi}(\boldsymbol{m})},\psi;\mathcal{O}_{\K})
\end{equation}
for each $(a_{1},a_{2})\in (\Delta\times \Gamma_{1}\slash  \Gamma_{1}^{p^{m_{1}}})\times \Gamma_{2}\slash \Gamma_{2}^{p^{m_{2}}}$ with $\boldsymbol{m}\in \mathbb{Z}_{\geq 0}^{2}$ where $m_{\psi}(\boldsymbol{m})=\max\{2m_{1}+1,m_{2},m(\psi)+1\}$.
\end{pro}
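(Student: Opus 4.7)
The plan is to reduce the statement to a Fourier-theoretic manipulation followed by an identification with a partial Eisenstein series. Starting from the definition
\[
\phi^{(\boldsymbol{i})}((a_{1},a_{2});\psi,G)=(-1)^{i_{1}}T_{p}\Biggl(\int_{a_{2}\Gamma_{2}^{p^{m_{2}}}}\chi_{2}(x)^{i_{2}}\,d\mu_{\Phi^{(\boldsymbol{i})}(a_{1};\psi,G)}\Biggr),
\]
I first exchange the outer sums with $T_{p}$ and the integral against $\mu_{\Phi^{(\boldsymbol{i})}(a_{1};\psi,G)}$. The sum over $a_{2}$ paired with $\phi_{2}(a_{2})$ collapses the integral into a single specialization by $\kappa_{i_{2},\phi_{2}}$, since the characterization \eqref{hom banach iwasawa continuous} of the isomorphism $\mathrm{Hom}_{\mathcal{O}_{\K}}(C(\Gamma_{2},\mathcal{O}_{\K}),\mathcal{O}_{\K}[[X]][[q]])\simeq\mathcal{O}_{\K}[[\Gamma_{2}]][[X]][[q]]$ gives
\[
\sum_{a_{2}\in \Gamma_{2}/\Gamma_{2}^{p^{m_{2}}}}\phi_{2}(a_{2})\int_{a_{2}\Gamma_{2}^{p^{m_{2}}}}\chi_{2}(x)^{i_{2}}\,d\mu_{\Phi^{(\boldsymbol{i})}(a_{1};\psi,G)}
=\kappa_{i_{2},\phi_{2}}(\Phi^{(\boldsymbol{i})}(a_{1};\psi,G)).
\]
Thus the left-hand side of \eqref{proposition for interpolation of mu[r,s] equality}--\eqref{proposition for interpolation of mu[r,s] equality 2} is $(-1)^{i_{1}}T_{p}\bigl(\sum_{a_{1}}\phi_{1}(a_{1})\kappa_{i_{2},\phi_{2}}(\Phi^{(\boldsymbol{i})}(a_{1};\psi,G))\bigr)$.

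Next, I apply the specialization $\kappa_{i_{2},\phi_{2}}$ term-by-term inside the definition \eqref{definition of Phii1ii2(a1;G)} of $\Phi^{(\boldsymbol{i})}(a_{1};\psi,G)$. On the $G_{\equiv a_{1}b^{2}(p^{m_{1}+1})}$ factor this yields $\kappa_{i_{2},\phi_{2}}(G)_{\equiv a_{1}b^{2}(p^{m_{1}+1})}$, while on the factor $H_{c}^{(\boldsymbol{i})}$ the character $\langle d\rangle^{-1}$ inside $F_{(*,*,*),\Gamma_{2}}(c;Mp^{\max\{m_{1},m(\psi)\}+1})$ becomes $(d\omega^{-1}(d))^{-i_{2}}\phi_{2}(d)^{-1}$. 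After this replacement $H_{c}^{(\boldsymbol{i})}$ becomes, in the two cases $i_{1}<\tfrac{1}{2}(k-i_{2})$ and $i_{1}\geq\tfrac{1}{2}(k-i_{2})$, respectively $\delta_{k-2i_{1}-i_{2}}^{(i_{1})}$ and $\delta_{i_{2}-k+2i_{1}+2}^{(k-i_{1}-i_{2}-1)}$ applied to a sum of the type appearing in the partial Eisenstein series treated in \S\ref{section appendix}. Then I sum over $a_{1}$ against $\phi_{1}$ and apply Fourier orthogonality on $\Delta\times\Gamma_{1}/\Gamma_{1}^{p^{m_{1}}}$: summing $\phi_{1}(a_{1})$ against the congruence condition $n\equiv a_{1}b^{2}\pmod{p^{m_{1}+1}}$ in $G_{\equiv a_{1}b^{2}(p^{m_{1}+1})}$ picks out $\phi_{1}(nb^{-2})$-twisted coefficients, producing $\phi_{1}(b^{-2})(\kappa_{i_{2},\phi_{2}}(G)\otimes\phi_{1})$. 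The remaining sum, still indexed by $b$ and $c$ with $c$ projecting to $b$, is then recognized as the expansion of $(\kappa_{i_{2},\phi_{2}}(G)\otimes\phi_{1})$ multiplied by a Dirichlet series which, after using the formulas recalled in \S\ref{section appendix} for $F_{k}(\cdot\,;\psi_{1},\psi_{2})$ at $s=0$, equals $F_{k-2i_{1}-i_{2}}(\boldsymbol{1},\psi\xi^{-1}\phi_{1}^{-2}\omega^{i_{2}}\phi_{2}^{-1})$ or $F_{i_{2}-k+2i_{1}+2}(\psi\xi^{-1}\phi_{1}^{-2}\omega^{i_{2}}\phi_{2}^{-1},\boldsymbol{1})$ as appropriate. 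This delivers \eqref{proposition for interpolation of mu[r,s] equality} and \eqref{proposition for interpolation of mu[r,s] equality 2}.

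For the membership statement \eqref{proposition for interpolation of mu[r,s]3}, I argue specialization-by-specialization: by Proposition \ref{classical ver of urban} it suffices to check the $q$-expansion belongs to a suitable nearly holomorphic space. An arbitrary weight-$k$ specialization of the interpolation formula just established identifies $\phi^{(\boldsymbol{i})}((a_{1},a_{2});\psi,G)$ as (a linear combination of) twists $T_{p}(\kappa(G)\otimes\phi_{1}\cdot\delta_{*}^{(*)}(F_{*}(\cdots)))$. The factor $\kappa(G)\otimes\phi_{1}$ is a cusp form since $\kappa(G)$ is by hypothesis; the Eisenstein factor lies in $N_{k-2i_{1}-i_{2}}^{\leq\epsilon}(Mp^{m_{\psi}(\boldsymbol{m})},\ast;\mathcal{O}_{\K})$ (or its mirror for $i_{1}\geq\tfrac{k-i_{2}}{2}$) by \eqref{level of eisenstein series left1nap}--\eqref{level of eisenstein series right 1nap}; the operator $\delta_{*}^{(i_{1})}$ raises the order by at most $i_{1}$; and $T_{p}$ preserves both the level (after absorbing one power of $p$) and the cuspidal property. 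Multiplying and tracking orders gives a nearly holomorphic cusp form of total order at most $\lfloor(k-1)/2\rfloor$, and Lemma \ref{nearlyholormorphic infty easy lemma} then lets me upgrade the pointwise polynomial-degree bound to membership in $N_{k}^{\leq\lfloor(k-1)/2\rfloor,\mathrm{cusp}}$.

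The principal obstacle I anticipate is the second step, namely the rigorous matching of the doubly-indexed sum $\sum_{b}\sum_{c,\,p_{m}^{(M)}(c)=b}(\psi\xi^{-1})(c)(\cdots)$ with the correct character-twisted Eisenstein series: the characters $\psi\xi^{-1}\phi_{1}^{-2}\omega^{i_{2}}\phi_{2}^{-1}$ and the factor $\phi_{1}(b^{-2})$ must be tracked carefully through the projection $p_{m}^{(M)}$, including the passage between moduli $Mp^{\max\{m_{1},m(\psi)\}+1}$ and $p^{m_{1}+1}$, and one must confirm that the congruence conditions on $d$ and $n/d$ inside $F_{(i_{1},i_{2},i_{3}),\Gamma_{2}}(c;Mp^{\max\{m_{1},m(\psi)\}+1})$ reassemble exactly into the Fourier expansions \eqref{definition of eisenstein seriesnonap}--\eqref{another Eisenstein seriesnonap}. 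Once this bookkeeping is stabilized, everything else is formal.
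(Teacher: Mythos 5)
Your overall plan coincides with the paper's: collapse the $a_{2}$-sum into the specialization $\kappa_{i_{2},\phi_{2}}$ via \eqref{hom banach iwasawa continuous}, carry out the $a_{1}$-sum against $\phi_{1}$ to factor out the twist $\kappa_{i_{2},\phi_{2}}(G)\otimes\phi_{1}$, and identify the remaining partial Eisenstein sum with the appropriate $F_{*}$, then recover \eqref{proposition for interpolation of mu[r,s]3} by inverse Fourier transform. That much is fine. However, there is a genuine gap in the Eisenstein-series identification. The formal series $F_{(0,k-2i_{1}-1,0),\Gamma_{2}}(c;\,\cdot)$ appearing in $\Phi^{(\boldsymbol{i})}$ is by construction supported on $n\geq 1$; it has no constant term, and neither does the $X$-linear term in case $\epsilon_{k,2}=1$. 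By contrast, $F_{k-2i_{1}-i_{2}}(\boldsymbol{1},\psi\xi^{-1}\phi_{1}^{-2}\omega^{i_{2}}\phi_{2}^{-1})$ carries a constant term $\tfrac{1}{2}L_{*}(1-k+2i_{1}+i_{2},\ast)$ (and in case (2) the term $\epsilon_{k,2}(\cdot)\tfrac{\varphi(N)}{8\pi N y}$). So your claim that the congruence conditions "reassemble \emph{exactly}" into \eqref{definition of eisenstein seriesnonap}--\eqref{another Eisenstein seriesnonap} is false as it stands; before applying $T_{p}$ the two sides of \eqref{proposition for interpolation of mu[r,s] equality}--\eqref{proposition for interpolation of mu[r,s] equality 2} differ by the product of the Eisenstein constant term with $\kappa_{i_{2},\phi_{2}}(G)\otimes\phi_{1}$.

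The paper's proof closes exactly this gap, and it is the one non-formal step: writing $\kappa_{i_{2},\phi_{2}}(G)\otimes\phi_{1}=\sum_{n\geq 1}b_{n}q^{n}$, one observes $b_{n}=0$ whenever $p\mid n$ (because $\phi_{1}$ is a Dirichlet character modulo a $p$-power, extended by zero). Hence $T_{p}\bigl((\kappa_{i_{2},\phi_{2}}(G)\otimes\phi_{1})\sum_{n\geq 1}c_{n}(X)q^{n}\bigr)=T_{p}\bigl((\kappa_{i_{2},\phi_{2}}(G)\otimes\phi_{1})\sum_{n\geq 0}c_{n}(X)q^{n}\bigr)$, since the only extra contributions involve $b_{pn}c_{0}(pX)$, which vanish. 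Without this observation your interpolation formulas do not hold. Relatedly, for \eqref{proposition for interpolation of mu[r,s]3} the device you need is precisely the inverse Fourier transform on $(\Delta\times\Gamma_{1}/\Gamma_{1}^{p^{m_{1}}})\times(\Gamma_{2}/\Gamma_{2}^{p^{m_{2}}})$, writing $\phi^{(\boldsymbol{i})}((a_{1},a_{2});\psi,G)$ explicitly as a finite $\K(\phi_{1},\phi_{2})$-linear combination of the right-hand sides of (1)/(2); the cited Proposition \ref{classical ver of urban} (injectivity of $\iota$) does not by itself supply membership of a formal $q$-series in a nearly holomorphic space, and Lemma \ref{nearlyholormorphic infty easy lemma} is not needed once the right-hand sides are placed in $N_{k}^{\leq\lfloor(k-1)/2\rfloor,\mathrm{cusp}}$ via \eqref{level of eisenstein series left1nap}--\eqref{level of eisenstein series right 1nap}.
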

\begin{proof}
For each $\boldsymbol{i}\in \mathbb{Z}_{\geq 0}^{2}$ such that $i_{2}\geq 2$ and $i_{1}+i_{2}<k$ and for each $(a_{1},a_{2})\in (\Delta\times \Gamma_{1}\slash  \Gamma_{1}^{p^{m_{1}}})\times \Gamma_{2}\slash \Gamma_{2}^{p^{m_{2}}}$ with $\boldsymbol{m}\in \mathbb{Z}_{\geq 0}^{2}$, we have
\begin{align}\label{proposition forminterpolation fo mu[r,s] fourier inverse}
\begin{split}
\phi^{(\boldsymbol{i})}((a_{1},a_{2});\psi,G)=&\frac{1}{
\# C_{1}\# C_{2}
}\sum_{(\phi_{1},\phi_{2})\in C_{1}\times C_{2}}\phi_{1}^{-1}(a_{1})\phi_{2}^{-1}(a_{2})\\
&\displaystyle{\sum_{(b_{1},b_{2})\in (\Delta\times \Gamma_{1}\slash  \Gamma_{1}^{p^{m_{1}}})\times \Gamma_{2}\slash \Gamma_{2}^{p^{m_{2}}}}}\phi_{1}(b_{1})\phi_{2}(b_{2})\phi^{(\boldsymbol{i})}((b_{1},b_{2})_{m_{1},m_{2}};\psi,G)
\end{split}
\end{align}
by the inverse Fourier transform where $C_{1}$ and $C_{2}$ are the groups of characters on $\Delta\times \Gamma_{1}\slash  \Gamma_{1}^{p^{m_{1}}}$ and $\Gamma_{2}\slash \Gamma_{2}^{p^{m_{2}}}$ respectively. By \eqref{level of eisenstein series left1nap} and \eqref{level of eisenstein series right 1nap}, the right-hand sides of \eqref{proposition for interpolation of mu[r,s] equality} and \eqref{proposition for interpolation of mu[r,s] equality 2} are in $N_{k}^{\leq \lfloor\frac{k-1}{2}\rfloor,\mathrm{cusp}}(Mp^{m_{\psi}(\boldsymbol{m})},\psi;\K(\phi_{1},\phi_{2}))$. Then, if we have \eqref{proposition for interpolation of mu[r,s] equality} and \eqref{proposition for interpolation of mu[r,s] equality 2},  by \eqref{proposition forminterpolation fo mu[r,s] fourier inverse}, we have \eqref{proposition for interpolation of mu[r,s]3}. Therefore, it suffices to prove \eqref{proposition for interpolation of mu[r,s] equality} and \eqref{proposition for interpolation of mu[r,s] equality 2}. By definition, we see that 
\begin{align}\label{proposition for interpolation of mu[r,s] of Phiphi1=}
\displaystyle{\sum_{(a_{1},a_{2})\in (\Delta\times \Gamma_{1}\slash  \Gamma_{1}^{p^{m_{1}}})\times \Gamma_{2}\slash \Gamma_{2}^{p^{m_{2}}}}}\phi_{1}(a_{1})\phi_{2}(a_{2})\phi^{(\boldsymbol{i})}((a_{1},a_{2});\psi,G)=(-1)^{i_{1}}T_{p}(\kappa_{i_{2},\phi_{2}}(\Phi_{\phi_{1}})),
\end{align}
where 
\begin{align*}
\Phi_{\phi_{1}}=\sum_{a_{1}\in \Delta\times \Gamma_{1}\slash  \Gamma_{1}^{p^{m_{1}}}}\phi_{1}(a_{1})\Phi^{(\boldsymbol{i})}(a_{1};\psi,G).
\end{align*}
We have 
\small 
\begin{align*}
&\Phi_{\phi_{1}}\\
&=
%
%
%
%
%
\begin{cases}
(G\otimes\phi_{1}) 
\hspace*{-1.1cm}
\displaystyle{\sum_{b\in \Delta_{M}\times (\Gamma_{1}\slash  \Gamma_{1}^{p^{\max\{m_{1},m(\psi)\}}})}} 
\hspace*{-1.1cm} 
(\psi\xi^{-1}\phi_{1}^{-2})(b)\delta^{(i_{1})}_{k-2i_{1}-i_{2}}
(F_{(0,k-2i_{1}-1,0)}(q))
 & \hspace*{-0.2cm}
 \mathrm{if}\ 0\leq i_{1}<\frac{1}{2}(k-i_{2}),\\
(G\otimes\phi_{1}) 
\hspace*{-1.1cm}
\displaystyle{\sum_{b\in \Delta_{M}\times (\Gamma_{1}\slash  \Gamma_{1}^{p^{\max\{m_{1},m(\psi)\}}})}} 
\hspace*{-1.1cm}
(\psi\xi^{-1}\phi_{1}^{-2})(b)\delta_{i_{2}-k+2i_{1}+2}^{(k-i_{1}-i_{2}-1)}(F_{(-k+2i_{1}+1,0,i_{2})}(q))
 & \hspace*{-0.2cm} \mathrm{if}\ \frac{1}{2}(k-i_{2})\leq i_{1}<k-i_{2},
\end{cases}
\end{align*}
\normalsize 
where $G\otimes\phi_{1}=\sum_{n=1}^{+\infty}a_{n}(G)\phi_{1}(n)q^{n}$ 
and we denote $F_{(n_1,n_2,n_3 ),\Gamma_2}(b;Mp^{\max\{m_{1},m(\psi)\}+1})$ by $F_{(n_1 ,n_2,n_3)} (q)$ for short. 
\\ 
$\mathbf{Proof\ of\ (1)}$. Assume that $0\leq i_{1}<\frac{1}{2}(k-i_{2})$.
 By \eqref{definition of eisenstein seriesnonap}, we have 
\begin{multline*}
F_{k-2i_{1}-i_{2}}(\boldsymbol{1},\psi\xi^{-1}\phi_{1}^{-2}\omega^{i_{2}}\phi_{2}^{-1})\\
=C+\kappa_{i_{2},\phi_{2}}\left(\sum_{b\in \Delta_{M}\times (\Gamma_{1}\slash  \Gamma_{1}^{p^{\max\{m_{1},m(\psi)\}}})}(\psi\xi^{-1}\phi_{1}^{-2})(b)F_{(1,k-2i_{1},0),\Gamma_{2}}(b;Mp^{\max\{m_{1},m(\psi)\}+1})\right)
\end{multline*}
where $C=\frac{1}{2}L_{Mp^{\max\{m_{1},m_{2},m(\psi)\}+1}}(1-(k-2i_{1}-i_{2}),\psi\xi^{-1}\phi_{1}^{-2}\omega^{i_{2}}\phi_{2}^{-1})\in \K(\phi_{1},\phi_{2})$. We put  
\begin{align*}
\delta_{k-2i_{1}-i_{2}}^{(i_{1})}\left(F_{k-2i_{1}-i_{2}}(\boldsymbol{1},\psi\xi^{-1}\phi_{1}^{-2}\omega^{i_{2}}\phi_{2}^{-1})\right)&=\sum_{n=0}^{+\infty}c_{n}(X)q^{n}
\end{align*}
with $c_{n}(X)\in \K(\phi_{1},\phi_{2})[X]$ for every $n\geq 0$.  We see that $c_{n}(X)\in \mathcal{O}_{\K}[\phi_{1},\phi_{2}][X]$ for every $n\geq 1$. We have 
$$\kappa_{i_{2},\phi_{2}}(\Phi_{\phi_{1}})=(\kappa_{i_{2},\phi_{2}}(G)\otimes\phi_{1})\sum_{n=1}^{+\infty}c_{n}(X)q^{n}.$$
We put $\kappa_{i_{2},\phi_{2}}(G)\otimes\phi_{1}=\sum_{n=1}^{+\infty}b_{n}q^{n}$, where $b_{n}\in \mathcal{O}_{\K}[\phi_{1},\phi_{2}]$. We have
\begin{align*}
&T_{p}\left((\kappa_{i_{2},\phi_{2}}(G)\otimes\phi_{1})\delta_{k-2i_{1}-i_{2}}^{(i_{1})}\left(F_{k-2i_{1}-i_{2}}(\boldsymbol{1},\psi\xi^{-1}\phi_{1}^{-2}\omega^{i_{2}}\phi_{2}^{-1})\right)\right)\\
&=\sum_{n=1}^{+\infty}\left(\sum_{\substack{(n_{1},n_{2})\in \mathbb{Z}_{\geq 1}\times \mathbb{Z}_{\geq 0}\\
n_{1}+n_{2}=pn}}b_{n_{1}}c_{n_{2}}(pX)\right)q^{n}
\end{align*}
and
\begin{align*}
T_{p}(\kappa_{i_{2},\phi_{2}}(\Phi_{\phi_{1}}))
& =T_{p}\left(\kappa_{i_{2},\phi_{2}}(G)\otimes\phi_{1}\sum_{n=1}^{+\infty}c_{n}(X)q^{n}\right) 
\\
&=\sum_{n=1}^{+\infty}\left(\sum_{\substack{(n_{1},n_{2})\in \mathbb{Z}_{\geq 1}^{2}\\
n_{1}+n_{2}=pn}}b_{n_{1}}c_{n_{2}}(pX)\right)q^{n}.
\end{align*}
Since we have $b_{n}=0$ for every $n\in \mathbb{Z}_{\geq 1}$ satisfying $p\vert n$, we have 
$$ 
\sum_{\substack{(n_{1},n_{2})\in \mathbb{Z}_{\geq 1}\times \mathbb{Z}_{\geq 0}\\
n_{1}+n_{2}=pn}}b_{n_{1}}c_{n_{2}}(pX)=\sum_{\substack{(n_{1},n_{2})\in \mathbb{Z}_{\geq 1}^{2}\\
n_{1}+n_{2}=pn}}b_{n_{1}}c_{n_{2}}(pX)
$$  
for every $n\in \mathbb{Z}_{\geq 1}$.
Thus, we have
\begin{align*}
&T_{p}\left(\kappa_{i_{2},\phi_{2}}(\Phi_{\phi_{1}})\right)\\
&=T_{p}\left(\kappa_{i_{2},\phi_{2}}(G)\otimes\phi_{1}\delta_{k-2i_{1}-i_{2}}^{(i_{1})}\left(F_{k-2i_{1}-i_{2}}(\boldsymbol{1},\psi\xi^{-1}\phi_{1}^{-2}\omega^{i_{2}}\phi_{2}^{-1})\right)\right).
\end{align*}
By \eqref{proposition for interpolation of mu[r,s] of Phiphi1=}, we have
\begin{align*}
&\displaystyle{\sum_{(a_{1},a_{2})\in (\Delta\times \Gamma_{1}\slash  \Gamma_{1}^{p^{m_{1}}})\times \Gamma_{2}\slash \Gamma_{2}^{p^{m_{2}}}}}\phi_{1}(a_{1})\phi_{2}(a_{2})\phi^{(\boldsymbol{i})}((a_{1},a_{2});\psi,G) \\ 
&=(-1)^{i_{1}}T_{p}(\kappa_{i_{2},\phi_{2}}(\Phi_{\phi_{1}}))\\
&=(-1)^{i_{1}}T_{p}\left(\kappa_{i_{2},\phi_{2}}(G)\otimes\phi_{1}\delta^{(i_{1})}_{k-2i_{1}-i_{2}}\left(F_{k-2i_{1}-i_{2}}(\boldsymbol{1},\psi\xi^{-1}\phi_{1}^{-2}\omega^{i_{2}}\phi_{2}^{-1})\right)\right).
\end{align*}
$\mathbf{Proof\ of\ (2)}$. Assume that $\frac{1}{2}(k-i_{2})\leq i_{1}<k-i_{2}$. By \eqref{another Eisenstein seriesnonap}, 
we have 
\begin{multline*}
F_{i_{2}-k+2i_{1}+2}(\psi\xi^{-1}\phi_{1}^{-2}\omega^{i_{2}}\phi_{2}^{-1},\boldsymbol{1})=-\delta_{(\boldsymbol{i})}(\psi\xi^{-1}\phi_{1}^{-2}\omega^{i_{2}}\phi_{2}^{-1})\frac{\varphi(Mp^{\max\{m_{1},m_{2},m(\psi)\}+1})}{2Mp^{\max\{m_{1},m_{2},m(\psi)\}+1}}X\\
+\kappa_{i_{2},\phi_{2}}\left(\sum_{b\in \Delta_{M}\times \Gamma_{1}\slash  \Gamma_{1}^{p^{m_{1}}}}(\psi\xi^{-1}\phi_{1}^{-2})(b)F_{(-k+2i_{1}+1,0,i_{2}),\Gamma_{2}}(b;Mp^{\max\{m_{1},m(\psi)\}+1})\right),
\end{multline*}
where $\varphi$ is the Euler function and $\delta_{(\boldsymbol{i})}(\psi\xi^{-1}\phi_{1}^{-2}\omega^{i_{2}}\phi_{2}^{-1})$ is defined to be $1$ (resp. $0$) when $i_{1}=\frac{1}{2}(k-i_{2})$ and $\psi\xi^{-1}\phi_{1}^{-2}\omega^{i_{2}}\phi_{2}^{-1}$ is trivial character (resp. otherwise). We put
\begin{align*}
\delta_{i_{2}-k+2i_{1}+2}^{(k-i_{1}-i_{2}-1)}\left(F_{i_{2}-k+2i_{1}+2}(\psi\xi^{-1}\phi_{1}^{-2}\omega^{i_{2}}\phi_{2}^{-1},\boldsymbol{1})\right)&=\sum_{n=0}^{+\infty}c_{n}^{\prime}(X)q^{n}
\end{align*}
with $c_{n}^{\prime}(X)\in \K(\phi_{1},\phi_{2})[X]$ for each $n\geq 0$.  We see that $c_{n}^{\prime}(X)\in \mathcal{O}_{\K}[\phi_{1},\phi_{2}][X]$ for every $n\geq 1$. We have 
$$\kappa_{i_{2},\phi_{2}}(\Phi_{\phi_{1}})=(\kappa_{i_{2},\phi_{2}}(G)\otimes\phi_{1})\sum_{n=1}^{+\infty}c_{n}^{\prime}(X)q^{n}.$$
Let $\kappa_{i_{2},\phi_{2}}(G)\otimes\phi_{1}=\sum_{n=1}^{+\infty}b_{n}q^{n}$ with $b_{n}\in \mathcal{O}_{\K}[\phi_{1},\phi_{2}]$. We have
\begin{multline*}
T_{p}\left((\kappa_{i_{2},\phi_{2}}(G)\otimes\phi_{1})\delta_{i_{2}-k+2i_{1}+2}^{(k-i_{1}-i_{2}-1)}\left(F_{i_{2}-k+2i_{1}+2}(\psi\xi^{-1}\phi_{1}^{-2}\omega^{i_{2}}\phi_{2}^{-1},\boldsymbol{1})\right)\right)\\
=\sum_{n=1}^{+\infty}\left(\sum_{\substack{(n_{1},n_{2})\in \mathbb{Z}_{\geq 1}\times \mathbb{Z}_{\geq 0}\\
n_{1}+n_{2}=pn}}b_{n_{1}}c_{n_{2}}^{\prime}(pX)\right)q^{n}
\end{multline*}
and
\begin{align*}
T_{p}(\kappa_{i_{2},\phi_{2}}(\Phi_{\phi_{1}}))
& =T_{p}\left(\kappa_{i_{2},\phi_{2}}(G)\otimes\phi_{1}\sum_{n=1}^{+\infty}c_{n}^{\prime}(X)q^{n}\right)\\
&=\sum_{n=1}^{+\infty}\left(\sum_{\substack{(n_{1},n_{2})\in \mathbb{Z}_{\geq 1}^{2}\\
n_{1}+n_{2}=pn}}b_{n_{1}}c_{n_{2}}^{\prime}(pX)\right)q^{n}.
\end{align*}
Since we have $b_{n}=0$ for every $n\in \mathbb{Z}_{\geq 1}$ satisfying $p \vert n$, we obtain  
$$
\sum_{\substack{(n_{1},n_{2})\in \mathbb{Z}_{\geq 1}\times \mathbb{Z}_{\geq 0}\\
n_{1}+n_{2}=pn}}b_{n_{1}}c_{n_{2}}^{\prime}(pX)=\sum_{\substack{(n_{1},n_{2})\in \mathbb{Z}_{\geq 1}^{2}\\
n_{1}+n_{2}=pn}}b_{n_{1}}c_{n_{2}}^{\prime}(pX)
$$ 
for every $n\in \mathbb{Z}_{\geq 1}$. Thus, we have
\begin{align*}
&T_{p}\left(\kappa_{i_{2},\phi_{2}}(\Phi_{\phi_{1}})\right)\\
&=T_{p}\left((\kappa_{i_{2},\phi_{2}}(G)\otimes\phi_{1})\delta_{i_{2}-k+2i_{1}+2}^{(k-i_{1}-i_{2}-1)}\left(F_{i_{2}-k+2i_{1}+2}(\psi\xi^{-1}\phi_{1}^{-2}\omega^{i_{2}}\phi_{2}^{-1},\boldsymbol{1})\right)\right).
\end{align*}
By \eqref{proposition for interpolation of mu[r,s] of Phiphi1=}, we have 
\begin{align*}
&\displaystyle{\sum_{(a_{1},a_{2})\in (\Delta\times \Gamma_{1}\slash  \Gamma_{1}^{p^{m_{1}}})\times \Gamma_{2}\slash \Gamma_{2}^{p^{m_{2}}}}}\phi_{1}(a_{1})\phi_{2}(a_{2})\phi^{(\boldsymbol{i})}((a_{1},a_{2});\psi,G) \\ 
&=(-1)^{i_{1}}T_{p}(\kappa_{i_{2},\phi_{2}}(\Phi_{\phi_{1}}))\\
&=(-1)^{i_{1}}T_{p}\left((\kappa_{i_{2},\phi_{2}}(G)\otimes\phi_{1})\delta_{i_{2}-k+2i_{1}+2}^{(k-i_{1}-i_{2}-1)}\left(F_{i_{2}-k+2i_{1}+2}(\psi\xi^{-1}\phi_{1}^{-2}\omega^{i_{2}}\phi_{2}^{-1},\boldsymbol{1})\right)\right).
\end{align*}
\end{proof}

For each $\boldsymbol{i}\in \mathbb{Z}^{2}$, we define a continuous group homomorphism
\begin{equation}\label{definition of rboldsymboli group hom}
r^{(\boldsymbol{i})}: (\Delta\times \Gamma_{1})\times \Gamma_{2}\rightarrow \mathcal{O}_{\K}[[(\Delta\times \Gamma_{1})\times \Gamma_{2}]]^{\times}
\end{equation}
to be $r^{(\boldsymbol{i})}((x_{1},x_{2}))=\chi_{1}(x_{1})^{-i_{1}}\chi_{2}(x_{2})^{-i_{2}}[x_{1},x_{2}]$ for each $(x_{1},x_{2})\in (\Delta\times \Gamma_{1})\times \Gamma_{2}$, where $[x_{1},x_{2}]\in \mathcal{O}_{\K}[[(\Delta\times \Gamma_{1})\times \Gamma_{2}]]^{\times}$ is the class of $(x_{1},x_{2})\in (\Delta\times \Gamma_{1})\times \Gamma_{2}$. Then, the above group homomorphism $r^{(\boldsymbol{i})}$ induces a $\K$-algebra isomorphism
\begin{equation}\label{definition of isom rmi}
r_{\boldsymbol{m}}^{(\boldsymbol{i})}: \K[\Delta\times \Gamma_{1}\slash  \Gamma_{1}^{p^{m_{1}}}\times \Gamma_{2}\slash \Gamma_{2}^{p^{m_{2}}}]
\overset{\sim}{\longrightarrow}\frac{\mathcal{O}_{\K}[[(\Delta\times \Gamma_{1})\times \Gamma_{2}]]}{(\Omega_{\boldsymbol{m}}^{[\boldsymbol{i}]})\mathcal{O}_{\K}[[(\Delta\times \Gamma_{1})\times \Gamma_{2}]]}\otimes_{\mathcal{O}_{\K}}\K\end{equation}
for each $\boldsymbol{m}\in \mathbb{Z}_{\geq 0}^{2}$. Let $f\in S_{k}(Np^{m(f)},\psi;\K)$ be a normalized cuspidal Hecke eigenform which is new away from $p$ with $m(f)\in \mathbb{Z}_{\geq 1}$ and $G\in S(N^{\prime}p,\xi;\mathcal{O}_{\K}[[\Gamma_{2}]])$ where $N$ and $N^{\prime}$ are positive integers which are prime to $p$. Assume that $m(f)$ is the smallest positive integer $m$ such that $f\in S_{k}(Np^{m},\psi)$ and $\ord_{p}(a_{p}(f))<\frac{k-1}{2}$. We denote by $M$ the least common multiple of $N$ and $N^{\prime}$. Let $\vert_{[M\slash N^{\prime}]}$ be the operator defined in \eqref{power series and qn mapst qNn}. It is easy to see that $G\vert_{[M\slash N^{\prime}]}\in S(Mp,\xi;\mathcal{O}_{\K}[[\Gamma_{2}]])$. By replacing $G$ with $G\vert_{[M\slash N^{\prime}]}$ in \eqref{definition Psii1i2a1a2}, we can define $\phi^{(\boldsymbol{i})}((a_{1},a_{2});\psi,G\vert_{[M\slash N^{\prime}]})\in \mathcal{O}_{\K}[[X]][[q]]$ for each $(a_{1},a_{2})\in (\Delta\times \Gamma_{1}\slash  \Gamma_{1}^{p^{m_{1}}})\times \Gamma_{2}\slash \Gamma_{2}^{p^{m_{2}}}$ with $\boldsymbol{m}\in \mathbb{Z}_{\geq 0}^{2}$ and we see that $\phi^{(\boldsymbol{i})}((a_{1},a_{2});\psi,G\vert_{[M\slash N^{\prime}]})\in N_{k}^{\leq \lfloor \frac{k-1}{2}\rfloor,\mathrm{cusp}}(Mp^{m_{f}(\boldsymbol{m})},\psi;\mathcal{O}_{\K})$ with $m_{f}(\boldsymbol{m})=\max\{2m_{1}+1,m_{2},m(f)\}$ by Proposition \ref{proposition for interpolation of mu[r,s]}.

We assume that all $M$-th roots of unity and Fourier coefficients of $f^{0}$ are contained in $\K$, where $f^{0}$ is the primitive form associated with $f$. Since  $\ord_{p}(a_{p}(f))<\frac{k-1}{2}$, we see that $a_{p}(f)^{2}\neq \psi_{0}(p)p^{k-1}$ easily. Let $l_{f,M}: \cup_{m=m(f)}^{+\infty}N^{\leq \lfloor \frac{k-1}{2}\rfloor,\mathrm{cusp}}_{k}(Mp^{m},\psi;\K)\rightarrow \K$ be the $\K$-linear map defined in \eqref{classical lf map}. For each $\boldsymbol{m}\in \mathbb{Z}_{\geq 0}^{2}$ and $\boldsymbol{i}\in \mathbb{Z}_{\geq 0}^{2}$ such that $i_{2}\geq 2$ and $i_{1}+i_{2}<k$, 
we define an element $s_{\boldsymbol{m}}^{[\boldsymbol{i}]} \in \frac{\mathcal{O}_{\K}[[(\Delta\times \Gamma_{1})\times \Gamma_{2}]]}{(\Omega_{\boldsymbol{m}}^{[\boldsymbol{i}]})\mathcal{O}_{\K}[[(\Delta\times \Gamma_{1})\times \Gamma_{2}]]}\otimes_{\mathcal{O}_{\K}}\K$ to be 
\begin{equation}\label{definition of qm1m2i1i1}
s_{\boldsymbol{m}}^{[\boldsymbol{i}]}=\sum_{(a_{1},a_{2})\in (\Delta\times \Gamma_{1}\slash  \Gamma_{1}^{p^{m_{1}}})\times \Gamma_{2}\slash \Gamma_{2}^{p^{m_{2}}}}s^{[\boldsymbol{i}]}(a_{1},a_{2})r_{\boldsymbol{m}}^{(\boldsymbol{i})}([a_{1},a_{2}]), 
\end{equation}
where $s^{[\boldsymbol{i}]}(a_{1},a_{2}) \in \K$ is defined by 
\begin{equation}\label{definition of qboldysmboli a12}
s^{[\boldsymbol{i}]}(a_{1},a_{2})=l_{f,M}(\phi^{(\boldsymbol{i})}((a_{1},a_{2});\psi,G\vert_{[M\slash N^{\prime}]})) 
\end{equation}
and $[a_{1},a_{2}]\in \K[\Delta\times \Gamma_{1}\slash  \Gamma_{1}^{p^{m_{1}}}\times \Gamma_{2}\slash \Gamma_{2}^{p^{m_{2}}}]$ is the class of $(a_{1},a_{2})\in (\Delta\times \Gamma_{1}\slash  \Gamma_{1}^{p^{m_{1}}})\times \Gamma_{2}\slash \Gamma_{2}^{p^{m_{2}}}$. 
\subsubsection*{{\bf{Verification of the distribution property of}\ $s_{\boldsymbol{m}}^{[\boldsymbol{i}]}$}}
Let $f\in S_{k}(Np^{m(f)},\psi;\K)$ be a normalized Hecke eigenform which is new away from $p$ with $m(f)\in \mathbb{Z}_{\geq 1}$ and $G\in S(N^{\prime}p,\xi;\linebreak\mathcal{O}_{\K}[[\Gamma_{2}]])$ where $N$ and $N^{\prime}$ are positive integers which are prime to $p$ and $\psi$ and $\xi$ are Dirichlet characters modulo $Np^{m(f)}$ and $N^{\prime}p$ respectively. Assume that $m(f)$ is the smallest positive integer $m$ such that $f\in S_{k}(Np^{m},\psi)$ and $\ord_{p}(a_{p}(f))<\frac{k-1}{2}$. Let $M$ be the common multiple of $N$ and $N^{\prime}$. We assume that all $M$-th roots of unity and Fourier coefficients of $f^{0}$ are contained in $\K$ where $f^{0}$ is the primitive form associated with $f$. We prove the following:
\begin{pro}\label{distribution of mu[boldsymbolr,boldsymbols]}
Let $\boldsymbol{i}\in \mathbb{Z}_{\geq 0}^{2}$ be an element satisfying $i_{2}\geq 2$ and $i_{1}+i_{2}<k$ and 
let $s_{\boldsymbol{m}}^{[\boldsymbol{i}]}   \in \tfrac{\mathcal{O}_{\K}[[(\Delta\times \Gamma_{1})\times \Gamma_{2}]]}{(\Omega_{\boldsymbol{m}}^{[\boldsymbol{i}]})\mathcal{O}_{\K}[[(\Delta\times \Gamma_{1})\times \Gamma_{2}]]}\otimes_{\mathcal{O}_{\K}}\K $ be the element defined in \eqref{definition of qm1m2i1i1} 
for each $\boldsymbol{m}\in \mathbb{Z}_{\geq 0}^{2}$. 
Then, we have 
$(s_{\boldsymbol{m}}^{[\boldsymbol{i}]})_{\boldsymbol{m}\in \mathbb{Z}_{\geq 0}^{2}}\in 
\varprojlim_{\boldsymbol{m}\in \mathbb{Z}_{\geq 0}^{2}}\left(
\tfrac{\mathcal{O}_{\K}[[(\Delta\times \Gamma_{1})\times \Gamma_{2}]]}{ (\Omega_{\boldsymbol{m}}^{[\boldsymbol{i}]})
\mathcal{O}_{\K}[[(\Delta\times \Gamma_{1})\times \Gamma_{2}]]
}
\otimes_{\mathcal{O}_{\K}}\K\right)$. That is, the elements $s_{\boldsymbol{m}}^{[\boldsymbol{i}]}$ form a projective system 
with respect to the index set $ \mathbb{Z}_{\geq 0}^{2}$. 
\end{pro}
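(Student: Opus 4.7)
The plan is to check that $s^{[\boldsymbol{i}]}_{\boldsymbol{m}}$ is the image of $s^{[\boldsymbol{i}]}_{\boldsymbol{n}}$ under the natural projection
$$\tfrac{\mathcal{O}_{\K}[[(\Delta\times \Gamma_{1})\times \Gamma_{2}]]}{(\Omega_{\boldsymbol{n}}^{[\boldsymbol{i}]})\mathcal{O}_{\K}[[(\Delta\times \Gamma_{1})\times \Gamma_{2}]]}\otimes_{\mathcal{O}_{\K}}\K\longrightarrow \tfrac{\mathcal{O}_{\K}[[(\Delta\times \Gamma_{1})\times \Gamma_{2}]]}{(\Omega_{\boldsymbol{m}}^{[\boldsymbol{i}]})\mathcal{O}_{\K}[[(\Delta\times \Gamma_{1})\times \Gamma_{2}]]}\otimes_{\mathcal{O}_{\K}}\K$$
for every pair $\boldsymbol{m}\leq \boldsymbol{n}$ in $\mathbb{Z}_{\geq 0}^{2}$. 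By the analogue of \eqref{multivariable iwasawa modoomega speq} over the deformation ring $\mathcal{O}_{\K}[[(\Delta\times \Gamma_{1})\times \Gamma_{2}]]$, it suffices to show $\kappa(\tilde{s}^{[\boldsymbol{i}]}_{\boldsymbol{n}})=\kappa(\tilde{s}^{[\boldsymbol{i}]}_{\boldsymbol{m}})$ for every arithmetic specialization $\kappa$ of weight $\boldsymbol{i}$ with $\boldsymbol{m}_{\kappa}\leq \boldsymbol{m}$, where $\tilde{s}^{[\boldsymbol{i}]}_{\boldsymbol{m}}$ and $\tilde{s}^{[\boldsymbol{i}]}_{\boldsymbol{n}}$ are any lifts.

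By the construction \eqref{definition of qm1m2i1i1} together with the definition \eqref{definition of rboldsymboli group hom} of $r^{(\boldsymbol{i})}$, a direct computation shows that for any such $\kappa$ of weight $\boldsymbol{i}$ with finite part $\boldsymbol{\phi}_{\kappa}$,
$$\kappa(\tilde{s}^{[\boldsymbol{i}]}_{\boldsymbol{m}})=\sum_{(a_{1},a_{2})\in (\Delta\times \Gamma_{1}\slash  \Gamma_{1}^{p^{m_{1}}})\times \Gamma_{2}\slash \Gamma_{2}^{p^{m_{2}}}}s^{[\boldsymbol{i}]}(a_{1},a_{2})\phi_{\kappa,1}(a_{1})\phi_{\kappa,2}(a_{2}).$$
Thus the equality $\kappa(\tilde{s}^{[\boldsymbol{i}]}_{\boldsymbol{n}})=\kappa(\tilde{s}^{[\boldsymbol{i}]}_{\boldsymbol{m}})$ reduces to proving the distribution property
$$s^{[\boldsymbol{i}]}(b_{1},b_{2})=\sum_{\substack{(a_{1},a_{2})\mapsto (b_{1},b_{2})}}s^{[\boldsymbol{i}]}(a_{1},a_{2})$$
for every $(b_{1},b_{2})$ at level $\boldsymbol{m}$, where the sum runs over the preimages in the finer level $\boldsymbol{n}$.

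Since $s^{[\boldsymbol{i}]}(a_{1},a_{2})=l_{f,M}(\phi^{(\boldsymbol{i})}((a_{1},a_{2});\psi,G\vert_{[M\slash N^{\prime}]}))$ and $l_{f,M}$ is $\K$-linear, the distribution property of $s^{[\boldsymbol{i}]}$ follows from the corresponding distribution property of the nearly holomorphic modular form $\phi^{(\boldsymbol{i})}((a_{1},a_{2});\cdot,\cdot)$. Unwinding the definition \eqref{definition Psii1i2a1a2}, this in turn reduces to the distribution property with respect to the $\Gamma_{2}$-variable (which holds because $\mu_{\Phi^{(\boldsymbol{i})}(a_{1};\psi,G)}$ is a measure and the integral over $a_{2}\Gamma_{2}^{p^{m_{2}}}$ decomposes as a sum over the finer cosets) and to the distribution property of $\Phi^{(\boldsymbol{i})}(a_{1};\psi,G)$ in the variable $a_{1}$, visible directly from \eqref{definition of Phii1ii2(a1;G)}: when $a_{1}$ is refined, the decomposition of $G$ into components $G_{\equiv ab^{2}(p^{m+1})}$ and the preimage condition $p_{m}^{(M)}(c)=b$ also refine compatibly.

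The main obstacle will be the second step above, namely bookkeeping the refinement of the indexing sets $b$ and $c$ in \eqref{definition of Phii1ii2(a1;G)} so that the character values $(\psi\xi^{-1})(c)$ and the pieces $G_{\equiv ab^{2}(p^{m+1})}$ combine cleanly under the projection from level $\boldsymbol{n}$ to level $\boldsymbol{m}$; one has to be careful that the refinement with respect to the first variable of $\boldsymbol{m}$ and with respect to the conductor of $\psi$ are both controlled by the projection $p_{m}^{(M)}$, and that the sum $\sum_{b}G_{\equiv ab^{2}(p^{m+1})}$ indeed recovers the coarser decomposition. Once this combinatorial compatibility is verified, the rest is a straightforward rewriting, and we conclude that $(s_{\boldsymbol{m}}^{[\boldsymbol{i}]})_{\boldsymbol{m}\in \mathbb{Z}_{\geq 0}^{2}}$ lies in the projective limit.
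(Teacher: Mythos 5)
Your overall strategy matches the paper's: reduce to the additivity of $s^{[\boldsymbol{i}]}(a_{1},a_{2})$ under refinement of the residue classes, treat the $\Gamma_{2}$-direction via the measure property of $\mu_{\Phi^{(\boldsymbol{i})}(a_{1};\cdot,\cdot)}$, and treat the $\Gamma_{1}$-direction by re-indexing the sums in the definition \eqref{definition of Phii1ii2(a1;G)} of $\Phi^{(\boldsymbol{i})}$. (Your detour through specializations and the analogue of \eqref{multivariable iwasawa modoomega speq} is harmless but unnecessary: since $s_{\boldsymbol{m}}^{[\boldsymbol{i}]}=r_{\boldsymbol{m}}^{(\boldsymbol{i})}\bigl(\sum_{(a_1,a_2)} s^{[\boldsymbol{i}]}(a_1,a_2)[a_1,a_2]\bigr)$ with $r_{\boldsymbol{m}}^{(\boldsymbol{i})}$ the isomorphism of \eqref{definition of isom rmi}, the projective system condition reduces directly to the two distribution identities on the group algebra level, without invoking specializations at all.)

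However, the step you flag as ``the main obstacle'' --- namely verifying that
$$
\sum_{b_{1}\in p_{1,m_{1}}^{-1}(a_{1})}\Phi^{(\boldsymbol{i})}(b_{1};\psi,G\vert_{[M\slash N^{\prime}]})=\Phi^{(\boldsymbol{i})}(a_{1};\psi,G\vert_{[M\slash N^{\prime}]})
$$
--- is precisely the content of the proof, and you do not carry it out. This is a genuine gap, not just a routine omission. The computation requires interchanging the sum over $b_{1}\in p_{1,m_{1}}^{-1}(a_{1})$ with the sums over $b\in\Delta\times(\Gamma_{1}/\Gamma_{1}^{p^{m_{1}+1}})$ and $c$ in \eqref{definition of Phii1ii2(a1;G)}, then using two observations: (a) the inner factor $\sum_{\substack{c\,:\,p_{m_{1}+1}^{(M)}(c)=b}}(\psi\xi^{-1})(c)H_{c}^{(\boldsymbol{i})}$ depends only on $b$, so after summing over $b_1$ and re-parametrizing by $b'\in\Delta\times(\Gamma_{1}/\Gamma_{1}^{p^{m_{1}}})$ the pieces $(G\vert_{[M/N']})_{\equiv b_{1}b^{2}(p^{m_{1}+2})}$ collapse to $(G\vert_{[M/N']})_{\equiv a_{1}(b')^{2}(p^{m_{1}+1})}$; and (b) the preimage condition then tightens from $p_{m_{1}+1}^{(M)}$ to $p_{1,m_{1}}\circ p_{m_{1}+1}^{(M)}=p_{m_{1}}^{(M)}$, which exactly reconstitutes the definition at level $m_{1}$. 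Without this re-indexing argument actually written down --- and in particular without noting that $\max\{m_{1}+1,m(f)-1\}$ in the $c$-sum is unchanged or reduces correctly when $m_{1}$ is dropped to $m_{1}$ --- the proof is not complete. You have diagnosed correctly where the work lies; you now need to do it.
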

\begin{proof}
By \eqref{definition of qm1m2i1i1}, we have
$$s_{\boldsymbol{m}}^{[\boldsymbol{i}]}=r^{(\boldsymbol{i})}_{\boldsymbol{m}}\left(\sum_{(a_{1},a_{2})\in (\Delta\times \Gamma_{1}\slash  \Gamma_{1}^{p^{m_{1}}})\times \Gamma_{2}\slash \Gamma_{2}^{p^{m_{2}}}}s^{[\boldsymbol{i}]}(a_{1},a_{2})[a_{1},a_{2}]\right)$$
for each $\boldsymbol{m}\in \mathbb{Z}_{\geq 0}^{2}$, where $r_{\boldsymbol{m}}^{(\boldsymbol{i})}$ is the isomorphism defined in \eqref{definition of isom rmi}, $s^{[\boldsymbol{i}]}(a_{1},a_{2})\in \K$ is the element defined in \eqref{definition of qboldysmboli a12} and $[a_{1},a_{2}]\in \mathcal{O}_{\K}[(\Delta\times \Gamma_{1}\slash  \Gamma_{1}^{p^{m_{1}}})\times \Gamma_{2}\slash \Gamma_{2}^{p^{m_{2}}}]$ is the class of $(a_{1},a_{2})\in \Delta\times \Gamma_{1}\slash  \Gamma_{1}^{p^{m_{1}}}\times \Gamma_{2}\slash \Gamma_{2}^{p^{m_{2}}}$. If we have
\begin{multline}\label{distribution of mu[boldsymbolr,boldsymbols] eq1}
\left(\sum_{(a_{1},a_{2})\in (\Delta\times \Gamma_{1}\slash  \Gamma_{1}^{p^{m_{1}}})\times \Gamma_{2}\slash \Gamma_{2}^{p^{m_{2}}}}s^{[\boldsymbol{i}]}(a_{1},a_{2})[a_{1},a_{2}]\right)_{\boldsymbol{m}\in \mathbb{Z}_{\geq 0}^{2}}\\
\in \varprojlim_{\boldsymbol{m}\in \mathbb{Z}_{\geq 0}^{2}}\K[(\Delta\times \Gamma_{1}\slash  \Gamma_{1}^{p^{m_{1}}})\times \Gamma_{2}\slash \Gamma_{2}^{p^{m_{2}}}],
\end{multline}
we have $(s_{\boldsymbol{m}}^{[\boldsymbol{i}]})_{\boldsymbol{m}\in \mathbb{Z}_{\geq 0}^{2}}\in 
\varprojlim_{\boldsymbol{m}\in \mathbb{Z}_{\geq 0}^{2}}\left(
\frac{\mathcal{O}_{\K}[[(\Delta\times \Gamma_{1})\times \Gamma_{2}]]}{ (\Omega_{\boldsymbol{m}}^{[\boldsymbol{i}]})
\mathcal{O}_{\K}[[(\Delta\times \Gamma_{1})\times \Gamma_{2}]]
}
\otimes_{\mathcal{O}_{\K}}\K\right)$. 
Let $p_{1,m_{1}}: \Delta\times \Gamma_{1}\slash  \Gamma_{1}^{p^{m_{1}+1}}\rightarrow \Delta\times \Gamma_{1}\slash  \Gamma_{1}^{p^{m_{1}}}$ and $p_{2,m_{2}}: \Gamma_{2}\slash \Gamma_{2}^{p^{m_{2}+1}}\rightarrow \Gamma_{2}\slash \Gamma_{2}^{p^{m_{2}}}$ be the natural projections. 
Then, to prove \eqref{distribution of mu[boldsymbolr,boldsymbols] eq1}, 
it suffices to prove the following equalities: 
\begin{align}\label{distribution of mu[boldsymbolr,boldsymbols]eq 2}
\begin{split}
\sum_{b_{1}\in p_{1,m_{1}}^{-1}(a_{1})}s^{[\boldsymbol{i}]}(b_{1},a_{2})=s^{[\boldsymbol{i}]}(a_{1},a_{2}),\\
\sum_{b_{2}\in p_{2,m_{2}}^{-1}(a_{2})}s^{[\boldsymbol{i}]}(a_{1},b_{2})=s^{[\boldsymbol{i}]}(a_{1},a_{2})
\end{split}
\end{align} 
for each $(a_{1},a_{2})\in (\Delta\times \Gamma_{1}\slash  \Gamma_{1}^{p^{m_{1}}})\times\Gamma_{2}\slash \Gamma_{2}^{p^{m_{2}}}$ with $\boldsymbol{m}\in \mathbb{Z}_{\geq 0}^{2}$.
Let $(a_{1},a_{2})\in (\Delta\times \Gamma_{1}\slash  \Gamma_{1}^{p^{m_{1}}})\times \Gamma_{2}\slash \Gamma_{2}^{p^{m_{2}}}$ with $\boldsymbol{m}\in \mathbb{Z}_{\geq 0}^{2}$. We denote by $M$ the least common multiple of $N$ and $N^{\prime}$. First we prove that $\sum_{b_{2}\in p_{2,m_{2}}^{-1}(a_{2})}s^{[\boldsymbol{i}]}(a_{1},b_{2})=s^{[\boldsymbol{i}]}(a_{1},a_{2})$. By \eqref{definition Psii1i2a1a2} and \eqref{definition of qboldysmboli a12}, we have
$$s^{[\boldsymbol{i}]}(a_{1},a_{2})=(-1)^{i_{1}}l_{f,M}\circ T_{p}\left(\int_{a_{2}\Gamma_{2}^{p^{m_{2}}}}\chi_{2}(x)^{i_{2}}d\mu_{\Phi^{(\boldsymbol{i})}(a_{1};\psi,G\vert_{[M\slash N^{\prime}]})}\right),$$
where $l_{f,M}: \cup_{m=m(f)}^{+\infty}N^{\leq \lfloor \frac{k-1}{2}\rfloor,\mathrm{cusp}}_{k}(Mp^{m},\psi;\K)\rightarrow \K$ is the $\K$-linear map defined in \eqref{classical lf map} and $\mu_{\Phi^{(\boldsymbol{i})}(a_{1};\psi,G\vert_{[M\slash N^{\prime}]})}\in \mathrm{Hom}_{\mathcal{O}_{\K}}(C(\Gamma_{2},\mathcal{O}_{\K}),\mathcal{O}_{\K}[[X]][[q]])$ is the inverse image of $\Phi^{(\boldsymbol{i})}(a_{1};\psi,\linebreak G\vert_{[M\slash N^{\prime}]})$ in \eqref{definition of Phii1ii2(a1;G)} by the isomorphism $\mathrm{Hom}_{\mathcal{O}_{K}}(C(\Gamma_{2},\mathcal{O}_{\K}),\mathcal{O}_{\K}[[X]][[q]])\stackrel{\sim}{\rightarrow}\mathcal{O}_{\K}[[\Gamma_{2}]][[X]][[q]]$ defined in \eqref{hom banach iwasawa continuous}. We have
\begin{align*}
&\sum_{b_{2}\in p_{2,m_{2}}^{-1}(a_{2})}s^{[\boldsymbol{i}]}(a_{1},b_{2})\\
&=(-1)^{i_{1}}l_{f,M}\circ T_{p}\left(\sum_{b_{2}\in p_{2,m_{2}}^{-1}(a_{2})}\int_{b_{2}\Gamma_{2}^{p^{m_{2}+1}}}\chi_{2}(x)^{i_{2}}d\mu_{\Phi^{(\boldsymbol{i})}(a_{1};\psi,G\vert_{[M\slash N^{\prime}]})}\right)\\
&=(-1)^{i_{1}}l_{f,M}\circ T_{p}\left(\int_{b_{2}\Gamma_{2}^{p^{m_{2}}}}\chi_{2}(x)^{i_{2}}d\mu_{\Phi^{(\boldsymbol{i})}(a_{1};\psi,G\vert_{[M\slash N^{\prime}]})}\right)\\
&=s^{[\boldsymbol{i}]}(a_{1},a_{2}).
\end{align*}
Next, we prove that
$\sum_{b_{1}\in p_{1,m_{1}}^{-1}(a_{1})}s^{[\boldsymbol{i}]}(b_{1},a_{2})=s^{[\boldsymbol{i}]}(a_{1},a_{2})$. By \eqref{definition of Phii1ii2(a1;G)}, we have
\begin{align*}
&\sum_{b_{1}\in p_{1,m_{1}}^{-1}(a_{1})}\Phi^{(\boldsymbol{i})}(b_{1};\psi,G\vert_{[M\slash N^{\prime}]}) \\
& =\sum_{b\in \Delta\times (\Gamma_{1}\slash  \Gamma_{1}^{p^{m_{1}+1}})}\sum_{\substack{c\in \Delta_{M}\times (\Gamma_{1}\slash  \Gamma_{1}^{p^{\max\{m_{1}+1,m(f)-1\}}})\\ p_{m_{1}+1}^{(M)}(c)=b}} (\psi\xi^{-1})(c)H_{c}^{(\boldsymbol{i})}\\
&\sum_{b_{1}\in p_{1,m_{1}}^{-1}(a_{1})} (G\vert_{[M\slash N^{\prime}]})_{\equiv b_{1}b^{2} (p^{m_{1}+2})}\\
& =\sum_{b\in \Delta\times \Gamma_{1}\slash  \Gamma_{1}^{p^{m_{1}}}}(G\vert_{[M\slash N^{\prime}]})_{\equiv a_{1}b^{2} (p^{m_{1}+1})}\sum_{\substack{c\in \Delta_{M}\times (\Gamma_{1}\slash  \Gamma_{1}^{p^{\max\{m_{1}+1,m(f)-1\}}})\\ p_{1,m_{1}}p_{m_{1}+1}^{(M)}(c)=b}}(\psi\xi^{-1})(c)H_{c}^{(\boldsymbol{i})}\\
&=\sum_{b\in \Delta\times \Gamma_{1}\slash  \Gamma_{1}^{p^{m_{1}}}}(G\vert_{[M\slash N^{\prime}]})_{\equiv a_{1}b^{2} (p^{m_{1}+1})}\sum_{\substack{c\in \Delta_{M}\times (\Gamma_{1}\slash  \Gamma_{1}^{p^{\max\{m_{1},m(f)-1\}}})\\ p_{m_{1}}^{(M)}(c)=b}}(\psi\xi^{-1})(c)H_{c}^{(\boldsymbol{i})}\\ 
& =\Phi^{(\boldsymbol{i})}(a_{1};\psi,G\vert_{[M\slash N^{\prime}]})
\end{align*}
where $p_{m}^{(M)}: \Delta_{M}\times \Gamma_{1}\slash  \Gamma_{1}^{p^{\max\{m,m(f)-1\}}}\rightarrow \Delta\times \Gamma_{1}\slash  \Gamma_{1}^{p^{m}}$ is the natural projection for each $m\in \mathbb{Z}_{\geq 0}$ and $H_{c}^{(\boldsymbol{i})}$ is the element defined in \eqref{definition of Wb(i1,i2)}. We see that 
\begin{align*}
&\sum_{b_{1}\in p_{1,m_{1}}^{-1}(a_{1})}s^{[\boldsymbol{i}]}(b_{1},a_{2}) \\ 
&=(-1)^{i_{1}}l_{f,M}\circ T_{p}\left(\int_{a_{2}\Gamma_{2}^{p^{m_{2}}}}\chi_{2}(x)^{i_{2}}\sum_{b_{1}\in p_{1,m_{1}}^{-1}(a_{1})}d\mu_{\Phi^{(\boldsymbol{i})}(b_{1};\psi,G\vert_{[M\slash N^{\prime}]})}\right)\\
&=(-1)^{i_{1}}l_{f,M}\circ T_{p}\left(\int_{a_{2}\Gamma_{2}^{p^{m_{2}}}}\chi_{2}(x)^{i_{2}}d\mu_{\Phi^{(\boldsymbol{i})}(a_{1};\psi,G\vert_{[M\slash N^{\prime}]})}\right)\\
&=s^{[\boldsymbol{i}]}(a_{1},a_{2}).
\end{align*}
We have proved \eqref{distribution of mu[boldsymbolr,boldsymbols]eq 2}, which completes the proof of the proposition. 
\end{proof}

\subsubsection*{{\bf{Verification of the admissible condition of}\ $s_{\boldsymbol{m}}^{[\boldsymbol{i}]}$}}

Let $\boldsymbol{r},\boldsymbol{s}\in \mathbb{Z}^{2}$ be elements satisfying $\boldsymbol{s}\geq \boldsymbol{r}$, $[\boldsymbol{r},\boldsymbol{s}]\subset [\boldsymbol{d},\boldsymbol{e}]$ and $s_{1}+s_{2}<k$, where $\boldsymbol{d}=(0,2)$ and $\boldsymbol{e}=(k-3,k-1)$. Let $f\in S_{k}(Np^{m(f)},\psi;\K)$ be a normalized Hecke eigenform which is new away from $p$ with $m(f)\in \mathbb{Z}_{\geq 1}$ and let $G\in S(N^{\prime}p,\xi;\mathcal{O}_{\K}[[\Gamma_{2}]])$ where $N$ and $N^{\prime}$ are positive integers which are prime to $p$ and $\psi$ and $\xi$ are Dirichlet characters modulo $Np^{m(f)}$ and $N^{\prime}p$ respectively. Assume that $m(f)$ is the smallest positive integer $m$ such that $f\in S_{k}(Np^{m},\psi)$. Let $M$ be the least common multiple of $N$ and $N^{\prime}$. We assume that $\alpha=\ord_{p}(a_{p}(f))<\frac{k-1}{2}$ and all $M$-th roots of unity and Fourier coefficients of $f^{0}$ are contained in $\K$ where $f^{0}$ is the primitive form associated with $f$. Let $R_{1,m_{1}}\subset \Delta\times \Gamma_{1}$ (resp. $R_{2,m_{2}}\subset \Gamma_{2}$) be a complete set of representatives of 
$\Delta\times \Gamma_{1}\slash  \Gamma_{1}^{p^{m_{1}}}$ (resp. $\Gamma_{2}\slash \Gamma_{2}^{p^{m_{2}}}$) 
for each $\boldsymbol{m}\in \mathbb{Z}_{\geq 0}^{2}$. For each $\boldsymbol{m}\in \mathbb{Z}_{\geq 0}^{2}$ and for each $\boldsymbol{i}\in [\boldsymbol{r},\boldsymbol{s}]$, we define  $\tilde{s}_{\boldsymbol{m}}^{[\boldsymbol{i}]}\in \mathcal{O}_{\K}[[(\Delta\times \Gamma_{1})\times \Gamma_{2}]]\otimes_{\mathcal{O}_{\K}}\K$ to be
\begin{equation}\label{lift of qboldsymbolmboldsymboli}
\tilde{s}_{\boldsymbol{m}}^{[\boldsymbol{i}]}=\sum_{(a_{1},a_{2})\in R_{1,m_{1}}\times R_{2,m_{2}}}s^{[\boldsymbol{i}]}([a_{1}]_{m_{1}},[a_{2}]_{m_{2}})r^{(\boldsymbol{i})}((a_{1},a_{2}))
\end{equation}
where $[a_{1}]_{m_{1}}\in \Delta\times \Gamma_{1}\slash  \Gamma_{1}^{p^{m_{1}}}$ (resp. $[a_{2}]_{m_{2}}\in \Gamma_{2}\slash \Gamma_{2}^{p^{m_{2}}}$) is the class of $a_{1}\in R_{1,m_{1}}$ (resp. $a_{2}\in R_{2,m_{2}}$), $s^{[\boldsymbol{i}]}([a_{1}]_{m_{1}},[a_{2}]_{m_{2}})\in \K$ is the element defined in \eqref{definition of qboldysmboli a12}, $r^{(\boldsymbol{i})}$ is the group homomorphism defined in \eqref{definition of rboldsymboli group hom}. By definition, $\tilde{s}_{\boldsymbol{m}}^{[\boldsymbol{i}]}$ is a lift of $s_{\boldsymbol{m}}^{[\boldsymbol{i}]}$ in \eqref{definition of qm1m2i1i1}. Put $\boldsymbol{h}=(2\alpha,\alpha)$.
\begin{pro}\label{admissible condition of p-adic l}
Let $f\in S_{k}(Np^{m(f)},\psi;\K)$ be a normalized cuspidal Hecke eigenform which is new away from $p$ with $m(f)\in \mathbb{Z}_{\geq 1}$. We assume that $\alpha=\ord_{p}(a_{p}(f))<\frac{k-1}{2}$ and all $M$-th roots of unity and Fourier coefficients of $f^{0}$ are contained in $\K$ where $f^{0}$ is the primitive form associated with $f$.
Let $\boldsymbol{r},\boldsymbol{s}\in \mathbb{Z}^{2}$ be elements satisfying $\boldsymbol{s}\geq \boldsymbol{r}$, $[\boldsymbol{r},\boldsymbol{s}]\subset [\boldsymbol{d},\boldsymbol{e}]$ and $s_{1}+s_{2}<k$. There exists a non-negative integer $n^{[\boldsymbol{r},\boldsymbol{s}]}(f)$ depending only on $f$ and $[\boldsymbol{r}, \boldsymbol{s}]$ 
which satisfies
\begin{multline}\label{admissible condition of p-adic l statement equation}
p^{\langle \boldsymbol{m},\boldsymbol{h}-(\boldsymbol{j}-\boldsymbol{r})\rangle_{2}}\displaystyle{\sum_{\boldsymbol{i}\in [\boldsymbol{r},\boldsymbol{j}]}}\left(\prod_{t=1}^{2}\begin{pmatrix}j_{t}-r_{t}\\i_{t}-r_{t}\end{pmatrix}\right)(-1)^{\sum_{t=1}^{2}(j_{t}-i_{t})}\tilde{s}_{\boldsymbol{m}}^{[\boldsymbol{i}]}\\
\in \mathcal{O}_{\K}[[(\Delta\times \Gamma_{1})\times \Gamma_{2}]]\otimes_{\mathcal{O}_{\K}}p^{-n^{[\boldsymbol{r},\boldsymbol{s}]}(f)}\mathcal{O}_{\K}
\end{multline}
for every $\boldsymbol{m}\in \mathbb{Z}_{\geq 0}^{2}$ and for every $\boldsymbol{j}\in [\boldsymbol{r},\boldsymbol{s}]$ where $\tilde{s}_{\boldsymbol{m}}^{[\boldsymbol{i}]}$ is the element defined in \eqref{lift of qboldsymbolmboldsymboli}.
\end{pro}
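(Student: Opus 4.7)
The plan is to reduce the containment \eqref{admissible condition of p-adic l statement equation} to a uniform lower bound on the $p$-adic valuations of arithmetic specializations, and then to compute these specializations using the interpolation formulas already developed for the distribution $\tilde s^{[\boldsymbol{i}]}_{\boldsymbol{m}}$.

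Via the non-canonical isomorphism $\mathcal{O}_{\mathcal{K}}[[(\Delta\times\Gamma_1)\times\Gamma_2]]\simeq \mathcal{O}_{\mathcal{K}}[\Delta]\widehat\otimes_{\mathcal{O}_{\mathcal{K}}}\mathcal{O}_{\mathcal{K}}[[X_1,X_2]]$, the membership in $\mathcal{O}_{\mathcal{K}}[[(\Delta\times\Gamma_1)\times\Gamma_2]]\otimes_{\mathcal{O}_{\mathcal{K}}}p^{-n^{[\boldsymbol{r},\boldsymbol{s}]}(f)}\mathcal{O}_{\mathcal{K}}$ is equivalent, as in \eqref{multivariable iwasawa modoomega speq} adapted to the present setting, to showing that the $p$-adic valuation of every arithmetic specialization $\kappa=\kappa_1\otimes\kappa_2$, with finite parts $(\phi_1,\phi_2)$ and conductor at most $\boldsymbol{m}$, of the displayed expression is at least $-n^{[\boldsymbol{r},\boldsymbol{s}]}(f)$, uniformly in $\boldsymbol{m}$, $\boldsymbol{j}$ and $\kappa$. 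By the definition \eqref{lift of qboldsymbolmboldsymboli} of $\tilde s^{[\boldsymbol{i}]}_{\boldsymbol{m}}$, the specialization $\kappa(\tilde s^{[\boldsymbol{i}]}_{\boldsymbol{m}})$ unfolds as
\[
\sum_{(a_1,a_2)}s^{[\boldsymbol{i}]}(a_1,a_2)\,\chi_1(a_1)^{j_1-i_1}\chi_2(a_2)^{j_2-i_2}\phi_1(a_1)\phi_2(a_2).
\]

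To compute this specialization, I will apply Fourier inversion together with Proposition \ref{proposition for interpolation of mu[r,s]} to rewrite $\kappa(\tilde s^{[\boldsymbol{i}]}_{\boldsymbol{m}})$ as $(-1)^{i_1}\,l_{f,M}\!\bigl(T_p(\kappa_2(G)\otimes\phi_1\cdot\delta^{(\cdot)}_\bullet F_\bullet)\bigr)$ involving the Eisenstein series $F_\bullet$ from \eqref{definition of Wb(i1,i2)}. Combining Lemma \ref{lemma for interpolation formula} with Lemma \ref{for comaptibilityof PC} then identifies this expression, up to Lemma \ref{lemma for modefied euler factor} and explicit elementary factors, with the algebraic ratio
\[
a_p(f)^{-\beta}\cdot w'((f^0)^\rho)\cdot G(\phi_1)\,G(\xi\phi_1\phi_2\omega^{-j_2})\cdot E_{p,\phi_1}(j_1+j_2,f,\kappa_2(G))\cdot\frac{\Lambda(j_1+j_2,f,(\kappa_2(G)\otimes\phi_1)^\rho)}{\langle f^0,f^0\rangle_{k,c_f}},
\]
where $\beta\leq\max\{m_1,m_2,m(f)\}+1$. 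The $p$-adic valuation of $a_p(f)^{-\beta}$ equals $-\alpha\beta$, the Gauss sums and Euler factor $E_{p,\phi_1}$ contribute explicit linear functions of $m_1,m_2$, and the $L$-ratio itself lies in $\mathcal{K}$ and is bounded because the set of forms $\kappa_2(G)\otimes\phi_1$ specialized in the finite range $[\boldsymbol{r},\boldsymbol{s}]$ lives in a finite-dimensional $\mathcal{K}$-subspace of nearly-holomorphic cusp forms.

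The decisive gain comes from the finite-difference operator $\sum_{\boldsymbol{i}}\prod_t\binom{j_t-r_t}{i_t-r_t}(-1)^{\sum(j_t-i_t)}$: the dependence of $\kappa(\tilde s^{[\boldsymbol{i}]}_{\boldsymbol{m}})$ on $\boldsymbol{i}$ comes through the exponentials $\chi_t(a_t)^{j_t-i_t}$ and through the Shimura operators $\delta^{(i_1)}_{k-2i_1-i_2}$ (or their reflected counterparts) appearing in \eqref{definition of Wb(i1,i2)}. The annihilation of polynomials of degree $<\boldsymbol{j}-\boldsymbol{r}$ by the finite difference, together with the fact that $\chi_t(a_t)-1\in p^{m_t}\mathbb{Z}_p$ on $\Gamma_t^{p^{m_t}}$-cosets, produces additional $p$-adic divisibility of order $\sum_t m_t(j_t-r_t)/(p-1)$. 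Combined with the outside weight $p^{\langle\boldsymbol{m},\boldsymbol{h}-(\boldsymbol{j}-\boldsymbol{r})\rangle_2}$ and the bound $\alpha\beta\leq\alpha\max\{m_1,m_2\}+\alpha(m(f)+1)\leq\langle\boldsymbol{m},\boldsymbol{h}\rangle_2+$ constant, the various contributions balance and yield a uniform lower bound $-n^{[\boldsymbol{r},\boldsymbol{s}]}(f)$ depending only on $f$ and $[\boldsymbol{r},\boldsymbol{s}]$. The main obstacle is the delicate combinatorial bookkeeping of the third step: the Shimura operator $\delta^{(i_1)}_{k-2i_1-i_2}$ depends on $\boldsymbol{i}$ both in its order and in its base weight, so one must verify that the ``polynomial-in-$\boldsymbol{i}$'' expansion that drives the finite-difference estimate persists through this non-linearity, and that the enhanced $p$-divisibility so obtained is sharp enough to absorb both the $a_p(f)^{-\beta}$ denominator and the outside weight uniformly in $\boldsymbol{m}$.
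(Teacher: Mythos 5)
Your plan deviates from the paper's proof in three ways, and in each case there is a genuine gap rather than a working alternative.

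First, your opening reduction is not an equivalence. An element of $\mathcal{K}[(\Delta\times\Gamma_1/\Gamma_1^{p^{m_1}})\times(\Gamma_2/\Gamma_2^{p^{m_2}})]$ whose arithmetic specializations all have valuation $\geq -n$ need only have coefficients of valuation $\geq -n-\ord_p(|G|)$ where $|G|=(p-1)p^{m_1+m_2}$: Fourier inversion divides by the group order, and this is a loss of $m_1+m_2$ that grows with $\boldsymbol{m}$. The paper's valuation budget has essentially no slack to absorb it: in \eqref{admissible prod ordp geq geq geq v-(2-cf)aloha-(s1-r1)} the outside weight $\langle\boldsymbol{m},\boldsymbol{h}\rangle_2 = 2\alpha m_1+\alpha m_2$ is used almost entirely to absorb $(2m_1+m_2+1)\alpha$, and the finite-difference gain $(j_1-r_1)(m_1+1)+(j_2-r_2)(m_2+1)$ is used entirely to absorb $\langle\boldsymbol{m},\boldsymbol{j}-\boldsymbol{r}\rangle_2$. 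There is no $m_1+m_2$ to spare; when $m_1=0$ and $m_2$ is large the balance is exact up to a constant. So the reduction to specialization estimates does not close. (The invocation of \eqref{multivariable iwasawa modoomega speq} in your proposal is also off: that equation is a membership criterion in an ideal, not a valuation criterion.)

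Second, and more importantly, the mechanism you describe for the finite-difference gain only works in the $\Gamma_2$ variable. Your ``$\chi_t(a_t)-1\in p^{m_t}\mathbb{Z}_p$ on cosets'' observation is exactly what the paper uses for $\Gamma_2$ (see \eqref{ordpleft int a2gamma2 (chi2)x2geq (m2+1)(j2-r2)}), but in the $\Gamma_1$ direction the paper's gain comes from a completely different, arithmetic source: at the level of the $p^{m_f(\boldsymbol{m})-m(f)+1}n$-th Fourier coefficients, for the relevant $(n_1,n_2,t)$ with $n_1+n_2\equiv 0\pmod{p^{m_f(\boldsymbol{m})-m(f)+1}}$, $n_1\equiv a_1b^2\pmod{p^{m_1+1}}$, and $t\equiv\chi_1(b)\pmod{p^{m_1+1}}$, one has the congruence $\tfrac{n_2}{t^2}+\chi_1(a_1)\equiv 0\pmod{p^{\min\{m_f(\boldsymbol{m})-m(f),m_1\}+1}}$, so the binomial sum collapses to $(\tfrac{n_2}{t^2}+\chi_1(a_1))^{j_1-r_1}$, which is highly $p$-divisible (\eqref{eq sum i1=r1 modulo p(m1+1)(j1-r1)}). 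This identity lives at the level of $q$-expansions and is invisible once you pass to $L$-values via the interpolation formulas of Lemma \ref{lemma for interpolation formula} and Lemma \ref{for comaptibilityof PC}. Your suggested gain $\sum_t m_t(j_t-r_t)/(p-1)$ also has the wrong shape (there is no $1/(p-1)$ factor; the gain is $\sum_t(j_t-r_t)(m_t+1)$ up to the $m(f)$ correction in the first variable), and your expectation that the nonlinearity of the Shimura operators $\delta^{(i_1)}_{k-2i_1-i_2}$ is a problem is in fact dissolved precisely by working at the $q$-expansion level, where the $\iota$-projection eliminates the $\frac{-1}{4\pi y}$ variable and the $\boldsymbol{i}$-dependence becomes genuinely polynomial in $\tfrac{n_2}{t^2}$ and $\chi_1(a_1)$.

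Third, the assertion that the $L$-ratio is bounded ``because the set of forms $\kappa_2(G)\otimes\phi_1$ specialized in the finite range lives in a finite-dimensional $\mathcal{K}$-subspace'' is false: the level of these forms is $Mp^{\beta_\kappa}$, which grows with $\boldsymbol{m}$, so the ambient space is not fixed. The paper obtains a bounded operator by first applying $T_p^{m_f(\boldsymbol{m})-m(f)+1}$ to bring everything down to the fixed level $Mp^{m(f)}$ (at the cost of the factor $a_p(f)^{-(m_f(\boldsymbol{m})-m(f))}$, which is carefully tracked), and then appealing to the finiteness of $v_{\mathfrak{L}}(l_{f,M}^{(m(f))})$ and $v_{\mathfrak{L}}(\iota_{m(f)}^{-1})$ on the resulting fixed finite-dimensional Banach space. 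Pulling back to fixed level before invoking boundedness is an essential step, not cosmetic.
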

\begin{proof}
Assume that $m(f)$ is the smallest positive integer $m$ such that $f\in S_{k}(Np^{m},\psi)$. Denote by $M$ the least common multiple of $N$ and $N^{\prime}$. For each $(a_{1},a_{2})\in R_{1,m_{1}}\times R_{2,m_{2}}$ with $\boldsymbol{m}\in \mathbb{Z}_{\geq 0}^{2}$ and for each $\boldsymbol{j}\in [\boldsymbol{r},\boldsymbol{s}]$, we put
\begin{multline*}
\theta^{(\boldsymbol{j})}_{\boldsymbol{m}}(a_{1},a_{2})=\displaystyle{\sum_{\boldsymbol{i}\in [\boldsymbol{r},\boldsymbol{j}]}}\left(\prod_{t=1}^{2}\begin{pmatrix}j_{t}-r_{t}\\i_{t}-r_{t}\end{pmatrix}\right)(-1)^{i_{1}}\int_{a_{2}\Gamma_{2}^{p^{m_{2}}}}\chi_{2}(x)^{i_{2}}d\mu_{\Phi^{(\boldsymbol{i})}(a_{1};\psi,G\vert_{[M\slash N^{\prime}]})}\\
(-\chi_{1}(a_{1}))^{j_{1}-i_{1}}(-\chi_{2}(a_{2}))^{j_{2}-i_{2}}
\end{multline*}
where $\Phi^{(\boldsymbol{i})}(a_{1};\psi,G\vert_{[M\slash N^{\prime}]})$ is the element defined in \eqref{definition of Phii1ii2(a1;G)} and the measure 
$\mu_{\Phi^{(\boldsymbol{i})}(a_{1};\psi,G\vert_{[M\slash N^{\prime}]})}\linebreak\in \mathrm{Hom}_{\mathcal{O}_{\K}}(C(\Gamma_{2},\mathcal{O}_{\K}),\mathcal{O}_{\K}[[X]][[q]])$ is the inverse image of the element $\Phi^{(\boldsymbol{i})}(a_{1};\psi,G\vert_{[M\slash N^{\prime}]})\in \mathcal{O}_{\K}[[\Gamma_{2}]][[X]][[q]]$ by the isomorphism $\mathrm{Hom}_{\mathcal{O}_{\K}}(C(\Gamma_{2},\mathcal{O}_{\K}),\mathcal{O}_{\K}[[X]][[q]])\stackrel{\sim}{\rightarrow}\mathcal{O}_{\K}[[\Gamma_{2}]][[X]][[q]]$ defined in \eqref{hom banach iwasawa continuous}. Let $T_{p}$ be the operator defined in \eqref{formal hecke operator and delta operator}. By the definition of $\theta^{(\boldsymbol{j})}_{\boldsymbol{m}}(a_{1},a_{2})$, we have
\begin{multline*}T_{p}(\theta^{(\boldsymbol{j})}_{\boldsymbol{m}}(a_{1},a_{2}))\\
=\displaystyle{\sum_{\boldsymbol{i}\in [\boldsymbol{r},\boldsymbol{j}]}}\left(\prod_{t=1}^{2}\begin{pmatrix}j_{t}-r_{t}\\i_{t}-r_{t}\end{pmatrix}\right)\phi^{(\boldsymbol{i})}((a_{1},a_{2});\psi,G\vert_{[M\slash N^{\prime}]})(-\chi_{1}(a_{1}))^{j_{1}-i_{1}}(-\chi_{2}(a_{2}))^{j_{2}-i_{2}}
\end{multline*}
where $\phi^{(\boldsymbol{i})}((a_{1},a_{2});\psi,G\vert_{[M\slash N^{\prime}]})$ is the element defined in \eqref{definition Psii1i2a1a2}. By Proposition \ref{proposition for interpolation of mu[r,s]}, we have $T_{p}(\theta^{(\boldsymbol{j})}_{\boldsymbol{m}}(a_{1},a_{2})) \in N_{k}^{\leq \lfloor\frac{k-1}{2}\rfloor,\mathrm{cusp}}(Mp^{m_{f}(\boldsymbol{m})},\psi;\mathcal{O}_{\K})$ with $m_{f}(\boldsymbol{m})=\max\{2m_{1}+1,m_{2},m(f)\}$. By \eqref{definition of qboldysmboli a12}, we have
\begin{multline}\label{admissible prop display prodt=12j1 sum R1m1 R2m2}
\displaystyle{\sum_{\boldsymbol{i}\in [\boldsymbol{r},\boldsymbol{j}]}}\left(\prod_{t=1}^{2}\begin{pmatrix}j_{t}-r_{t}\\i_{t}-r_{t}\end{pmatrix}\right)(-1)^{\sum_{t=1}^{2}(j_{t}-i_{t})}\tilde{s}_{\boldsymbol{m}}^{[\boldsymbol{i}]}\\
=\sum_{(a_{1},a_{2})\in R_{1,m_{1}}\times R_{2,m_{2}}}\chi_{1}(a_{1})^{-j_{1}}\chi_{2}(a_{2})^{-j_{2}}l_{f,M}^{(m_{f}(\boldsymbol{m}))}\left(T_{p}(\theta^{(\boldsymbol{j})}_{\boldsymbol{m}}(a_{1},a_{2}))\right)[a_{1},a_{2}]
\end{multline}
where $l_{f,M}^{(m)}: N^{\leq \lfloor \frac{k-1}{2}\rfloor,\mathrm{cusp}}_{k}(Mp^{m},\psi;\K)\rightarrow \K$ is the $\K$-linear map defined in \eqref{definition of lf,L(n)} for each positive integer $m$ such that $m\geq m(f)$ and $[a_{1},a_{2}]\in \mathcal{O}_{\K}[[(\Delta\times \Gamma_{1})\times \Gamma_{2}]]$ is the class of $(a_{1},a_{2})\in (\Delta\times \Gamma_{1})\times \Gamma_{2}$. By the definition of  $l_{f,M}^{(m_{f}(\boldsymbol{m}))}$, we have 
\begin{equation}\label{admissible prop lfmcfyomgeq firs lmftptheta}
l_{f,M}^{(m_{f}(\boldsymbol{m}))}T_{p}(\theta^{(\boldsymbol{j})}_{\boldsymbol{m}}(a_{1},a_{2}))=a_{p}(f)^{-(m_{f}(\boldsymbol{m})-m(f))}l_{f,M}^{(m(f))}T_{p}^{m_{f}(\boldsymbol{m})-m(f)+1}(\theta^{(\boldsymbol{j})}_{\boldsymbol{m}}(a_{1},a_{2}))
\end{equation}
 for every $\boldsymbol{j}\in[\boldsymbol{r},\boldsymbol{s}]$ and for every $(a_{1},a_{2})\in R_{1,m_{2}}\times R_{2,m_{2}}$ with $\boldsymbol{m}\in \mathbb{Z}_{\geq 0}^{2}$. We regard $N_{k}^{\lfloor\frac{k-1}{2}\rfloor,\mathrm{cusp}}(Mp^{m(f)},\psi;\K)$ as a $\K$-Banach space by the valuation $v_{N_{k}^{\leq\lfloor\frac{k-1}{2}\rfloor}(Mp^{m(f)},\psi)}$ defined in \eqref{valuation of nearly holomorphic modular forms}. Since $l_{f,M}^{(m(f))}: N_{k}^{\lfloor\frac{k-1}{2}\rfloor,\mathrm{cusp}}(Mp^{m(f)},\psi;\K)\rightarrow \K$ is bounded, we have $v_{\mathfrak{L}}(l_{f,M}^{(m(f))})>-\infty$ where $v_{\mathfrak{L}}$ is the valuation defined by the condition \eqref{valuation of bounded operator}. Let $\alpha=\ord_{p}(a_{p}(f))$. By \eqref{admissible prop lfmcfyomgeq firs lmftptheta}, we see that 
\begin{multline}\label{p-adic admissible prop in T}
\ord_{p}\left(l_{f,M}^{(m_{f}(\boldsymbol{m}))}T_{p}(\theta^{(\boldsymbol{j})}_{\boldsymbol{m}}(a_{1},a_{2}))\right)
\\ 
\geq -(m_{f}(\boldsymbol{m})-m(f))\alpha+v_{\mathfrak{L}}(l_{f,M}^{(m(f))})+v_{N_{k}^{\leq\lfloor\frac{k-1}{2}\rfloor}(Mp^{m(f)},\psi)}(T_{p}^{m_{f}(\boldsymbol{m})-m(f)+1}(\theta^{(\boldsymbol{j})}_{\boldsymbol{m}}(a_{1},a_{2}))),
\end{multline}
for every $\boldsymbol{j}\in [\boldsymbol{r},\boldsymbol{s}]$ and for every $(a_{1},a_{2})\in R_{1,m_{1}}\times R_{2,m_{2}}$ with $\boldsymbol{m}\in \mathbb{Z}_{\geq 0}^{2}$.
Let $\iota: \K[[X]][[q]]\rightarrow K[[q]]$ be the $\K$-linear homomorphism defined in \eqref{definition of formal iota and d}. Denote by $\iota_{m(f)}$ the restriction of $\iota$ on $N_{k}^{\leq\lfloor \frac{k-1}{2}\rfloor,\mathrm{cusp}}(Mp^{m(f)},\psi;\K)$. {By Proposition \ref{classical ver of urban}, the map $\iota_{m(f)}:N_{k}^{\leq\lfloor \frac{k-1}{2}\rfloor,\mathrm{cusp}}(Mp^{m(f)},\psi;\K)\rightarrow \K[[q]]$ induces a $\K$-linear isomorphism 
$$
\iota_{m(f)}: N_{k}^{\leq\lfloor \frac{k-1}{2}\rfloor,\mathrm{cusp}}(Mp^{m(f)},\psi;\K)\stackrel{\sim}{\rightarrow}\iota_{m(f)}\left(N_{k}^{\leq\lfloor \frac{k-1}{2}\rfloor,\mathrm{cusp}}(Mp^{m(f)},
\psi;\K)\right).
$$
} 
By the diagram \eqref{formal commutative delta and d and Tp and Up}, 
we have 
\begin{equation}\label{amidssible prop tpm-cf=iotacfTpformal m-cf}
T_{p}^{m_{f}(\boldsymbol{m})-m(f)+1}(\theta^{(\boldsymbol{j})}_{\boldsymbol{m}}(a_{1},a_{2}))=\iota_{m(f)}^{-1}T_{p}^{m_{f}(\boldsymbol{m})-m(f)+1}\iota(\theta^{(\boldsymbol{j})}_{\boldsymbol{m}}(a_{1},a_{2})).
\end{equation}
We regard $\iota_{m(f)}\left(N_{k}^{\leq\lfloor \frac{k-1}{2}\rfloor,\mathrm{cusp}}(Mp^{m(f)},\psi;\K)\right)$ as a $\K$-Banach space by the valuation $v_{\iota_{m(f)}}$ defined by
\begin{equation}
v_{\iota_{m(f)}}(g)=\inf_{n\in \mathbb{Z}_{\geq 1}}\{\ord_{p}(a_{n}(g))\}
\end{equation}
for each $g=\sum_{n=1}^{+\infty}a_{n}(g)q^{n}\in \iota_{m(f)}\left(N_{k}^{\leq\lfloor \frac{k-1}{2}\rfloor,\mathrm{cusp}}(Mp^{m(f)},\psi;\K)\right)$. We note that $\iota_{m(f)}^{-1}$ is bounded. Then, by \eqref{p-adic admissible prop in T} and \eqref{amidssible prop tpm-cf=iotacfTpformal m-cf}, we have 
\begin{multline}\label{p-adic admissible prop in T iota}
\ord_{p}\left(l_{f,M}^{(m_{f}(\boldsymbol{m}))}T_{p}(\theta^{(\boldsymbol{j})}_{\boldsymbol{m}}(a_{1},a_{2}))\right) \\ 
\geq -(m_{f}(\boldsymbol{m})-m(f))\alpha+v_{\mathfrak{L}}(l_{f,M}^{(m(f))})+v_{\mathfrak{L}}(\iota_{m(f)}^{-1})+v_{\iota_{m(f)}}(T_{p}^{m_{f}(\boldsymbol{m})-m(f)+1}\iota(\theta^{(\boldsymbol{j})}_{\boldsymbol{m}}(a_{1},a_{2})))
\end{multline}
for every $\boldsymbol{j}\in [\boldsymbol{r},\boldsymbol{s}]$ and for every $(a_{1},a_{2})\in R_{1,m_{1}}\times R_{2,m_{2}}$ with $\boldsymbol{m}\in \mathbb{Z}_{\geq 0}^{2}$. 
We define a power series $\Phi^{(\boldsymbol{i})}_{\iota}([a_{1}]_{m_{1}};\psi,G\vert_{[M\slash N^{\prime}]})\in \mathcal{O}_{\K}[[\Gamma_{2}]][[q]]$ to be
\small 
\begin{align*}
&\Phi^{(\boldsymbol{i})}_{\iota}([a_{1}]_{m_{1}};\psi,G\vert_{[M\slash N^{\prime}]})=\sum_{b\in \Delta\times \Gamma_{1}\slash  \Gamma_{1}^{p^{m_{1}}}}(G\vert_{[M\slash N^{\prime}]})_{\equiv [a_{1}]_{m_{1}}b^{2} (p^{m_{1}+1})}\times
\\ 
&\displaystyle{
\begin{cases}
\displaystyle{\sum_{\substack{c\in \Delta_{M}\times (\Gamma_{1}\slash  \Gamma_{1}^{p^{\max\{m_{1},m(f)-1\}}})\\ p_{m_{1}}^{(M)}(c)=b}}} (\psi\xi^{-1})(c)d^{i_{1}}\left(F_{(0,k-2i_{1}-1,0)}(q)\right)\ &\mathrm{if}\ 0\leq i_{1}<\frac{1}{2}(k-i_{2}),
\\
\displaystyle{\sum_{\substack{c\in \Delta_{M}\times (\Gamma_{1}\slash  \Gamma_{1}^{p^{\max\{m_{1},m(f)-1\}}})\\ p_{m_{1}}^{(M)}(c)=b}}} (\psi\xi^{-1})(c)d^{k-i_{1}-i_{2}-1}
\left( F_{(-k+2i_{1}+1,0,i_{2})}(q) \right)\ &\mathrm{if}\ \frac{1}{2}(k-i_{2})\leq i_{1}<k-i_{2},
\end{cases}}
\end{align*}
\normalsize 
where we denote the power series $F_{(n_1,n_2,n_3 ),\Gamma_2}(c;Mp^{\max\{m_{1}+1,m(f)\}})$ defined in \eqref{Definition of Fm,a} by $F_{n_1 ,n_2,n_3} (q)$ for short. 
The map $p_{m}^{(M)}: \Delta_{M}\times\Gamma_{1}\slash \Gamma_{1}^{p^{\max\{m,m(f)-1\}}}\rightarrow \Delta\times \Gamma_{1}\slash \Gamma_{1}^{p^{m}}$ is the natural projection and $d:\mathcal{O}_{\K}[[\Gamma_{2}]][[q]]\rightarrow \mathcal{O}_{\K}[[\Gamma_{2}]][[q]]$ is the operator defined by $d=q\frac{d}{dq}$. By the diagram of \eqref{formal commutative delta and d and Tp and Up}, we see that 
\begin{equation}\label{admiss pro image ofphiiota}
\iota\left(\int_{a_{2}\Gamma_{2}^{p^{m_{2}}}}\chi_{2}(x_{2})^{i_{2}}d\mu_{\Phi^{(\boldsymbol{i})}([a_{1}]_{m_{1}};\psi,G\vert_{[M\slash N^{\prime}]})}\right)=\int_{a_{2}\Gamma_{2}^{p^{m_{2}}}}\chi_{2}(x_{2})^{i_{2}}d\mu_{\Phi_{\iota}^{(\boldsymbol{i})}([a_{1}]_{m_{1}};\psi,G\vert_{[M\slash N^{\prime}]})},
\end{equation}
where $\mu_{\Phi^{(\boldsymbol{i})}_{\iota}([a_{1}]_{m_{1}};\psi,G\vert_{[M\slash N^{\prime}]})}\in \mathrm{Hom}_{\mathcal{O}_{\K}}(C(\Gamma_{2},\mathcal{O}_{\K}),\mathcal{O}_{\K}[[q]])$ is the inverse image of the element $\Phi^{(\boldsymbol{i})}_{\iota}([a_{1}]_{m_{1}};\psi,G\vert_{[M\slash N^{\prime}]})\in \mathcal{O}_{\K}[[\Gamma_{2}]][[q]]$ by the isomorphism $\mathrm{Hom}_{\mathcal{O}_{\K}}(C(\Gamma_{2},\mathcal{O}_{\K}),\linebreak\mathcal{O}_{\K}[[q]])\stackrel{\sim}{\rightarrow}\mathcal{O}_{\K}[[\Gamma_{2}]][[q]]$ defined in \eqref{hom banach iwasawa continuous}. By \eqref{admiss pro image ofphiiota}, we have
\begin{align*}
\small 
&\iota(\theta^{(\boldsymbol{j})}_{\boldsymbol{m}}(a_{1},a_{2})) \\ 
& =\sum_{\boldsymbol{i}\in[\boldsymbol{r},\boldsymbol{j}]}\begin{pmatrix}j_{1}-r_{1}\\ i_{1}-r_{1}\end{pmatrix}\begin{pmatrix}j_{2}-r_{2}\\ i_{2}-r_{2}\end{pmatrix}(-1)^{i_{1}}\int_{a_{2}\Gamma_{2}^{p^{\nu_{2}}}}\chi_{2}(x_{2})^{i_{2}}d\mu_{\Phi_{\iota}^{(\boldsymbol{i})}([a_{1}]_{m_{1}};\psi,G\vert_{[M\slash N^{\prime}]})}\\
&(-\chi_{1}(a_{1}))^{j_{1}-i_{1}}(-\chi_{2}(a_{2}))^{j_{2}-i_{2}}\\
& =\sum_{i_{1}=r_{1}}^{j_{1}}\begin{pmatrix}j_{1}-r_{1}\\ i_{1}-r_{1}\end{pmatrix}(-\chi_{1}(a_{1}))^{j_{1}-i_{1}}(-1)^{i_{1}}\int_{a_{2}\Gamma_{2}^{p^{\nu_{2}}}}(\chi_{2}(x_{2})-\chi_{2}(a_{2}))^{j_{2}-r_{2}}\\
&\chi_{2}(x_{2})^{r_{2}}d\mu_{\Phi^{(\boldsymbol{i})}_{\iota}([a_{1}]_{m_{1}};\psi,G\vert_{[M\slash N^{\prime}]})}.
\normalsize
\end{align*}
We put $\iota(\theta^{(\boldsymbol{j})}_{\boldsymbol{m}}(a_{1},a_{2}))=\sum_{n=1}^{+\infty}a_{n}((a_{1},a_{2}),\boldsymbol{j})q^{n}$ with $a_{n}((a_{1},a_{2}),\boldsymbol{j})\in \mathcal{O}_{\K}$. By the definition of $T_{p}$, we have $T_{p}^{m_{f}(\boldsymbol{m})-m(f)+1}(\iota(\theta^{(\boldsymbol{j})}_{\boldsymbol{m}}(a_{1},a_{2})))=\sum_{n=1}^{+\infty}a_{p^{m_{f}(\boldsymbol{m})-m(f)+1}n}((a_{1},a_{2}),\boldsymbol{j})q^{n}$. For each $n\in \mathbb{Z}_{\geq 1}$, $p^{m_{f}(\boldsymbol{m})-m(f)+1}n$-th coefficinet of $\Phi^{(\boldsymbol{i})}_{\iota}([a_{1}]_{m_{1}};\psi,G\vert_{[M\slash N^{\prime}]})$ is given by 
\small 
\begin{multline}\label{pl(boldsymbolm)-1nth coefficient of phiiotaa1m1G}
\sum_{b\in \Delta\times \Gamma_{1}\slash  \Gamma_{1}^{p^{m_{1}}}}\sum_{\substack{c\in \Delta_{M}\times (\Gamma_{1}\slash  \Gamma_{1}^{p^{\max\{m_{1},m(f)-1\}}})\\ p_{m_{1}}^{(M)}(c)=b}} (\psi\xi^{-1})(c)\sum_{\substack{n_{1}+n_{2}=p^{m_{f}(\boldsymbol{m})-m(f)+1}n\\ n_{1}\equiv a_{1}b^{2}\ \mathrm{mod}\ p^{m_{1}+1}}}n_{2}^{i_{1}}\\
\sum_{\substack{t\vert n_{2}\\ t\equiv c\ \mathrm{mod}\ Mp^{\max\{m_{1}+1,m(f)\}}}}t^{k-2i_{1}-1}a_{n_{1},G\vert_{[M\slash N^{\prime}]}}\langle t\rangle^{-1}.
\end{multline}
\normalsize
For each $H=\sum_{n=0}^{+\infty}a_{n}(H)q^{n}\in \mathcal{O}_{\K}[[\Gamma_{2}]][[q]]$ with $a_{n}(H)\in \mathcal{O}_{\K}[[\Gamma_{2}]]$ and for each $\phi\in C(\Gamma_{2},\mathcal{O}_{\K})$, we have 
the following equality in $\mathcal{O}_{\K}[[q]]$: 
\begin{equation}\label{eq continuous measures formal power and each coeffici relatinon}
\int_{\Gamma_{2}}\phi(x)d\mu_{H}=\sum_{n=0}^{+\infty}\left(\int_{\Gamma_{2}}\phi(x)\mu_{a_{n}(H)}\right)q^{n} 
\end{equation}
where $\mu_{H}\in \mathrm{Hom}_{\mathcal{O}_{\K}}(C(\Gamma_{2},\mathcal{O}_{\K}),\mathcal{O}_{\K}[[q]])$ and $\mu_{a_{n}(H)}\in \mathrm{Hom}_{\mathcal{O}_{\K}}(C(\Gamma_{2},\mathcal{O}_{\K}),\mathcal{O}_{\K})$ are the inverse images of $H\in \mathcal{O}_{\K}[[\Gamma_{2}]][[q]]$ and $a_{n}(H)\in \mathcal{O}_{\K}[[q]]$ by the isomorphisms 
\begin{align}\label{eq two isomorphism hom and iwasawa in proof}
\begin{split}
& \mathrm{Hom}_{\mathcal{O}_{\K}}(C(\Gamma_{2},\mathcal{O}_{\K}),\mathcal{O}_{\K}[[q]])\stackrel{\sim}{\rightarrow}\mathcal{O}_{\K}[[\Gamma_{2}]][[q]] \\
& \mathrm{Hom}_{\mathcal{O}_{\K}}(C(\Gamma_{2},\mathcal{O}_{\K}),\mathcal{O}_{\K})\stackrel{\sim}{\rightarrow}\mathcal{O}_{\K}[[\Gamma_{2}]] 
\end{split}
\end{align}
in \eqref{hom banach iwasawa continuous} respectively. 
By applying \eqref{eq continuous measures formal power and each coeffici relatinon} to 
\begin{align*}
& H=\Phi^{(\boldsymbol{i})}_{\iota}([a_{1}]_{m_{1}};\psi,G\vert_{[M\slash N^{\prime}]}) \\ 
& \phi(x_{2})=\sum_{i_{1}=r_{1}}^{j_{1}}\begin{pmatrix}j_{1}-r_{1}\\ i_{1}-r_{1}\end{pmatrix}(-\chi_{1}(a_{1}))^{j_{1}-i_{1}}(-1)^{i_{1}}(\chi_{2}(x_{2})-\chi_{2}(a_{2}))^{j_{2}-r_{2}}\chi_{2}(x_{2})^{r_{2}}1_{a_{2}\Gamma_{2}^{p^{m_{2}}}}(x_{2}) 
\end{align*} 
 with the  characteristic function $1_{a_{2}\Gamma_{2}^{p^{m_{2}}}}(x_{2})$ on $a_{2}\Gamma_{2}^{p^{m_{2}}}$, we have
\begin{multline}\label{applyint general theta(boldsymbolj)boldsymbolm iota(a1a2)}
\iota(\theta^{(\boldsymbol{j})}_{\boldsymbol{m}}(a_{1},a_{2}))=\sum_{n=1}^{+\infty}\bigg(\sum_{i_{1}=r_{1}}^{j_{1}}\begin{pmatrix}j_{1}-r_{1}\\ i_{1}-r_{1}\end{pmatrix}(-\chi_{1}(a_{1}))^{j_{1}-i_{1}}(-1)^{i_{1}}\\
\int_{a_{2}\Gamma_{2}^{p^{\nu_{2}}}}(\chi_{2}(x_{2})-\chi_{2}(a_{2}))^{j_{2}-r_{2}}\chi_{2}(x_{2})^{r_{2}}d\mu_{n,\Phi^{(\boldsymbol{i})}_{\iota}([a_{1}]_{m_{1}};\psi,G\vert_{[M\slash N^{\prime}]})}\bigg)q^{n}
\end{multline}
where $\mu_{n,\Phi^{(\boldsymbol{i})}_{\iota}([a_{1}]_{m_{1}};\psi,G\vert_{[M\slash N^{\prime}]})}\in \mathrm{Hom}_{\mathcal{O}_{\K}}(C(\Gamma_{2},\mathcal{O}_{\K}),\mathcal{O}_{\K})$ is the inverse image of the $n$-th coefficient of $\Phi^{(\boldsymbol{i})}_{\iota}([a_{1}]_{m_{1}};\psi,G\vert_{[M\slash N^{\prime}]})$ by \eqref{eq two isomorphism hom and iwasawa in proof}.  
By \eqref{pl(boldsymbolm)-1nth coefficient of phiiotaa1m1G} and \eqref{applyint general theta(boldsymbolj)boldsymbolm iota(a1a2)}, for each $n\in \mathbb{Z}_{\geq 1}$, we have
\begin{multline}\label{admissible prodeqref apmntheta long equation}
a_{p^{m_{f}(\boldsymbol{m})-m(f)+1}n}((a_{1},a_{2}),\boldsymbol{j})=(-1)^{j_{1}-r_{1}}\sum_{b\in \Delta\times \Gamma_{1}\slash  \Gamma_{1}^{p^{m_{1}}}}\sum_{\substack{c\in \Delta_{M}\times (\Gamma_{1}\slash  \Gamma_{1}^{p^{\max\{m_{1},m(f)-1\}}})\\ p_{m_{1}}^{(M)}(c)=b}}(\psi\xi^{-1})(c)\\
\sum_{\substack{n_{1}+n_{2}=p^{m_{f}(\boldsymbol{m})-m(f)+1}n\\ n_{1}\equiv a_{1}b^{2}\ \mathrm{mod}\ p^{m_{1}+1}}}\sum_{\substack{t\vert n_{2}\\ t\equiv c\ \mathrm{mod}\ Mp^{\max\{m_{1}+1,m(f)\}}}}t^{k-1}
\int_{a_{2}\Gamma_{2}^{p^{m_{2}}}}(\chi_{2}(x_{2})-\chi_{2}(a_{2}))^{j_{2}-r_{2}}\\
\chi_{2}(x_{2})^{r_{2}}d\mu_{n_{1},t}\sum_{i_{1}=r_{1}}^{j_{1}}\begin{pmatrix}j_{1}-r_{1}\\i_{1}-r_{1}\end{pmatrix}(\frac{n_{2}}{t^{2}})^{i_{1}-r_{1}}\chi_{1}(a_{1})^{j_{1}-i_{1}}
\end{multline}
where $\mu_{n_{1},t}\in \mathrm{Hom}_{\mathcal{O}_{\K}}(C(\Gamma_{2},\mathcal{O}_{\K}),\mathcal{O}_{\K})$ is the inverse image of $a_{n_{1}}(G\vert_{[M\slash N^{\prime}]})\langle t\rangle^{-1}\in \mathcal{O}_{\K}[[\Gamma_{2}]]$ 
by \eqref{eq two isomorphism hom and iwasawa in proof}. 
By \eqref{continuous measure and spectral norm}, we have 
\begin{align}\label{ordpleft int a2gamma2 (chi2)x2geq (m2+1)(j2-r2)}
& \ord_{p}\left(\int_{a_{2}\Gamma_{2}^{p^{m_{2}}}}(\chi_{2}(x_{2})-\chi_{2}(a_{2}))^{j_{2}-r_{2}}\chi_{2}(x_{2})^{r_{2}}d\mu_{n_{1},t}\right)\\ 
& \geq 
\inf\{(\chi_{2}(x_{2})-\chi_{2}(a_{2}))^{j_{2}-r_{2}}\chi_{2}(x_{2})^{r_{2}}1_{a_{2}\Gamma_{2}^{p^{m_{2}}}}(x_{2})\}_{x_{2}\in \Gamma_{2}} \notag \\
&=(m_{2}+1)(j_{2}-r_{2}). \notag 
\end{align}
Let $b\in \Delta\times \Gamma_{1}\slash  \Gamma_{1}^{p^{m_{1}}}$, $c\in \Delta_{M}\times \Gamma_{1}\slash  \Gamma_{1}^{p^{\max\{m_{1},m(f)-1\}}}$ and $t\in \mathbb{Z}_{\geq 1}$ 
 be elements satisfying $p_{m_{1}}^{(M)}(c)=b$ and $t\equiv c\ \mathrm{mod}\ Mp^{\max\{m_{1}+1,m(f)\}}$. Since we have $p_{m_{1}}^{(M)}(c)=b$ and $t\equiv c\ \mathrm{mod}\ Mp^{\max\{m_{1}+1,m(f)\}}$, the element $b\in \Delta\times \Gamma_{1}\slash  \Gamma_{1}^{p^{m_{1}}}$ is sent to $[t]\in (\mathbb{Z}\slash p^{m_{1}+1}\mathbb{Z})^{\times}$ by the isomorphism $\Delta\times\Gamma_{1}\slash  \Gamma_{1}^{p^{m_{1}}}\simeq (\mathbb{Z}\slash p^{m_{1}+1}\mathbb{Z})^{\times}$ induced by $\chi_{1}$. That is, we have
\begin{equation}\label{eq addmisible prop tequiv chi19b0}
t\equiv \chi_{1}(b)\ \mathrm{mod}\  p^{m_{1}+1}.
\end{equation}
Let $(n_{1},n_{2})\in \mathbb{Z}_{\geq 1}^{2}$ be a pair of elements satisfying $n_{1}\equiv a_{1}b^{2}\ \mathrm{mod}\ p^{m_{1}+1}$ and $n_{1}+n_{2}\equiv 0\ \mathrm{mod}\ p^{m_{f}(\boldsymbol{m})-m(f)+1}$. Then we have 
\begin{equation}\label{eq admisibble prop n1/chi1(b)2equiv chi1(a)}
\frac{n_{1}}{\chi_{1}(b)^{2}}\equiv \chi_{1}(a)\ \mathrm{mod}\ p^{m_{1}+1}\ \mathrm{and}\ n_{2}\equiv -n_{1}\ \mathrm{mod}\ p^{m_{f}(\boldsymbol{m})-m(f)+1}.
\end{equation}
Assume that $t\vert n_{2}$. By combining \eqref{eq addmisible prop tequiv chi19b0} and \eqref{eq admisibble prop n1/chi1(b)2equiv chi1(a)}, we have $\frac{n_{2}}{t^{2}}\equiv \frac{n_{2}}{\chi_{1}(b)^{2}}\equiv \frac{-n_{1}}{\chi_{1}(b)^{2}}\equiv -\chi_{1}(a_{1})\ \mathrm{mod}\ p^{\min\{m_{f}(\boldsymbol{m})-m(f),m_{1}\}+1}$, 
 which implies that 
\begin{multline}\label{eq sum i1=r1 modulo p(m1+1)(j1-r1)}
\sum_{i_{1}=r_{1}}^{j_{1}}
\begin{pmatrix}j_{1}-r_{1}\\i_{1}-r_{1}\end{pmatrix}\left(\frac{n_{2}}{t^{2}}\right)^{i_{1}-r_{1}}\chi_{1}(a_{1})^{j_{1}-i_{1}}=
\left(\frac{n_{2}}{t^{2}}+\chi_{1}(a_{1})\right)^{j_{1}-r_{1}}\\
\equiv 0\ \mathrm{mod}\ p^{(j_{1}-r_{1})(\min\{m_{f}(\boldsymbol{m})-m(f),m_{1}\}+1)}.
\end{multline}
 By \eqref{admissible prodeqref apmntheta long equation}, \eqref{ordpleft int a2gamma2 (chi2)x2geq (m2+1)(j2-r2)} and \eqref{eq sum i1=r1 modulo p(m1+1)(j1-r1)}, we
have 
$$\ord_{p}(a_{p^{m_{f}(\boldsymbol{m})-m(f)+1}n}((a_{1},a_{2}),\boldsymbol{j}))\geq (j_{1}-r_{1})(\min\{m_{f}(\boldsymbol{m})-m(f),m_{1}\}+1)+(j_{2}-r_{2})(m_{2}+1)
$$ 
for each $n\in \mathbb{Z}_{\geq 1}$. Thus, we see that 
\begin{multline}\label{admissible prod viotam(f) geq j2-r2 m2}
v_{\iota_{m(f)}}(T_{p}^{m_{f}(\boldsymbol{m})-m(f)+1}(\iota(\theta^{(\boldsymbol{j})}_{\boldsymbol{m}}(a_{1},a_{2}))) =\inf_{n\in \mathbb{Z}_{\geq 1}}\{\ord_{p}(a_{p^{m_{f}(\boldsymbol{m})-m(f)+1}n}((a_{1},a_{2}),\boldsymbol{j}))\}\\ 
 \geq (j_{1}-r_{1})(\min\{m_{f}(\boldsymbol{m})-m(f),m_{1}\}+1)+(j_{2}-r_{2})(m_{2}+1)
\end{multline}
for every $\boldsymbol{j}\in [\boldsymbol{r},\boldsymbol{s}]$ and for every $(a_{1},a_{2})\in R_{1,m_{1}}\times R_{2,m_{2}}$ with $\boldsymbol{m}\in \mathbb{Z}_{\geq 0}^{2}$.
By \eqref{p-adic admissible prop in T iota} and \eqref{admissible prod viotam(f) geq j2-r2 m2}, we have
\small 
\begin{align}\label{admissible prod ordp geq geq geq v-(2-cf)aloha-(s1-r1)}
\begin{split}
& \ord_{p}\left((l_{f,M}^{(m_{f}(\boldsymbol{m}))}T_{p}(\theta^{(\boldsymbol{j})}_{\boldsymbol{m}}(a_{1},a_{2}))\right) \\
& \geq -(m_{f}(\boldsymbol{m})-m(f))\alpha+v_{\mathfrak{L}}(l_{f,M}^{(m(f))})+v_{\mathfrak{L}}(\iota_{m(f)}^{-1})+(j_{1}-r_{1})(\min\{m_{f}(\boldsymbol{m})-m(f),m_{1}\}+1)\\
&+(j_{2}-r_{2})(m_{2}+1) 
\\
& \geq -(2m_{1}+m_{2}+1)\alpha+v_{\mathfrak{L}}(l_{f,M}^{(m(f))})+v_{\mathfrak{L}}(\iota_{m(f)}^{-1})+(j_{1}-r_{1})(m_{1}+1-m(f))+(j_{2}-r_{2})(m_{2}+1)\\
&\geq-\langle \boldsymbol{m},\boldsymbol{h}-(\boldsymbol{j}-\boldsymbol{r})\rangle_{2}+v_{\mathfrak{L}}(l_{f,M}^{(m(f))})+v_{\mathfrak{L}}(\iota_{m(f)}^{-1})-\alpha-(s_{1}-r_{1})(m(f)-1)
\end{split}
\end{align}
\normalsize
for every $\boldsymbol{j}\in [\boldsymbol{r},\boldsymbol{s}]$ and for every $(a_{1},a_{2})\in R_{1,m_{1}}\times R_{2,m_{2}}$ with $\boldsymbol{m}\in \mathbb{Z}_{\geq 0}^{2}$. Let $n^{[\boldsymbol{r},\boldsymbol{s}]}(f)$ be a non-negative integer satisfying the following condition:
\begin{equation}\label{admissible prod prisice n 1}
v_{\mathfrak{L}}(l_{f,M}^{(m(f))})+v_{\mathfrak{L}}(\iota_{m(f)}^{-1})-\alpha-(s_{1}-r_{1})(m(f)-1)\geq -n^{[\boldsymbol{r},\boldsymbol{s}]}(f).
\end{equation}
Then, by \eqref{admissible prod ordp geq geq geq v-(2-cf)aloha-(s1-r1)}, we have 
\begin{equation}\label{admissible prop lfmcf(bldsybmol)jgeq -ncomp}
\ord_{p}\left((l_{f,M}^{(m_{f}(\boldsymbol{m}))}T_{p}(\theta^{(\boldsymbol{j})}_{\boldsymbol{m}}(a_{1},a_{2}))\right)+\langle \boldsymbol{m},\boldsymbol{h}-(\boldsymbol{j}-\boldsymbol{r})\rangle_{2}\geq -n^{[\boldsymbol{r},\boldsymbol{s}]}(f)
\end{equation}
 for every $\boldsymbol{j}\in [\boldsymbol{r},\boldsymbol{s}]$ and for every $(a_{1},a_{2})\in R_{1,m_{1}}\times R_{2,m_{2}}$ with $\boldsymbol{m}\in \mathbb{Z}_{\geq 0}^{2}$. 
Thus, by \eqref{admissible prop display prodt=12j1 sum R1m1 R2m2} and \eqref{admissible prop lfmcf(bldsybmol)jgeq -ncomp}, we see that
\begin{align*}
&p^{\langle \boldsymbol{m},\boldsymbol{h}-(\boldsymbol{j}-\boldsymbol{r})\rangle_{2}}\displaystyle{\sum_{\boldsymbol{i}\in [\boldsymbol{r},\boldsymbol{j}]}}\left(\prod_{t=1}^{2}\begin{pmatrix}j_{t}-r_{t}\\i_{t}-r_{t}\end{pmatrix}\right)(-1)^{\sum_{t=1}^{2}(j_{t}-i_{t})}\tilde{s}_{\boldsymbol{m}}^{[\boldsymbol{i}]}\\
&=\sum_{(a_{1},a_{2})\in R_{1,m_{1}}\times R_{2,m_{2}}}\chi_{1}(a_{1})^{-j_{1}}\chi_{2}(a_{2})^{-j_{2}}p^{\langle \boldsymbol{m},\boldsymbol{h}-(\boldsymbol{j}-\boldsymbol{r})\rangle_{2}}l_{f,M}^{(m_{f}(\boldsymbol{m}))}\left(T_{p}\theta^{(\boldsymbol{j})}_{\boldsymbol{m}}(a_{1},a_{2})\right)[a_{1},a_{2}]
\end{align*}
is in $\mathcal{O}_{\K}[[(\Delta\times \Gamma_{1})\times\Gamma_{2}]]\otimes_{\mathcal{O}_{\K}}p^{-n^{[\boldsymbol{r},\boldsymbol{s}]}(f)}\mathcal{O}_{\K}$ for every $\boldsymbol{j}\in [\boldsymbol{r},\boldsymbol{s}]$ and for every $\boldsymbol{m}\in \mathbb{Z}_{\geq 0}^{2}$. This completes the proof of the proposition.
\end{proof}

\subsubsection*{{\bf{Definition of the two-variable admissible distribution}}}
Let $f\in S_{k}(Np^{m(f)},\psi;\K)$ be a normalized cuspidal Hecke eigenform which is new away from $p$ with $m(f)\in \mathbb{Z}_{\geq 1}$ and $G\in S(N^{\prime}p,\xi;\mathcal{O}_{\K}[[\Gamma_{2}]])$. We assume that $m(f)$ is the smallest positive integer $m$ such that $f\in S_{k}(Np^{m},\psi;\K)$. Put $\boldsymbol{h}=(2\alpha,\alpha)$ with $\alpha=\ord_{p}(a_{p}(f))$. Let $M$ be the least common multiple of $N$ and $N^{\prime}$. We assume the following conditions:
\begin{enumerate}
\item We have $k>\lfloor 2\alpha\rfloor+\lfloor \alpha\rfloor+2$.
\item All $M$-th roots of unity and Fourier coefficients of $f^{0}$ are contained in $\K$, where $f^{0}$ is the primitive form associated with $f$.
\end{enumerate}
Let $\boldsymbol{d}=(0,2)$, $\boldsymbol{e}=(k-3, k-1)$. Let $\boldsymbol{r},\boldsymbol{s}\in \mathbb{Z}^{2}$ be elements such that $\boldsymbol{s}\geq \boldsymbol{r}$, $[\boldsymbol{r},\boldsymbol{s}]\subset [\boldsymbol{d},\boldsymbol{e}]$ and $s_{1}+s_{2}<k$. Let $s_{\boldsymbol{m}}^{[\boldsymbol{i}]}$ be the element defined in \eqref{definition of qm1m2i1i1} for each $\boldsymbol{m}\in \mathbb{Z}_{\geq 0}^{2}$. By Lemma \ref{multivariable litfing prop deformation} and Proposition \ref{admissible condition of p-adic l}, there exists a unique element
\begin{equation}
s_{\boldsymbol{m}}^{[\boldsymbol{r},\boldsymbol{s}]}\in \frac{\mathcal{O}_{\K}[[(\Delta\times \Gamma_{1})\times\Gamma_{2}]]}{(\Omega_{\boldsymbol{m}}^{[\boldsymbol{r},\boldsymbol{s}]})\mathcal{O}_{\K}[[(\Delta\times \Gamma_{1})\times\Gamma_{2}]]}\otimes_{\mathcal{O}_{\K}}\K
\end{equation}
for each $\boldsymbol{m}\in \mathbb{Z}_{\geq 0}^{2}$ such that the image of $s_{\boldsymbol{m}}^{[\boldsymbol{r},\boldsymbol{s}]}$ by the projection $\frac{\mathcal{O}_{\K}[[(\Delta\times \Gamma_{1})\times\Gamma_{2}]]}{(\Omega_{\boldsymbol{m}}^{[\boldsymbol{r},\boldsymbol{s}]})\mathcal{O}_{\K}[[(\Delta\times \Gamma_{1})\times\Gamma_{2}]]}\otimes_{\mathcal{O}_{\K}}\K\rightarrow \frac{\mathcal{O}_{\K}[[(\Delta\times \Gamma_{1})\times\Gamma_{2}]]}{(\Omega_{\boldsymbol{m}}^{[\boldsymbol{i}]})\mathcal{O}_{\K}[[(\Delta\times \Gamma_{1})\times\Gamma_{2}]]}\otimes_{\mathcal{O}_{\K}}\K$ is equal to $s_{\boldsymbol{m}}^{[\boldsymbol{i}]}$ for every $\boldsymbol{i}\in [\boldsymbol{r},\boldsymbol{s}]$ and we have \linebreak$(p^{\langle \boldsymbol{h},\boldsymbol{m}\rangle_{2}}s_{\boldsymbol{m}}^{[\boldsymbol{r},\boldsymbol{s}]})_{\boldsymbol{m}\in \mathbb{Z}_{\geq 0}}\in \left(\prod_{\boldsymbol{m}\in \mathbb{Z}_{\geq 0}^{2}}\frac{\mathcal{O}_{\K}[[(\Delta\times \Gamma_{1})\times\Gamma_{2}]]}{(\Omega_{\boldsymbol{m}}^{[\boldsymbol{r},\boldsymbol{s}]})\mathcal{O}_{\K}[[(\Delta\times \Gamma_{1})\times\Gamma_{2}]]}\right)\otimes_{\mathcal{O}_{\K}}\K$. By Proposition \ref{distribution of mu[boldsymbolr,boldsymbols]}, we see that $(s_{\boldsymbol{m}}^{[\boldsymbol{r},\boldsymbol{s}]})_{\boldsymbol{m}\in \mathbb{Z}_{\geq 0}^{2}}\in \varprojlim_{\boldsymbol{m}\in \mathbb{Z}_{\geq 0}}\left(\frac{\mathcal{O}_{\K}[[(\Delta\times \Gamma_{1})\times\Gamma_{2}]]}{(\Omega_{\boldsymbol{m}}^{[\boldsymbol{r},\boldsymbol{s}]})\mathcal{O}_{\K}[[(\Delta\times \Gamma_{1})\times\Gamma_{2}]]}\otimes_{\mathcal{O}_{\K}}\K\right)$. Then, we have
\begin{equation}\label{definition of q[boldsymbolr,boldsymbols]}
s^{[\boldsymbol{r},\boldsymbol{s}]}=(s_{\boldsymbol{m}}^{[\boldsymbol{r},\boldsymbol{s}]})_{\boldsymbol{m}\in \mathbb{Z}_{\geq 0}^{2}}\in I_{\boldsymbol{h}}^{[\boldsymbol{r},\boldsymbol{s}]}\otimes_{\mathcal{O}_{\K}[[\Gamma_{1}\times \Gamma_{2}]]}\mathcal{O}_{\K}[[(\Delta\times \Gamma_{1})\times \Gamma_{2}]].
\end{equation}
Let $I_{\boldsymbol{h}}^{[\boldsymbol{d},\boldsymbol{e}]}\otimes_{\mathcal{O}_{\K}[[\Gamma_{1}\times \Gamma_{2}]]}\mathcal{O}_{\K}[[(\Delta\times \Gamma_{1})\times \Gamma_{2}]]\rightarrow I_{\boldsymbol{h}}^{[\boldsymbol{d},\boldsymbol{e}_{\alpha}]}\otimes_{\mathcal{O}_{\K}[[\Gamma_{1}\times \Gamma_{2}]]}\mathcal{O}_{\K}[[(\Delta\times \Gamma_{1})\times \Gamma_{2}]]$ be the natural projection, where $\boldsymbol{e}_{\alpha}=(\lfloor2\alpha\rfloor,\lfloor\alpha\rfloor+2)$. As mentioned in \eqref{projection I is isom if e-dgeq h}, the above projection is an isomorphism. Then, we can define the inverse image
\begin{equation}\label{definition of inverse q[d,ealpha]}
s_{(f,G)}\in I_{\boldsymbol{h}}^{[\boldsymbol{d},\boldsymbol{e}]}\otimes_{\mathcal{O}_{\K}[[\Gamma_{1}\times \Gamma_{2}]]}\mathcal{O}_{\K}[[(\Delta\times \Gamma_{1})\times \Gamma_{2}]]
\end{equation}
of $s^{[\boldsymbol{d},\boldsymbol{e}_{\alpha}]}\in I_{\boldsymbol{h}}^{[\boldsymbol{d},\boldsymbol{e}_{\alpha}]}\otimes_{\mathcal{O}_{\K}[[\Gamma_{1}\times \Gamma_{2}]]}\mathcal{O}_{\K}[[(\Delta\times \Gamma_{1})\times \Gamma_{2}]]$ by the projection. 
\subsubsection*{{\bf{Verification of the interpolation formula of}\ $s_{(f,G)}$}}
For each $\kappa\in\mathfrak{X}_{\mathcal{O}_{\K}[[(\Delta\times \Gamma_{1})\times \Gamma_{2}]]}$, let $\phi_{\kappa,1}: (\Delta\times \Gamma_{1})\rightarrow \overline{\K}^{\times}$ and $\phi_{\kappa,2}: \Gamma_{2}\rightarrow \overline{\K}^{\times}$ be the finite characters which satisfy
\begin{equation}\label{definition phiQ,ZLtimes}
\kappa\vert_{(\Delta\times \Gamma_{1})\times \Gamma_{2}}((x_{1},x_{2}))=\phi_{\kappa,1}(x_{1})\chi_{1}(x_{1})^{w_{\kappa,1}}\phi_{\kappa,2}(x_{2})\chi_{2}(x_{2})^{w_{\kappa,2}}
\end{equation}
 for each $(x_{1},x_{2})\in (\Delta\times \Gamma_{1})\times \Gamma_{2}$. Here, $\boldsymbol{w}_{\kappa}=(w_{\kappa,1},w_{\kappa,2})\in \mathbb{Z}^{2}$ is the weight of $\kappa$. For each $\kappa\in\mathfrak{X}_{\mathcal{O}_{\K}[[(\Delta\times \Gamma_{1})\times \Gamma_{2}]]}$, we denote by $m_{\kappa,i}$ the smallest integer $m$ such that $\phi_{\kappa,i}$ factors through $\Gamma_{i}\slash \Gamma_{i}^{p^{m}}$ with $i=1,2$ and put
 \begin{equation}\label{twovariable ZLGamma2 conductor}
 \boldsymbol{m}_{\kappa}=(m_{\kappa,1},m_{\kappa,2}).
 \end{equation}
 Let $\tau_{L}$ be the matrix defined in \eqref{definition of tauM} for each $L\in \mathbb{Z}_{\geq 1}$ and $f^{\rho}$ the cusp form defined in \eqref{definition of frho}. 
\begin{lem}\label{proposition of interpolation of L(f,G)}
Let $N$ and $N^{\prime}$ be positive integers which are prime to $p$.
Let $f\in S_{k}(Np^{m(f)},\linebreak\psi;\K)$ be a normalized cuspidal Hecke eigenform which is new away from $p$ with $m(f)\in \mathbb{Z}_{\geq 1}$. 
Assume that $m(f)$ is the smallest positive integer $m$ such that $f\in S_{k}(Np^{m},\psi)$. 
Let $G\in S(N^{\prime}p,\xi;\mathcal{O}_{\K}[[\Gamma_{2}]])$. Put $\boldsymbol{h}=(2\alpha,\alpha)$ with $\alpha=\ord_{p}(a_{p}(f))$. Let $M$ be the least common multiple of $N$ and $N^{\prime}$. We assume the following conditions:
\begin{enumerate}
\item We have $k>\lfloor 2\alpha\rfloor+\lfloor \alpha\rfloor+2$.
\item All $M$-th roots of unity and Fourier coefficients of $f^{0}$ are contained in $\K$, where $f^{0}$ is the primitive form associated with $f$.
\end{enumerate}
Then the element $s_{(f,G)}=(s_{(f,G),\boldsymbol{m}})_{\boldsymbol{m}\in \mathbb{Z}_{\geq 0}^{2}}\in I_{\boldsymbol{h}}^{[\boldsymbol{d},\boldsymbol{e}]}\otimes_{\mathcal{O}_{\K}[[\Gamma_{1}\times \Gamma_{2}]]}\mathcal{O}_{\K}[[(\Delta\times \Gamma_{1})\times \Gamma_{2}]]$ defined in \eqref{definition of inverse q[d,ealpha]} with $s_{(f,G),\boldsymbol{m}}\in\frac{ \mathcal{O}_{\K}[[(\Delta\times \Gamma_{1})\times \Gamma_{2}]]}{(\Omega_{\boldsymbol{m}}^{[\boldsymbol{d},\boldsymbol{e}]})\mathcal{O}_{\K}[[(\Delta\times \Gamma_{1})\times \Gamma_{2}]]}\otimes_{\mathcal{O}_{\K}}\K$ satisfies the following interpolation property for every $\kappa\in \mathfrak{X}_{\mathcal{O}_{\K}[[(\Delta\times \Gamma_{1})\times \Gamma_{2}]]}^{[\boldsymbol{d},\boldsymbol{e}]}$ satisfying $w_{\kappa,1}+w_{\kappa,2}<k$: 
\begin{equation}\label{lemma interpolation L(f,G) eq}
\kappa(\tilde{s}_{(f,G),\boldsymbol{m}_{\kappa}})=(-1)^{w_{\kappa,1}}\phi_{\kappa,1}(M\slash N^{\prime})l_{f,M}\circ T_{p}\left((\kappa\vert_{\mathcal{O}_{\K}[[\Gamma_{2}]]}(G)\otimes\phi_{\kappa,1})\vert_{[M\slash N^{\prime}]}H_{\kappa}\right)
\end{equation}
where $\tilde{s}_{(f,G),\boldsymbol{m}_{\kappa}}\in \mathcal{O}_{\K}[[(\Delta\times \Gamma_{1})\times\Gamma_{2}]]\otimes_{\mathcal{O}_{\K}}\K$ is a lift of the element 
$s_{(f,G),\boldsymbol{m}_{\kappa}}$, 
the map $l_{f,M}: \cup_{n=m(f)}^{+\infty}N^{\leq \lfloor \frac{k-1}{2}\rfloor,\mathrm{cusp}}_{k}(Mp^{n},\psi;\K)\rightarrow \K$ is the $\K$-linear map defined in \eqref{classical lf map}, $T_{p}$ is the $p$-th Hecke operator, $\vert_{[M\slash N^{\prime}]}$ is the operator defined in \eqref{power series and qn mapst qNn}, $\kappa\vert_{\mathcal{O}_{\K}[[\Gamma_{2}]]}(G\vert_{[M\slash N^{\prime}]})\otimes\phi_{\kappa,1}$ is the twist of $\kappa\vert_{\mathcal{O}_{\K}[[\Gamma_{2}]]}(G\vert_{[M\slash N^{\prime}]})$ and 
\begin{equation}\label{lemma interpolation L(f,G) eisenstein}
H_{\kappa}=\begin{cases}\delta^{(w_{\kappa,1})}_{t_{\kappa}^{(1)}}\left(F_{t_{\kappa}^{(1)}}(\boldsymbol{1},\psi_{\kappa})\right)\ &\mathrm{if}\ 0\leq w_{\kappa,1}<\frac{1}{2}(k-w_{\kappa,2}),\\
\delta^{(k-w_{\kappa,1}-w_{\kappa,2}-1)}_{t_{\kappa}^{(2)}}\left(F_{t_{\kappa}^{(2)}}(\psi_{\kappa},\boldsymbol{1})\right)\ &\mathrm{if}\ \frac{1}{2}(k-w_{\kappa,2})\leq w_{\kappa,1}<k-w_{\kappa,2},
\end{cases}
\end{equation}
with $t_{\kappa}^{(1)}=k-2w_{\kappa,1}-w_{\kappa,2}$, $t_{\kappa}^{(2)}=w_{\kappa,2}-k+2w_{\kappa,1}+2$ and $\psi_{\kappa}=\psi\xi^{-1}\phi_{\kappa,1}^{-2}\omega^{w_{\kappa,2}}\phi_{\kappa,2}^{-1}$. Here $\boldsymbol{1}$ is the trivial character modulo $1$, $F_{t_{\kappa}^{(1)}}(\boldsymbol{1},\psi_{\kappa})$ and $F_{t_{\kappa}^{(2)}}(\psi_{\kappa},\boldsymbol{1})$ are the $q$-expansions of the Eisenstein series defined in \eqref{definition of eisenstein seriesnonap} and \eqref{another Eisenstein seriesnonap}, $\phi_{\kappa,1}$ and $\phi_{\kappa,2}$ are finite characters defined in \eqref{definition phiQ,ZLtimes} and $\delta_{m}^{(r)}$ is the differential operator defined in \eqref{shimura operator} with $m\in \mathbb{Z}$ and $r\in \mathbb{Z}_{\geq 0}$.
\end{lem}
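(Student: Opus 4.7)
The plan is to verify the interpolation formula \eqref{lemma interpolation L(f,G) eq} by first establishing it on the ``core'' subrange $[\boldsymbol{d},\boldsymbol{e}_\alpha]$ where $s_{(f,G)}$ is directly constructed, and then propagating the formula to the full range $[\boldsymbol{d},\boldsymbol{e}]$ (with the constraint $w_{\kappa,1}+w_{\kappa,2}<k$) via a compatibility-of-lifts argument. Throughout, I would use the fact that specialization at $\kappa$ factors through the natural projection $I_{\boldsymbol{h}}^{[\boldsymbol{d},\boldsymbol{e}]}\otimes\mathbf{I}\to I_{\boldsymbol{h}}^{[\boldsymbol{w}_\kappa]}\otimes\mathbf{I}$, and the key multi-variable lifting result Lemma \ref{multivariable litfing prop deformation} together with Proposition \ref{admissible condition of p-adic l}.

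First, for $\boldsymbol{w}_\kappa\in[\boldsymbol{d},\boldsymbol{e}_\alpha]$ I would unwind the construction directly. By \eqref{definition of inverse q[d,ealpha]} and \eqref{definition of q[boldsymbolr,boldsymbols]}, the projection of $s_{(f,G)}$ to the $(\boldsymbol{w}_\kappa)$-component is exactly $s^{[\boldsymbol{w}_\kappa]}=(s^{[\boldsymbol{w}_\kappa]}_{\boldsymbol{m}})_{\boldsymbol{m}}$ with $s^{[\boldsymbol{w}_\kappa]}_{\boldsymbol{m}_\kappa}$ defined by \eqref{definition of qm1m2i1i1} and \eqref{definition of qboldysmboli a12}. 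Applying $\kappa$ and noticing that, by the definition \eqref{definition of rboldsymboli group hom} of $r^{(\boldsymbol{w}_\kappa)}$ and of $\phi_{\kappa,i}$ in \eqref{definition phiQ,ZLtimes}, the $\chi_i^{-w_{\kappa,i}}$ factors cancel the $\chi_i^{w_{\kappa,i}}$ factors in $\kappa|_{(\Delta\times\Gamma_1)\times\Gamma_2}$, yields
$$
\kappa(\tilde s_{(f,G),\boldsymbol{m}_\kappa})=l_{f,M}\!\left(\sum_{(a_1,a_2)}\phi_{\kappa,1}(a_1)\phi_{\kappa,2}(a_2)\,\phi^{(\boldsymbol{w}_\kappa)}((a_1,a_2);\psi,G|_{[M/N']})\right)
$$
by the $\K$-linearity of $l_{f,M}$. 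Applying Proposition \ref{proposition for interpolation of mu[r,s]} (split into its two subcases according to whether $0\le w_{\kappa,1}<\tfrac{1}{2}(k-w_{\kappa,2})$ or $\tfrac{1}{2}(k-w_{\kappa,2})\le w_{\kappa,1}<k-w_{\kappa,2}$) inserts exactly the Eisenstein factor $H_\kappa$ of \eqref{lemma interpolation L(f,G) eisenstein} and the sign $(-1)^{w_{\kappa,1}}$. A short $q$-expansion bookkeeping, using that for any $H\in\mathcal{O}_{\K}[[\Gamma_2]][[q]]$ one has $\kappa|_{\mathcal{O}_{\K}[[\Gamma_2]]}(H|_{[M/N']})\otimes\phi_{\kappa,1}=\phi_{\kappa,1}(M/N')\bigl(\kappa|_{\mathcal{O}_{\K}[[\Gamma_2]]}(H)\otimes\phi_{\kappa,1}\bigr)|_{[M/N']}$, converts the resulting expression into the stated right-hand side of \eqref{lemma interpolation L(f,G) eq}.

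For a general $\boldsymbol{w}_\kappa\in[\boldsymbol{d},\boldsymbol{e}]$ with $w_{\kappa,1}+w_{\kappa,2}<k$, I would choose an interval $[\boldsymbol{r},\boldsymbol{s}]\subset[\boldsymbol{d},\boldsymbol{e}]$ containing $\boldsymbol{w}_\kappa$, with $\boldsymbol{s}-\boldsymbol{r}\ge\lfloor\boldsymbol{h}\rfloor$ and $s_1+s_2<k$ (e.g.\ $[\boldsymbol{w}_\kappa-\lfloor\boldsymbol{h}\rfloor,\boldsymbol{w}_\kappa]$ when $\boldsymbol{w}_\kappa\ge\boldsymbol{e}_\alpha$; the base interval $[\boldsymbol{d},\boldsymbol{e}_\alpha]$ when $\boldsymbol{w}_\kappa\le\boldsymbol{e}_\alpha$; in mixed cases a rectangle translated along the anti-diagonal). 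Proposition \ref{admissible condition of p-adic l} applied to this interval supplies an admissible system $(s^{[\boldsymbol{i}]})_{\boldsymbol{i}\in[\boldsymbol{r},\boldsymbol{s}]}$, and Lemma \ref{multivariable litfing prop deformation} assembles it into a lift $s^{[\boldsymbol{r},\boldsymbol{s}]}\in I_{\boldsymbol{h}}^{[\boldsymbol{r},\boldsymbol{s}]}\otimes\mathbf{I}$. Since both $s^{[\boldsymbol{r},\boldsymbol{s}]}$ and the projection of $s_{(f,G)}$ to $I_{\boldsymbol{h}}^{[\boldsymbol{r},\boldsymbol{s}]}\otimes\mathbf{I}$ are admissible lifts of the same $(\boldsymbol{i})$-data on the overlap with $[\boldsymbol{d},\boldsymbol{e}_\alpha]$, the uniqueness clause of Lemma \ref{multivariable litfing prop deformation} combined with the isomorphism \eqref{projection I is isom if e-dgeq h} forces them to coincide. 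Then the $(\boldsymbol{w}_\kappa)$-component of $s_{(f,G)}$ is again $s^{[\boldsymbol{w}_\kappa]}$ from \eqref{definition of qm1m2i1i1}, and the computation of the first step applies verbatim. The main obstacle is precisely this step: because the critical region $\{i_1+i_2<k\}\cap[\boldsymbol{d},\boldsymbol{e}]$ is a triangle while our admissible lifts live on rectangles, no single interval $[\boldsymbol{r},\boldsymbol{s}]$ covers both $[\boldsymbol{d},\boldsymbol{e}_\alpha]$ and every $\boldsymbol{w}_\kappa$ in the critical triangle, so a careful case analysis together with the uniqueness assertion of Lemma \ref{multivariable litfing prop deformation} is required to glue the locally constructed lifts into the global $s_{(f,G)}$.
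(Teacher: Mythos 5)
Your first step (for $\boldsymbol{w}_\kappa\in[\boldsymbol{d},\boldsymbol{e}_\alpha]$) matches the paper's Step~1: unwind the definition of $s^{[\boldsymbol{d},\boldsymbol{e}_\alpha]}$ and apply Proposition~\ref{proposition for interpolation of mu[r,s]} to produce the Eisenstein factor $H_\kappa$. That part is sound.

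The extension to the full critical triangle, however, has a genuine gap. You propose to find, for each $\boldsymbol{w}_\kappa$ with $w_{\kappa,1}+w_{\kappa,2}<k$, a rectangle $[\boldsymbol{r},\boldsymbol{s}]\subset[\boldsymbol{d},\boldsymbol{e}]$ containing $\boldsymbol{w}_\kappa$ with $\boldsymbol{s}-\boldsymbol{r}\geq\lfloor\boldsymbol{h}\rfloor$ and $s_1+s_2<k$, and then glue via uniqueness. But such a rectangle simply does not exist near the thin corners of the triangle. For instance take $w_{\kappa,1}=0$ and $w_{\kappa,2}=k-1$ (a valid pair, since $0+(k-1)<k$). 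Any admissible rectangle must satisfy $r_1\geq 0$ and $s_1-r_1\geq\lfloor 2\alpha\rfloor$, hence $s_1\geq\lfloor 2\alpha\rfloor$, while also $s_2\geq w_{\kappa,2}=k-1$. Then $s_1+s_2\geq\lfloor 2\alpha\rfloor+k-1\geq k$ whenever $\lfloor 2\alpha\rfloor\geq 1$, violating the required bound $s_1+s_2<k$. (The situation is symmetric for $w_{\kappa,2}=2$ and $w_{\kappa,1}$ near $k-3$.) So the obstruction is not merely a gluing problem — there is no local lift $s^{[\boldsymbol{r},\boldsymbol{s}]}$ to glue at these corners. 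Even in the cases where a rectangle does exist, e.g.\ $\boldsymbol{w}_\kappa>\boldsymbol{e}_\alpha$, the rectangle $[\boldsymbol{w}_\kappa-\lfloor\boldsymbol{h}\rfloor,\boldsymbol{w}_\kappa]$ overlaps $[\boldsymbol{d},\boldsymbol{e}_\alpha]$ in a block strictly narrower than $\lfloor\boldsymbol{h}\rfloor$, so Proposition~\ref{admissible proj is isom not hisom} (hence \eqref{projection I is isom if e-dgeq h}) does not let you transport uniqueness across the overlap directly, and a chain of such rectangles cannot escape the $s_1+s_2<k$ constraint near the hypotenuse.

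The paper resolves this with a fundamentally different device: a two-stage one-variable slicing argument. In Step~2 it fixes the $\Gamma_2$-component $(w_{\kappa,2},\phi_{\kappa,2})$ with $w_{\kappa,2}\in[2,\lfloor\alpha\rfloor+2]$, applies the partial specialization $r_{(w_{\kappa,2},\phi_{\kappa,2})}$ to land in a one-variable module $I_{h_1,\K(\phi_{\kappa,2})}^{[0,e_{\kappa,1}]}$, and uses the one-variable isomorphism $I_{h_1}^{[0,e_{\kappa,1}]}\simeq I_{h_1}^{[0,\lfloor 2\alpha\rfloor]}$ to push the interpolation identity up to $w_{\kappa,1}=k-w_{\kappa,2}-1$; on a fixed slice the $s_1+s_2<k$ constraint degenerates to $w_{\kappa,1}+w_{\kappa,2}<k$ and imposes no rectangle obstruction. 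Step~3 then freezes $(w_{\kappa,1},\phi_{\kappa,1})$ and slides in the $\Gamma_2$-direction, using the fact that Steps~1--2 have already pinned down the values on the vertical slice $\{w_{\kappa,1}\}\times[2,\lfloor\alpha\rfloor+2]$, which suffices to determine the one-variable element by Proposition~\ref{admissible distribution characterization prop}. Your proposal needs to be replaced by this slice-by-slice argument; a ``careful case analysis of rectangles'' cannot be made to work.
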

\begin{proof}
Let $\boldsymbol{d}=(0,2)$, $\boldsymbol{e}=(k-3, k-1)$ and $\boldsymbol{e}_{\alpha}=(\lfloor2\alpha\rfloor,\lfloor\alpha\rfloor+2)$. The weights of the arithmetic specializations $\kappa$ in the range of interpolation is given as follows: 
$$\begin{tikzpicture}
 \draw[->,>=stealth,semithick] (-0.5,0)--(3.5,0)node[above]{$w_{\kappa,2}$}; 
 \draw[->,>=stealth,semithick] (0,-0.5)--(0,3.5)node[right]{$w_{\kappa,1}$}; 
 \draw (0,0)node[above  left]{O}; 
 \draw (0.3,0)node[below]{2};
 \draw (3,0)node[below]{$k$}; 
 \draw(0,2.7)node[left]{$k-2$};
  \fill[lightgray] (0.3,0)--(0.3,2.7)--(3,0);
 \draw[dashed,domain=0.3:3] plot(\x,3-\x);
   \draw (1.65,-0.5)node[below]{};
\end{tikzpicture} . 
$$
The range of the interpolation is triangular, but our theory covers only the rectangular region.  
So we will cover the rectangular region $[\boldsymbol{d},\boldsymbol{e}_{\alpha}]$ which is contained in the above triangular region in Step 1 below. 
In Step 2, we will extend this rectangular region to the vertical direction to cover the upper subtriangle which was not coverd in Step 1. 
In Step 3, we will extend the region which was covered in Step 1 and Step 2 to the horizontal direction to cover the right subtriangle which was not covered in Step 1 and Step2. 
\\ 
$\mathbf{Step\ 1}$. Let $\boldsymbol{r},\boldsymbol{s}\in \mathbb{Z}^{2}$ be elements satisfying $\boldsymbol{s}\geq \boldsymbol{r}$, $[\boldsymbol{r},\boldsymbol{s}]\subset [\boldsymbol{d},\boldsymbol{e}]$ and $s_{1}+s_{2}<k$. Let $s^{[\boldsymbol{r},\boldsymbol{s}]}=(s_{\boldsymbol{m}}^{[\boldsymbol{r},\boldsymbol{s}]})_{\boldsymbol{m}\in \mathbb{Z}_{\geq 0}^{2}}\in I_{\boldsymbol{h}}^{[\boldsymbol{r},\boldsymbol{s}]}\otimes_{\mathcal{O}_{\K}[[\Gamma_{1}\times \Gamma_{2}]]}\mathcal{O}_{\K}[[(\Delta\times \Gamma_{1})\times \Gamma_{2}]]$ be the element defined in \eqref{definition of q[boldsymbolr,boldsymbols]}. We will prove that, for each $\kappa\in \mathfrak{X}_{\mathcal{O}_{\K}[[(\Delta\times \Gamma_{1})\times \Gamma_{2}]]}^{[\boldsymbol{r},\boldsymbol{s}]}$, we have
\begin{equation}\label{lemma interpolation q[r,s]mQ eq}
\kappa(\tilde{s}^{[\boldsymbol{r},\boldsymbol{s}]}_{\boldsymbol{m}_{\kappa}})=(-1)^{w_{\kappa,1}}\phi_{\kappa,1}(M\slash N^{\prime})l_{f,M}\circ T_{p}\left((\kappa\vert_{\mathcal{O}_{\K}[[\Gamma_{2}]]}(G)\otimes\phi_{\kappa,1})\vert_{[M\slash N^{\prime}]}H_{\kappa}\right)
\end{equation}
where $\tilde{s}^{[\boldsymbol{r},\boldsymbol{s}]}_{\boldsymbol{m}_{\kappa}}\in \mathcal{O}_{\K}[[(\Delta\times \Gamma_{1})\times \Gamma_{2}]]\otimes_{\mathcal{O}_{\K}}\K$ is a lift of $s^{[\boldsymbol{r},\boldsymbol{s}]}_{\boldsymbol{m}_{\kappa}}$ and $\boldsymbol{m}_{\kappa}$ is the pair of non-negative integers defined in \eqref{twovariable ZLGamma2 conductor}. Let $\kappa\in \mathfrak{X}_{\mathcal{O}_{\K}[[(\Delta\times \Gamma_{1})\times \Gamma_{2}]]}^{[\boldsymbol{r},\boldsymbol{s}]}$ and $s^{[\boldsymbol{w}_{\kappa}]}_{\boldsymbol{m}_{\kappa}}\in \frac{\mathcal{O}_{\K}[[(\Delta\times \Gamma_{1})\times \Gamma_{2}]]}{(\Omega_{\boldsymbol{m}_{\kappa}}^{[\boldsymbol{w}_{\kappa},\boldsymbol{w}_{\kappa}]})\mathcal{O}_{\K}[[(\Delta\times \Gamma_{1})\times \Gamma_{2}]]}\otimes_{\mathcal{O}_{\K}}\K$ be the element defined in \eqref{definition of qm1m2i1i1}. 
By the definition of $s^{[\boldsymbol{r},\boldsymbol{s}]}$, we have $\kappa(\tilde{s}^{[\boldsymbol{r},\boldsymbol{s}]}_{\boldsymbol{m}_{\kappa}})=\kappa(\tilde{s}^{[\boldsymbol{w}_{\kappa}]}_{\boldsymbol{m}_{\kappa}})$. By \eqref{definition of qm1m2i1i1}, we see that
\begin{align*}
\kappa(\tilde{s}^{[\boldsymbol{r},\boldsymbol{s}]}_{\boldsymbol{m}_{\kappa}})=\kappa(\tilde{s}^{[\boldsymbol{w}_{\kappa}]}_{\boldsymbol{m}_{\kappa}})=\sum_{(a_{1},a_{2})\in (\Delta\times \Gamma_{1}\slash  \Gamma_{1}^{p^{m_{1}}})\times \Gamma_{2}\slash \Gamma_{2}^{p^{m_{2}}}}s^{[\boldsymbol{w}_{\kappa}]}(a_{1},a_{2})\phi_{\kappa,1}(a_{2})\phi_{\kappa,2}(a_{2})
\end{align*}
where $s^{[\boldsymbol{w}_{\kappa}]}(a_{1},a_{2})\in \K$ is the element defined in \eqref{definition of qboldysmboli a12}. By Proposition \ref{proposition for interpolation of mu[r,s]} and \eqref{definition of qboldysmboli a12}, we have
\begin{align*}
\kappa(\tilde{s}^{[\boldsymbol{r},\boldsymbol{s}]}_{\boldsymbol{m}_{\kappa}})&=\sum_{(a_{1},a_{2})\in (\Delta\times \Gamma_{1}\slash  \Gamma_{1}^{p^{m_{1}}})\times \Gamma_{2}\slash \Gamma_{2}^{p^{m_{2}}}}s^{[\boldsymbol{w}_{\kappa}]}(a_{1},a_{2})\phi_{\kappa,1}(a_{2})\phi_{\kappa,2}(a_{2})\\
&=l_{f,M}\left(\sum_{(a_{1},a_{2})\in (\Delta\times \Gamma_{1}\slash  \Gamma_{1}^{p^{m_{1}}})\times \Gamma_{2}\slash \Gamma_{2}^{p^{m_{2}}}}\phi_{1}(a_{1})\phi_{2}(a_{2})\phi^{[\boldsymbol{w}_{\kappa}]}((a_{1},a_{2});\psi,G\vert_{[M\slash N^{\prime}]})\right)\\
&=(-1)^{w_{\kappa,1}}l_{f,M}\circ T_{p}\left(\kappa\vert_{\mathcal{O}_{\K}[[\Gamma_{2}]]}(G\vert_{[M\slash N^{\prime}]})\otimes\phi_{\kappa,1}H_{\kappa}\right)\\
&=(-1)^{w_{\kappa,1}}\phi_{\kappa,1}(M\slash N^{\prime})l_{f,M}\circ T_{p}\left((\kappa\vert_{\mathcal{O}_{\K}[[\Gamma_{2}]]}(G)\otimes\phi_{\kappa,1})\vert_{[M\slash N^{\prime}]}H_{\kappa}\right).
\end{align*}

Therefore, we have \eqref{lemma interpolation q[r,s]mQ eq}. By the definition of $s_{(f,G)}$, we see that $\kappa(\tilde{s}_{(f,G),\boldsymbol{m}_{\kappa}})=\kappa(\tilde{s}^{[\boldsymbol{d},\boldsymbol{e}_{\alpha}]}_{\boldsymbol{m}_{\kappa}})$ for every $\kappa\in \mathfrak{X}^{[\boldsymbol{d},\boldsymbol{e}_{\alpha}]}_{_{\mathcal{O}_{\K}[[(\Delta\times \Gamma_{1})\times \Gamma_{2}]]}}$. Then, by \eqref{lemma interpolation q[r,s]mQ eq}, we have
$$\kappa(\tilde{s}_{(f,G),\boldsymbol{m}_{\kappa}})=(-1)^{w_{\kappa,1}}\phi_{\kappa,1}(M\slash N^{\prime})l_{f,M}\circ T_{p}\left((\kappa\vert_{\mathcal{O}_{\K}[[\Gamma_{2}]]}(G)\otimes\phi_{\kappa,1})\vert_{[M\slash N^{\prime}]}H_{\kappa}\right).$$
 for every $\kappa\in \mathfrak{X}^{[\boldsymbol{d},\boldsymbol{e}_{\alpha}]}_{_{\mathcal{O}_{\K}[[(\Delta\times \Gamma_{1})\times \Gamma_{2}]]}}$.

$\mathbf{Step\ 2}$. We will prove that $\kappa(\tilde{s}_{(f,G),\boldsymbol{m}_{\kappa}})$ is equal to 
the right-hand side of \eqref{lemma interpolation L(f,G) eq} for each $\kappa\in \mathfrak{X}_{\mathcal{O}_{\K}[[(\Delta\times \Gamma_{1})\times \Gamma_{2}]]}^{[\boldsymbol{d},\boldsymbol{e}]}$ such that $w_{\kappa,2}\in [2,\lfloor \alpha\rfloor+2]$ and $w_{\kappa,1}+w_{\kappa,2}<k$.
$$
\begin{tikzpicture}
 \draw[->,>=stealth,semithick] (-0.5,0)--(3.5,0)node[above]{$w_{\kappa,2}$}; 
 \draw[->,>=stealth,semithick] (0,-0.5)--(0,3.5)node[right]{$w_{\kappa,1}$}; 
 \draw (0,0)node[above  left]{O}; 
 \draw(1.3,0)node[below]{$\lfloor\alpha\rfloor+2$};
 \draw (0.3,0)node[below]{2};
 \draw(0,1)node[left]{$\lfloor 2\alpha\rfloor$};
 \draw (3,0)node[below]{$k$}; 
 \draw(0,3)node[left]{$k$};
  \fill[lightgray] (0.3,0)--(0.3,2.7)--(1.3,1.7)--(1.3,0);
 \draw[dashed,domain=0:3] plot(\x,3-\x);
  \draw (1.65,-0.5)node[below]{$\substack{\mathrm{Range\ of}\ \boldsymbol{w}_{\kappa}\in [\boldsymbol{d},\boldsymbol{e}]\\ w_{\kappa,2}\in [2,\lfloor \alpha\rfloor+2],\ w_{\kappa,1}+w_{\kappa,2}<k}$};
\end{tikzpicture}
$$
Let $\kappa\in \mathfrak{X}_{\mathcal{O}_{\K}[[(\Delta\times \Gamma_{1})\times \Gamma_{2}]]}^{[\boldsymbol{d},\boldsymbol{e}]}$ such that $w_{\kappa,2}\in [2,\lfloor \alpha\rfloor+2]$ and $w_{\kappa,1}+w_{\kappa,2}<k$. We define a continuous $\mathcal{O}_{\K}[[\Delta\times \Gamma_{1}]]$-module homomorphism
\begin{equation}\label{definition of r(wkappa,2,phikappa2)in lemma}
r_{(w_{\kappa,2},\phi_{\kappa,2})}: \mathcal{O}_{\K}[[(\Delta\times \Gamma_{1})\times \Gamma_{2}]]\rightarrow \mathcal{O}_{\K(\phi_{\kappa,2})}[[(\Delta\times \Gamma_{1})]]
\end{equation}
to be $r_{(w_{\kappa,2},\phi_{\kappa,2})}\vert_{(\Delta\times \Gamma_{1})\times\Gamma_{2}}((a_{1},a_{2}))=\kappa\vert_{\mathcal{O}_{\K}[[\Gamma_{2}]]}(a_{2})[a_{1}]$ for each $(a_{1},a_{2})\in (\Delta\times \Gamma_{1})\times \Gamma_{2}$, where $[a_{1}]\in\mathcal{O}_{\K(\phi_{\kappa,2})}[[(\Delta\times \Gamma_{1})]]$ is the class of $a_{1}\in (\Delta\times \Gamma_{1})$. We remark that we have 
\begin{equation}\label{proposition of interpolation of L(f,G) interpolation form rkappa}
\kappa^{\prime}\vert_{\mathcal{O}_{\K}[[\Delta\times \Gamma_{1}]]}(r_{(w_{\kappa,2},\phi_{\kappa,2})}(s))=\kappa^{\prime}(s)
\end{equation}
for every $s\in \mathcal{O}_{\K}[[(\Delta\times \Gamma_{1})\times \Gamma_{2}]]$ and for every $\kappa^{\prime}\in \mathfrak{X}_{\mathcal{O}_{\K}[[(\Delta\times \Gamma_{1})\times \Gamma_{2}]]}$ 
such that $\kappa^{\prime}\vert_{\mathcal{O}_{\K}[[\Gamma_{2}]]}=\kappa\vert_{\mathcal{O}_{\K}[[\Gamma_{2}]]}$. Let $\boldsymbol{r},\boldsymbol{s}\in \mathbb{Z}^{2}$ 
be elements satisfying $\boldsymbol{s}\geq \boldsymbol{r}$ and $w_{\kappa,2}\in [r_{2},s_{2}]$. Then, $r_{(w_{\kappa,2},\phi_{\kappa,2})}$ induces an $\mathcal{O}_{\K}[[\Delta\times \Gamma_{1}]]$-module homomorphism
$$r_{m,(w_{\kappa,2},\phi_{\kappa,2})}^{[\boldsymbol{r},\boldsymbol{s}]}:
\frac{\mathcal{O}_{\K}[[(\Delta\times \Gamma_{1})\times \Gamma_{2}]]}{(\Omega_{(m,m_{\kappa,2})}^{[\boldsymbol{r},\boldsymbol{s}]})\mathcal{O}_{\K}[[(\Delta\times \Gamma_{1})\times \Gamma_{2}]]}
\rightarrow \frac{\mathcal{O}_{\K(\phi_{\kappa,2})}[[(\Delta\times \Gamma_{1})]]}{(\Omega_{m}^{[r_{1},s_{1}]})\mathcal{O}_{\K(\phi_{\kappa,2})}[[(\Delta\times \Gamma_{1})]]}$$
for each $m\in \mathbb{Z}_{\geq 0}$. We put $I_{h_{1},\K(\phi_{\kappa,2})}^{[r_{1},s_{1}]}=I_{h_{1}}^{[r_{1},s_{1}]}\otimes_{\K}\K(\phi_{\kappa,2})$ and define an $\mathcal{O}_{\K}[[\Delta\times \Gamma_{1}]]\otimes_{\mathcal{O}_{\K}}\K$-module homomorphism 
$$r_{(w_{\kappa,2},\phi_{\kappa,2})}^{[\boldsymbol{r},\boldsymbol{s}]}:I_{\boldsymbol{h}}^{[\boldsymbol{r},\boldsymbol{s}]}\otimes_{\mathcal{O}_{\K}[[\Gamma_{1}\times \Gamma_{2}]]}\mathcal{O}_{\K}[[(\Delta\times \Gamma_{1})\times\Gamma_{2}]]\rightarrow I_{h_{1},\K(\phi_{\kappa,2})}^{[r_{1},s_{1}]}\otimes_{\mathcal{O}_{\K(\phi_{\kappa,2})}[[\Gamma_{1}]]}\mathcal{O}_{\K(\phi_{\kappa,2})}[[(\Delta\times \Gamma_{1})]]$$
by setting $r_{(w_{\kappa,2},\phi_{\kappa,2})}^{[\boldsymbol{r},\boldsymbol{s}]}((s_{\boldsymbol{m}})_{\boldsymbol{m}\in \mathbb{Z}_{\geq 0}^{2}})=(r_{m,(w_{\kappa,2},\phi_{\kappa,2})}^{[\boldsymbol{r},\boldsymbol{s}]}(s_{(m,m_{\kappa,2})}))_{m\in \mathbb{Z}_{\geq 0}}$ for each $(s_{\boldsymbol{m}})_{\boldsymbol{m}\in \mathbb{Z}_{\geq 0}^{2}}\in I_{\boldsymbol{h}}^{[\boldsymbol{r},\boldsymbol{s}]}\otimes_{\mathcal{O}_{\K}[[\Gamma_{1}\times \Gamma_{2}]]}\mathcal{O}_{\K}[[(\Delta\times \Gamma_{1})\times \Gamma_{2}]]$. Let 
 \begin{multline*}
 \mathrm{pr}^{[\boldsymbol{r}^{(1)},\boldsymbol{s}^{(1)}]}_{[\boldsymbol{r}^{(2)},\boldsymbol{s}^{(2)}]}:I_{\boldsymbol{h}}^{[\boldsymbol{r}^{(1)},\boldsymbol{s}^{(1)}]}\otimes_{\mathcal{O}_{\K}[[\Gamma_{1}\times \Gamma_{2}]]}\mathcal{O}_{\K}[[(\Delta\times \Gamma_{1})\times \Gamma_{2}]]\\
 \rightarrow I_{\boldsymbol{h}}^{[\boldsymbol{r}^{(2)},\boldsymbol{s}^{(2)}]}\otimes_{\mathcal{O}_{\K}[[\Gamma_{1}\times \Gamma_{2}]]}\mathcal{O}_{\K}[[(\Delta\times \Gamma_{1})\times \Gamma_{2}]]
 \end{multline*}
be the projection for each $\boldsymbol{r}^{(i)},\boldsymbol{s}^{(i)}\in \mathbb{Z}_{\geq 0}^{2}$ such that $[\boldsymbol{r}^{(2)},\boldsymbol{s}^{(2)}]\subset [\boldsymbol{r}^{(1)},\boldsymbol{s}^{(1)}]\subset [\boldsymbol{d},\boldsymbol{e}]$ with $i=1,2$. By the definition of $s^{[\boldsymbol{r}^{(i)},\boldsymbol{s}^{(i)}]}$ with $i=1,2$, we have
\begin{equation}\label{lemma interpolation formula L(f,g) image of pro1s1r2s2}
\mathrm{pr}^{[\boldsymbol{r}^{(1)},\boldsymbol{s}^{(1)}]}_{[\boldsymbol{r}^{(2)},\boldsymbol{s}^{(2)}]}(s^{[\boldsymbol{r}^{(1)},\boldsymbol{s}^{(1)}]})=s^{[\boldsymbol{r}^{(2)},\boldsymbol{s}^{(2)}]}.
\end{equation}
Let $e_{\kappa,1}=k-w_{\kappa,2}-1$. By \eqref{lemma interpolation q[r,s]mQ eq} and \eqref{proposition of interpolation of L(f,G) interpolation form rkappa}, we see that $\kappa\vert_{\mathcal{O}_{\K}[[\Delta\times\Gamma_{1}]]} r_{(w_{\kappa,2},\phi_{\kappa,2})}(\linebreak\tilde{s}^{[(0,w_{\kappa,2}),(e_{\kappa,1},w_{\kappa,2})]}_{\boldsymbol{m}_{\kappa}})$ is equal to the right-hand side of \eqref{lemma interpolation L(f,G) eq}. Further, we have $\kappa(\tilde{s}_{(f,G),\boldsymbol{m}_{\kappa}})=\kappa\vert_{\mathcal{O}_{\K}[[(\Delta\times \Gamma_{1})]]}r_{(w_{\kappa,2},\phi_{\kappa,2})}(\tilde{s}_{(f,G)})$. Then, to prove that $\kappa(\tilde{s}_{(f,G),\boldsymbol{m}_{\kappa}})$ is equal to the right-hand side of \eqref{lemma interpolation L(f,G) eq}, it suffices to prove that 
\begin{equation}\label{lemma interpolation L(f,G) eq step2 sufficient rkappa}
r_{(w_{\kappa,2},\phi_{\kappa,2})}^{[(0,w_{\kappa,2}),(e_{\kappa,1},w_{\kappa,2})]}\circ\mathrm{pr}^{[\boldsymbol{d},\boldsymbol{e}]}_{[(0,w_{\kappa,2}),(e_{\kappa,1},w_{\kappa,2})]}(s_{(f,G)})=r_{(w_{\kappa,2},\phi_{\kappa,2})}^{[(0,w_{\kappa,2}),(e_{\kappa,1},w_{\kappa,2})]}(s^{[(0,w_{\kappa,2}),(e_{\kappa,1},w_{\kappa,2})]})
\end{equation}
in $I_{h_{1},\K(\phi_{\kappa,2})}^{[0,e_{\kappa,1}]}\otimes_{\mathcal{O}_{\K(\phi_{\kappa,2})}[[\Gamma_{1}]]}\mathcal{O}_{\K(\phi_{\kappa,2})}[[(\Delta\times \Gamma_{1})]]$.  As mentioned in \eqref{projection I is isom if e-dgeq h}, the projection $p^{[0,e_{\kappa,1}]}_{[0,\lfloor 2\alpha\rfloor]}:I_{h_{1},\K(\phi_{\kappa,2})}^{[0,e_{\kappa,1}]}\otimes_{\mathcal{O}_{\K(\phi_{\kappa,2})}[[\Gamma_{1}]]}\mathcal{O}_{\K(\phi_{\kappa,2})}[[(\Delta\times \Gamma_{1})]]\rightarrow I_{h_{1},\K(\phi_{\kappa,2})}^{[0,\lfloor 2\alpha\rfloor]}\otimes_{\mathcal{O}_{\K(\phi_{\kappa,2})}[[\Gamma_{1}]]}\mathcal{O}_{\K(\phi_{\kappa,2})}[[(\Delta\times \Gamma_{1})]]$ is an isomorphism. Then, to prove \eqref{lemma interpolation L(f,G) eq step2 sufficient rkappa}, it suffices to show that we have 
\begin{multline}\label{lemma interpolation L(f,G) eq step2 sufficient rkappa 2alphaver}
\mathrm{pr}^{[0,e_{\kappa,1}]}_{[0,\lfloor2\alpha\rfloor]}\circ r_{(w_{\kappa,2},\phi_{\kappa,2})}^{[(0,w_{\kappa,2}),(e_{\kappa,1},w_{\kappa,2})]}\circ\mathrm{pr}^{[\boldsymbol{d},\boldsymbol{e}]}_{[(0,w_{\kappa,2}),(e_{\kappa,1},w_{\kappa,2})]}(s_{(f,G)})
\\
=\mathrm{pr}^{[0,e_{\kappa,1}]}_{[0,\lfloor2\alpha\rfloor]}\circ r_{(w_{\kappa,2},\phi_{\kappa,2})}^{[(0,w_{\kappa,2}),(e_{\kappa,1},w_{\kappa,2})]}(s^{[(0,w_{\kappa,2}),(e_{\kappa,1},w_{\kappa,2})]}).
\end{multline}
It is easy to see that the following diagram is commutative:
\small 
\begin{equation}\label{lemma interpolation L(f,G) commutative diagram}
\hspace*{-1cm}
\xymatrix{
I^{(1)}\ar[d]_{\mathrm{pr}^{[(0,w_{\kappa,2}),(e_{\kappa,1},w_{\kappa,2})]}_{[(0,w_{\kappa,2}),(\lfloor 2\alpha\rfloor,w_{\kappa,2})]}}\ar[rrr]^{\ \ \ \ \ \ r_{(w_{\kappa,2},\phi_{\kappa,2})}^{[(0,w_{\kappa,2}),(e_{\kappa,1},w_{\kappa,2})]}}&&&I^{(3)}\ar[d]^{\mathrm{pr}^{[0,e_{\kappa,1}]}_{[0,\lfloor2\alpha\rfloor]}}\\
I^{(2)}\ar[rrr]^{\ \ \ \ \ \ r_{(w_{\kappa,2},\phi_{\kappa,2})}^{[(0,w_{\kappa,2}),(\lfloor 2\alpha\rfloor,w_{\kappa,2})]}}&&&I^{(4)}}
\end{equation}
\normalsize 
where $I^{(1)}=I_{\boldsymbol{h}}^{[(0,w_{\kappa,2}),(e_{\kappa,1},w_{\kappa,2})]}\otimes_{\mathcal{O}_{\K}[[\Gamma_{1}\times \Gamma_{2}]]}\mathcal{O}_{\K}[[(\Delta\times \Gamma_{1})\times\Gamma_{2}]]$, $I^{(2)}=I_{\boldsymbol{h}}^{[(0,w_{\kappa,2}),(\lfloor 2\alpha\rfloor,w_{\kappa,2})]}\linebreak\otimes_{\mathcal{O}_{\K}[[\Gamma_{1}\times \Gamma_{2}]]}\mathcal{O}_{\K}[[(\Delta\times \Gamma_{1})\times\Gamma_{2}]]$, $I^{(3)}=I_{h_{1},\K(\phi_{\kappa,2})}^{[0,e_{\kappa,1}]}\otimes_{\mathcal{O}_{\K(\phi_{\kappa,2})}[[\Gamma_{1}]]}\mathcal{O}_{\K(\phi_{\kappa,2})}[[(\Delta\times \Gamma_{1})]]$ and $I^{(4)}=I_{h_{1},\K(\phi_{\kappa,2})}^{[0,\lfloor2\alpha\rfloor]}\otimes_{\mathcal{O}_{\K(\phi_{\kappa,2})}[[\Gamma_{1}]]}\mathcal{O}_{\K(\phi_{\kappa,2})}[[(\Delta\times \Gamma_{1})]]$.
By \eqref{lemma interpolation formula L(f,g) image of pro1s1r2s2} and \eqref{lemma interpolation L(f,G) commutative diagram}, we have 
\begin{align}\label{lemma interpolation L(f,g) eq pro0eekappa1,0,lfloor21}
\begin{split}
&\mathrm{pr}^{[0,e_{\kappa,1}]}_{[0,\lfloor2\alpha\rfloor]}\circ r_{(w_{\kappa,2},\phi_{\kappa,2})}^{[(0,w_{\kappa,2}),(e_{\kappa,1},w_{\kappa,2})]}\circ\mathrm{pr}^{[\boldsymbol{d},\boldsymbol{e}]}_{[(0,w_{\kappa,2}),(e_{\kappa,1},w_{\kappa,2})]}(s_{(f,G)})\\
&=r_{(w_{\kappa,2},\phi_{\kappa,2})}^{[(0,w_{\kappa,2}),(\lfloor 2\alpha\rfloor,w_{\kappa,2})]}\circ\mathrm{pr}^{[(0,w_{\kappa,2}),(e_{\kappa,1},w_{\kappa,2})]}_{[(0,w_{\kappa,2}),(\lfloor 2\alpha\rfloor,w_{\kappa,2})]}\circ \mathrm{pr}^{[\boldsymbol{d},\boldsymbol{e}]}_{[(0,w_{\kappa,2}),(e_{\kappa,1},w_{\kappa,2})]}(s_{(f,G)})\\
&=r_{(w_{\kappa,2},\phi_{\kappa,2})}^{[(0,w_{\kappa,2}),(\lfloor 2\alpha\rfloor,w_{\kappa,2})]}\circ\mathrm{pr}^{[\boldsymbol{d},\boldsymbol{e}]}_{[(0,w_{\kappa,2}),(\lfloor 2\alpha\rfloor,w_{\kappa,2})]}(s_{(f,G)})
\end{split}
\end{align}
and 
\begin{align}\label{lemma interpolation L(f,g) eq pro0eekappa1,0,lfloor22}
\begin{split}
&\mathrm{pr}^{[0,e_{\kappa,1}]}_{[0,\lfloor2\alpha\rfloor]}\circ r_{(w_{\kappa,2},\phi_{\kappa,2})}^{[(0,w_{\kappa,2}),(e_{\kappa,1},w_{\kappa,2})]}(s^{[(0,w_{\kappa,2}),(e_{\kappa,1},w_{\kappa,2})]})\\
&=r_{(w_{\kappa,2},\phi_{\kappa,2})}^{[(0,w_{\kappa,2}),(\lfloor 2\alpha\rfloor,w_{\kappa,2})]}\circ\mathrm{pr}^{[(0,w_{\kappa,2}),(e_{\kappa,1},w_{\kappa,2})]}_{[(0,w_{\kappa,2}),(\lfloor 2\alpha\rfloor,w_{\kappa,2})]}(s^{[(0,w_{\kappa,2}),(e_{\kappa,1},w_{\kappa,2})]})\\
&=r_{(w_{\kappa,2},\phi_{\kappa,2})}^{[(0,w_{\kappa,2}),(\lfloor 2\alpha\rfloor,w_{\kappa,2})]}(s^{[(0,w_{\kappa,2}),(\lfloor 2\alpha\rfloor,w_{\kappa,2})]}).
\end{split}
\end{align}
By the definition of $s_{(f,G)}$, we have $\mathrm{pr}^{[\boldsymbol{d},\boldsymbol{e}]}_{[\boldsymbol{d}_,\boldsymbol{e}_{\alpha}]}(s_{(f,G)})=s^{[\boldsymbol{d},\boldsymbol{e}_{\alpha}]}$, where $\boldsymbol{e}_{\alpha}=(\lfloor 2\alpha\rfloor,\lfloor \alpha\rfloor +2)$. Then, by \eqref{lemma interpolation formula L(f,g) image of pro1s1r2s2}, we have
\begin{align}\label{lemma interpolation L(f,g) image s(f,G) by pro is s0,wkappalfloor}
\begin{split}
\mathrm{pr}^{[\boldsymbol{d},\boldsymbol{e}]}_{[(0,w_{\kappa,2}),(\lfloor 2\alpha\rfloor,w_{\kappa,2})]}(s_{(f,G)})&=\mathrm{pr}^{[\boldsymbol{d},\boldsymbol{e}_{\alpha}]}_{[(0,w_{\kappa,2}),(\lfloor 2\alpha\rfloor,w_{\kappa,2})]}\circ \mathrm{pr}^{[\boldsymbol{d},\boldsymbol{e}]}_{[\boldsymbol{d}_,\boldsymbol{e}_{\alpha}]}(s_{(f,G)})\\
&=\mathrm{pr}^{[\boldsymbol{d},\boldsymbol{e}_{\alpha}]}_{[(0,w_{\kappa,2}),(\lfloor 2\alpha\rfloor,w_{\kappa,2})]}(s^{[\boldsymbol{d},\boldsymbol{e}_{\alpha}]})\\
&=s^{[(0,w_{\kappa,2}),(\lfloor 2\alpha\rfloor,w_{\kappa,2})]}.
\end{split}
\end{align}
By \eqref{lemma interpolation L(f,g) eq pro0eekappa1,0,lfloor21}, \eqref{lemma interpolation L(f,g) eq pro0eekappa1,0,lfloor22} and \eqref{lemma interpolation L(f,g) image s(f,G) by pro is s0,wkappalfloor}, we have \eqref{lemma interpolation L(f,G) eq step2 sufficient rkappa 2alphaver}.
\\ 
$\mathbf{Step\ 3}$. We will prove that for every $\kappa\in \mathfrak{X}_{\mathcal{O}_{\K}[[(\Delta\times \Gamma_{1})\times \Gamma_{2}]]}^{[\boldsymbol{d},\boldsymbol{e}]}$ satisfying  $w_{\kappa,1}+w_{\kappa,2}<k$ and $w_{\kappa,1}+(\lfloor \alpha\rfloor+2)<k$, $\kappa(\tilde{s}_{(f,G),\boldsymbol{m}_{\kappa}})$ is equal to the right-hand side of \eqref{lemma interpolation L(f,G) eq}.
$$
\begin{tikzpicture}
 \draw[->,>=stealth,semithick] (-0.5,0)--(3.5,0)node[above]{$w_{\kappa,2}$}; 
 \draw[->,>=stealth,semithick] (0,-0.5)--(0,3.5)node[right]{$w_{\kappa,1}$}; 
 \draw (0,0)node[above  left]{O}; 
 \draw(1.3,0)node[below]{$\lfloor\alpha\rfloor+2$};
 \draw (0.3,0)node[below]{2};
 \draw(0,1)node[left]{$\lfloor 2\alpha\rfloor$};
 \draw (3,0)node[below]{$k$}; 
 \draw(0,3)node[left]{$k$};
  \fill[lightgray] (0.3,0)--(0.3,1.7)--(1.3,1.7)--(3,0);
 \draw[dashed,domain=0:3] plot(\x,3-\x);
  \draw (1.65,-0.5)node[below]{$\substack{\mathrm{Range\ of}\ \boldsymbol{w}_{\kappa}\in [\boldsymbol{d},\boldsymbol{e}],\\w_{\kappa,1}+(\lfloor \alpha\rfloor+2)<k\ w_{\kappa,1}+w_{\kappa,2}<k}$};
\end{tikzpicture}
$$
Let us fix an element $\kappa\in \mathfrak{X}_{\mathcal{O}_{\K}[[(\Delta\times \Gamma_{1})\times \Gamma_{2}]]}^{[\boldsymbol{d},\boldsymbol{e}]}$ satisfying 
$w_{\kappa,1}+w_{\kappa,2}<k$ and $w_{\kappa,1}+(\lfloor \alpha\rfloor+2)<k$. For each $(a_{1},a_{2})\in (\Delta\times \Gamma_{1})\times \Gamma_{2}$, we define a continuous $\mathcal{O}_{\K}[[\Gamma_{2}]]$-module homomorphism
\begin{equation}
r_{(w_{\kappa,1},\phi_{\kappa,1})}: \mathcal{O}_{\K}[[(\Delta\times \Gamma_{1})\times \Gamma_{2}]]\rightarrow \mathcal{O}_{\K(\phi_{\kappa,1})}[[\Gamma_{2}]]
\end{equation}
to be $r_{(w_{\kappa,1},\phi_{\kappa,1})}\vert_{(\Delta\times \Gamma_{1})\times\Gamma_{2}}((a_{1},a_{2}))=\kappa\vert_{\mathcal{O}_{\K}[[(\Delta\times \Gamma_{1})]}(a_{1})[a_{2}]$. In the same way as \eqref{proposition of interpolation of L(f,G) interpolation form rkappa}, we have
\begin{equation}\label{lemma interpolatino L(f,G) interpolation rwkappa1,phkappa1}
\kappa^{\prime}\vert_{\mathcal{O}_{\K}[[\Gamma_{2}]]}(r_{(w_{\kappa,1},\phi_{\kappa,1})}(s))=\kappa^{\prime}(s)
\end{equation}
for every $s\in \mathcal{O}_{\K}[[(\Delta\times \Gamma_{1})\times \Gamma_{2}]]$ and for every $\kappa^{\prime}\in \mathfrak{X}_{\mathcal{O}_{\K}[[(\Delta\times \Gamma_{1})\times \Gamma_{2}]]}$  such that $\kappa^{\prime}\vert_{\mathcal{O}_{\K}[[\Delta\times \Gamma_{1}]]}=\kappa\vert_{\mathcal{O}_{\K}[[\Delta\times \Gamma_{1}]]}$. Let $\boldsymbol{r},\boldsymbol{s}\in \mathbb{Z}^{2}$ such that $\boldsymbol{s}\geq \boldsymbol{r}$ and $w_{\kappa,1}\in [r_{1},s_{1}]$. Then, $r_{(w_{\kappa,1},\phi_{\kappa,1})}$ induces an $\mathcal{O}_{\K}[[\Gamma_{2}]]$-module homomorphism
$$r_{(w_{\kappa,1},\phi_{\kappa,1}),m}^{[\boldsymbol{r},\boldsymbol{s}]}:\frac{\mathcal{O}_{\K}[[(\Delta\times \Gamma_{1})\times \Gamma_{2}]]}{(\Omega_{(m_{\kappa,1},m)}^{[\boldsymbol{r},\boldsymbol{s}]})\mathcal{O}_{\K}[[(\Delta\times \Gamma_{1})\times \Gamma_{2}]]}\rightarrow \mathcal{O}_{\K(\phi_{\kappa,1})}[[\Gamma_{2}]]\slash (\Omega_{m}^{[r_{2},s_{2}]})$$
for each $m\in \mathbb{Z}_{\geq 0}$. We define an $\mathcal{O}_{\K}[[\Gamma_{2}]]\otimes_{\mathcal{O}_{\K}}\K$-module homomorphism 
$$r_{(w_{\kappa,1},\phi_{\kappa,1})}^{[\boldsymbol{r},\boldsymbol{s}]}:I_{\boldsymbol{h}}^{[\boldsymbol{r},\boldsymbol{s}]}\otimes_{\mathcal{O}_{\K}[[\Gamma_{1}\times \Gamma_{2}]]}\mathcal{O}_{\K}[[(\Delta\times \Gamma_{1})\times\Gamma_{2}]]\rightarrow I_{h_{2},\K(\phi_{\kappa,1})}^{[r_{2},s_{2}]}$$
to  be $r_{(w_{\kappa,1},\phi_{\kappa,1})}^{[\boldsymbol{r},\boldsymbol{s}]}((s_{\boldsymbol{m}})_{\boldsymbol{m}\in \mathbb{Z}_{\geq 0}^{2}})=(r_{(w_{\kappa,1},\phi_{\kappa,1}),m}^{[\boldsymbol{r},\boldsymbol{s}]}(s_{(m_{\kappa,1},m)}))_{m\in \mathbb{Z}_{\geq 0}}$ for each $(s_{\boldsymbol{m}})_{\boldsymbol{m}\in \mathbb{Z}_{\geq 0}^{2}}\in I_{\boldsymbol{h}}^{[\boldsymbol{r},\boldsymbol{s}]}\linebreak\otimes_{\mathcal{O}_{\K}[[\Gamma_{1}\times \Gamma_{2}]]}\mathcal{O}_{\K}[[(\Delta\times \Gamma_{1})\times \Gamma_{2}]]$. Let $e_{\kappa,2}=k-w_{\kappa,1}-1$. In the same way as \eqref{lemma interpolation L(f,G) eq step2 sufficient rkappa}, if we have 
\begin{equation}\label{lemma interpolation L(f,G) eq step3 sufficient rkappa}
r_{(w_{\kappa,1},\phi_{\kappa,1})}^{[(w_{\kappa,1},2),(w_{\kappa,1},e_{\kappa,2})]}\circ \mathrm{pr}^{[\boldsymbol{d},\boldsymbol{e}]}_{[(w_{\kappa,1},2),(w_{\kappa,1},e_{\kappa,2})]}(s_{(f,G)})=r_{(w_{\kappa,1},\phi_{\kappa,1})}^{[(w_{\kappa,1},2),(w_{\kappa,1},e_{\kappa,2})]}(s^{[(w_{\kappa,1},2),(w_{\kappa,1},e_{\kappa,2})]})
\end{equation}
in $I_{h_{2},\K(\phi_{\kappa,1})}^{[2,e_{\kappa,2}]}$, we see that $\kappa(\tilde{s}_{(f,G),\boldsymbol{m}_{\kappa}})$ is equal to the right-hand side of \eqref{lemma interpolation L(f,G) eq}. Then, we will prove that we have \eqref{lemma interpolation L(f,G) eq step3 sufficient rkappa}. Since $w_{\kappa,1}+(\lfloor \alpha\rfloor+2)<k$, we see that $\lfloor \alpha\rfloor+2\leq e_{\kappa,2}$. Then, as mentioned in \eqref{projection I is isom if e-dgeq h}, the projection $\mathrm{pr}^{[2,e_{\kappa,2}]}_{[2,\lfloor\alpha\rfloor+2]}:I_{h_{2},\K(\phi_{\kappa,1})}^{[2,e_{\kappa,2}]}\rightarrow I_{h_{2},\K(\phi_{\kappa,1})}^{[2,\lfloor\alpha\rfloor+2]}$ is an isomorphism. To prove \eqref{lemma interpolation L(f,G) eq step3 sufficient rkappa}, it suffices to prove that we have 
\begin{multline}\label{lemma interpolation L(f,G) eq step3 sufficient rkappa 2alphaver}
\mathrm{pr}^{[2,e_{\kappa,2}]}_{[2,\lfloor\alpha\rfloor+2]}\circ r_{(w_{\kappa,1},\phi_{\kappa,1})}^{[(w_{\kappa,1},2),(w_{\kappa,1},e_{\kappa,2})]}\circ \mathrm{pr}^{[\boldsymbol{d},\boldsymbol{e}]}_{[(w_{\kappa,1},2),(w_{\kappa,1},e_{\kappa,2})]}(s_{(f,G)})
\\ 
=\mathrm{pr}^{[2,e_{\kappa,2}]}_{[2,\lfloor\alpha\rfloor+2]}\circ r_{(w_{\kappa,1},\phi_{\kappa,1})}^{[(w_{\kappa,1},2),(w_{\kappa,1},e_{\kappa,2})]}(s^{[(w_{\kappa,1},2),(w_{\kappa,1},e_{\kappa,2})]})
\end{multline}
in $I_{h_{2},\K(\phi_{\kappa,1})}^{[2,\lfloor \alpha\rfloor+2]}$. 
In the same way as \eqref{lemma interpolation L(f,g) eq pro0eekappa1,0,lfloor21} and \eqref{lemma interpolation L(f,g) eq pro0eekappa1,0,lfloor22}, we have 
\begin{multline}\label{lemma interpolation L(f,G) step3 p2,ekappa2 ,0rw s(f,G)}
\mathrm{pr}^{[2,e_{\kappa,2}]}_{[2,\lfloor\alpha\rfloor+2]}\circ r_{(w_{\kappa,1},\phi_{\kappa,1})}^{[(w_{\kappa,1},2),(w_{\kappa,1},e_{\kappa,2})]}\circ \mathrm{pr}^{[\boldsymbol{d},\boldsymbol{e}]}_{[(w_{\kappa,1},2),(w_{\kappa,1},e_{\kappa,2})]}(s_{(f,G)})
\\ =r_{(w_{\kappa,1},\phi_{\kappa,1})}^{[(w_{\kappa,1},2),(w_{\kappa,1},\lfloor \alpha\rfloor+2)]}\circ \mathrm{pr}^{[\boldsymbol{d},\boldsymbol{e}]}_{[(w_{\kappa,1},2),(w_{\kappa,1},\lfloor \alpha\rfloor+2)]}(s_{(f,G)})
\end{multline}
and
\begin{align}\label{lemma interpolatin L(f,G) step3 p2swkappa1,2, swkappa1}
\begin{split}
&\mathrm{pr}^{[2,e_{\kappa,2}]}_{[2,\lfloor\alpha\rfloor+2]}\circ r_{(w_{\kappa,1},\phi_{\kappa,1})}^{[(w_{\kappa,1},2),(w_{\kappa,1},e_{\kappa,2})]}(s^{[(w_{\kappa,1},2),(w_{\kappa,1},e_{\kappa,2})]})\\
&= r_{(w_{\kappa,1},\phi_{\kappa,1})}^{[(w_{\kappa,1},2),(w_{\kappa,1},\lfloor \alpha\rfloor+2)]}\circ \mathrm{pr}^{[(w_{\kappa,1},2),(w_{\kappa,1},e_{\kappa,2})]}_{[(w_{\kappa,1},2),(w_{\kappa,1},\lfloor \alpha\rfloor+2)]}(s^{[(w_{\kappa,1},2),(w_{\kappa,1},e_{\kappa,2})]})\\
&= r_{(w_{\kappa,1},\phi_{\kappa,1})}^{[(w_{\kappa,1},2),(w_{\kappa,1},\lfloor \alpha\rfloor+2)]}(s^{[(w_{\kappa,1},2),(w_{\kappa,1},\lfloor \alpha\rfloor+2)]}).
\end{split}
\end{align}
By \eqref{lemma interpolation L(f,G) step3 p2,ekappa2 ,0rw s(f,G)} and \eqref{lemma interpolatin L(f,G) step3 p2swkappa1,2, swkappa1}, \eqref{lemma interpolation L(f,G) eq step3 sufficient rkappa 2alphaver} is equivalent to 
\begin{multline}\label{lemma interpolation L(f,G) last rwkapp1,phikappa1most left}
r_{(w_{\kappa,1},\phi_{\kappa,1})}^{[(w_{\kappa,1},2),(w_{\kappa,1},\lfloor \alpha\rfloor+2)]}\circ \mathrm{pr}^{[\boldsymbol{d},\boldsymbol{e}]}_{[(w_{\kappa,1},2),(w_{\kappa,1},\lfloor \alpha\rfloor+2)]}(s_{(f,G)})\\
=r_{(w_{\kappa,1},\phi_{\kappa,1})}^{[(w_{\kappa,1},2),(w_{\kappa,1},\lfloor \alpha\rfloor+2)]}(s^{[(w_{\kappa,1},2),(w_{\kappa,1},\lfloor \alpha\rfloor+2)]}).
\end{multline}
By the results of Step\ 1 and Step\ 2, we see that $\kappa^{\prime}(\tilde{s}_{(f,G),\boldsymbol{m}_{\kappa^{\prime}}})$ and $\kappa^{\prime}(\tilde{s}_{\boldsymbol{m}_{\kappa^{\prime}}}^{[(w_{\kappa,1},2),(w_{\kappa,1},\lfloor \alpha\rfloor+2)]})$ are equal to the right-hand side of \eqref{lemma interpolation L(f,G) eq} for every $\kappa^{\prime}\in \mathfrak{X}_{\mathcal{O}_{\K}[[(\Delta\times \Gamma_{1})\times \Gamma_{2}]]}^{[(w_{\kappa,1},2),(w_{\kappa,1},\lfloor \alpha\rfloor+2)]}$. Then, we see that
\begin{equation}\label{lemma interpolation L(f,G) step3 by setp12kappaprime equal}
\kappa^{\prime}(\tilde{s}_{(f,G),\boldsymbol{m}_{\kappa^{\prime}}})=\kappa^{\prime}(\tilde{s}_{\boldsymbol{m}_{\kappa^{\prime}}}^{[(w_{\kappa,1},2),(w_{\kappa,1},\lfloor \alpha\rfloor+2)]})
\end{equation}
for every $\kappa^{\prime}\in \mathfrak{X}_{\mathcal{O}_{\K}[[(\Delta\times \Gamma_{1})\times \Gamma_{2}]]}^{[(w_{\kappa,1},2),(w_{\kappa,1},\lfloor \alpha\rfloor+2)]}$. By \eqref{lemma interpolatino L(f,G) interpolation rwkappa1,phkappa1} and \eqref{lemma interpolation L(f,G) step3 by setp12kappaprime equal}, we have 
\begin{equation}\label{lasteqatino lemma ofsection6.1twovariableiwasawarakin}
\kappa^{\prime\prime}r_{(w_{\kappa,1},\phi_{\kappa,1})}(\tilde{s}_{(f,G),(m_{\kappa,1},m_{\kappa^{\prime\prime}})})=\kappa^{\prime\prime}r_{(w_{\kappa,1},\phi_{\kappa,1})}(\tilde{s}_{(m_{\kappa,1},m_{\kappa^{\prime\prime}})}^{[(w_{\kappa,1},2),(w_{\kappa,1},\lfloor \alpha\rfloor+2)]})
\end{equation}
for every $\kappa^{\prime\prime}\in \mathfrak{X}_{\mathcal{O}_{\K}[[\Gamma_{2}]]}^{[2,\lfloor \alpha\rfloor+2]}$. Since each element $s=(s_{m})_{m\in \mathbb{Z}_{\geq 0}}\in I_{h_{2},\K(\phi_{\kappa,1})}^{[2,\lfloor \alpha\rfloor+2]}$ is characterized by the specializations $\kappa^{\prime\prime}(\tilde{s}_{m_{\kappa^{\prime\prime}}})$ for every $\kappa^{\prime\prime}\in \mathfrak{X}_{\mathcal{O}_{\K}[[\Gamma_{2}]]}^{[2,\lfloor \alpha\rfloor+2]}$,  by \eqref{lasteqatino lemma ofsection6.1twovariableiwasawarakin}, we have \eqref{lemma interpolation L(f,G) last rwkapp1,phikappa1most left}.
\end{proof}

\subsection{Construction of a two-variable $p$-adic Rankin-Selberg $L$-series}\label{Construction of a two-variable padic rankin selber l series}
Let $\Gamma_{1}$ and $\Gamma_{2}$ be $p$-adic Lie groups which are isomorphic to $1+p\mathbb{Z}_{p}$. Set $\Delta=(\mathbb{Z}\slash p\mathbb{Z})^{\times}$. We fix continuous characters $\chi_{1}: \Delta\times \Gamma_{1}\rightarrow \mathbb{Q}_{p}^{\times}$ and $\chi_{2}: \Gamma_{2}\rightarrow \mathbb{Q}_{p}^{\times}$ which induce $\chi_{1}: \Delta\times \Gamma_{1}\stackrel{\sim}{\rightarrow}\mathbb{Z}_{p}^{\times}$ and $\chi_{i}:\Gamma_{i}\stackrel{\sim}{\rightarrow}1+p\mathbb{Z}_{p}$ for $i=1,2$. Let $\mathbf{I}$ be a finite free extension of $\mathcal{O}_{\K}[[\Gamma_{2}]]$. Let $N$ and $N^{\prime}$ be positive integers which are prime to $p$. Let $M$ be the least common multiple of $N$ and $N^{\prime}$. For each $\kappa\in \mathfrak{X}_{\mathcal{O}_{\K}[[\Delta\times\Gamma_{1}]]\widehat{\otimes}_{\mathcal{O}_{\K}}\mathbf{I}}$, we denote by $\phi_{\kappa,1}: \Delta\times \Gamma_{1}\rightarrow \overline{\K}^{\times}$ and $\phi_{\kappa,2}: \Gamma_{2}\rightarrow \overline{\K}^{\times}$ are finite characters which satisfy
\begin{equation}\label{subsectionconstrution two-variable rankin finitekappa1,2}
\kappa\vert_{(\Delta\times \Gamma_{1})\times \Gamma_{2}}((x_{1},x_{2}))=\phi_{\kappa,1}(x_{1})\chi_{1}(x_{1})^{w_{\kappa,1}}\phi_{\kappa,2}(x_{2})\chi_{2}(x_{2})^{w_{\kappa,2}}
\end{equation}
for each $(x_{1},x_{2})\in (\Delta\times \Gamma_{1})\times \Gamma_{2}$, where $\boldsymbol{w}_{\kappa}=(w_{\kappa,1},w_{\kappa,2})$ is the weight of $\kappa$. Further, we denote by $m_{\kappa,i}$ the smallest integer $m$ such that $\phi_{\kappa,i}$ factors through $\Gamma_{i}\slash \Gamma_{i}^{p^{m}}$ with $i=1,2$. Let $\xi$ be a Dirichlet character modulo $N^{\prime}p$. In this subsection, we prove the following theorem:
\begin{thm}\label{two variable rankin selberg l series of hida family}
Let $f\in S_{k}(Np^{m(f)},\psi;\K)$ be a normalized cuspidal Hecke eigenform which is new away from $p$ with $m(f)\in \mathbb{Z}_{\geq 1}$ and $G\in eS(N^{\prime}p,\xi;\mathbf{I})$ an $\mathbf{I}$-adic primitive Hida family of tame level $N^{\prime}$ and character $\xi$. Here, $\psi$ is a Dirichlet charactere modulo $Np^{m(f)}$ and $\mathbf{I}$ is a finite free extension of $\mathcal{O}_{\K}[[\Gamma_{2}]]$ such that $\mathbf{I}$ is an integral domain. Put $\boldsymbol{h}=(2\alpha,\alpha)$ with $\alpha=\ord_{p}(a_{p}(f))$. We assume the following conditions:
\begin{enumerate}
\item We have $k>\lfloor2\alpha\rfloor+\lfloor \alpha\rfloor +2$.
\item All $M$-roots of unity, the root number of $f^{0}$ and Fourier coefficients of $f^{0}$ are contained in $\K$, where $f^{0}$ is the primitive form associated with $f$.
\end{enumerate}
Let $\boldsymbol{d}=(0,2)$ and $\boldsymbol{e}=(k-3,k-1)$. We denote by $\xi_{(p)}$ the restriction of $\xi$ on $(\mathbb{Z}\slash p\mathbb{Z})^{\times}$. Then, there exists a unique element $L_{(f,G),p}\in \mathcal{D}_{\boldsymbol{h}}^{[\boldsymbol{d},\boldsymbol{e}]}(\Gamma_{1}\times \Gamma_{2},\K)\otimes_{\mathcal{O}_{\K}[[\Gamma_{1}\times\Gamma_{2}]]}(\mathcal{O}_{\K}[[\Delta \times \Gamma_{1}]]\widehat{\otimes}_{\mathcal{O}_{\K}}\mathbf{I})$ which satisfies
\begin{align}
\begin{split}\label{eq two variable rankin selberg l series of hida family}
\kappa(L_{(f,G),p})&={N^{\prime}}^{\frac{w_{\kappa,2}}{2}}\sqrt{-1}^{w_{\kappa,2}}(-1)^{w_{\kappa,1}}w^{\prime}(\kappa\vert_{\mathbf{I}}(G)^{0})G(\phi_{\kappa,1})G(\omega^{-w_{\kappa,2}}\xi_{(p)}\phi_{\kappa,1}\phi_{\kappa,2})\\
&\ \ \times E_{p,\phi_{\kappa,1}}(w_{\kappa,1}+w_{\kappa,2},f,\kappa\vert_{\mathbf{I}}(G))\frac{\Lambda\left(w_{\kappa,1}+w_{\kappa,2},f,
 \left(\kappa\vert_{\mathbf{I}}(G)\otimes\phi_{\kappa,1}\right)^{\rho}\right)}{\langle f^{0},f^{0}\rangle_{k,c_{f}}}
\end{split}
\end{align}
for every $\kappa\in \mathfrak{X}^{[\boldsymbol{d},\boldsymbol{e}]}_{\mathcal{O}_{\K}[[\Delta\times \Gamma_{1}]]\widehat{\otimes}_{\mathcal{O}_{\K}}\mathbf{I}}$ such that $w_{\kappa,1}+w_{\kappa,2}<k$, where $\kappa\vert_{\mathbf{I}}(G)^{0}$ is the primitive form associated with $\kappa\vert_{\mathbf{I}}(G)$, $\kappa\vert_{\mathbf{I}}(G)\otimes\phi_{\kappa,1}$ is the twist of $\kappa\vert_{\mathbf{I}}(G)$ by $\phi_{\kappa,1}$ and $c_{f}$ is the conductor of $f$, $w^{\prime}(\kappa\vert_{\mathbf{I}}(G)^{0})$ is the constant defined in \eqref{notp part of rootnumber}, $G(\phi_{\kappa,1})$ and $G(\omega^{-w_{\kappa,2}}\xi_{(p)}\phi_{\kappa,1}\phi_{\kappa,2})$ are the Gauss sums defined in \eqref{definition of gauss sum}, $E_{p,\phi_{\kappa,1}}(s,f,\kappa\vert_{\mathbf{I}}(G))$ is the Euler factor defined in \eqref{for comaptibilityof PC Euler factor} and $\Lambda\left(s,f,
 \left(\kappa\vert_{\mathbf{I}}(G)\otimes\phi_{\kappa,1}\right)^{\rho}\right)$ is the Rankin-Selberg $L$-series defined in \eqref{rankin attached primitives}. Here, $\phi_{\kappa,1}$ and $\phi_{\kappa,2}$ are finite characters defined in \eqref{subsectionconstrution two-variable rankin finitekappa1,2}.
\end{thm}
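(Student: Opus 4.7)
The plan is to upgrade the single Hida family element $s_{(f,G_0)}$ constructed in Lemma~\ref{proposition of interpolation of L(f,G)} to an $\mathbf{I}$-adic element, transfer it to the admissible distribution side via Theorem~\ref{multi-variable results on admissible distributions deformation ver}, and then rewrite its interpolation property in terms of the completed Rankin--Selberg $L$-value. Uniqueness will follow from Proposition~\ref{admissible distribution characterization prop} once existence is established.

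First I would decompose the $\mathbf{I}$-adic Hida family $G$. Since $\mathbf{I}$ is a finite free extension of $\mathcal{O}_{\K}[[\Gamma_{2}]]$, the isomorphism \eqref{eSIisom eSLambda} applied to the ordinary projector gives a decomposition $G=\sum_{i=1}^{n}G_{i}\alpha_{i}$ with $G_{i}\in eS(N^{\prime}p,\xi;\mathcal{O}_{\K}[[\Gamma_{2}]])$ and $\{\alpha_{i}\}$ a fixed basis of $\mathbf{I}$ over $\mathcal{O}_{\K}[[\Gamma_{2}]]$. For each $G_{i}$ the construction in \S\ref{ssc const. ad} produces an element
\[
s_{(f,G_{i})}\in I_{\boldsymbol{h}}^{[\boldsymbol{d},\boldsymbol{e}]}\otimes_{\mathcal{O}_{\K}[[\Gamma_{1}\times \Gamma_{2}]]}\mathcal{O}_{\K}[[(\Delta\times \Gamma_{1})\times \Gamma_{2}]],
\]
and I set $s_{(f,G)}:=\sum_{i=1}^{n}s_{(f,G_{i})}\otimes\alpha_{i}$, which naturally lives in $I_{\boldsymbol{h}}^{[\boldsymbol{d},\boldsymbol{e}]}\otimes_{\mathcal{O}_{\K}[[\Gamma_{1}\times \Gamma_{2}]]}(\mathcal{O}_{\K}[[(\Delta\times \Gamma_{1})]]\widehat{\otimes}_{\mathcal{O}_{\K}}\mathbf{I})$. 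Independence of this sum from the basis $\{\alpha_{i}\}$ is a routine verification.

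Next I would apply Theorem~\ref{multi-variable results on admissible distributions deformation ver} with the finite free extension $\mathcal{O}_{\K}[[\Delta\times \Gamma_{1}]]\widehat{\otimes}_{\mathcal{O}_{\K}}\mathbf{I}$ of $\mathcal{O}_{\K}[[\Gamma_{1}\times \Gamma_{2}]]$ (note that $\boldsymbol{e}-\boldsymbol{d}=(k-3,k-3)\geq \lfloor\boldsymbol{h}\rfloor$ thanks to hypothesis (1)) to transport $s_{(f,G)}$ to a unique element
\[
L_{(f,G),p}\in \mathcal{D}_{\boldsymbol{h}}^{[\boldsymbol{d},\boldsymbol{e}]}(\Gamma_{1}\times \Gamma_{2},\K)\otimes_{\mathcal{O}_{\K}[[\Gamma_{1}\times\Gamma_{2}]]}(\mathcal{O}_{\K}[[\Delta \times \Gamma_{1}]]\widehat{\otimes}_{\mathcal{O}_{\K}}\mathbf{I})
\]
characterized by $\kappa(L_{(f,G),p})=\kappa(\tilde{s}_{(f,G),\boldsymbol{m}_{\kappa}})$ for every $\kappa\in \mathfrak{X}^{[\boldsymbol{d},\boldsymbol{e}]}_{\mathcal{O}_{\K}[[\Delta\times \Gamma_{1}]]\widehat{\otimes}_{\mathcal{O}_{\K}}\mathbf{I}}$. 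Uniqueness then follows from Proposition~\ref{admissible distribution characterization prop} applied to the coefficient ring $\mathcal{O}_{\K}[[\Delta\times \Gamma_{1}]]\widehat{\otimes}_{\mathcal{O}_{\K}}\mathbf{I}$, because $[\boldsymbol{d},\boldsymbol{e}]$ already contains a $\lfloor\boldsymbol{h}\rfloor$-box of specializations.

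The hardest step, which I expect to occupy the bulk of the proof, is to rewrite Lemma~\ref{proposition of interpolation of L(f,G)}'s interpolation formula
\[
\kappa(\tilde{s}_{(f,G),\boldsymbol{m}_{\kappa}})=(-1)^{w_{\kappa,1}}\phi_{\kappa,1}(M/N^{\prime})\,l_{f,M}\!\circ\! T_{p}\!\left((\kappa\vert_{\mathcal{O}_{\K}[[\Gamma_{2}]]}(G)\otimes\phi_{\kappa,1})\vert_{[M/N^{\prime}]}H_{\kappa}\right)
\]
as the right-hand side of \eqref{eq two variable rankin selberg l series of hida family}. The linear functional $l_{f,M}\circ T_{p}^{m-m(f)}$ expresses values as Petersson inner products against $f^{\rho}\vert_{k}\tau_{Np^{m(f)}}$, so after unwinding the definition of $l_{f,M}$ via \eqref{definition of lf,L(n)} the inner expression becomes
\[
a_{p}(f)^{-\beta}\bigl\langle f^{\rho}\vert_{k}\tau_{Np^{m(f)}},\,\mathrm{Tr}_{Mp^{m(f)}/Np^{m(f)}}(T_{p}^{\beta-m(f)}(g\otimes\phi_{\kappa,1})_{0}\cdot H_{\kappa})\bigr\rangle_{k,Np^{m(f)}},
\]
where $g=\kappa\vert_{\mathbf{I}}(G)\vert_{[M/N^{\prime}]}$ and $\beta$ is large enough; the pair of adjunctions \eqref{relaTl anddiagl1} and \eqref{trace operator} rewrites this inner product on the level $Mp^{\beta}$. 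Recognising $H_{\kappa}$ as the holomorphic/nearly holomorphic Eisenstein series of Lemma~\ref{lemma for interpolation formula} with the shift $m=w_{\kappa,1}$ and $l=w_{\kappa,2}$, the inner product becomes (up to normalising factors involving $\tau_{N^{\prime}p^{\beta}}$ and powers of $Np^{\beta}$) the value $\Lambda_{Mp^{\max\{r,\beta\}}}(w_{\kappa,1}+w_{\kappa,2},f,(g\otimes\phi_{\kappa,1})\vert_{w_{\kappa,2}}\tau_{N^{\prime}p^{\beta}})$. Applying Lemma~\ref{for comaptibilityof PC} extracts the $p$-Euler factor $E_{p,\phi_{\kappa,1}}(w_{\kappa,1}+w_{\kappa,2},f,\kappa\vert_{\mathbf{I}}(G))$ and the Gauss sums $G(\phi_{\kappa,1})G(\omega^{-w_{\kappa,2}}\xi_{(p)}\phi_{\kappa,1}\phi_{\kappa,2})$, yielding the primitive $\Lambda\bigl(w_{\kappa,1}+w_{\kappa,2},f,(\kappa\vert_{\mathbf{I}}(G)\otimes\phi_{\kappa,1})^{\rho}\bigr)$ together with the prefactor $w^{\prime}(\kappa\vert_{\mathbf{I}}(G)^{0})$ and the archimedean power $\sqrt{-1}^{\,w_{\kappa,2}}{N^{\prime}}^{w_{\kappa,2}/2}$. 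Finally, Lemma~\ref{lemma for modefied euler factor} converts the denominator $\langle f^{\rho}\vert_{k}\tau_{Np^{m(f)}},f\rangle_{k,Np^{m(f)}}$ built into $l_{f,M}$ into $\langle f^{0},f^{0}\rangle_{k,c_{f}}$ (the factor involving the $p$-Euler refinement of $f$ is, by design, already incorporated in $E_{p,\phi_{\kappa,1}}$ so it cancels cleanly). Tracking the residual powers of $p$, the signs $(-1)^{w_{\kappa,1}}$, and the auxiliary factor $\phi_{\kappa,1}(M/N^{\prime})$ through these manipulations, one obtains exactly \eqref{eq two variable rankin selberg l series of hida family}. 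The combinatorics of these normalising factors will be the main technical obstacle and requires careful bookkeeping, but each ingredient is already prepared in the lemmas of \S6.1.
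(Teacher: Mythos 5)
Your proposal follows essentially the same route as the paper: decompose $G=\sum G_i\alpha_i$ over a basis of $\mathbf{I}$, build $s_{(f,G)}$ from the $s_{(f,G_i)}$, transport by the isomorphism of Theorem~\ref{multi-variable results on admissible distributions deformation ver}, and then convert the interpolation formula of Lemma~\ref{proposition of interpolation of L(f,G)} into completed Rankin--Selberg $L$-values via the Petersson-pairing reformulation of $l_{f,M}$ together with Lemmas~\ref{lemma for interpolation formula}, \ref{for comaptibilityof PC}, and \ref{lemma for modefied euler factor}. Uniqueness via Proposition~\ref{admissible distribution characterization prop} is also exactly the paper's argument.

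There are two places where your sketch is imprecise and would need repair if expanded. First, the claim that the quantity $\mathcal{E}(f)=\langle f^{\rho}\vert_k\tau_{Np^{m(f)}},f\rangle/\langle f^0,f^0\rangle$ from Lemma~\ref{lemma for modefied euler factor} ``is, by design, already incorporated in $E_{p,\phi_{\kappa,1}}$ so it cancels cleanly'' is false: $E_{p,\phi_{\kappa,1}}$ in \eqref{for comaptibilityof PC Euler factor} is a $\kappa$-dependent Euler factor of the Rankin--Selberg $L$-function, whereas $\mathcal{E}(f)$ is a fixed constant (involving the root number and, when $f\neq f^0$, a $p$-stabilization Euler factor of $f$ alone) that does not appear in $E_{p,\phi_{\kappa,1}}$. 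The paper instead removes $\mathcal{E}(f)$ --- together with the $\kappa$-dependent factors $\phi_{\kappa,1}(M/N')$, $M^{w_{\kappa,1}}$, and residual powers of $p$ and $2$ --- by a final renormalization of $L_{(f,G),p}$, multiplying by $a_p(f)^{-(m(f)+1)}(Np^{m(f)})^{k/2-1}2^{k-1}\sqrt{-1}^{-k}\mathcal{E}(f)\langle M\rangle_1^{-1}$, where $\langle M\rangle_1^{-1}$ supplies the needed $\chi_1(M)^{-w_{\kappa,1}}\phi_{\kappa,1}(M)^{-1}$ at each $\kappa$. Your sketch does not plan for this $\kappa$-varying correction; it would not fall out of cancellation alone. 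Second, when unwinding $l_{f,M}$ you need a case distinction on whether $\beta_{\kappa}$ (the level exponent of $\kappa\vert_{\mathbf I}(G)\otimes\phi_{\kappa,1}$) is $\geq m(f)$ or $< m(f)$: in the first case one applies Hecke-operator adjointness to push the inner product up to level $Mp^{\beta_\kappa}$ before invoking Lemma~\ref{lemma for interpolation formula}, while in the second case the inner product is already at level $Mp^{m(f)}$ and one uses Lemma~\ref{lemma 1 of shimura rankin selberg} to compare $\Lambda_{Mp^{m(f)}}$ with $\Lambda_{Mp^{\beta_\kappa}}$. Your sketch folds these into a single computation; both cases need to be checked because the level-raising step changes the power of $a_p(f)$ and $p$ that appears.
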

\begin{proof}
We can assume that $m(f)$ is the smallest positive integer $m$ such that $f\in S_{k}(Np^{m},\psi)$. Let $\alpha_{1},\ldots, \alpha_{n}$ be a basis of $\mathbf{I}$ over $\mathcal{O}_{\K}[[\Gamma_{2}]]$. By \eqref{eSIisom eSLambda}, we have an expression $G=\sum_{i=1}^{n}G_{i}\alpha_{i}\in eS(N^{\prime}p,\xi;\mathbf{I})$ with $G_{i}\in eS(N^{\prime}p,\xi;\mathcal{O}_{\K}[[\Gamma_{2}]])$. 
We define $L_{(f,G),p}\in \mathcal{D}_{\boldsymbol{h}}^{[\boldsymbol{d},\boldsymbol{e}]}(\Gamma_{1}\times\Gamma_{2},\K)\otimes_{\mathcal{O}_{\K}[[\Gamma_{1}\times\Gamma_{2}]]}(\mathcal{O}_{\K}[[\Delta\times \Gamma_{1}]]{\otimes}_{\widehat{\mathcal{O}}_{\K}}\mathbf{I})$ to be
$$L_{(f,G),p}=\sum_{i=1}^{n}\Psi(s_{(f,G_{i})})\alpha_{i},$$
where $s_{(f,G_{i})}\in I_{\boldsymbol{h}}^{[\boldsymbol{d},\boldsymbol{e}]}\otimes_{\mathcal{O}_{\K}[[\Gamma_{1}\times \Gamma_{2}]]}\mathcal{O}_{\K}[[(\Delta\times \Gamma_{1})\times \Gamma_{2}]]$ is the element defined in \eqref{definition of inverse q[d,ealpha]} and $\Psi: I_{\boldsymbol{h}}^{[\boldsymbol{d},\boldsymbol{e}]}\otimes_{\mathcal{O}_{\K}[[\Gamma_{1}\times \Gamma_{2}]]}\mathcal{O}_{\K}[[(\Delta\times \Gamma_{1})\times \Gamma_{2}]]\stackrel{\sim}{\rightarrow}\mathcal{D}_{\boldsymbol{h}}^{[\boldsymbol{d},\boldsymbol{e}]}(\Gamma_{1}\times \Gamma_{2},\K)\otimes_{\mathcal{O}_{\K}[[\Gamma_{1}\times\Gamma_{2}]]}\mathcal{O}_{\K}[[(\Delta\times \Gamma_{1})\times \Gamma_{2}]]$ is the isomorphism defined in \eqref{equation:multi-variable results on admissible distributions deformation ver}. Let $\kappa\in \mathfrak{X}^{[\boldsymbol{d},\boldsymbol{e}]}_{\mathcal{O}_{\K}[[\Delta\times \Gamma_{1}]]\widehat{\otimes}_{\mathcal{O}_{\K}}\mathbf{I}}$ such that $w_{\kappa,1}+w_{\kappa,2}<k$. By Lemma \ref{proposition of interpolation of L(f,G)}, we see that 
\begin{align}\label{main two variable proof 1}
\kappa(L_{(f,G),p})
& =\sum_{i=1}^{n}(-1)^{w_{\kappa,1}}\phi_{\kappa,1}(M\slash N^{\prime})l_{f,M}\circ T_{p}\left((\kappa\vert_{\mathcal{O}_{\K}[[\Gamma_{2}]]}(G_{i})\otimes\phi_{\kappa,1})\vert_{[M\slash N^{\prime}]}H_{\kappa}\right)\\
& =(-1)^{w_{\kappa,1}}\phi_{\kappa,1}(M\slash N^{\prime})l_{f,M}\circ T_{p}\left((\kappa\vert_{\mathbf{I}}(G)\otimes\phi_{\kappa,1})\vert_{[M\slash N^{\prime}]}H_{\kappa}\right) \notag 
\end{align}
where $H_{\kappa}$ is the nearly holomorphic modular form defined in \eqref{lemma interpolation L(f,G) eisenstein}, $l_{f,M}: \cup_{m=m(f)}^{+\infty}\linebreak N^{\leq \lfloor \frac{k-1}{2}\rfloor,\mathrm{cusp}}_{k}(Mp^{m},\psi;\K)\rightarrow \K$ is the $\K$-linear map defined in \eqref{classical lf map} and $T_{p}$ is the $p$-th Hecke operator, $\vert_{[M\slash N^{\prime}]}$ is the operator defined in \eqref{power series and qn mapst qNn}. Let $\beta_{\kappa}$ be the smallest positive integer $m$ so that $\kappa\vert_{\mathbf{I}}(G)\otimes\phi_{\kappa,1}\in S_{w_{\kappa,2}}(N^{\prime}p^{m},\xi\omega^{-w_{\kappa,2}}\phi_{\kappa,2}\phi_{\kappa,1}^{2})$. Let $\pi_{\kappa\vert_{\mathbf{I}}(G)}=\otimes_{l}\pi_{\kappa\vert_{\mathbf{I}}(G),l}$ be the automorphic representation attached to $\kappa\vert_{\mathbf{I}}(G)$. By \cite[Proposition 2.2]{ML Hsieh}, we see that $\pi_{\kappa\vert_{\mathbf{I}}(G),p}$ is the special representation $\chi\mathrm{St}$ attached to an unramified character $\chi$ or the  principal series $\pi(\chi,\chi^{\prime})$ attached to an unramified character $\chi$ and a character $\chi^{\prime}$. Then, $\pi_{\kappa\vert_{\mathbf{I}}(G),p}\otimes \phi_{\kappa,1}$ is the special representation $\chi\phi_{\kappa,1}\mathrm{St}$ or the princiapl series $\pi(\chi\phi_{\kappa,1},\chi^{\prime}\phi_{\kappa,1})$. If $m_{\kappa,2}\geq 1$, the conductor of $\pi_{\kappa\vert_{\mathbf{I}}(G),p}$ is equal to $m_{\kappa,2}+1$ and if $m_{\kappa,2}=0$, the conductor of $\pi_{\kappa\vert_{\mathbf{I}}(G),p}$ is equal to $1$ or $0$. Then, by the table in \cite[page 8]{Schmid02}, we have $\beta_{\kappa}\geq \max\{m_{\kappa_{1}},m_{\kappa_{2}}\}+1$. Since $H_{\kappa}$ is a modular form of level $Mp^{\max\{m_{\kappa_{1}},m_{\kappa_{2}},m(f)-1\}+1}$, we have
\begin{equation}\label{betakkageq mkappa1,mkappa2+1}
(\kappa\vert_{\mathbf{I}}(G)\otimes\phi_{\kappa,1})\vert_{[M\slash N^{\prime}]}H_{\kappa}\in N^{\leq \lfloor \frac{k-1}{2}\rfloor,\mathrm{cusp}}_{k}(Mp^{\max\{\beta_{\kappa},m(f)\}},\psi).
\end{equation}
We will prove that 
\begin{align}\label{main two variable proof 2}
& l_{f,M}\circ T_{p}\left((\kappa\vert_{\mathbf{I}}(G)\otimes\phi_{\kappa,1})\vert_{[\frac{M}{N^{\prime}}]}H_{\kappa}\right)\\
& = a_{p}(f)^{m(f)+1}(Np^{m(f)})^{1-\frac{k}{2}}(-1)^{w_{\kappa,2}}(N^{\prime})^{\frac{w_{\kappa,2}}{2}}2^{-k-1}M^{w_{\kappa,1}}(\sqrt{-1})^{k-w_{\kappa,2}} \notag \\
& \ \ \times w^{\prime}(\kappa\vert_{\mathbf{I}}(G)^{0})G(\phi_{\kappa,1})G(\omega^{-w_{\kappa,2}}\xi_{(p)}\phi_{\kappa,1}\phi_{\kappa,2})E_{p,\phi_{\kappa,1}}(w_{\kappa,1}+w_{\kappa,2},f,\kappa\vert_{\mathbf{I}}(G))\phi_{\kappa,1}(N^{\prime})\notag \\
& \ \ \times \frac{\Lambda\left(w_{\kappa,1}+w_{\kappa,2},f,
 \left(\kappa\vert_{\mathbf{I}}(G)\otimes\phi_{\kappa,1}\right)^{\rho}\right)}{\mathcal{E}(f)\langle f^{0},f^{0}\rangle_{k,c_{f}}} \notag 
 \end{align}
 where 
 $$\mathcal{E}(f)=
\begin{cases}(-1)^{k}w(f^{0})\ &\mathrm{if}\ f=f^{0},\\
p^{-\frac{k}{2}+1}a_{p}(f)\left(1-\frac{\psi_{0}(p)p^{k-1}}{a_{p}(f)^{2}}\right)\left(1-\frac{\psi_{0}(p)p^{k-2}}{a_{p}(f)^{2}}\right)(-1)^{k}w(f^{0})\ &\mathrm{if}\ f\neq f^{0} .
\end{cases}$$
Here, $w(f^{0})$ is the root number of $f^{0}$ and $\psi_{0}$ is the primitive Dirichlet character associated with $\psi$.

$\mathbf{Case}\ \beta_{\kappa}\geq m(f)$. Assume that $\beta_{\kappa}\geq m(f)$. Let $\tau_{m}=\begin{pmatrix}0&-1\\ m&0\end{pmatrix}$ for each $m\in \mathbb{Z}_{\geq 1}$. By \eqref{betakkageq mkappa1,mkappa2+1} and the assumption $\beta_{\kappa}\geq m(f)$, we have 
$$
l_{f,M}\circ T_{p}\left((\kappa\vert_{\mathbf{I}}(G)\otimes\phi_{\kappa,1})\vert_{[\frac{M}{N^{\prime}}]}H_{\kappa}\right)=l_{f,M}^{(\beta_{\kappa})}\circ T_{p}\left((\kappa\vert_{\mathbf{I}}(G)\otimes\phi_{\kappa,1})\vert_{[\frac{M}{N^{\prime}}]}H_{\kappa}\right), 
$$ 
where $l_{f,M}^{(\beta_{\kappa})}$ is the map defined in \eqref{definition of lf,L(n)}. By the definition of $l_{f,M}^{(\beta_{\kappa})}$ and $\eqref{hecke opeator trace Mp}$, we have
\begin{align}\label{main two variable 2.5}
\small 
\begin{split}
&l_{f,M}^{(\beta_{\kappa})}\circ T_{p}\left((\kappa\vert_{\mathbf{I}}(G)\otimes\phi_{\kappa,1})\vert_{[M\slash N^{\prime}]}W_{\kappa}\right)=a_{p}(f)^{-\beta_{\kappa}+m(f)}\\
&\frac{
\left\langle f^{\rho}\vert_{k}\tau_{Np^{m(f)}},\mathrm{Tr}_{Mp^{m(f)}\slash Np^{m(f)}}\circ T_{p}^{\beta_{\kappa}+1-m(f)}\left((\kappa\vert_{\mathbf{I}}(G)\otimes\phi_{\kappa,1})\vert_{[M\slash N^{\prime}]}H_{\kappa}\right)
\right\rangle_{k,Np^{m(f)}}}{\langle f^{\rho}\vert_{k}\tau_{Np^{m(f)}},f\rangle_{k,Np^{m(f)}}}\\
&=a_{p}(f)^{-\beta_{\kappa}+m(f)}(M\slash N)^{\frac{k}{2}-1}\frac{
\left\langle f^{\rho}\vert_{k}\tau_{Mp^{m(f)}},T_{p}^{\beta_{\kappa}+1-m(f)}\left((\kappa\vert_{\mathbf{I}}(G)\otimes\phi_{\kappa,1})\vert_{[M\slash N^{\prime}]}H_{\kappa}\right)
\right\rangle_{k,Mp^{m(f)}}}{\langle f^{\rho}\vert_{k}\tau_{ Np^{m(f)}},f\rangle_{k,Np^{m(f)}}}
\end{split}
\normalsize
\end{align}
where $\mathrm{Tr}_{Mp\slash Np}$ is the trace operator defined in \eqref{definition of trace operator}, $f^{\rho}=\sum_{n=1}^{\infty}\overline{a_{n}(f)}q^{n}$. By \eqref{relaTl anddiagl1} and \cite[Theorem 4.5.5]{Miyake89}, we see that 
\begin{multline*}
\left\langle f^{\rho}\vert_{k}\tau_{Mp^{m(f)}},T_{p}^{\beta_{\kappa}+1-m(f)}\left((\kappa\vert_{\mathbf{I}}(G)\otimes\phi_{\kappa,1})\vert_{[M\slash N^{\prime}]}H_{\kappa}\right)
\right\rangle_{k,Mp^{m(f)}}
\\ 
=a_{p}(f)p^{(\beta_{\kappa}-m(f))(\frac{k}{2}-1)}
\left\langle 
f^{\rho}\vert_{k}\tau_{Mp^{m(f)}}\begin{pmatrix}p^{\beta_{\kappa}-m(f)}&0\\0&1\end{pmatrix},(\kappa\vert_{\mathbf{I}}(G)\otimes\phi_{\kappa,1})\vert_{[M\slash N^{\prime}]}H_{\kappa}
\right\rangle_{k,Mp^{\beta_{\kappa}}}
\\
=a_{p}(f)p^{(\beta_{\kappa}-m(f))(\frac{k}{2}-1)}
\left\langle 
f^{\rho}\vert_{k}\tau_{Mp^{\beta_{\kappa}}},(\kappa\vert_{\mathbf{I}}(G)\otimes\phi_{\kappa,1})\vert_{[M\slash N^{\prime}]}H_{\kappa}
\right\rangle_{k,Mp^{\beta_{\kappa}}}
\end{multline*}
and by \eqref{main two variable 2.5} and Lemma \ref{lemma for modefied euler factor}, we have
\begin{align}\label{main two variable proof 2.7}
\begin{split}
&l_{f,M}\circ T_{p}\left((\kappa\vert_{\mathbf{I}}(G)\otimes\phi_{\kappa,1})\vert_{[\frac{M}{N^{\prime}}]}H_{\kappa}\right)\\
&=l_{f,M}^{(\beta_{\kappa})}\circ T_{p}\left((\kappa\vert_{\mathbf{I}}(G)\otimes\phi_{\kappa,1})\vert_{[M\slash N^{\prime}]}H_{\kappa}\right)\\
&=a_{p}(f)^{-\beta_{\kappa}+m(f)+1}(Mp^{\beta_{\kappa}-m(f)}\slash N)^{\frac{k}{2}-1}\frac{
\left\langle 
f^{\rho}\vert_{k}\tau_{Mp^{\beta_{\kappa}}},(\kappa\vert_{\mathbf{I}}(G)\otimes\phi_{\kappa,1})\vert_{[M\slash N^{\prime}]}H_{\kappa}
\right\rangle_{k,Mp^{\beta_{\kappa}}}}{\langle f^{\rho}\vert_{k}\tau_{ Np^{m(f)}},f\rangle_{k,Np^{m(f)}}}\\
&=a_{p}(f)^{-\beta_{\kappa}+m(f)+1}(Mp^{\beta_{\kappa}-m(f)}\slash N)^{\frac{k}{2}-1}\frac{
\left\langle 
f^{\rho}\vert_{k}\tau_{Mp^{\beta_{\kappa}}},(\kappa\vert_{\mathbf{I}}(G)\otimes\phi_{\kappa,1})\vert_{[M\slash N^{\prime}]}H_{\kappa}
\right\rangle_{k,Mp^{\beta_{\kappa}}}}{\mathcal{E}(f)\langle f^{0},f^{0}\rangle_{k,c_{f}}}.
\end{split}
\end{align}
 Since we have 
$$\left(\kappa\vert_{\mathbf{I}}(G)\otimes\phi_{\kappa,1}\right)\Big\vert_{w_{\kappa,2}}\tau_{N^{\prime}p^{\beta_{\kappa}}}\tau_{Mp^{\beta_{\kappa}}}
=(-1)^{w_{\kappa,2}}\left(\tfrac{M}{N^{\prime}}\right)^{\frac{w_{\kappa,2}}{2}}(\kappa\vert_{\mathbf{I}}(G)\otimes\phi_{\kappa,1})\vert_{[\frac{M}{N^{\prime}}]},
$$
by Lemma \ref{lemma for interpolation formula}, we see that
\begin{multline}\label{main two variable proof 3}
\Lambda_{Mp^{\beta_{\kappa}}}\left(w_{\kappa,1}+w_{\kappa,2},f,\left(\kappa\vert_{\mathcal{O}_{\K}[[\Gamma_{2}]]}(G)\otimes\phi_{\kappa,1}\right)\Big\vert_{w_{\kappa,2}}\tau_{N^{\prime}p^{\beta_{\kappa}}}\right)=(-1)^{w_{\kappa,2}}\left(\tfrac{M}{N^{\prime}}\right)^{\frac{w_{\kappa,2}}{2}}\\
\times 2^{k+1}(Mp^{\beta_{\kappa}})^{\frac{1}{2}(k-w_{\kappa,2}-2w_{\kappa,1}-2)}(\sqrt{-1})^{w_{\kappa,2}-k}\\
\times\left\langle f^{\rho}\vert_{k}\tau_{Mp^{\beta_{\kappa}}},(\kappa\vert_{\mathcal{O}_{\K}[[\Gamma_{2}]]}(G)\otimes\phi_{\kappa,1})\vert_{[M\slash N^{\prime}]}H_{\kappa}
\right\rangle_{k,Mp^{\beta_{\kappa}}}.
\end{multline}
By \eqref{main two variable proof 2.7}, \eqref{main two variable proof 3} and Lemma \ref{for comaptibilityof PC}, we have
\begin{align}\label{main two variable proof 4}
& l_{f,M}\circ T_{p}\left((\kappa\vert_{\mathbf{I}}(G)\otimes\phi_{\kappa,1})\vert_{[\frac{M}{N^{\prime}}]}H_{\kappa}\right)  \\ 
& =a_{p}(f)^{-\beta_{\kappa}+m(f)+1}(Np^{m(f)})^{1-\frac{k}{2}}(-1)^{w_{\kappa,2}}\left(\tfrac{N^{\prime}}{M}\right)^{\frac{w_{\kappa,2}}{2}} \notag  \\
& \ \ \times 2^{-k-1}(Mp^{\beta_{\kappa}})^{\frac{1}{2}(w_{\kappa,2}+2w_{\kappa,1})}(\sqrt{-1})^{k-w_{\kappa,2}}\notag \\
& \ \ \times \frac{\Lambda_{Mp^{\beta_{\kappa}}}\left(w_{\kappa,1}+w_{\kappa,2},f,\left(\kappa\vert_{\mathcal{O}_{\K}[[\Gamma_{2}]]}(G)\otimes\phi_{\kappa,1}\right)\Big\vert_{w_{\kappa,2}}\tau_{N^{\prime}p^{\beta_{\kappa}}}\right)}{\mathcal{E}(f)\langle f^{0},f^{0}\rangle_{k,c_{f}}} \notag \\
& =a_{p}(f)^{m(f)+1}(Np^{m(f)})^{1-\frac{k}{2}}(-1)^{w_{\kappa,2}}\left(N^{\prime}\right)^{\frac{w_{\kappa,2}}{2}}2^{-k-1}M^{w_{\kappa,1}}(\sqrt{-1})^{k-w_{\kappa,2}} \notag \\
& \ \ \times w^{\prime}(\kappa\vert_{\mathbf{I}}(G)^{0})G(\phi_{\kappa,1})G(\omega^{-w_{\kappa,2}}\xi_{(p)}\phi_{\kappa,1}\phi_{\kappa,2})E_{p,\phi_{\kappa,1}}(w_{\kappa,1}+w_{\kappa,2},f,\kappa\vert_{\mathbf{I}}(G))\phi_{\kappa,1}(N^{\prime}) \notag \\
& \ \ \times\frac{\Lambda\left(w_{\kappa,1}+w_{\kappa,2},f,
 \left(\kappa\vert_{\mathbf{I}}(G)\otimes\phi_{\kappa,1}\right)^{\rho}\right)}{\mathcal{E}(f)\langle f^{0},f^{0}\rangle_{k,c_{f}}}. \notag 
\end{align}
$\mathbf{Case}\ \beta_{\kappa}<m(f)$. We assume that $\beta_{\kappa}<m(f)$. By \eqref{betakkageq mkappa1,mkappa2+1} and the assumption $\beta_{\kappa}<m(f)$, the form $(\kappa\vert_{\mathbf{I}}(G)\otimes\phi_{\kappa,1})\vert_{[M\slash N^{\prime}]}H_{\kappa}$ is in $N^{\leq \lfloor \frac{k-1}{2}\rfloor,\mathrm{cusp}}_{k}(Mp^{m(f)},\psi)$. 
By \cite[Theorem 4.5.5]{Miyake89}, $\eqref{hecke opeator trace Mp}$ and Lemma \ref{lemma for modefied euler factor}, we have
\begin{align}\label{main two variable proof 5}
& l_{f,M}\circ T_{p}\left((\kappa\vert_{\mathbf{I}}(G)\otimes\phi_{\kappa,1})\vert_{[\frac{M}{N^{\prime}}]}H_{\kappa}\right)   \\ 
& =l_{f,M}^{(m(f))}\circ T_{p}\left((\kappa\vert_{\mathbf{I}}(G)\otimes\phi_{\kappa,1})\vert_{[\frac{M}{N^{\prime}}]}H_{\kappa}\right) \notag \\
& =a_{p}(f)(M\slash N)^{\frac{k}{2}-1}\frac{
\left\langle 
f^{\rho}\vert_{k}\tau_{Mp^{m(f)}},(\kappa\vert_{\mathbf{I}}(G)\otimes\phi_{\kappa,1})\vert_{[M\slash N^{\prime}]}H_{\kappa}
\right\rangle_{k,Mp^{m(f)}}}{\mathcal{E}(f)\langle f^{0},f^{0}\rangle_{k,c_{f}}}. \notag 
\end{align}
 Since we have 
$$\left(\kappa\vert_{\mathbf{I}}(G)\otimes\phi_{\kappa,1}\right)\Big\vert_{w_{\kappa,2}}\tau_{N^{\prime}p^{m(f)}}\tau_{Mp^{m(f)}}
=(-1)^{w_{\kappa,2}}\left(\tfrac{M}{N^{\prime}}\right)^{\frac{w_{\kappa,2}}{2}}(\kappa\vert_{\mathbf{I}}(G)\otimes\phi_{\kappa,1})\vert_{[\frac{M}{N^{\prime}}]},
$$
by Lemma \ref{lemma for interpolation formula}, we see that
\begin{align}\label{main two variable proof 6}
& \Lambda_{Mp^{m(f)}}\left(w_{\kappa,1}+w_{\kappa,2},f,\left(\kappa\vert_{\mathcal{O}_{\K}[[\Gamma_{2}]]}(G)\otimes\phi_{\kappa,1}\right)\Big\vert_{w_{\kappa,2}}\tau_{N^{\prime}p^{m(f)}}\right) \\ 
& =(-1)^{w_{\kappa,2}}\left(\tfrac{M}{N^{\prime}}\right)^{\frac{w_{\kappa,2}}{2}}
\times 2^{k+1}(Mp^{m(f)})^{\frac{1}{2}(k-w_{\kappa,2}-2w_{\kappa,1}-2)}(\sqrt{-1})^{w_{\kappa,2}-k} \notag\\ 
& \ \ \times \left\langle f^{\rho}\vert_{k}\tau_{Mp^{m(f)}},(\kappa\vert_{\mathcal{O}_{\K}[[\Gamma_{2}]]}(G)\otimes\phi_{\kappa,1})\vert_{[M\slash N^{\prime}]}H_{\kappa} \notag 
\right\rangle_{k,Mp^{m(f)}}.
\end{align}
 By Lemma \ref{lemma 1 of shimura rankin selberg}, we have
\begin{multline}\label{main two variable proof 7}
\Lambda_{Mp^{m(f)}}\left(w_{\kappa,1}+w_{\kappa,2},f,\left(\kappa\vert_{\mathcal{O}_{\K}[[\Gamma_{2}]]}(G)\otimes\phi_{\kappa,1}\right)\Big\vert_{w_{\kappa,2}}\tau_{N^{\prime}p^{m(f)}}\right)\\
=p^{\frac{1}{2}(\beta_{\kappa}-m(f))(2w_{\kappa,1}+w_{\kappa_{2}})}a_{p}(f)^{m(f)-\beta_{\kappa}}\\
\times\Lambda_{Mp^{m(f)}}\left(w_{\kappa,1}+w_{\kappa,2},f,\left(\kappa\vert_{\mathcal{O}_{\K}[[\Gamma_{2}]]}(G)\otimes\phi_{\kappa,1}\right)\Big\vert_{w_{\kappa,2}}\tau_{N^{\prime}p^{\beta_{\kappa}}}\right).
\end{multline}
By \eqref{main two variable proof 6}, \eqref{main two variable proof 7} and Lemma \ref{for comaptibilityof PC}, we have
\begin{align}\label{main two variable proof 8}
& \left\langle f^{\rho}\vert_{k}\tau_{Mp^{m(f)}},(\kappa\vert_{\mathcal{O}_{\K}[[\Gamma_{2}]]}(G)\otimes\phi_{\kappa,1})\vert_{[M\slash N^{\prime}]}H_{\kappa}
\right\rangle_{k,Mp^{m(f)}} \\ 
& =(-1)^{w_{\kappa,2}}\left(\tfrac{N^{\prime}}{M}\right)^{\frac{w_{\kappa,2}}{2}} \notag
2^{-(k+1)}(Mp^{m(f)})^{-\frac{1}{2}(k-w_{\kappa,2}-2w_{\kappa,1}-2)}(\sqrt{-1})^{k-w_{\kappa,2}}\\
& \ \ \times p^{-\frac{1}{2}m(f)(2w_{\kappa,1}+w_{\kappa_{2}})}a_{p}(f)^{m(f)}w^{\prime}(\kappa\vert_{\mathbf{I}}(G)^{0})G(\phi_{\kappa,1})G(\omega^{-w_{\kappa,2}}\xi_{(p)}\phi_{\kappa,1}\phi_{\kappa,2}) \notag \\
& \ \ \times E_{p,\phi_{\kappa,1}}(w_{\kappa,1}+w_{\kappa,2},f,\kappa\vert_{\mathbf{I}}(G))\phi_{\kappa,1}(N^{\prime})\Lambda\left(w_{\kappa,1}+w_{\kappa,2},f,
 \left(\kappa\vert_{\mathbf{I}}(G)\otimes\phi_{\kappa,1}\right)^{\rho}\right). \notag 
\end{align}
By \eqref{main two variable proof 5} and \eqref{main two variable proof 8}, we have 
\begin{align}\label{main two variable proof 9}
&l_{f,M}\circ T_{p}\left((\kappa\vert_{\mathbf{I}}(G)\otimes\phi_{\kappa,1})\vert_{[\frac{M}{N^{\prime}}]}H_{\kappa}\right)\\
& = a_{p}(f)^{m(f)+1}(Np^{m(f)})^{1-\frac{k}{2}}(-1)^{w_{\kappa,2}}(N^{\prime})^{\frac{w_{\kappa,2}}{2}}2^{-k-1}M^{w_{\kappa,1}}(\sqrt{-1})^{k-w_{\kappa,2}}
\notag \\
& \ \ \times w^{\prime}(\kappa\vert_{\mathbf{I}}(G)^{0})G(\phi_{\kappa,1})G(\omega^{-w_{\kappa,2}}\xi_{(p)}\phi_{\kappa,1}\phi_{\kappa,2})
E_{p,\phi_{\kappa,1}}(w_{\kappa,1}+w_{\kappa,2},f,\kappa\vert_{\mathbf{I}}(G))\phi_{\kappa,1}(N^{\prime}) \notag \\
& \ \ \times \frac{\Lambda\left(w_{\kappa,1}+w_{\kappa,2},f,
 \left(\kappa\vert_{\mathbf{I}}(G)\otimes\phi_{\kappa,1}\right)^{\rho}\right)}{\mathcal{E}(f)\langle f^{0},f^{0}\rangle_{k,c_{f}}}. \notag
 \end{align}
By \eqref{main two variable proof 4} and \eqref{main two variable proof 9}, we have \eqref{main two variable proof 2}. We define a group homomorphism 
$$\langle\ \rangle_{1}: \mathbb{Z}_{p}^{\times}\rightarrow \mathbb{Z}_{p}[[\Delta\times \Gamma_{1}]]^{\times}$$ 
to be $z\mapsto [\chi_{1}^{-1}(z)]$ for each $z\in \mathbb{Z}_{p}^{\times}$, where $[\ ]: \Delta\times \Gamma_{1}\rightarrow \mathbb{Z}_{p}[[\Delta\times \Gamma_{1}]]^{\times}$ is the 
tautological inclusion. We replace $L_{(f,G),p}$ with $L_{(f,G),p}a_{p}(f)^{-(m(f)+1)}(Np^{m(f)})^{\frac{k}{2}-1}2^{k-1}\sqrt{-1}^{-k}\mathcal{E}(f)\langle M\rangle_{1}^{-1}$. 
By \eqref{main two variable proof 1} and \eqref{main two variable proof 2}, $L_{(f,G),p}$ satisfies the following interpolation property: 
\begin{align*}
\kappa(L_{(f,G),p})
&={N^{\prime}}^{\frac{w_{\kappa,2}}{2}}\sqrt{-1}^{w_{\kappa,2}}(-1)^{w_{\kappa,1}}w^{\prime}(\kappa\vert_{\mathbf{I}}(G)^{0})G(\phi_{\kappa,1})G(\omega^{-w_{\kappa,2}}\xi_{(p)}\phi_{\kappa,1}\phi_{\kappa,2})\\ 
&\ \ \times E_{p,\phi_{\kappa,1}}(w_{\kappa,1}+w_{\kappa,2},f,\kappa\vert_{\mathbf{I}}(G))\frac{\Lambda\left(w_{\kappa,1}+w_{\kappa,2},f,
 \left(\kappa\vert_{\mathbf{I}}(G)\otimes\phi_{\kappa,1}\right)^{\rho}\right)}{\langle f^{0},f^{0}\rangle_{k,c_{f}}}
\end{align*}
for every $\kappa\in \mathfrak{X}^{[\boldsymbol{d},\boldsymbol{e}]}_{\mathcal{O}_{\K}[[\Delta\times \Gamma_{1}]]\widehat{\otimes}_{\mathcal{O}_{\K}}\mathbf{I}}$ 
satisfying $w_{\kappa,1}+w_{\kappa,2}<k$. The uniqueness of $L_{(f,G),p}$ follows from Proposition \ref{admissible distribution characterization prop}.

\end{proof}

\begin{rem}
Let $N$ and $N'$ be positive integers relatively prime to $p$. Let 
$\psi$ $($resp. $\xi )$ be a Dirichlet character modulo $N$ $($resp. $N^{\prime})$. Let $f\in S_{k}(N,\psi;\K)$ and $g\in S_{l}(N^{\prime},\xi;K)$ be primitive forms of weight $k$ and $l$. We assume that we have $k>l\geq 2$.  Assume that $g$ is ordinary at $p$ and the inequality $k>\lfloor 2\alpha\rfloor+\lfloor \alpha\rfloor +2$ 
is valid with $\alpha=\ord_{p}(a_{p}(f))$. Let $\alpha_{1}(f)$ and $\alpha_{2}(f)$ $($resp. $\alpha_{1}(g)$ and $\alpha_{2}(g))$ be the roots of the polynomial $X^{2}-a_{p}(f)X+\psi(p)p^{k-1}$ 
$($resp. $X^{2}-a_{p}(g)X+\xi(p)p^{l-1})$ satisfying $\ord_{p}(\alpha_{1}(f))\leq \ord_{p}(\alpha_{2}(f))$ $($resp. $\ord_{p}(\alpha_{1}(g))\leq \ord_{p}(\alpha_{2}(g)))$.

Let $G$ be the primitie Hida deformation which extends the primitie form $g$. 
By specializing the two-variable $p$-adic $L$-function $L_{(f,G),p}$ constructed in Theorem \ref{two variable rankin selberg l series of hida family} at the point 
$g$ of $G$, we obtain a one-variable $p$-adic $L$-function 
$L_{(f,g),p}\in \mathcal{D}_{2\alpha}^{[0,k-l-1]}(\Gamma_{1},\K)\otimes_{\mathcal{O}_{\K}[[\Gamma]]}\mathcal{O}_{\K}[[\Delta\times\Gamma_{1}]]$. Let $w(g)^{\prime}$ be the constant defined in \eqref{notp part of rootnumber}. By replacing $L_{(f,g),p}$ with ${N^{\prime}}^{-l\slash 2}\sqrt{-1}^{-l\slash 2}w^{\prime}(g)^{-1}L_{(f,G),p}$, we have the following interpolation property:
\begin{multline}\label{remark perrin coatseq}
\kappa(L_{(f,g),p})\\
=(-1)^{w_{\kappa}}G(\phi_{\kappa})^{2}\Phi_{p}\left(f,g,\phi_{\kappa}^{-1},l+w_{\kappa}\right)
\prod_{i=1}^{2}\left(\frac{p^{l+w_{\kappa}-1}}{\alpha_{1}(f)\alpha_{i}(g)^{\rho}}\right)^{r_{\kappa}}
\frac{\Lambda(l+w_{\kappa},f,g^{\rho}\otimes\phi_{\kappa}^{-1})
}{\langle f^{0},f^{0}\rangle_{k,c_{f}}}
\end{multline}
for every $\kappa\in \mathfrak{X}_{\mathcal{O}_{\K}[[\Delta\times \Gamma_{1}]]}^{[0,k-l-1]}$ where 
$$
\Phi_{p}\left(f,g,\phi_{\kappa}^{-1},s\right)=\prod_{i=1}^{2}(1-\alpha_{2}(f)\alpha_{i}(g)^{\rho}\phi_{\kappa,0}^{-1}(p)p^{-s})\prod_{j=1}^{2}\left(1-\left(\frac{p}{\alpha_{1}(f)\alpha_{j}(g)^{\rho}}\right)\phi_{\kappa,0}(p)p^{s-2}\right)$$
and 
$$r_{\kappa}=\begin{cases}m_{\kappa}+1\ &\mathrm{if}\ \phi_{\kappa}\ \mathrm{is\ not\ trivial},\\
0\ &\mathrm{if}\ \phi_{\kappa}\ \mathrm{is\ trivial}. 
\end{cases}$$
Here $\phi_{\kappa}$ is the unique finite character on $\Delta\times \Gamma_{1}$ which satisfies $\kappa\vert_{\Delta\times \Gamma}(x)=\phi_{\kappa}(x)\chi_{1}(x)^{w_{\kappa}}$, $m_{\kappa}$ is the smallest non-negative integer $m$ such that $\phi_{\kappa}$ factors through $\Gamma_{1}\slash \Gamma_{1}^{p^{m}}$. 
{We see that the interpolation formula of \eqref{remark perrin coatseq} of the one-variable $p$-adic $L$-function $L_{(f,g),p}$ is compatible with the Coates--Perrin-Riou's principal conjecture given in \cite[(4.14)]{CoatesPerrin}.}

\end{rem}
\begin{rem}
In Theorem \ref{two variable rankin selberg l series of hida family}, we constructed a two-variable $p$-adic $L$-function $L_{(f,G),p}$  
which is associated to a normalized cuspidal Hecke eigenform $f$ and an $\mathbf{I}$-adic Hida family $G$.  
\begin{enumerate}
\item 
By the reason related to the uniqueness and the construction of $L_{(f,G),p}$, 
we imposed the condition 
$$
k>\lfloor 2\alpha\rfloor+\lfloor \alpha\rfloor +2$$
where $k$ is the weight of the fixed cuspform $f$ and we set $\alpha=\ord_{p}(a_{p}(f))$ 
(see the proof of Theorem \ref{two variable rankin selberg l series of hida family}, \eqref{definition of inverse q[d,ealpha]} and Step 2 and Step 3 of the proof of Lemma \ref{proposition of interpolation of L(f,G)}).  At the moment, we do not know how much we can relax the 
above condition for the uniueness and the construction of $L_{(f,G),p}$.  
%
\item 
By the technical reason related to the construction of $L_{(f,G),p}$, we can only show that 
$L_{(f,G),p}\in \mathcal{D}_{\boldsymbol{h}}^{[\boldsymbol{d},\boldsymbol{e}]}(\Gamma_{1}\times \Gamma_{2},\K)\otimes_{\mathcal{O}_{\K}[[\Gamma_{1}\times\Gamma_{2}]]}(\mathcal{O}_{\K}[[\Delta \times \Gamma_{1}]]\widehat{\otimes}_{\mathcal{O}_{\K}}\mathbf{I})$ with $\boldsymbol{h}=(2\alpha,\alpha)$ where $\Delta=\left(\mathbb{Z}\slash p\mathbb{Z}\right)^{\times}$. 
At the moment, we do not know what should be the minimal $(h_{1},h_{2})\in \ord_{p}(\mathcal{O}_{\K}\backslash \{0\})^{2}$ so that we have 
$L_{(f,G),p}\in \mathcal{D}_{(h_{1},h_{2})}^{[\boldsymbol{d},\boldsymbol{e}]}(\Gamma_{1}\times \Gamma_{2},\K)\otimes_{\mathcal{O}_{\K}[[\Gamma_{1}\times\Gamma_{2}]]}(\mathcal{O}_{\K}[[\Delta \times \Gamma_{1}]]\widehat{\otimes}_{\mathcal{O}_{\K}}\mathbf{I})$. 
%
\end{enumerate}
\end{rem}
\begin{rem}
The two-variable $p$-adic $L$-function $L_{(f,G),p}$ associated to a non $p$-ordinary normalized cuspidal Hecke eigenform $f$ and 
a primitive $\mathbf{I}$-adic Hida family $G$ which we constructed in Theorem \ref{two variable rankin selberg l series of hida family} 
has the following triangular range of interpolation.  
$$\begin{tikzpicture}
 \draw[->,>=stealth,semithick] (-0.5,0)--(3.5,0)node[above]{$w_{\kappa,2}$}; 
 \draw[->,>=stealth,semithick] (0,-0.5)--(0,3.5)node[right]{$w_{\kappa,1}$}; 
 \draw (0,0)node[above  left]{O}; 
 \draw (0.3,0)node[below]{2};
 \draw (3,0)node[below]{$k$}; 
 \draw(0,2.7)node[left]{$k-2$};
  \fill[lightgray] (0.3,0)--(0.3,2.7)--(3,0);
 \draw[dashed,domain=0.3:3] plot(\x,3-\x);
   \draw (1.65,-0.5)node[below]{$\mathrm{Critical\ range\ of}\ L_{(f,G),p}$};
\end{tikzpicture}
$$
Recall that a non $p$-ordinary normalized cuspidal Hecke eigenform $f$ of weight $k_0$ and level $Np$ such that $a_{p}(f)\not=0$ 
extends to a $p$-adic family $F=\{f_{k}\}_{\substack{k\in U\cap \mathbb{Z}\\ k>\alpha+1}}$ called a Coleman family where $U$ 
is a closed subdisk of $\mathbb{C}_{p}$ and $f_{k}$ is a normalized cuspidal Hecke eigenform of weight $k$ and level $Np$ 
such that $\mathrm{ord}_p (a_p (f)) = \mathrm{ord}_p (a_p (f_k ))$ for each $k \in U\cap \mathbb{Z} $ satisfying $k> \mathrm{ord}_p (a_p (f)) +1$ (see \cite{coleman 1997}). 
It is known that the Coleman family has a formal model $\sum A_n q^n \in \mathbf{A}_{\K}[[q]]$ where $\mathcal{K}$ is a $p$-adic field and $\mathbf{A}_{\K}=\mathcal{O}_{\K}[[\frac{T-k_{0}}{e_{0}}]]\otimes_{\mathcal{O}_{\K}}\K$ with $e_{0}\in \K^{\times}$ (see \cite[Thm 3.2]{NOR2023}). 
%
%
\par 
We expect that there exists a unique three-variable $p$-adic $L$-function $L_{(F,G),p}$ which coincides with the two-variable $p$-adic 
$L$-function $L_{(f_k ,G),p}$ when specialized to $f_k$ for every $k \in U\cap \mathbb{Z} $ satisfying $k> \lfloor 2\alpha\rfloor+\lfloor\alpha\rfloor +1$ with $\alpha=\ord_{p}(a_{p}(f))$. 
The expected range of interpolation of the three-variable $p$-adic $L$-function $L_{(F,G),p}$ 
is given as follows: 
$$
\left\{(k,\kappa)\in (U\cap \mathbb{Z})\times \mathfrak{X}_{\mathcal{O}_{\K}[[\Delta\times \Gamma_{1}]]\widehat{\otimes}_{\mathcal{O}_{\K}}\mathbf{I}}\ 
\Big\vert \  k>\alpha+1,\ 0\leq w_{\kappa,1}+w_{\kappa,2}<k,\ w_{\kappa_{2}}\geq 2\right\} . 
$$ 
Note that the above range of interpolation is unbounded and it will be constructed as an element of 
$\mathcal{D}_{\boldsymbol{h}}(\Gamma_{1}\times\Gamma_{2},\mathbf{A}_{\K})\otimes_{\mathcal{O}_{\K}[[\Gamma_{1}\times \Gamma_{2}]]}(\mathcal{O}_{\K}[[\Delta\times \Gamma_{1}]]\widehat{\otimes}_{\mathcal{O}_{\K}}\mathbf{I})$ given in $\eqref{not-bounded admissible distributions}$. 




\end{rem}

\end{document}